\documentclass{amsart}
\textwidth=14.5cm \oddsidemargin=1cm
\usepackage[utf8]{inputenc}
\usepackage{amssymb}
\usepackage{amsmath}
\usepackage{xcolor}
\usepackage{amscd}
\usepackage{comment}
\usepackage{accents}
\usepackage{enumitem}
\usepackage{tikz}
\usepackage{tikz-cd}
\usepackage{graphicx}
\usepackage{mathrsfs}
\usepackage{mathtools}

\usepackage{xr-hyper}
\usepackage[
pdftex,
bookmarks=false,
colorlinks=true,
debug=true,
pdfnewwindow=true]{hyperref}

\newcommand{\CA}{\mathcal{A}}
\newcommand{\uCA}{\underline{\mathcal{A}}}
\newcommand{\uH}{\underline{H}}
\newcommand{\CB}{\mathcal{B}}
\newcommand{\BB}{\mathsf{B}}
\newcommand{\bU}{{\mathbf{U}}}
\newcommand{\bN}{{\mathbf{N}}}
\newcommand{\CI}{\mathcal{I}}
\newcommand{\CF}{\mathcal{F}}
\newcommand{\CK}{\mathcal{K}}

\newcommand{\CR}{\mathcal{R}}

\newcommand{\g}{\mathfrak{g}}
\newcommand{\ssl}{\mathfrak{sl}}
\newcommand{\h}{\mathfrak{h}}
\newcommand{\fp}{\mathfrak{p}}
\newcommand{\fF}{\mathfrak{F}}
\newcommand{\fJ}{\mathfrak{J}}
\newcommand{\dU}{\dot{U}}
\newcommand{\rU}{\mathring{U}}
\newcommand{\rc}{\mathring{c}}
\newcommand{\tE}{\tilde{E}}
\newcommand{\tF}{\tilde{F}}

\newcommand{\ad}{\operatorname{ad}}
\newcommand{\Fr}{\operatorname{Fr}}
\newcommand{\fr}{\operatorname{fr}}
\newcommand{\Img}{\operatorname{Im}}

\newcommand{\BZ}{{\mathbb{Z}}}
\newcommand{\BN}{{\mathbb{N}}}
\newcommand{\BQ}{{\mathbb{Q}}}
\newcommand{\BC}{{\mathbb{C}}}
\newcommand{\BR}{{\mathbb{R}}}
\newcommand{\BF}{\mathbb{F}}
\newcommand{\sd}{{\mathsf{d}}}
\newcommand{\Dyn}{\mathrm{Dyn}}
\newcommand{\Or}{\mathsf{Or}}
\newcommand{\sA}{\mathsf{p}}

\newcommand{\Hom}{\operatorname{Hom}}
\newcommand{\End}{\operatorname{End}}
\newcommand{\Id}{\mathrm{Id}}
\newcommand{\Lprod}{\overset{\leftarrow}{\prod}}
\newcommand{\Rprod}{\overset{\rightarrow}{\prod}}

\makeatletter
\DeclareRobustCommand{\cev}[1]{%
  {\mathpalette\do@cev{#1}}%
}
\newcommand{\do@cev}[2]{%
  \vbox{\offinterlineskip
    \sbox\z@{$\m@th#1 x$}%
    \ialign{##\cr
      \hidewidth\reflectbox{$\m@th#1\vec{}\mkern4mu$}\hidewidth\cr
      \noalign{\kern-\ht\z@}
      $\m@th#1#2$\cr
    }%
  }%
}
\makeatother

\newcommand\iso{\,\vphantom{j^{X^2}}\smash{\overset{\sim}{\vphantom{\rule{0pt}{0.20em}}\smash{\longrightarrow}}}\,}

\newcommand{\ul}[1]{\underline{\smash{#1}}}

\newcommand{\wt}{\widetilde}
\newcommand{\mm}{\mathrm{m}}

\newcommand{\Rep}{\operatorname{Rep}}

\title{On De Concini-Kac forms of quantum groups}
\author{Ivan Losev, Alexander Tsymbaliuk and Trung Vu}

\newtheorem{Thm}{Theorem}[section]
\newtheorem{Prop}[Thm]{Proposition}
\newtheorem{Cor}[Thm]{Corollary}

\newtheorem{Lem}[Thm]{Lemma}
\newtheorem{PropDef}[Thm]{Proposition-Definition}
\theoremstyle{definition}

\newtheorem{defi}[Thm]{Definition}
\newtheorem{Rem}[Thm]{Remark}

\numberwithin{equation}{section}

\newcommand*{\dt}[1]{%
  \accentset{\mbox{\large\bfseries .}}{#1}}
\newcommand{\dx}{\dt{x}}
\newcommand{\dy}{\dt{y}}
\newcommand{\dlambda}{\dt{\lambda}}
\newcommand{\dmu}{\dt{\mu}}
\newcommand{\dnu}{\dt{\nu}}

\def\a{\alpha}
\def\b{\beta}
\def\c{\gamma}

\def\e{\epsilon}
\def\th{\theta}
\def\w{\omega}
\def\s{\sigma}

\def\g{\mathfrak{g}}
\def\h{\mathfrak{h}}
\def\p{\mathfrak{p}}
\def \n{\mathfrak{n}}
\def \t{\mathfrak{t}}
\def \m{\mathfrak{m}}
\def \fu{\mathfrak{u}}

\def \uchi{\underline{\chi}}
\def \hchi{\hat{\chi}}
\def \r{\textnormal{\textbf{r}}}
\def\k{\textnormal{\textbf{k}}}
\def \q{\textnormal{\textbf{q}}}

\def \op{\textnormal{op}}
\def\cop{\textnormal{c-op}}

\def\to{\rightarrow}

\def\x{\times}
\def\<{\langle}
\def\>{\rangle}

\def \({\left(}
\def \){\right)}
\newcommand{\ds}{\displaystyle}

\newcommand{\tl}{\tilde{l}}
\newcommand{\te}{\tilde{e}}
\newcommand{\tf}{\tilde{f}}
\newcommand{\he}{\hat{e}}
\newcommand{\hf}{\hat{f}}
\newcommand{\hb}{\hat{b}}

\newcommand{\tZ}{\tilde{Z}}
\newcommand{\tFr}{\tilde{\textnormal{Fr}}}
\newcommand{\hFr}{\widehat{\textnormal{Fr}}}
\newcommand{\cFr}{\check{\textnormal{Fr}}}

\newcommand{\hW}{\widehat{\textnormal{W}}}
\newcommand{\hL}{\widehat{\textnormal{L}}}

\newcommand{\Ker}{\mathrm{Ker}}
\newcommand{\Dim}{\text{dim}}
\newcommand{\hU}{\widehat{U}}
\newcommand{\oU}{\mathring{U}}
\newcommand{\oR}{\mathring{\mathcal{R}}}
\newcommand{\tU}{\tilde{U}}

\newcommand{\res}{\textnormal{res}}
\newcommand{\St}{\textbf{St}}

\def\Mod{\textnormal{-mod}}
\newcommand{\Rmod}{\textnormal{-Rmod}}
\newcommand{\rmod}{\textnormal{-rmod}}
\newcommand{\Lmod}{\textnormal{-Lmod}}
\newcommand{\lmod}{\textnormal{-lmod}}
\newcommand{\Bimod}{\textnormal{-Bimod}}
\newcommand{\bimod}{\textnormal{-bimod}}
\newcommand{\weight}{\textnormal{wt}}
\newcommand{\gr}{\textnormal{gr}}
\def\het{\textnormal{ht}}
\newcommand{\Irr}{\textnormal{Irr}}
\newcommand{\Spec}{\textnormal{Spec}}
\newcommand{\dsim}{\overset{\raisebox{-1 ex}{\textnormal{s}}}{\sim}}

\newcommand{\mU}{\mathcal{U}}
\newcommand{\dmU}{\dot{\mathcal{U}}}
\newcommand{\hmU}{\widehat{\mathcal{U}}}

\newcommand{\hiota}{\hat{\iota}}

\newcommand{\Ext}{\textnormal{Ext}}
\newcommand{\du}{\dot{\mathfrak{u}}}
\newcommand{\K}{\mathbb{K}}
\newcommand{\F}{\mathbb{F}}
\newcommand{\Dist}{\operatorname{Dist}}

\newcommand{\ccop}{\textnormal{cop}}

\newcommand{\zlambda}{\zeta^>}
\newcommand{\zmu}{\zeta^<}
\newcommand{\sN}{\mathsf{N}}
\newcommand{\sF}{\mathsf{F}}

\newcommand{\bmat}[1]{\begin{bmatrix}#1\\\end{bmatrix}}
\newcommand{\sslash}{\mathbin{/\mkern-6mu/}}


\begin{document}

\begin{abstract}
Quantum groups of semisimple Lie algebras at roots of unity admit several different forms. Among them is the De Concini-Kac form, which is the easiest to define but, perhaps, hardest to study.
In this paper, we propose a suitable modification to the De Concini-Kac form, namely the {\em even part algebra}, which has some appealing features. Notably, it behaves uniformly with respect to the order of the roots of unity and admits an adjoint action of the Lusztig form. We revisit several results due to De Concini-Kac-Procesi and Tanisaki for the even part algebra. Namely, we give conceptual definitions of the Frobenius and Harish-Chandra centers and describe the entire center in terms of these two subalgebras getting a complete quantum analog of the Veldkamp theorem on the center of the universal enveloping algebras in positive characteristic. We investigate the Azumaya locus of the even part algebra over its center. We also show that the locally finite part of the even part algebra under the adjoint  action of the Lusztig form is isomorphic to the reflection equation algebra, which is the quantized coordinate algebra with the product twisted by $R$-matrix. Some results on Lusztig forms at roots of unity are revisited and proved in greater generality including Kempf vanishing theorem and good filtrations on the quantized coordinate algebra. 
\end{abstract}

\maketitle

\tableofcontents


\section{Introduction}


\subsection{Quantum groups}\label{SS:intro_generic}\

Quantum groups introduced by Drinfeld and Jimbo in the 80's \cite{dr, ji} form, arguably, the most important class of Hopf algebras studied to this date. They play a crucial role in several fields of mathematics and physics. Let us briefly explain key features of the definition in a somewhat simplified setting. Let $R$ be a field and $q$ a nonzero element of $R$. Let $G$ be a simple algebraic group over $R$, $\g$ be its Lie algebra, and $\h$ be a Cartan subalgebra of $\g$. The quantum group $\bU_q(\g)$ is the $R$-algebra generated by the elements $E_i,F_i,K^\mu$, where $i$ runs over the indexing set of the simple roots and $\mu$ over the elements of a suitable lattice $X$ in $\h^*$. Two most conventional choices of the lattice $X$ are the weight and root lattices of~$\g$. The elements $E_i,F_i,K^\mu$ are supposed to satisfy certain relations (including the quantum Serre relations on the $E_i$'s and on the $F_i$'s) that will be recalled in Section \ref{ssec: quantum DJ}. It is classically known that $\bU_q(\g)$ carries a Hopf algebra structure, in fact, this structure depends on a number of choices obtained one from another through a ``Cartan twist''. 

While the classical presentation of $\bU_q(\g)$ is very elegant it comes with a number of structural deficiencies that we are going to list below. In this section, we will restrict to the (easier) case when $q$ is not a root of unity.

The first deficiency one encounters can be described as:
\begin{itemize}

\item[(I)]  
$2^{\operatorname{rk}(\g)}$-cover phenomenon.  

\end{itemize}
For example, while $\g$ has only one $1$-dimensional representation, the quantum group associated to the root lattice has $2^{\operatorname{rk}(\g)}$ of such. This leads to a somewhat unpleasant technicality in the description of the center of $\bU_q(\g)$. This description is easier when the lattice $X$ is the weight lattice $P$. Here we have the quantum Harish-Chandra isomorphism between the center and $\operatorname{Span}_R(K^{\mu} \,|\, \mu\in 2P)^W$, where $W$ is the Weyl group of $\g$ and the action of $W$ on the group algebra of $2P$ is suitably twisted. 

Independently, there is the following issue:
\begin{itemize}

\item[(II)] 
Absence of non-degenerate characters: the positive part of $\bU_q(\g)$, i.e., the subalgebra generated by the elements $E_i$, admits no homomorphism to $R$ that is nonzero on all~$E_i$.

\end{itemize}
Such homomorphisms exist for the positive subalgebra of the usual universal enveloping algebra $U(\g)$ and play a crucially important role in various aspects of the representation theory of $\g$ and $G$, as realized already in \cite{k2}.

The above issue~(II) was resolved by Sevostyanov~\cite{s1}. To do so, one replaces the generators $E_i$ with their twisted versions $\tE_i=E_iK^{\nu_i}$ for suitable choices of elements $\nu_i$, which only lie in $\frac{1}{2}P$ in general, and so $\tE_i$ are not elements in the version of the quantum group discussed above.


\subsection{Roots of unity case}\label{SS:intro_roots}\

While our discussion above centered on the easier case when $q$ is not a root of $1$, we now assume that $q$ is a root of unity of order that is not too small. We also assume that the base field for the quantum group is $\BC$ as some aspects become more complicated in positive characteristic. 

One can still define $\bU_q(\g)$ by the same generators and relations as before, getting the so called {\it De Concini-Kac form}. The structure of this algebra was extensively studied in \cite{dck, dckp, dckp2}.

One can also consider different forms of $\bU_q(\g)$: one replaces the base field $R$ with the field of rational functions $R(v)$, sets $q:=v$, considers an $R[v^{\pm 1}]$-subalgebra of $\bU_v(\g)$ and then mods out the ideal $(v-q)$. Of particular interest is the \emph{Lusztig form}, i.e., the form with divided powers of $E_i,F_i$, see Section \ref{ssec Lus form} for the definition, which has also been studied extensively in \cite{l0}--\cite{l-book}.

A remarkable insight of Lusztig is that the Lusztig form should behave roughly as the distribution algebra $\operatorname{Dist}_1(G_{\F})$ of  $G_{\F}$ over a positive characteristic field $\F$, cf.~\cite{l1}. With the same logic, the usual De Concini-Kac form should behave as a partly multiplicative version of $U(\g_{\F})$.

With this analogy in mind, let us list some results and constructions from the study of $G_{\F}$ and $\g_{\F}$ whose analogs for the quantum groups are desirable to have:
\begin{enumerate}

\item 
There is the adjoint action of $\Dist_1(G_\F)$ on $U(\g)$ and the algebra of regular functions $\F[G]$.

\item 
There is a natural adjoint invariant pairing  $\operatorname{Dist}_1(G_\F)\times \F[G]\rightarrow \F$ that is non-degenerate in the second argument, i.e., the induced map $\F[G]\rightarrow \operatorname{Dist}_1(G_\F)^*$ is an embedding.

\item 
There is a Frobenius epimorphism $\operatorname{Dist}_1(G_\F)\rightarrow \operatorname{Dist}_1(G^{(1)}_\F)$ induced by the Frobenius epimorphism for $G_\F$. Here the superscript (1) indicates the Frobenius twist.

\item 
The center $Z$ of $U(\g_\F)$ is described in terms of its two subalgebras $Z_{HC}$ and $Z_{Fr}$:
\begin{itemize}

\item[-] 
the {\it Harish-Chandra center} $Z_{HC}:=U(\g_\F)^{G_\F}$ is described in the same manner as the center of $U(\g_{\BC})$, unless the characteristic of $\F$ is too small;

\item[-] 
the Frobenius center $Z_{Fr}$, a.k.a.\ the $p$-center, is naturally identified with the symmetric algebra $S(\g_\F^{(1)})$.

\end{itemize}
By Veldkamp's theorem~\cite{ve}, the entire center $Z$ is identified with the tensor product of these two central subalgebras over their intersection $Z_{HC}\cap Z_{Fr} \simeq S(\g_\F^{(1)})^{G_\F^{(1)}}$.

\item 
The algebra $U(\g_\F)$ is a free rank $p^{\dim\g}$-module over $Z_{Fr}$, hence a finitely generated module over $Z$. One can thus ask about the Azumaya locus of $U(\g_\F)$ in $\operatorname{Spec}(Z)$, i.e., the locus where the module $U(\g_\F)$ is projective and the fibers are matrix algebras. As follows from the Kac-Weisfeiler conjecture proved in~\cite{Pr}, this locus contains the preimage of the locus of regular elements under the projection $\operatorname{Spec}(Z)\rightarrow \operatorname{Spec}(Z_{Fr})=\g^{(1),*}_{\F}$.

\end{enumerate}

Items (3)--(5) are less basic and their analogs in the quantum case attracted more attention, so we start with them. Lusztig constructed a quantum analog of the Frobenius epimorphism, see \cite{l2} and \cite[$\mathsection 35$]{l-book}. There are two interesting features. First, the target is almost the classical distribution algebra. Second, if the order of $q$ is divisible by $4$, then the distribution algebra is for the Langlands dual group \cite[$\mathsection 35$]{l-book}.

(4) and (5) were investigated by De Concini, Kac, and Procesi in \cite{dckp, dckp2} in the case when the order of $q$ is odd (and coprime to 3 for $G_2$). They defined analogs of $Z_{HC}$ and $Z_{Fr}$, the latter in a somewhat ad hoc way \cite[$\mathsection 3$, $\mathsection 6.2$]{dckp}. Their definition of the Harish-Chandra center was made so that $Z_{HC}$ is the same as in the case when $q$ is not a root of unity. The algebra $Z_{Fr}$ turns out to be the algebra of regular functions on a $2^{\operatorname{rk}\g}$-fold cover of $G^0$, the open Bruhat cell in $G$. In \cite[$\mathsection 6.4$]{dckp}, the authors proved that the entire center is still the tensor product of $Z_{HC}$ and $Z_{Fr}$ over their intersection, an analog of the Veldkamp's theorem. And in \cite[Theorem 5.1]{dckp2}, a direct analog of (5) was established.

In the case of even roots of unity an analog of (4) is known, see \cite[Theorem 5.4]{t2}, but it is considerably more complicated and technical. (5) is not in the literature to the best of our knowledge.

Now we get to more basic questions (1) and (2). We still have the adjoint action of $\bU_v(\g)$ on itself as well as a non-degenerate adjoint pairing $\bU_v(\g)\times \bU_v(\g)\rightarrow \BC(v^{1/2})$ \cite[$\mathsection 6$]{j}. When the order of $q$ is odd (and coprime to 3 for $G_2$) the action  descends to the adjoint action of the Lusztig form on the De Concini-Kac form, while the pairing descends to that of the Lusztig form and the De Concini-Kac form. However, one can check that this is no longer the case for even roots of $1$. This is yet another deficiency of the usual definition of the quantum group.


\subsection{Main results}\

We suggest a modification of $\bU_q(\g)$. First, we replace the usual generators $E_i,F_i$ with the Sevostyanov generators $\tE_i,\tF_i$ and only take $K^\mu$ with $\mu\in 2P$, $P$ being the weight lattice of $\g$ (morally, addressing issues (I) and (II) from Section \ref{SS:intro_generic}). We call this the \emph{even part algebra} and denote it by $U^{ev}_q(\g)$. We note that $U^{ev}_q(\g)$ is not closed under the usual coproduct, so we twist the latter as well using a Cartan type twist, see Section \ref{ssec twisted DJ} for details. 

An easy but computational argument shows that the analogs of (1) and (2) hold for all $q$ that we consider (recall that we exclude roots of unity of very small order). Lusztig's construction of the quantum Frobenius still applies in our setting with some mild modifications. We now explain our versions of analogs of (4) and (5) from Section \ref{SS:intro_roots}.

Let $R$ be an arbitrary field. First, for all $q$ we can define the Harish-Chandra center as the subalgebra of invariants of the (twisted) Lusztig form acting on $U^{ev}_q(\g)$ mirroring the definition for $U(\g_\F)$. We show in Section \ref{sec: HC center} that it admits the usual isomorphism with the $W$-invariants in $\operatorname{Span}_R(K^\mu \,|\, \mu\in 2P)$.  

A more interesting is our description of the Frobenius center $Z_{Fr}$ (here $q$ is a root of unity). Recall that we have the adjoint-invariant pairing between the Lusztig form and $U^{ev}_q(\g)$. We define $Z_{Fr}\subset U^{ev}_q(\g)$ as the orthogonal complement to the kernel of the quantum Frobenius homomorphism, which is easily seen to be a central subalgebra. Let $G^d$ be the simply connected simple algebraic group such that the target of the quantum Frobenius homomorphism is $\operatorname{Dist}_1(G^d)$, and let $G^d_0$ be the open Bruhat cell in $G^d$. We produce an identification between $Z_{Fr}$ and $R[G^d_0]$ which is equivariant under the action of $\operatorname{Dist}_1(G^d)$.

Before we get to the discussion of our versions of (4) and (5), we would like to highlight an important construction, which does not have a classical analog and is very useful, for instance in our computation of the Harish-Chandra center. The algebra of regular functions $R[G]$ admits two well-known $q$-deformations. One, more immediate, is the \emph{quantized coordinate algebra}: it comes with the rational actions of two copies of the Lusztig form (corresponding to the left and right actions of the classical group on itself) but it is not an algebra object in the category of rational representations for either of these two actions or for the adjoint action. There is however a way to twist the product on the quantized coordinate algebra using the universal $R$-matrix to get a different algebra, called the \emph{reflection equation algebra} (REA) and denoted by $O_q[G]$, see \cite{KV} and references therein. If $q$ is not a root of unity this algebra admits an embedding into $\bU_q(\g)$ such that the image coincides with the locally finite part of $\bU_q(\g)$, i.e., the subspace of all elements lying in finite dimensional subrepresentations for the adjoint action \cite[Proposition 2.8]{KV}. We carry out this construction in our situation and establish an isomorphism between $O_q[G]$ and the locally finite part for all $q$. An advantage of our setup is that $U_q^{ev}(\g)$ actually becomes a localization of $O_q[G]$, while in the usual setting one also needs to pass to covers.

\begin{Rem}
We would like to explain a reason why we care about the algebra $O_q[G]$ and its relation to $U_q^{ev}(\g)$. One can define the category of Harish-Chandra bimodules for the quantum group as the category of right modules over $O_q[G]$ together with a (weakly) equivariant structure for the action of the Lusztig form that turns a module into a rational representation (which makes sense because $O_q[G]$ is an algebra in the category of rational representations), see \cite[$\mathsection 1.4$]{gjv}, \cite[$\mathsection 6.2$]{bbj}. Many results in this paper, including the connection between $O_q[G]$ and $U_q^{ev}(\g)$, will be used to study this category in an upcoming paper by the third-named author. 
\end{Rem}

Now let us state our version of (4) that we only obtain in the case when $\operatorname{char}R=0$ (the case of positive characteristic presents some complication but we believe the result is still true). Let $T^d$ be the maximal torus in the group $G^d$. Recall that the adjoint quotient of $G^d$ is identified with the quotient $T^d/W$ giving us maps $G^d_0\hookrightarrow G^d\twoheadrightarrow T^d/W$. We identify the center of $U^{ev}_q(\g)$ with the algebra of regular functions on the fiber product $G^d_0\times_{T^d/W}T/W$ getting a direct analog of the Veldkamp's theorem. Finally, we derive a description of the Azumaya locus in a complete analogy with \cite[Theorem 5.1]{dckp2}.


\subsection{Outline}\label{ssec:outline}\

In Section~\ref{sec Setting}, we recall the basic facts about quantum groups as well as their De Concini-Kac and Lusztig forms. In Section~\ref{sec twisted coproduct}, we introduce the even part algebra $U_q^{ev}(\g)$ and endow it with an adjoint action of Lusztig form $\dU_q(\g)$. In Section~\ref{S_inv_pair}, we construct the adjoint invariant pairing between $U_q^{ev}(\g)$ and $\dU_q(\g)$, which provides the main tool to study $U_q^{ev}(\g)$ in the present paper. In Section~\ref{Quantum Frobenius}, we adapt Lusztig's quantum Frobenious homomorphism to our modified setup. In Section~\ref{sec: Frobenius kernel}, we study the Frobenius center $Z_{Fr}$ of the even algebra in the root of unity case. In Section~\ref{sec rational reps}, we revisit rational representations of the Lusztig form and prove some old results in the greater generality as well as adapt them to the present setup.
In Section~\ref{sec: REA}, we recall the construction of reflection equation algebra $O_q[G]$. In Section~\ref{sec:REA as locfin}, we identify $O_q[G]$ with $U_q^{fin}$, the locally finite part of $U_q^{ev}(\g)$ under the adjoint action of $\dU_q(\g)$. In Section~\ref{sec:PoissonGeometry},  we study the center of the even algebra $U_q^{ev}(\g)$ and its Azumaya locus, in a close analogy with~\cite{dckp, dckp2}. In Appendix~\ref{sec:equiv}, we recall basic facts about equivariant modules over Hopf algebras.


\subsection{Acknowledgment}\label{ssec:acknowl}
\

We are very grateful to P.~Etingof and D.~Jordan for many useful discussions on the subject. The work of I.~Losev and T.~Vu was supported by an NSF Grant DMS-$2001139$, the work of A.~Tsymbaliuk was supported by an NSF Grant DMS-$2302661$.


\section{Setting}\label{sec Setting}

In this section, we establish notations and recall facts about the Drinfeld-Jimbo quantum groups as well as 
their Lusztig and De~Concini-Kac forms.

Let $\g$ be a semisimple Lie algebra with simple positive roots $\alpha_1,\ldots,\alpha_r$ and fundamental 
weights $\omega_1,\ldots,\omega_r$. Let $P:=\bigoplus_{i=1}^r \BZ\omega_i$ be the weight lattice and 
$Q:=\bigoplus_{i=1}^r \BZ\alpha_i$ be the root lattice. Let $P_+:=\bigoplus_{i=1}^r \BZ_{\geq 0} \omega_i$ 
be the set of all dominant weights in $P$, and $Q_+:=\bigoplus_{i=1}^r \BZ_{\geq 0} \alpha_i$. We fix a 
non-degenerate invariant bilinear form $(\ ,\ )$ on the Cartan subalgebra $\h\subset \g$, and identify 
$\h^*$ with $\h$ using $(\ ,\ )$. We set $\sd_i:=\frac{(\alpha_i,\alpha_i)}{2}$. The choice of $(\ ,\ )$ 
is such that $\sd_i=1$ for short roots $\alpha_i$, in particular, $\sd_i\in \{1,2,3\}$ for any $i$. Define 
$\omega^\vee_i:=\frac{\omega_i}{\sd_i}$ and $\alpha^\vee_i:=\frac{\alpha_i}{\sd_i}$, so that 
$(\alpha_i,\omega^\vee_j)=(\omega_i,\alpha^\vee_j)=\delta_{i,j}$. Thus, we shall identify the coweight and 
the coroot lattices of $\g$ with $P^\vee:=\bigoplus_{i=1}^r \BZ\omega_i^\vee$ and 
$Q^\vee:=\bigoplus_{i=1}^r \BZ\alpha_i^\vee$, respectively. Let $(a_{ij})_{i,j=1}^r$ be the Cartan matrix of $\g$:
\begin{equation*}
  a_{ij}=(\alpha_i^\vee,\alpha_j) = 2(\alpha_i,\alpha_j)/(\alpha_i,\alpha_i) \,.
\end{equation*}
We also consider the symmetrized Cartan matrix $B=(b_{ij})_{i,j=1}^r$ defined via:  
\begin{equation}\label{eq:b-matrix}
  b_{ij}:=\sd_ia_{ij}=(\alpha_i,\alpha_j) \,.
\end{equation}

Let $v$ be a formal variable and let us consider $\BZ[v,v^{-1}]$ localized at 
$\{v^{2k}-1\}_{1\leq k\leq \max\{\sd_i\}_{i=1}^r}$:
\begin{equation*}
  \CA = \BZ[v,v^{-1}] \Big[\Big\{ \tfrac{1}{v^{2k}-1} \Big\}_{1\leq k\leq \max\{\sd_i\}} \Big] \,,
  \footnote{We note that~\cite{l2,l-book} rather use $\CA$ for $\BZ[v,v^{-1}]$, while~\cite{dck,dckp} use $\CA$ for $\BC[v,v^{-1}]$.}
\end{equation*}
with the quotient field $\BQ(v)$. Given $m\in \BZ$, $s\in \BN$, define 
$[s]_v, [s]_v!, \bmat{m\\s}_v\in \BZ[v,v^{-1}]\subset \CA$ via:
\begin{equation*}
  [s]_v:=\frac{v^s-v^{-s}}{v-v^{-1}} \,, \quad
  [s]_v!:=\prod_{c=1}^s [c]_v=[1]_v \cdots [s]_v \,, \quad 
  \bmat{m\\s}_v:=\, \prod_{c=1}^s \frac{v^{m-c+1}-v^{-m+c-1}}{v^c-v^{-c}} \,.
\end{equation*}
Note that $[0]_v!=1$. If $m\geq s\geq 0$, then the $v$-binomial coefficient equals  
$\bmat{m\\s}_v=\frac{[m]_v!}{[s]_v!\cdot [m-s]_v!}$. For $1\leq i\leq r$, we set $v_i:=v^{\sd_i}$, 
and define $[s]_{v_i}, [s]_{v_i}!, \bmat{m\\s}_{v_i}$ accordingly.

Let $q$ be an invertible element of a Noetherian ring $R$ such that the elements $q^{2k}-1\in R$ 
are invertible for all $1\leq k \leq \max\{\sd_i\}_{i=1}^r$. Define an algebra homomorphism: 
\begin{equation}\label{sigma homom}
  \sigma\colon \CA\longrightarrow R \qquad \mathrm{via}\qquad v\mapsto q \,.
\end{equation}
For $m\in \BZ, s\in \BN$, let  $[s]_q, [s]_q!, \bmat{m\\s}_q\in R$ be the images of 
$[s]_v, [s]_v!, \bmat{m\\s}_v\in \CA$ under~(\ref{sigma homom}).


\subsection{The Drinfeld-Jimbo quantum group $\bU(\g)$}\label{ssec: quantum DJ}
\

Let $\bU(\g)$ denote the Drinfeld-Jimbo quantum group of $\g$ over $\BQ(v)$, that is, the associative unital $\BQ(v)$-algebra 
generated by $\{E_i,F_i,K^{\mu}\}_{1\leq i\leq r}^{\mu\in Q}$ with the following defining relations: 
\begin{equation}\label{DJ eqn 1}
  K^{\mu} K^{\mu'}=K^{\mu+\mu'}\,, \qquad K^0=1 \,,
\end{equation}
\begin{equation}\label{DJ eqn 2}
  K^{\mu} E_i K^{-\mu}=v^{(\alpha_i,\mu)}E_i \,, \qquad K^{\mu} F_i K^{-\mu}=v^{-(\alpha_i,\mu)}F_i \,,
\end{equation}
\begin{equation}\label{DJ eqn 3}
  [E_i,F_j]=\delta_{ij}\frac{K_i-K_i^{-1}}{v_i-v_i^{-1}} \,,
\end{equation}
\begin{equation}\label{DJ eqn 4}
  \sum_{m=0}^{1-a_{ij}}(-1)^m \bmat{1-a_{ij}\\ m}_{v_i} E_i^{1-a_{ij}-m}E_jE_i^{m}=0 \quad (i\ne j) \,,
\end{equation}
\begin{equation}\label{DJ eqn 5}
  \sum_{m=0}^{1-a_{ij}}(-1)^m \bmat{1-a_{ij}\\m}_{v_i} F_i^{1-a_{ij}-m}F_jF_i^{m}=0 \quad (i\ne j) \,,
\end{equation}
with the standard notation $K_i:=K^{\alpha_i}$.

Let $\bU^<, \bU^>, \bU^0$ denote the $\BQ(v)$-subalgebras of $\bU(\g)$ generated by $\{F_i\}_{i=1}^r, \{E_i\}_{i=1}^r$, 
and $\{K^\mu\}_{\mu\in Q}$, respectively. The following is standard (see~\cite[Theorem 4.21]{j}):

\begin{Lem}\label{triangular_DJ}
(a) (Triangular decomposition of $\bU(\g)$) The multiplication map
\begin{equation}\label{triang_isom_DJ}
  \mm\colon \bU^< \otimes_{\BQ(v)} \bU^0 \otimes_{\BQ(v)} \bU^> \longrightarrow \bU(\g)    
\end{equation}
is an isomorphism of $\BQ(v)$-vector spaces.

\noindent
(b) The subalgebras $\bU^<, \bU^>, \bU^0$ are isomorphic to the algebras generated by $\{F_i\}_{i=1}^r$, $\{E_i\}_{i=1}^r$, 
and $\{K^\mu\}_{\mu\in Q}$, with the defining relations~(\ref{DJ eqn 5}),~(\ref{DJ eqn 4}), and~(\ref{DJ eqn 1}), respectively.
\end{Lem}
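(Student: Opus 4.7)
The plan is to prove (a) and (b) in tandem. Let $\wt{\bU}^<,\wt{\bU}^0,\wt{\bU}^>$ denote the abstract $\BQ(v)$-algebras presented by the generators $\{F_i\}$, $\{K^\mu\}_{\mu\in Q}$, $\{E_i\}$ subject only to the respective relations~(\ref{DJ eqn 5}), (\ref{DJ eqn 1}), (\ref{DJ eqn 4}). The universal property of $\bU(\g)$ produces canonical algebra homomorphisms $\wt{\bU}^?\to\bU(\g)$ factoring through surjections $\wt{\bU}^?\twoheadrightarrow\bU^?$, and composing with multiplication in $\bU(\g)$ yields a $\BQ(v)$-linear map
\[
\wt{\mm}\colon \wt{\bU}^<\otimes_{\BQ(v)}\wt{\bU}^0\otimes_{\BQ(v)}\wt{\bU}^>\longrightarrow \bU(\g).
\]
I shall prove $\wt{\mm}$ is a bijection; this simultaneously yields (a) (since $\wt{\mm}$ factors through $\mm$) and (b) (since bijectivity forces each $\wt{\bU}^?\twoheadrightarrow\bU^?$ to be an isomorphism, ruling out hidden relations among the generators of $\bU^?$ inside $\bU(\g)$).

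Surjectivity of $\wt{\mm}$ is a routine reordering argument: by induction on the length of a monomial in the generators of $\bU(\g)$, relation~(\ref{DJ eqn 2}) lets one slide each $K^\mu$ past any $E_i,F_i$ at the cost of a scalar, while relation~(\ref{DJ eqn 3}) converts each out-of-order product $E_iF_j$ into $F_jE_i$ plus an element of $\wt{\bU}^0$. The count of $(E,F)$-inversions strictly decreases, driving every monomial into a $\BQ(v)$-linear combination of products $f\cdot k\cdot e$.

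For injectivity I construct a left $\bU(\g)$-module structure on $W:=\wt{\bU}^<\otimes_{\BQ(v)}\wt{\bU}^0\otimes_{\BQ(v)}\wt{\bU}^>$ that will provide an explicit section of $\wt{\mm}$. Let $F_i$ act on $W$ by left multiplication on the first tensor factor; let $K^\mu$ act by the scalar $v^{-(\mu,\beta)}$ on the $(-\beta)$-graded summand of the first factor combined with left multiplication on the second; and let $E_i$ act through a recursive formula obtained from~(\ref{DJ eqn 3}): an easy induction on the length of $f$ shows $[E_i,f]\in \wt{\bU}^<\cdot\wt{\bU}^0$, so writing $E_if=fE_i+\sum_\alpha g_\alpha h_\alpha$ in $\bU(\g)$ with $g_\alpha\in\wt{\bU}^<$ and $h_\alpha\in\wt{\bU}^0$, set
\[
E_i\cdot(f\otimes K^\nu\otimes e) := v^{-(\alpha_i,\nu)}\,f\otimes K^\nu\otimes (E_ie) \;+\; \sum_\alpha g_\alpha\otimes h_\alpha K^\nu\otimes e.
\]
Once these formulas are shown to define a genuine $\bU(\g)$-action, the resulting representation $\rho\colon \bU(\g)\to \End_{\BQ(v)}(W)$ satisfies $\rho(fke)(1\otimes 1\otimes 1)=f\otimes k\otimes e$ by direct unwinding, so the composition $\textnormal{ev}_{1\otimes 1\otimes 1}\circ \rho\circ \wt{\mm}$ is the identity on $W$, forcing $\wt{\mm}$ to be injective.

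The main obstacle is verifying that the above action on $W$ respects all defining relations of $\bU(\g)$, most notably the $E$-side quantum Serre relation~(\ref{DJ eqn 4}). The verification reduces via weight-grading arguments to computations inside the rank-two subalgebras, where it becomes the standard $v$-binomial identity that underpins every PBW-type result for quantum groups; everything else in the proof is essentially bookkeeping.
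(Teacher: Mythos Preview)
The paper does not actually prove this lemma; it simply records it as standard and cites \cite[Theorem~4.21]{j}. Your proposal is an outline of precisely the argument one finds in that reference (and in most textbook treatments): build the abstract halves $\wt{\bU}^{<},\wt{\bU}^{0},\wt{\bU}^{>}$, get surjectivity of $\wt{\mm}$ by reordering via (\ref{DJ eqn 2})--(\ref{DJ eqn 3}), and get injectivity by making $W$ into a $\bU(\g)$-module on which evaluation at $1\otimes 1\otimes 1$ inverts $\wt{\mm}$. So in substance you are reproducing the cited proof.

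One small comment on your sketch: the phrase ``reduces via weight-grading arguments to computations inside the rank-two subalgebras'' for the $E$-Serre verification is slightly imprecise. What one actually checks is that the Serre combination $\sum_m(-1)^m{1-a_{ij}\brack m}_{v_i}E_i^{1-a_{ij}-m}E_jE_i^m$, viewed as an operator on $W$, commutes with every $F_k$ (trivially for $k\neq i,j$, and by a direct rank-two computation for $k\in\{i,j\}$) and kills $1\otimes\wt{\bU}^0\otimes\wt{\bU}^>$ (since the relation holds in $\wt{\bU}^>$ by construction); since the $F_k$'s and $K^\mu$'s generate $W$ from that subspace, the operator is zero on all of $W$. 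This is what the ``rank-two'' reduction really amounts to.
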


The algebra $\bU(\g)$ is $Q$-graded via: 
\begin{equation}\label{grading of DJ}
  \deg(E_i)=\alpha_i \,, \qquad \deg(F_i)=-\alpha_i \,, \qquad \deg(K^\mu)=0 \,.
\end{equation}
Following~\cite[Proposition 4.11]{j}, we also endow $\bU(\g)$ with the standard Hopf algebra structure:
\begin{equation}\label{standard_Hopf_DJ}
\begin{split}
  & \mathrm{coproduct}\ \ \Delta\colon 
    E_i\mapsto E_i\otimes 1 + K_i\otimes E_i \,, \ \
    F_i\mapsto F_i\otimes K_i^{-1} + 1\otimes F_i \,,\ \ 
    K^{\mu}\mapsto K^{\mu}\otimes K^{\mu} \,, \\
  & \mathrm{antipode}\ \ \ \ S\colon 
    E_i\mapsto -K_i^{-1}E_i \,,\ \ 
    F_i\mapsto -F_iK_i \,,\ \ 
    K^{\mu}\mapsto K^{-\mu} \,, \\
  & \mathrm{counit}\ \ \ \ \ \ \ \ \varepsilon\colon 
    E_i\mapsto 0 \,,\ \ 
    F_i\mapsto 0 \,,\ \ 
    K^{\mu}\mapsto 1 \,.    
\end{split}
\end{equation}

\begin{Rem}\label{rem: enlarge Cartan part}
One often needs a more general version of $\bU(\g)$: obtained by enlarging the Cartan part $\bU^0$ of $\bU(\g)$ and 
getting an algebra over $\BQ(v^{1/\bN})$ for a suitable positive integer $\bN$. Let $X$ be a $\BZ$-lattice in 
$\BZ Q\otimes_{\BZ} \BQ$ containing the root lattice $Q$. Let $\bN$ be a positive integer such that 
$(X,Q) \in \frac{1}{\bN} \BZ$; then for any $\mu\in X$ and $\lambda\in Q$, we define 
$v^{(\mu, \lambda)} :=(v^{1/\bN})^{\bN(\mu, \lambda)}$. Let $\bU(\g, X)$ be an associative $\BQ(v^{1/\bN})$-algebra 
generated by $\{E_i, F_i, K^\mu\}_{1\leq i \leq r}^{\mu \in X}$ with the defining relations 
similar to the defining relations \eqref{DJ eqn 1}--\eqref{DJ eqn 5}. We also endow $\bU(\g, X)$ with 
the Hopf algebra structure as in \eqref{standard_Hopf_DJ} and the $Q$-grading as in~\eqref{grading of DJ}.
\end{Rem}

Let us recall the $\BQ$-algebra anti-involution $\tau\colon \bU(\g)\rightarrow \bU(\g)$ defined by 
\begin{equation}\label{eq: anti-involution map}  
  \tau(E_i)=F_i \,, \quad \tau(F_i)=E_i \,, \quad \tau(K^\lambda)=K^{-\lambda} \,, \quad \tau(v)=v^{-1} 
  \qquad \forall\, 1\leq i \leq r, \lambda\in Q \,.
\end{equation}
There is Lusztig's braid group action on $\bU(\g)$ defined via (see~\cite[Theorem 3.1]{l2},~\cite[Part VI]{l-book}, 
cf.~\cite[\S8.14--8.15]{j}):
\begin{equation}\label{eq:braid group}
\begin{split}
   T_i(K^\mu)&=K^{s_{i}\mu} \,, \qquad T_i(E_i)=-F_iK^{\a_i} \,, \qquad T_i(F_i)=-K^{-\a_i}E_i \,, \\
   T_i(E_j)&=\sum_{k=0}^{-a_{ij}} (-1)^k\frac{v_i^{-k}}{[-a_{ij}-k]_{v_i}![k]_{v_i}!}E_i^{-a_{ij}-k}E_jE_i^k \,, \\
   T_i(F_j)&=\sum_{k=0}^{-a_{ij}} (-1)^k\frac{v_i^{k}}{[-a_{ij}-k]_{v_i}![k]_{v_i}!}F_i^kF_jF_i^{-a_{ij}-k} \,,
\end{split}
\end{equation}
where $s_i=s_{\a_i}$ is a simple reflection. 
The operators $T_i$ are easily seen to commute with the map $\tau$ of~\eqref{eq: anti-involution map}. 
With this braid group action, Lusztig defined the PBW basis of $\bU(\g)$ as we recall next 
(see~\cite[Part VI]{l-book}, cf.~\cite[\S8]{j}). Pick a reduced decomposition of the longest element 
$w_0=s_{i_1}s_{i_2} \cdots s_{i_N}$ of the Weyl group $W$ of $\g$ (where $N$ is the cardinality of the positive root 
system $\Delta_+$). Then the set of roots $\beta_k=s_{i_1}\cdots s_{i_{k-1}} \alpha_{i_k}\ (1\leq k\leq N)$ provides 
a labeling of all positive roots $\Delta_+$ of $\g$, and using Lusztig's braid group action we define root vectors 
\begin{equation}\label{eq:root-generator}
  E_{\beta_{k}} =\, T_{i_1}\cdots T_{i_{k-1}} E_{i_k} \,, \qquad 
  F_{\beta_{k}} =\, T_{i_1}\cdots T_{i_{k-1}} F_{i_k} =\tau(E_{\b_k})
  \qquad \forall\, 1\leq k\leq N \,.
\end{equation}
For $\vec{k}=(k_1,\dots, k_N) \in \BZ_{\geq 0}^N$, consider the ordered monomials:
\[
  F^{\vec{k}}:=F_{\b_1}^{k_1} \dots F_{\b_N}^{k_N} \,, \quad  
  E^{\vec{k}}:=E_{\b_1}^{k_1} \dots E_{\b_N}^{k_N} \,, \quad 
  F^{\cev{k}}:=F_{\b_N}^{k_N} \dots F_{\b_1}^{k_1} \,, \quad 
  E^{\cev{k}}:=E_{\b_N}^{k_N} \dots E_{\b_1}^{k_1} \,. 
\]
The following result follows from~\cite[\S8.24]{j} and the triangular decomposition of Lemma~\ref{triangular_DJ}:

\begin{Lem}\label{lem:PBW-DJ} 
(a) The sets $\{E^{\vec{k}}\}_{\vec{k}\in \BZ_{\geq 0}^N}, \{E^{\cev{k}}\}_{\vec{k}\in \BZ_{\geq 0}^N}$ are 
$\BQ(v)$-bases of $\bU^>$.

\noindent
(b) The sets $\{F^{\vec{k}}\}_{\vec{k}\in \BZ_{\geq 0}^N}, \{F^{\cev{k}}\}_{\vec{k}\in \BZ_{\geq 0}^N}$ are 
$\BQ(v)$-bases of $\bU^<$.

\noindent 
(c) The set $\{K^\mu\}_{\mu \in Q}$ is a $\BQ(v)$-basis of $\bU^0$.

\noindent
(d) The sets 
  $\{F^{\vec{k}}K^\mu E^{\vec{r}}\}, \{F^{\cev{k}}K^\mu E^{\vec{r}}\}, \{F^{\vec{k}}K^\mu E^{\cev{r}}\}, \{F^{\cev{k}}K^\mu E^{\cev{r}}\}$, 
with $\vec{k}, \vec{r}\in \BZ_{\geq 0}^N$ and $\mu \in Q$, form $\BQ(v)$-bases of $\bU(\g)$.
\end{Lem}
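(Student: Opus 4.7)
The plan is to handle the four parts in the order (c), (a), (b), (d), using the triangular decomposition of Lemma~\ref{triangular_DJ} at the end. Part (c) is immediate: since $\bU^0$ is presented by relations~\eqref{DJ eqn 1} alone, it is canonically the group algebra $\BQ(v)[Q]$, and $\{K^\mu\}_{\mu\in Q}$ is manifestly a basis.

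The main content is part (a), for which I would follow Lusztig's PBW strategy as exposed in \S8.24 of Jantzen's book. First, I would verify that each root vector $E_{\beta_k} = T_{i_1}\cdots T_{i_{k-1}}(E_{i_k})$ actually lies in $\bU^>$; this uses the convexity of the chosen reduced expression for $w_0$, namely the fact that $s_{i_1}\cdots s_{i_{k-1}}(\alpha_{i_k}) \in \Delta_+$, combined with the explicit formulas~\eqref{eq:braid group} which keep the output in the $E$-subalgebra whenever no negative root is produced en route. Second, I would establish Levendorski--Soibelman-type relations: for $i<j$, the quantum commutator $E_{\beta_j} E_{\beta_i} - v^{(\beta_i,\beta_j)} E_{\beta_i} E_{\beta_j}$ can be expressed as a $\BQ(v)$-linear combination of ordered monomials $E^{\vec{k}}$ whose support lies strictly between $\beta_i$ and $\beta_j$. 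Iterating these relations straightens any product of root vectors into the normal form $E^{\vec{k}}$, giving the spanning statement for $\bU^>$.

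For linear independence over $\BQ(v)$, I would appeal to a specialization argument: the structure constants of all PBW products lie in $\BZ[v,v^{-1}]$, and under specialization $v \mapsto 1$ the root vectors $E_{\beta_k}$ degenerate to the classical root vectors of the nilpotent subalgebra $\mathfrak{n}^+ \subset \g$, whose ordered monomials form a basis by the classical PBW theorem. This forces linear independence of $\{E^{\vec{k}}\}$ over $\BQ(v)$. The case of $\{E^{\cev{k}}\}$ follows symmetrically, either by running the straightening with the opposite order, or by observing that the change of basis matrix between the two orderings is unitriangular with respect to a suitable height filtration and hence invertible.

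Part (b) is then deduced by applying the $\BQ$-algebra anti-involution $\tau$ of~\eqref{eq: anti-involution map}: since the operators $T_i$ commute with $\tau$, it sends $E_{\beta_k}$ to $F_{\beta_k}$, and as a $\BQ$-linear anti-automorphism it transports bases of $\bU^>$ to bases of $\bU^<$, interchanging ordered and reverse-ordered monomials. Finally, part (d) follows immediately from Lemma~\ref{triangular_DJ}(a): tensoring the three bases via~\eqref{triang_isom_DJ} yields the four claimed bases of $\bU(\g)$. The hardest step will be verifying the Levendorski--Soibelman straightening relations in full generality, as this is the technical heart of Lusztig's construction; everything else is bookkeeping via $\tau$ and the triangular decomposition.
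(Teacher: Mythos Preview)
Your outline is correct and broadly matches the paper's approach, which simply cites Jantzen \cite[\S8.24]{j} together with the triangular decomposition. Your use of $\tau$ to pass from (a) to (b), and of Lemma~\ref{triangular_DJ} for (d), is exactly what the paper does.

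There is one genuine difference worth flagging. For linear independence in part~(a), Jantzen's argument in \S8.24 (the reference the paper invokes) does \emph{not} proceed by specialization to $v=1$; rather, it uses the non-degenerate Hopf pairing between $\bU^>$ and $\bU^<$, whose values on PBW monomials are computed explicitly in \cite[\S8.29--8.30]{j} (this is the formula the present paper later records as~\eqref{eq:PBW-pairing}). Your specialization argument is a legitimate alternative, but as written it contains a small inaccuracy: the braid-group formulas~\eqref{eq:braid group} involve denominators $[k]_{v_i}!$, so the structure constants of the root vectors do not literally lie in $\BZ[v,v^{-1}]$. They do lie in $\BQ[v,v^{-1}]$ (or in the Lusztig integral form after passing to divided powers), and specialization $v\to 1$ is still available there, so the argument can be repaired; but you should either work with the divided-power PBW monomials (for which Lusztig proves the $\BZ[v,v^{-1}]$-statement directly) or be explicit about the ring. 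A minor aside: the coefficient in the Levendorski\u{\i}--Soibelman relation should be $v^{-(\beta_i,\beta_j)}$ rather than $v^{(\beta_i,\beta_j)}$ (cf.~\eqref{eq: commutators of E} later in the paper).
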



\subsection{The Lusztig form}\label{ssec Lus form} 
\

For $1\leq i\leq r$ and $s\in \BN$, define the \textbf{divided powers} $E^{[s]}_i, F^{[s]}_i\in \bU(\g)$ 
via:\footnote{Our superscript $[s]$ differs from $(s)$ used in~\cite{l2,l-book}, as we use the latter 
for a modified version in Section~\ref{ssec: new Lusztig form}.} 
\begin{equation*}
  E^{[s]}_i:=\frac{E^s_i}{[s]_{v_i}!} \,, \qquad  F^{[s]}_i:=\frac{F^s_i}{[s]_{v_i}!} \,.
\end{equation*}
For $1\leq i\leq r,\, a\in \BZ,\, n\in \BN$, define the $v$-binomial coefficients $\bmat{K_i; a \\ n}\in \bU(\g)$ via: 
\begin{equation*}
  \bmat{K_i; a \\ n}:=\prod_{c=1}^n \frac{K_iv_i^{a-c+1}-K_i^{-1}v_i^{-a+c-1}}{v_i^{c}-v_i^{-c}} \,.
\end{equation*}
Following~\cite[\S1.3]{l2}, define the \textbf{Lusztig form} $\dmU_\CA(\g)$ as the $\CA$-subalgebra 
of $\bU(\g)$ generated by $\{E^{[s]}_i, F^{[s]}_i, K^{\mu}\}_{1\leq i\leq r}^{s\in \BN,\mu\in Q}$. 
Note that all elements $\bmat{K_i; a \\ n}$ lie in $\dmU_\CA(\g)$, due to a recursion 
\begin{equation*}
  \bmat{K_i; a \\ n}-v_i^{-n}\bmat{K_i; a+1 \\ n} = -v^{-a-1}_i K_i^{-1} \bmat{K_i; a \\n-1} \,,       
\end{equation*}
combined with the following important equality (cf.~\cite[\S4.3]{l0}):
\begin{equation}\label{EF-formula}
  E^{[p]}_i F^{[s]}_i = \sum_{c=0}^{\min(p,s)} F^{[s-c]}_i \bmat{K_i; 2c-p-s \\ c} E^{[p-c]}_i \,.    
\end{equation}
We also note that $\dmU_\CA(\g)$ is clearly invariant under the map $\tau$ of~\eqref{eq: anti-involution map}.

Evoking the standard Hopf algebra structure~(\ref{standard_Hopf_DJ}) on $\bU(\g)$, we find (see~\cite[\S4.9]{j}):
\begin{equation*}
\begin{split}
  & \Delta(E^{[s]}_i)=\sum_{c=0}^{s} v_i^{c(s-c)} E^{[s-c]}_iK^c_i\otimes E^{[c]}_i \,, \qquad
    S(E^{[s]}_i)=(-1)^s v_i^{s(s-1)}K_i^{-s}E^{[s]}_i \,, \\
  & \Delta(F^{[s]}_i)=\sum_{c=0}^{s} v_i^{-c(s-c)} F^{[c]}_i\otimes K^{-c}_iF^{[s-c]}_i \,, \qquad
    S(F^{[s]}_i)=(-1)^s v_i^{-s(s-1)}F^{[s]}_iK_i^{s} \,,
\end{split}  
\end{equation*}
so that $\dmU_\CA(\g)$ is actually a Hopf $\CA$-subalgebra of $\bU(\g)$.     
    
Let $\dmU^<_\CA, \dmU^>_\CA, \dmU^0_\CA$ denote the $\CA$-subalgebras of $\dmU_\CA(\g)$ generated by 
$\{F^{[s]}_i\}, \{E^{[s]}_i\}$, $\left\{\bmat{K_i;m\\s},K^\mu\right\}$, respectively. Evoking the construction 
of root vectors~\eqref{eq:root-generator}, we define 
\begin{equation}\label{eq:old divided root-generator}
  F_{\b_k}^{[s]}:=\frac{F_{\b_k}^s}{[s]_{v_{i_k}}!} \,, \qquad E_{\b_k}^{[s]}:=\frac{E_{\b_k}^s}{[s]_{v_{i_k}}!} 
  \qquad \forall\, 1\leq k\leq N,\, s\in \BN \,.
\end{equation}
According to~\cite[Theorem 6.7]{l2}, we have:

\begin{Lem}\label{triangular_Lus}
(a) Both subalgebras $\dmU^>_\CA$ and $\dmU^<_\CA$ are $Q$-graded via~(\ref{grading of DJ}), 
and each of their degree components is a free $\CA$-module of finite rank.

\noindent
(b) (Triangular decomposition of $\dmU_\CA(\g)$) The multiplication map
\begin{equation}\label{triang_isom_Lus}
  \mm\colon \dmU^<_\CA \otimes_\CA \dmU^0_\CA \otimes_\CA \dmU^>_\CA \longrightarrow \dmU_\CA(\g)    
\end{equation}
is an isomorphism of free $\CA$-modules.

\noindent
(c) The elements
\begin{equation*}
  \left\{ \prod_{i=1}^r \left(K_i^{\delta_i}\bmat{K_i; 0 \\ t_i}\right) \,\Big|\, t_i\geq 0, \delta_i\in\{0,1\} \right\}
\end{equation*}
form an $\CA$-basis of $\dmU^0_\CA$.

\noindent
(d) The elements $E^{[\vec{k}]}:=E_{\b_1}^{[k_1]} \dots E_{\b_N}^{[k_N]}$ with all $\vec{k}\in \BZ_{\geq 0}^N$ 
form an $\CA$-basis of $\dmU^>_\CA$. Similarly, the elements $E^{[\cev{k}]}:=E_{\b_N}^{[k_N]} \dots E_{[\b_1]}^{[k_1]}$ 
with all $\vec{k}\in \BZ^N_{\geq 0}$ form an $\CA$-basis of $\dmU^>_\CA$.

\noindent
(e) The elements $F^{[\vec{k}]}:=F_{\b_1}^{[k_1]}\dots F_{\b_N}^{[k_N]}$ with all $\vec{k}\in \BZ_{\geq 0}^N$ 
form an $\CA$-basis of $\dmU^<_\CA$. Similarly, the elements $F^{[\cev{k}]}:=F_{\b_N}^{[k_N]}\dots F_{\b_1}^{[k_1]}$ 
with all $\vec{k}\in \BZ^N_{\geq 0}$ form an $\CA$-basis of $\dmU^<_\CA$.
\end{Lem}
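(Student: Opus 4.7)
\medskip

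\noindent
\textbf{Proof proposal.} The plan is to bootstrap from the generic PBW bases in Lemma~\ref{lem:PBW-DJ} together with two ingredients specific to the integral form: (i) Lusztig's braid operators $T_i$ preserve $\dmU_\CA(\g)$, so in particular they send divided powers of simple root vectors to elements of $\dmU_\CA$; (ii) all the commutation relations among the $E_{\b_k}^{[s]}$ (resp.\ $F_{\b_k}^{[s]}$) have coefficients in $\CA$. Once these two ingredients are in place, every statement of the lemma follows by a standard triangular decomposition argument.

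First I would address part (c). Using \eqref{EF-formula} together with $[E_i,F_i]=(K_i-K_i^{-1})/(v_i-v_i^{-1})$, one checks by induction on $n$ that the elements $\bmat{K_i;a\\n}$ lie in $\dmU_\CA^0$ for every $i,a,n$, and moreover all such generators can be rewritten as $\CA$-linear combinations of $K_i^{\delta_i}\bmat{K_i;0\\t_i}$ with $\delta_i\in\{0,1\}$, using the recursion displayed just above~\eqref{EF-formula}. Linear independence of the monomials $\prod_i K_i^{\delta_i}\bmat{K_i;0\\t_i}$ over $\CA$ follows by noting that after inverting $v$ and specializing $v\mapsto 1$ they become the usual binomial polynomials $\prod_i \binom{H_i}{t_i}$ in the classical universal enveloping algebra, which are known to be linearly independent; alternatively, one may specialize $v$ to various generic values and inspect the resulting Laurent polynomials in $K_i$.

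Next, for parts (d) and (e) I would proceed in three steps. Using Lemma~\ref{lem:PBW-DJ}(a)--(b), the elements $E^{\vec{k}}$ and $E^{\cev{k}}$ are already $\BQ(v)$-bases of $\bU^>$, so both spanning sets in~(d) after clearing denominators are at worst $\BQ(v)$-bases; linear independence over $\CA$ is automatic. The real content is that $E^{[\vec{k}]}$ actually lies in $\dmU^>_\CA$, and that $\dmU^>_\CA$ is spanned by these monomials. For the first point, a theorem of Lusztig (\cite[\S41.1]{l-book}) states that $T_i$ preserves $\dmU_\CA$; applied to $E_{i_k}^{[s]}\in\dmU^>_\CA$ this gives $E_{\b_k}^{[s]}=T_{i_1}\cdots T_{i_{k-1}}(E_{i_k}^{[s]})\in \dmU_\CA$, and it in fact lies in the positive part by weight considerations. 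For the second point, one uses the Levendorskii-Soibelman type quantum commutation relations
\[
  E_{\b_\ell}^{[a]} E_{\b_k}^{[b]} - v^{-ab(\b_k,\b_\ell)} E_{\b_k}^{[b]} E_{\b_\ell}^{[a]} \,=\, \sum c_{\vec{j}}\, E^{[\vec{j}]} \quad (\ell>k),
\]
with coefficients $c_{\vec{j}}\in \CA$ and $\vec{j}$ supported strictly between $\b_k$ and $\b_\ell$ in weight; induction on a natural ordering of $\BZ_{\geq 0}^N$ then rewrites any product of divided root generators as an $\CA$-combination of ordered monomials $E^{[\vec{k}]}$, and similarly for the reverse order. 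This proves~(d), and~(e) follows either by applying the anti-involution $\tau$ from~\eqref{eq: anti-involution map} or by a symmetric argument.

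Part~(a) is then immediate: the $Q$-grading on $\bU^>$ from~\eqref{grading of DJ} restricts to $\dmU^>_\CA$, and the PBW basis $\{E^{[\vec{k}]}\}$ from~(d) contains only finitely many elements in each weight component, giving finite $\CA$-rank. Finally for~(b), multiplying the bases from~(c), (d), (e) produces a subset of the bases of $\bU(\g)$ listed in Lemma~\ref{lem:PBW-DJ}(d), hence is $\CA$-linearly independent; to see it spans $\dmU_\CA(\g)$, one verifies using \eqref{EF-formula} and the quantum Serre-type relations that the generating set $\{E_i^{[s]},F_i^{[s]},K^\mu,\bmat{K_i;a\\n}\}$ can be normally ordered into the product form, which is a straightforward induction once (c)--(e) are known.

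The main obstacle is ingredient~(i), that $T_i$ preserves $\dmU_\CA$; proving this directly from~\eqref{eq:braid group} requires a careful analysis of the denominators $[-a_{ij}-k]_{v_i}![k]_{v_i}!$ appearing in $T_i(E_j)$ and showing they are absorbed into the divided-power structure. The secondary difficulty is establishing the Levendorskii-Soibelman relations with \emph{integral} coefficients, which is what makes the ordered monomials $E^{[\vec{k}]}$ a genuine $\CA$-basis rather than merely a $\BQ(v)$-basis.
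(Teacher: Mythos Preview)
Your proposal is a reasonable sketch of how one might prove this result from first principles, but the paper takes a much shorter route: it simply cites \cite[Theorem~6.7]{l2} (Lusztig's foundational result) for parts (a)--(c) and the first basis in (d) and the second basis in (e), and then adds one sentence obtaining the remaining two bases (the reversed orderings $E^{[\cev{k}]}$ and $F^{[\vec{k}]}$) by applying the anti-involution $\tau$ of~\eqref{eq: anti-involution map}. So there is no ``paper's own proof'' to compare against beyond this citation plus the $\tau$ trick, which you also mention.

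Your outline is essentially a reconstruction of what Lusztig's proof involves, and you correctly identify the two substantive points: that the $T_i$ preserve the integral form (so $E_{\b_k}^{[s]}\in\dmU^>_\CA$), and that the ordered-monomial rewriting has coefficients in $\CA$. One small wrinkle: your suggested check of linear independence in~(c) by ``specializing $v\mapsto 1$'' does not work over $\CA$ as written, since $\CA$ contains $\tfrac{1}{v^{2k}-1}$ and so $v=1$ is not in its spectrum; however, linear independence over the domain $\CA$ is equivalent to linear independence over its fraction field $\BQ(v)$, where the claim reduces to a statement about Laurent polynomials in the $K_i$---your ``alternative'' is the correct route. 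Beyond that, the sketch is sound, and the two difficulties you flag at the end are precisely the nontrivial inputs that Lusztig establishes in \cite{l2} and \cite{l-book}.
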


\begin{Rem}
While~\cite{l2} treats only the first basis in (d) and the second basis in (e), we obtain the other two bases 
from (e) and (d) by applying the $\BQ$-algebra anti-involution $\tau$ of~\eqref{eq: anti-involution map}.   
\end{Rem}

Given $q\in R$ as in the paragraph preceding~(\ref{sigma homom}), we define the \textbf{Lusztig form} $\dmU_q(\g)$ as the 
base change of $\dmU_\CA(\g)$ with respect to $\sigma\colon \CA\to R$ of~(\ref{sigma homom}):
\begin{equation}\label{Lusztig base changed}
  \dmU_q(\g):= \dmU_\CA(\g)\otimes_\CA R \,.
\end{equation}


\subsection{The De~Concini-Kac form}\label{ssec DCK form}
\

Following~\cite[\S1.5]{dck}, define the \textbf{De~Concini-Kac form} $\mU_\CA(\g)$ as the $\CA$-subalgebra 
of $\bU(\g)$ generated by $\{E_i,F_i,K^{\mu}\}_{1\leq i\leq r}^{\mu\in Q}$. We note that $\mU_\CA(\g)$ is 
clearly a Hopf $\CA$-subalgebra of $\bU(\g)$. 
It is also clear that $\mU_\CA(\g)$ is invariant under the map $\tau$ of~\eqref{eq: anti-involution map}.


Let $\mU^<_\CA, \mU^>_\CA, \mU^0_\CA$ denote the $\CA$-subalgebras of $\mU_\CA(\g)$ generated by 
$\{F_i\}_{i=1}^r, \{E_i\}_{i=1}^r$, and $\{K^\mu\}_{\mu\in Q}$, respectively. Since $\CA$ contains 
$(v^{2k}-1)^{-1}$ for any $1\leq k\leq \sd_i$ and any $i$, the braid group action preserves $\mU_\CA(\g)$ according 
to~\eqref{eq:braid group}. Hence the elements $\{E_{\b_k}, F_{\b_k}\}_{k=1}^N$ of \eqref{eq:root-generator} 
are contained in $\mU_A(\g)$. The following result is a standard corollary of Lemma~\ref{triangular_DJ}:

\begin{Lem}\label{triangular_DCK}
(a) Both subalgebras $\mU^>_\CA$ and $\mU^<_\CA$ are $Q$-graded via~\eqref{grading of DJ}, 
and each of their degree components is a free $\CA$-module of finite rank.

\noindent
(b) (Triangular decomposition of $\mU_\CA(\g)$) The multiplication map
\begin{equation}\label{triang_isom_DCK}
  \mathrm{m}\colon \mU^<_\CA \otimes_\CA \mU^0_\CA \otimes_\CA \mU^>_\CA \longrightarrow \mU_\CA(\g)    
\end{equation}
is an isomorphism of free $\CA$-modules.

\noindent
(c) The elements $K^\mu$ with $\mu \in Q$ form an $\CA$-basis of $\mU^0_\CA$.

\noindent
(d) The sets $\{E^{\vec{k}}\}_{\vec{k}\in \BZ^N_{\geq 0}}, \{E^{\cev{k}}\}_{\vec{k}\in \BZ^N_{\geq 0}}$ form 
$\CA$-bases of $\mU^>_\CA$. 

\noindent
(e) The sets $\{F^{\vec{k}}\}_{\vec{k}\in \BZ^N_{\geq 0}}, \{F^{\cev{k}}\}_{\vec{k}\in \BZ^N_{\geq 0}}$ form 
$\CA$-bases of $\mU^<_\CA$. 
\end{Lem}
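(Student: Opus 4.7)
The proof is a bootstrap from Lemma~\ref{triangular_DJ}: everything becomes routine once one establishes (i) that the relevant PBW monomials actually lie in $\mU_\CA(\g)$, and (ii) that all commutation relations between root vectors and Cartan elements have coefficients in $\CA$. For (i), I would first verify that Lusztig's braid operators $T_i$ preserve $\mU_\CA(\g)$. Inspecting~(\ref{eq:braid group}), the denominators in $T_i(E_j), T_i(F_j)$ are $[c]_{v_i}!$ with $c \leq -a_{ij}$. A standard rank-two root system check shows that whenever $-a_{ij}\geq 2$ one necessarily has $\sd_i=1$, so every such denominator is of the form $[c]_v!$ with $c\leq \max\sd_i$, which is a unit of $\CA$ by the very definition of $\CA$ (since $[c]_v = v^{-c+1}(v^{2c}-1)/(v^2-1)$). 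Consequently the root vectors $E_{\b_k}, F_{\b_k}$ of~(\ref{eq:root-generator}) lie in $\mU_\CA(\g)$, and hence so do all ordered and reverse-ordered monomials appearing in (d) and (e).

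Part~(c) is then immediate: the $K^\mu$ are $\BQ(v)$-linearly independent in $\bU^0$ by Lemma~\ref{triangular_DJ}(c), hence $\CA$-linearly independent in $\mU^0_\CA$, and they $\CA$-span it thanks to~(\ref{DJ eqn 1}). The $\CA$-linear independence halves of (d) and (e) follow from the $\BQ(v)$-linear independence established in Lemma~\ref{lem:PBW-DJ}. The spanning halves of (d) and (e) rest on the Levendorskii-Soibelman straightening identity, valid over $\BZ[v,v^{-1}]\subset \CA$: for $j<k$,
\begin{equation*}
  E_{\b_k}E_{\b_j} - v^{(\b_k,\b_j)}E_{\b_j}E_{\b_k} \,=\, \sum_{\vec m} c_{\vec m}\, E_{\b_{j+1}}^{m_{j+1}}\cdots E_{\b_{k-1}}^{m_{k-1}} \,, \qquad c_{\vec m}\in \BZ[v,v^{-1}] \,,
\end{equation*}
together with its $F$-analog obtained by applying $\tau$. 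Since $\mU^>_\CA$ is generated over $\CA$ by $\{E_i\}_{i=1}^r$, iterated straightening rewrites any product of $E_{\b_k}$'s as an $\CA$-linear combination of the ordered $E^{\vec k}$; processing the straightening from the opposite end yields the reverse-ordered basis $E^{\cev k}$. Part~(a) follows at once, the $Q$-grading being inherited from the $Q$-homogeneity of the defining relations and each graded component being freely spanned by the finitely many PBW monomials of that degree.

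For the triangular decomposition~(b), tensoring the source of $\mm$ with $\BQ(v)$ over $\CA$ recovers~(\ref{triang_isom_DJ}); since the source is a free $\CA$-module by parts~(c)--(e), $\mm$ is injective. Surjectivity is the standard PBW straightening argument: relation~(\ref{DJ eqn 2}) lets one move each $K^\mu$ past $E_i,F_j$ at unit cost, while~(\ref{DJ eqn 3}) lets one commute $E_i$ past $F_j$ at the cost of $\delta_{ij}(K_i-K_i^{-1})/(v_i-v_i^{-1})$, whose denominator $v_i-v_i^{-1} = v^{-\sd_i}(v^{2\sd_i}-1)$ is a unit of $\CA$ since $\sd_i\leq \max\sd_i$. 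Iterating, every monomial in $E_i, F_i, K^\mu$ is rewritten as an $\CA$-combination of $F^{\vec k}K^\mu E^{\vec r}$, all of which lie in the image of $\mm$. The only technically delicate step is the integrality of the Levendorskii-Soibelman straightening, for which I would lean on the classical treatment (e.g.\ in Lusztig's book or~\cite{dck}) rather than rederive it.
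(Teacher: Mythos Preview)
Your proof is correct but follows a genuinely different route from the paper's. The paper proves the spanning halves of (d) and (e) via the Hopf pairing with the Lusztig form: given $x\in\mU^>_\CA$, one knows $(\dmU^<_\CA,x)'\subset\CA$ (essentially because pairing a divided power against an ordinary monomial is integral), and the orthogonality formula $(F^{[\cev k]},E^{\cev r})=\delta_{\vec k,\vec r}\cdot(\text{unit of }\CA)$ from \cite[\S8.29--8.30]{j} then forces the expansion coefficients of $x$ in the basis $\{E^{\cev k}\}$ to lie in $\CA$. The other three bases are then obtained by applying $\tau$. Your approach instead invokes the Levendorskii--Soibelman straightening identity directly, bypassing the Lusztig form and the pairing entirely.

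Each approach has its trade-off. The paper's argument is self-contained once the pairing machinery is in place (which the paper develops anyway for later sections), and it makes no appeal to the delicate integrality of the LS coefficients. Your argument is more elementary in spirit, but its linchpin---that the LS coefficients lie in $\BZ[v,v^{-1}]$---is precisely the step you defer to external references. That integrality does hold (it reduces to rank-two calculations via the braid group action on the integral form), but it is not entirely trivial, and the paper's pairing argument sidesteps it. Note also that the paper already records, just before the lemma, the same observation you make about the $T_i$ preserving $\mU_\CA(\g)$; so your preliminary step~(i) matches the paper's setup exactly.
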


\begin{proof}
Let us show that the second set in part (d) is an $\CA$-basis of $\mU^>_\CA$. It suffices to show that the inclusion 
$\mU^>_\CA\supseteq \bigoplus_{\vec{k}\in \BZ_{\geq 0}^N} \CA\cdot E^{\cev{k}}$ is actually an equality. To this end, 
pick any $x\in \mU^>_\CA$. Evoking the Hopf pairing $(\cdot,\cdot)$ of~\cite[\S6.12]{j}, cf.~Proposition~\ref{prop:J-pairing-6.12} 
below,  we note that $(\dmU^<_\CA,x)\in \CA$, with $\dmU^<_\CA$ defined in Section~\ref{ssec Lus form} above. On the other hand, 
writing down $x$ in the Lusztig PBW basis as  $x=\sum_{\vec{k}} c_{\vec{k}}E^{\cev{k}}$ and using the ``duality'' of 
the Lusztig basis with respect to the  pairing $(\cdot,\cdot)$, see~\cite[\S8.29--8.30]{j} and~\eqref{eq:PBW-pairing} 
below, we conclude that all $c_{\vec{k}}\in \CA$ as $\frac{1}{v_i-v_i^{-1}}\in \CA$ for all $i$. 
The proof that the second set in part (e) is an $\CA$-basis of $\mU^<_\CA$ is analogous. 

The anti-involution $\tau$ of~\eqref{eq: anti-involution map} induces an anti-isomorphism $\mU^>_\CA\rightarrow \mU^<_\CA$. 
Moreover, $\tau(E^{\cev{k}})=F^{\vec{k}}, \tau(F^{\cev{k}})=E^{\vec{k}}$. Therefore, the first sets in part (d) and (e) are 
$\tau$-images of the second sets in part (e) and (d), respectively, hence form $\CA$-bases. Part (c) follows from 
Lemma~\ref{triangular_DJ}(b). The rest of Lemma~\ref{triangular_DCK} immediately follows from the above combined 
with Lemma~\ref{triangular_DJ}.
\end{proof}

Given $q\in R$ as in the paragraph preceding~(\ref{sigma homom}), we define the \textbf{De~Concini-Kac form} $\mU_q(\g)$ 
as the base change of $\mU_\CA(\g)$ with respect to $\sigma\colon \CA\to R$ of~(\ref{sigma homom}):
\begin{equation}\label{DCK base changed}
  \mU_q(\g):=\mU_\CA(\g)\otimes_\CA R \,.       
\end{equation}


\section{Twisted coproduct and integral forms}\label{sec twisted coproduct}

For a Hopf algebra $(A,\Delta,S,\varepsilon)$, the \textbf{left adjoint action} $\ad\colon A\curvearrowright A$ 
of $A$ on itself is given~by: 
\begin{equation}\label{def adjoint}
  (\ad a)(b):=a_{(1)}\cdot b\cdot S(a_{(2)})  \qquad \forall\, a,b\in A \,. 
\end{equation}
Here we use the Sweedler's notation for the coproduct (suppressing the summation symbol): 
\begin{equation}\label{Sweedler}
  \Delta(a)=a_{(1)}\otimes a_{(2)}  \qquad \forall\, a\in A \,. 
\end{equation}
Let us record the following basic property of the adjoint action (see~\cite[Lemma 2.2(ii)]{jl}):
\begin{equation}\label{ad on products}
  (\ad a)(b\cdot c)=(\ad a_{(1)})(b)\cdot (\ad a_{(2)})(c)  \qquad \forall\, a,b,c\in A \,.    
\end{equation}

Thus we have the adjoint action  $\bU(\g) \curvearrowright \bU(\g)$. However, this action does not restrict to an action 
of the Lusztig form $\dmU_\CA(\g)$ on the De~Concini-Kac form $\mU_\CA(\g)$. To remedy this, and for other purposes, we will 
modify the coproduct of $\bU(\g)$ via the twist construction. Then we introduce the \textbf{(twisted) Lusztig form} $\dU_\CA(\g)$ 
and the \textbf{even part} subalgebra $U^{ev}_\CA(\g)$. The latter is a suitable alternative (based on a certain ``Cartan twist'' 
of generators) to the De~Concini-Kac form so that we have an adjoint action $\dU_\CA(\g) \curvearrowright U^{ev}_\CA(\g)$, 
which is of crucial importance for the rest of this paper.


\subsection{Twisting}\label{ssec twists}
\

To achieve the above goal, let us recall the standard twist construction (cf.~\cite[Theorem~1]{r}):

\begin{Prop}\label{resh twist construction}
(a) For a (topological) Hopf algebra $(A,m,\Delta,S,\varepsilon)$ and $\sF\in A\otimes A$ 
(or in an appropriate completion $A\widehat{\otimes} A$) satisfying
\begin{equation}\label{twist conditions}
  (\Delta \otimes \Id)(\sF)=\sF_{13}\sF_{23} \,, \ 
  (\Id \otimes \Delta)(\sF)=\sF_{13}\sF_{12} \,, \ 
  \sF_{12}\sF_{13}\sF_{23}=\sF_{23}\sF_{13}\sF_{12} \,, \
  \sF_{12}\sF_{21}=1 \,,
\end{equation}
the formulas
\begin{equation}\label{resh twist formulas}
  \Delta^{(\sF)}(a)=\sF\Delta(a)\sF^{-1} \,, \quad 
  S^{(\sF)}(a)=uS(a)u^{-1} \,, \quad 
  \varepsilon^{(\sF)}(a)=\varepsilon(a) 
\end{equation}
with $u:=m(\Id \otimes S)(\sF)$, endow $A$ with a new Hopf algebra structure:
$(A,m,\Delta^{(\sF)},S^{(\sF)},\varepsilon^{(\sF)})$.

\noindent
(b) If $(A,m,\Delta,S,\varepsilon)$ is a quasitriangular Hopf algebra with a universal $R$-matrix $R\in A\otimes A$ 
(or in an appropriate completion $A\widehat{\otimes} A$), then $(A,m,\Delta^{(\sF)},S^{(\sF)},\varepsilon^{(\sF)})$ 
is also a quasitriangular Hopf algebra with a universal $R$-matrix 
\begin{equation}\label{R-matrix}
  R^{(\sF)}=\sF^{-1}R\sF^{-1} \,.    
\end{equation}
\end{Prop}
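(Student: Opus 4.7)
For part (a), the plan is to verify the Hopf algebra axioms for $(A, m, \Delta^{(\sF)}, S^{(\sF)}, \varepsilon^{(\sF)})$ directly from the four twist conditions in~\eqref{twist conditions}. That $\Delta^{(\sF)}$ is multiplicative is immediate, since conjugation by $\sF$ respects multiplication. Applying $(\varepsilon \otimes \Id \otimes \Id)$ to the first twist condition $(\Delta \otimes \Id)(\sF) = \sF_{13}\sF_{23}$ and using counitality of $\Delta$ yields $(\varepsilon \otimes \Id)(\sF) = 1$, and symmetrically the second condition gives $(\Id \otimes \varepsilon)(\sF) = 1$; these identities imply at once that $\varepsilon^{(\sF)} = \varepsilon$ is a counit for $\Delta^{(\sF)}$. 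For coassociativity, I expand
\begin{equation*}
(\Delta^{(\sF)} \otimes \Id)\Delta^{(\sF)}(a) = \sF_{12}\sF_{13}\sF_{23} \cdot (\Delta \otimes \Id)\Delta(a) \cdot \sF_{23}^{-1}\sF_{13}^{-1}\sF_{12}^{-1}
\end{equation*}
using the first twist condition, and similarly
\begin{equation*}
(\Id \otimes \Delta^{(\sF)})\Delta^{(\sF)}(a) = \sF_{23}\sF_{13}\sF_{12} \cdot (\Id \otimes \Delta)\Delta(a) \cdot \sF_{12}^{-1}\sF_{13}^{-1}\sF_{23}^{-1}
\end{equation*}
using the second; the equality of the two sides then follows from coassociativity of $\Delta$ together with the third twist condition $\sF_{12}\sF_{13}\sF_{23} = \sF_{23}\sF_{13}\sF_{12}$.

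The antipode axiom is the technical core of part (a). Writing $\sF = \sum f^1 \otimes f^2$ and $\sF^{-1} = \sum \bar{f}^1 \otimes \bar{f}^2$, I plan to first check that the element $u = \sum f^1 S(f^2)$ is invertible in $A$, with inverse $u^{-1} = \sum S(\bar{f}^1)\bar{f}^2$, using the antipode axiom for $S$ and the counit identities already established. Then the antipode identity $m \circ (S^{(\sF)} \otimes \Id) \circ \Delta^{(\sF)}(a) = \varepsilon(a) \cdot 1$ (and its right-handed analogue) reduces, after moving $u$ and $u^{-1}$ inward via their defining expressions and applying the antipode axiom of $S$, to a cancellation driven by the first three twist conditions.

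For part (b), the crucial observation is that the fourth twist condition $\sF_{12}\sF_{21} = 1$ is equivalent to $\sF_{21} = \sF^{-1}$, whence the stated formula $R^{(\sF)} = \sF^{-1} R \sF^{-1}$ is exactly Drinfeld's universal twisting prescription $\sF_{21} R \sF_{12}^{-1}$. The three quasitriangularity axioms for $R^{(\sF)}$ are then verified by direct substitution: the quasi-cocommutativity $(\Delta^{(\sF)})^{\mathrm{op}}(a) = R^{(\sF)}\Delta^{(\sF)}(a)(R^{(\sF)})^{-1}$ collapses, after cancellation of the $\sF^{\pm 1}$ factors on both sides, to the original relation $\Delta^{\mathrm{op}}(a) = R\Delta(a)R^{-1}$; the two hexagon identities $(\Delta^{(\sF)} \otimes \Id)(R^{(\sF)}) = R^{(\sF)}_{13}R^{(\sF)}_{23}$ and $(\Id \otimes \Delta^{(\sF)})(R^{(\sF)}) = R^{(\sF)}_{13}R^{(\sF)}_{12}$ follow from those of $R$ combined with the first two twist conditions, again after symbolic manipulation. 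The main obstacle throughout is the bookkeeping in the antipode verification in part (a); the remaining steps are essentially cancellation arguments in $A$ and in the (possibly completed) tensor algebra $A \widehat{\otimes} A \widehat{\otimes} A$.
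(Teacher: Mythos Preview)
The paper does not supply its own proof of this proposition; it is stated with a parenthetical reference ``(cf.~\cite[Theorem~1]{r})'' and followed only by a remark recording the explicit formula for $u^{-1}$ and noting that the conditions~\eqref{twist conditions} can be relaxed for part~(a) alone. Your direct verification of the Hopf algebra and quasitriangularity axioms is the standard route and is correctly outlined; in particular, your derivation of $(\varepsilon\otimes\Id)(\sF)=1$ and $(\Id\otimes\varepsilon)(\sF)=1$ from the first two twist conditions, the coassociativity computation via the third condition, and the observation in part~(b) that $\sF_{21}=\sF^{-1}$ recovers Drinfeld's prescription $\sF_{21}R\sF_{12}^{-1}$ are all sound. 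Your formula for $u^{-1}$ also matches the paper's Remark, since $\sF^{-1}=\sF_{21}$ gives $\sum S(\bar f^1)\bar f^2 = m(S\otimes\Id)(\sF_{21})$.
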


The Hopf algebra $(A,m,\Delta^{(\sF)},S^{(\sF)},\varepsilon^{(\sF)})$ is called 
the \textbf{twist} of $(A,m,\Delta,S,\varepsilon)$ by $\sF$.

\begin{Rem}
(a) The inverse of $u$ is explicitly given by $u^{-1}=m(S \otimes \Id)(\sF_{21})$.

\noindent
(b) For Proposition~\ref{resh twist construction}(a) alone, conditions~(\ref{twist conditions}) may be relaxed 
(cf.~\cite[Proposition 4.2.13]{cp}):
\begin{equation*}
  \sF_{12}(\Delta \otimes \Id)(\sF)=\sF_{23}(\Id \otimes \Delta)(\sF) \,, \quad
  (\varepsilon \otimes \Id)(\sF) = 1 = (\Id \otimes \varepsilon)(\sF) \,, \quad
  \sF \ \mathrm{is\ invertible} \,.
\end{equation*}
\end{Rem}


\subsection{Twisted Hopf algebra structure on $\bU(\g)$}\label{ssec twisted DJ}
\

Following~\cite[\S2]{r}, let us apply Proposition~\ref{resh twist construction} to $A=\bU(\g)$ 
endowed with the standard Hopf algebra structure $(A,\Delta,S,\varepsilon)$ of~(\ref{standard_Hopf_DJ}) and 
a Cartan type element $\sF$:
\begin{equation}\label{formal Cartan twist}
  \sF=v^{\sum_{i,j=1}^{r} \phi_{ij}\omega^\vee_i\otimes \omega^\vee_j} \,.
\end{equation}
Such $\sF$ satisfies the conditions~(\ref{twist conditions}) iff the matrix 
$\Phi=(\phi_{ij})_{i,j=1}^r\in \mathrm{Mat}_{r\times r} (\BQ)$ is skew-symmetric.

To simplify our notations, we shall henceforth denote the corresponding twisted coproduct $\Delta^{(\sF)}$, antipode 
$S^{(\sF)}$, and counit $\varepsilon^{(\sF)}$ of~(\ref{resh twist formulas}) by $\Delta', S'$, and $\varepsilon'$, 
respectively.

\begin{Rem}\label{rem: twisted Cartan} 
We note that $\sum_{j=1}^r \phi_{ij} \w^\vee_j$ is not contained in $Q$. Thus, in fact, we will perform the twist on 
a Hopf $\BQ(v^{1/\sN})$-algebra $\bU(\g, X)$ for a $\BZ$-lattice $X$ of $\BZ Q\otimes_{\BZ} \BQ$ containing all 
$\{ \sum_{j=1}^r \phi_{ij} \w^\vee_j\}_{1 \leq i \leq r}$, see Remark \ref{rem: enlarge Cartan part}. Explicitly, 
the twisted Hopf structure is as follows:
\begin{equation}\label{eq: twisted-Hopf}
\begin{split}
  & \Delta'(K^{\mu})=K^{\mu}\otimes K^{\mu} \,, \\
  & \Delta'(E_i) = E_i\otimes K^{\sum_{j=1}^r \phi_{ij}\omega^\vee_j} + 
      K^{\alpha_i-\sum_{j=1}^r \phi_{ij}\omega^\vee_j}\otimes E_i \,, \\
  & \Delta'(F_i) = F_i\otimes K^{-\alpha_i-\sum_{j=1}^r \phi_{ij}\omega^\vee_j} + 
      K^{\sum_{j=1}^r \phi_{ij}\omega^\vee_j}\otimes F_i \,, \\
  & S'(K^\mu)=K^{-\mu} \,, \qquad S'(E_i)= -K^{-\a_i}E_i \,, \qquad S'(F_i)=-F_iK^{\a_i} \,, \\
  & \varepsilon'(K^\mu)=1 \,, \qquad \varepsilon'(E_i)=\varepsilon'(F_i)=0 \,,
\end{split}
\end{equation}
for $\mu \in X, 1\leq i \leq r$. 
\end{Rem}

%


\subsection{Special choice of $\Phi$}\label{ssec Sevostyanov phi}
\

In this section, we spell out a special choice of $\Phi$ which will be used in various constructions in the rest 
of this paper. Let $\Dyn(\g)$ denote the graph obtained from the Dynkin diagram of $\g$ by replacing all multiple edges 
by simple ones, e.g.\ $\Dyn(\mathfrak{sp}_{2r}) = \Dyn(\mathfrak{so}_{2r+1}) = \Dyn(\mathfrak{sl}_{r+1}) = A_r$. Given 
an orientation $\Or$ of the graph $\Dyn(\g)$, define the \emph{associated matrix} $(\epsilon_{ij})_{i,j=1}^r$ via:
\begin{equation}\label{epsilon matrix}
  \epsilon_{ij}:=
  \begin{cases}
     0 & \mathrm{if}\ a_{ij}\geq 0 \\
     1 & \mathrm{if}\ a_{ij}<0\ \mathrm{and}\ \Or\ \mathrm{contains\ an\ oriented\ edge}\ i\rightarrow j \\
    -1 & \mathrm{if}\ a_{ij}<0\ \mathrm{and}\ \Or\ \mathrm{contains\ an\ oriented\ edge}\ i\leftarrow j  
  \end{cases} \,.
\end{equation}
Then, we consider $\Phi=(\phi_{ij})_{i,j=1}^r$ with 
\begin{equation}\label{eq:condition on Phi}
  \phi_{ij}=\epsilon_{ij}\frac{b_{ij}}{2} \,, 
\end{equation} 
where $b_{ij}=(\a_i, \a_j)$ as in \eqref{eq:b-matrix}. With this choice, we have  
$\sum_{j=1}^r \phi_{ij}\w^\vee_j \in P/2$ for all $1\leq i \leq r$. Therefore, following Remark \ref{rem: twisted Cartan}, 
we will consider the Hopf algebra $\bU(\g,P/2)$ over $\BQ(v^{1/2})$ with the Hopf structure as in 
\eqref{eq: twisted-Hopf}. The new Lusztig form $\dU_\CA(\g)$ and the even part algebra $U^{ev}_\CA$ will be 
Hopf $\CA$-subalgebras of $\bU(\g, P/2)$. To introduce those, we first define the \textbf{modified} 
elements $\{\tE_i, \tF_i\}_{i=1}^r$ of $\bU(\g, P/2)$.

For any $1\leq i\leq r$, let 
\begin{equation}\label{tilda elements}
  \nu^>_i:=-\a_i +\sum_{j=1}^r \phi_{ij}\w^\vee_j \,, \qquad 
  \nu^<_i:=\sum_{j=1}^r \phi_{ij}\w^\vee_j=\alpha_i+\nu^>_i \,,
\end{equation}
and consider 
\begin{equation}\label{Sev twist}
  \tE_i:=E_iK^{\nu^>_i} \,, \qquad \tF_i:=K^{-\nu^<_i}F_i \,.
\end{equation}
We will also need the following elements of $\mathfrak{h}^*$:
\begin{equation}\label{lambda and mu}
  \zlambda_i:=\a_i-2\sum_{j=1}^r \phi_{ij}\w^\vee_j=-\nu^>_i-\nu^<_i \,, \qquad 
  \zmu_i:=-\a_i-2\sum_{j=1}^r \phi_{ij}\w^\vee_j \,.
\end{equation}

One can show that the $\BQ(v^{1/2})$-algebra $\bU(\g, P/2)$ is generated by 
$\{\tE_i, \tF_i, K^\mu\}_{1\leq i\leq r}^{\mu \in P/2}$ subject to the following relations: 
\begin{equation}\label{eq:gen-rel-twistedDJ}
\begin{split}
  & K^{\mu} K^{\mu'}=K^{\mu+\mu'} \,, \qquad K^0=1 \,, \\
  & K^{\mu} \tE_i K^{-\mu}=v^{(\alpha_i,\mu)}\tE_i \,, \qquad K^{\mu} \tF_i K^{-\mu}=v^{-(\alpha_i,\mu)}\tF_i \,, \\
  & \tE_i\tF_j=v^{(\a_i, -\zmu_j)}\tF_j\tE_i \quad(i\neq j) \,, \qquad 
    \tE_i \tF_i - v_i^2\tF_i\tE_i=v_i \frac{1-K_i^{-2}}{1-v_i^{-2}} \,, \\
  & \sum_{m=0}^{1-a_{ij}} 
    (-1)^m v^{m \epsilon_{ij}b_{ij}}\bmat{1-a_{ij}\\ m}_{v_i} \tE_i^{1-a_{ij}-m}\tE_j\tE_i^{m}=0 \quad (i\ne j) \,, \\
  & \sum_{m=0}^{1-a_{ij}}
    (-1)^m v^{m \epsilon_{ij}b_{ij}}\bmat{1-a_{ij}\\ m}_{v_i} \tF_i^{1-a_{ij}-m}\tF_j\tF_i^{m}=0 \quad (i\ne j) \,,
\end{split}
\end{equation}
with the standard notation $K_i:=K^{\alpha_i}$. The algebra $\bU(\g, P/2)$ is $Q$-graded via (cf.~\eqref{grading of DJ}):
\begin{equation}\label{eq: Q-grading for twisted one}
  \deg(\tE_i)=\a_i \,, \qquad \deg(\tF_i)=-\a_i \,, \qquad \deg(K^\mu)=0 \,.
\end{equation}
Moreover, we have 
\begin{equation}\label{eq: twist-Hopf-2}
\begin{split}
  & \Delta'(K^\mu)=K^\mu\otimes K^\mu \,, \ \  \Delta'(\tE_i)=1\otimes \tE_i + \tE_i\otimes K^{-\zlambda_i} \,, \ \ 
    \Delta'(\tF_i)=1\otimes \tF_i+\tF_i \otimes K^{\zmu_i} \,, \\
  & S'(K^\mu)=K^{-\mu} \,, \qquad S'(\tE_i)=-\tE_iK^{\zlambda_i} \,, \qquad S'(\tF_i)=-\tF_iK^{-\zmu_i} \,.
\end{split}
\end{equation}

Let 
\begin{equation}\label{even sublattice}
  2P:=\big\{\mu \in \h^* \,\big|\, \mu/2\in P \big\}=
  \Big\{\mu\in P^\vee \, \Big|\, \tfrac{(\alpha_i,\mu)}{2\sd_i}\in \BZ\ \mathrm{for\ all}\ 1\leq i\leq r\Big\} \,. 
\end{equation}

\begin{Lem}\label{lem: lambda and mu in 2P}
The elements $\{\zlambda_i\}_{i=1}^r$ and $\{\zmu_i\}_{i=1}^{r}$ are contained in $2P$.
\end{Lem}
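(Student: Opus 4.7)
The plan is to perform a direct computation expressing $\zlambda_i$ and $\zmu_i$ as explicit $\BZ$-linear combinations of the fundamental weights $\omega_k$, and then to check that all of the resulting coefficients are even.

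First, I would substitute $\phi_{ij}=\epsilon_{ij}b_{ij}/2$ from~\eqref{eq:condition on Phi} to rewrite
\begin{equation*}
  2\sum_{j=1}^r \phi_{ij}\omega^\vee_j \;=\; \sum_{j=1}^r \epsilon_{ij}(\alpha_i,\alpha_j)\,\omega^\vee_j.
\end{equation*}
Using $\omega^\vee_j=\omega_j/\sd_j$ together with the identity $(\alpha_i,\alpha_j)/\sd_j=a_{ji}$, this simplifies to $\sum_j \epsilon_{ij}a_{ji}\,\omega_j$. Combined with the standard expansion $\alpha_i=\sum_k a_{ki}\omega_k$, this yields
\begin{equation*}
  \zlambda_i \;=\; \sum_{k=1}^r (1-\epsilon_{ik})\,a_{ki}\,\omega_k, \qquad
  \zmu_i \;=\; -\sum_{k=1}^r (1+\epsilon_{ik})\,a_{ki}\,\omega_k.
\end{equation*}

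It then suffices to verify that each coefficient $(1\mp\epsilon_{ik})\,a_{ki}$ is an even integer, which I would do by a trichotomy. When $k=i$, we have $a_{ii}=2$ and $\epsilon_{ii}=0$ (since $a_{ii}\geq 0$), so the coefficient equals $\pm 2$. When $k\neq i$ with nodes $i,k$ not connected in the Dynkin diagram, $a_{ki}=0$ and the coefficient vanishes. Finally, when $k\neq i$ and $i,k$ are connected, $a_{ik}<0$ forces $\epsilon_{ik}\in\{\pm 1\}$ by~\eqref{epsilon matrix}, so the factor $(1\mp\epsilon_{ik})\in\{0,2\}$ is already even regardless of the parity of $a_{ki}$.

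The computation is essentially routine once the conversion $(\alpha_i,\alpha_j)\omega^\vee_j = a_{ji}\omega_j$ is in place. The only mildly delicate point is ensuring evenness when $a_{ki}$ itself is odd, which occurs precisely for the short-root neighbours in types $B$, $C$, $F$, $G$. This is exactly the third case above, where evenness of the coefficient is forced by the factor $1\mp\epsilon_{ik}$ rather than by $a_{ki}$, so no obstruction arises. Thus $\zlambda_i,\zmu_i\in 2P$.
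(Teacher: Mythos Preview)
Your proof is correct and essentially identical to the paper's. The paper computes $(\alpha_j,\zlambda_i)/(2\sd_j)=\tfrac{1-\epsilon_{ij}}{2}a_{ji}$ and checks this is an integer via the characterization of $2P$ in~\eqref{even sublattice}, while you compute the coefficient of $\omega_j$ in $\zlambda_i$ as $(1-\epsilon_{ij})a_{ji}$ and check it is even; since $(\alpha_j^\vee,\omega_k)=\delta_{jk}$, these are the same quantity up to a factor of $2$, and your trichotomy matches the paper's case analysis exactly.
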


\begin{proof} 
We have $(\a_j,\zlambda_i)=(\a_j,\a_i-2\sum_{k=1}^r \phi_{ik} \w^\vee_k)=(\a_j, \a_i)-2\phi_{ij}=(1-\epsilon_{ij})(\a_i,\a_j)$. 
Hence $\ds \frac{(\a_j, \zlambda_i)}{2\sd_j}=\frac{1-\epsilon_{ij}}{2}a_{ji}$. If $i=j$ then 
$\ds \frac{(\a_j, \zlambda_i)}{2\sd_j}=1$. If $i \neq j$, then $\ds \frac{(\a_j, \zlambda_i)}{2\sd_j}\in \{0, a_{ji}\}$. 
This implies that  $\zlambda_i\in 2P$. The result for $\zmu_i$ follows from $\zmu_i=\zlambda_i-2\alpha_i$.
\end{proof}

\begin{Rem} 
The elements $\tE_i,\tF_i$ appeared in \cite{s1}, where the motivation was as follows. Let $\bU^<_{\Phi}, \bU^>_{\Phi}$ be 
the $\BQ(v^{1/2})$-subalgebras of $\bU(\g, P/2)$ generated by $\{\tF_i\}, \{\tE_i\}$, respectively. Then, the assignment 
$\tE_i\mapsto 1, \tF_i \mapsto 1$ gives rise to algebra homomorphisms:
\[ 
  \chi^<\colon \bU^<_{\Phi}\longrightarrow \BQ(v^{1/2}) \,, \qquad \chi^>\colon \bU^>_{\Phi} \longrightarrow \BQ(v^{1/2}) \,,
\]
cf.~\cite[Proposition 2, Theorem 4]{s1}. We are not going to use this fact in what follows.
\end{Rem}


\subsection{The (twisted) Lusztig form $\dU_q(\g)$}\label{ssec: new Lusztig form}
\

Let 
\[  
  (n)_v=\frac{1-v^{-2n}}{1-v^{-2}}, \quad(n)_{v}!=(1)_{v}\dots (n)_{v}, \quad 
  \binom{m}{s}_v:= \prod_{c=1}^s \frac{1-v^{2(-m+c-1)}}{1-v^{-2c}}.
\] 
We note that $[n]_{v}=v^{n-1}(n)_{v}$. Let us consider the following elements:
\begin{equation*}
  \binom{K_i;a}{n} = \frac{\prod_{s=1}^n (1-K_i^{-2}v_i^{2(s-a-1)})}{\prod_{s=1}^n(1-v_i^{-2s})} \,, \qquad 
  \tE_i^{(n)}=\frac{\tE_i^n}{(n)_{v_i}!} \,, \qquad \tF_i^{(n)}=\frac{\tF_i^n}{(n)_{v_i}!} \,.
\end{equation*}
We define the \textbf{(twisted) Lusztig form} $\dU_\CA(\g)$ as  the  $\CA$-subalgebra of $\bU(\g, P/2)$ generated by 
the elements $\{\tE^{(s)}_i, \tF_i^{(s)}, K^\mu\}_{1\leq i\le r}^{s\in \BN, \mu \in 2P}$. 
Later on we shall often refer to $\dU_\CA(\g)$ as the Lusztig form, when there is no ambiguity. 
By \eqref{EF-formula}, we have
\begin{equation}\label{eq:ef-twisted-swap-in-v} 
\begin{split}
 \tE_i^{(p)}\tF_j^{(s)}&=v^{ps(\a_i, -\zmu_j)} \tF_j^{(s)}\tE_i^{(p)} \qquad \mathrm{for}\ i\ne j \,, \\
 \tE_i^{(p)}\tF_i^{(s)}&=\sum_{c=0}^{\min(p,s)} v_i^{2ps-c^2} \tF_i^{(s-c)}\binom{K_i; 2c-p-s}{c} \tE_i^{(p-c)} \,. 
\end{split}
\end{equation}
The last two relations in \eqref{eq:gen-rel-twistedDJ} can be rewritten as follows:
\begin{equation}\label{eq:gen-rel-twistedDJ-2}
\begin{split}
  & \sum_{m=0}^{1-a_{ij}}(-1)^m v_i^{ma_{ij}(\epsilon_{ij}-1)-m(m-1)}\tE_i^{(1-a_{ij}-m)}\tE_j \tE_i^{(m)}=0 \qquad  
    (i \neq j) \,, \\
  & \sum_{m=0}^{1-a_{ij}}(-1)^m v_i^{m a_{ij}(\e_{ij}-1)-m(m-1)} \tF_i^{(1-a_{ij}-m)} \tF_j \tF_i^{(m)}=0 \qquad 
    (i \neq j) \,.
\end{split}
\end{equation}

Evoking the twisted Hopf algebra structure from \eqref{eq: twisted-Hopf} and \eqref{eq: twist-Hopf-2}, we have
\begin{equation}\label{eq:twisted Hopf on divided powers}
\begin{split}
  \Delta'(\tE_i^{(s)}) &= \sum_{c=0}^s \tE_i^{(s-c)}\otimes \tE_i^{(c)}K^{-(s-c)\zlambda_i} \,,\qquad 
  S'(\tE_i^{(s)})=(-1)^sv_i^{s(s-1)}\tE_i^{(s)}K^{s\zlambda_i} \,, \\
  \Delta'(\tF_i^{(s)}) &= \sum_{c=0}^sv_i^{2c(s-c)}\tF_i^{(c)}\otimes \tF_i^{(s-c)}K^{c\zmu_i} \,, \qquad 
  S'(\tF_i^{(s)})= (-1)^sv_i^{-s(s-1)}\tF_i^{(s)}K^{-s\zmu_i} \,.
\end{split}
\end{equation}
Combining these formulas with Lemma \ref{lem: lambda and mu in 2P}, we deduce that $\dU_\CA(\g)$ is a Hopf $\CA$-subalgebra 
of $\bU(\g, P/2)$ with respect to the twisted Hopf structure of \eqref{eq: twisted-Hopf} and \eqref{eq: twist-Hopf-2}.

Given $q\in R$ as in the paragraph preceding~(\ref{sigma homom}), we define the \textbf{(twisted) Lusztig form} $\dU_q(\g)$ 
as the base change of $\dU_\CA(\g)$ with respect to $\sigma\colon \CA\rightarrow R$ of \eqref{sigma homom}:
\begin{equation}\label{eq: Lusztig form over R}
  \dU_q(\g):=\dU_\CA(\g) \otimes_\CA R \,.
\end{equation}
We denote the images of $\ds \tE_i^{(s)},\tF_i^{(s)}, K^\mu$, $\binom{K_i;a}{n}$ in $\dU_q(\g)$ by the same symbols.  

We shall often be interested in idempotented versions of the Lusztig forms, see Section~\ref{sssec:idempotented-Lus}. 
In this context, the ``Cartan twist'' disappears and we get the same algebra as~\cite{l-book} but with a different coproduct.


\subsection{The even part algebra $U^{ev}_q(\g)$} \label{ssec:even-part}
\

We define the \textbf{even part} algebra $U^{ev}_\CA$ as the $\CA$-subalgebra of $\bU(\g, P/2)$ generated by the elements 
$\{\tE_i, \tF_i, K^{\mu}\}_{1\leq i \leq r}^{\mu \in 2P}$. By \eqref{eq: twist-Hopf-2} and Lemma \ref{lem: lambda and mu in 2P}, 
it follows that $U^{ev}_\CA(\g)$ is a Hopf $\CA$-subalgebra of $\bU(\g, P/2)$ with respect to  the twisted Hopf structure of 
\eqref{eq: twisted-Hopf} and \eqref{eq: twist-Hopf-2}.

Given $q\in R$ as in the paragraph preceding~(\ref{sigma homom}), we define the \textbf{even part} algebra $U^{ev}_q(\g)$ as 
the base change of $U^{ev}_\CA(\g)$ with respect to $\sigma\colon \CA\rightarrow R$ of \eqref{sigma homom}:
\begin{equation}\label{eq: even part over R}
  U^{ev}_q(\g):=U^{ev}_\CA(\g) \otimes_\CA R \,.
\end{equation}
We denote the images of $\tE_i, \tF_i, K^\mu$ in $U^{ev}_q(\g)$ by the same symbols.


\subsection{Triangular decompositions and PBW bases for $\bU(\g, P/2), \dU_\CA(\g), U^{ev}_\CA(\g)$}
\

Let $\bU^>_{P/2}, \bU^<_{P/2}, \bU^0_{P/2}$ be the $\BQ(v^{1/2})$-subalgebras of $\bU(\g, P/2)$ generated by 
$\{E_i\}_{i=1}^r, \{F_i\}_{i=1}^r$, $ \{K^{\mu}\}_{\mu \in P/2}$, respectively. Let $\bU^>_{\Phi}, \bU^<_\Phi, \bU^0_{\Phi}$ 
be the $\BQ(v^{1/2})$-subalgebras of $\bU(\g, P/2)$ generated by $\{\tE_i\}_{i=1}^r, \{\tF_i\}_{i=1}^r, \{K^\mu\}_{\mu \in P/2}$, 
respectively. We note that $\bU^0_{P/2}=\bU^0_\Phi$. Likewise, the subalgebras generated by $\bU^>_{P/2}, \bU^0_{P/2}$ and 
$\bU^>_{\Phi}, \bU^0_{\Phi}$ (resp.\ $\bU^<_{P/2}, \bU^0_{P/2}$ and $\bU^<_{\Phi}, \bU^0_{\Phi}$) coincide, 
and will be denoted $\bU^\geqslant$ (resp.\ $\bU^\leqslant$). We also note that all these subalgebras are $Q$-graded via~\eqref{eq: Q-grading for twisted one}.

\begin{Lem} \label{lem: triangular-U(g,P/2)}
The following multiplication maps are isomorphisms of $\BQ(v^{1/2})$-vector spaces:
\begin{align}
   \label{eq: old-triangular-DJ}  
  & \mm\colon \bU^<_{P/2}\otimes_{\BQ(v^{1/2})} \bU^0_{P/2} \otimes_{\BQ(v^{1/2})} \bU^>_{P/2} \longrightarrow \bU(\g, P/2) \,, \\
   \label{eq:new-triangular-DJ}  
  & \mm\colon \bU^<_{\Phi}\otimes_{\BQ(v^{1/2})} \bU^0_{\Phi} \otimes_{\BQ(v^{1/2})} \bU^>_{\Phi} \longrightarrow \bU(\g, P/2) \,.
\end{align}
\end{Lem}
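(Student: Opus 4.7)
My plan is to handle the two decompositions separately, using Lemma~\ref{triangular_DJ} as the starting point.

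For \eqref{eq: old-triangular-DJ}, the algebra $\bU(\g,P/2)$ is a mild extension of $\bU(\g)$: the Cartan part enlarges from $\bU^0=\BQ(v)[Q]$ to $\bU^0_{P/2}=\BQ(v^{1/2})[P/2]$, while $\bU^<_{P/2}$ and $\bU^>_{P/2}$ involve no Cartan generators and thus coincide, after base change from $\BQ(v)$ to $\BQ(v^{1/2})$, with $\bU^<$ and $\bU^>$. Since $\bU^0_{P/2}$ is free over $\bU^0\otimes_{\BQ(v)}\BQ(v^{1/2})$ on coset representatives of $(P/2)/Q$, the triangular isomorphism of Lemma~\ref{triangular_DJ}(a) extends directly to give \eqref{eq: old-triangular-DJ}.

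For \eqref{eq:new-triangular-DJ}, the crucial observation is that $\bU^0_\Phi=\bU^0_{P/2}$ and that the modified generators $\tE_i=E_iK^{\nu^>_i}$, $\tF_i=K^{-\nu^<_i}F_i$ differ from the standard ones only by invertible Cartan shifts. Using $K^\mu E_i = v^{(\alpha_i,\mu)} E_i K^\mu$, a straightforward induction on length shows that any ordered product $\tE_{i_1}\cdots\tE_{i_k}$ equals a nonzero scalar multiple of $E_{i_1}\cdots E_{i_k}\cdot K^{\nu^>_{i_1}+\cdots+\nu^>_{i_k}}$. Writing $\lambda=\sum_i n_i\alpha_i$ for the total weight of such a monomial, one has $\sum_a\nu^>_{i_a}=-\lambda+\sum_j\bigl(\sum_i n_i\phi_{ij}\bigr)\omega^\vee_j$, which depends only on $\lambda$. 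Setting $\nu^>(\lambda):=\sum_i n_i\nu^>_i$, I conclude that $(\bU^>_\Phi)_\lambda=(\bU^>_{P/2})_\lambda\cdot K^{\nu^>(\lambda)}$ as subspaces of $\bU(\g,P/2)$, and the analogous statement holds on the $<$ side. Right multiplication by the invertible element $K^{\nu^>(\lambda)}$ furnishes a $\BQ(v^{1/2})$-linear bijection $(\bU^>_{P/2})_\lambda\iso(\bU^>_\Phi)_\lambda$, so the two subalgebras share the same $Q$-graded character.

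Combining the above gives $\bU^>_\Phi\cdot\bU^0_\Phi=\bU^>_{P/2}\cdot\bU^0_{P/2}$ and $\bU^0_\Phi\cdot\bU^<_\Phi=\bU^0_{P/2}\cdot\bU^<_{P/2}$ as subspaces of $\bU(\g,P/2)$, so the images of the two multiplication maps coincide; surjectivity of \eqref{eq:new-triangular-DJ} then follows from \eqref{eq: old-triangular-DJ}. For injectivity, the equality of $Q$-graded characters together with \eqref{eq: old-triangular-DJ} shows that the source of \eqref{eq:new-triangular-DJ} has the correct graded dimension in each $Q$-degree, forcing it to be an isomorphism as well. The only slightly delicate step is verifying that the Cartan tail accumulated by an ordered product of $\tE$'s depends only on the total weight rather than on the order in which the factors appear, which is immediate from the linearity of $\nu^>$ in the simple root index observed above.
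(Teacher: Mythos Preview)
Your approach is essentially the same as the paper's (which dispatches the lemma in a single sentence), and all the key observations are correct: the identity $(\bU^>_\Phi)_\lambda=(\bU^>_{P/2})_\lambda\cdot K^{\nu^>(\lambda)}$ and its analogue on the negative side are exactly the mechanism by which \eqref{eq:new-triangular-DJ} is reduced to \eqref{eq: old-triangular-DJ}.

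There is, however, a small gap in your injectivity argument. You conclude injectivity from ``the correct graded dimension in each $Q$-degree,'' but each $Q$-graded piece of the source is infinite-dimensional (the middle factor $\bU^0_\Phi$ is infinite-dimensional), so a surjective linear map matching graded dimensions need not be injective. The fix is already implicit in what you have set up: assemble your weight-by-weight bijections into a single linear isomorphism
\[
\Psi\colon (\bU^<_{P/2})_{-\mu}\otimes\bU^0_{P/2}\otimes(\bU^>_{P/2})_\lambda \iso (\bU^<_\Phi)_{-\mu}\otimes\bU^0_\Phi\otimes(\bU^>_\Phi)_\lambda,
\]
sending $y\otimes u\otimes x$ to $(K^{-\nu^<(\mu)}y)\otimes (K^{\nu^<(\mu)}uK^{-\nu^>(\lambda)})\otimes(xK^{\nu^>(\lambda)})$. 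A direct computation using $K^\beta y=v^{-(\mu,\beta)}yK^\beta$ and $K^\beta x=v^{(\lambda,\beta)}xK^\beta$ shows that $\mm\circ\Psi$ agrees with the old multiplication map $\mm$ up to a nonzero scalar on each $(\mu,\lambda)$-component. Since $\Psi$ is bijective and the old $\mm$ is an isomorphism by \eqref{eq: old-triangular-DJ}, the new $\mm$ is an isomorphism as well. This replaces the dimension count and completes your argument.
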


\begin{proof} 
The proof of \eqref{eq: old-triangular-DJ} is the same as that of Lemma \ref{triangular_DJ}, while 
\eqref{eq:new-triangular-DJ} follows from \eqref{eq: old-triangular-DJ}.
\end{proof}

The $\BQ$-algebra anti-involution $\tau$ of \eqref{eq: anti-involution map} can be extended to a $\BQ$-algebra 
anti-involution $\tau$ of $\bU(\g,P/2)$ via:
\begin{equation}\label{eq: extended tau map}
    \tau(E_i)=F_i \,, \qquad \tau(F_i)=E_i \,, \qquad 
    \tau(K^\mu)=K^{-\mu} \,, \qquad \tau(v^{1/2})=v^{-1/2} \,,
\end{equation}
for $1\leq i \leq r, \mu \in P/2$.

Since the lattice $P/2$ is stable under the action of the Weyl group $W$, the formulas \eqref{eq:braid group} still 
define a braid group action on $\bU(\g, P/2)$. Therefore, we can still define the root vectors $E_{\b_k}, F_{\b_k}$ 
as in \eqref{eq:root-generator}. However, this braid group action  as well as the map $\tau$ 
of~\eqref{eq: extended tau map} do not preserve $\dU_\CA(\g), U^{ev}_\CA(\g)$. We also note that the elements $E_{\b_k}, F_{\b_k}$ 
are not contained in $\bU^>_{\Phi}, \bU^<_{\Phi}$, respectively. To construct the bases of $\bU^>_{\Phi}, \bU^<_{\Phi}$ 
we shall first introduce \textbf{modified} elements $\tE_{\b_k}, \tF_{\b_k}$. To this end, 
for any $\b=\sum_{i=1}^{r} a_i \a_i \in Q_+$, define (cf.~\eqref{tilda elements}):
\begin{equation}\label{nu for alpha}
  \nu^>_\b= \sum_{i=1}^r a_i \nu^>_i \,, \qquad \nu^<_\b=\sum_{i=1}^r a_i \nu^<_i \,.
\end{equation}

\begin{Lem}\label{lem: some values are integer}
For any $\a, \b\in Q_+$, we have that $(\nu^>_\a, \b) \pm (\a, \nu^<_\b)\in \BZ$.
\end{Lem}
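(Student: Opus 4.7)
My plan is to reduce to the case of simple roots by bilinearity, then compute both pairings explicitly using the duality $(\alpha_i,\omega^\vee_j)=\delta_{ij}$, and finally exploit the skew-symmetry of the matrix $\Phi$.

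First I would write $\alpha=\sum_i a_i\alpha_i$ and $\beta=\sum_j b_j\alpha_j$ with $a_i,b_j\in\mathbb Z_{\geq 0}$, so that both sides of the desired assertion are $\mathbb Z$-linear combinations of $(\nu^>_i,\alpha_j)$ and $(\alpha_i,\nu^<_j)$. Using the definition $\nu^>_i=-\alpha_i+\sum_k\phi_{ik}\omega^\vee_k$ and the pairing $(\alpha_i,\omega^\vee_j)=\delta_{ij}$, a direct computation gives
\begin{equation*}
  (\nu^>_i,\alpha_j)=-b_{ij}+\phi_{ij},\qquad (\alpha_i,\nu^<_j)=\phi_{ji}.
\end{equation*}

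The key observation is that $\Phi$ is skew-symmetric. Indeed, for $i\ne j$ one has $\epsilon_{ji}=-\epsilon_{ij}$ by the definition~(\ref{epsilon matrix}) (any orientation of a single edge flips sign under swapping endpoints), and $b_{ji}=b_{ij}$; for $i=j$ we have $\epsilon_{ii}=0$ since $a_{ii}=2\geq 0$. Hence $\phi_{ij}+\phi_{ji}=0$, which gives
\begin{equation*}
  (\nu^>_i,\alpha_j)+(\alpha_i,\nu^<_j)=-b_{ij}\in\mathbb Z,
\end{equation*}
\begin{equation*}
  (\nu^>_i,\alpha_j)-(\alpha_i,\nu^<_j)=-b_{ij}+2\phi_{ij}=-b_{ij}+\epsilon_{ij}b_{ij}=(\epsilon_{ij}-1)b_{ij}\in\mathbb Z,
\end{equation*}
using that $b_{ij}=\sd_i a_{ij}\in\mathbb Z$ and $\epsilon_{ij}\in\{-1,0,1\}$.

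Summing against the non-negative integer coefficients $a_ib_j$ then yields $(\nu^>_\alpha,\beta)\pm(\alpha,\nu^<_\beta)\in\mathbb Z$, as required. There is no real obstacle here; the only thing to be slightly careful about is remembering that the skew-symmetry of $\Phi$, which was already noted as the condition for $\sF$ in~(\ref{formal Cartan twist}) to satisfy~(\ref{twist conditions}), is built into the choice~(\ref{eq:condition on Phi}) via the orientation-reversal behaviour of $\epsilon_{ij}$.
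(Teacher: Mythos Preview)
Your proof is correct and follows essentially the same approach as the paper: reduce by bilinearity to simple roots, compute the two pairings explicitly, and use the skew-symmetry of $\Phi$ to obtain $(\nu^>_i,\alpha_j)+(\alpha_i,\nu^<_j)=-b_{ij}$ and $(\nu^>_i,\alpha_j)-(\alpha_i,\nu^<_j)=(\epsilon_{ij}-1)b_{ij}$. The paper simply states these two identities directly without the intermediate step of writing out $(\nu^>_i,\alpha_j)=-b_{ij}+\phi_{ij}$ and $(\alpha_i,\nu^<_j)=\phi_{ji}$ separately, but the argument is the same.
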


\begin{proof} 
We have $(\nu^>_i, \a_j) - (\a_i, \nu^<_j)= (\epsilon_{ij}-1)(\a_i, \a_j) \in \BZ$,  
$(\nu^>_i, \a_j) + (\a_i, \nu^<_j)=-(\a_i, \a_j)\in \BZ$. 
The general case of $\alpha,\beta\in Q_+$ follows immediately now. 
\end{proof}

For the positive root $\b_k =\sum_{i=1}^{r} a_i \a_i$ of $\g$, set 
\begin{equation}\label{eq: normalizer b}
  b^>_{\beta_k} = \sum_{i<j} a_ia_j(\nu^>_i, \a_j) \,,\qquad
  b^<_{\beta_k} = -\sum_{i<j}a_ia_j(\nu^<_j, \a_i) \,,
\end{equation}
and define
\begin{equation}\label{eq:twisted-root-vector}
  \tE_{\b_k}:=v^{b^>_{\beta_k}}E_{\b_k}K^{\nu^>_{\b_k}} \,, \quad \tF_{\b_k}:=v^{b^<_{\beta_k}}K^{-\nu^<_{\b_k}}F_{\b_k} \,, \qquad 
  \tE_{\b_k}^{(s)}:=\frac{\tE_{\b_k}^s}{(s)_{v_{i_k}}!} \,, \quad \tF_{\b_k}^{(s)}:=\frac{\tF_{\b_k}^s}{(s)_{v_{i_k}}!} \,.
\end{equation}
For any $\vec{k}\in \BZ_{\geq 0}^N$ (with $N=|\Delta_+|$ as before), we define the ordered monomials: 
\begin{equation*}
\begin{split}
  & \tE^{\vec{k}}:=\tE_{\b_1}^{k_1}\dots \tE_{\b_N}^{k_N} \,, \qquad \tF^{\vec{k}}:=\tF_{\b_1}^{k_1}\dots \tF_{\b_N}^{k_N} \,, \quad 
    \tE^{\cev{k}}:=\tE_{\b_N}^{k_N}\dots \tE_{\b_1}^{k_1} \,, \qquad \tF^{\cev{k}}:=\tF_{\b_N}^{k_N}\dots \tF_{\b_1}^{k_1} \,, \\
  & \tE^{(\vec{k})}:=\tE_{\b_1}^{(k_1)}\dots \tE_{\b_N}^{(k_N)} \,, \; \tF^{(\vec{k})}:=\tF_{\b_1}^{(k_1)}\dots \tF_{\b_N}^{(k_N)} \,,\;
   \tE^{(\cev{k})}:=\tE_{\b_N}^{(k_N)}\dots \tE_{\b_1}^{(k_1)} \,, \; \tF^{(\cev{k})}:=\tF_{\b_N}^{(k_N)}\dots \tF_{\b_1}^{(k_1)} \,.
\end{split}
\end{equation*}
Let $\dU^<_\CA, \dU^>_\CA, \dU^0_\CA$ denote the $\CA$-subalgebras of $\dU_\CA(\g)$ generated by 
$\{\tF_i^{(s)}\}, \{\tE_i^{(s)}\}, \{\binom{K_i;a}{s}, K^\mu\}$, respectively. Let $U^{ev\, <}_\CA, U^{ev\, >}_\CA, U^{ev\, 0}_\CA$ 
denote the $\CA$-subalgebras of $U^{ev}_\CA(\g)$ generated by $\{\tF_i\}, \{\tE_i\}$ and $\{K^\mu\}$, respectively.

\begin{Lem} \label{lem: modified elements}
For all $\vec{k}\in \BZ_{\geq 0}^N$, we have: 
\[
  \tE^{\vec{k}}, \tE^{\cev{k}}\in U^{ev\, >}_\CA \,; \quad \tF^{\vec{k}}, \tF^{\cev{k}} \in U^{ev\,  <}_\CA \,; \quad
  \tE^{(\vec{k})}, \tE^{(\cev{k})}\in \dU^>_\CA \,; \quad \tF^{(\vec{k})}, \tF^{(\cev{k})} \in \dU^<_\CA \,.
\]
\end{Lem}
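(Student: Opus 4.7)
The strategy is to reduce the statements to the classical analogs in Lemmas~\ref{triangular_DCK} and~\ref{triangular_Lus} via the defining formulas $\tE_{\beta_k} = v^{b^>_{\beta_k}} E_{\beta_k} K^{\nu^>_{\beta_k}}$ and $\tF_{\beta_k} = v^{b^<_{\beta_k}} K^{-\nu^<_{\beta_k}} F_{\beta_k}$. Since each target subalgebra is closed under products, it suffices to show that each single root generator $\tE_{\beta_k}$ (resp.\ $\tE_{\beta_k}^{(s)}$, $\tF_{\beta_k}$, $\tF_{\beta_k}^{(s)}$) lies in $U^{ev\,>}_\CA$ (resp.\ $\dU^>_\CA$, $U^{ev\,<}_\CA$, $\dU^<_\CA$); the ordered-monomial claims for $\tE^{\vec{k}}, \tE^{\cev{k}}, \tF^{\vec{k}}, \tF^{\cev{k}}, \tE^{(\vec{k})}, \tE^{(\cev{k})}, \tF^{(\vec{k})}, \tF^{(\cev{k})}$ then follow automatically.

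To show $\tE_{\beta_k}\in U^{ev\,>}_\CA$: by Lemma~\ref{triangular_DCK}(d), $E_{\beta_k}\in\mU^>_\CA$, which is by definition the $\CA$-algebra generated by $\{E_i\}_{i=1}^r$. Since $E_{\beta_k}$ is homogeneous of weight $\beta_k$, I can expand
\[
  E_{\beta_k} \,=\, \sum_c a_c\, E_{j_1^c}\cdots E_{j_{m_c}^c}, \qquad a_c\in \CA,\ \ \sum_{l=1}^{m_c} \alpha_{j_l^c} = \beta_k.
\]
On the other hand, iterating $K^\mu E_j = v^{(\alpha_j,\mu)} E_j K^\mu$ shows that for any such weight-$\beta_k$ monomial in the simple generators,
\[
  \tE_{j_1^c}\cdots \tE_{j_{m_c}^c} \,=\, v^{e_c}\, E_{j_1^c}\cdots E_{j_{m_c}^c}\cdot K^{\nu^>_{\beta_k}},
\]
for an explicit (a priori possibly half-integer) exponent $e_c$. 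Substituting and cancelling $K^{\nu^>_{\beta_k}}$ yields
\[
  \tE_{\beta_k} \,=\, v^{b^>_{\beta_k}} E_{\beta_k} K^{\nu^>_{\beta_k}} \,=\, \sum_c v^{b^>_{\beta_k}-e_c}\, a_c\, \tE_{j_1^c}\cdots \tE_{j_{m_c}^c},
\]
which belongs to $U^{ev\,>}_\CA$ provided each $v^{b^>_{\beta_k}-e_c}\in\CA$. The divided-power case $\tE_{\beta_k}^{(s)}\in \dU^>_\CA$ runs in parallel using Lemma~\ref{triangular_Lus}(d) in place of Lemma~\ref{triangular_DCK}(d), together with the conversion $E_i^{[r]} K^{r\nu^>_i} = v^{?}\, \tE_i^{(r)}$ and the observation that $[s]_{v_{i_k}}!/(s)_{v_{i_k}}! = v_{i_k}^{s(s-1)/2}\in\CA$. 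The $\tF$-statements are handled by the symmetric computation, invoking Lemmas~\ref{triangular_DCK}(e) and~\ref{triangular_Lus}(e) and the defining formula for $\tF_{\beta_k}$.

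The main obstacle is verifying that each exponent $b^>_{\beta_k}-e_c\in \BZ$, i.e., that the half-integer contributions truly cancel. This is precisely what the normalizing constants $b^>_{\beta_k}$ in \eqref{eq: normalizer b} are calibrated for: the half-integer part of $e_c$ comes from pairings of the form $(\alpha_{j_{l'}^c},\phi_{j_l^c j_{l'}^c}\omega^\vee_{j_{l'}^c})$ (arising via~\eqref{eq:condition on Phi}), and summing these over any weight-$\beta_k$ monomial in the simple generators produces a half-integer that depends only on $\beta_k$. The required integrality is controlled by Lemma~\ref{lem: some values are integer} together with the elementary consequence $(\nu^>_\alpha,\beta)+(\nu^>_\beta,\alpha) = -2(\alpha,\beta)\in\BZ$ (which follows from the antisymmetry $\phi_{ji}=-\phi_{ij}$ of $\Phi$); these identities show that $b^>_{\beta_k}$ matches the half-integer part of $e_c$ independently of the chosen factorization, so that $b^>_{\beta_k}-e_c\in\BZ$ as required.
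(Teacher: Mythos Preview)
Your approach is exactly the one taken in the paper: reduce to showing each $\tE_{\beta_k}$ (resp.\ $\tE_{\beta_k}^{(s)}$, and the $\tF$-analogs) lies in the appropriate subalgebra, expand $E_{\beta_k}$ in monomials $E_{j_1}\cdots E_{j_m}$ in the simple generators, and verify that the resulting exponents $b^>_{\beta_k}-e_c$ are integers.

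There is, however, a genuine gap in the final integrality step. Your identity $(\nu^>_\alpha,\beta)+(\nu^>_\beta,\alpha)=-2(\alpha,\beta)\in\BZ$ correctly shows that swapping adjacent factors $E_{j_l}$ and $E_{j_{l+1}}$ changes $e_c$ by an integer, so the half-integer part of $e_c$ is independent of the factorization. But you still need to anchor this: for \emph{one} specific factorization you must check $b^>_{\beta_k}-e_c\in\BZ$, and neither that identity nor Lemma~\ref{lem: some values are integer} does this. (Lemma~\ref{lem: some values are integer} concerns pairings mixing $\nu^>$ and $\nu^<$, which do not enter the $\tE$-computation at all.) The paper's way out is a direct computation: for the ``canonical'' monomial $I_0=(1,\ldots,1,2,\ldots,2,\ldots,r,\ldots,r)$ with index $i$ repeated $a_i$ times (where $\beta_k=\sum a_i\alpha_i$), one has $e_{I_0}=\sum_i\binom{a_i}{2}(\nu^>_i,\alpha_i)+\sum_{i<j}a_ia_j(\nu^>_i,\alpha_j)= -\sum_i \sd_i a_i(a_i-1)+b^>_{\beta_k}$, using $(\nu^>_i,\alpha_i)=-2\sd_i$; hence $b^>_{\beta_k}-e_{I_0}=\sum_i \sd_i a_i(a_i-1)\in\BZ$. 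With this anchor plus the swap invariance, the integrality follows. The divided-power argument in the paper proceeds analogously, using in addition that $(\nu^>_{\beta_k},\beta_k)=-(\beta_k,\beta_k)\in\BZ$, so that $\tE_{\beta_k}^{(n)}=v^{n b^>_{\beta_k}+(\nu^>_{\beta_k},\beta_k)n(n-1)/2}E_{\beta_k}^{(n)}K^{n\nu^>_{\beta_k}}$ has integer exponent; your conversion $E_i^{[r]}K^{r\nu^>_i}=v^{\sd_i r(r-1)/2}\tE_i^{(r)}$ is the correct ingredient for the remaining step.
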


\begin{proof} 
{\it Show $\tE^{\vec{k}} \in U_\CA^{ev>}$ :} Let $\mathcal{I}_{\b_k}$ denote the collection of all tuples $I=(i_1, \dots, i_m)$ with $1 \leq i_1, \dots, i_m \leq r$ 
and $\sum_{p=1}^m \a_{i_p} =\b_k$. Any tuple $I$ in $\CI_{\b_k}$ is obtained from the following tuple $I_0$ by permuting 
the indices: $I_0=(1,\dots, 1, 2, \dots, 2, \dots, r\dots, r)$ with the index $j$ repeated $a_j$ times. For any $I\in \CI_{\b_k}$, 
let $c_I=-\sum_{p=1}^{m-1}(\nu^>_{i_p}, \a_{i_{p+1}}+\ldots+\a_{i_m})$. In particular, $b^>_{\beta_k}+c_{I_0}=\sum_{i=1}^{r} \sd_i a_i(a_i-1) \in \BZ$. 
We now prove that $c_I-c_{I_0} \in \BZ$ for any $I \in \CI_{\b_k}$. Since $I$ can be obtained from $I_0$ via permuting the indices, 
it is enough to show that $c_{I_1}-c_{I_2}\in \BZ$ for two tuples $I_1, I_2\in \mathcal{I}_{\b_k}$ obtained from each other 
by permuting two consecutive indices. To this end, assume that $I_2$ is obtained form $I_1$ by permuting indices $i_p$ and $i_{p+1}$. 
Then $c_{I_1}-c_{I_2}=(\nu^>_{i_{p+1}}, \a_{i_p})-(\nu^>_{i_p}, \a_{i_{p+1}})=2\phi_{i_{p+1},i_p}\in \BZ$. We conclude 
that $b^>_{\beta_k} + c_I \in \BZ$ for any tuple $I$ in $\CI_{\b_k}$.

For any $I=(i_1, \dots, i_m)\in \CI_{\b_k}$, let $E_I=E_{i_1} \dots E_{i_m}$, $\tE_I=\tE_{i_1}\dots \tE_{i_m}$. Since 
$E_{\b_k}$ are contained in $\mU^>_A$ and have degree $\beta_k$, we have $E_{\b_k}=\sum_{I\in \CI_{\b_k}} p_I(v)E_I$ with 
$p_I(v)\in \CA$, and so 
\[
  \tE_{\b_k}=v^{b^>_{\beta_k}}E_{\b_k}K^{\nu^>_{\b_k}} = \sum_{I\in \CI_{\b_k}} v^{b^>_{\beta_k}+c_I}p_I(v)\tE_I.
\]
Since $b^>_{\beta_k}+c_I\in \BZ$, it follows that $\tE_{\b_k}\in U^{ev\, >}_\CA$, hence $\tE^{\vec{k}}\in U^{ev\,>}_\CA$ for any 
$\vec{k} \in \BZ_{\geq 0}^N$. 

{\it Show $\tE^{(\vec{k})} \in \dU_\CA^>$ :} It is enough to show that  $\tE_{\b_k}^{(n)} \in \dU^>_\CA$ for all $n \in \BN$. We have 
\[ \tE^{(n)}_{\b_k}= v^{(\nu^>_{\b_k}, \b_k)(n-1)n/2}v^{n b^>_{\b_k}} E^{(n)}_{\b_k} K^{n \nu^>_{\b_k}}, \quad \text{in which }~ (\nu^>_{\b_k}, \b_k) \in \BZ.\]
Now 
\[E_{\b_k}^{(n)}=\sum_{\textbf{a}=(a_1, \dots a_l)}p_{\textbf{a}}(v) E_{i_1}^{(a_1)}\dots E_{i_l}^{(a_l)},\]
for some tuple  $\textbf{a}=(a_1, \dots a_l)$ such that $\sum_{j=1}^l a_j \a_{i_j}=n \b_k$ and $p_{\textbf{a}}(v) \in \CA$. Then arguing as in the case of $\tE_{\b_k}$, one can show that $v^{n \b^>_k} E^{(n)}_{\b_k} K^{n \nu^>_{\b_k}} \in \dU^>_\CA$, hence $\tE_{\b_k}^{(n)} \in \dU^>_\CA$.

The proofs of the other statements are analogous. 
\end{proof}

We are now ready to establish the PBW-bases for $\bU(\g, P/2), \dU_\CA(\g), U^{ev}_\CA(\g)$. 
We start with $\bU(\g, P/2)$. We also recall the ordered monomials 
$F^{\vec{k}}, E^{\vec{k}}, F^{\cev{k}}, E^{\cev{k}}$ introduced before Lemma~\ref{lem:PBW-DJ}.

\begin{Lem}\label{lem:PBW-twisted-DJ}
(a1) The sets $\{E^{\vec{k}}\}_{\vec{k}\in \BZ^N_{\geq 0}}, \{E^{\cev{k}}\}_{\vec{k}\in \BZ^N_{\geq 0}}$ are 
$\BQ(v^{1/2})$-bases of $\bU^>_{P/2}$.

\noindent
(a2) The sets $\{F^{\vec{k}}\}_{\vec{k}\in \BZ^N_{\geq 0}}, \{F^{\cev{k}}\}_{\vec{k}\in \BZ^N_{\geq 0}}$ are 
$\BQ(v^{1/2})$-bases of $\bU^<_{P/2}$.

\noindent
(a3) The set $\{K^\mu\}_{\mu \in P/2}$ is a $\BQ(v^{1/2})$-basis of $\bU^0_{P/2}$.

\noindent
(a4) The sets 
  $\{F^{\vec{k}}K^\mu E^{\vec{r}}\}, \{F^{\cev{k}}K^\mu E^{\vec{r}}\}, 
   \{F^{\vec{k}}K^\mu E^{\cev{r}}\}, \{F^{\cev{k}}K^\mu E^{\cev{r}}\}$, 
with $\vec{k}, \vec{r}\in \BZ_{\geq 0}^N$ and $\mu \in P/2$, form $\BQ(v^{1/2})$-bases of $\bU(\g, P/2)$. 

\noindent
(b1) The sets $\{\tE^{\vec{k}}\}_{\vec{k}\in \BZ^N_{\geq 0}}, \{\tE^{\cev{k}}\}_{\vec{k}\in \BZ^N_{\geq 0}}$ are 
$\BQ(v^{1/2})$-bases of $\bU^>_\Phi$.

\noindent
(b2) The sets $\{\tF^{\vec{k}}\}_{\vec{k}\in \BZ^N_{\geq 0}}, \{\tF^{\cev{k}}\}_{\vec{k}\in \BZ^N_{\geq 0}}$ are 
$\BQ(v^{1/2})$-bases of $\bU^<_\Phi$.

\noindent
(b3) The set $\{K^\mu\}_{\mu \in P/2}$ is a $\BQ(v^{1/2})$-basis of $\bU^0_\Phi$.

\noindent
(b4)  The sets 
  $\{\tF^{\vec{k}} K^\mu \tE^{\vec{r}}\}, \{\tF^{\cev{k}} K^\mu \tE^{\vec{r}}\}, 
   \{\tF^{\vec{k}} K^\mu \tE^{\cev{r}}\}, \{\tF^{\cev{k}} K^\mu \tE^{\cev{r}}\}$, 
with $\vec{k}, \vec{r}\in \BZ_{\geq 0}^N$ and $\mu \in P/2$, form $\BQ(v^{1/2})$-bases of $\bU(\g, P/2)$. 
\end{Lem}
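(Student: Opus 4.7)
The plan is to prove (a) and (b) in that order, reducing the twisted statements in (b) to the untwisted statements in (a) via a simple identity relating $\tE^{\vec{k}}$ to $E^{\vec{k}} K^{\nu^>_{\beta}}$ (and analogously on the $F$-side).

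For part (a), the argument is a carbon copy of the proof of Lemma~\ref{lem:PBW-DJ}, now executed inside $\bU(\g,P/2)$ via the triangular decomposition~\eqref{eq: old-triangular-DJ}. The subalgebras $\bU^>_{P/2}$ and $\bU^<_{P/2}$ are defined by the same generators and relations as $\bU^>$ and $\bU^<$ (the quantum Serre relations only involve the $E_i$'s or the $F_i$'s and no $K$'s), so they are abstractly isomorphic to $\bU^>,\bU^<$ and inherit their PBW bases from Lemma~\ref{lem:PBW-DJ}; this gives (a1), (a2). Claim (a3) is immediate because $\bU^0_{P/2}$ is simply the group algebra of the free abelian group $P/2$. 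Combining (a1)--(a3) through~\eqref{eq: old-triangular-DJ} yields (a4).

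For part (b), the crux is the following identity, valid for every $\vec{k} \in \BZ_{\geq 0}^N$:
\begin{equation*}
  \tE^{\vec{k}} \,=\, c_{\vec{k}} \cdot E^{\vec{k}} \cdot K^{\nu^>_{\beta(\vec{k})}},
  \qquad \beta(\vec{k}) := \sum_{l=1}^N k_l\beta_l \in Q_+, \qquad c_{\vec{k}} \in \BQ(v^{1/2})^\times.
\end{equation*}
One verifies it by expanding each factor $\tE_{\beta_l} = v^{b^>_{\beta_l}} E_{\beta_l} K^{\nu^>_{\beta_l}}$, pushing every $K$-factor to the right using $K^\mu E_{\beta_l} = v^{(\mu,\beta_l)} E_{\beta_l} K^\mu$, and collapsing $\sum_l k_l \nu^>_{\beta_l}$ to $\nu^>_{\beta(\vec{k})}$ by the $\BZ$-linear extension of $\nu^>$ from the basis $\{\alpha_i\}$ (see~\eqref{nu for alpha}). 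The entirely parallel identity $\tF^{\vec{k}} = c'_{\vec{k}} K^{-\nu^<_{\beta(\vec{k})}} F^{\vec{k}}$ holds on the $F$-side. Granting this, linear independence of $\{\tE^{\vec{k}}\}_{\vec{k}}$ follows at once from (a4), since the elements $E^{\vec{k}} K^{\nu^>_{\beta(\vec{k})}}$ are among the PBW monomials $F^{\vec{0}} K^\mu E^{\vec{k}}$ of $\bU(\g,P/2)$. For spanning: any word in the generators $\tE_i$ of total $Q$-degree $\beta$ equals a scalar times a word in the $E_i$'s of the same degree, multiplied on the right by $K^{\nu^>_\beta}$; since words in the $E_i$'s of degree $\beta$ lie in $\bU^>_{P/2}[\beta] = \bigoplus_{\beta(\vec{k})=\beta} \BQ(v^{1/2}) \cdot E^{\vec{k}}$ by (a1), the identity above lets us rewrite each such product as a $\BQ(v^{1/2})$-linear combination of $\{\tE^{\vec{k}}\}$. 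This proves (b1); the reverse-ordered version uses the reverse-ordered basis of (a1) in the same way. Part (b2) is verbatim analogous on the $F$-side, (b3) is trivial as $\bU^0_\Phi = \bU^0_{P/2}$ by construction, and (b4) follows by combining (b1)--(b3) with the triangular decomposition~\eqref{eq:new-triangular-DJ}.

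The only subtle point is the assertion that the identity above really produces a single $K$-factor $K^{\nu^>_{\beta(\vec{k})}}$ on the right, independent of the precise order in which $K$'s are shuffled past $E$'s. This rests entirely on the $\BZ$-linearity of $\beta \mapsto \nu^>_\beta$, which is built into the definition~\eqref{nu for alpha}; apart from that, everything is mechanical bookkeeping of $v$-powers. I do not need to identify the scalars $c_{\vec{k}}, c'_{\vec{k}}$ precisely, only observe that they are nonzero powers of $v^{1/2}$, which is automatic.
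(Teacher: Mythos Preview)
Your proof is correct and follows essentially the same approach as the paper, which simply states that parts (a1)--(a4) follow from Lemma~\ref{lem:PBW-DJ} and (b1)--(b4) follow from (a1)--(a4). Your key identity $\tE^{\vec{k}} = c_{\vec{k}}\, E^{\vec{k}}\, K^{\nu^>_{\beta(\vec{k})}}$ is precisely what the paper uses elsewhere (see~\eqref{eq:tE-vs-E-products} in the proof of Corollary~\ref{cor: pairing of PBW}), and you have correctly identified that this reduces the twisted PBW bases to the untwisted ones.
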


\begin{proof}
Parts (a1)--(a4) follow from Lemma \ref{lem:PBW-DJ}, while parts (b1)--(b4) follow from (a1)--(a4).
\end{proof}

Before stating the next result, it will be useful to record some of the relations among Cartan elements (cf.~\cite[\S2.3]{l3}):
\begin{align}
\label{eq: K-rel1}\binom{t+t'}{t}_i \binom{K_i;0}{t+t'} &= \binom{K_i;0}{t} \sum_{k=0}^{t'}(-1)^k 
    v_i^{2t'(t+k)-k(k+1)}\binom{t+k-1}{k}_i \binom{K_i;0}{t'-k}\,,\\
\label{eq: K-rel2} \binom{K_i; -c}{t} &= \sum_{0\leq k \leq t} (-1)^k 
    v_i^{2t(c+k)-k(k+1)} \binom{c+k-1}{k}_i \binom{K_i;0}{t-k} \;\; ( c\geq 1) \,, \\
\label{eq: K-rel3} \binom{K_i;c}{t} &= \sum_{0\leq k \leq t} 
    v_i^{2k(t-k)} \binom{c}{k}_i K_i^{-2k} \binom{K_i;0}{t-k} \;\; ( c\geq 0) \,,
\end{align}
for all $t, t' \geq 0$.

\begin{Lem}\label{lem:twisted-triangular-PBW}
(a1) Subalgebras $\dU^<_\CA, \dU^>_\CA, U^{ev\,<}_\CA, U^{ev\,>}_\CA$ are $Q$-graded via \eqref{eq: Q-grading for twisted one}, 
and each of their degree components is a free $\CA$-module of finite rank.

\noindent
(a2) The multiplication maps
\begin{align*}
  & \mm\colon \dU^<_\CA \otimes_\CA \dU^0_\CA \otimes_\CA \dU^>_\CA \longrightarrow \dU_\CA(\g) \,,\\
  & \mm\colon U^{ev\,<}_\CA \otimes_\CA U^{ev\, 0}_\CA\otimes_\CA U^{ev\, >}_\CA \longrightarrow U^{ev}_\CA(\g)
\end{align*}
are isomorphisms of free $\CA$-modules.

\noindent
(b1) The sets $\{\tE^{\vec{k}}\}_{\vec{k}\in \BZ^N_{\geq 0}}, \{\tE^{\cev{k}}\}_{\vec{k}\in \BZ^N_{\geq 0}}$ are 
$\CA$-bases of $U^{ev\,>}_\CA$.

\noindent 
(b2) The sets $\{\tF^{\vec{k}}\}_{\vec{k}\in \BZ^N_{\geq 0}}, \{\tF^{\cev{k}}\}_{\vec{k}\in \BZ^N_{\geq 0}}$ are 
$\CA$-bases of $U^{ev\,<}_\CA$.

\noindent 
(b3) The set $\{K^\mu\}_{\mu \in 2P}$ is an $\CA$-basis of $U^{ev\,0}_\CA$.

\noindent 
(c1) The sets $\{\tE^{(\vec{k})}\}_{\vec{k}\in \BZ^N_{\geq 0}}, \{\tE^{(\cev{k})}\}_{\vec{k}\in \BZ^N_{\geq 0}}$ are 
$\CA$-bases of $\dU^>_\CA$.

\noindent 
(c2) The sets $\{\tF^{(\vec{k})}\}_{\vec{k}\in \BZ^N_{\geq 0}}, \{\tF^{(\cev{k})}\}_{\vec{k}\in \BZ^N_{\geq 0}}$ are 
$\CA$-bases of $\dU^<_\CA$.

\noindent 
(c3) The subalgebra $\dU^0_\CA$ has the following $\CA$-basis:
\begin{equation}\label{eq:basis-Cartan-twisted-Lus}
  \left\{ K^{2\varsigma_j}\cdot \prod_{i=1}^r \left(K_i^{2\big\lfloor \tfrac{t_i}{2} \big\rfloor}\binom{K_i;0}{t_i}\right) 
  \,\Big|\, \;t_i\geq 0, 1\leq j\leq k \right\} \,,
\end{equation}
where $\varsigma_1, \dots, \varsigma_k \in P$ is a set of representatives of the left cosets $P/Q$. 
\end{Lem}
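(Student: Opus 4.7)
The plan is to deduce everything from Lemma~\ref{lem:PBW-twisted-DJ} (PBW of the ambient $\bU(\g,P/2)$), Lemma~\ref{lem: modified elements} (integrality of the twisted root vectors), and the non-twisted analogues in Lemmas~\ref{triangular_Lus} and~\ref{triangular_DCK}. The underlying principle is that passing from the standard generators $E_i,F_i$ to the Sevostyanov generators $\tE_i,\tF_i$ twists each reordering coefficient only by an integer power of $v$ (by Lemma~\ref{lem: some values are integer}) together with an element of $K^{2P}$, so all structure constants remain $\CA$-integral.

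For parts (a1), (a2), and (b1)--(b3), the proposed monomials lie in $U^{ev}_\CA(\g)$ by Lemma~\ref{lem: modified elements}, and their $\CA$-linear independence is inherited from the $\BQ(v^{1/2})$-linear independence of Lemma~\ref{lem:PBW-twisted-DJ}(b1)--(b4). For spanning, any product of generators is first put into ``$\tF$-Cartan-$\tE$'' normal form: commutation with $K^\mu$ is $\CA$-integral since $(\alpha_i,\mu) \in \BZ$ for $\mu \in 2P$, and the relations $\tE_i\tF_j = v^{(\alpha_i,-\zmu_j)}\tF_j\tE_i$ for $i\neq j$ together with $\tE_i\tF_i - v_i^2\tF_i\tE_i = v_i(1-K_i^{-2})/(1-v_i^{-2})$ from~\eqref{eq:gen-rel-twistedDJ} keep us inside $U^{ev\,<}_\CA \cdot U^{ev\,0}_\CA \cdot U^{ev\,>}_\CA$, since the right-hand side lies in $\CA\cdot K^{2P}$. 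The resulting $\tE$- and $\tF$-factors are then reordered using the Serre relations~\eqref{eq:gen-rel-twistedDJ} and the standard PBW argument of \cite[\S8.24]{j}, all with $\CA$-coefficients. Freeness of each $Q$-graded component follows from the resulting basis description.

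Parts (c1) and (c2) are proved by the same strategy with divided powers: the containments $\tE^{(\vec{k})},\tF^{(\vec{k})} \in \dU_\CA(\g)$ are supplied by Lemma~\ref{lem: modified elements}, and the reorderings use~\eqref{eq:ef-twisted-swap-in-v} and~\eqref{eq:gen-rel-twistedDJ-2}, whose coefficients all lie in $\CA$.

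The main obstacle is part (c3). The plan is threefold. First, iterate~\eqref{eq: K-rel2}--\eqref{eq: K-rel3} to rewrite every $\binom{K_i;a}{n}$ as an $\CA$-linear combination of terms of the form $K_i^{-2k}\binom{K_i;0}{t}$. Second, apply~\eqref{eq: K-rel1} repeatedly to reduce any product $\binom{K_i;0}{t}\binom{K_i;0}{t'}$ to single-index form, so that each $\binom{K_i;0}{t_i}$ appears with multiplicity at most one per $i$. Third, decompose $2P$ as $\bigsqcup_j (2Q + 2\varsigma_j)$: any $K^\mu$ with $\mu \in 2P$ can be written as $K^{2\varsigma_j} \prod_i K_i^{2 n_i}$, and in the single-$i$ case the triangular family $\{K_i^{2\lfloor t_i/2\rfloor}\binom{K_i;0}{t_i}\}_{t_i \geq 0}$ has $\CA$-span equal to $\CA[K_i^{\pm 2}]$ by a straightforward triangularity argument on the $K_i$-degrees, allowing absorption of the $K_i^{2 n_i}$-factors. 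This produces the spanning of~\eqref{eq:basis-Cartan-twisted-Lus}. The $\CA$-linear independence is obtained by base-changing to $\BQ(v^{1/2})$, projecting onto each coset of $P/Q$ indexed by $j$, and reducing to the one-variable statement compatible with Lemma~\ref{triangular_Lus}(c).
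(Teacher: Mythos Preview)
Your spanning argument for (b1)--(c2) has a gap at the crucial step. Neither ingredient you cite suffices: the Serre relations~\eqref{eq:gen-rel-twistedDJ} only reorder adjacent \emph{simple-root} generators $\tE_i,\tE_j$, not the root vectors $\tE_{\beta_k}$; and \cite[\S8.24]{j} establishes PBW over the field $\BQ(v)$, which is already Lemma~\ref{lem:PBW-twisted-DJ} and says nothing about $\CA$-integrality of the straightening coefficients. So ``all with $\CA$-coefficients'' is precisely the assertion in need of proof. Your opening appeal to Lemma~\ref{lem: some values are integer} is also not the right input: that lemma concerns $(\nu^>_\alpha,\beta)\pm(\alpha,\nu^<_\beta)$, whereas reordering within $U^{ev\,>}_\CA$ involves the differences $(\nu^>_{i},\alpha_j)-(\nu^>_{j},\alpha_i)=2\phi_{ij}$.

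Your plan \emph{can} be salvaged, but not as written. The degree-wise argument goes: for fixed $\mu\in Q_+$, the computation in the proof of Lemma~\ref{lem: modified elements} shows that the half-integer exponents $c_I$ appearing in $\tE_I = v^{c_I} E_I K^{\nu^>_\mu}$ all lie in a single coset of $\BZ$; hence $x\mapsto v^{-c_{I_0}} x K^{-\nu^>_\mu}$ is an $\CA$-module isomorphism $U^{ev\,>}_{\CA,\mu}\to \mU^>_{\CA,\mu}$ carrying each $\tE^{\cev{k}}$ to an $\CA^\times$-multiple of $E^{\cev{k}}$, and one then invokes Lemma~\ref{triangular_DCK}(d). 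The paper takes a different route, deferring (b1), (b2), (c1), (c2) until after the twisted Hopf pairing of Section~\ref{S_inv_pair}: since $(\dU^\leqslant_\CA, U^{ev\,\geqslant}_\CA)'\subset\CA$ (Lemma~\ref{Lem:pairing_quantum_Borel}) and $(\tF^{(\cev{k})},\tE^{\cev{r}})'$ is diagonal with $\CA^\times$-entries (Corollary~\ref{cor: pairing of PBW}), pairing any $x\in U^{ev\,>}_\CA$ against each $\tF^{(\cev{k})}$ forces its $\tE^{\cev{k}}$-coefficient into $\CA$. The other orderings are then obtained via the anti-involution~$\tau$.

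For (c3) your sketch is close to the paper's, but the assertion that the $\CA$-span of $\{K_i^{2\lfloor t_i/2\rfloor}\binom{K_i;0}{t_i}\}$ equals $\CA[K_i^{\pm 2}]$ is false: the span contains $\binom{K_i;0}{1}\notin\CA[K_i^{\pm 2}]$. What you need, and what the paper establishes by recursion, is that this span is \emph{stable} under multiplication by $K_i^{\pm 2}$.
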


\begin{proof} 
We will present the proof of (b1), (b2), (c1), (c2) after Lemma  \ref{Lem:pairing_quantum_Borel}. Part (b3) easily follows 
from part (b3) of Lemma \ref{lem:PBW-twisted-DJ} that does not depend on this lemma. 

Let us sketch the proof of (c3). Let $\wt{\dU^0_\CA}$ denote the 
$\CA$-submodule of $\dU^0_\CA$ linearly generated by the elements in \eqref{eq:basis-Cartan-twisted-Lus}. It is not hard 
to see that elements in \eqref{eq:basis-Cartan-twisted-Lus} form a $\BQ(v^{1/2})$-basis for the  $\BQ(v^{1/2})$-subalgebra 
$\bU^0(2P)$ of $\bU(\g, P/2)$ generated by $\{K^\mu\}^{\mu \in 2P}$. Hence $\wt{\dU^0_\CA}$ is a free $\CA$-module with 
the $\CA$-basis \eqref{eq:basis-Cartan-twisted-Lus}.

{\it Step 1:} 
We will show that   $ K_i^{2k}\binom{K_i;0}{m}$ for 
$m \in \BN, k \in \BZ$ are $\CA$-linear combinations of elements 
$K_i^{2\big\lfloor \tfrac{t_i}{2} \big\rfloor}\binom{K_i;0}{t_i}$.  
Hence $K^\mu, \binom{K_i; a}{m}$ are contained in 
$\wt{\dU^0_\CA}$ for all $\mu \in 2P, a \in \BZ, m \in \BN$ by using \eqref{eq: K-rel2}--\eqref{eq: K-rel3}. 

We will prove the statement for the even number $2m$, and the proof for odd number $2m+1$ is the same. The statement holds for $K_i^{2m}\binom{K_i;0}{2m}$. Consider
\[ (1-v_i^{-4m})K^{2m}\binom{K_i;0}{2m+1}=K^{2m}\binom{K_i;0}{m}-v_i^{2m}K_i^{2m-2}\binom{K_i;0}{2m},\]
hence the statement holds for $K_i^{2m-2}\binom{K_i;0}{2m}$. Consider 
\begin{multline*} \prod_{i=0}^1(1-v_i^{-4m-i}) K_i^{2m+2} \binom{K_i;0}{2m+2}= K_i^{2m+2}\binom{K_i;0}{2m}-(v_i^{4m}+v_i^{4m+2})K_i^{2m}\binom{K_i;0}{2m}\\
+ v_i^{8m+2}K_i^{2m-2}\binom{K_i;0}{2m},
\end{multline*}
hence the statement holds for $K_i^{2m+2}\binom{K_i;0}{2m}$. Keep doing this procedure, we can show that the statement holds for $K_i^{2k} \binom{K_i;0}{2m}$ for all $k\in \BZ$.

{\it Step 2:} We will show that $\wt{\dU^0_\CA}$ is stable under the left multiplication by $ K^\mu,\binom{K_i;a}{m}$. By \eqref{eq: K-rel2}-\eqref{eq: K-rel3}, it is enough to show that $\wt{\dU^0_\CA}$ is stable under the left multiplication by $K_i^{2k}\binom{K_i;0}{m}$. On the other hand, \eqref{eq: K-rel1} (by induction) implies that $\binom{K_i;0}{t} \binom{K_i;0}{t'}$ is a $\CA$-linear combination of $\binom{K_i;0}{t''}$. Therefore, after multiplying any element of $\wt{\dU^0_\CA}$ by $K_i^{2k} \binom{K_i;0}{m}$, we get a $\CA$-linear combination of $\prod_{i=1}^r K_i^{2k_i} \binom{K_i;0}{t_i}$ for some $k_i \in \BZ$. Using Step 1, the latter $\CA$-linear combination belongs to $\wt{\dU^0_\CA}$.

Therefore, we have  $\wt{\dU^0_\CA}=\dU^0_\CA$ and part (c3) follows.


Part (a1) follows from (b1), (b2), (c1), and (c2). Let us prove the first isomorphism in part (a2). One can show that $\dU^<_\CA \dU^0_\CA\dU^>_\CA =\dU_\CA(\g)$. Indeed,  $1\in \dU^<_\CA \dU^0_\CA \dU^>_\CA$ while $\dU^<_\CA \dU^0_\CA \dU^>_\CA$ is closed under the left multiplications by $\dU_\CA(\g)$ as can be seen by using the commutation relations between $\tE^{(n)}_i, \tF^{(n)}_i, K^\mu, \binom{K_i;0}{m}$.
 
Let $\bU(\g, 2P)$ denote the $\BQ(v^{1/2})$-subalgebra of $\bU(\g, P/2)$ generated by 
$\{\tE_i, \tF_i, K^\mu\}_{1\leq i\leq r}^{\mu \in 2P}$. By Lemma \ref{lem:PBW-twisted-DJ} and the proof of (c3), 
the algebra $\bU(\g, 2P)$ has the following $\BQ(v^{1/2})$-basis:
\begin{equation}\label{eq:PBW-twisted-Lusz}
  \tF^{\vec{k}} K^{2\varsigma_j}\prod_{i=1}^r\left(K_i^{2\big\lfloor \tfrac{t_i}{2} \big\rfloor} 
  \binom{K_i;0}{t_i}\right) \tE^{\vec{r}} \,,
\end{equation}
with the suitable indices as above. Combining this with (c1)--(c3), we see that $\dU^<_\CA \dU^0_\CA \dU^>_\CA$ is a free 
$\CA$-module with the $\CA$-basis \eqref{eq:PBW-twisted-Lusz}, hence the first isomorphism of part (a) follows. The proof 
of the second isomorphism is similar. 
\end{proof}


\subsection{The adjoint action of $\dU_q(\g)$ on $U^{ev}_q(\g)$}
\

For the twisted Hopf algebra structure in \eqref{eq: twisted-Hopf}, we have the left adjoint action $\ad'$ of $\bU(\g,P/2)$ 
on itself. Explicitly, for any $1\leq i\leq r$, $\mu\in P/2$, $x\in \bU(\g, P/2)$, we have:
\begin{equation}\label{eq:explicit adjoint action}
  \ad'(K^\mu)(x)=K^{\mu}xK^{-\mu} \,, \quad \ad'(\tE_i)(x)=[\tE_i, x]K^{\zlambda_i} \,, \quad \ad'(\tF_i)(x)=[\tF_i, x]K^{-\zmu_i} \,,
  \footnote{In contrast, while $\ad(F_i)(x)=[F_i, x]K_i$, there is no similar formula relating $\ad(E_i)(x)$ to $[E_i, x]$.}
\end{equation}
due to~\eqref{eq: twist-Hopf-2}.
It turns out that this action restricts to an action $\dU_\CA(\g) \curvearrowright U^{ev}_\CA(\g)$:

\begin{Prop}\label{action of Lus on DCK}
The adjoint action $\ad'$ of $\dU_\CA(\g)\subset \bU(\g, P/2)$ preserves the even part $U^{ev}_\CA(\g)\subset \bU(\g, P/2)$, 
thus giving rise to 
\begin{equation}\label{eq:Lus-on-DCK}
  \ad'\colon \dU_{\CA}(\g)\curvearrowright U_{\CA}^{ev}(\g) \,.    
\end{equation}
\end{Prop}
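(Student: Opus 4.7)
The strategy is a two-stage reduction to generators, followed by an explicit calculation.

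First, since $\ad'$ is an algebra homomorphism from $\bU(\g,P/2)$ to its endomorphism algebra, we have $\ad'(a_1a_2)=\ad'(a_1)\circ\ad'(a_2)$. Hence it suffices to show that $\ad'(a)$ preserves $U^{ev}_\CA(\g)$ for every $a$ in a generating set of $\dU_\CA(\g)$, which by Lemma~\ref{lem:twisted-triangular-PBW} may be taken to be $\{\tE_i^{(s)},\tF_i^{(s)},K^\nu\}$ with $1\leq i\leq r$, $s\in\BN$, $\nu\in 2P$. Second, for such a fixed $a$, the identity~\eqref{ad on products} reads $\ad'(a)(bc)=\ad'(a_{(1)})(b)\cdot\ad'(a_{(2)})(c)$, and since $\dU_\CA(\g)$ is a Hopf $\CA$-subalgebra of $\bU(\g,P/2)$, the Sweedler components of $\Delta'(a)$ lie in $\dU_\CA(\g)$. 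Arguing by induction on the length of a monomial in the generators $\{\tE_j,\tF_j,K^\mu:\mu\in 2P\}$ of $U^{ev}_\CA(\g)$, it then suffices to check that $\ad'(a)(y)\in U^{ev}_\CA(\g)$ for $y$ in this generating set.

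What remains is case-by-case verification. For $a=K^\nu$ with $\nu\in 2P$, $\ad'(K^\nu)(y)=K^\nu y K^{-\nu}$ is manifestly in $U^{ev}_\CA(\g)$. For $a=\tE_i$ and $a=\tF_i$, the explicit formulas~\eqref{eq:explicit adjoint action}, together with $\zlambda_i,\zmu_i\in 2P$ by Lemma~\ref{lem: lambda and mu in 2P}, give the claim immediately. The substantive cases are $a=\tE_i^{(s)}$ and $a=\tF_i^{(s)}$ with $s\geq 2$. In each such case one expands $\ad'(\tE_i^{(s)})(y)=\sum_{c=0}^s(-1)^cv_i^{c(c-1)}\,\tE_i^{(s-c)}\,y\,K^{(s-c)\zlambda_i}\tE_i^{(c)}K^{c\zlambda_i}$ using~\eqref{eq:twisted Hopf on divided powers}, commutes the Cartan factors past $y$ and $\tE_i^{(c)}$ via the weight rules and $(\zlambda_i,\alpha_i)=2\sd_i$, and analyzes the resulting sum separately for $y\in\{\tE_j,\tF_j,K^\mu\}$. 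These three subcases reduce respectively to the divided-power Serre identities~\eqref{eq:gen-rel-twistedDJ-2}, the EF-commutation formula~\eqref{eq:ef-twisted-swap-in-v}, and elementary $q$-binomial identities.

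The main obstacle is to verify that the outcome lands in $U^{ev}_\CA(\g)$ rather than merely in $\dU_\CA(\g)$: since the divided power $\tE_i^{(s)}=\tE_i^s/(s)_{v_i}!$ does not in general belong to $U^{ev}_\CA(\g)$, one must argue that the $(s)_{v_i}!$ in the denominator of each divided-power monomial arising in the expansion cancels against the quantum binomial coefficients produced by products of the form $\tE_i^{(s-c)}\tE_i^{(c)}=\binom{s}{c}_{v_i}\tE_i^{(s)}$ combined with the Serre/commutation identities. The twisted Serre relations~\eqref{eq:gen-rel-twistedDJ-2} and the EF formula~\eqref{eq:ef-twisted-swap-in-v} supply exactly the identities needed for these cancellations, producing a $\CA$-linear combination of PBW monomials of $U^{ev}_\CA(\g)$ as required.
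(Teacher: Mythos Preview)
Your two-stage reduction to generators is exactly the paper's opening move, and the overall architecture of your plan matches the paper's proof. The difference lies in how the case $a=\tE_i^{(s)}$ acting on $y\in\{\tE_j,\tF_j,K^\mu\}$ is actually handled.

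The paper does \emph{not} expand $\ad'(\tE_i^{(s)})(y)$ directly and then hunt for cancellations. Instead it uses a trick that you have not identified: rather than acting on $\tE_j$ (or $\tF_j$), it acts on $\tE_jK^{\zlambda_j}$ (resp.\ $\tF_jK^{-\zmu_j}$). With this particular Cartan shift, the expansion of $\ad'(\tE_i^{1-a_{ij}})(\tE_jK^{\zlambda_j})$ is \emph{literally} the Serre relation times $K^{?}$, hence zero; this immediately gives $\ad'(\tE_i^{(m)})(\tE_jK^{\zlambda_j})=0$ for all $m\geq 1-a_{ij}$. For $1\leq m\leq -a_{ij}$ one simply invokes $1/(m)_{v_i}!\in\CA$, which is exactly where the localization built into $\CA$ is used. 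Similarly $\ad'(\tE_i)(\tF_jK^{-\zmu_j})=0$ for $j\neq i$, so the whole tower vanishes. One then removes the auxiliary $K^\beta$ by the module-algebra property \eqref{ad on products}, using the already-settled case $y=K^\mu$. The $y=K^\mu$ case itself is done in the paper by iterating the formula $\ad'(\tE_i)(x)=[\tE_i,x]K^{\zlambda_i}$, obtaining a closed product $\prod_{s=0}^{m-1}(1-v^{(\beta,\alpha_i)+2s\sd_i})$ whose divisibility by $(m)_{v_i}!$ is then transparent because $(\beta,\alpha_i)\in 2\sd_i\BZ$ for $\beta\in 2P$.

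Your direct-expansion route is not wrong in principle, but the sentence ``supply exactly the identities needed for these cancellations'' hides real work. For $y=\tF_j$ with $j\neq i$, after commuting everything you get a scalar multiple of $\tF_j\tE_i^{(s)}K^{s\zlambda_i}$, and you must check that the scalar $\sum_{c=0}^s(-1)^c v_i^{c(c-1)+2(s-c)c}v^{(s-c)(\alpha_i,-\zmu_j)}\binom{s}{c}_{v_i}$ vanishes identically---a genuine $q$-binomial identity depending on the value of $(\alpha_i,-\zmu_j)$. For $y=\tE_j$ with $j\neq i$ and $s>1-a_{ij}$, the single Serre relation \eqref{eq:gen-rel-twistedDJ-2} does not by itself tell you that $\sum_c(\cdots)\tE_i^{(s-c)}\tE_j\tE_i^{(c)}$ lies in $U^{ev,>}_\CA$; you would need to invoke higher-order Serre relations or an inductive argument. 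None of this is impossible, but it is significantly messier than the paper's approach and you have not carried it out.
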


Given $q\in R$ as in the paragraph preceding~(\ref{sigma homom}), we obtain 
\begin{equation}\label{eq:specialized ad-action}
  \mathrm{\textbf{(left) adjoint action}}\ \ad'\colon \dU_q(\g)\curvearrowright U_q^{ev}(\g) 
\end{equation}
after applying the base change $\sigma\colon \CA\to R$ of~\eqref{sigma homom} to~\eqref{eq:Lus-on-DCK}.

\begin{proof}[Proof of Proposition~\ref{action of Lus on DCK}]
Due to property~(\ref{ad on products}) and the explicit coproduct formulas in \eqref{eq: twist-Hopf-2} and 
\eqref{eq:twisted Hopf on divided powers}, it suffices to verify the claim for the generators of $\dU_{\CA}(\g)$ 
acting on the generators of $U^{ev}_{\CA}(\g)$. To this end, we first note that $U_{\CA}^{ev}(\g)$ is obviously 
stable under $\{\ad'(K^\beta) \,|\, \beta\in 2P\}$.


Next, let us consider $\ad'(\tE^{(m)}_i)$. First, we apply $\ad'(\tE^{(m)}_i)$ to $\{K^{\beta}\}_{\beta\in 2P}$. 
To this end, recall: 
\begin{equation*}
  \ad'(\tE_i)(x)=[\tE_i,x]K^{\zlambda_i} 
\end{equation*}  
with $\zlambda_i\in 2P$ by Lemma~\ref{lem: lambda and mu in 2P}. Combining this with 
$[\tE_i, \tE_i^{s}K^\beta] = (1-v^{(\beta,\alpha_i)})\tE^{s+1}_iK^\beta$, we obtain:
  $\ad'(\tE_i)(\tE^{s}_iK^\beta) = (1-v^{(\beta,\alpha_i)})\tE^{s+1}_iK^{\beta+\zlambda_i}$.
As $(\zlambda_i,\alpha_i)=(\alpha_i,\alpha_i)=2\sd_i$, we finally get:
\begin{equation}\label{E-acts-K}
  \ad'(\tE^{m}_i)(K^\beta) = 
  \prod_{s=0}^{m-1} \left(1-v^{(\beta,\alpha_i)+2s\sd_i}\right) \cdot \tE^{m}_iK^{\beta+m\zlambda_i} \,.
\end{equation}
Note that $(\beta,\alpha_i)\in 2\sd_i\BZ$ for $\beta\in 2P$, so that:
\begin{equation}\label{eq:divisibility 1}
  \frac{\prod_{s=0}^{m-1} (1-v^{(\beta,\alpha_i)+2s\sd_i})}{(m)_{v_i}!}\in \CA \,.
\end{equation}
Hence, we get the following equality in $\bU(\g)$: 
\begin{equation}\label{dividedE-acts-K}
  \ad'(\tE^{(m)}_i)(K^\beta) = 
  \frac{\prod_{s=0}^{m-1} (1-v^{(\beta,\alpha_i)+2s\sd_i})}{(m)_{v_i}!} \cdot \tE^{m}_iK^{\beta+m\zlambda_i} \,,
\end{equation}
the right-hand side of which lies in $U^{ev}_\CA(\g)$, due to~\eqref{eq:divisibility 1}.


\medskip
Next, we apply $\ad'(\tE_i)$ to $\tE_jK^\beta$ with a suitably chosen $\beta\in 2P$:

\noindent
$\bullet$ For $j=i$, we have $\ad'(\tE^{(m)}_i)(\tE_i)\in U^{ev}_{\CA}(\g)$, due to:
\begin{equation*}
  \ad'(\tE_i)(\tE_i)=[\tE_i,\tE_i]K^{\zlambda_i}=0 \  \Longrightarrow \
  \ad'(\tE^{(m)}_i)(\tE_i)=0 \ \mathrm{for} \ m\geq 1 \,.
\end{equation*}  

\noindent
$\bullet$ For $j\ne i$, set $\beta=\zlambda_j\in 2P$. Evoking~\eqref{eq:twisted Hopf on divided powers}, we thus obtain:
\begin{multline*}
  \ad'(\tE^{1-a_{ij}}_i)(\tE_j K^{\beta})=
  \left(\sum_{m=0}^{1-a_{ij}} (-1)^m v^{m\epsilon_{ij}b_{ij}} 
  \bmat{1-a_{ij}\\ m}_{v_i} \tE_i^{1-a_{ij}-m}\tE_j\tE_i^{m}\right)K^{\beta+(1-a_{ij})\zlambda_i} = 0 \,,
\end{multline*}
with the last equality due to~\eqref{eq:gen-rel-twistedDJ}. Therefore, we have:
\begin{equation*}
  \ad'(\tE^{(m)}_i)(\tE_j K^{\beta})=0\in U^{ev}_{\CA}(\g) \quad \mathrm{for} \quad m>-a_{ij} \,.
\end{equation*}
We also have:
\begin{equation*}
  \ad'(\tE^{(m)}_i)(\tE_j K^{\beta}) = \frac{1}{(m)_{v_i}!}\ad'(\tE^m_i)(\tE_j K^{\beta})\in U^{ev}_{\CA}(\g)
  \quad \mathrm{for} \quad 1\leq m\leq -a_{ij} \,,
\end{equation*}
as $1/(m)_{v_i}!\in \CA$ (here, we use that if $a_{ij}<0$, then either $a_{ij}=-1$ or $a_{ji}=-1$). 

Combining the inclusions $\ad'(\tE^{(m)}_i)(\tE_j K^{\beta})\in U^{ev}_{\CA}(\g)$ established above for all $m$, 
formulas~(\ref{ad on products},~\ref{eq:twisted Hopf on divided powers}), and 
the inclusion $\ad'(\tE^{(m)}_i)(K^{-\beta})\in U^{ev}_{\CA}(\g)$ (also verified above), we conclude 
that $\ad'(\tE^{(m)}_i)(\tE_j)$ is indeed an element of $U^{ev}_{\CA}(\g)$ for any $m\geq 1$.


\medskip
Finally, we apply $\ad'(\tE_i^{(m)})$ to $\tF_jK^\beta$ with $\beta:=-\zmu_j\in 2P$:

\noindent
$\bullet$ For $j\ne i$, we have
\begin{equation*}
  \ad'(\tE_i)(\tF_j K^{\beta}) =
  \left(\tE_i\tF_j-v^{(\alpha_i,-\zmu_j)}\tF_j\tE_i\right) K^{\beta+\zlambda_i} = 0 \,,
\end{equation*}
with the last equality due to the third line of~\eqref{eq:gen-rel-twistedDJ}. Thus, we get $\ad'(\tE^{(m)}_i)(\tF_jK^{\beta})=0$ 
for any $m\geq 1$. Combining this with the inclusion $\ad'(\tE^{(m)}_i)(K^{-\beta})\in U_\CA^{ev}(\g)$ verified 
above and formulas~(\ref{ad on products},~\ref{eq:twisted Hopf on divided powers}), we obtain the desired 
inclusion $\ad'(\tE^{(m)}_i)(\tF_j)\in U_\CA^{ev}(\g)$ for any $m\geq 1$.

\noindent
$\bullet$ For $j=i$, we have
\begin{align*}
  \ad'(\tE_i)(\tF_i K^\beta)&=[\tE_i, \tF_iK^{-\zmu_i}]K^{\zlambda_i}=(\tE_i \tF_i-v_i^2\tF_i\tE_i)K^{\zlambda_i-\zmu_i}\\
  &=v_i \frac{1-K^{-2\a_i}}{1-v_i^2}K^{2\a_i}=\frac{v_i}{1-v_i^{-2}}(K^{2\a_i}-1) \in U^{ev}_\CA(\g) \,,
\end{align*}
due to the third line of \eqref{eq:gen-rel-twistedDJ}. Combining this with~\eqref{E-acts-K}, we obtain for $m\geq 2$:
\begin{equation*}
  \ad'(\tE_i^m)(\tF_iK^\beta)=
  \frac{v_i}{1-v_i^{-2}}\prod_{s=0}^{m-2}(1-v^{4\sd_i+2s\sd_i})\tE_i^{m-1}K^{2\a_i +(m-1)\zlambda_i} \,.
\end{equation*}
Therefore, we have: 
\begin{equation}\label{eq:e-f-k}
  \ad'(\tE_i^{(m)})(\tF_iK^\beta) \in U^{ev}_\CA(\g) \qquad \text{for} \quad m \geq 1 \,,
\end{equation}
due to the obvious inclusion
\begin{equation*}
  \frac{v_i\prod_{s=0}^{m-2}(1-v^{4\sd_i+2s\sd_i})}{(1-v_i^{-2})(m)_{v_i}!}\in \CA \qquad \text{for}\quad m \geq 2 \,.
\end{equation*}
Combining~\eqref{eq:e-f-k}, formulas~(\ref{ad on products},~\ref{eq:twisted Hopf on divided powers}), and the inclusion 
$\ad'(\tE^{(m)}_i)(K^{-\beta})\in U^{ev}_{\CA}(\g)$ verified above, we conclude that $\ad'(\tE^{(m)}_i)(\tF_i)$ 
is indeed an element of $U^{ev}_{\CA}(\g)$ for any $m\geq 1$.

\medskip
The case of $\ad'(\tF^{(m)}_i)$ is treated analogously; we leave details to the interested reader.
\end{proof}


\section{Hopf and invariant pairings}\label{S_inv_pair}

Let $\bU^{\geqslant}, \bU^{\leqslant}$ denote the $\BQ(v^{1/2})$-subalgebras of $\bU(\g, P/2)$ generated by 
$\{E_i, K^\mu\}_{1\leq i \leq r}^{\mu \in P/2}$ and $\{F_i, K^\mu\}_{1\leq i \leq r}^{\mu \in P/2}$, respectively.  
We note that both $\bU^{\geqslant}$ and $\bU^{\leqslant}$ are Hopf $\BQ(v^{1/2})$-subalgebras of $\bU(\g,P/2)$ 
endowed with either the standard Hopf algebra structure $(\Delta,S,\varepsilon)$ of~(\ref{standard_Hopf_DJ}) or 
the twisted one $(\Delta',S',\varepsilon')$ of~(\ref{eq: twisted-Hopf}). Let $U^{ev\, \geqslant}_\CA$ and 
$U^{ev\,\leqslant}_\CA$ (respectively, $\dU^{\geqslant}_\CA$ and $\dU^{\leqslant}_\CA$) denote the 
$\CA$-subalgebras of $U^{ev}_\CA(\g)$ (respectively, of $\dU_\CA(\g)$) generated by $U^{ev\,>}_\CA,U^{ev\,0}_\CA$ 
and $U^{ev\, <}_\CA,U^{ev\,0}_\CA$ (respectively, $\dU^{>}_\CA,\dU^0_\CA$ and $\dU^{<}_\CA,\dU^0_\CA$). 
Likewise, define the $R$-algebras $U^{ev\, \geqslant}_q$, $U^{ev\, \leqslant}_q$, $\dU^{\geqslant}_q$, 
$\dU^{\leqslant}_q$.

The goal of this section is to generalize the Hopf pairing of \cite[\S6.12]{j} and the invariant pairing of 
\cite[\S6.20]{j}\footnote{As noted in~\cite[\S6.18]{j}, this pairing slightly differs from the one in~\cite[\S3.1.7-8]{l-book}.}
to our twisted setup and then construct similar pairings (now involving both the modified Lusztig form and the even part algebra):
\begin{equation*}
  \left(\ ,\ \right)'\colon \dU^{\leqslant}_q \times U^{ev\,\geqslant}_q \longrightarrow R \,, \qquad 
  \left(\ ,\ \right)'\colon U^{ev\, \leqslant}_q \x \dU^{\geqslant}_q \longrightarrow R \,,
\end{equation*}
\begin{equation*}
  \left\langle \ ,\ \right\rangle'\colon U^{ev}_q(\g)\times \dU_q(\g)\longrightarrow R \,.
\end{equation*}


\subsection{Two endomorphisms of $\h^*$}\label{ssec kappa-gamma}
\

Let us recall the  skew-symmetric matrix $\Phi=(\phi_{ij})_{i,j=1}^r$ from Section \ref{ssec Sevostyanov phi}. 
It gives rise to two endomorphisms $\kappa,\gamma\in \End(\h^*)$ defined in the basis $\{\alpha_i\}_{i=1}^r$ via:
\begin{equation}\label{kappa and gamma}
  \kappa(\alpha_i):=\alpha_i+\sum_{j=1}^r 2\phi_{ij}\omega^\vee_j=-\zmu_i \qquad \mathrm{and} \qquad 
  \gamma(\alpha_i):=\alpha_i-\sum_{j=1}^r 2\phi_{ij}\omega^\vee_j=\zlambda_i \,,
\end{equation}
where we use the notations~(\ref{lambda and mu}). These endomorphisms are adjoint to each other:
\begin{equation}\label{adjoint}
  \left(\kappa(\alpha),\beta\right) = \left(\alpha,\gamma(\beta)\right) 
  \qquad \mathrm{for\ any} \quad \alpha,\beta\in \h^* \,,
\end{equation}
with respect to the pairing $(\ ,\ )$ on $\h^*$ that is naturally induced from the one on $\h$.

\begin{Lem}\label{Q-to-2P}
We have $\kappa(Q)=\gamma(Q)=2P$. In particular, $\kappa$ and $ \gamma$ are invertible.
\end{Lem}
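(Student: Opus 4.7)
My plan is to show $\kappa(Q), \gamma(Q) \subseteq 2P$ via Lemma~\ref{lem: lambda and mu in 2P}, then to establish the reverse inclusions by writing down the matrices of $\kappa, \gamma\colon Q \to 2P$ in the natural $\BZ$-bases and checking they have determinant $\pm 1$. Invertibility on $\h^*$ then follows by tensoring with $\BQ$.

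For the forward inclusions, note that~\eqref{kappa and gamma} gives $\gamma(\alpha_i) = \zlambda_i$ and $\kappa(\alpha_i) = -\zmu_i$, both of which lie in $2P$ by Lemma~\ref{lem: lambda and mu in 2P}; since the $\alpha_i$ generate $Q$ over $\BZ$, this yields $\gamma(Q) \cup \kappa(Q) \subseteq 2P$. For the reverse inclusions, I would work in the $\BZ$-bases $\{\alpha_i\}$ of $Q$ and $\{2\omega_j\}$ of $2P$ and determine the matrices $M$, $N$ of $\gamma$, $\kappa$ by pairing with $\alpha_k$ and applying the identity $(\alpha_k, 2\omega_j)/(2\sd_k) = \delta_{jk}$. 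The computation already carried out in the proof of Lemma~\ref{lem: lambda and mu in 2P} yields $M_{ki} = (1-\epsilon_{ik})a_{ki}/2$, and a completely analogous calculation starting from $\kappa(\alpha_i) = -\zmu_i$ gives $N_{ki} = (1+\epsilon_{ik})a_{ki}/2$. A direct case analysis of~\eqref{epsilon matrix} then shows that $M, N$ have integer entries, with $M_{ii} = N_{ii} = 1$ and with $M_{ki}$ (respectively $N_{ki}$) nonzero for $k \neq i$ precisely when $\Or$ contains the directed edge $k \to i$ (respectively $i \to k$), in which case the entry equals $a_{ki}$.

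The crux of the argument is then the graph-theoretic observation that $\Dyn(\g)$ is a forest, so any orientation $\Or$ is acyclic; topologically sorting the vertices simultaneously makes both $M$ and $N$ upper triangular with $1$'s on the diagonal, whence $\det M = \det N = 1$. Consequently $\gamma, \kappa \colon Q \iso 2P$ are isomorphisms of free $\BZ$-modules, and extending scalars to $\BQ$ yields the invertibility of $\kappa$ and $\gamma$ as endomorphisms of $\h^*$. The whole argument is essentially bookkeeping together with one combinatorial observation; the only step with real content is the triangularization, which is the closest thing to a ``main obstacle'' that arises.
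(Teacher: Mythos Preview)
Your proof is correct and follows essentially the same strategy as the paper: compute the matrix of $\kappa$ (resp.\ $\gamma$) from $Q$ to $2P$ in natural bases, reorder vertices using acyclicity of the Dynkin diagram, and observe the resulting matrix is unitriangular. The paper phrases the reordering as ``choose the labeling so that $\epsilon_{ij}=-1$ for $i>j$'' and writes the matrix as $(B+2\Phi)$ in the coweight basis before factoring out $\operatorname{diag}(2\sd_i)$, but the content is the same as your computation of the entries $M_{ki}, N_{ki}$.

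One small imprecision: a single topological sort will not make $M$ and $N$ upper triangular \emph{simultaneously}. With your conventions, $M_{ki}\neq 0$ (off-diagonal) forces $k\to i$ while $N_{ki}\neq 0$ forces $i\to k$, so ordering so that sources precede targets makes $M$ upper triangular but $N$ lower triangular. This is harmless for your argument---both are unitriangular, hence both have determinant $1$---but the word ``simultaneously'' should be dropped or replaced by the observation that one is upper and the other lower unitriangular in the same ordering.
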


\begin{proof}
Since the Dynkin diagram of $\g$ has no cycles, we can reorder $\{\alpha_i\}_{i=1}^r$ (respectively $\{\omega_i\}_{i=1}^r$) 
so that $\epsilon_{ij}=-1$ for $i>j$ with $(\alpha_i,\alpha_j)\ne 0$. Then, evoking the symmetrized Cartan matrix $B$ 
of~\eqref{eq:b-matrix}, we get:
\begin{equation*}
  \begin{pmatrix} 
    \kappa(\alpha_1) \\ \vdots \\ \kappa(\alpha_r) 
  \end{pmatrix} =
  (B+2\Phi)\cdot 
  \begin{pmatrix} 
    \omega^\vee_1 \\ \vdots \\ \omega^\vee_r 
  \end{pmatrix} =
  K\cdot \mathrm{diag}(2\sd_1,\cdots,2\sd_r)\cdot 
  \begin{pmatrix} 
    \omega^\vee_1 \\ \vdots \\ \omega^\vee_r 
  \end{pmatrix} =
  K\cdot \begin{pmatrix} 2\omega_1 \\ \vdots \\ 2\omega_r \end{pmatrix} \,.
\end{equation*}
Here, $K=(k_{ij})_{i,j=1}^r$ is an upper-triangular matrix with $k_{ii}=1$ and $k_{ij}=a_{ij},\ i<j$, which thus maps 
the lattice $2P=\bigoplus_{i=1}^r \BZ\cdot 2\omega_i$ bijectively to itself. This completes the proof of $\kappa(Q)=2P$.

The proof of $\gamma(Q)=2P$ is completely analogous. 
\end{proof}


\subsection{Twisted Hopf pairing}\label{ssec twisted Hopf pairing}
\

Let us pick  $\sN \in \BN$ such that $\frac{1}{2}(P/2, P/2) \in \frac{1}{\sN}\BZ$ and $(\kappa^{-1}(P/2), P/2)\in \frac{1}{\sN} \BZ$. 
The following result is just an extension of \cite[\S6.12]{j}:

\begin{Prop}\label{prop:J-pairing-6.12}
There exists a unique $\BQ(v^{1/2})$-bilinear pairing 
\begin{equation}\label{eq:old-pairing}
  (\ , \ )\colon \bU^{\leqslant} \x \bU^{\geqslant} \longrightarrow \BQ(v^{1/\sN})
\end{equation} 
such that 
\begin{equation}\label{old pairing condition 1}
  (y,xx')=(\Delta(y), x'\otimes x) \,, \qquad (yy', x)=(y\otimes y', \Delta(x)) \,,
\end{equation}
\begin{equation}\label{old pairing condition 2}
  (F_i, E_j)=-\frac{\delta_{i,j}}{v_i-v_i^{-1}} \,, \quad 
  (K^\mu, K^{\mu'})=v^{-(\mu,\mu')} \,, \quad (F_i, K^{\mu'})=(K^\mu, E_i)=0 \,,
\end{equation}
for any $1\leq i,j \leq r,$ $ \mu, \mu'\in P/2$, $x,x'\in \bU^{\geqslant}$, and $y,y'\in \bU^{\leqslant}$.
\end{Prop}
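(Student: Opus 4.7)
The plan is to mirror the construction of the Hopf pairing in \cite[\S6.12]{j}, adapted to the enlarged Cartan lattice $P/2$ in place of $Q$. The choice of $\sN$ in the paragraph preceding the statement is precisely what is needed to interpret $v^{-(\mu,\mu')}$ for $\mu,\mu' \in P/2$ as an element of $\BQ(v^{1/\sN})$.

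For uniqueness: The subalgebras $\bU^{\geqslant}$ and $\bU^{\leqslant}$ are generated by $\{E_i, K^\mu\}_{1\leq i\leq r}^{\mu \in P/2}$ and $\{F_i, K^\mu\}_{1\leq i\leq r}^{\mu \in P/2}$, respectively. The initial conditions \eqref{old pairing condition 2} fix the pairing on pairs of generators, while the coproduct compatibility \eqref{old pairing condition 1}, combined with the explicit formulas for $\Delta(E_i), \Delta(F_i), \Delta(K^\mu)$ from \eqref{standard_Hopf_DJ}, inductively determines the pairing on all products of generators. Thus any two pairings satisfying \eqref{old pairing condition 1}--\eqref{old pairing condition 2} coincide.

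For existence, I would follow Jantzen's strategy: first construct a bilinear pairing between the free algebras $\wt{\bU}^{\leqslant}$ and $\wt{\bU}^{\geqslant}$ generated by $\{F_i, K^\mu\}$ and $\{E_i, K^\mu\}$ subject only to the Cartan-type relations \eqref{DJ eqn 1}--\eqref{DJ eqn 2} (which give those free algebras compatible bialgebra structures with the same coproducts as in \eqref{standard_Hopf_DJ}), using the initial data \eqref{old pairing condition 2} and then propagating to products via \eqref{old pairing condition 1}. The verification that this is well-defined on the free algebras is easy because the Cartan relations $K^\mu K^{\mu'}=K^{\mu+\mu'}$ are dual to themselves under the symmetric bilinear exponent $(\ ,\ )$, and the commutation relations $K^\mu E_i K^{-\mu}=v^{(\alpha_i,\mu)}E_i$ pair consistently using the coproducts together with $(K^\mu, E_i)=0$.

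The main obstacle is then to show that the pairing descends to the quotient by the Serre relators \eqref{DJ eqn 4}--\eqref{DJ eqn 5}, i.e.\ that these relators pair to zero with every element of $\wt{\bU}^{\leqslant}$ (and symmetrically on the other side). Since $K^\mu$ pairs trivially with $E_i$ and $F_j$ by \eqref{old pairing condition 2}, the enlargement of the Cartan from $Q$ to $P/2$ plays no role in this verification: the computation reduces to an identity of $v_i$-binomial coefficients that is exactly the one carried out in \cite[\S6.12--6.14]{j}. The only new ingredient is the need to enlarge the target ring from $\BQ(v)$ to $\BQ(v^{1/\sN})$ so as to accommodate the values $(K^\mu,K^{\mu'})=v^{-(\mu,\mu')}$ with $\mu,\mu'\in P/2$; this is accounted for by our choice of $\sN$, and Jantzen's argument then carries through verbatim.
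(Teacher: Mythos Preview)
Your proposal is correct and matches the paper's approach exactly: the paper states this result without proof, simply noting that it ``is just an extension of \cite[\S6.12]{j}'', and your outline fills in precisely those details---Jantzen's construction carries over verbatim once the Cartan lattice is enlarged to $P/2$ and the coefficient field to $\BQ(v^{1/\sN})$.
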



The pairing \eqref{eq:old-pairing} involves the standard Hopf structure on $\bU(\g, P/2)$. The following result 
provides an appropriate generalization of the above proposition to our twisted setup:

\begin{Prop}\label{twisted Hopf pairing on DJ}
There exists a unique $\BQ(v^{1/2})$-bilinear pairing 
\begin{equation}\label{pairing of DJ halves}
  (\ ,\ )'\colon \bU^{\leqslant} \times \bU^{\geqslant} \longrightarrow \BQ(v^{1/\sN})
\end{equation}
such that 
\begin{equation}\label{pairing condition 1}
  (y,xx')'=(\Delta'(y),x'\otimes x)' \,, \qquad  (yy',x)'=(y\otimes y', \Delta'(x))' \,,
\end{equation} 
\begin{equation}\label{pairing condition 2}
  (\tF_i,\tE_j)'=-\frac{\delta_{i,j}}{v_i-v_i^{-1}} \,, \quad 
  (K^\mu, K^{\mu'})'=v^{-(\kappa^{-1}(\mu),\mu')} \,, \quad
  (\tF_i,K^{\mu'})'=(K^\mu,\tE_i)'=0 \,,
\end{equation}
for any $1\leq i,j\leq r$, $\mu,\mu'\in P/2$, $x,x'\in \bU^{\geqslant}$, and $y,y'\in \bU^{\leqslant}$.
\end{Prop}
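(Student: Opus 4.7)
The plan is to mirror the proof of the untwisted Hopf pairing (Proposition \ref{prop:J-pairing-6.12}), substituting $\Delta'$ for $\Delta$ throughout. Uniqueness is the easy direction: by the PBW decomposition of Lemma \ref{lem:PBW-twisted-DJ}, any element of $\bU^\geqslant$ is a $\BQ(v^{1/2})$-linear combination of monomials $\tE^{\vec k} K^\mu$, and analogously for $\bU^\leqslant$. Iterating the Hopf pairing conditions (\ref{pairing condition 1}) then reduces the value of $(\ ,\ )'$ on any such pair to a sum of products of values on generators, which are prescribed by (\ref{pairing condition 2}). In particular, the triangularity $(\bU^\leqslant)_{-\beta} \times (\bU^\geqslant)_\alpha \to 0$ for $\alpha \neq \beta$ in the $Q$-grading follows by induction on the total length of the monomials.

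For existence, I would proceed in two stages. First, I would construct the restriction of $(\ ,\ )'$ to $\bU^<_{\Phi} \times \bU^>_{\Phi}$ (the subalgebras generated by the $\tF_i$ and by the $\tE_j$; note these coincide with $\bU^<_{P/2}, \bU^>_{P/2}$ by Lemma \ref{lem: modified elements}) by the standard Drinfeld double recipe: using $\Delta'(\tE_i) = 1\otimes \tE_i + \tE_i \otimes K^{-\zlambda_i}$ from (\ref{eq: twist-Hopf-2}) iteratively, one gets a closed formula for $(\tF_{i_1}\cdots\tF_{i_n}, \tE_{j_1}\cdots\tE_{j_n})'$ as a sum over permutations weighted by $v$-powers. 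Then I would extend to the Cartan components via (\ref{pairing condition 2}) and factorization: for $y\in \bU^<_\Phi$, $x\in \bU^>_\Phi$ of matching $Q$-weight and $\mu,\nu\in P/2$, one forces $(y K^\mu, K^\nu x)'=v^{-(\kappa^{-1}(\mu),\nu)}(y,x)'$ after commuting the $K^\mu$ through using the weight of $y$. Well-definedness of this formula requires the verification that the defining relations of $\bU^\leqslant$ and $\bU^\geqslant$ — namely the twisted Serre relations in (\ref{eq:gen-rel-twistedDJ}), the Cartan commutations, and $K^\mu K^{\mu'}=K^{\mu+\mu'}$ — all lie in the radicals of the constructed form.

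The main obstacle is this last step: vanishing of the Serre relations under the pairing. The saving feature is the structural parallel between $\Delta'(\tE_i) = 1\otimes \tE_i + \tE_i\otimes K^{-\zlambda_i}$ and the untwisted $\Delta(E_i) = E_i\otimes 1 + K_i\otimes E_i$, with the role of $\alpha_i$ now played by $\zlambda_i$. By Lemma \ref{lem: lambda and mu in 2P}, $\zlambda_i,\zmu_i\in 2P\subset P/2$, so the elements $K^{-\zlambda_i},K^{\zmu_i}$ are legitimate members of $\bU^0_{\Phi}$; and by Lemma \ref{Q-to-2P}, $\kappa^{-1}$ is well-defined (as an endomorphism of $\h^*$ mapping $P/2$ into $\tfrac{1}{\sN}P/2$, hence producing values in $\BQ(v^{1/\sN})$ as claimed). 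The scaling $(K^\mu,K^{\mu'})' = v^{-(\kappa^{-1}(\mu),\mu')}$ is precisely calibrated so that the cross-terms produced by expanding $\Delta'$ on a Serre monomial cancel in the same pattern as in Jantzen's original computation \cite[\S 6]{j}, with $\kappa^{-1}(\zlambda_i)=\alpha_i$ replacing the identity in the untwisted case. I expect the Serre vanishing to reduce, after extracting these matching $v$-power factors from Cartan pairings, to the untwisted Serre identity already established.
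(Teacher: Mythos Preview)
Your approach is essentially the paper's own, whose proof reads in full: ``Follows by applying a line-to-line reasoning as in~\cite[\S6.8--6.12]{j}.'' Two small slips worth correcting: first, $\bU^<_\Phi$ and $\bU^<_{P/2}$ do \emph{not} coincide as subalgebras (one is generated by $\tF_i=K^{-\nu^<_i}F_i$, the other by $F_i$; they agree only after adjoining the Cartan to form $\bU^\leqslant$), though this is harmless for your argument. Second, your key relation $\kappa^{-1}(\zlambda_i)=\alpha_i$ is false (that would force $\Phi=0$); the correct identity is $\gamma^{-1}(\zlambda_i)=\alpha_i$, and what actually makes the Serre cancellation match Jantzen's is the adjointness $(\kappa^{-1}(\mu),\zlambda_i)=(\mu,\gamma^{-1}(\zlambda_i))=(\mu,\alpha_i)$ from~(\ref{adjoint}) --- so the Cartan pairing $(K^\mu,K^{-\zlambda_i})'=v^{(\mu,\alpha_i)}$ reproduces exactly the untwisted exponent, and the twisted Serre relations of~(\ref{eq:gen-rel-twistedDJ}) (which carry their own extra $v^{m\epsilon_{ij}b_{ij}}$ factors) land in the radical by the same skew-derivation mechanism.
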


\begin{proof}
Follows by applying a line-to-line reasoning as in~\cite[\S6.8--6.12]{j}. 
\end{proof}

\begin{Rem} 
We have the following equalities:
\begin{equation}\label{eq:degree-zero property}
  (\bU^\leqslant_{-\nu}, \bU^{\geqslant}_\mu) =0 \,, \quad \big(\bU^{\leqslant}_{-\nu} , \bU^{\geqslant}_{\mu}\big)' = 0 
  \qquad \mathrm{for} \ \ \nu \ne \mu \,,
\end{equation}
\begin{equation}\label{eq:old-Cartan-reduction} 
  (yK^\lambda, x K^{\lambda'}) = (y,x)v^{-(\lambda, \lambda')} 
  \quad \mathrm{for} \ y\in \bU^<_{P/2}, x \in \bU^>_{P/2},\ \lambda, \lambda'\in P/2 \,,
\end{equation}
\begin{equation}\label{eq:new-Cartan-reduction}
  (\wt{y}K^\lambda, \wt{x}K^{\lambda'})'=(\wt{y},\wt{x})'v^{(\lambda, \deg(\wt{x}))-(\kappa^{-1}(\lambda), \lambda')}
  \quad \mathrm{for} \ \wt{y} \in \bU^<_\Phi, \wt{x} \in\bU^>_\Phi,\ \lambda, \lambda' \in P/2 \,,
\end{equation}
where we write $\deg(\tilde{x})$ for the weight of $\tilde{x}$.
\end{Rem}

\begin{Cor}
For any $1\leq i\leq r$ and $n\in \BN$, we have: 
\begin{equation*}
  (\tF^n_i,\tE^n_i)' = 
  (-1)^n v_i^{-n}\frac{(n)_{v_i}!}{(1-v_i^{-2})^n} \,.
\end{equation*}
\end{Cor}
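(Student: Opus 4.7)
The plan is to prove this by induction on $n$, using the coproduct property of the pairing $(\cdot,\cdot)'$ to convert a power of $\tF_i$ on the left into a coproduct of a power of $\tE_i$ on the right, and then using weight orthogonality \eqref{eq:degree-zero property} to kill all but one term.

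First I would compute $\Delta'(\tE_i^n)$ explicitly. Since $\Delta'$ is an algebra homomorphism, $\Delta'(\tE_i^n) = (1\otimes \tE_i + \tE_i\otimes K^{-\zlambda_i})^n$. Setting $A:=1\otimes \tE_i$ and $B:=\tE_i\otimes K^{-\zlambda_i}$, the commutation relation $K^\mu \tE_i K^{-\mu}=v^{(\alpha_i,\mu)}\tE_i$ from \eqref{eq:gen-rel-twistedDJ} together with $(\alpha_i,\zlambda_i)=(\alpha_i,\alpha_i)=2\sd_i$ (since $\phi_{ii}=0$) gives $BA = v_i^{-2}AB$. The classical $q$-binomial theorem for $q$-commuting variables with parameter $q=v_i^{-2}$ (noting that $1+v_i^{-2}+\cdots+v_i^{-2(n-1)}=(n)_{v_i}$) then yields
\begin{equation*}
  \Delta'(\tE_i^n)=\sum_{k=0}^n \binom{n}{k}_{\!v_i} \tE_i^{n-k}\otimes \tE_i^k K^{-(n-k)\zlambda_i} \,.
\end{equation*}

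Next I would apply $(\tF_i\otimes \tF_i^{n-1},\cdot)'$ to this identity via \eqref{pairing condition 1}:
\begin{equation*}
  (\tF_i^n,\tE_i^n)' = \sum_{k=0}^n \binom{n}{k}_{\!v_i} (\tF_i,\tE_i^{n-k})'\cdot (\tF_i^{n-1},\tE_i^k K^{-(n-k)\zlambda_i})' \,.
\end{equation*}
By \eqref{eq:degree-zero property}, the factor $(\tF_i,\tE_i^{n-k})'$ vanishes unless $n-k=1$, so only the $k=n-1$ term survives. Using $\binom{n}{n-1}_{v_i}=(n)_{v_i}$, $(\tF_i,\tE_i)'=-1/(v_i-v_i^{-1})$, and \eqref{eq:new-Cartan-reduction} applied with $\lambda=0$ (so the trailing $K^{-\zlambda_i}$ contributes a factor of $1$), we obtain the recursion
\begin{equation*}
  (\tF_i^n,\tE_i^n)' = -\frac{(n)_{v_i}}{v_i-v_i^{-1}}\cdot (\tF_i^{n-1},\tE_i^{n-1})' \,.
\end{equation*}
Iterating from the base case $(\tF_i^0,\tE_i^0)'=1$ yields $(\tF_i^n,\tE_i^n)' = (-1)^n (n)_{v_i}!/(v_i-v_i^{-1})^n$, and rewriting $(v_i-v_i^{-1})^n = v_i^n(1-v_i^{-2})^n$ gives the claimed formula.

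No step here looks genuinely difficult, though one should be careful that the $q$-commutation $BA=v_i^{-2}AB$ produces exactly $(n)_{v_i}$ rather than something like $[n]_{v_i}$ — the key sign/power bookkeeping happens in the conversion between these two normalizations of $q$-integers. The only other subtlety worth flagging is the application of \eqref{eq:new-Cartan-reduction}, which is valid because $\tF_i^{n-1}$ carries no Cartan factor ($\lambda=0$), making the exponent $v^{(\lambda,\deg)-(\kappa^{-1}(\lambda),\lambda')}$ trivial regardless of $\lambda'=-\zlambda_i$.
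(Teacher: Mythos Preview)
Your proof is correct and follows essentially the same approach as the paper's: both derive the recursion $(\tF_i^n,\tE_i^n)' = -\frac{(n)_{v_i}}{v_i-v_i^{-1}}(\tF_i^{n-1},\tE_i^{n-1})'$ by applying \eqref{pairing condition 1} to split off one $\tF_i$ and then invoking weight orthogonality. The only cosmetic differences are that the paper factors $\tF_i^n=\tF_i^{n-1}\cdot\tF_i$ (so orthogonality kills terms in the second tensor factor rather than the first) and cites the coproduct formula \eqref{eq:twisted Hopf on divided powers} directly instead of rederiving it via the $q$-binomial theorem.
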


\begin{proof}
Due to~\eqref{pairing condition 1} and~\eqref{eq:twisted Hopf on divided powers}, we have:
\begin{multline*}
  (\tF^n_i,\tE^n_i)' = (\tF^{n-1}_i\otimes \tF_i ,\Delta'(\tE^n_i))' = 
  (n)_{v_i} \cdot (\tF^{n-1}_i,\tE^{n-1}_i)' (\tF_i,\tE_i K^{-(n-1)\zlambda_i})' = \\
  \frac{-(n)_{v_i}}{v_i-v_i^{-1}} (\tF^{n-1}_i,\tE^{n-1}_i)' \,.
\end{multline*}
The result now follows by induction on $n$. 
\end{proof}

The next proposition relates the pairings \eqref{eq:old-pairing} and \eqref{pairing of DJ halves}. Consider the linear 
endomorphisms $\varpi^{\leqslant}$ of $\bU^{\leqslant}$ and $\varpi^{\geqslant}$ of $\bU^{\geqslant}$ defined via:
\begin{align}
  \varpi^{\leqslant} &\colon F_{i_1}\cdots F_{i_n}K^\mu \mapsto 
  K^{-\nu^{<}_{i_1}-\ldots-\nu^{<}_{i_n}} F_{i_1}\cdots F_{i_n} K^{\kappa(\mu)} 
    \label{eq:neg-half-match} \,, \\
  \varpi^{\geqslant} &\colon E_{j_n}\cdots E_{j_1}K^\nu \mapsto 
  E_{j_n}\cdots E_{j_1} K^{\nu-\nu^{<}_{j_n}-\ldots-\nu^{<}_{j_1}} 
    \label{eq:pos-half-match} \,, 
\end{align}
for any $n\geq 0,\, 1\leq i_1,\ldots,i_n,j_1,\ldots, j_n\leq r,\, \mu,\nu\in P/2$, with 
$\nu^<_j=\sum_{p=1}^r \phi_{jp}\omega_p^\vee$ of~\eqref{tilda elements}, $\kappa\in \End(\h^*)$ 
of~\eqref{kappa and gamma}. These maps are well-defined as they preserve the $v$-Serre 
relations~(\ref{DJ eqn 4},~\ref{DJ eqn 5}).

\begin{Prop}\label{lem:pairing comparison}
For any $y\in \bU^{\leqslant}$ and $x\in \bU^{\geqslant}$, we have:
\begin{equation*}
  (y,x)=\big(\varpi^{\leqslant}(y),\varpi^{\geqslant}(x)\big)' \,.
\end{equation*}
\end{Prop}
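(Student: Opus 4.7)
The plan is to invoke the uniqueness in Proposition~\ref{prop:J-pairing-6.12}: setting $B(y,x) := (\varpi^{\leqslant}(y),\varpi^{\geqslant}(x))'$ on $\bU^{\leqslant}\times\bU^{\geqslant}$, it suffices to verify that $B$ satisfies the defining axioms~\eqref{old pairing condition 1}--\eqref{old pairing condition 2}, which will force $B = (\cdot,\cdot)$.

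I would first check the generator values. Using $F_i = K^{\nu^<_i}\tilde F_i$, $E_j = \tilde E_j K^{-\nu^>_j}$, and $\zlambda_j = -\nu^<_j - \nu^>_j$, the definitions of $\varpi^{\leqslant},\varpi^{\geqslant}$ unpack to $\varpi^{\leqslant}(F_i) = \tilde F_i$, $\varpi^{\geqslant}(E_j) = \tilde E_j K^{\zlambda_j}$, $\varpi^{\leqslant}(K^\mu) = K^{\kappa(\mu)}$, $\varpi^{\geqslant}(K^{\mu'}) = K^{\mu'}$. Combining~\eqref{pairing condition 2} with~\eqref{eq:new-Cartan-reduction} then gives $B(F_i,E_j) = -\delta_{ij}/(v_i-v_i^{-1})$, $B(K^\mu, K^{\mu'}) = v^{-(\kappa^{-1}(\kappa(\mu)),\mu')} = v^{-(\mu,\mu')}$, and vanishing on the mixed pairings, matching~\eqref{old pairing condition 2} exactly.

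For the Hopf axioms~\eqref{old pairing condition 1}, I would use Lemma~\ref{lem: triangular-U(g,P/2)} to write $y = y_0 K^\mu$, $x = x_0 K^\nu$ with $y_0\in\bU^<_{P/2}$ of weight $-\alpha$ and $x_0\in\bU^>_{P/2}$ of weight $\beta$; by~\eqref{eq:degree-zero property} both sides vanish unless $\alpha = \beta$. The left side of the target identity then factorizes as $(y_0,x_0)\, v^{-(\mu,\nu)}$ via~\eqref{eq:old-Cartan-reduction}. On the right, computing $\varpi^{\leqslant}(y) = v^c\, \tilde y_0\, K^{\kappa(\mu)}$ and $\varpi^{\geqslant}(x) = v^{c'}\, \tilde x_0\, K^{\nu+\zlambda_\beta}$ for explicit scalars $c,c'$ obtained by commuting the $K^{\nu^<_i}$-pieces past tilde generators (with $\tilde y_0,\tilde x_0$ the tilde analogs of $y_0, x_0$), and then invoking~\eqref{eq:new-Cartan-reduction}, the $K$-contribution is $v^{(\kappa(\mu),\beta) - (\mu,\nu+\zlambda_\beta)}$. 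Since $\gamma(\alpha_i) = \zlambda_i$ gives $\zlambda_\beta = \gamma(\beta)$, the adjointness~\eqref{adjoint} forces $(\kappa(\mu),\beta) = (\mu,\zlambda_\beta)$, so this factor collapses to $v^{-(\mu,\nu)}$ and the $\mu,\nu$-dependence matches on both sides.

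What remains is the purely weight-homogeneous equality $(y_0,x_0) = v^{c-c'}(\tilde y_0,\tilde x_0)'$ on PBW monomials, which I would prove by induction on $\het(\alpha)$: the base case $\alpha = \alpha_i$ is the already-verified generator identity, and the inductive step splits off one generator using the Hopf-pairing axioms~\eqref{old pairing condition 1} and~\eqref{pairing condition 1} and compares the two coproduct expansions. The hardest part will be bookkeeping the commutation scalars $c,c'$ so that they cancel the discrepancy between $\Delta$ and $\Delta' = \sF\Delta\sF^{-1}$ on the Cartan pieces $K_{j_p}$ appearing in $\Delta(E_{j_1}\cdots E_{j_n})$; the mechanism is forced by the twist construction itself, and all residual scalar identities ultimately reduce to further instances of the adjointness~\eqref{adjoint}.
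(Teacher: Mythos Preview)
Your opening announces a uniqueness argument---verify that $B$ satisfies \eqref{old pairing condition 1}--\eqref{old pairing condition 2} and invoke Proposition~\ref{prop:J-pairing-6.12}---but what you actually execute from the second paragraph on is a direct proof that $B(y,x)=(y,x)$: you factor off the Cartan pieces, reduce to the identity on $\bU^<_{P/2}\times\bU^>_{P/2}$, and then propose induction on the height. These are not the same thing, and the uniqueness route is not as clean as you hope: neither $\varpi^{\leqslant}$ nor $\varpi^{\geqslant}$ is an algebra or coalgebra homomorphism (for instance $\varpi^{\geqslant}(E_{j_1}K^\nu)\,\varpi^{\geqslant}(E_{j_2}K^{\nu'})$ and $\varpi^{\geqslant}(E_{j_1}K^\nu\cdot E_{j_2}K^{\nu'})$ differ by a factor $v^{(\nu^<_{j_1},\alpha_{j_2})}$), so checking \eqref{old pairing condition 1} for $B$ directly from \eqref{pairing condition 1} requires exactly the same scalar bookkeeping you defer to the inductive step. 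Once that bookkeeping is done you have proved the proposition outright, and the appeal to uniqueness is idle.

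That said, your actual computation is the paper's argument: the paper likewise separates the Cartan reduction (using \eqref{eq:old-Cartan-reduction} and \eqref{eq:new-Cartan-reduction} exactly as you do) from the Cartan-free core, and for the latter expands both $(F_{i_1}\cdots F_{i_n},\,E_{j_n}\cdots E_{j_1})$ and its $\varpi$-transported analogue via iterated coproducts and compares term by term. The paper does the full expansion in one shot rather than splitting off a single generator, but the content is the same. One correction on the mechanism: the key cancellation in the Cartan-free part is governed by the skew-symmetry $\phi_{ab}=-\phi_{ba}$ of $\Phi$ (this kills the exponent $\sum_{m<l}\phi_{i_l i_m}+\sum_{m<l}\phi_{i_{\sigma(l)} i_{\sigma(m)}}+\sum_{m<l}^{\sigma(m)<\sigma(l)}2\phi_{i_{\sigma(m)} i_{\sigma(l)}}$), not the adjointness \eqref{adjoint}, which is used only in the Cartan step. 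Also, your $v^{c-c'}$ should read $v^{c+c'}$.
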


\begin{proof} 
Let us first evaluate the pairing $(y,x)$ for any $y=F_{i_1}\cdots F_{i_n}\in \bU^<_{P/2}$ 
and $x=E_{j_n}\cdots E_{j_1}$$\in \bU^>_{P/2}$. Applying $n-1$ times the first equality 
of~\eqref{old pairing condition 1}, we obtain:  
\begin{equation*}
  (y,x)=(F_{i_1}\cdots F_{i_n},E_{j_n}\cdots E_{j_1})=
  (\Delta^{(n-1)}(F_{i_1})\cdots \Delta^{(n-1)}(F_{i_n}) , E_{j_1}\otimes \cdots \otimes E_{j_n}) \,.    
\end{equation*}
Here, the $(n-1)$-fold coproduct $\Delta^{(n-1)}(F_i)\in {\bU^{\leqslant}}^{\otimes n}$ is explicitly given by:
\begin{equation*}
  \Delta^{(n-1)}(F_i) = 
  \sum_{a=1}^n \underbrace{1\otimes \cdots \otimes 1}_{a-1 \text{ times}} \otimes F_i \otimes 
  \underbrace{K^{-\alpha_i}\otimes \cdots \otimes K^{-\alpha_i}}_{n-a \text{ times}} \,.  
\end{equation*}
Therefore, we have: 
\begin{multline*}
  \Delta^{(n-1)}(F_{i_1})\cdots \Delta^{(n-1)}(F_{i_n}) = 
  \sum_{\sigma\in S_n} \bigotimes^{\to}_{1\leq l\leq n} \prod_{m<l}^{\sigma(m)<\sigma(l)} K^{-\alpha_{i_{\sigma(m)}}} 
  \cdot F_{i_{\sigma(l)}} \cdot \prod_{m<l}^{\sigma(m)>\sigma(l)} K^{-\alpha_{i_{\sigma(m)}}} + (\cdots) \\ = 
  \sum_{\sigma\in S_n} \bigotimes^{\to}_{1\leq l\leq n} 
  v^{\sum_{1\leq m<l}^{\sigma(m)<\sigma(l)} (\alpha_{i_{\sigma(m)}},\alpha_{i_{\sigma(l)}})}\cdot 
  F_{i_{\sigma(l)}} \cdot \prod_{m<l} K^{-\alpha_{i_{\sigma(m)}}} + (\cdots) \,.
\end{multline*}
Here, $\underset{1\leq l\leq n}{\overset{\to}{\bigotimes}}$ denotes the ordered tensor product (that is, 
$\underset{1\leq l\leq n}{\overset{\to}{\bigotimes}} x_i= x_1\otimes x_2\otimes \cdots \otimes x_n$), while 
$(\cdots)$ denotes all other terms with at least one tensorand being of degree zero, which thus have a trivial 
$(\ ,\ )$-pairing with $E_{j_1}\otimes \cdots \otimes E_{j_n}$ by~\eqref{eq:degree-zero property}. 
As $(F_iK^{\mu},E_j)=(F_i,E_j)$, we thus get:
\begin{equation}\label{eq:pair-comp-2}
  (F_{i_1}\cdots F_{i_n},E_{j_n}\cdots E_{j_1}) =
  \sum_{\sigma\in S_n} v^{\sum_{1\leq m<l\leq n}^{\sigma(m)<\sigma(l)} (\alpha_{i_{\sigma(m)}},\alpha_{i_{\sigma(l)}})} \cdot
  (F_{i_{\sigma(1)}},E_{j_1}) \cdots (F_{i_{\sigma(n)}},E_{j_n}) \,.    
\end{equation}

Let us now similarly compute the twisted Hopf pairing $(\varpi^{\leqslant}(y),\varpi^{\geqslant}(x))'$ for $x$ and 
$y$ as above. Rewriting~\eqref{eq:neg-half-match} as 
  $\varpi^{\leqslant}(F_{i_1}\cdots F_{i_n})=v^{\sum_{1\leq m<l\leq n} \phi_{i_l i_m}}\tF_{i_1}\cdots \tF_{i_n}$, 
we obtain:
\begin{multline*}
  (\varpi^{\leqslant}(y),\varpi^{\geqslant}(x))' = \\ 
  v^{\sum_{1\leq m<l\leq n} \phi_{i_l i_m}}
  (\Delta'^{(n)}(\tF_{i_1})\cdots \Delta'^{(n)}(\tF_{i_n}) , 
    K^{-\nu^<_{j_1}-\ldots-\nu^<_{j_n}}\otimes E_{j_1}\otimes \cdots \otimes E_{j_n})' \,.    
\end{multline*}
Here, the $n$-fold coproduct $\Delta'^{(n)}(\tF_i)\in {\bU^{\leqslant}}^{\otimes(n+1)}$ is explicitly given by:
\begin{align*}
  \Delta'^{(n)}(\tF_i) & =
  \sum_{a=1}^{n+1} \underbrace{1\otimes \cdots \otimes 1}_{a-1 \text{ times}} \otimes \tF_i \otimes 
  \underbrace{K^{\zmu_i}\otimes \cdots \otimes K^{\zmu_i}}_{n+1-a \text{ times}} \\ 
  & = \sum_{a=1}^{n+1} \underbrace{1\otimes \cdots \otimes 1}_{a-1 \text{ times}} \otimes \tF_i\otimes 
  \underbrace{K^{-\kappa(\alpha_i)}\otimes \cdots \otimes K^{-\kappa(\alpha_i)}}_{n+1-a \text{ times}} \,.
\end{align*}
Therefore, we have: 
\begin{align*}
  \Delta'^{(n)}(\tF_{i_1})\cdots & \Delta'^{(n)}(\tF_{i_n}) \\
  & = 1\otimes \sum_{\sigma\in S_n} 
    \bigotimes^{\to}_{1\leq l\leq n} \prod_{m<l}^{\sigma(m)<\sigma(l)} K^{-\kappa(\alpha_{i_{\sigma(m)}})} 
    \cdot \tF_{i_{\sigma(l)}} \cdot \prod_{m<l}^{\sigma(m)>\sigma(l)} K^{-\kappa(\alpha_{i_{\sigma(m)}})} + (\cdots) \\
  & = 1\otimes \sum_{\sigma\in S_n} \bigotimes^{\to}_{1\leq l\leq n} 
    v^{\sum_{m<l}^{\sigma(m)<\sigma(l)} (\kappa(\alpha_{i_{\sigma(m)}}),\alpha_{i_{\sigma(l)}})} \cdot 
    \tF_{i_{\sigma(l)}} \cdot \prod_{m<l} K^{-\kappa(\alpha_{i_{\sigma(m)}})} + (\cdots) \,,
\end{align*}
where $(\cdots)$ denotes all other terms which, for degree reasons~\eqref{eq:degree-zero property}, have a zero 
pairing with $K^{-\nu^<_{j_1}-\ldots-\nu^<_{j_n}}\otimes E_{j_1}\otimes \cdots \otimes E_{j_n}$ via $(\ ,\ )'$.  
Due to the second equality of~\eqref{pairing condition 1}, we also~have:
\begin{equation*}
  (\tF_i K^{-\kappa(\mu)},E_j)' = 
  (\tF_i\otimes K^{-\kappa(\mu)},E_j\otimes K^{\sum_{p=1}^{r} \phi_{jp} \omega_p^\vee})' =
  v^{\sum_{p=1}^{r} \phi_{jp}(\mu,\omega_p^\vee)}(\tF_i,E_j)' \,.
\end{equation*} 
Combining all the above, we thus obtain:
\begin{multline*}
  (\varpi^{\leqslant}(F_{i_1}\cdots F_{i_n}),\varpi^{\geqslant}(E_{j_n}\cdots E_{j_1}))'=\\
  \sum_{\sigma\in S_n} \left(v^{\sum_{1\leq m<l\leq n}^{\sigma(m)<\sigma(l)} (\alpha_{i_{\sigma(m)}},\alpha_{i_{\sigma(l)}})} \cdot
  (\tF_{i_{\sigma(1)}},E_{j_1})' \cdots (\tF_{i_{\sigma(n)}},E_{j_n})' \right. \times \\ 
  \left. v^{\sum_{m<l} \phi_{i_l i_m} + \sum_{m<l} \phi_{i_{\sigma(l)} i_{\sigma(m)}} + 
     \sum_{m<l}^{\sigma(m)<\sigma(l)} 2\phi_{i_{\sigma(m)} i_{\sigma(l)}}} \right) \,.    
\end{multline*}
But the exponent of $v$ in the last line above vanishes due to $\phi_{ab}=-\phi_{ba}$, so that:
\begin{equation}\label{eq:pair-comp-3'}
\begin{split}
  & (\varpi^{\leqslant}(F_{i_1}\cdots F_{i_n}),\varpi^{\geqslant}(E_{j_n}\cdots E_{j_1}))' = \\
  & \qquad \qquad \qquad 
    \sum_{\sigma\in S_n} v^{\sum_{1\leq m<l\leq n}^{\sigma(m)<\sigma(l)} (\alpha_{i_{\sigma(m)}},\alpha_{i_{\sigma(l)}})} \cdot 
    (\tF_{i_{\sigma(1)}},E_{j_1})' \cdots (\tF_{i_{\sigma(n)}},E_{j_n})'  \,.    
\end{split}
\end{equation}
Comparing~(\ref{eq:pair-comp-2},~\ref{eq:pair-comp-3'}), evoking~\eqref{eq:degree-zero property} and
$(F_i,E_j)=-\frac{\delta_{i,j}}{v_i-v_i^{-1}}=(\tF_i,E_j)'$, we obtain: 
\begin{equation*}
  (y,x)=(\varpi^{\leqslant}(y),\varpi^{\geqslant}(x))' \qquad 
  \mathrm{for\ any}\quad y\in \bU^<_{P/2},\,  x\in \bU^>_{P/2} \,.
\end{equation*}

It remains only to incorporate the Cartan part. For any homogeneous $y\in \bU^<_{P/2}, x \in \bU^>_{P/2}$:  
\begin{equation*}
  (y K^\mu , x K^\nu) = v^{-(\mu,\nu)} \cdot (y,x) \,,
\end{equation*}
in accordance with~\eqref{eq:old-Cartan-reduction}. We also note that 
$\hat{y}:=\varpi^{\leqslant}(y)=\varpi^{\leqslant}(yK^\mu)K^{-\kappa(\mu)}$ lies in $\bU^<_\Phi$, 
while $\hat{x}:=\varpi^{\geqslant}(x)K^{-\gamma(\deg(x))}=\varpi^{\geqslant}(xK^\nu)K^{-\gamma(\deg(x))-\nu}$ 
lies in $\bU^>_\Phi$, so that by~\eqref{eq:new-Cartan-reduction} we get: 
\begin{equation*}
  (\varpi^{\leqslant}(yK^\mu),\varpi^{\geqslant}(xK^\nu))' =
  v^{-(\mu,\nu)} \cdot (\hat{y},\hat{x})' = 
  v^{-(\mu,\nu)} \cdot (\hat{y},\hat{x}K^{\gamma(\deg(x))})' = 
  v^{-(\mu,\nu)} \cdot (\varpi^{\leqslant}(y),\varpi^{\geqslant}(x))' \,. 
\end{equation*}
Combining the above two equalities with the earlier part of the proof, we obtain 
\begin{equation*}
  (\varpi^{\leqslant}(yK^\mu),\varpi^{\geqslant}(xK^\nu))' = 
  v^{-(\mu,\nu)} \cdot (\varpi^{\leqslant}(y),\varpi^{\geqslant}(x))' = \\
  v^{-(\mu,\nu)} \cdot (y,x) = (yK^\mu,xK^\nu) \,.
\end{equation*}

This completes our proof of Proposition~\ref{lem:pairing comparison}.
\end{proof}

Recall the elements $\{\tE_{\b_k}, \tF_{\b_k}\}_{k=1}^{N}$ of \eqref{eq:twisted-root-vector} 
and the constants $\{b^>_{\beta_k}, b^<_{\beta_k}\}_{k=1}^N$ of~\eqref{eq: normalizer b}.

\begin{Cor}\label{cor: pairing of PBW}
For any $\vec{k}=(k_1,\ldots,k_N), \vec{r}=(r_1,\ldots,r_N)\in \BZ^N_{\geq 0}$, we have
\begin{equation*}
  (\tF^{\cev{k}} , \tE^{\cev{r}})' = 
  \delta_{\vec{k}, \vec{r}}\  v^{A_{\vec{k}}}\prod_{p=1}^N v_{i_p}^{\frac{k_p(k_p-1)}{2}} 
    \frac{[k_p]_{v_{i_p}}!}{(v_{i_p}^{-1}-v_{i_p})^{k_p}} \,,
\end{equation*}
with $A_{\vec{k}}$ explicitly given by 
\begin{align*}
  A_{\vec{k}} &= \sum_{i=1}^N k_i(b^>_{\beta_i}+b^<_{\beta_i}) + \sum_{i=1}^N \frac{k_i(k_i-1)}{2} \big((\nu^>_{\b_i},\b_i) - (\nu^<_{\b_i},\b_i)\big) + 
    \sum_{i<j} k_i k_j \big((\nu^>_{\b_j}, \b_i) - (\nu^<_{\b_i}, \b_j) \big) \,.
\end{align*}
\end{Cor}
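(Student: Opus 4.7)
The plan is to reduce the twisted pairing $(\tF^{\cev{k}}, \tE^{\cev{r}})'$ to the standard Hopf pairing $(F^{\cev{k}}, E^{\cev{r}})$ via Proposition~\ref{lem:pairing comparison}, and then invoke the classical orthogonality of the Lusztig PBW basis under the standard pairing (the input from~\cite[\S8.29--8.30]{j} already referenced in the proof of Lemma~\ref{triangular_DCK}).

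First, using the definitions~\eqref{eq:twisted-root-vector} of $\tE_{\b_k}$ and $\tF_{\b_k}$ together with the Cartan commutation relations, I would rewrite the ordered monomials as
\[
  \tE^{\cev{r}} = v^{X_{\vec{r}}}\, E^{\cev{r}}\, K^{\sum_k r_k \nu^>_{\b_k}},
  \qquad
  \tF^{\cev{k}} = v^{Y_{\vec{k}}}\, F^{\cev{k}}\, K^{-\sum_p k_p \nu^<_{\b_p}},
\]
for explicit integers $X_{\vec{r}}, Y_{\vec{k}}$ into which three kinds of $v$-powers feed: the normalization factors $r_k b^>_{\b_k}, k_p b^<_{\b_p}$ of~\eqref{eq: normalizer b}; the diagonal corrections $\binom{r_k}{2}(\nu^>_{\b_k}, \b_k)$ and $\binom{k_p}{2}(\nu^<_{\b_p}, \b_p)$ produced by gathering the Cartan factors inside each $\tE_{\b_k}^{r_k}$ or $\tF_{\b_p}^{k_p}$; and the mixed terms $r_j r_k (\nu^>_{\b_k}, \b_j)$ and $k_p k_q (\nu^<_{\b_p}, \b_q)$ arising from commuting each gathered Cartan block past the remaining root vectors in the product. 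I would then apply Proposition~\ref{lem:pairing comparison} to the pair $(F^{\cev{k}} K^\mu, E^{\cev{r}} K^\nu)$ with $\mu := -\sum_p k_p \nu^<_{\b_p}$ and $\nu := \sum_k r_k \nu^>_{\b_k}$, combining this with the Cartan-reduction identities~\eqref{eq:old-Cartan-reduction} and~\eqref{eq:new-Cartan-reduction} to trim out the Cartan pieces on both sides. The classical orthogonality
\[
  (F^{\cev{k}}, E^{\cev{r}}) = \delta_{\vec{k}, \vec{r}} \prod_{p=1}^N v_{i_p}^{k_p(k_p-1)/2}\, \frac{[k_p]_{v_{i_p}}!}{(-(v_{i_p} - v_{i_p}^{-1}))^{k_p}}
\]
then supplies the Kronecker $\delta_{\vec{k}, \vec{r}}$ and the non-$v^{A_{\vec{k}}}$ factor of the claimed formula, the signs $(-1)^{k_p}$ from the classical pairing being absorbed into the denominators $(v_{i_p} - v_{i_p}^{-1})^{k_p}$ to produce $(v_{i_p}^{-1} - v_{i_p})^{k_p}$.

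The main obstacle is the resulting bookkeeping of $v$-powers: one must verify that, on the diagonal $\vec{r} = \vec{k}$, the combined contributions $X_{\vec{k}} + Y_{\vec{k}}$, the standard-pairing reduction factor $v^{-(\mu, \nu)}$, and the twisted-pairing reduction factor $v^{(\lambda, \deg \tilde x) - (\kappa^{-1}(\lambda), \lambda')}$ from~\eqref{eq:new-Cartan-reduction} recombine into exactly the three-term expression for $A_{\vec{k}}$. I expect the off-diagonal contributions to assemble into the asymmetric differences $k_i k_j\big((\nu^>_{\b_j}, \b_i) - (\nu^<_{\b_i}, \b_j)\big)$ for $i<j$, and the diagonal contributions to collapse into $\tfrac{k_i(k_i - 1)}{2}\big((\nu^>_{\b_i}, \b_i) - (\nu^<_{\b_i}, \b_i)\big)$, while Lemma~\ref{lem: some values are integer} should guarantee that the resulting exponent is integer-valued, as consistent with the base ring $\CA$.
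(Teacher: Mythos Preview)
Your proposal is correct and follows the same overall strategy as the paper: rewrite $\tF^{\cev{k}},\tE^{\cev{r}}$ in terms of $F^{\cev{k}},E^{\cev{r}}$ with Cartan tails, invoke Proposition~\ref{lem:pairing comparison} to pass to the standard pairing, and then apply the orthogonality from~\cite[\S8.29--8.30]{j}. The paper's execution is slightly more streamlined than what you outline, and one point in your plan is mis-aimed. Rather than writing the Cartan on the right and then trying to apply both~\eqref{eq:old-Cartan-reduction} and~\eqref{eq:new-Cartan-reduction}, the paper places the Cartan on the left of $F^{\cev{k}}$ and observes that $K^{-\sum_i k_i\nu^<_{\b_i}}F^{\cev{k}}$ is \emph{exactly} $\varpi^{\leqslant}(F^{\cev{k}})\in\bU^<_\Phi$ (and similarly identifies $\varpi^{\geqslant}(E^{\cev{r}})$ up to a residual Cartan factor). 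This makes Proposition~\ref{lem:pairing comparison} apply directly rather than ``in reverse''; the leftover Cartan factor on the $E$-side is then dropped using~\eqref{eq:new-Cartan-reduction} with $\lambda=0$. Note that~\eqref{eq:old-Cartan-reduction} never enters, and~\eqref{eq:new-Cartan-reduction} cannot be applied to $F^{\cev{k}}K^\mu$ as written since $F^{\cev{k}}\notin\bU^<_\Phi$---the passage through $\varpi^{\leqslant}$ is exactly what places you in $\bU^<_\Phi$. With this adjustment the bookkeeping collapses to $A_{\vec{k}}=C_1+C_2$ on the diagonal, with no separate ``standard-pairing reduction factor'' $v^{-(\mu,\nu)}$ to track.
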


\begin{proof} 
First, we note that $A_{\vec{k}}\in \BZ$, due to Lemma \ref{lem: some values are integer}. Recall that 
$\tF_{\b_k}=v^{b^<_{\beta_k}}K^{-\nu^<_{\b_k}}F_{\b_k}$ and $ \tE_{\b_k}=v^{b^>_{\beta_k}}E_{\b_k}K^{\nu^>_{\b_k}}$. Hence 
\begin{equation}\label{eq:tF-vs-F-products}
\begin{split}
  \tF^{\cev{k}} &=
  \Lprod_{1\leq i \leq N}(v^{b^<_{\beta_i}}K^{-\nu^<_{\b_i}} F_{\b_i})^{k_i} = 
  \Lprod_{1\leq i \leq N} \Big( v^{k_ib^<_{\beta_i}} v^{-(\nu^<_{\b_i}, \b_i)\frac{k_i(k_i-1)}{2}}K^{-k_i\nu^<_{\b_i}}F^{k_i}_{\b_i} \Big) \\
  &= v^{\sum_i k_i b^<_{\beta_i}} v^{-\sum_i (\nu^<_{\b_i}, \b_i)\frac{k_i(k_i-1)}{2}} v^{-\sum_{i<j} k_ik_j(\nu^<_{\b_i},\b_j)} 
    \cdot  K^{-\sum_i k_i \nu^<_{\b_i}} F^{\cev{k}} \\
  &=v^{\sum_i k_i b^<_{\beta_i}} v^{-\sum_i (\nu^<_{\b_i}, \b_i)\frac{k_i(k_i-1)}{2}} v^{-\sum_{i<j} k_ik_j(\nu^<_{\b_i},\b_j)} 
    \cdot \varpi^{\leqslant}(F^{\cev{k}})
\end{split}
\end{equation}
and 
\begin{equation}\label{eq:tE-vs-E-products}
\begin{split}
  \tE^{\cev{r}}
  &= \Lprod_{1\leq i \leq N}(v^{b^>_{\beta_i}}E_{\b_i}K^{\nu^>_{\b_i}})^{r_i} = 
     \Lprod_{1\leq i \leq N} \Big( v^{r_ib^>_{\beta_i}}v^{(\nu^>_{\b_i}, \b_i)\frac{r_i(r_i-1)}{2}}E_{\b_i}^{r_i}K^{r_i\nu^>_{\b_i}} \Big) \\
  &=v^{\sum_i r_ib^>_{\beta_i}}v^{\sum_i(\nu^>_{\b_i}, \b_i)\frac{r_i(r_i-1)}{2}} 
    v^{\sum_{i<j}r_ir_j(\nu^>_{\b_j}, \b_i)} \cdot E^{\cev{r}} K^{\sum_i r_i \nu^>_{\b_i}} \\
  &=v^{\sum_i r_ib^>_{\beta_i}}v^{\sum_i(\nu^>_{\b_i}, \b_i)\frac{r_i(r_i-1)}{2}} v^{\sum_{i<j}r_ir_j(\nu^>_{\b_j}, \b_i)} 
    \cdot \varpi^{\geqslant}(E^{\cev{r}}) K^{\sum_i r_i(\nu^>_{\b_i}+\nu^<_{\b_i})} \,.
\end{split}
\end{equation}
Since $\varpi^{\leqslant}(F^{\cev{k}}) \in \bU^<_{\Phi}$ by~\eqref{eq:tF-vs-F-products}, we get 
\begin{align*}
  (\tF^{\cev{k}},\tE^{\cev{r}})' 
  = \delta_{\vec{k},\vec{r}}  v^{A_{\vec{k}}} 
    \left(\varpi^{\leqslant}(F^{\cev{k}}), \varpi^{\geqslant}(E^{\cev{k}})K^{\sum_i k_i(\nu^>_{\b_i}+\nu^<_{\b_i})} \right)' 
  =\delta_{\vec{k},\vec{r}} v^{A_{\vec{k}}} \left(\varpi^{\leqslant}(F^{\cev{k}}), \varpi^{\geqslant}(E^{\cev{k}})\right)' \,, 
\end{align*}
due to~(\ref{eq:tF-vs-F-products},~\ref{eq:tE-vs-E-products}) and \eqref{eq:new-Cartan-reduction}. 
By Proposition \ref{lem:pairing comparison}, we thus obtain: 
\begin{equation*}
  (\tF^{\cev{k}} , \tE^{\cev{r}})' = \delta_{\vec{k},\vec{r}} v^{A_{\vec{k}}} ( F^{\cev{k}} , E^{\cev{k}} ) \,.
\end{equation*}
But according to \cite[\S8.29--8.30]{j}, we have 
\begin{equation}\label{eq:PBW-pairing}
  (F^{\cev{k}},E^{\cev{k}}) = 
  \prod_{p=1}^N v_{i_p}^{\frac{k_p(k_p-1)}{2}} \frac{[k_p]_{v_{i_p}}!}{(v^{-1}_{i_p}-v_{i_p})^{k_p}} \,.
\end{equation}
This implies the equality of the corollary. 
\end{proof}

Let us now investigate the behavior of the pairing~\eqref{pairing of DJ halves} with respect to $\dU_\CA(\g)$ and $U^{ev}_\CA(\g)$:

\begin{Lem}\label{Lem:pairing_quantum_Borel}
The restriction of~\eqref{pairing of DJ halves} gives rise to $\CA$-valued pairings:
\begin{equation}\label{pairing of DCK and Lus halves 1}
  (\ ,\ )'\colon \dU_\CA^{\leqslant} \times U^{ev\,\geqslant}_\CA \longrightarrow \CA \,,
\end{equation}
\begin{equation}\label{pairing of DCK and Lus halves 2}
  (\ ,\ )'\colon U_\CA^{ev\,\leqslant} \times \dU_\CA^{\geqslant} \longrightarrow \CA \,.
\end{equation}
Both~(\ref{pairing of DCK and Lus halves 1},~\ref{pairing of DCK and Lus halves 2}) are uniquely determined 
by the properties~(\ref{pairing condition 1},~\ref{pairing condition 2}).
\end{Lem}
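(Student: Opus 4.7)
The plan is as follows. Uniqueness is automatic: each of the two restricted pairings is a restriction of the unique pairing of Proposition~\ref{twisted Hopf pairing on DJ}, and the properties~\eqref{pairing condition 1}--\eqref{pairing condition 2} together with the iterative application of the Hopf identities pin down the pairing on any $\CA$-algebra generating set, hence on everything.

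For the $\CA$-valuedness of~\eqref{pairing of DCK and Lus halves 1}, the first step is to verify that both $\dU^{\leqslant}_\CA$ and $U^{ev,\geqslant}_\CA$ are Hopf $\CA$-subalgebras of $\bU(\g, P/2)$ under the twisted coproduct $\Delta'$. This follows by direct inspection: formulas~\eqref{eq: twist-Hopf-2} and~\eqref{eq:twisted Hopf on divided powers}, combined with the containments $\zlambda_i, \zmu_i\in 2P$ of Lemma~\ref{lem: lambda and mu in 2P}, show that $\Delta'(\tE_i)$ and $\Delta'(K^\nu)$ lie in $U^{ev,\geqslant}_\CA \otimes_\CA U^{ev,\geqslant}_\CA$, while $\Delta'(\tF_i^{(s)})$ and $\Delta'(K^\mu)$ lie in $\dU^{\leqslant}_\CA \otimes_\CA \dU^{\leqslant}_\CA$.

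Given this, I would apply the Hopf identities~\eqref{pairing condition 1} iteratively, inducting on the word length of $y \in \dU^{\leqslant}_\CA$ and $x \in U^{ev,\geqslant}_\CA$ expressed in $\CA$-algebra generators, thereby reducing the verification to pairs of generators. I take as generators of $\dU^{\leqslant}_\CA$ the set $\{\tF_i^{(s)}\}_{s\geq 1} \cup \{K^\mu\}_{\mu \in 2P}$; the Cartan binomials $\binom{K_i;0}{t}$ are redundant since their explicit formula in Section~\ref{ssec: new Lusztig form} expresses them as $\CA$-linear combinations of $K_i^{-2s}$ for $0\leq s\leq t$, using that $(1-v_i^{-2s})^{-1}\in \CA$. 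The generators of $U^{ev,\geqslant}_\CA$ are $\{\tE_i\} \cup \{K^\nu\}_{\nu \in 2P}$. The four types of generator pairings evaluate as follows: $(K^\mu, \tE_j)' = 0$ and $(\tF_i^{(s)}, K^\nu)' = 0$ (the latter follows by induction on $s$ using $\Delta'(K^\nu)=K^\nu\otimes K^\nu$); $(\tF_i^{(s)}, \tE_j)' = -\delta_{s,1}\delta_{i,j}/(v_i - v_i^{-1})$, which lies in $\CA$ because $(v_i - v_i^{-1})^{-1} = v_i(v_i^2 - 1)^{-1}$ and $(v_i^2-1)^{-1}\in \CA$ by the very definition of $\CA$; and $(K^\mu, K^\nu)' = v^{-(\kappa^{-1}(\mu), \nu)}$, which lies in $\CA$ because by Lemma~\ref{Q-to-2P} one has $\kappa^{-1}(\mu) \in Q$, hence $(\kappa^{-1}(\mu), \nu) \in (Q, 2P) \subset \BZ$.

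The second pairing~\eqref{pairing of DCK and Lus halves 2} is then proved by an entirely parallel argument after swapping the roles of $\tE_i$'s and $\tF_j$'s (and using $\zlambda_i\in 2P$ in the appropriate places). The only real obstacle in either case is careful bookkeeping around the Hopf $\CA$-subalgebra property for the divided powers, which is precisely where Lemma~\ref{lem: lambda and mu in 2P} is indispensable; every other step reduces to elementary manipulations with the explicit formulas for $\Delta'$ and the generator pairings.
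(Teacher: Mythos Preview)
Your overall strategy---reduce to generator pairings via the Hopf identities~\eqref{pairing condition 1}---is exactly what the paper does. However, there is a genuine gap: your claim that the Cartan binomials $\binom{K_i;0}{t}$ are redundant is false. You assert that $(1-v_i^{-2s})^{-1}\in\CA$, but by definition $\CA$ only inverts $v^{2k}-1$ for $1\le k\le \max\{\sd_j\}$. Already for simply-laced $\g$ (where $\sd_i=1$ and $\CA=\BZ[v^{\pm 1}][(v^2-1)^{-1}]$) the element $(1-v^{-4})^{-1}$ requires inverting $v^2+1$, which is not available. Consequently $\binom{K_i;0}{t}$ is \emph{not} an $\CA$-linear combination of powers $K_i^{-2s}$ for $t\ge 2$; indeed, this is why $\dU^0_\CA$ is strictly larger than $U^{ev,0}_\CA$ (compare parts (b3) and (c3) of Lemma~\ref{lem:twisted-triangular-PBW}).

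The fix is straightforward and is precisely what the paper does: include $\binom{K_j;a}{n}$ among the generators of $\dU^{\leqslant}_\CA$ and compute the one additional type of pairing,
\[
\left(\binom{K_j;a}{n},K^\nu\right)'=\binom{a-(\kappa^{-1}(\alpha_j^\vee),\nu)}{n}_{v_j},
\]
which lies in $\CA$ because the argument is an integer: using the adjointness~\eqref{adjoint} and Lemma~\ref{Q-to-2P}, one has $(\kappa^{-1}(\alpha_j^\vee),\nu)=(\alpha_j^\vee,\gamma^{-1}(\nu))\in(\alpha_j^\vee,Q)\subset\BZ$. With this correction your argument goes through and coincides with the paper's proof.
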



\begin{proof}
To show that $(y,x)'\in \CA$ for any $y\in \dU_\CA^{\leqslant}$ and $x\in U^{ev\,\geqslant}_\CA$, we first apply 
the formulas~(\ref{pairing condition 1}) to reduce to the case $x\in \{\tE_i, K^{\nu}\}_{1\leq i\leq r}^{\nu\in 2P}$ 
and $y\in \{\tF^{(s)}_j, K^{\mu}, \binom{K_j; a}{n}\}_{1\leq j\leq r,\mu\in 2P}^{a\in \BZ,n\in \BN}$. Hence, it remains 
to apply the following explicit formulas (easily derived from~\eqref{pairing condition 1} and~\eqref{pairing condition 2}):
\begin{equation}\label{explicit formulas 1}
\begin{split}
  & (\tF^{(s)}_j,\tE_i)' = -\frac{\delta_{i,j}\delta_{s,1}}{v_i-v_i^{-1}} \,, \\
  & (\tF^{(s)}_j,K^\nu)'=0 \,, \qquad
    (K^\mu,\tE_i)'=\left(\binom{K_j; a}{n},\tE_i\right)'=0 \,, \\
  & (K^\mu,K^\nu)'=v^{-(\kappa^{-1}(\mu),\nu)} \,, \qquad
    \left(\binom{K_j; a}{n},K^\nu\right)'=\binom{a-(\kappa^{-1}(\alpha^\vee_j),\nu)}{n}_{v_j}.
\end{split}    
\end{equation}
We note that $(\kappa^{-1}(\mu),\nu)\in \BZ$ and $(\kappa^{-1}(\a^\vee_j), \nu)=(\a^\vee_j, \gamma^{-1}(\nu))\in \BZ$ for 
any $\mu,\nu\in 2P$ by~\eqref{adjoint} and Lemma~\ref{Q-to-2P}. So, indeed $(y,x)'\in \CA$.

The verification of 
$(y,x)'\in \CA$ for any $y\in U^{ev\,\leqslant}$ and $x\in \dU_\CA^{\geqslant}$ is likewise based on:
\begin{equation}\label{explicit formulas 2}
\begin{split}
  & (\tF_j,\tE^{(s)}_i)'=-\frac{\delta_{i,j}\delta_{s,1}}{v_i-v_i^{-1}} \,, \\
  & (\tF_j,K^\mu)'=\left(\tF_j,\binom{K_i;a}{n}\right)'=0 \,, \qquad
    (K^\nu,\tE^{(s)}_i)'=0 \,, \\ 
  & (K^\nu,K^\mu)'=v^{-(\kappa^{-1}(\nu),\mu)} \,, \qquad
    \left(K^\nu, \binom{K_i; a}{n}\right)'=\binom{a-(\kappa^{-1}(\nu),\alpha^\vee_i)}{n}_{v_i} \,.
\end{split}    
\end{equation}

Finally, the uniqueness part of Lemma~\ref{Lem:pairing_quantum_Borel} is obvious.
\end{proof}

We can now complete the proof of Lemma \ref{lem:twisted-triangular-PBW}:

\begin{proof}[Proof of (b1), (b2), (c1), (c2) in Lemma \ref{lem:twisted-triangular-PBW}]
First, let us show that the second set in (b1) forms an $\CA$-basis of $U^{ev\,>}_\CA$. 
According to Lemmas~\ref{lem: modified elements}--\ref{lem:PBW-twisted-DJ}, we have an inclusion 
  $U^{ev\,>}_\CA \supseteq \bigoplus_{\vec{k}\in \BZ^N_{\geq 0}} \CA \cdot \tE^{\cev{k}}$. 
For any $x\in U^{ev\,>}_\CA$, we have $(\tF^{(\cev{k})},x)' \in \CA$ by Lemma \ref{Lem:pairing_quantum_Borel}. 
On the other hand, writing $x=\sum_{\vec{k}\in \BZ^N_{\geq 0}} c_{\vec{k}}\tE^{\cev{k}}$ with $c_{\vec{k}}\in \BQ(v^{1/2})$ 
via Lemma~\ref{lem:PBW-twisted-DJ} and using Corollary \ref{cor: pairing of PBW}, we get $(\tF^{(\cev{k})},x)'=f_{\vec{k}}(v)c_{\vec{k}}$. 
Here, $f_{\vec{k}}(v)$ is an invertible element of $\CA$, and therefore $c_{\vec{k}}\in \CA$. We thus conclude that 
$U^{ev\, >}_\CA=\bigoplus_{\vec{k}\in \BZ_{\geq 0}^N}\CA \cdot \tE^{\cev{k}}$. The proofs that the second sets 
in (b2), (c1), (c2) form $\CA$-bases for the corresponding algebras are completely analogous.

Let us now prove that the first set in (b1) is an $\CA$-basis of $U^{ev\,>}_\CA$. 
Recall the map $\tau$ of~\eqref{eq: extended tau map}. Since $\tau(F_{\b_k})=E_{\b_k}$ and $\tau(E_{\b_k})=F_{\b_k}$,  we have: 
\[ 
  \tau(\tF_i)=\tE_i K^{\a_i} \,, \qquad  \tau(\tF_{\b_k})=v^{-b^<_{\beta_k}-b^>_{\beta_k}}\tE_{\b_k}K^{\b_k} 
  \qquad \forall\, 1\leq i \leq r \,, 1\leq k \leq N \,,
\]
in which $-b^<_{\beta_k}-b^>_{\beta_k} \in \BZ$ by definition \eqref{eq: normalizer b} and Lemma~\ref{lem: some values are integer}, 
and we used $\nu^<_{\beta_k}-\nu^>_{\beta_k}=\beta_k$. Let $\mathscr{U}^{ev\,>}_\CA$ be the $\CA$-subalgebra of $\bU(\g,P/2)$ 
generated by $\tE_iK^{\a_i}$ for $1\leq i \leq r$. Then the map $\tau\colon U^{ev\,<}_\CA\rightarrow \mathscr{U}^{ev\,>}_\CA$ 
is a $\BZ$-algebra anti-isomorphism. Therefore, $\mathscr{U}^{ev\,>}_\CA$ has an $\CA$-basis consisting of elements 
$\Rprod_{1\leq j \leq N} (\tE_{\b_j}K^{\b_j})^{k_j}$. On the other hand, we have $Q_+$-gradings: 
\[
  U^{ev\,>}_\CA=\bigoplus_{\mu \in Q_+} U^{ev\,>}_\mu \,, \qquad  
  \mathscr{U}^{ev\, >}_\CA=\bigoplus_{\mu \in Q_+}\mathscr{U}^{ev\,>}_\mu \,,
\]
so that $U^{ev\,>}_\mu=\mathscr{U}^{ev\,>}_\mu K^{-\mu}$ for all $\mu \in Q_+$. Therefore, the elements $\tE^{\vec{k}}$ form 
an $\CA$-basis of $U^{ev\,>}_\CA$. The proofs that the first sets in (b2), (c1), (c2) form $\CA$-bases of the corresponding 
algebras are analogous. 
\end{proof}

After the base change with respect to $\sigma\colon \CA\to R$ of~(\ref{sigma homom}), we obtain:

\begin{Cor}\label{Lem:pairing_quantum_Borel_R}
There exist unique $R$-valued Hopf pairings
\begin{equation}\label{pairing of DCK and Lus halves 1R}
  (\ ,\ )'\colon \dU_q^{\leqslant} \times U^{ev\,\geqslant}_q \longrightarrow R \,,
\end{equation}
\begin{equation}\label{pairing of DCK and Lus halves 2R}
  (\ ,\ )'\colon U_q^{ev\,\leqslant} \times \dU_q^{\geqslant} \longrightarrow R \,,
\end{equation}
satisfying~(\ref{explicit formulas 1}) and~(\ref{explicit formulas 2}), respectively, where $v=q$.
\end{Cor}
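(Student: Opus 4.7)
The plan is to obtain the $R$-valued pairings by base change of the $\CA$-valued pairings constructed in Lemma~\ref{Lem:pairing_quantum_Borel} along the homomorphism $\sigma\colon \CA\to R$ of~\eqref{sigma homom}. The key point that makes this work cleanly is that all four algebras involved, namely $\dU_\CA^{\leqslant}, \dU_\CA^{\geqslant}, U^{ev\,\leqslant}_\CA, U^{ev\,\geqslant}_\CA$, are free $\CA$-modules, as follows by combining the PBW-type bases of Lemma~\ref{lem:twisted-triangular-PBW}(b1)--(c3) with the corresponding triangular decompositions. Consequently,
\begin{equation*}
  \dU^{\leqslant}_q\otimes_R U^{ev\,\geqslant}_q \;\cong\; (\dU^{\leqslant}_\CA\otimes_\CA U^{ev\,\geqslant}_\CA)\otimes_\CA R,
\end{equation*}
and analogously for the other pair, so tensoring the $\CA$-bilinear pairings~\eqref{pairing of DCK and Lus halves 1}--\eqref{pairing of DCK and Lus halves 2} with $R$ over $\CA$ produces the desired $R$-bilinear pairings.

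Next I would check that these induced pairings are Hopf pairings in the sense of~\eqref{pairing condition 1}. Since the coproducts $\Delta'$ on $\dU_q, U^{ev}_q$ are themselves obtained by base change from the coproducts on $\dU_\CA, U^{ev}_\CA$, the identities~\eqref{pairing condition 1} are preserved under $\sigma\otimes\sigma$, so the Hopf pairing property descends formally. Similarly, the explicit formulas~\eqref{explicit formulas 1} and~\eqref{explicit formulas 2} descend term-by-term, with $v$ replaced by $q$; here one uses that all the combinatorial expressions appearing there lie in $\CA$ (the binomial coefficients $\binom{a-(\kappa^{-1}(\mu),\alpha_j^\vee)}{n}_{v_j}$ lie in $\CA$ by Lemma~\ref{Q-to-2P} and~\eqref{adjoint}, and $\frac{1}{v_i-v_i^{-1}}\in\CA$ by the very definition of $\CA$).

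For uniqueness, the argument is the same as at the end of the proof of Lemma~\ref{Lem:pairing_quantum_Borel}: given the Hopf pairing axioms~\eqref{pairing condition 1}, any such pairing is completely determined by its values on pairs of generators, since iterated use of~\eqref{pairing condition 1} reduces an arbitrary pair $(y,x)$ with $y = y_1\cdots y_m$ and $x = x_1\cdots x_n$ to a sum of products of elementary pairings $(y_i,x_j)$. Since the generators of $\dU^{\leqslant}_q, U^{ev\,\geqslant}_q, U^{ev\,\leqslant}_q, \dU^{\geqslant}_q$ are images of the corresponding $\CA$-generators under base change, and since the required values of the pairings on these generators are specified by~\eqref{explicit formulas 1} and~\eqref{explicit formulas 2}, uniqueness follows.

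No serious obstacle is expected; the main thing to be careful about is the coherence of the base change with the coproducts, which is immediate because $\dU_\CA(\g)$ and $U^{ev}_\CA(\g)$ are Hopf $\CA$-subalgebras of $\bU(\g,P/2)$ by the discussion in Sections~\ref{ssec: new Lusztig form} and~\ref{ssec:even-part}. The only mild subtlety is verifying that the right-hand sides of the formulas in~\eqref{explicit formulas 1} and~\eqref{explicit formulas 2} indeed lie in $\CA$ (so that they can be specialized via $\sigma$), but this was already established inside the proof of Lemma~\ref{Lem:pairing_quantum_Borel}.
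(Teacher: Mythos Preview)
Your proposal is correct and matches the paper's approach exactly: the paper states this Corollary immediately after Lemma~\ref{Lem:pairing_quantum_Borel} with the single remark ``After the base change with respect to $\sigma\colon \CA\to R$'', treating it as immediate, while you have simply spelled out the routine details of why base change works (freeness of the $\CA$-modules, compatibility of coproducts, and reduction to generators for uniqueness).
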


For $\lambda \in P$, let us define the character $\hchi_\lambda: \dU^0_q\rightarrow R$ as follows:
\begin{equation}\label{eq: defi of chi-lambda} 
\hchi_\lambda: \qquad K^\mu \mapsto q^{(\mu, \lambda)}, \qquad \binom{K_j;a}{n} \mapsto  \binom{a+(\a^\vee_j, \lambda)}{n}_{q_j} \qquad \forall~ \mu \in 2P, n\in \BN.
\end{equation}
\begin{Lem}\label{lem: independent of characters}
(a) Characters $\{ \hchi_\lambda\}_{\lambda \in P}$ are pair-wise distinct.

\noindent
(b) Suppose $\sum a_\lambda \hchi_\lambda =0$ for finitely many $a_\lambda \neq 0 \in R$ then $a_\lambda =0$ for all $\lambda$.
\end{Lem}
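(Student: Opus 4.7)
The plan is to prove (a) by exhibiting, for each pair $\lambda \neq \lambda'$, an element of $\dU^0_q$ on which the two characters take the values $0$ and $1$ respectively; (b) then follows from (a) via an induction on the support size that bypasses any need for $R$ to be a domain.

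For (a), given $\lambda \neq \lambda' \in P$, the pairing identities $(\alpha_i^\vee, \omega_j) = \delta_{ij}$ make $\lambda \mapsto \bigl((\alpha_i^\vee, \lambda)\bigr)_{i=1}^r$ a bijection $P \iso \BZ^r$, so some index $i$ satisfies $c := (\alpha_i^\vee, \lambda) \neq c' := (\alpha_i^\vee, \lambda')$; after swapping $\lambda$ and $\lambda'$ if needed, assume $c < c'$. Set $a := \max(0, -c) \in \BZ_{\geq 0}$ and $n := a + c' \in \BN$, so that $0 \leq a + c < n$. The formula $\binom{m}{n}_v = \prod_{l=1}^n \frac{1 - v^{2(-m+l-1)}}{1 - v^{-2l}}$ from Section~\ref{ssec: new Lusztig form} has the factor at $l = m + 1$ equal to $0$ whenever $0 \leq m \leq n-1$, while a direct telescoping (substitute $l' = n-l+1$ in the numerator) gives $\binom{n}{n}_v = 1$; thus both binomials lie in $\BZ[v^{\pm 1}]$ and specialize in $R$ to $0$ and $1$ respectively. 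Formula~\eqref{eq: defi of chi-lambda} then yields $\hchi_\lambda\bigl(\binom{K_i; a}{n}\bigr) = 0$ and $\hchi_{\lambda'}\bigl(\binom{K_i; a}{n}\bigr) = 1$, which are distinct in any nonzero $R$.

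For (b), I would induct on $k := |S|$, where $S := \{\lambda : a_\lambda \neq 0\}$. The base case $k = 1$ follows by evaluating the relation at $1 \in \dU^0_q$. For $k \geq 2$, fix $\lambda_0 \in S$ and any $\lambda_1 \in S \setminus \{\lambda_0\}$. Applying the construction of (a) to the pair $(\lambda_0, \lambda_1)$---and, in the case where the chosen index $i$ yields $(\alpha_i^\vee, \lambda_0) > (\alpha_i^\vee, \lambda_1)$ rather than the opposite, replacing the resulting element $y_0$ by $1 - y_0 \in \dU^0_q$---produces $y \in \dU^0_q$ with $\hchi_{\lambda_0}(y) = 0$ and $\hchi_{\lambda_1}(y) = 1$. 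Since each $\hchi_\lambda$ is an algebra homomorphism, applying the assumed relation to $yw$ for arbitrary $w \in \dU^0_q$ gives $\sum_\lambda a_\lambda \hchi_\lambda(y)\, \hchi_\lambda(w) = 0$, i.e.\ $\sum_\lambda a_\lambda \hchi_\lambda(y)\, \hchi_\lambda = 0$ is itself a relation. Its $\lambda_0$-term vanishes, so after dropping it one has a relation indexed by a subset of $S \setminus \{\lambda_0\}$, of size $\leq k - 1$; its $\lambda_1$-coefficient equals $a_{\lambda_1} \cdot 1 \neq 0$, and the inductive hypothesis forces $a_{\lambda_1} = 0$, a contradiction.

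The design choice that keeps the argument robust over any Noetherian $R$ is targeting the exact values $0$ and $1$ in part (a) rather than some merely nonzero element of $R$, as this sidesteps any need for the difference of character values to be invertible. The only conceptually subtle point is that when $q$ is a root of unity, the subalgebra generated by the $\{K^\mu : \mu \in 2P\}$ alone generally fails to separate the characters (for instance in $\ssl_2$ with $q$ a primitive fourth root of unity, $\hchi_0$ and $\hchi_{4\omega}$ agree on every such $K^\mu$), so it is essential to invoke the quantum binomials $\binom{K_i; a}{n}$, whose specializations retain strictly more information about $\lambda$ than the Cartan exponentials do; no individual step appears to be genuinely difficult.
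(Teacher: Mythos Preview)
Your proof is correct and takes a genuinely different route from the paper's.

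For (a), the paper assumes $\hchi_\lambda = \hchi_\mu$ and derives from this that $q_i^{2(\lambda-\mu,\alpha_i^\vee)}=1$ and $\binom{(\lambda-\mu,\alpha_i^\vee)}{n}_{q_i}=0$ for all $n$, then invokes the auxiliary Lemma~\ref{lem: simple spectrum} (which itself uses Lemma~\ref{lem:aux-at-roots}(a) and a case analysis on $\operatorname{char}\BF$ and whether $q$ is a root of unity) to conclude $\lambda=\mu$. Your approach is more constructive: you exhibit an explicit element $\binom{K_i;a}{n}$ on which one character vanishes and the other equals $1$, using only the elementary identities $\binom{m}{n}_v=0$ for $0\le m<n$ and $\binom{n}{n}_v=1$ in $\BZ[v^{\pm 1}]$. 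This bypasses Lemma~\ref{lem: simple spectrum} entirely.

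For (b), the paper runs the classical Dedekind-style telescoping to obtain $a_k\prod_{i<k}(\hchi_k(y_i)-\hchi_i(y_i))=0$, and then, since $R$ need not be a domain, passes to a residue field $\BF=R/\mathfrak m$ containing the annihilator of $a_k$ to force a contradiction. Your stronger separation in (a), with values exactly $0$ and $1$, makes the inductive step work directly over $R$: the derived relation has support in $S\setminus\{\lambda_0\}$ and its $\lambda_1$-coefficient is literally $a_{\lambda_1}\cdot 1$, so no auxiliary reduction to fields is needed. Your argument is shorter and more self-contained; the paper's argument, by contrast, is the one that would survive if only the bare distinctness of characters (rather than the $\{0,1\}$-separation) were available.
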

We need the following lemma: 

\begin{Lem}\label{lem: simple spectrum}  
Let $q \in R$ be invertible. If $m\in \BZ$ is such that $q^{2m} =1$ and $\binom{m}{n}_q=0$ for all $n \in \BN$, then 
$m=0$.
\end{Lem}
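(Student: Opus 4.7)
The plan is to derive a contradiction under $m \neq 0$, treating $m > 0$ and $m < 0$ separately. Since $q \in R^\times$ forces $R \neq 0$, the key leverage in both cases is that no polynomial in $R[T]$ of positive degree with a unit leading coefficient can equal the constant polynomial $1$.

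For $m > 0$, I would invoke the Laurent polynomial identity $\binom{m}{m}_v = 1$ in $\BZ[v, v^{-1}]$, which follows by pairing numerator and denominator factors in $\binom{m}{m}_v = \prod_{c=1}^{m}(1 - v^{-2(m-c+1)})/(1 - v^{-2c})$ under the substitution $c \leftrightarrow m - c + 1$. Specializing at $v = q$ yields $\binom{m}{m}_q = 1 \in R$, which is nonzero, contradicting the hypothesis applied at $n = m$.

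For $m < 0$, write $m = -m'$ with $m' > 0$ and put $u := q^{-2} \in R^\times$. First I would establish the Laurent polynomial identity
\[
  \binom{-m'}{n}_v \;=\; (-1)^n\, v^{2nm' + n(n-1)}\, \binom{m' + n - 1}{n}_v,
\]
verified by direct comparison of the two product formulas; this converts the hypothesis into $\binom{m' + n - 1}{n}_q = 0$ for all $n \geq 1$. Then I would invoke the reciprocal $q$-binomial theorem
\[
  \Bigl(\prod_{c=0}^{m'-1}(1 - u^c T)\Bigr)\cdot \sum_{n \geq 0}\binom{m'+n-1}{n}_u\, T^n \;=\; 1 \quad \text{in } R[[T]],
\]
a formal identity over $\BZ[u][[T]]$ provable by induction on $m'$ via the $q$-Pascal recursion $\binom{M}{N}_u = \binom{M-1}{N}_u + u^{M-N}\binom{M-1}{N-1}_u$. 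Under the translated hypothesis the sum collapses to its $n = 0$ term $\binom{m'-1}{0}_u = 1$, forcing $\prod_{c=0}^{m'-1}(1 - u^c T) = 1$ in $R[T]$. But the $T^{m'}$ coefficient of this product is $(-1)^{m'}\, u^{m'(m'-1)/2} \in R^\times$, a unit and hence nonzero, contradicting the equality with the constant polynomial $1$.

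The principal technical obstacle is the $m < 0$ case, where the shortcut $\binom{m}{m}_v = 1$ has no obvious analogue; the route through the Laurent polynomial identity reducing $\binom{-m'}{n}_v$ to $\binom{m'+n-1}{n}_v$, combined with the reciprocal $q$-binomial theorem, produces a polynomial identity in $R[T]$ whose top coefficient supplies the contradiction in exactly the same style as the positive case.
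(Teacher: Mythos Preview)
Your proof is correct and takes a genuinely different route from the paper's. The paper passes to a residue field $\BF = R/\mathfrak{m}$, uses $q^{2m} = 1$ to write $m = \ell a$ where $\ell$ is the order of $\bar q^{\,2}$, then applies the $q$-Lucas formula (Lemma~\ref{lem:aux-at-roots}(a)) to reduce to ordinary binomials $\binom{a}{k}$ and finishes by a case split on $\operatorname{char}\BF$ (trivial in characteristic zero, Lucas' theorem in positive characteristic). Your argument stays over $R$, avoids any reduction to a field, and --- notably --- never uses the hypothesis $q^{2m} = 1$, so it actually establishes the stronger statement that the vanishing $\binom{m}{n}_q = 0$ for all $n \geq 1$ alone forces $m = 0$. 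The $m > 0$ case via the Laurent-polynomial identity $\binom{m}{m}_v = 1$ is immediate; for $m < 0$ your generating-function argument is clean and uniform in the characteristic, at the cost of needing the (standard) reciprocal $q$-binomial identity. One notational point to tighten in a write-up: your $\binom{m'+n-1}{n}_u$ is meant as the Gaussian binomial in the variable $u$, and it does coincide with the paper's $\binom{m'+n-1}{n}_q$ precisely because the paper's symbol $\binom{M}{N}_v$ equals the Gaussian binomial in $v^{-2}$ and you take $u = q^{-2}$; this identification should be made explicit, since otherwise the subscript $u$ invites misreading as the paper's own specialization $v \mapsto u$, which would give the wrong object.
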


\begin{proof}If $q$ is not a root of unity then $q^{2m}=1$ implies that $m=0$. So we can assume $q$ is a root of unity in $R$. Let $\m$ be a maximal ideal of $R$ and $\BF=R/\m$. Let $\bar{q}$ be the image of $q$ in $\BF^\times$. Let $\ell$ be the order of $\bar{q}^2$ in $\BF$. In the field $\BF$,  we have $\bar{q}^{2m}=1$ and $\binom{m}{n}_{\bar{q}}=0$ for all $n \in \BN$. 
Since $\bar{q}^{2m}=1$, we have $m=\ell a$ for some $a \in \BZ$. We consider two cases:
\begin{enumerate}

\item[$\bullet$]
if $\text{char} (\BF)=0$, then $a=\binom{m}{\ell}_q=0$ in $\BF$ by Lemma \ref{lem:aux-at-roots}(a), which implies that $a=0$ and so $m=0$;

\item[$\bullet$]
if $\text{char} (\BF)=p$, then $\binom{a}{p^n}=\binom{m}{p^n \ell}_q=0$ in $\BF$ for all $n \in \BN$, see Lemma \ref{lem:aux-at-roots}(a) below. Since $p \nmid \binom{a}{p^n}$ when either $p^n \leq a<p^{n+1}$ or $p^n-p^{n+1}\leq a<0$,  
it follows that $a=0$ and so $m=0$ in this case.

\end{enumerate}
This completes the proof.
\end{proof}

\begin{proof}[Proof of Lemma \ref{lem: independent of characters}]
(a) Now assume that $\hchi_\lambda= \hchi_\mu$ for $\lambda, \mu \in P$. We want to show that $\lambda =\mu$. By $\hchi_\lambda(K^{2\a_i})=\hchi_\mu(K^{2\a_i})$, we ave $q_i^{2(\lambda-\mu, \a_i^\vee)}=1$. By $\hchi_\lambda(x)=\hchi_\mu(x)$ with $x=\binom{K_i; (\mu, \a_i^\vee)}{n}$, we have $\binom{(\lambda-\mu, \a_i^\vee)}{n}_{q_i}=0$ for all $n \in \BN$. Hence $(\lambda-\mu, \a^\vee_i)=0$ for all $1 \leq i \leq r$, due to Lemma \ref{lem: simple spectrum}. Therefore, $\lambda -\mu=0$.

\noindent
(b) Assume we have  $\sum_{i=1}^k a_i \hchi_i =0$ with all $a_i \neq 0$ and $k>0$. We have 
\[
  0=\sum_{i=1}^ka_i \hat{\chi}_i(y_1y)-\hat{\chi}_1(y_1)\sum_{i=1}^k a_i \hat{\chi}_i(y)
  =\sum_{i=2}^ka_i(\hat{\chi}_i(y_1)-\hat{\chi}_1(y_1))\hat{\chi}_i(y)
\]
for all $y,y_1\in \dU^0_q$. Repeating this process, we get $a_k \prod_{i=1}^{k-1}(\hat{\chi}_k(y_i)-\hat{\chi}_i(y_i))=0$ for all $y_1, \dots, y_{k-1}\in \dU^0_q$. Let $I$ be the annihilator of $a_k$ in $R$, then $\prod_{i=1}^{k-1}(\hat{\chi}_k(y_i)-\hat{\chi}_i(y_i)) \in I$ for all $y_1, \dots, y_{k-1}\in R$.

Since $a_k \neq 0$, there is a maximal ideal $\m$ of $R$ containing $I$. Let $\BF=R/\m$. Consider the induced character $\uchi_i\colon  \dU^0_\BF \rightarrow \BF$ of $\hat{\chi}_i$. Since the characters $\{\uchi_i\}_{i=1}^{k-1}$ are pairwise distinct by part (a), it follows that there are $\bar{y}_1, \dots, \bar{y}_{k-1} \in \dU^0_\BF$ such that $\prod_{i=1}^{k-1}(\uchi_k(\bar{y}_i)-\uchi_i(\bar{y}_i))\neq 0$. Thus, there are $y_1, \dots, y_{k-1}\in \dU^0_q$ such that $\prod_{i=1}^{k-1}(\hat{\chi}_k(y_i)-\hat{\chi}_i(y_i))\not \in \m$, hence $\prod_{i=1}^{k-1}(\hat{\chi}_k(y_i)-\hat{\chi}_i(y_i)) \not \in I$, a contradiction.
\end{proof}

We conclude this subsection with the following key observation:

\begin{Thm}\label{one argument nondegeneracy}
(a) The pairing $(\ ,\ )'$ of~\eqref{pairing of DCK and Lus halves 1R} has the zero kernel in the second argument.

\noindent
(b) The pairing $(\ ,\ )'$ of~\eqref{pairing of DCK and Lus halves 2R} has the zero kernel in the first argument.
\end{Thm}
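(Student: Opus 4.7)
The plan is to prove (a) directly and deduce (b) by the mirror argument. Fix $x \in U^{ev\,\geqslant}_q$ lying in the right kernel of \eqref{pairing of DCK and Lus halves 1R}. By \eqref{eq:degree-zero property} the pairing vanishes between different $Q$-weights, so I may assume $x$ is weight-homogeneous of some weight $\mu \in Q_+$. Lemma \ref{lem:twisted-triangular-PBW} lets me expand
\begin{equation*}
  x = \sum_{\vec{k}} \tE^{\cev{k}} h_{\vec{k}} \,, \qquad
  h_{\vec{k}} = \sum_{\alpha \in 2P} a_{\alpha, \vec{k}} K^\alpha \in U^{ev\,0}_q \,,
\end{equation*}
with $\vec{k} \in \BZ_{\geq 0}^N$ ranging over tuples with $\deg(\tE^{\cev{k}}) = \mu$. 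The strategy is to force every $a_{\alpha, \vec{k}}$ to vanish by probing $x$ against $y = \tF^{(\cev{k'})} h'$ for varying admissible $\vec{k'}$ and $h' \in \dU^0_q$.

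By \eqref{pairing condition 1}, $(y, x)' = (\tF^{(\cev{k'})} \otimes h', \Delta'(x))'$, and weight considerations show that only the $(\mu, 0)$-bidegree component of $\Delta'(x)$ contributes. I claim this component equals $\sum_{\vec{k},\alpha} a_{\alpha, \vec{k}}\, \tE^{\cev{k}} K^\alpha \otimes K^{\alpha - \gamma(\mu)}$. To prove this, I would argue that for any monomial $w$ in the simple generators $\tE_i$ of weight $\mu$, the $(\mu,0)$-bidegree of $\Delta'(w)$ equals $w \otimes K^{-\gamma(\mu)}$: this is immediate for $w = \tE_i$ from $\Delta'(\tE_i) = 1 \otimes \tE_i + \tE_i \otimes K^{-\zlambda_i}$ and $\zlambda_i = \gamma(\alpha_i)$, and in a product $w_1 w_2$ the $(\mu,0)$-component of the coproduct factors as the product of top-bidegrees of the factors because $Q_+$-valuations force both second tensorands to vanish individually. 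Since $\tE^{\cev{k}}$ is a polynomial in the $\tE_i$ and $\Delta'(K^\alpha) = K^\alpha \otimes K^\alpha$, the claim follows by linearity. Combining this with Corollary \ref{cor: pairing of PBW} collapses the pairing to
\begin{equation*}
  (\tF^{(\cev{k'})} h', x)' \;=\; c_{\vec{k'}} \sum_{\alpha \in 2P} a_{\alpha, \vec{k'}}\, (h', K^{\alpha - \gamma(\mu)})' \,,
\end{equation*}
where $c_{\vec{k'}} = (\tF^{(\cev{k'})}, \tE^{\cev{k'}})'$ is invertible in $R$ since the explicit PBW formula involves only quantum integers and powers of $v_i - v_i^{-1}$, which are units in $\CA$.

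The final ingredient is the non-degeneracy of the Cartan pairing. Since $K^\nu$ is grouplike, the map $h' \mapsto (h', K^\nu)'$ is an algebra homomorphism $\dU^0_q \to R$; comparing \eqref{explicit formulas 1} with the definition \eqref{eq: defi of chi-lambda} via the adjointness \eqref{adjoint} identifies it as the character $\hchi_{-\gamma^{-1}(\nu)}$. By Lemma \ref{Q-to-2P}, the assignment $\nu \mapsto -\gamma^{-1}(\nu)$ is a bijection from $2P$ onto $-Q \subset P$, so the characters $\hchi_{\mu - \gamma^{-1}(\alpha)}$ indexing the displayed sum are pairwise distinct, and Lemma \ref{lem: independent of characters}(b) forces every $a_{\alpha, \vec{k'}} = 0$. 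Thus $h_{\vec{k'}} = 0$ for each $\vec{k'}$, and $x = 0$, proving (a). Part (b) proceeds by the mirrored argument: write $y \in U^{ev\,\leqslant}_q$ as $\sum_{\vec{k}} \tF^{\cev{k}} h_{\vec{k}}$, probe against $h' \tE^{(\cev{k'})}$, isolate the $(-\mu, 0)$-bidegree component $\tF^{\cev{k}} \otimes K^{-\kappa(\mu)}$ of $\Delta'(\tF^{\cev{k}})$ by the analogous induction, and use \eqref{explicit formulas 2} to recognize $(K^\alpha, \cdot)'$ on $\dU^0_q$ as the character $\hchi_{-\kappa^{-1}(\alpha)}$, again pairwise distinct for $\alpha \in 2P$ by Lemma \ref{Q-to-2P}.

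The main obstacle I anticipate is the bookkeeping of the top-bidegree claim in the presence of Lusztig root vectors $\tE_\beta$ that are not simple generators; the trick to keep the argument clean is to induct on monomials in the \emph{simple} generators $\tE_i$ rather than attempting any explicit formula for $\Delta'(\tE_\beta)$, and then invoke linearity to pass to the PBW monomials $\tE^{\cev{k}}$. Once this is in place, the whole proof reduces to the invertibility of $\kappa$ and $\gamma$ on $Q$ (Lemma \ref{Q-to-2P}) and the linear independence of characters (Lemma \ref{lem: independent of characters}).
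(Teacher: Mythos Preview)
Your proof is correct and takes essentially the same approach as the paper. Both arguments reduce the nondegeneracy to two ingredients: the PBW duality on the nilpotent parts (Corollary~\ref{cor: pairing of PBW}) to strip off the $\tE^{\cev{k}}$-factor, and the identification of the induced Cartan functionals with the pairwise distinct characters $\hchi_{-\gamma^{-1}(\cdot)}$ (Lemmas~\ref{Q-to-2P} and~\ref{lem: independent of characters}). The paper is more terse about the first step, asserting directly that it suffices to check nondegeneracy of $(\ ,\ )'\colon \dU^0_q \times U^{ev\,0}_q \to R$, whereas you unpack this reduction explicitly by isolating the top-bidegree term of $\Delta'$.

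One small point worth making explicit: your displayed formula tacitly uses $(\tF^{(\cev{k'})},\, \tE^{\cev{k}} K^\alpha)' = (\tF^{(\cev{k'})},\, \tE^{\cev{k}})'$, which is not literally what Corollary~\ref{cor: pairing of PBW} states. This follows either from the Cartan reduction~\eqref{eq:new-Cartan-reduction} with $\lambda=0$, or by one more application of your own coproduct trick: the $(0,-\mu)$-component of $\Delta'(\tF^{(\cev{k'})})$ is $1 \otimes \tF^{(\cev{k'})}$, so pairing against $K^\alpha \otimes \tE^{\cev{k}}$ via~\eqref{pairing condition 1} gives $(1,K^\alpha)'(\tF^{(\cev{k'})},\tE^{\cev{k}})' = (\tF^{(\cev{k'})},\tE^{\cev{k}})'$.
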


\begin{proof}
(a) By Lemma \ref{lem:twisted-triangular-PBW} and Corollary \ref{cor: pairing of PBW}, we see that the pairing 
$(\ ,\ )'\colon \dU^<_q \x U^{ev\,>}_q \rightarrow R$ is non-degenerate in each argument. Both algebras 
$\dU^<_q$ and $U^{ev\,>}_q$ are $Q$-graded via~\eqref{eq: Q-grading for twisted one}, with each graded 
component being a free $R$-module of finite rank, and the pairing $(\ ,\ )'$ is of degree zero~\eqref{eq:degree-zero property}. So it is enough to prove the restriction $(\ , \ )': \dU^0_q \x U_q^{ev 0} \rightarrow R$ is non-degenerated in the second argument.

Since $\{ K^\nu\}_{\nu \in 2P} \in U_q^{ev 0}$ are group-like, we obtain characters $\chi^+_\nu: \dU^0_q \rightarrow R$ for $\nu \in 2P$ defined via $\chi^+_\nu(x)=(x, K^\nu)'$. It is easy to see that $\chi^+_\nu=\hchi_{-\gamma^{-1}(\nu)}$, here $\gamma^{-1}(\nu)\in Q$ since $\nu \in 2P$. The non-degeneracy of the second argument in the pairing $(\ ,\ )': \dU^0_q \x U_q^{ev 0} \rightarrow 0$  is equivalent to the statement that if $\sum_\nu a_\nu \chi^+_\nu=0$ for finitely many $a_\nu \neq 0 \in R$ then $a_\nu =0$ for all $\nu$. But the latter statement follows by Lemma \ref{lem: independent of characters}.b).

\noindent 
(b) The proof is identical.
\end{proof}

\subsection{Twisted invariant pairing}\label{ssec twisted invariant pairing}
\

Consider a function $G$ on $(\h^*)^4=\h^*\times \h^*\times \h^*\times \h^*$ defined via:
\begin{multline*}
  G(\lambda_1,\lambda_2,\mu_1,\mu_2):=\\
  \frac{(\lambda_1,\kappa(\mu_1)+\gamma(\mu_2))+(\lambda_2,\kappa(\mu_2)+\gamma(\mu_1))
        +(\gamma(\mu_1-\mu_2),\gamma(\mu_1-\mu_2))-(\lambda_1,\lambda_2)}{2} + (2\rho,\mu_2) \,,
\end{multline*}
with the endomorphisms $\kappa,\gamma\in \End(\h^*)$ defined in~(\ref{kappa and gamma}).


Evoking the Hopf pairing $(\ ,\ )'$ of~(\ref{pairing of DJ halves}), the triangular 
decomposition~\eqref{eq:new-triangular-DJ} of $\bU(\g,P/2)$, and the $Q$-grading~\eqref{eq: Q-grading for twisted one} 
of $\bU^>_\Phi$ and $\bU^<_\Phi$ (cf.\ paragraph preceding Lemma~\ref{lem: triangular-U(g,P/2)}), 
we define a $\BQ(v^{1/2})$-bilinear pairing:
\begin{equation}\label{twisted pairing on whole DJ}
  \langle \ ,\ \rangle'\colon \bU(\g,P/2)\times \bU(\g,P/2)\longrightarrow \BQ(v^{1/\sN})
\end{equation}
via 
\begin{equation}\label{twisted pairing construction}
  \langle y_1K^{\lambda_1} x_1, y_2 K^{\lambda_2}x_2 \rangle' := 
  (y_1,x_2)' \cdot (y_2,x_1)' \cdot v^{G(\lambda_1,\lambda_2,\mu_1,\mu_2)} \,,
\end{equation}
for all 
  $x_1\in \bU^{>}_{\Phi, \mu_1}, x_2\in \bU^{>}_{\Phi,\mu_2}, y_1\in \bU^{<}_{\Phi,-\nu_1}, y_2\in \bU^{<}_{\Phi,-\nu_2}$
with $\mu_1,\mu_2,\nu_1,\nu_2\in Q_+$, $\lambda_1,\lambda_2\in P/2$.

\begin{Rem}
To simplify the exponent of $v$ in~(\ref{twisted pairing construction}),
we can rewrite the pairing~(\ref{twisted pairing on whole DJ}) as:
\begin{equation}\label{twisted pairing rewritten}
\begin{split}
  & \Big\langle (y_1K^{\kappa(\nu_1)})K^{\lambda_1} (x_1K^{\gamma(\mu_1)}), 
          (y_2K^{\kappa(\nu_2)}) K^{\lambda_2} (x_2 K^{\gamma(\mu_2)}) \Big\rangle' = \\
  & \qquad \qquad \qquad \qquad \qquad \qquad \qquad \qquad \qquad 
  (y_1,x_2)'\cdot (y_2,x_1)'\cdot v^{-\frac{(\lambda_1,\lambda_2)}{2}+(2\rho,\nu_1)}
\end{split}
\end{equation}
for all
  $x_1\in \bU^{>}_{\Phi, \mu_1}, x_2\in \bU^{>}_{\Phi, \mu_2}, 
   y_1\in \bU^{<}_{\Phi, -\nu_1}, y_2\in \bU^{<}_{\Phi, -\nu_2}$
with $\mu_1,\mu_2,\nu_1,\nu_2\in Q^+$, $\lambda_1,\lambda_2\in P/2$. 
Here, we use $\gamma+\kappa=2\Id_{\h^*}$, so that $\gamma\kappa=\kappa\gamma$ 
and $(\gamma(\mu),\gamma(\mu'))=(\kappa(\mu),\kappa(\mu'))$ due to~\eqref{adjoint}. 
Formula~\eqref{twisted pairing rewritten} also clarifies our first condition 
in the choice of $\sN$ in the beginning of Section~\ref{ssec twisted Hopf pairing}.
\end{Rem}

We note that: 
\begin{equation}\label{eq:orthogonal}
  \langle \bU^{<}_{\Phi,-\nu_1} \bU^{0}_\Phi \bU^{>}_{\Phi,\mu_1},
          \bU^{<}_{\Phi,-\nu_2} \bU^{0}_\Phi \bU^{>}_{\Phi, \mu_2} \rangle' = 0 
  \quad \mathrm{unless} \quad \nu_1=\mu_2 \,,\, \nu_2=\mu_1 \,.
\end{equation}
The following result is completely analogous to~\cite[Proposition 6.20]{j}:

\begin{Prop}\label{twisted pairing DJ}
The above pairing $\langle \ ,\ \rangle'$ of~\eqref{twisted pairing on whole DJ} on $\bU(\g,P/2)$ satisfies:
\begin{equation}\label{ad-invariance def 1}
  \langle \ad'(x)y,z \rangle' = \langle y,\ad'(S'(x))z \rangle' \qquad \mathrm{for\ any} \quad x,y,z\in \bU(\g,P/2) \,.
\end{equation}
\end{Prop}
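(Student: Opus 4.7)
The plan is to follow the strategy of \cite[Proposition 6.20]{j}, adapted to our twisted setup. First I would reduce \eqref{ad-invariance def 1} to the case where $x$ is one of the generators $K^\mu, \tE_i, \tF_i$ of $\bU(\g,P/2)$. Indeed, if the identity holds for $x_1$ and $x_2$ individually (for all $y,z$), then for $x = x_1 x_2$ one computes
\[
  \langle \ad'(x_1x_2)y, z\rangle' = \langle \ad'(x_1)\ad'(x_2)y, z\rangle' = \langle \ad'(x_2)y, \ad'(S'(x_1))z\rangle' = \langle y, \ad'(S'(x_2)S'(x_1))z\rangle',
\]
using that $\ad'$ is a left action of $\bU(\g,P/2)$ on itself and that $S'$ is anti-multiplicative. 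By bilinearity and the triangular decomposition \eqref{eq:new-triangular-DJ}, we may in addition assume that $y$ and $z$ are PBW monomials $y_k K^{\lambda_k} x_k$ with $y_k \in \bU^{<}_{\Phi, -\nu_k}$, $x_k \in \bU^{>}_{\Phi, \mu_k}$, and $\lambda_k \in P/2$.

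For the Cartan generators $x = K^\mu$, the identity $\ad'(K^\mu)(y_k K^{\lambda_k} x_k) = v^{(\mu,\,\mu_k - \nu_k)} y_k K^{\lambda_k} x_k$ together with $S'(K^\mu) = K^{-\mu}$ reduces \eqref{ad-invariance def 1} to the degree constraint \eqref{eq:orthogonal}: both sides vanish unless $\nu_1 = \mu_2$ and $\nu_2 = \mu_1$, in which case each side scales by the common factor $v^{(\mu,\mu_1-\nu_1)} = v^{-(\mu,\mu_2-\nu_2)}$.

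For $x = \tE_i$ (and symmetrically $x = \tF_i$), one expands $\ad'(\tE_i)y = [\tE_i, y] K^{\zlambda_i}$ and $\ad'(S'(\tE_i))z = -\ad'(\tE_i)\ad'(K^{\zlambda_i})z$ via \eqref{eq:explicit adjoint action} and $S'(\tE_i) = -\tE_i K^{\zlambda_i}$. Moving $\tE_i$ past the factors of $y_k$ or $x_k$ produces skew-derivation terms whose coefficients are governed by the Hopf-pairing axioms \eqref{pairing condition 1} applied to $\Delta'(\tE_i) = 1\otimes \tE_i + \tE_i \otimes K^{-\zlambda_i}$. Expanding both sides of \eqref{ad-invariance def 1} through these axioms and relabelling indices via the degree constraint of \eqref{eq:orthogonal}, one finds the same polynomial expression in the Hopf-pairing values $(y_k, x_l)'$, weighted by matching powers of $v$.

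The main technical obstacle is the bookkeeping of the $v$-powers that arise from commuting Cartan elements past $\tE_i$ and $\tF_i$ against the twisting factors hidden inside $G(\lambda_1,\lambda_2,\mu_1,\mu_2)$. Showing that these cancel requires the adjointness \eqref{adjoint} between $\kappa$ and $\gamma$ together with the identities $\zlambda_i = \gamma(\alpha_i)$ and $-\zmu_i = \kappa(\alpha_i)$ from \eqref{kappa and gamma}. Since the pairing \eqref{twisted pairing on whole DJ} was defined directly from the twisted Hopf pairing $(\ ,\ )'$ using the triangular decomposition, the needed cancellations ultimately reduce to \eqref{pairing condition 1}--\eqref{pairing condition 2}; consequently the argument parallels \cite[Proposition 6.20]{j} line by line, with the untwisted structures replaced by their twisted counterparts throughout.
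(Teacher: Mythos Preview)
Your proposal is correct and follows essentially the same approach as the paper, which simply states that the result ``follows by applying a line-to-line reasoning as in~\cite[\S6.14--6.20]{j}.'' Your outline makes explicit the reduction to generators and the role of the degree constraint~\eqref{eq:orthogonal}, the Hopf-pairing axioms~\eqref{pairing condition 1}, and the $\kappa$/$\gamma$ identities, which is precisely the content of that line-by-line adaptation.
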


\begin{proof}
Follows by applying a line-to-line reasoning as in~\cite[\S6.14--6.20]{j}. 
\end{proof}

Let us now investigate the behavior of the pairing~\eqref{twisted pairing on whole DJ} with respect to 
$\dU_\CA(\g)$ and $U^{ev}_\CA(\g)$.

\begin{Prop}\label{Prop:pairing_whole}
The restriction of~(\ref{twisted pairing on whole DJ}) gives rise to: 
\begin{equation}\label{pairing of DCK and Lus whole}
  \langle \ ,\ \rangle'\colon U^{ev}_{\CA}(\g) \times \dU_{\CA}(\g) \longrightarrow \CA[v^{\pm {1/\sN}}] \,.
\end{equation}
Evoking the adjoint action $\ad'\colon \dU_{\CA}(\g)\curvearrowright U^{ev}_{\CA}(\g)$ of 
Proposition~\ref{action of Lus on DCK}, the pairing~(\ref{pairing of DCK and Lus whole}) satisfies: 
\begin{equation}\label{invariance DCK+Lus}
  \langle \ad'(x)y, z \rangle' = \langle y, \ad'(S'(x))z \rangle' \qquad
  \mathrm{for\ any} \quad x,z\in \dU_{\CA}(\g) \,,\, y\in U^{ev}_{\CA}(\g) \,.
\end{equation}
\end{Prop}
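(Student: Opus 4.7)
The plan is to treat the two claims separately. The invariance identity~\eqref{invariance DCK+Lus} is immediate from Proposition~\ref{twisted pairing DJ}: the pairing~\eqref{pairing of DCK and Lus whole} is just the restriction of~\eqref{twisted pairing on whole DJ} from $\bU(\g,P/2)\times \bU(\g,P/2)$ to $U^{ev}_\CA(\g)\times \dU_\CA(\g)$. Since Proposition~\ref{action of Lus on DCK} guarantees $\ad'(\dU_\CA(\g))U^{ev}_\CA(\g)\subseteq U^{ev}_\CA(\g)$, and $\dU_\CA(\g)$ is a Hopf $\CA$-subalgebra of $\bU(\g,P/2)$ (hence stable under its own $\ad'$-action), the identity~\eqref{ad-invariance def 1} proved for the ambient algebra restricts verbatim to our pair of subalgebras.

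For the integrality statement~\eqref{pairing of DCK and Lus whole}, I would use the triangular decomposition isomorphisms of Lemma~\ref{lem:twisted-triangular-PBW}(a2) to reduce, via $\CA$-bilinearity, to verifying that every monomial pairing
\begin{equation*}
\langle y\,K^{\lambda}\,x,\ \tilde y\,\tilde h\,\tilde x\rangle'
\end{equation*}
lies in $\CA[v^{\pm 1/\sN}]$, where $y\in U^{ev,<}_\CA$, $x\in U^{ev,>}_\CA$ are homogeneous, $\lambda\in 2P$, $\tilde y\in \dU^<_\CA$, $\tilde x\in \dU^>_\CA$ are homogeneous, and $\tilde h$ ranges over the basis~\eqref{eq:basis-Cartan-twisted-Lus} of $\dU^0_\CA$. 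Plugging into the defining formula~\eqref{twisted pairing construction} and expanding $\tilde h=\sum_\mu c_\mu K^\mu$ inside $\bU^{0}_\Phi$, this pairing factors as
\begin{equation*}
(y,\tilde x)'\cdot (\tilde y,x)'\cdot \sum_\mu c_\mu\, v^{G(\lambda,\mu,\deg x,\deg \tilde x)}.
\end{equation*}
The two nilpotent Hopf pairings are in $\CA$ by Lemma~\ref{Lem:pairing_quantum_Borel}, and splitting off the $\mu$-independent part of $G$ rewrites the remaining ``Cartan sum'' as $v^{r}\sum_\mu c_\mu v^{(\mu,\xi)/2}$ with $\xi\in 2P$ (using $\kappa(Q),\gamma(Q)\subseteq 2P$ from Lemma~\ref{Q-to-2P}) and $r\in \tfrac{1}{\sN}\BZ$ (using $(2P,2P)\subseteq (P/2,P/2)\subseteq \tfrac{2}{\sN}\BZ$ by the choice of $\sN$, together with $(2\rho,Q)\subseteq \BZ$).

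The hard part will be the $\CA$-integrality of this Cartan sum when $\tilde h$ contains the factors $\binom{K_i;0}{t}$, which are \emph{not} $\CA$-combinations of $K^\mu$'s inside $\bU^{0}_\Phi$, so term-by-term integrality of the sum is not available. For $\tilde h=K^{2\varsigma}$ the sum collapses to a single monomial $v^{(2\varsigma,\xi)/2}\in v^{\frac{1}{\sN}\BZ}\subseteq \CA[v^{\pm 1/\sN}]$. For the generic basis element $\tilde h=K_i^{2\lfloor t/2\rfloor}\binom{K_i;0}{t}$ the plan is to expand via the $q$-binomial theorem
\begin{equation*}
\prod_{s=0}^{t-1}(1-v_i^{2s}K_i^{-2}) \,=\, \sum_{k=0}^t (-1)^k v_i^{k(k-1)}\bmat{t\\ k}_{v_i^2} K_i^{-2k} \,,
\end{equation*}
substitute into the Cartan sum, and then apply a second instance of the $q$-binomial theorem with $k'\in \BZ$ defined by $(\lambda,\alpha_i)=2k'\sd_i$ (an even multiple of $\sd_i$ because $\lambda\in 2P$): the sum collapses to a monomial in $v_i$ times the Gaussian binomial $\bmat{k'+t-1\\ t}_{v_i^{-2}}$ when $k'\geq 1$, vanishes when $k'=0$ or $0>k'>-t$, and reduces to $\bmat{-k'\\ t}_{v_i^{-2}}$ times a monomial when $k'\leq -t$. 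In all cases the result lies in $\CA$, and iterating over the indices $i$ appearing in the tensor-product basis element~\eqref{eq:basis-Cartan-twisted-Lus} finishes the verification.
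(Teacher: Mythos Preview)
Your invariance argument is exactly the paper's: both simply restrict the identity~\eqref{ad-invariance def 1} from $\bU(\g,P/2)$ to the pair of subalgebras, using that $\ad'(\dU_\CA(\g))$ preserves $U^{ev}_\CA(\g)$ and that $\dU_\CA(\g)$ is a Hopf subalgebra (hence stable under its own $\ad'$ and $S'$).

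For integrality your route is correct but more laborious than the paper's. The paper bypasses the full function $G$ by invoking the simplified form~\eqref{twisted pairing rewritten}, where the exponent collapses to $-\tfrac{(\lambda_1,\lambda_2)}{2}+(2\rho,\nu_1)$. The map $K^{\lambda_2}\mapsto v^{-(\lambda,\lambda_2)/2}$ is then an \emph{algebra homomorphism} on the Cartan part, so its value on $\binom{K_i;0}{s_i}$ is obtained by substituting $K_i^{-2}\mapsto v_i^{(\alpha_i^\vee,\lambda)}$ directly into the defining quotient, yielding $\binom{-\tfrac{(\alpha_i^\vee,\lambda)}{2}}{s_i}_{v_i}\in\BZ[v,v^{-1}]$ in one step---this is exactly the formula~\eqref{pairing of DCK and Lus rewritten} stated right after the proposition. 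Your expansion of the numerator via the $q$-binomial theorem followed by a resummation rediscovers this character evaluation by brute force; recognizing the Cartan sum as a ring homomorphism evaluated on $\tilde h$ is what makes the paper's argument a two-line proof.

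One small slip worth flagging: you define $k'$ by $(\lambda,\alpha_i)=2k'\sd_i$, but in your own setup the Cartan sum is $\sum_\mu c_\mu v^{(\mu,\xi)/2}$ with $\xi=\kappa(\deg\tilde x)+\gamma(\deg x)-\lambda$, not $-\lambda$. The correct integer parameter is $(\alpha_i^\vee,\xi)/2$; since $\xi\in 2P$ this is still an integer and the conclusion (a Gaussian binomial at an integer argument, hence in $\CA$) is unaffected, but the displayed case analysis is shifted. If instead you first absorb the Cartan shifts $K^{\kappa(\nu)},K^{\gamma(\mu)}$ into the nilpotent factors as in~\eqref{twisted pairing rewritten}, then $\xi=-\lambda$ on the nose and your formula for $k'$ becomes literally correct---which is precisely why the paper prefers that reformulation.
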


\begin{proof}
The restriction of the pairing~(\ref{twisted pairing on whole DJ}) to $U^{ev}_{\CA}(\g) \times \dU_{\CA}(\g)$ 
satisfies the condition~(\ref{invariance DCK+Lus}) due to~\eqref{ad-invariance def 1}, and takes values in $\CA[v^{\pm {1/\sN}}]$ 
due to~\eqref{twisted pairing rewritten} and Lemma~\ref{Lem:pairing_quantum_Borel}. 
\end{proof}

Evoking the pairings $(\ ,\ )'$ of Lemma~\ref{Lem:pairing_quantum_Borel} 
(uniquely determined by~(\ref{pairing condition 1},~\ref{explicit formulas 1}) 
and~(\ref{pairing condition 1},~\ref{explicit formulas 2})), the triangular decompositions of
$\dU_{\CA}(\g)$ and $U^{ev}_{\CA}(\g)$ of Lemma~\ref{lem:twisted-triangular-PBW}, 
the $Q$-grading~\eqref{eq: Q-grading for twisted one} of both algebras, Lemma~\ref{Q-to-2P}, and formula~(\ref{twisted pairing rewritten}), we note that 
the pairing~(\ref{pairing of DCK and Lus whole}) is given by:
\begin{equation}\label{pairing of DCK and Lus rewritten}
\begin{split}
  & \left\langle (yK^{\kappa(\nu)}) K^{\lambda} (x K^{\gamma(\mu)}), 
               (\dy K^{\kappa(\dnu)}) K^{\dlambda} \prod_{i=1}^{r} \binom{K_i;0}{s_i} (\dx K^{\gamma(\dmu)}) \right\rangle' \, = \\
  & \qquad \qquad \qquad \qquad \qquad \qquad 
    (\dy,x)'\cdot (y,\dx)'\cdot v^{-\frac{(\lambda,\dlambda)}{2}+(2\rho,\nu)} \cdot
    \prod_{i=1}^{r} \binom{-\frac{(\alpha^\vee_i,\lambda)}{2}}{s_i}_{v_i}
\end{split}
\end{equation}
for any 
  $\dx\in \dU^{>}_{\CA,\dmu}, x\in U^{ev\,>}_{\CA,\mu}, 
   \dy\in \dU^{<}_{\CA,-\dnu}, y\in U^{ev\,<}_{\CA,-\nu}$, $\dmu,\mu,\dnu,\nu\in Q_+$, 
$\dlambda, \lambda\in 2P$, and $s_i\geq 0$. In particular, we note that $-(\alpha^\vee_i,\lambda)/2\in \BZ$, and so 
$\binom{-\frac{(\alpha^\vee_i,\lambda)}{2}}{s_i}_{v_i}$ is well-defined.

Given $q\in R$ as in the paragraph preceding~(\ref{sigma homom}), suppose $q$ has an $\sN$-th root in $R$, and fix such a root $q^{1/\sN}$.  
Lift $\sigma\colon \CA\to R$ of~(\ref{sigma homom}) to $\CA[v^{1/\sN}]\rightarrow R$ via $v^{1/\sN}\mapsto q^{1/\sN}$. We thus obtain:

\begin{Prop}\label{Prop:pairing_whole_R}
Define an $R$-bilinear pairing
\begin{equation}\label{pairing of DCK and Lus whole R}
  \langle \ ,\ \rangle'\colon U^{ev}_{q}(\g) \times \dU_{q}(\g) \longrightarrow R
\end{equation}
via~(\ref{pairing of DCK and Lus rewritten}), where $v^{1/\sN}=q^{1/\sN}$ and the pairings $(\dy,x)',(y,\dx)'$ refer 
to~(\ref{pairing of DCK and Lus halves 1R},~\ref{pairing of DCK and Lus halves 2R}). Then:

\medskip
\noindent
(a) The pairing~(\ref{pairing of DCK and Lus whole R}) is $\ad'$-invariant:
\begin{equation*}
  \langle \ad'(x)y, z \rangle' = \langle y, \ad'(S'(x))z \rangle' 
  \qquad \mathrm{for\ any} \quad x,z\in \dU_q(\g) \,,\, y\in U^{ev}_q(\g) \,.
\end{equation*}

\noindent
(b) The pairing~(\ref{pairing of DCK and Lus whole R}) has the zero kernel in the first argument.
\end{Prop}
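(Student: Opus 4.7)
Part (a) is an immediate specialization of Proposition~\ref{Prop:pairing_whole}: both the adjoint action $\ad'\colon \dU_q(\g)\curvearrowright U^{ev}_q(\g)$ of~\eqref{eq:specialized ad-action} and the pairing $\langle\,,\,\rangle'$ of~\eqref{pairing of DCK and Lus whole R} are obtained from their $\CA$-analogues by the base change $\sigma\colon \CA[v^{1/\sN}]\to R$, so the identity~\eqref{invariance DCK+Lus} descends verbatim.

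For part (b), suppose $y\in U^{ev}_q(\g)$ pairs trivially with every $z\in \dU_q(\g)$. Using the triangular decomposition from Lemma~\ref{lem:twisted-triangular-PBW}(a2) together with the $Q$-grading~\eqref{eq: Q-grading for twisted one}, decompose $y=\sum_{\mu,\nu\in Q_+}y_{\mu,\nu}$ with $y_{\mu,\nu}\in U^{ev,<}_{-\nu}\cdot U^{ev,0}_q\cdot U^{ev,>}_\mu$. The orthogonality~\eqref{eq:orthogonal} ensures that each $y_{\mu,\nu}$ separately pairs to zero against all test elements in $\dU^<_{-\mu}\cdot \dU^0_q\cdot \dU^>_\nu$, so it suffices to prove $y_{\mu,\nu}=0$ for each fixed pair $(\mu,\nu)$.

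Fix $(\mu,\nu)$, pick $R$-bases $\{F_i\}$ of $U^{ev,<}_{-\nu}$ and $\{E_j\}$ of $U^{ev,>}_\mu$ (finite ranks by Lemma~\ref{lem:twisted-triangular-PBW}(a1)), and expand $y_{\mu,\nu}=\sum_{\lambda,i,j}c^\lambda_{i,j}\,F_iK^\lambda E_j$ with finitely many nonzero $c^\lambda_{i,j}\in R$. Rewriting each $F_iK^\lambda E_j$ in the normal form required by~\eqref{pairing of DCK and Lus rewritten} (using $\kappa(\nu),\gamma(\mu)\in 2P$ from Lemma~\ref{Q-to-2P} and absorbing a scalar from moving $K^{\gamma(\mu)}$ past $E_j$), we obtain, for every $\dy\in\dU^<_{-\mu}$, every $\dx\in\dU^>_\nu$, and every Cartan test element $\xi=K^{\dlambda}\prod_\ell \binom{K_\ell;0}{s_\ell}$, an identity of the form
$$\sum_{\lambda,i,j}c^\lambda_{i,j}\,(\dy,E_j)'\,(F_i,\dx)'\,\hchi_{-\lambda/2}(\xi)\;=\;0,$$
where $\hchi_{-\lambda/2}\colon\dU^0_q\to R$ is the character of~\eqref{eq: defi of chi-lambda} (note $-\lambda/2\in P$ since $\lambda\in 2P$). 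Fixing $\dy,\dx$ and letting $\xi$ vary, Lemma~\ref{lem: independent of characters}(b) forces $\sum_{i,j}c^\lambda_{i,j}(\dy,E_j)'(F_i,\dx)'=0$ for each $\lambda$.

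Fix $\lambda$ and $\dx$ and vary $\dy$: the element $\sum_j\bigl(\sum_i c^\lambda_{i,j}(F_i,\dx)'\bigr)E_j\in U^{ev,>}_\mu$ pairs to zero with all $\dy\in\dU^<_{-\mu}$ and hence vanishes by Theorem~\ref{one argument nondegeneracy}(a), yielding $\sum_i c^\lambda_{i,j}(F_i,\dx)'=0$ for each $j$. Varying $\dx$ and invoking Theorem~\ref{one argument nondegeneracy}(b) on $\sum_i c^\lambda_{i,j}F_i\in U^{ev,<}_{-\nu}$ then gives $c^\lambda_{i,j}=0$ for all $i,j,\lambda$, so $y_{\mu,\nu}=0$. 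The main delicate point is the Cartan rewriting step, which requires a careful verification that the Cartan contribution of~\eqref{pairing of DCK and Lus rewritten} indeed collapses into the character $\hchi_{-\lambda/2}$ modulo a nonzero constant depending only on $(\mu,\nu)$; everything that follows is a straightforward chain of nondegeneracy reductions.
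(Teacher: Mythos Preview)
Your proof is correct and follows essentially the same strategy as the paper's: both use the triangular decomposition and orthogonality~\eqref{eq:orthogonal} to reduce to individual $(\mu,\nu)$-components, then invoke Lemma~\ref{lem: independent of characters} to separate the Cartan coefficients, and finally the nondegeneracy of the half-pairings to conclude. The paper's argument is slightly more streamlined in that it works directly with the explicit PBW monomials $\tF^{\cev{k}},\tE^{\cev{r}}$ and the orthogonality of Corollary~\ref{cor: pairing of PBW}, which makes the ``Cartan rewriting step'' you flag entirely transparent and avoids the separate appeal to Theorem~\ref{one argument nondegeneracy}; but the underlying logic is identical.
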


\begin{proof}
Part (a) is obvious. Let us prove part (b). Let $x\in U_q^{ev}(\g)$ then, by using the PBW-bases of Lemma \ref{lem:twisted-triangular-PBW},  
\[x=\sum_{\vec{k}, \vec{r} \in \BZ^N_{\geq 0}} (\tF^{\cev{k}} K^{-\kappa(\weight(\tF^{\cev{k}}))})a_{\vec{k}, \vec{r}}(\tE^{\cev{r}} K^{\gamma(\weight(\tE^{\cev{r}}))}),\]
for finitely many nonzero $a_{\vec{k}, \vec{r}}\in U_q^{ev0}$. Suppose $\<x, u\>'=0$ for all $u \in \dU_q(\g)$. Apply this to  $u=(\tF^{(\cev{r})} K^{-\kappa(\weight(\tF^{(\cev{r})}))})u_0(\tE^{(\cev{k})}K^{\gamma(\weight(\tE^{(\cev{k})}))})$ for any $u_0\in \dU^0_q$ and using the computations in Corollary \ref{cor: pairing of PBW}, we obtain $\<a_{\vec{k}, \vec{r}}, u_0\>'=0$ for all $u_0\in \dU^0_q$. Note that the map $\dU^0_q \rightarrow R$ defined by $u_0 \mapsto \< K^{2\lambda}, u_0\>'$ is equal to $\hchi_{-\lambda}$ for all $\lambda \in P$. Therefore, by Lemma \ref{lem: independent of characters}, $a_{\vec{k}, \vec{r}}=0$, hence $x=0$.
\end{proof}

\section{Twisted quantum Frobenius homomorphism}\label{Quantum Frobenius}



We impose restrictions on $\ell$ as in Section \ref{ssec:qFr} below. Let $\CA'$ be the quotient of $\CA$ by the ideal generated by $\ell$-cyclotomic polynomial $f_\ell\in\CA$. 
In this section, we assume that the given ring homomorphism $\sigma\colon \CA\rightarrow R$ factors through a ring homomorphism 
$\CA'\rightarrow R$. 
To define the coproduct and the braiding for certain 
Hopf algebras below, we shall further assume that there is $\e^{1/\sN}\in R$, where $\sN$ satisfies 
$\tfrac{1}{2}(P/2,P/2)\subset \frac{1}{\sN}\BZ$, 
and we fix such $\sN,\e^{1/\sN}$. We set $\e_i=\e^{\sd_i}$, where $\sd_i=(\a_i,\a_i)/2 \in \{1,2,3\}$. Define a positive integer  $\ell_i$ by 
\begin{equation}\label{eq:li_equation}
  \ell_i:=\ell/\operatorname{GCD}(2d_i,\ell).
\end{equation}
More generally, for any $\alpha\in \Delta_+$, let 
\begin{equation}\label{eq: l_a} 
  \ell_\alpha:=\ell/\operatorname{GCD}((\alpha,\alpha),\ell).
\end{equation}
In this section, we recall the quantum Frobenius homomorphism of \cite{l-book} with small modifications.

Henceforth, we shall often use the following result:

\begin{Lem}\label{lem:aux-at-roots}
(a) For any $a\in \BZ$ and $b\in \BN$, we have 
$\ds \binom{a}{b}_{\e_i} =\, \binom{a_0}{b_0}_{\e_i} \cdot\, \binom{a_1}{b_1}$, where $a=\ell_i a_1+a_0$ and $b=\ell_i b_1+b_0$ 
with $a_1\in\BZ$, $b_1\in \BN$, and $0\leq a_0,b_0\leq \ell_i-1$.

\noindent
(b) The $v=\epsilon$ specialization of the $v$-multibinomial coefficient 
$\displaystyle \frac{(n\ell_i)_{v_i}!}{((\ell_i)_{v_i}!)^n}\in \BZ[v,v^{-1}]$ equals $n!$.

\noindent
(c) $(-1)^{\ell_i}\e_i^{\ell_i(\ell_i+1)}=-1$.
\end{Lem}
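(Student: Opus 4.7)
The plan exploits that $\ell_i$ equals the multiplicative order of $\e_i^2$: since $\e_i=\e^{\sd_i}$ and $2\sd_i\ell_i=\mathrm{lcm}(2\sd_i,\ell)$, one has $\e_i^{2\ell_i}=1$, with minimality forced by the defining formula for $\ell_i$. Consequently $1-\e_i^{-2j}$ depends only on $j\bmod\ell_i$ and vanishes exactly when $\ell_i\mid j$. To sidestep zero-divisors in $R$, I would first prove all three identities in the domain $\CA/(f_\ell)$ (which embeds into $\BQ(\zeta_\ell)$) and then push forward via $\sigma$.

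For (a), starting from $\binom{a}{b}_v = \prod_{j=a-b+1}^a(1-v^{-2j}) \big/ \prod_{j=1}^b(1-v^{-2j})$, a residue-class count shows that the residues $\{a_0-b_0+1,\dots,a_0\}\pmod{\ell_i}$ (cyclically) occur $b_1+1$ times in the numerator range and the rest occur $b_1$ times, while in the denominator the residues $\{1,\dots,b_0\}$ occur $b_1+1$ times and the rest $b_1$ times. If $a_0<b_0$ then residue $0$ belongs to the numerator's $(b_1+1)$-multiplicity class but not the denominator's, giving one extra vanishing factor and hence $\binom{a}{b}_{\e_i}=0=\binom{a_0}{b_0}_{\e_i}$. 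When $a_0\ge b_0$ the zero counts match; cancelling the common periodic factors leaves
\[
  \binom{a}{b}_{\e_i} \,=\, \frac{\prod_{r=a_0-b_0+1}^{a_0}(1-\e_i^{-2r})}{\prod_{r=1}^{b_0}(1-\e_i^{-2r})}\,\cdot\, \frac{\prod_{k=a_1-b_1+1}^{a_1}(1-u^k)}{\prod_{k=1}^{b_1}(1-u^k)}\bigg|_{u=1},
\]
with $u=v^{-2\ell_i}$. The first factor is $\binom{a_0}{b_0}_{\e_i}$ by definition; for the second, using $1-u^k=(1-u)(1+u+\cdots+u^{k-1})$, the $b_1$ copies of $(1-u)$ cancel and specializing $u=1$ yields $\prod_{k=a_1-b_1+1}^{a_1}k \big/ \prod_{k=1}^{b_1} k = \binom{a_1}{b_1}$.

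For (b), decompose
\[
  \frac{(n\ell_i)_{v_i}!}{((\ell_i)_{v_i}!)^n} \,=\, \prod_{k=1}^{n}\prod_{j=1}^{\ell_i}\frac{((k-1)\ell_i+j)_{v_i}}{(j)_{v_i}}.
\]
For $1\le j<\ell_i$ each factor specializes at $v=\e$ to $1$ by periodicity of $\e_i^{-2m}$ modulo $\ell_i$. For $j=\ell_i$, the polynomial identity $(k\ell_i)_{v_i}/(\ell_i)_{v_i}=1+v_i^{-2\ell_i}+\cdots+v_i^{-2(k-1)\ell_i}$ specializes to $k$, so the product equals $\prod_{k=1}^n k = n!$.

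For (c), working in the domain $\CA/(f_\ell)$, the element $\e_i^{\ell_i}$ squares to $1$, so equals $\pm1$, and equals $-1$ whenever $\ell_i$ is even (else $(\e_i^2)^{\ell_i/2}=1$ would contradict minimality of $\ell_i$). If $\ell_i$ is odd, $\e_i^{\ell_i(\ell_i+1)}=(\e_i^{2\ell_i})^{(\ell_i+1)/2}=1$ and $(-1)^{\ell_i}=-1$; if $\ell_i$ is even, $\e_i^{\ell_i(\ell_i+1)}=(\e_i^{\ell_i})^{\ell_i+1}=(-1)^{\ell_i+1}=-1$ and $(-1)^{\ell_i}=1$; the product is $-1$ either way. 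The main obstacle is the residue-class bookkeeping in (a); once the zero counts are pinned down, the remaining computations are routine.
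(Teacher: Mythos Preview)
Your proof is correct, and part (c) is essentially identical to the paper's argument. Parts (a) and (b), however, take a different and more self-contained route.

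For (a), the paper simply invokes the $q$-Lucas identity from \cite[Lemma~34.1.2(c)]{l-book} for the symmetric binomials $\bmat{a\\b}_{\e_i}$ and converts via $\bmat{a\\b}_{\e_i}=\e_i^{b(a-b)}\binom{a}{b}_{\e_i}$ together with $\e_i^{2\ell_i}=1$. You instead reprove the $q$-Lucas identity from scratch by a residue-count argument. This is one of the standard proofs and has the advantage of being self-contained; what it costs is the care needed to make the ``cancel and then specialize'' step rigorous. The cleanest way to phrase your argument is to work in the discrete valuation ring $\BZ[v,v^{-1}]_{(f_\ell)}$: each factor $1-v_i^{-2j}$ has $(f_\ell)$-adic valuation $1$ if $\ell_i\mid j$ and $0$ otherwise, so the valuation of $\binom{a}{b}_v$ equals the difference of zero-residue counts, immediately giving the vanishing when $a_0<b_0$; when $a_0\ge b_0$ the valuation is zero and your factorization into a unit times $\prod_k(1-u^k)\big/\prod_k(1-u^k)$ (with $u=v_i^{-2\ell_i}$) followed by reduction modulo the maximal ideal is then perfectly legitimate. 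Your residue bookkeeping and the limit $\frac{1-u^k}{1-u}\big|_{u=1}=k$ (valid for all $k\in\BZ$, including $k<0$) are both correct.

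For (b), the paper proceeds by induction on $n$, peeling off $\binom{n\ell_i}{\ell_i}_{\e_i}=n$ via part (a). Your direct term-by-term decomposition $\prod_{k,j}\frac{((k-1)\ell_i+j)_{v_i}}{(j)_{v_i}}$ is equally valid and arguably more transparent: for $1\le j<\ell_i$ each factor is a ratio of two elements that are both units at $v=\e$ and equal there, while for $j=\ell_i$ the factor is the honest polynomial $1+v_i^{-2\ell_i}+\cdots+v_i^{-2(k-1)\ell_i}$ specializing to $k$. Both approaches yield $n!$; yours avoids the induction at the price of checking regularity of the individual rational factors.
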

\begin{proof}
(a) Follows from~\cite[Lemma 34.1.2(c)]{l-book} upon using $\ds \bmat{a\\ b}_{\e_i}=\, \e_i^{b(a-b)}\binom{a}{b}_{\e_i}$ and 
$\e_i^{2\ell_i}=1$.

(b) Follows by induction on $n$ by applying part (a) to $a=n\ell_i,b=\ell_i$.

(c) We have $(-1)^{\ell_i}\epsilon_i^{\ell_i(\ell_i+1)}=(-1)^{\ell_i}\epsilon^{\sd_i \ell_i(\ell_i+1)}$. 
If $\ell_i$ is odd, then $(-1)^{\ell_i}=-1$, $\epsilon^{\sd_i\ell_i(\ell_i+1)}=1$. If $\ell_i$ is even, then 
we note that $\epsilon^{\sd_i \ell_i}=-1$ and also $(-1)^{\ell_i}=1$. The result follows. 
\end{proof}


\subsection{The domain of the quantum Frobenius homomorphism} 
\

Let us recall the Lusztig form $\dU_\e(\g)=\dU_\CA(\g)\otimes_{\CA} R$ from \eqref{eq: Lusztig form over R}, and let  
$\dU^>_\e$, $\dU^<_\e$ be its $R$-subalgebras generated by $\{\tE^{(n)}_i\}^{n\geq 1}_{1\leq i \leq r}$, 
$\{\tF^{(n)}_i\}^{n\geq 1}_{1\leq i \leq r}$. 
We recall the $q$-Serre relations, cf.~\eqref{eq:gen-rel-twistedDJ-2}: 
\begin{align*}
  & \sum_{m=0}^{1-a_{ij}}(-1)^m \e_i^{ma_{ij}(\epsilon_{ij}-1)-m(m-1)}\tE_i^{(1-a_{ij}-m)}\tE_j \tE_i^{(m)}=0 \qquad  
    (i \neq j) \,, \\
  & \sum_{m=0}^{1-a_{ij}}(-1)^m \e_i^{m a_{ij}(\e_{ij}-1)-m(m-1)} \tF_i^{(1-a_{ij}-m)} \tF_j \tF_i^{(m)}=0 \qquad 
    (i \neq j) \,.
\end{align*}
The generators $\tE_i^{(n)}$ and $\tF_j^{(n)}$ also satisfy the following relations, cf.~\eqref{eq:ef-twisted-swap-in-v}: 
\begin{equation}\label{eq:ef-twisted-swap} 
\begin{split}
  \tE_i^{(p)}\tF_j^{(s)}&=\epsilon^{ps(\a_i, \kappa(\a_j))} \tF_j^{(s)}\tE_i^{(p)} \qquad \mathrm{for}\ i\ne j \,, \\
  \tE_i^{(p)}\tF_i^{(s)}&=\sum_{c=0}^{\min(p,s)} \epsilon_i^{2ps-c^2} \tF_i^{(s-c)}\binom{K_i; 2c-p-s}{c} \tE_i^{(p-c)} \,. 
\end{split}
\end{equation}
   
\subsubsection{The idempotented Lusztig form $\hU_\e(\g, X)$}\label{sssec:idempotented-Lus}
\
%

For any lattice $X$ with $Q\subseteq X\subseteq P$, let $\hU_\e(\g, X)$ be the \textbf{idempotented Lusztig form} 
defined similarly to \cite[Chapter~$23$]{l-book} with generators 
\[
  \Big\{ \tE_i^{(n)}1_\lambda, \tF_i^{(n)}1_\lambda \,\Big|\, 1\leq i\leq r, n\geq 0, \lambda \in X \Big\} \,.
\]

We record the topological coproduct in $\hU_\e(\g, X)$: 
\begin{equation}\label{EF coproduct}
\begin{split}
  & \Delta(\tE^{(r)}_i 1_\lambda)=
    \sum_{c=0}^r \prod_{\lambda'+\lambda''=\lambda}  \e^{-(r-c)(\zeta^>_i, \lambda'')} 
    \tE_i^{(r-c)}1_{\lambda'}\otimes \tE_i^{(c)}1_{\lambda''} \,,\\
  & \Delta(\tF_i^{(r)}1_\lambda)=
    \sum_{c=0}^r \prod_{\lambda'+\lambda''=\lambda} \e_i^{2c(r-c)}\e^{c(\zeta^<_i, \lambda'')} 
    \tF_i^{(c)}1_{\lambda'}\otimes \tF_i^{(r-c)}1_{\lambda''} \,,
\end{split}
\end{equation}
cf.~\eqref{eq:twisted Hopf on divided powers}, as well as some relations in $\hU_\e(\g, X)$:
\begin{equation}\label{EF relations}
\begin{split}
  & \tE_i^{(p)}1_\lambda \tF^{(s)}_j = 
    \e^{ps(\a_i,\kappa(\a_j))}\tF_j^{(s)}1_{\lambda+s\a_j+p\a_i}\tE_i^{(p)} \qquad \mathrm{for} \  \ i \neq j \,, \\
  & \tE_i^{(p)}1_\lambda \tF_i^{(s)} = 
    \sum_{c\geq0} \e_i^{2ps-c^2} \binom{(\lambda, \a_i^\vee)+s+p}{c}_{\e_i}\tF_i^{(s-c)}1_{\lambda+(p+s-c)\a_i}\tE_i^{(p-c)} \,,
\end{split}
\end{equation}
cf.~\eqref{eq:ef-twisted-swap}, with $\lambda, \lambda', \lambda''\in X$ in the formulas above. We note that in \eqref{EF coproduct}, 
the element $\e^{1/\sN}$ is used in the definition of non-integer powers of $\e$, e.g.\  
$\e^{c(\zeta^<_i, \lambda'')}:=(\e^{1/\sN})^{\sN c(\zeta^<_i, \lambda'')}$.


\subsection{The codomain of the quantum Frobenius homomorphism}\label{target of Fr} 

\subsubsection{Root data}\label{SSS:root_data}
Let us consider the following data:
\begin{itemize}

\item The lattices $P^*=\bigoplus_{i=1}^r \BZ \w^*_i$ and $ Q^*=\bigoplus_{i=1}^r \BZ \a^*_i$ with 
$\w^*_i:=\ell_i \w_i$ and $\a^*_i:=\ell_i \a_i$. We also set $\w^{*\vee}_i:=\w^\vee_i/\ell_i$ and $\a^{*\vee}_i:=\a^\vee_i/\ell_i$.

\item The new Cartan matrix with $(i,j)$-entry 
\begin{equation}\label{New Cartan entry} 
  a_{ij}^*=2(\a^*_i,\a^*_j)/(\a^*_i, \a^*_i)=2\ell_j(\a_i, \a_j)/\ell_i(\a_i, \a_i) \,.
\end{equation}

\item The bilinear form on $P^*$ induced from the bilinear form on $P$ via the inclusion $P^*\subset P$.

\end{itemize}
The fact that $(a^*_{ij})_{i,j=1}^{r}$ is a Cartan matrix of a semisimple Lie algebra follows from~\cite[\S 2.2.4]{l-book};  
note that our $\ell$ is $\ell'$ in \cite[Chapter 35]{l-book}).
In the case when $\ell$ is divisible by $2\sd_i$ for all $i$, we have $a_{ij}^*=a_{ji}$ for all $i,j$, so that $(a^*_{ij})$ is 
the Cartan matrix of the Langlands dual $\g^\vee$ of $\g$.

\begin{Rem}\label{rem:lattice-star}
We note that $\kappa(Q^*)=\gamma(Q^*)=2P^*$ similarly to Lemma \ref{Q-to-2P}.
\end{Rem}


\subsubsection{The $\BQ(v^{1/2})$-Hopf algebra $\bU^*(\g, P^*/2)$}
\

All constructions in Sections \ref{sec Setting}--\ref{S_inv_pair} can be carried out with the above datum. Let 
$\sd^*_i=\frac{(\a^*_i,\a^*_i)}{2}=\sd_i \ell_i^2$, $v^*_i=v^{\sd^*_i}$, and respectively $\e_i^*=\e^{\sd^*_i}\in \{\pm 1\}$. Let 
$\g^d$ be the semisimple Lie algebra with the Cartan matrix $(a^*_{ij})$, the weight lattice $P^*$ and the root lattice $Q^*$. 
The graph $\text{Dyn}(\g^d)$, see definition in Section \ref{ssec Sevostyanov phi}, is the same as $\text{Dyn}(\g)$. Let us 
consider the associated matrix $(\epsilon_{ij})$ as in \eqref{epsilon matrix} and then the following twist: 
\[
  F^*:= 
  \prod_{\lambda,\mu \in P^*}v^{\sum_{i,j} \phi^*_{ij}(\w^{*\vee}_i, \lambda)(\w^{*\vee}_j, \mu)}1_\lambda \otimes 1_\mu = 
  \prod_{\lambda, \mu \in P^*} v^{\sum_{i,j} \phi_{ij}(\w^\vee_i, \lambda)(\w_j^\vee, \mu)}1_\lambda \otimes 1_\mu \,,
\]
where $\phi^*_{ij}=\epsilon_{ij}(\a^*_i, \a^*_j)/2=\ell_i\ell_j\phi_{ij}$, cf.~\eqref{formal Cartan twist} and~\eqref{eq:condition on Phi}.

With the above data, we define the $\BQ(v^{1/2})$-Hopf algebra $\bU^*(\g, P^*/2)$ generated by the set 
$\{\he_i, \hf_i, K^\mu\}_{1\leq i \leq r}^{\mu \in P^*/2}$ as in Section \ref{ssec: quantum DJ}. We also have the Lusztig's 
braid group action on $\bU^*(\g, P^*/2)$ defined by:
\begin{equation}\label{eq: *braid action}
\begin{split}
  & T^*_i(K^\mu)=K^{s^*_i\mu} \,, \qquad T^*_i(\he_i)=-\hf_iK^{\a^*_i} \,, \qquad T^*_i(\hf_i)=-K^{-\a^*_i}\he_i \,,\\
  & T^*_i(\he_j)=\sum_{k=0}^{-a^*_{ij}}(-1)^k\frac{(v^*_i)^{-k}}{[-a^*_{ij}-k]_{v^*_i}![k]_{v^*_i}!}\he_i^{-a^*_{ij}-k}\he_j\he_i^k \qquad (i \neq j) \,,\\
  & T^*_i(\hf_j)=\sum_{k=0}^{-a^*_{ij}}(-1)^k\frac{(v^*_i)^k}{[-a^*_{ij}-k]_{v^*_i}![k]_{v^*_i}!}\hf_i^k\hf_j\hf_i^{-a^*_{ij}-k} \qquad (i \neq j) \,,
\end{split}
\end{equation}
in which $s^*_i=s_{\a^*_i}$. The Weyl group of $\g^d$ is the same as the Weyl group of $\g$ via identifying $s^*_i$ with $s_i$, 
so we also denote the Weyl group of $\g^d$ by $W$. Fix the same reduced decomposition of the longest element $w_0$ in $W$ as in 
Section \ref{ssec: quantum DJ}. Then $\b^*_k=s^*_{i_1}\dots s^*_{i_{k-1}}\a^*_{i_k}=\ell_{i_k}\b_k\; (1\leq k\leq N)$ provides 
a labeling of all positive roots $\Delta^d_+$ of $\g^d$. We then define root vectors $\{\he_{\b^*_k}, \hf_{\b^*_k}\}_{1\leq k\leq N}$ 
in a standard way via:
\begin{equation*}
  \he_{\b^*_k}=T^*_{i_1}\dots T^*_{i_{k-1}}\he_{i_k} \,, \qquad \hf_{\b^*_k}=T^*_{i_1}\dots T^*_{i_{k-1}}\hf_{i_k} \,.
\end{equation*}
Following (\ref{tilda elements},~\ref{Sev twist}), for $1\leq i\leq r$, let 
\begin{equation*}
  \nu^{*>}_i:=-\a^*_i+\sum_{1\leq j\leq r} \phi^*_{ij}\w^{*\vee}_j=\ell_i \nu^>_i \,, \qquad 
  \nu^{*<}_i:=\sum_{1\leq j\leq r} \phi^*_{ij}\w^{*\vee}_j=\ell_i \nu^<_i \,,
\end{equation*}
and set 
\begin{equation*}
  \te_i:=\he_i K^{\nu^{*>}_i} \,, \qquad \tf_i:=K^{-\nu^{*<}_i} \hf_i \,.
\end{equation*}
Then the algebra $\bU^*(\g,P^*/2)$ is also generated over $\BQ(v^{1/2})$ by $\{\te_i, \tf_i, K^\mu\}_{1\leq i \leq r}^{\mu\in P^*/2}$ 
subject to the following relations:
\begin{equation}\label{eq: relations for *-version}
\begin{split}
  & K^{\mu}K^{\mu'}=K^{\mu+\mu'} \,, \qquad K^0=1 \,, \\
  & K^{\mu}\te_i K^{-\mu}=v^{(\a^*_i, \mu)}\te_i \,, \qquad K^\mu\tf_i K^{-\mu}=v^{-(\a^*_i, \mu)}\tf_i \,, \\
  & \te_i\tf_j=v^{(\a^*_i, -\zeta^{*<}_j)}\tf_j\te_i \;\;\;(i \neq j) \,, \qquad 
    \te_i\tf_i-(v^*_i)^2\tf_i\te_i = v^*_i \frac{1-(K^*_i)^{-2}}{1-(v^*_i)^{-2}} \,, \\
  & \sum_{m=0}^{1-a^*_{ij}}(-1)^m(v^*_i)^{ma^*_{ij}(\epsilon_{ij}-1)-m(m-1)}\te_i^{(1-a^*_{ij}-m)}\te_j\te_i^{(m)}=0 \qquad (i\neq j) \,, \\
  & \sum_{m=0}^{1-a^*_{ij}}(-1)^m(v^*_i)^{ma^*_{ij}(\epsilon_{ij}-1)-m(m-1)}\tf_i^{(1-a^*_{ij}-m)}\tf_j\tf_i^{(m)}=0\qquad (i\neq j) \,,
\end{split}
\end{equation}
where $K^*_i=K^{\a^*_i}$, $\te_i^{(m)}=\te^m/(m)_{v^*_i}$, $\tf_i^{(m)}=\tf_i^m/(m)_{v^*_i}$, and we set 
$\zeta^{*<}_j=\ell_j \zmu_j$, $\zeta^{*>}_j=\ell_j \zlambda_j$. Following the twisted construction in Section \ref{sec twisted coproduct}, 
we also have a twisted Hopf algebra structure on $\bU^*(\g, P^*/2)$ as follows: 
\begin{equation*}
\begin{split}
  & \Delta'(K^\mu)=K^\mu\otimes K^\mu \,, \qquad \Delta'(\te_i)=1\otimes \te_i +\te_i \otimes K^{-\zeta^{*>}_i} \,, \qquad 
    \Delta'(\tf_i)=1\otimes \tf_i+\tf_i\otimes K^{\zeta^{*<}_i} \,, \\
  & S'(K^\mu)=K^{-\mu} \,, \qquad S'(\te_i)=-\te_iK^{\zeta^{*>}_i} \,, \qquad S'(\tf_i)=-\tf_i K^{-\zeta^{*<}_i} \,.
\end{split}
\end{equation*}
The algebra $\bU^*(\g, P^*/2)$ is $Q^*$-graded via: 
\begin{equation}\label{eq: Q*-graded}
  \deg(\te_i)=\a^*_i \,, \qquad \deg(\tf_i)=-\a^*_i \,, \qquad \deg(K^\mu)=0 \,.
\end{equation}

\begin{Rem}\label{rem: *Lusztig form}
Similarly to Section \ref{sec Setting}, we define the (untwisted) Lusztig form $\dmU^*_\CA(\g)$ as the $\CA$-subalgebra of 
$\bU^*(\g, P^*/2)$ generated by elements $\{\he_i^{[n]}, \hf_i^{[n]}, K^\mu\}_{1\leq i \leq r, n \geq 1}^{\mu \in Q^*}$, in which
$\he_i^{[n]}:=\he^n_i/[n]_{v^*_i}!$, $\hf_i^{[n]}:=\hf_i^n/[n]_{v^*_i}!$. Then we also consider the specialization 
$\dmU^*_\e(\g):=\dmU^*_\CA(\g) \otimes_\CA R$.
\end{Rem}


\subsubsection{The $\CA$-Hopf algebra $\dU^*_\CA(\g)$ and its specalization $\dU^*_\e(\g)$}
\

We define the (twisted) Lusztig form $\dU^*_\CA(\g)$ as the $\CA$-subalgebra of $\bU^*(\g, P^*/2)$ generated by the elements 
$\{\te_i^{(n)}, \tf_i^{(n)}, K^\mu\}_{1\leq i \leq r}^{\mu \in 2P^*}$ as in Section \ref{ssec: new Lusztig form}. It is a Hopf algebra over $\CA$. We record the 
Hopf structure of $\dU^*_\CA(\g)$:
\begin{equation}\label{eq:*twisted Hopf on dived powers}
\begin{split}
    & \Delta'(\te_i^{(s)})=\sum_{c=0}^s \te_i^{(s-c)}\otimes \te_i^{(c)}K^{-(s-c)\zeta^{*>}_i} \,, \quad  
      S'(\te_i^{(s)})=(-1)^s (v^*_i)^{s(s-1)} \te_i^{(s)}K^{s\zeta^{*>}_i} \,, \\
    & \Delta'(\tf_i^{(s)})=\sum_{c=0}^s (v^*_i)^{2c(s-c)}\tf_i^{(c)}\otimes \tf_i^{(s-c)}K^{c\zeta^{*<}_i}\,, \quad 
      S'(\tf_i^{(s)})=(-1)^s(v^*_i)^{s(1-s)}\tf_i^{(s)}K^{-s\zeta^{*<}_i} \,, \\
    & \Delta'(K^\mu)=K^\mu\otimes K^\mu \,, \qquad S'(K^\mu)=K^{-\mu} \,, \\
    & \varepsilon'(\te_i^{(s)})=\varepsilon'(\tf_i^{(s)})=0 \,, \qquad \varepsilon'(K^\mu)=1 \,.
\end{split}
\end{equation}
The elements $\ds \binom{K^*_i;a}{n}:=\frac{\prod_{s=1}^n (1-(K_i^*)^{-2}(v^*_i)^{2(s-a-1)})}{\prod_{s=1}^n(1-(v^*_i)^{-2s})}$ 
belong to $\dU^*_\CA(\g)$. Let $\dU^{*<}_\CA, \dU^{*>}_\CA, \dU^{*0}_\CA$ denote the $\CA$-subalgebras of $\dU^*_\CA(\g)$ generated 
by $\ds \{\te_i^{(s)}\}, \{\tf_i^{(s)}\}, \Big\{K^\mu, \binom{K^*_i;a}{n} \Big\}$, respectively.

For any $\b^*=\sum_{i=1}^{r} a_i \a^*_i\in Q^*_+$, let  
\begin{equation}\label{eq: nu and normalizr b for *}
\begin{split}
  & \nu^{>}_{\b^*}:=\sum_i a_i \nu^{*>}_i \,, \qquad \nu^{<}_{\b^*}:=\sum_i a_i \nu^{*<}_i \,,\\
  & b^{>}_{\b^*}=\sum_{i<j}a_ia_j(\nu^{*>}_i , \a^*_j) \,, \qquad b^{<}_{\b^*}=-\sum_{i<j} a_ia_j(\nu^{*<}_j, \a^*_i) \,,
\end{split}
\end{equation}
cf.~\eqref{nu for alpha}--\eqref{eq: normalizer b}. Similarly to~\eqref{eq:twisted-root-vector} for any $\b^*_k\in \Delta^d_+$, define: 
\begin{equation}\label{eq:tilde-vs-hat}
  \te_{\b^*_k}:=v^{b^{>}_{\b^*_k}}\he_{\b^*_k}K^{\nu^{>}_{\b^*_k}} \,, \quad 
  \tf_{\b^*_k}:=v^{b^{<}_{\b^*_k}}K^{-\nu^<_{\b^*_k}}\tf_{\b^*_k} \,, \quad 
  \te_{\b^*_k}^{(n)}:=\frac{\te_{\b^*_k}^{n}}{(n)_{v^*_{i_k}}!} \,, \quad 
  \tf_{\b^*_k}^{(n)}:=\frac{\tf_{\b^*_k}^n}{(n)_{v^*_{i_k}}!} \,.
\end{equation}
For any $\vec{k}\in \BZ^N_{\geq 0}$, we also define the ordered monomials:
\begin{equation*}
\begin{split}
  & \te^{(\vec{k})}:=\te_{\b^*_1}^{(k_1)}\dots \te_{\b^*_N}^{(k_N)} \,, \qquad 
    \tf^{(\vec{k})}:=\tf_{\b^*_1}^{(k_1)}\dots \tf_{\b^*_N}^{(k_N)} \,, \\
  & \te^{(\cev{k})}:=\te_{\b^*_N}^{(k_N)}\dots \te_{\b^*_1}^{(k_1)} \,, \qquad 
    \tf^{(\cev{k})}:=\tf_{\b^*_N}^{(k_N)}\dots \tf_{\b^*_1}^{(k_1)} \,.
\end{split}
\end{equation*}
We have the following results analogous to Lemma \ref{lem:twisted-triangular-PBW}:

\begin{Lem}\label{lem:*twisted-trinagular-PBW}
(a) The subalgebras $\dU^{*<}_\CA, \dU^{*>}_\CA$ are $Q^*$-graded via \eqref{eq: Q*-graded}, and each of their degree components 
is a free $\CA$-module of finite rank.

\noindent
(b) The multiplication map:
\[ 
  \mm\colon \dU^{*<}_\CA\otimes_\CA\dU^{*0}_\CA\otimes_{\CA} \dU^{*>}_\CA\longrightarrow \dU^*_\CA(\g)
\]
is an isomorphism of free $\CA$-modules.

\noindent
(c) The sets $\{\te^{(\vec{k})}\}_{\vec{k}\in \BZ^N_{\geq 0}},$ $ \{\te^{(\cev{k})}\}_{\vec{k}\in \BZ^N_{\geq 0}}$ are 
$\CA$-bases of $\dU^{*>}_\CA$.

\noindent
(d) The sets $\{\tf^{(\vec{k})}\}_{\vec{k}\in \BZ^N_{\geq 0}},$ $\{\tf^{(\cev{k})}\}_{\vec{k}\in \BZ^N_{\geq 0}}$ are 
$\CA$-bases of $\dU^{*<}_\CA$.

\noindent
(e) The subalgebra $\dU^{*0}_\CA$ has the following $\CA$-basis:
\begin{equation}\label{eq: *basis-Cartan-twisted-Lus}
  \left\{ K^{2\varsigma^*_j}\cdot \prod_{i=1}^r \left((K^*_i)^{2\big\lfloor \tfrac{t_i}{2} \big\rfloor}\binom{K^*_i;0}{t_i}\right) 
  \,\Big|\, \;t_i\geq 0, 1\leq j\leq k \right\} \,,
\end{equation}
where $\varsigma^*_1, \dots, \varsigma^*_k \in P^*$ is a set of representatives of the left cosets $P^*/Q^*$.
\end{Lem}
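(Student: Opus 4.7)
The plan is to recognize this as the direct starred analog of Lemma \ref{lem:twisted-triangular-PBW} and deduce it by running the same proof with the starred datum $(a^*_{ij}), P^*, Q^*, \sd^*_i, \Phi^*$ in place of $(a_{ij}), P, Q, \sd_i, \Phi$. All the structural inputs that made the earlier argument work are in place: $(a^*_{ij})$ is the Cartan matrix of the semisimple Lie algebra $\g^d$; the matrix $\Phi^* = (\phi^*_{ij})$ with $\phi^*_{ij} = \ell_i\ell_j\phi_{ij}$ is skew-symmetric and, via \eqref{eq:condition on Phi} applied to $\g^d$, corresponds to the same orientation $\Or$ of $\mathrm{Dyn}(\g^d) = \mathrm{Dyn}(\g)$; and the starred analog of Lemma \ref{Q-to-2P}, namely $\kappa(Q^*) = \gamma(Q^*) = 2P^*$, is recorded in Remark \ref{rem:lattice-star}.

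Granted these, I would proceed as in the proof of Lemma \ref{lem:twisted-triangular-PBW}. For (e), I would run the argument of part (c3) there: the starred analogs of the Cartan identities \eqref{eq: K-rel1}--\eqref{eq: K-rel3}, combined with the starred analog of Lemma \ref{lem:PBW-twisted-DJ}(b3) applied to $\bU^*(\g, P^*/2)$, show that the set in \eqref{eq: *basis-Cartan-twisted-Lus} both spans $\dU^{*0}_\CA$ over $\CA$ and is $\CA$-linearly independent. For (c) and (d), I would first verify the starred analog of Lemma \ref{lem: modified elements} --- that the monomials $\te^{(\vec{k})}, \te^{(\cev{k})}$ and $\tf^{(\vec{k})}, \tf^{(\cev{k})}$ lie in $\dU^{*>}_\CA$ and $\dU^{*<}_\CA$ --- using the braid action \eqref{eq: *braid action} and the definitions \eqref{eq:tilde-vs-hat}. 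Then I would construct the starred Hopf pairing between $\dU^{*\leqslant}_\CA$ and the corresponding starred even-part algebra (defined exactly as in Section \ref{ssec:even-part} but with starred data) along the lines of Section \ref{S_inv_pair}, and invoke the starred analog of Corollary \ref{cor: pairing of PBW} to extract non-degeneracy on PBW monomials, forcing them to be $\CA$-bases. Part (a) then follows from (c) and (d), while (b) follows from (a), (c), (d), (e) via the closure argument of Lemma \ref{lem:twisted-triangular-PBW}(a2), now using the starred EF-exchange formulas in $\dU^*_\CA(\g)$.

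The main obstacle is bookkeeping rather than mathematics: one must verify that every ingredient from Sections \ref{sec Setting}--\ref{S_inv_pair} --- the braid group action, Lusztig's PBW theorem for the untwisted starred Lusztig form $\dmU^*_\CA(\g)$ of Remark \ref{rem: *Lusztig form}, the Hopf pairings, and the Cartan identities --- genuinely transfers to the starred setting. Since $\mathrm{Dyn}(\g^d) = \mathrm{Dyn}(\g)$ and the starred data has precisely the same structural shape as the original, this transfer is mechanical; the numerical hypothesis on $\CA$ (invertibility of $v^{2k} - 1$ for small $k$) continues to suffice, as one needs only the same kind of quantum-integer identities in the rescaled variable $v^*_i = v^{\sd^*_i}$.
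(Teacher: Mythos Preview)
Your approach matches the paper's: the lemma is stated as the starred analog of Lemma~\ref{lem:twisted-triangular-PBW} and the paper gives no separate proof, relying on the blanket sentence ``All constructions in Sections \ref{sec Setting}--\ref{S_inv_pair} can be carried out with the above datum.''

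There is, however, one point where the literal transcription you describe does not go through over $\CA$. The pairing step you invoke for parts (c) and (d) --- the starred analog of Corollary~\ref{cor: pairing of PBW} --- yields diagonal values involving $\big((v^*_{i_p})^{-1}-v^*_{i_p}\big)^{-k_p}$, and the starred Hopf pairing itself (Lemma~\ref{Lem:pairing_quantum_Borel} for $\g^d$) has normalization $(\tf_i,\te_j)'=-\delta_{ij}/(v^*_i-(v^*_i)^{-1})$. These lie in $\CA$ only if $v^{2\sd^*_i}-1$ is a unit there; since $\sd^*_i=\sd_i\ell_i^2$ generally exceeds $\max_j\sd_j$, it is not. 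So your closing claim that ``the numerical hypothesis on $\CA$ \ldots\ continues to suffice, as one needs only the same kind of quantum-integer identities in the rescaled variable $v^*_i$'' is precisely where the transfer fails.

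The repair is already among your listed ingredients: bypass the pairing for (c), (d) and instead use Lusztig's PBW theorem for the \emph{untwisted} starred form $\dmU^{*>}_\CA,\dmU^{*<}_\CA$ (Lemma~\ref{triangular_Lus} applied to $\g^d$, valid over $\BZ[v,v^{-1}]$ and hence over $\CA$), together with the Cartan-twist relations $\te_{\beta^*_k}^{(n)}=v^{(\ldots)}\,\he_{\beta^*_k}^{[n]}K^{n\nu^{>}_{\beta^*_k}}$. The integrality bookkeeping from the proof of Lemma~\ref{lem: modified elements} --- that exponents attached to monomials of a fixed weight differ by elements of $2\Phi^*\subset\BZ$ --- then shows that expanding any generator-monomial of $\dU^{*>}_\CA$ in the PBW family $\{\te^{(\vec{k})}\}$ produces only integer powers of $v$, hence $\CA$-coefficients. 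Parts (a), (b), (e) are unaffected by this issue.
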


Evoking the algebra homomorphism $\sigma\colon \CA \rightarrow R$ of~\eqref{sigma homom}, we define:
\[ 
  \dU^*_\e(\g):= \dU^*_\CA(\g)\otimes_{\CA} R \,.
\]
Let $\ds \te_i^{(n)},\tf_i^{(n)}, K^\mu, \binom{K^*_i;a}{n}$ denote the corresponding elements in $\dU^*_\e(\g)$, and let 
$\dU^{*>}_\e,$ $\dU^{*<}_\e,$ $\dU^{*0}_\e$ be the corresponding $R$-subalgebras of $\dU^*_\e(\g)$.

We also note the following equalities in $R$:   
\[
  \e^{(\a^*_i, \kappa(\a^*_j))} = 1 = (\e^*_i)^{ma^*_{ij}(\e_{ij}-1)-m(m-1)} \,.
\]
Therefore, in $\dU^*_\e(\g)$, the last two relations of \eqref{eq: relations for *-version} simplify as follows: 
\begin{equation}\label{eq:Serre-star}
\begin{split}
  & \sum_{m=0}^{1-a^*_{ij}} (-1)^m \te_i^{(1-a^*_{ij}-m)} \te_j \te_i^{(m)}=0 \qquad \mathrm{for} \quad i\neq j \,,\\
  & \sum_{m=0}^{1-a^*_{ij}}(-1)^m \tf_i^{(1-a^*_{ij}-m)}\tf_j \tf^{(m)}_i=0 \qquad \mathrm{for} \quad i \neq j \,.
\end{split}
\end{equation}
Moreover, the generators $\te_i^{(n)}$ and $\tf_j^{(n)}$ also satisfy the following relations, cf.~\eqref{eq:ef-twisted-swap}: 
\begin{equation} \label{eq: ef*-relation}
\begin{split}
  \te_i^{(p)}\tf_j^{(s)} &= \epsilon^{ps(\a^*_i, \kappa(\a^*_j))} \tf_j^{(s)}\te_i^{(p)} = \tf_j^{(s)}\te_i^{(p)} 
    \qquad \mathrm{for}\ i\ne j \,, \\
  \te_i^{(p)}\tf_i^{(s)}&=\sum_{c=0}^{\min(p,s)} (\epsilon_i^*)^{2ps-c^2} \tf_i^{(s-c)}\binom{K^*_i; 2c-p-s}{c} \te_i^{(p-c)} \,.
\end{split}
\end{equation}


\subsubsection{The idempotented Lusztig form $\hU^*_\e(\g, X^*)$}\

For any lattice $X^*$ with $Q^* \subseteq X^* \subseteq P^*$, we also form the \textbf{idempotented Lusztig form} 
$\hU^*_\e(\g, X^*)$ with the generators
\[ 
  \Big\{ \te_i^{(n)}1_\lambda, \tf_i^{(n)}1_\lambda \,\Big|\, 1\leq i\leq r, n \geq 0, \lambda \in X^* \Big\} \,.
\]
We record the topological coproduct in $\hU^*_\e(\g, X^*)$, cf.~(\ref{EF coproduct},~\ref{eq:*twisted Hopf on dived powers}):
\begin{equation}\label{ef coproduct}
\begin{split}
  \Delta(\te^{(r)}_i1_\lambda) 
  &= \sum_{c=0}^r \prod_{\lambda'+\lambda''=\lambda} \e^{-(r-c)(\zeta^{*>}_i, \lambda'')} 
     \te_i^{(r-c)}1_{\lambda'}\otimes \te_i^{(c)}1_{\lambda''} \,, \\
  \Delta(\tf_i^{(r)}1_\lambda) 
  &= 
  \sum_{c=0}^r \prod_{\lambda'+\lambda''=\lambda} \e^{c(\zeta^{*<}_i, \lambda'')} 
  \tf_i^{(c)}1_{\lambda'}\otimes \tf_i^{(r-c)}1_{\lambda''} \,,
\end{split}
\end{equation}
as well as some relations in $\hU^*_\e(\g, X^*)$, cf.~(\ref{EF relations},~\ref{eq: ef*-relation}):
\begin{equation}\label{ef relations}
\begin{split}
  & \te_i^{(p)}1_\lambda \tf_j^{(s)} = \tf_j^{(s)}1_{\lambda+s\a^*_j+p\a^*_i}\te_i^{(p)} \qquad 
      \mathrm{for} \ \ i\neq j \,, \\
  & \te_i^{(p)}1_\lambda \tf_i^{(s)} = 
    \sum_{c\geq0} (\e^*_i)^{2ps-c^2}\binom{(\lambda, \a_i^{* \vee})+s+p}{c}_{\e^*_i}
    \tf_i^{(s-c)}1_{\lambda+(p+s-c)\a^*_i}\te_i^{(p-c)} \,,
\end{split}
\end{equation}
with $\lambda, \lambda', \lambda''\in X^*$ in the formulas above.


\subsection{The Kostant form $\dU_R(\g^d)$}\label{ssec:Kostant Z-form}
\

It turns out that the algebra $\hU^*_\e(\g, X^*)$ is closely related to a classical object, the Kostant form of the universal enveloping algebra. 

Let us  recall the semisimple Lie algebra $\g^d$  with the Cartan matrix \eqref{New Cartan entry}, the weight lattice $P^*$, 
and the root lattice $Q^*$ from the previous subsections. Let $\{e_i, f_i, h_i\}_{1\leq i \leq r}$ be the Chevalley generators 
of $\g^d$. Following \cite{k}, the \textbf{Kostant form} $\dU_\BZ(\g^d)$ is the $\BZ$-subalgebra of the enveloping algebra $U_\BQ(\g^d)$ 
over $\BQ$ generated by the following elements:
\[
  e_i^{(n)}:=\frac{e_i^n}{n!} \,, \qquad f_i^{(n)}:=\frac{f_i^n}{n!} \,, \qquad 
  \binom{h_i;a}{n}:=\frac{(h_i+a)(h_i+a-1) \cdots (h_i+a-n+1)}{n!} \,.
\]
Let $\dU^>_\BZ(\g^d), \dU^<_\BZ(\g^d), \dU^0_\BZ(\g^d)$ be the $\BZ$-subalgebras of $\dU_\BQ(\g^d)$ generated by elements 
$\{e_i^{(n)}\}, \{f_i^{(n)}\}$, $\ds \Big\{\binom{h_i;a}{n}\Big\}$, respectively. We have the standard braid group action on 
$U_\BQ(\g^d)$ defined by
\begin{equation*}
\begin{split}
  & T^*_i(h_j)=s^*_i(h_j)  \,, \qquad T^*_i(e_i)=-f_i \,, \qquad T^*_i(f_i)=-e_i \,, \\
  & T^*_i(e_j)=\sum_{k=0}^{-a^*_{ij}}(-1)^k e_i^{(-a^*_{ij}-k)}e_je_i^{(k)} \,, \qquad 
    T^*_i(f_j)=\sum_{k=0}^{-a^*_{ij}}(-1)^k f_i^{(k)}f_j f^{(-a^*_{ij}-k)} \qquad (i \neq j) \,. 
\end{split}
\end{equation*}
Evoking the reduced decomposition of the longest element $w_0=s^*_{i_1}\dots s^*_{i_N}$ in the Weyl group of $\g^d$ and the resulting 
labeling $\b^*_k$ of all $\Delta^d_+$, one defines root vectors of $\g^d$ in a standard way:

\[ 
  e_{\b^*_k}:=T^*_{i_1}\dots T^*_{i_{k-1}}e_{i_k} \,, \qquad f_{\b^*_k}:= T^*_{i_1}\dots T^*_{i_{k-1}}f_{i_k} \,.
\]
For any $\vec{k}\in \BZ^N_{\geq 0}$, we define
\begin{equation*}
  e^{(\vec{k})}=e_{\b^*_1}^{(k_1)}\dots e_{\b^*_N}^{(k_N)} \,, \quad 
  e^{(\cev{k})}:=e_{\b^*_N}^{(k_N)}\dots e_{\b^*_1}^{(k_1)} \,, \quad 
  f^{(\vec{k})}:=f_{\b^*_1}^{(k_1)}\dots f_{\b^*_N}^{(k_N)} \,, \quad 
  f^{(\cev{k})}:=f_{\b^*_N}^{(k_N)}\dots f_{\b^*_1}^{(k_1)} \,.
\end{equation*}
This construction gives a Chevalley-type basis of $\g^d$ as used in~\cite{k} (we left the proof to interested readers). Then we can apply \cite[Theorem 1]{k} to get:

\begin{Lem}\label{lem: triangular-PBW-kostant form}
(a) The multiplication map:
\[ 
  \mm\colon \dU^<_\BZ(\g^d)\otimes_\BZ \dU^0_\BZ(\g^d) \otimes_\BZ \dU^>_\BZ(\g^d) \longrightarrow \dU_\BZ(\g^d)
\]
is an isomorphism of free $\BZ$-modules.

\noindent
(b) The sets $\{e^{(\vec{k})}\}_{\vec{k}\in \BZ^N_{\geq 0}}$, $\{e^{(\cev{k})}\}_{\vec{k}\in \BZ^N_{\geq 0}}$ are 
$\BZ$-bases of $\dU^>_\BZ(\g^d)$.

\noindent
(c) The sets $\{ f^{(\vec{k})}\}_{\vec{k}\in \BZ^N_{\geq 0}}$, $ \{f^{(\cev{k})}\}_{\vec{k}\in \BZ^N_{\geq 0}}$ are 
$\BZ$-bases of $\dU^<_\BZ(\g^d)$.

\noindent 
(d) The set $\ds \left\{ \prod_{i=1}^r \binom{h_i;0}{t_i} \,\Big|\, t_i \geq 0\right\}$ is a $\BZ$-basis of $\dU^0_\BZ(\g^d)$.
\end{Lem}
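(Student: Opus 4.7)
The plan is to reduce Lemma~\ref{lem: triangular-PBW-kostant form} to Kostant's classical theorem~\cite[Theorem~1]{k}, which states exactly the analogous triangular decomposition and PBW basis for the $\BZ$-form of $U(\g^d)$ generated by divided powers of a fixed Chevalley basis together with the binomial Cartan elements $\binom{h_i;a}{n}$. Since part (d) only involves the Cartan piece, it is literally part of Kostant's statement. Parts (a)--(c) reduce to showing that the braid-generated root vectors $e_{\b^*_k} = T^*_{i_1}\cdots T^*_{i_{k-1}} e_{i_k}$ and $f_{\b^*_k} = T^*_{i_1}\cdots T^*_{i_{k-1}} f_{i_k}$ coincide up to a sign with the Chevalley root vectors $e'_{\b^*_k}$ and $e'_{-\b^*_k}$ for some Chevalley basis $\{e'_\alpha\}_{\alpha\in\Delta^d}$ of $\g^d$; once this is in place, the divided powers $e^{(n)}_{\b^*_k} = \pm^n (e'_{\b^*_k})^{(n)}$ generate the same $\BZ$-submodule, so Kostant's theorem applies verbatim.

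To establish this sign identification, I would prove by induction on the height $\het(\alpha)$ that $T^*_i(e'_\alpha) = \pm\, e'_{s^*_i(\alpha)}$ for all $\alpha\in\Delta^d$ and all simple reflections $s^*_i$. For the base case $\het(\alpha)=1$, if $\alpha = \alpha^*_i$, the definition of $T^*_i$ gives $T^*_i(e'_{\alpha^*_i}) = -f_i = -e'_{-\alpha^*_i}$; if $\alpha = \alpha^*_j$ with $j\neq i$, the braid formula collapses to $\frac{1}{(-a^*_{ij})!}(\ad e'_{\alpha^*_i})^{-a^*_{ij}}(e'_{\alpha^*_j})$, which equals $\pm\, e'_{s^*_i(\alpha^*_j)}$ after iterated application of the Chevalley commutator rule $[e'_\alpha,e'_\beta] = \pm(r_{\beta,\alpha}+1)\, e'_{\alpha+\beta}$. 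For the inductive step, write $\alpha = \b_1+\b_2$ with $\b_1,\b_2\in\Delta^d_+$ of smaller height; then $T^*_i(e'_\alpha) = \pm(r_{\b_2,\b_1}+1)^{-1}[T^*_i(e'_{\b_1}),T^*_i(e'_{\b_2})]$, and the inductive hypothesis together with the $W$-invariance $r_{\b_2,\b_1} = r_{s^*_i(\b_2),s^*_i(\b_1)}$ yields the desired relation. Negative roots are handled symmetrically, using $T^*_i(f_i) = -e_i$ for the base case. Applying this statement successively along $s^*_{i_1} \cdots s^*_{i_{k-1}}$ gives $e_{\b^*_k} = \pm\, e'_{\b^*_k}$ and $f_{\b^*_k} = \pm\, e'_{-\b^*_k}$.

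The main obstacle is the sign bookkeeping in the induction step: one must confirm that $T^*_i$ preserves the structure constant $r_{\b_2,\b_1}+1$ when passing to $[T^*_i(e'_{\b_1}), T^*_i(e'_{\b_2})]$, which rests on the Weyl-invariance of root-string lengths. A secondary subtlety is the base case $\alpha=\alpha^*_j$ with $j\neq i$: one must check that the integer coefficient emerging from iterating $\ad e'_{\alpha^*_i}$ exactly cancels $(-a^*_{ij})!$ up to sign, which follows inductively from the Chevalley commutator rule but deserves explicit attention. Once both points are settled, the triangular decomposition (a) and the PBW bases (b), (c) for either orientation of the ordering follow directly from~\cite[Theorem~1]{k}, and (d) is the standard Kostant basis of the Cartan subalgebra.
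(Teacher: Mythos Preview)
Your proposal is correct and follows precisely the approach the paper intends: the paper's proof simply asserts that the braid-group constructed root vectors form a Chevalley-type basis (leaving the verification to the reader) and then invokes \cite[Theorem~1]{k}. Your induction on $\het(\alpha)$ showing $T^*_i(e'_\alpha)=\pm e'_{s^*_i(\alpha)}$, with base case via the Chevalley commutator rule and inductive step via $W$-invariance of root-string lengths, is exactly the argument the authors have in mind for that omitted verification.
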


For any commutative ring $R$, we define the \textbf{Kostant form} $\dU_R(\g^d)$ via:  
\[
  \dU_R(\g^d):=\dU_{\BZ}(\g^d)\otimes_\BZ R \,.
\]
Let $\dU^>_R(\g^d), \dU_R^0(\g^d), \dU_R^<(\g^d)$ be the corresponding $R$-subalgebras of $\dU_R(\g^d)$. The next lemma relates 
the positive and negative subalgebras of $\dU^*_\e(\g)$ and $\dU_R(\g^d)$, compare to \cite[$\mathsection 33.2$]{l-book}.

\begin{Lem}\label{lem: dU*(g) iso Kostant form}
(a) There is a unique isomorphism of $R$-algebras $\CF^>\colon \dU^{*>}_\e \rightarrow \dU^>_R(\g^d)$ such that  
$\CF^>(\te^{(n)}_i)=e^{(n)}_i$ for all $n\geq 1, 1\leq i \leq r$.

\noindent
(b) There is a unique isomorphism of $R$-algebras $\CF^<\colon \dU^{*<}_\e\rightarrow \dU_R^<(\g^d)$ such that 
$\CF^<(\tf^{(n)}_i)=f_i^{(n)}$ for all $n \geq 1, 1\leq i \leq r$.
\end{Lem}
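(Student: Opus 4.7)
The plan is to construct the inverse $\Psi\colon \dU^>_R(\g^d) \to \dU^{*>}_\e$ by $e_i^{(n)} \mapsto \te_i^{(n)}$ and set $\CF^> := \Psi^{-1}$. For $\Psi$ to extend to a well-defined algebra homomorphism, the classical Serre and divided-power relations governing the Kostant form $\dU^>_R(\g^d)$ must be satisfied by the $\te_i^{(n)}$ in $\dU^{*>}_\e$. The relations inherited by $\dU^{*>}_\e$ from $\dU^{*>}_\CA$ via $\sigma$ are the divided-power composition $\te_i^{(a)}\te_i^{(b)} = \binom{a+b}{a}_{v_i^*}\te_i^{(a+b)}$ and the quantum Serre relations \eqref{eq:Serre-star}. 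The key computation is $(\e^*_i)^2 = \e^{2\sd_i\ell_i^2} = (\e^{2\sd_i\ell_i})^{\ell_i} = 1$ in $R$, using $2\sd_i\ell_i \in \ell\BZ$ by \eqref{eq:li_equation} together with $\e^\ell = 1$ (from the factorization $\sigma\colon \CA \to \CA' \to R$ through the $\ell$-th cyclotomic quotient). Expanding $(n)_v = \sum_{k=0}^{n-1}v^{-2k}$ and using $(\e^*_i)^{-2k} = 1$ yields $(n)_{\e^*_i} = n$, and hence $\binom{a+b}{a}_{\e^*_i} = \binom{a+b}{a}$ in $R$. Moreover, since $\epsilon_{ij} \in \{-1, 0, 1\}$, a case check shows the exponents $ma^*_{ij}(\epsilon_{ij}-1) - m(m-1)$ in \eqref{eq:Serre-star} are always even, so the $(\e^*_i)$-powers reduce to $1$. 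Thus the specialized quantum relations on $\{\te_i^{(n)}\}$ exactly match the classical Chevalley-Serre relations governing $\{e_i^{(n)}\}$, so $\Psi$ is a well-defined algebra homomorphism.

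Next, $\Psi$ is surjective because the $\te_i^{(n)}$ generate $\dU^{*>}_\e$. For bijectivity, both algebras are $Q^*$-graded via \eqref{eq: Q*-graded}, and Lemmas \ref{lem:*twisted-trinagular-PBW}(c) and \ref{lem: triangular-PBW-kostant form}(b) provide PBW bases $\{\te^{(\vec{k})}\}$ and $\{e^{(\vec{k})}\}$, both indexed by $\vec{k}\in\BZ^N_{\geq 0}$ with matching weights $\sum k_i\beta^*_i$. Hence each weight component on each side is a free $R$-module of the same finite rank. Since $\Psi$ is a weight-preserving surjection between free $R$-modules of equal finite rank, it is automatically an isomorphism in each weight component (its matrix representation admits a right inverse and is therefore invertible over the commutative ring $R$). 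Consequently $\Psi$ is an isomorphism, and $\CF^> := \Psi^{-1}$ is the desired isomorphism in part (a), uniquely characterized by $\CF^>(\te_i^{(n)}) = e_i^{(n)}$.

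Part (b) for $\CF^<$ is proved in complete parallel, with $\tf_i^{(n)} \mapsto f_i^{(n)}$ and the corresponding negative PBW bases. The main obstacle in this strategy is the first step—verifying that the specialized quantum relations on the $\te_i^{(n)}$ coincide exactly with the classical divided-power Chevalley-Serre relations governing the Kostant form of $\g^d$—which hinges precisely on the parity of the Serre exponents and on the fact $(\e^*_i)^2 = 1$. An implicit secondary ingredient is that the positive Kostant form $\dU^>_R(\g^d)$ admits its divided-power composition and classical Serre relations as a defining presentation, which follows from the standard Kostant integral-form construction after base change to $R$.
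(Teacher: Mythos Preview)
Your argument has a genuine gap in the well-definedness of $\Psi$. You claim that the Kostant form $\dU^>_R(\g^d)$ is presented by the divided-power compositions $e_i^{(a)}e_i^{(b)} = \binom{a+b}{a}e_i^{(a+b)}$ together with the classical Serre relations, and that this ``follows from the standard Kostant integral-form construction.'' This is not correct: the Kostant form is defined as a $\BZ$-subalgebra of $U_\BQ(\g^d)$, not by generators and relations, and over $\BZ$ (hence over general $R$) these two families of relations are \emph{not} a complete defining set. A presentation requires Lusztig's higher-order Serre relations; without them you have no guarantee that the assignment $e_i^{(n)}\mapsto \te_i^{(n)}$ respects all relations in $\dU^>_R(\g^d)$. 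The verification that $(\e^*_i)^2=1$ and that the Serre exponents are even only tells you that those particular relations hold for the $\te_i^{(n)}$, not that these are the \emph{only} relations you must check.

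The paper circumvents exactly this obstacle by first reducing (via base change and torsion-freeness) to $R=\CA'$ and then to its fraction field $\CK$, a characteristic-zero field. Over $\CK$ the divided powers are redundant, $\dU^>_\CK(\g^d)$ is just the enveloping algebra $U_\CK(\mathfrak{n}^+)$, and the ordinary Serre relations are a complete presentation. The map $e_i\mapsto \te_i$ is then well-defined by \eqref{eq:Serre-star} alone, and a dimension count over the field gives bijectivity. Your surjection-between-equal-rank-free-modules argument for bijectivity is actually slightly cleaner than the paper's dimension count and would work over any commutative $R$, but it cannot be invoked until the homomorphism itself exists---and for that you need either the characteristic-zero reduction or an appeal to the full Lusztig presentation of the integral form.
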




\begin{proof} We write $\dU^{*>}_\e$ instead of $\dU^{*>}_R$  to keep track of the base ring $R$. 

\noindent
(a) The uniqueness is obvious. To prove the existence, we note that the general case follows from the case $R=\CA'$ through the 
base change. Since $\CA'$ is an integral domain, it is embedded into its fraction field $\CK$. The algebra $\dU^{*>}_{\CA'}$ is 
thus embedded into $\dU^{*>}_\CK$ as the $\CA'$-subalgebra generated by $\{\te_i^{(n)}\}$ by Lemma~\ref{lem:*twisted-trinagular-PBW}. 
Likewise, the algebra $\dU^>_{\CA'}(\g^d)$ is embedded in $\dU^>_\CK(\g^d)$ as the $\CA'$-subalgebra generated by $\{e_i^{(n)}\}$ by 
Lemma \ref{lem: triangular-PBW-kostant form}. Therefore, the existence of the claimed isomorphism in the case $R=\CA'$ follows from 
its existence in the case $R=\CK$.

Since $\CK$ is a field of characteristics $0$, the algebra $\dU^>_\CK(\g^d)$ is generated by the elements $e_i$ subject to the usual 
Serre relations. Therefore, due to \eqref{eq:Serre-star} , there is a $\CK$-algebra homomorphism 
$(\CF^>)^{-1}\colon \dU^>_\CK(\g^d)\rightarrow \dU^{*>}_\CK$ defined by $(\CF^>)^{-1}(e_i)=\te_i$, so that 
$(\CF^>)^{-1}(e_i^{(n)})=\te_i^{(n)}$ for all $n \geq 1$, $1\leq i \leq r$. Therefore, $(\CF^>)^{-1}$ is surjective. 
Furthermore, $(\CF^>)^{-1}$ preserves the $Q^*$-grading on both algebras. By Lemmas \ref{lem:*twisted-trinagular-PBW} 
and~\ref{lem: triangular-PBW-kostant form}, the finite dimensional $\CK$-vector spaces $\dU^{*>}_{\CK, \mu}$ and $\dU^>_{\CK, \mu}(\g^d)$ 
have the same dimension for any $\mu \in Q^*_+$. Therefore, $(\CF^>)^{-1}$ is an isomorphism 
of $\CK$-algebras. The inverse is the claimed isomorphism $\CF^>$ for $R=\CK$.

\noindent
(b) The proof is the same.
\end{proof}

\begin{Cor}\label{cor: relation for pov-neg parts} 
If $R$ is a field of characteristics $0$, then the algebra $\dU^{*>}_\e$ is an algebra generated by $\{\te_i\}_{1\leq i \leq r}$ 
subject to $\sum_{m=0}^{1-a^*_{ij}} (-1)^m \te_i^{(1-a^*_{ij}-m)} \te_j \te_i^{(m)}=0$ for $i \neq j$, while 
the algebra $\dU^{*<}_\e$ is an algebra generated by $\{\tf_i\}_{1\leq i \leq r}$ subject to 
$\sum_{m=0}^{1-a^*_{ij}}(-1)^m \tf_i^{(1-a^*_{ij}-m)}\tf_j \tf^{(m)}_i=0$ for $i \neq j$.
\end{Cor}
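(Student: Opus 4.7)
The strategy is to transfer the statement across the isomorphism $\CF^>\colon \dU^{*>}_\e \xrightarrow{\sim} \dU^>_R(\g^d)$ of Lemma~\ref{lem: dU*(g) iso Kostant form}(a), which sends $\te_i \mapsto e_i$, so that it suffices to show: in characteristic zero, the positive part of the Kostant form $\dU^>_R(\g^d)$ is presented by generators $\{e_i\}_{i=1}^r$ subject only to the divided-power Serre relations.

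First I would observe that when $R$ has characteristic zero, the divided powers $e_i^{(n)} = e_i^n/n!$ lie in the subalgebra of $U_R(\g^d)$ generated by $\{e_i\}_{i=1}^r$, so that $\dU^>_R(\g^d)$ coincides with $U^>_R(\g^d)$, the positive subalgebra of the ordinary enveloping algebra $U_R(\g^d)$. Next, by Serre's classical presentation, $U_R(\g^d)$ is generated by $\{e_i, f_i, h_i\}$ subject to the commutation relations of the Chevalley generators together with the two families of Serre relations; combining the triangular decomposition of $U_R(\g^d)$ with the PBW theorem, one concludes that $U^>_R(\g^d)$ is presented by $\{e_i\}_{i=1}^r$ modulo only the $E$-side Serre relations
\[
\sum_{m=0}^{1-a^*_{ij}} (-1)^m \binom{1-a^*_{ij}}{m} e_i^{1-a^*_{ij}-m} e_j e_i^{m} = 0 \qquad (i\ne j).
\]

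Finally, in characteristic zero each such relation is, after multiplication by $1/(1-a^*_{ij})!$, equivalent to its divided-power form $\sum_{m=0}^{1-a^*_{ij}} (-1)^m e_i^{(1-a^*_{ij}-m)} e_j e_i^{(m)} = 0$. Transporting this presentation back through $\CF^>$ yields the claim for $\dU^{*>}_\e$, while the statement for $\dU^{*<}_\e$ follows by the identical argument using the isomorphism $\CF^<$ of Lemma~\ref{lem: dU*(g) iso Kostant form}(b). There is no serious obstacle here: the argument is essentially a bookkeeping reduction, with the only mildly subtle point being the standard but nontrivial fact that $U^>_R(\g^d)$ has no relations beyond Serre's, which is already packaged into the Serre presentation combined with PBW.
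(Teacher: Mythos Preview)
Your proposal is correct and follows essentially the same approach as the paper: the corollary is stated without proof and is meant to follow immediately from Lemma~\ref{lem: dU*(g) iso Kostant form}, whose proof already constructs the inverse isomorphism $(\CF^>)^{-1}$ precisely by invoking the Serre presentation of $\dU^>_R(\g^d)$ over a characteristic-zero field. Transporting that presentation across $\CF^>$ as you do is exactly the intended argument.
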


\subsection{Completed forms $\tU_\e(\g, P), \tU^*_\e(\g, P^*), \tU^*_\e(\g, Q^*)$}\ 
\label{ssec: complete form}

Let us consider the following completion of $\hU_\e(\g, P)$
\[ \tU_\e(\g, P):=\bigoplus_{\mu, \lambda \in Q_+} \dU^<_{-\mu} \otimes_R \prod_{\nu \in P} R 1_\nu \otimes_R \dU^>_\lambda.\]
The  topology of $\tU_\e(\g, P)$ comes from the factor $\prod_{\nu \in P} R1_\nu$ as follows: small neighborhoods of zero in $\prod R 1_\nu$ contain elements $\prod a_\nu 1_\nu$ 
with $a_\nu =0$ for all $\nu$ in a suitably large ball $\mathcal{B}$ centered at the origin in the real vector space $\h^*_\BR$ 
containing $P$ . Note that the topological Hopf structure on $\hU_\e(\g, P)$ extends to a  topological Hopf structure on $\tU_\e(\g, P)$.

Similarly, we consider the topological Hopf algebras which are completions of $\hU^*(\g, P^*)$ and $ \hU^*(\g, Q^*)$, respectively:
\[ \tU^*_\e(\g, P^*):=\bigoplus_{\mu, \lambda \in Q^*_+} \dU^{*<}_{-\mu}\otimes_R \prod_{\nu \in P^*} R1_\nu \otimes_R \dU^{*>}_\lambda, \quad  \tU^*_\e(\g, Q^*):=\bigoplus_{\mu, \lambda \in Q^*_+} \dU^{*<}_{-\mu}\otimes_R \prod_{\nu \in Q^*} R1_\nu \otimes_R \dU^{*>}_\lambda.\]

 There are natural algebra homomorphisms:
 \begin{equation}\label{eq:relation_to_completion}
  \psi_1\colon \dU_\e(\g) \rightarrow \tU_\e(\g, P) \qquad \mathrm{and} \qquad \psi_2\colon \hU_\e(\g, P) \rightarrow \tU_\e(\g, P).
\end{equation}
The map $\psi_2$ is a natural embedding. The map $\psi$ is defined by $\psi_1(yu_0 x)=y(\prod_{\nu}\hchi_\nu(u_0) 1_\nu) x$ for $y\in \dU_\e^<, x\in \dU_\e^>$ and $u_0 \in \dU_\e^0$.

\begin{Lem}\label{lem: dense subalgebras} Both $\psi_1$ and $\psi_2$ are Hopf algebra homomorphisms. 
The images of $\psi_1$ and $\psi_2$ from~\eqref{eq:relation_to_completion} are dense subalgebras in the codomains. 
\end{Lem}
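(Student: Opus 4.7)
Plan. I would split the verification into the two maps, treating $\psi_2$ first since it is essentially tautological, and then $\psi_1$ where the real content lies.

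For $\psi_2$, the map is by construction the natural inclusion of the Hopf algebra $\hU_\e(\g,P)$ into its completion, and the topological Hopf structure on $\tU_\e(\g,P)$ is the unique continuous extension of the Hopf structure on $\hU_\e(\g,P)$ (as asserted in the paragraph preceding the statement), so $\psi_2$ is automatically a Hopf map. Density of $\psi_2$ follows directly from the definition of the topology: given $x = \sum_{\mu,\lambda \in Q_+} y_{\mu,\lambda}\otimes \bigl(\prod_\nu a^{(\mu,\lambda)}_\nu 1_\nu\bigr)\otimes x_{\mu,\lambda}$ and any ball $\mathcal{B}\subset P$ determining a neighbourhood of $x$, its truncation $\sum_{\mu,\lambda} y_{\mu,\lambda}\otimes\bigl(\sum_{\nu\in\mathcal{B}} a^{(\mu,\lambda)}_\nu 1_\nu\bigr)\otimes x_{\mu,\lambda}$ lies in $\hU_\e(\g,P)$ and agrees with $x$ to within that neighbourhood.

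For $\psi_1$, I would first check it is a well-defined algebra homomorphism by verifying the defining relations of $\dU_\e(\g)$ on generators. The crucial observation is that the character $\hchi_\nu$ of \eqref{eq: defi of chi-lambda} encodes exactly the action of a Cartan element on the formal weight-space labelled by $\nu\in P$, so each relation of $\dU_\e(\g)$ involving $K^\mu$, $\binom{K_i;a}{n}$, and $\tE_i^{(s)}, \tF_j^{(s)}$ translates componentwise in $\nu$ into the corresponding relation \eqref{EF relations} in $\hU_\e(\g,P)$, hence in $\tU_\e(\g,P)$. The Hopf compatibility is handled the same way: the twisted coproduct \eqref{eq:twisted Hopf on divided powers} of $\dU_\e(\g)$ and the topological coproduct \eqref{EF coproduct} of $\hU_\e(\g,P)$ match under $\psi_1$ because $\hchi_{\lambda'+\lambda''}(K^\mu)=\hchi_{\lambda'}(K^\mu)\hchi_{\lambda''}(K^\mu)$ and the analogous multiplicativity on the divided-Cartan generators; antipode and counit are checked analogously.

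Density of $\psi_1$ reduces, via the triangular decomposition of $\dU_\e(\g)$ in Lemma \ref{lem:twisted-triangular-PBW} and the parallel decomposition of $\tU_\e(\g,P)$ in its definition, to density of the induced map $\dU^0_\e\to \prod_{\nu\in P} R\, 1_\nu$ given by $u_0\mapsto \prod_\nu \hchi_\nu(u_0)\, 1_\nu$. By the definition of the topology, this amounts to showing that for every finite $S\subset P$ and every tuple $(c_\nu)_{\nu\in S}\in R^S$ there is $u_0\in \dU^0_\e$ with $\hchi_\nu(u_0)=c_\nu$ for all $\nu\in S$, i.e.\ surjectivity of the ring homomorphism $\dU^0_\e\to R^S$. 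Lemma \ref{lem: independent of characters} guarantees that $\{\hchi_\nu\}_{\nu\in S}$ are pairwise distinct. I would then argue by a Chinese-remainder argument: for each pair $\nu\neq \mu$ in $S$ produce a separating element $z_{\nu,\mu}\in \dU^0_\e$ with $\hchi_\nu(z_{\nu,\mu})-\hchi_\mu(z_{\nu,\mu})$ \emph{invertible} in $R$, assemble these into polynomials in $u_0$'s to build, for each $\nu\in S$, an interpolating element that maps to $1$ on $\hchi_\nu$ and $0$ on every other $\hchi_\mu$, $\mu\in S\setminus\{\nu\}$, and linearly combine to hit $(c_\nu)_{\nu\in S}$.

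The main obstacle is the last step: producing separating elements with \emph{invertible} differences, rather than just nonzero ones as delivered by Lemma \ref{lem: independent of characters}. One has to exploit both the group-like generators $K^\mu$ (for $\mu\in 2P$, via factors of the form $q^{(\mu,\mu_0)}(q^{(\mu,\nu-\mu_0)}-1)$) and the divided-Cartan generators $\binom{K_i;a}{n}$, and to combine them using the invertibility hypotheses on $R$ together with the arithmetic of $q$-binomials underlying Lemma \ref{lem: simple spectrum}; the structural input is the same residue-field reduction used in the proof of Lemma \ref{lem: independent of characters}(a), now reinterpreted as producing invertible, and not merely nonzero, separators upstairs in $R$.
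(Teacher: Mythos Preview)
Your overall architecture matches the paper: $\psi_2$ is handled tautologically, and for $\psi_1$ you correctly reduce density to the surjectivity of $\dU^0_\e\to \prod_{\nu\in S}R\,1_\nu$ for every finite $S\subset P$. The Hopf-compatibility checks you sketch are routine and the paper does not dwell on them.

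The gap is exactly where you flag it. Demanding, for each pair $\nu\neq\mu$, a \emph{single} element $z_{\nu,\mu}\in\dU^0_\e$ with $\hchi_\nu(z_{\nu,\mu})-\hchi_\mu(z_{\nu,\mu})$ invertible in $R$ is too strong, and your last paragraph does not justify it. The residue-field argument behind Lemma~\ref{lem: simple spectrum} produces, for each maximal ideal $\m\subset R$, \emph{some} $z$ (depending on $\m$, on $\operatorname{char}(R/\m)$ and on the order of $\bar q$) separating the two characters in $R/\m$; it does not produce one $z$ that works uniformly, and for roots of unity with $R$ having residue fields of varying characteristic there is no reason such a uniform $z$ exists.

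The paper avoids this by a different, cleaner reduction. Since both $\dU^0_\e$ and $\prod_{\nu\in S}R\,1_\nu$ are obtained from their $\CA_\sN$-forms by base change, it suffices to prove surjectivity over $R=\CA_\sN$. There one localizes at an arbitrary maximal ideal $\fp$: the target $\prod_{\nu\in S}\CA_{\sN,\fp}1_\nu$ is a finitely generated $\CA_{\sN,\fp}$-module, so by Nakayama surjectivity follows from surjectivity over the residue field $\BF_\fp$. Over a field the pairwise distinctness of $\{\hchi_\nu\}_{\nu\in S}$ (Lemma~\ref{lem: independent of characters}) gives linear independence of characters and hence surjectivity onto $\BF_\fp^{\,S}$. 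Since surjectivity holds after localization at every maximal ideal, it holds over $\CA_\sN$.

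Your approach can be repaired without abandoning the Chinese-remainder idea: instead of a single invertible difference, show that for $\nu\ne\mu$ the \emph{ideal} $\bigl(\hchi_\nu(z)-\hchi_\mu(z):z\in\dU^0_\e\bigr)\subset R$ is the unit ideal (otherwise it lies in some $\m$, forcing $\hchi_\nu=\hchi_\mu$ over $R/\m$, contradicting Lemma~\ref{lem: independent of characters}(a)). This makes the kernels $\ker\hchi_\nu$ pairwise comaximal in the $R$-algebra $\dU^0_\e$, and the standard Chinese Remainder Theorem then yields the desired surjection onto $R^S$. That is a correct variant of what you intended, but it is not what you wrote.
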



\begin{proof} 
The image of $\psi_2$ is clearly dense in $\tU_\e(\g, P)$. To show that the image of $\psi_1$ is dense in $\tU_\e(\g,P)$, it is enough to show that 
the image of the natural map $\psi_1^0\colon \dU^0_\e(\g) \rightarrow \prod_{\nu} R 1_\nu$ is dense. To this end, it suffices to show 
that for any ball $\mathcal{B}$ centered at the origin, the map $\dU^0_\e(\g)\rightarrow \prod_{\nu \in \mathcal{B}\cap P} R 1_\nu$ is 
surjective. Since the cardinality of $\mathcal{B}\cap P$ is finite, it is enough to show that for any finite set $S \subset P$, 
the map $\psi^S_1\colon \dU^0_\e(\g) \rightarrow \prod_{\nu \in S}R 1_\nu$ is surjective. The last statement follows from the case 
$R=\CA_N:=\CA[v^{\pm 1/\sN}]$ through the base change, which we shall deduce from the case when $R$ is a field.

\emph{Step 1: $R=\BF$ is a field.} 
Following~\eqref{eq: defi of chi-lambda}, let us consider the algebra homomorphisms $\hat{\chi}_\nu\colon \dU^0_\e(\g) \rightarrow \BF$ defined by 
\begin{equation}\label{eq:hat-chi-revised}
  \hat{\chi}_{\nu}\colon \quad K^\lambda \mapsto \epsilon^{(\lambda, \nu)} \,, \quad 
  \binom{K_i;0}{t} \mapsto \binom{(\a^\vee_i, \nu)}{t}_{\epsilon_i} \,.
\end{equation}
The characters $\{\hat{\chi}_\nu\}_{\nu \in S}$ of characters of $\dU^0_\e(\g)$ are linearly independent, cf.\ the proof of Proposition~\ref{Prop:pairing_whole_R}(b). 
The map $\dU^0_\e(\g)\rightarrow \prod_{\nu \in \mathcal{B}\cap P} R 1_\nu$ sends an element $a$ to $(\hat{\chi}_\nu(a))_{\nu\in S}$ finishing the proof.


\emph{Step 2: $R=\CA_\sN$.} 
For a maximal ideal $\mathfrak{p}$ of $\CA_\sN$, let $\CA_{\sN\fp}$ and $\BF_\fp$ be the localization and the residue field of $\CA_\sN$. 
The map $\psi^S_1$ is surjective if the maps $\psi^S_{1, \CA_{\sN\fp}}\colon \dU^0_{\CA_{\sN\fp}}(\g)\rightarrow \prod_{\nu \in S} \CA_{\sN\fp} 1_\nu$ 
are surjective for all maximal ideals $\fp$ of $\CA_\sN$ because the cokernel of $\psi^S_1$ is a finitely generated module annihilated by the localization at any maximal ideal and such a module must be $0$. Since 
$\prod_{\nu\in S} \CA_{\sN\fp} 1_\nu$ is a finitely generated $\CA_{\sN\fp}$-module, due to Nakayama's lemma, $\psi^S_{1, \CA_{\sN\fp}}$ is 
surjective iff the map $\psi^S_{1, \BF_\fp}\colon \dU^0_{\BF_\fp}(\g)\rightarrow \prod_{\nu \in S} \BF_{\fp}1_\nu$ is surjective.  
The latter follows by Step 1.
\end{proof}

 \begin{Rem}Similarly, we have Hopf algebra homomorphisms 
 \[ \psi^*_1: \dU^*_\e(\g)\rightarrow \tU^*_\e(\g, P^*), \qquad \psi^*_2: \hU^*_\e(\g, P^*)\rightarrow \tU^*_\e(\g, P^*),\]
 with dense images in the codomains. Moreover, $\psi^*_2$ is an inclusion.
 \end{Rem}

\begin{Lem}\label{lem: Kostant form and complete form} (a) We have the following algebra homomorphisms:  
\begin{equation}\label{eq: Kostant form vs complete} \phi_P: \dU_R(\g^d) \rightarrow \tU^*_\e(\g, P^*),  \qquad \phi_Q:  \dU_R(\g^d) \rightarrow \tU^*_\e(\g, Q^*),
\end{equation}
defined by
\[ \phi_X(e^{(n)}_i)= (\e^*_i)^n \te^{(n)}_i \prod_{\nu \in X^*} 1_\nu, \quad \phi_X(f^{(n)}_i)=\tf^{(n)}_i \prod_{\nu \in X^*} 1_\nu, \quad \phi_X\left(\binom{h_i;0}{t_i}\right)=\prod_{\nu \in X^*} \binom{(\nu, \a^{*\vee}_i)}{t_i}1_\nu,\]
with $1\leq i \leq r,$ $t_i \in \BZ_{\geq 0},$ and $X$ is either $P$ or $Q$.

\noindent
(b) $\phi_P, \phi_Q$ are inclusions with dense images.
\end{Lem}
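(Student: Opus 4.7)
To prove part (a), I would verify that the images of the Chevalley generators $e_i^{(n)}$, $f_i^{(n)}$, $\binom{h_i;0}{t_i}$ under $\phi_X$ satisfy the defining relations of $\dU_R(\g^d)$ inside $\tU^*_\e(\g, X^*)$. The relations split into (i) the Serre relations on $\{e_i\}$ and $\{f_i\}$; (ii) the $E$--$F$ commutation relations; and (iii) commutativity of the Cartan generators together with their commutation with $e_j^{(n)}$ and $f_j^{(n)}$. The Serre relations follow immediately from \eqref{eq:Serre-star} (Corollary~\ref{cor: relation for pov-neg parts}): the sign factor $(\epsilon_i^*)^n$ in $\phi_X(e_i^{(n)})$ depends only on $n$, and in a Serre monomial $\te_i^{(1-a_{ij}^*-m)} \te_j \te_i^{(m)}$ the total sign $(\epsilon_i^*)^{1-a_{ij}^*} \epsilon_j^*$ is $m$-independent, so the homogeneous relation is preserved. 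The Cartan relations reduce to $\te_j^{(n)} 1_\nu = 1_{\nu+n\alpha_j^*}\te_j^{(n)}$ together with the classical identity $h_i \cdot e_j^{(n)} = e_j^{(n)}(h_i + n a_{ji}^*)$.

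The main technical step is the $E$--$F$ commutation for $i=j$, where the classical Kostant identity $e_i^{(p)} f_i^{(s)} = \sum_c f_i^{(s-c)} \binom{h_i; 2c - p - s}{c} e_i^{(p-c)}$ must match the quantum identity \eqref{ef relations} in the completion. Two key simplifications make this work. First, $(\epsilon_i^*)^2 = \epsilon^{2\sd_i \ell_i^2} = 1$ (since $2\sd_i \ell_i \in \ell\BZ$ by definition of $\ell_i$), so $(n)_{\epsilon_i^*}! = n!$ and $\binom{N}{c}_{\epsilon_i^*} = \binom{N}{c}$ for $N \in \BZ$, collapsing the quantum binomials to classical ones. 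Second, a weight-shift computation at weight $\lambda$ gives
\begin{equation*}
(\lambda - s\alpha_i^*, \alpha_i^{*\vee}) + s + p \,=\, (\lambda, \alpha_i^{*\vee}) + p - s \,=\, (\lambda + (p-c)\alpha_i^*, \alpha_i^{*\vee}) + 2c - p - s \,,
\end{equation*}
so the binomial arguments on the quantum and classical sides match. The residual sign comparison $(\epsilon_i^*)^{p + 2ps - c^2} = (\epsilon_i^*)^{p-c}$ then holds because $2ps$ and $c(c-1)$ are even.

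For part (b), the triangular decomposition of Lemma~\ref{lem: triangular-PBW-kostant form}(a) shows that $\phi_X$ factors as a tensor product in which the positive and negative factors are, up to the diagonal sign twists $(\epsilon_i^*)^n$, the isomorphisms $\CF^>, \CF^<$ of Lemma~\ref{lem: dU*(g) iso Kostant form}. Injectivity thus reduces to showing that the restricted Cartan map $\dU^0_R(\g^d) \to \prod_{\nu \in X^*} R\,1_\nu$, sending $\binom{h_i;0}{t_i}$ to $\sum_\nu \binom{(\nu,\alpha_i^{*\vee})}{t_i}\,1_\nu$, is injective; this follows from the linear independence of the characters $\chi^{(d)}_\nu \colon \binom{h_i;0}{t_i} \mapsto \binom{(\nu,\alpha_i^{*\vee})}{t_i}$, which are pairwise distinct over any field (using that for $\nu\neq\nu'$ there exists $i$ with $(\nu-\nu',\alpha_i^{*\vee})\neq 0$ in $\BZ$, whence the sequences $(\binom{(\nu,\alpha_i^{*\vee})}{t})_t$ and $(\binom{(\nu',\alpha_i^{*\vee})}{t})_t$ differ even modulo any prime via Lucas' theorem) by a field-reduction argument parallel to Lemma~\ref{lem: independent of characters}. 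Density of $\phi_X$ is obtained by mirroring Lemma~\ref{lem: dense subalgebras}: since $\CF^{>}, \CF^{<}$ are already isomorphisms, density reduces to the claim that for every finite $S \subset X^*$ the restriction $\dU^0_R(\g^d) \to \prod_{\nu\in S} R\,1_\nu$ is surjective, which by Nakayama's lemma and localization at maximal ideals reduces in turn to the linear independence of $\{\chi^{(d)}_\nu\}_{\nu \in S}$ over residue fields. The principal obstacle is the $E$--$F$ commutation for $i=j$ in part (a); the rest is either formal or a direct consequence of already-established structural results, and the key input enabling the quantum-to-classical comparison at $v = \epsilon$ is the identity $(\epsilon_i^*)^2 = 1$.
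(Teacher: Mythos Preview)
Your detailed computations in (a) are correct, but the framing has a gap: you propose to verify ``the defining relations of $\dU_R(\g^d)$'', yet over a general ring $R$ the Kostant form is defined as the base change of a $\BZ$-lattice inside $U_\BQ(\g^d)$ and does not a priori admit the presentation by Serre, Cartan, and $E$--$F$ relations that you list. The paper handles this by a routine reduction: first base-change to $R = \CA'$ (both source and target are functorial in $R$), then embed into the fraction field $\CK$ using torsion-freeness over the domain $\CA'$; over the characteristic-zero field $\CK$ one has $\dU_\CK(\g^d) = U_\CK(\g^d)$, which does have the Serre presentation, and at that point your verification of the three families of relations is exactly what is needed. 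The paper only gestures at this step with ``using the Serre presentation $\ldots$ combining with \eqref{ef relations} and Lemma~\ref{lem: dU*(g) iso Kostant form}'' and omits the $E$--$F$ details you supply, so your sign and weight-shift computations via $(\epsilon_i^*)^2=1$ are a genuine contribution once the reduction is in place.

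For part (b), both you and the paper reduce injectivity to the Cartan component $\phi_X^0\colon \dU_R^0(\g^d) \to \prod_{\nu \in X^*} R\,1_\nu$, the positive and negative parts being handled by the isomorphisms $\CF^{>}, \CF^{<}$ of Lemma~\ref{lem: dU*(g) iso Kostant form}. Your route is a linear-independence argument for the characters $\chi^{(d)}_\nu$ via reduction to residue fields and Lucas' theorem; the paper instead identifies $\dU^0_R(\g^d)$ with the distribution algebra $\Dist(T^d_X)$ and $\prod_{\nu\in X^*} R\,1_\nu$ with $\Hom_R(R[T^d_X], R)$, so that $\phi_X^0$ arises from the canonical pairing $\Dist(T^d_X)\times R[T^d_X] \to R$, whose first-argument nondegeneracy is classical. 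Both arguments are valid; the paper's is shorter and more structural, yours is more self-contained. For density both proofs defer to Lemma~\ref{lem: dense subalgebras}.
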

\begin{proof}(a) The general case follows by the base change from the case when $R=\CA'$. Since $\dU_R(\g^d)$ and $\tU^*_\e(\g, X^*)$ are torsion free over the domain $R=\CA'$, the case when $R=\CA'$ follows by the case when $R=\CK$, the fraction field of $\CA'$. 

When $R=\CK$, the existence of algebra homomorphisms follows by using the Serre presentation of the enveloping algebra $\dU_\CK(\g^d)$ combining with \eqref{ef relations} and Lemma \ref{lem: dU*(g) iso Kostant form}. 

\noindent
(b) It is enough to show that $\phi^0_X: \dU^0_R(\g^d) \rightarrow \prod_{\nu \in X^*}R1_\nu$ is injective. Let $T^d_X$ be the torus with the lattice $X^*$, i.e., $R[T^d_X]=\oplus_{\nu \in X^*} R \chi_\nu$, hence, $\prod_{\nu \in X^*} R1_\nu =\Hom_R(R[T^d_X], R)$. On the other hand $\dU^0_R(\g^d)$ is  $\text{Dist}(T^d_X)$, the distribution algebra of $T^d_X$ at identity. Then one see that the map $\phi^0_X$ can be obtained from the pairing 
\[ \text{Dist}(T^d_X) \x R[T^d_X]\rightarrow R.\]
This pairing is nondegenerate on the first argument, hence $\phi^0_X$ is injective.  The proof of dense images are the same as the one of Lemma \ref{lem: dense subalgebras}.    
\end{proof}

\begin{Rem}While $\phi_Q$ is a Hopf algebra homomorphism, $\phi_P$ is not, e.g., by looking at the coproduct 
\[\tilde{\Delta}(\te_i \prod_{\nu \in P^*}1_\nu)= \te_i \prod_{\nu\in P^*} 1_\nu \otimes \prod_{\nu \in P^*} \e^{(-\zeta^{*>}_i, \nu)}1_\nu +\prod_{\nu \in P^*} 1_\nu \otimes \te_i \prod_{\nu \in P^*} 1_\nu.\]
\end{Rem}

\subsection{Quantum Frobenius homomorphism}\label{ssec:qFr} 
\

Following~\cite[Section~$35.1.2$]{l-book}, let us assume that for any $i \neq j$ such that $\ell_j \geq 2$, 
we have $\ell_i \geq 1-a_{ij}$. The excluded values of $\ell$ for each type of simple Lie algebras are recorded in the following table:
\begin{center}
\begin{tabular}{|c|c|c|c|c|c|c|}
\hline
A & B &C &D & E &F &G\\
\hline 
 $\emptyset$ & $4$& $4$ & $\emptyset$ & $\emptyset$ & $4$& $3,4,6$\\
\hline  
\end{tabular}
\end{center}
We have the following analogue of~\cite[Theorem~$35.1.9$]{l-book}:

\begin{Prop}\label{Fr in idem form} 
There is a unique $R$-algebra homomorphism (twisted quantum Frobenius)
\begin{equation}\label{eq: Fr for idemp} 
  \tFr\colon \hU_\e(\g, P) \longrightarrow \hU^*_\e(\g, P^*)
\end{equation}
such that
\begin{itemize}

\item 
$\tFr(1_\lambda)$ equals $1_\lambda$ if $\lambda \in P^*$ and is zero otherwise,

\item 
$\tFr(\tE_i^{(n)}1_\lambda)$ equals $\te_i^{(n/\ell_i)}1_\lambda$ if $\lambda \in P^*$ and $n$ is divisible by $\ell_i$, 
and is zero otherwise;

\item 
$\tFr(\tF_i^{(n)}1_\lambda)$ equals $\tf_i^{(n/\ell_i)}1_\lambda$ if $\lambda \in P^*$ and $n$ is divisible by $\ell_i$, 
and is zero otherwise.

\end{itemize}
Furthermore, this homomorphism is surjective and compatible with the comultiplications. 
\end{Prop}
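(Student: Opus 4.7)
The plan is to adapt Lusztig's proof of \cite[Theorem~35.1.9]{l-book} to our twisted setting. The first task is to verify that the prescribed assignment extends to a well-defined $R$-algebra homomorphism. Since the idempotented Lusztig form $\hU_\e(\g, P)$ is presented by the generators $\tE_i^{(n)} 1_\lambda$, $\tF_i^{(n)} 1_\lambda$, $1_\lambda$ subject to the idempotent, divided-power product, twisted Serre, and mixed commutation relations of~\eqref{EF coproduct}--\eqref{EF relations}, it is enough to check these are preserved.

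The idempotent compatibility is immediate. The divided-power identity $\tE_i^{(a)} \tE_i^{(b)} = \binom{a+b}{a}_{\e_i} \tE_i^{(a+b)}$, along with its $\tF_i$-analogue, is handled via Lemma~\ref{lem:aux-at-roots}(a): writing $a = a_1 \ell_i + a_0$, $b = b_1 \ell_i + b_0$, the $\e_i$-binomial factors as $\binom{a_0+b_0}{a_0}_{\e_i} \binom{a_1+b_1}{a_1}$, and both sides of the relation vanish under $\tFr$ unless $a_0 = b_0 = 0$, in which case what survives matches the target relation $\te_i^{(a_1)}\te_i^{(b_1)} = \binom{a_1+b_1}{a_1}\te_i^{(a_1+b_1)}$. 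For the twisted Serre relation, the hypothesis $\ell_i \geq 1 - a_{ij}$ whenever $\ell_j \geq 2$ ensures that for intermediate indices $0 < m < 1-a_{ij}$ both $m$ and $1 - a_{ij} - m$ lie in $(0, \ell_i)$, so $\tFr(\tE_i^{(m)}) = \tFr(\tE_i^{(1-a_{ij}-m)}) = 0$. The extreme terms $m = 0$ and $m = 1 - a_{ij}$ contribute only when $\ell_i \mid (1 - a_{ij})$, matching the target Serre relation~\eqref{eq:Serre-star} with rescaled Cartan datum $a^*_{ij}$. The boundary case $\ell_j = 1$ is separate but easier, since then $\tFr(\tE_j) = \te_j$ while the source Serre sum is already constrained.

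The mixed commutation~\eqref{EF relations} is the crux. For $i \neq j$ the scalar $\e^{ps(\a_i, \kappa(\a_j))}$ restricts to $1$ on the image (only multiples of $\ell_i, \ell_j$ survive), matching the trivial commutation in~\eqref{ef relations}. For $i = j$, assume $\ell_i \mid p$ and $\ell_i \mid s$ and write $p = p_1 \ell_i$, $s = s_1 \ell_i$, $c = c_1 \ell_i + c_0$. Using Lemma~\ref{lem:aux-at-roots}(a), the scalar $\binom{(\lambda, \a_i^\vee) + s + p}{c}_{\e_i}$ factors into an integer binomial in $(c_1)$ and a fractional $\e_i$-binomial in $c_0$; simultaneously $\tFr(\tF_i^{(s-c)}) = \tFr(\tE_i^{(p-c)}) = 0$ unless $\ell_i \mid c$, forcing $c_0 = 0$. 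Tracking signs with Lemma~\ref{lem:aux-at-roots}(c), the surviving sum collapses onto the target relation~\eqref{ef relations} with scalar $(\e^*_i)^{2ps - c^2}$ and the correct shifted idempotent $1_{\lambda + (p+s-c)\a^*_i}$, upon identifying $(\lambda, \a_i^\vee)/\ell_i = (\lambda, \a_i^{*\vee})$ for $\lambda \in P^*$.

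Surjectivity is automatic from the definition, since the algebra generators $\te_i^{(n)} 1_\lambda, \tf_i^{(n)} 1_\lambda$ of the target (for $\lambda \in P^*$) are images of $\tE_i^{(n\ell_i)} 1_\lambda, \tF_i^{(n\ell_i)} 1_\lambda$. For compatibility with the coproducts~\eqref{EF coproduct} and~\eqref{ef coproduct}, apply $\tFr \otimes \tFr$ to the source formula: all summands with $\ell_i \nmid c$ or $\ell_i \nmid (r-c)$ are annihilated, so we may write $r = r_1 \ell_i$, $c = c_1 \ell_i$; the Cartan twist $\e^{-(r-c)(\zeta^>_i, \lambda'')}$ rewrites as $\e^{-(r_1 - c_1)(\zeta^{*>}_i, \lambda'')}$ since $\zeta^{*>}_i = \ell_i \zeta^>_i$, matching the target coproduct (and similarly for $\tF$ using $\zeta^{*<}_i = \ell_i \zeta^<_i$ and $\e_i^{2\ell_i^2} = 1$). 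The principal obstacle throughout is the bookkeeping of the asymmetric twist exponents and the signs from Lemma~\ref{lem:aux-at-roots}(c); once the counts are set up, consistency is forced by the definitions of $\ell_i$, $a^*_{ij}$, and $\zeta^{*>,<}_i$.
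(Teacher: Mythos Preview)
Your overall strategy matches the paper's in spirit—both appeal to a presentation of the idempotented algebra and verify relations on generators—but there is a genuine gap in the presentation you invoke. You assert that $\hU_\e(\g, P)$ is presented by idempotent relations, divided-power product relations, the twisted Serre relations, and the mixed $\tE$-$\tF$ commutation relations. This is not what is available. The presentation in \cite[\S31.1.3]{l-book} that the paper cites expresses $\hU_\e(\g,P)$ in terms of the positive and negative subalgebras $\dU^>_\e$, $\dU^<_\e$ taken as given algebras (with whatever relations they carry), together with idempotents and the mixed commutation relations~\eqref{EF relations}. At a root of unity the subalgebra $\dU^>_\e$ is \emph{not} known to be presented by divided-power and Serre relations alone; it is defined as a base change of an $\mathcal{A}$-subalgebra, and its relations are implicit.

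This is exactly why the paper first establishes Proposition~\ref{prop: steps to construct Frobenious}, constructing $\tFr^>\colon \dU^>_\e \to \dU^{*>}_\e$ and $\tFr^<$ separately. The key input there is Lemma~\ref{lem: Spliting of Fr} (the twisted analogue of \cite[Theorem~35.4.2(b)]{l-book}), which gives a tensor decomposition $\dU^<_\e \cong \dU^{*<}_\e \otimes \tilde{\mathfrak{f}}$ via $\hFr^<$; only with this structural result in hand can one define $\tFr^<$ as projection onto the first factor. Your direct Serre-relation check, while correctly observing that most terms die for divisibility reasons, does not by itself guarantee a well-defined map out of $\dU^>_\e$. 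Once Proposition~\ref{prop: steps to construct Frobenious} is in place, the paper's proof of Proposition~\ref{Fr in idem form} needs only verify the mixed relations~\eqref{EF relations} against~\eqref{ef relations} and the coproduct compatibility—and your treatment of those parts is essentially correct and matches the paper.
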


The assumption that there is $\e^{1/\sN}$ in $R$ is only needed when we talk about the comultiplications. 
The proof of Proposition \ref{Fr in idem form} is similar to that of \cite[Theorem $35.1.9$]{l-book}. We start with the following result:


\begin{Prop}\label{prop: steps to construct Frobenious}
(a) There is a unique $R$-algebra homomorphism $\hFr^<\colon \dU^{*<}_\e\rightarrow \dU^{<}_\e$ such that 
$\hFr^<(\tf_i^{(n)})=\tF_i^{(n\ell_i)}$ for all $i$ and $n \geq 1$.Similarly, there is a unique $R$-algebra homomorphism $\hFr^>\colon \dU^{*>}_\e\rightarrow \dU^{>}_\e$ such that 
$\hFr^<(\te_i^{(n)})=\tE_i^{(n\ell_i)}$ for all $i$ and $n \geq 1$. 


\noindent
(b) There is a unique $R$-algebra homomorphism:
\begin{equation}\label{eq: Fr for positive part} 
  \tFr^>\colon \dU^>_\e \longrightarrow \dU^{*>}_\e
\end{equation}
such that $\tFr^>(\tE_i^{(n)})$ equals $\te_i^{(n/\ell_i)}$ if $n$ is divisible by $\ell_i$, and equals zero otherwise. 
Similarly, there is a unique $R$-algebra homomorphism
\begin{equation}\label{eq: Fr for negative part} 
  \tFr^<\colon \dU^<_\e \longrightarrow \dU^{*<}_\e
\end{equation}
such that $\tFr^<(\tF_i^{(n)})$ equals $\tf_i^{(n/\ell_i)}$ if $n$ is divisible by $\ell_i$, and equals zero otherwise.
\end{Prop}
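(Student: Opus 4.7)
The plan is to first establish the existence of these maps over the integral form $\CA'$ (the quotient of $\CA$ by the $\ell$-cyclotomic polynomial) and then base-change to $R$. The anti-involution~\eqref{eq: extended tau map} swapping $\tE_i \leftrightarrow \tF_i$ reduces each ``$<$'' statement to the corresponding ``$>$'' statement, so it suffices to construct $\hFr^>$ in part~(a) and $\tFr^>$ in part~(b).

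For part (b), I follow the strategy of~\cite[Chapter 35]{l-book}. The map $\tFr^>$ is defined on the PBW basis $\{\tE^{(\vec{k})}\}_{\vec{k}\in\BZ_{\ge 0}^N}$ of $\dU^>_\e$ supplied by Lemma~\ref{lem:twisted-triangular-PBW}(c1): send $\tE^{(\vec{k})}$ to $\te^{(\vec{k}/\vec{\ell})}$ when each $k_p$ is divisible by $\ell_{i_p}$, and to $0$ otherwise. To verify this is an algebra homomorphism, the key inputs are: (i) the divided-power identity $\tE_i^{(a)}\tE_i^{(b)} = \binom{a+b}{a}_{v_i}\tE_i^{(a+b)}$, where Lemma~\ref{lem:aux-at-roots}(a) gives a clean $\ell_i$-adic factorization of the specialized $v$-binomial matching the analogous relation $\te_i^{(a)}\te_i^{(b)} = \binom{a+b}{a}_{v^*_i}\te_i^{(a+b)}$; and (ii) the Serre relation~\eqref{eq:gen-rel-twistedDJ-2} for $\tE_i$, which must reduce, after applying the proposed map, to the Serre relation~\eqref{eq:Serre-star} for $\te_i$. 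The arithmetic restriction on $\ell$ (that $\ell_i \geq 1 - a_{ij}$ whenever $\ell_j \geq 2$, excluding the values tabulated in Section~\ref{ssec:qFr}) is exactly what forces the non-extremal $v$-binomial coefficients in the Serre relation to vanish at $v = \e$, leaving only the terms matching the classical Serre relation for $\te_i$.

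For part (a), the map $\hFr^>$ is first constructed over the fraction field $\CK$ of $\CA'$. By Corollary~\ref{cor: relation for pov-neg parts}, the algebra $\dU^{*>}_\CK$ is presented by generators $\te_i$ (and their divided powers) subject to the classical Serre relations~\eqref{eq:Serre-star}. Thus it suffices to check that the elements $y_i := \tE_i^{(\ell_i)} \in \dU^>_\CK$ satisfy these Serre relations and that the assignment $\te_i^{(n)} \mapsto \tE_i^{(n\ell_i)}$ is compatible with the divided-power structure. For the latter, iterating the divided-power identity gives
\begin{equation*}
  y_i^n \;=\; \frac{(n\ell_i)_{v_i}!}{((\ell_i)_{v_i}!)^n}\,\tE_i^{(n\ell_i)},
\end{equation*}
which specializes at $v = \e$ to $n!\,\tE_i^{(n\ell_i)}$ by Lemma~\ref{lem:aux-at-roots}(b), in agreement with $\te_i^n = n!\,\te_i^{(n)}$ over $\CK$. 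The Serre relations for the $y_i$ are the higher-order quantum Serre relations at a root of unity, which can be deduced from~\eqref{eq:gen-rel-twistedDJ-2} by an inductive expansion via the divided-power commutation rule. Once $\hFr^>$ is defined over $\CK$, it descends to $\CA'$ because both $\dU^{*>}_{\CA'}$ and $\dU^>_{\CA'}$ embed in their $\CK$-scalar extensions with explicit PBW bases (Lemmas~\ref{lem:*twisted-trinagular-PBW}(c) and~\ref{lem:twisted-triangular-PBW}(c1)), and the constructed map sends one PBW basis into the other.

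The main obstacle will be the verification of the higher-order Serre relations for the elements $\tE_i^{(\ell_i)}$ in part (a), together with the Serre-relation compatibility in part (b). These are where the arithmetic restriction on $\ell$ is decisive, and the twist matrix $\Phi$ in~\eqref{eq:gen-rel-twistedDJ-2} introduces bookkeeping beyond the untwisted treatment of~\cite[Chapter 35]{l-book}.
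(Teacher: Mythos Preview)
For part (a), your approach matches the paper's: reduce to the fraction field $\CK$ of $\CA'$, use Corollary~\ref{cor: relation for pov-neg parts} to present $\dU^{*>}_\CK$ by generators and Serre relations, and verify the higher-order Serre identity
\[
  \sum_{n+m=1-a^*_{ij}}(-1)^m \tE_i^{(\ell_i n)}\tE_j^{(\ell_j)} \tE_i^{(\ell_i m)}=0,
\]
which the paper attributes to \cite[\S35.2.3(b)]{l-book}, together with Lemma~\ref{lem:aux-at-roots}(b) for the divided-power compatibility. One minor wrinkle: the anti-involution $\tau$ of~\eqref{eq: extended tau map} does not literally swap $\tE_i \leftrightarrow \tF_i$ (one has $\tau(\tF_i) = \tE_i K^{\a_i}$), so your reduction between ``$>$'' and ``$<$'' carries extra Cartan factors---harmless, but not as clean as stated.

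For part (b) there is a genuine gap. You define $\tFr^>$ as a \emph{linear} map on the PBW basis $\{\tE^{(\vec{k})}\}$ and then propose to verify multiplicativity via (i) the divided-power identity for $\tE_i^{(a)}\tE_i^{(b)}$ and (ii) the Serre relations. But (i)--(ii) concern only the \emph{simple-root} generators $\tE_i^{(n)}$, whereas the PBW basis involves all root vectors $\tE_{\b_p}^{(k_p)}$; verifying that your linear map respects multiplication requires the full straightening (Levendorskii--Soibelman) relations among root vectors, not merely (i)--(ii). Alternatively, if you intend to define the map on the generators $\tE_i^{(n)}$ and extend multiplicatively, you need a complete presentation of $\dU^>_\e$---and over $\CK$ this algebra is not generated by the $\tE_i$ alone (since $(\ell_i)_{\e_i}!=0$), while no simple presentation in terms of all divided powers is available.

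The paper (following \cite[Chapter 35]{l-book}) avoids this entirely. Having already built $\hFr^>$ in part~(a), it invokes the splitting isomorphism $\dU^>_\CK \cong \hFr^>(\dU^{*>}_\CK) \otimes \tilde{\mathfrak{f}}$ of Lemma~\ref{lem: Spliting of Fr} (the twisted form of \cite[Theorem 35.4.2(b)]{l-book}), where $\tilde{\mathfrak{f}}$ is the subalgebra generated by the undivided $\tE_i$ with $\ell_i\ge 2$, and defines $\tFr^>$ as the projection killing $\tilde{\mathfrak{f}}$. Multiplicativity is then checked using \cite[Proposition 35.3.1]{l-book}. Thus part~(a) is logically prior to part~(b), and your ingredients (i)--(ii) play no role in the construction of $\tFr^>$.
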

\begin{proof}The proof is the same as in \cite{l-book}. One reduces to the case when  $R=\CK$, the fraction field of $\CA'$.

\noindent
(a) The uniqueness is clear. The existence follows by using the generators and relations of $\dU^{*<}_\e$ in Corrollary \ref{cor: relation for pov-neg parts} and the following identity
\[ \sum_{n+m=1-a^*_{ij}}(-1)^m \tF_i^{(\ell_i n)}\tF_j^{(\ell_j)} \tF_i^{(\ell_i m)}=0.\]
This identity is deduced from the formula \cite[$\mathsection 35.2.3(b)$]{l-book}.

\noindent
(b) Let us prove \eqref{eq: Fr for negative part} only. We use the following lemma which is a version of Theorem $35.4.2(b)$ in \cite{l-book} incorporating a Cartan twist
\begin{Lem} \label{lem: Spliting of Fr}
Let $\tilde{\mathfrak{f}}$ be the $\CK$-subalgebra of $\dU^<_\e$ generated by $\tF_i$ with $\ell_i \geq 2$. There is an 
isomorphism of $\CK$-vector spaces $\mathfrak{F}\colon \dU^{*<}_\e \otimes \tilde{\mathfrak{f}} \rightarrow \dU^<_\e$ 
defined by $x \otimes y \mapsto \hFr^<(x)y$.
\end{Lem}
With this lemma, one proceeds exactly the same as the proof of \cite[Theorem 35.1.7]{l-book} to finish the proof. 
\end{proof}

\begin{Rem}\label{partial KerFr}
First, we note that the compositions $\tFr^>\circ \hFr^>$  and $\tFr^< \circ \hFr^<$ are identities. Second, we claim that 
the kernel of the homomorphism $\tFr^>\colon \dU^>_\e\rightarrow \dU^{*>}_\e$ from~\eqref{eq: Fr for positive part} is 
a two-sided ideal generated by $\{\tE_i^{(n)} \,|\, \ell_i \geq 2, \ell_i \nmid n\}$, and likewise the kernel of the homomorphism 
$\tFr^<\colon \dU^<_\e\rightarrow \dU^{*<}_\e$ from~\eqref{eq: Fr for negative part} is a two-sided ideal generated by 
$\{\tF_i^{(n)} \,|\, \ell_i \geq 2, \ell_i \nmid n\}$. Let us establish the kernel of the map $\tFr^>$. It is enough to show that $\tFr^>(x)=0$ 
implies $x=0$ for the elements of the following form: $x=\sum a_{n_1, \dots n_m} \tE_{i_1}^{(n_1)}\dots \tE_{i_m}^{(n_m)}$ 
with $\ell_{i_j}\mid n_j$ and $ a_{n_1, \dots n_m}\in R$. For such $x$, consider 
$\tilde{x}=\sum a_{n_1, \dots n_m}\te_{i_1}^{(n_1/\ell_{i_1})}\dots \te_{i_m}^{(n_m/\ell_{i_m})}$, 
so that $x=\hFr^>(\tilde{x})$ and $\tilde{x}=\tFr^>(x)$. Therefore, if $\tFr(x)=0$ then $x=0$. 
\end{Rem}

\begin{proof}[Proof of Proposition \ref{Fr in idem form}]
In~\cite{l-book}, the proof of Theorem 35.1.9 is based on the presentation of the idempotented version by generators and relations, 
established in Section~31.1.3. The similar presentation also holds in our Cartan-twisted setup since the Cartan elements are all 
invertible. The only non-trivial relations we need to check are the ones between $\tE_i^{(n)}$ and $\tF_j^{(m)}$, but those directly 
follow by comparing the formulas \eqref{EF relations} and \eqref{ef relations}. It is straightforward to check that the homomorphism 
$\tFr$ is compatible with the coproducts in \eqref{EF coproduct} and \eqref{ef coproduct}.
\end{proof}

\begin{Rem}\label{rem: dU acts on U*(g,P*)}We have a natural adjoint action of $\dU_\e(\g)$ on $\hU_\e(\g, P)$ via $\ad'_l$. Via the Hopf morphism $\tFr: \dU_\e(\g, P) \rightarrow \dU^*_\e(\g, P^*)$, the Hopf algebra $\dU_\e(\g)$ naturally acts on $\dU^*_\e(\g, P^*)$ via $\ad'_l$ as follows: $ \ad'_l(x)b=\tFr(\ad'_l(x)b')$ for $b\in \dU^*_\e(\g, P^*)$ and any $b'\in \hU_\e(\g, P)$ such that $\tFr(b')=b$.
\end{Rem}

Let us recall the Kostant form $\dU_R(\g^d)$ from Subsection \ref{ssec:Kostant Z-form}. Then, we have the following twisted 
Frobenius homomorphism from the original Lusztig forms: 

\begin{Prop}\label{Fr in Lus form}
There is a unique $R$-Hopf algebra homomorphism (twisted quantum Frobenius)
\begin{equation*}
  \cFr\colon \dU_\e(\g)\longrightarrow \dU_R(\g^d)
\end{equation*}
such that
\[
  \tE_i^{(n)}\mapsto (\e_i^*)^{-n/\ell_i}e_i^{(n/\ell_i)} \,, \quad 
  \tF_i^{(n)}\mapsto f_i^{(n/\ell_i)} \,, \quad K^\lambda \mapsto 1 \,,
\]
where $\lambda \in 2P$ and we set $e_i^{(n/\ell_i)}=f_i^{(n/\ell_i)}=0$ if $\ell_i$ does not divide $n$. 
\end{Prop}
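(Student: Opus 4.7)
\emph{Plan.} My plan is to establish uniqueness directly from the presentation and then construct $\cFr$ by invoking the idempotented twisted Frobenius of Proposition~\ref{Fr in idem form}, combined with the Hopf embedding $\phi_Q\colon\dU_R(\g^d)\hookrightarrow\tU^*_\e(\g,Q^*)$ of Lemma~\ref{lem: Kostant form and complete form}. Uniqueness is immediate: by Lemma~\ref{lem:twisted-triangular-PBW}(a2), the $R$-algebra $\dU_\e(\g)$ is generated by $\{\tE_i^{(n)},\tF_i^{(n)},K^\mu\mid 1\leq i\leq r,\, n\geq 1,\, \mu\in 2P\}$, so any algebra homomorphism out of $\dU_\e(\g)$ is determined by its values on these generators.

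\emph{Constructing an auxiliary Hopf homomorphism.} I will first extend the idempotented Frobenius $\tFr\colon\hU_\e(\g,P)\to\hU^*_\e(\g,P^*)$ by continuity along the Cartan factor to a continuous Hopf algebra homomorphism $\tFr\colon\tU_\e(\g,P)\to\tU^*_\e(\g,P^*)$ (well-defined since $\tFr$ preserves the idempotent decomposition and is a Hopf map at the idempotented level by Proposition~\ref{Fr in idem form}). I will then compose with the projection $\pi\colon\tU^*_\e(\g,P^*)\twoheadrightarrow\tU^*_\e(\g,Q^*)$ that sends $1_\nu$ to $1_\nu$ for $\nu\in Q^*$ and to $0$ otherwise; this is a Hopf algebra surjection because $Q^*\subset P^*$ is a sublattice closed under the weight shifts appearing in the coproducts~\eqref{ef coproduct}. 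Pre-composing with $\psi_1\colon\dU_\e(\g)\to\tU_\e(\g,P)$ from Lemma~\ref{lem: dense subalgebras} yields a Hopf algebra homomorphism
\[
  \Psi \,:=\, \pi\circ\tFr\circ\psi_1 \,\colon\, \dU_\e(\g)\,\longrightarrow\,\tU^*_\e(\g,Q^*).
\]

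\emph{Evaluating $\Psi$ on generators.} The next step is to check that $\Psi$ factors through $\phi_Q$, which it suffices to verify on the generators. For $\tE_i^{(n)}$ one obtains $\Psi(\tE_i^{(n)})=\te_i^{(n/\ell_i)}\prod_{\nu\in Q^*}1_\nu$ whenever $\ell_i\mid n$ and zero otherwise; comparing with $\phi_Q(e_i^{(n/\ell_i)})=(\e_i^*)^{n/\ell_i}\te_i^{(n/\ell_i)}\prod_{\nu\in Q^*}1_\nu$ from Lemma~\ref{lem: Kostant form and complete form}(a), this equals $(\e_i^*)^{-n/\ell_i}\phi_Q(e_i^{(n/\ell_i)})$, matching the claim. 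Similarly $\Psi(\tF_i^{(n)})=\phi_Q(f_i^{(n/\ell_i)})$ when $\ell_i\mid n$, zero otherwise. For $K^\mu$ with $\mu\in 2P$, $\psi_1(K^\mu)=\prod_\nu\e^{(\mu,\nu)}1_\nu$ by Lemma~\ref{lem: dense subalgebras}, hence $\Psi(K^\mu)=\prod_{\nu\in Q^*}\e^{(\mu,\nu)}1_\nu$. The key arithmetic fact to verify is that $\e^{(\mu,\nu)}=1$ for all $\mu\in 2P$ and $\nu\in Q^*$: by bilinearity it reduces to $\mu=2\omega_j$ and $\nu=\a^*_i=\ell_i\a_i$, for which $(\mu,\nu)=2\ell_j\sd_j\delta_{ij}$, and the divisibility $\ell\mid 2\ell_j\sd_j$ follows directly from the definition $\ell_j=\ell/\gcd(2\sd_j,\ell)$ in~\eqref{eq:li_equation} together with $\e^\ell=1$ in $R$. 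Hence $\Psi(K^\mu)=\phi_Q(1)$.

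\emph{Conclusion.} Since $\Psi$ is an algebra homomorphism whose values on the generators lie in $\phi_Q(\dU_R(\g^d))$, its entire image sits inside $\phi_Q(\dU_R(\g^d))$. Combined with the injectivity of $\phi_Q$ (Lemma~\ref{lem: Kostant form and complete form}(b)), this produces a unique $R$-algebra homomorphism $\cFr\colon\dU_\e(\g)\to\dU_R(\g^d)$ with $\phi_Q\circ\cFr=\Psi$, whose action on generators coincides with the formula in the proposition. Since $\phi_Q$ is a Hopf embedding and $\Psi$ is a Hopf homomorphism, $\cFr$ automatically inherits the Hopf algebra structure. The main technical challenges will be rigorously setting up the continuous extension of $\tFr$ and verifying Hopf compatibility of the projection $\pi$, both of which amount to careful tracking of the weight decompositions in~\eqref{ef coproduct}, together with the arithmetic verification of $\ell\mid 2\ell_j\sd_j$; these are all routine once the groundwork is in place.
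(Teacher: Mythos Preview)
Your proof is correct and follows essentially the same route as the paper: both construct $\cFr$ by composing $\psi_1\colon\dU_\e(\g)\to\tU_\e(\g,P)$ with the continuous extension of the idempotented Frobenius landing in $\tU^*_\e(\g,Q^*)$, then factor through the Hopf embedding $\phi_Q$. Your exposition is in fact slightly more explicit than the paper's, as you separate out the projection $\pi\colon\tU^*_\e(\g,P^*)\to\tU^*_\e(\g,Q^*)$ and spell out the arithmetic verification that $\e^{(\mu,\nu)}=1$ for $\mu\in 2P$, $\nu\in Q^*$, which the paper relegates to a footnote.
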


\begin{proof}Let us recall the topological Hopf algebras $\tU_\e(\g, P)$ and $\tU^*_\e(\g, Q^*)$ in Section \ref{ssec: complete form}. The Hopf algebra homomorphism in Proposition \ref{Fr in idem form} gives us a (topological) Hopf algebra homomorphism:
\begin{equation}\label{eq: Fr in complete form}
\cFr: \tU_\e(\g, P)\rightarrow \tU^*_\e(\g, Q^*).
\end{equation}
On the other hand, we have the following Hopf algebras morphisms:
\[ \psi_1: \dU_\e(\g)\rightarrow \tU_\e(\g,P), \qquad \phi_Q: \dU_R(\g^d) \rightarrow \tU^*_\e(\g, Q^*),\]
in which  $\phi_Q$ is an inclusion, see Section \ref{ssec: complete form}. Furthermore, the images of $\psi_1(\tE^{(n)}_i),$  $\psi_1(\tF^{(n)}_i),$ $ \psi_1(K^{2\lambda})$ under  \eqref{eq: Fr in complete form} are contained in $\phi_Q(\dU_R(\g^d))$\footnote{ We want to note that $\cFr(\psi_1(K^{2\lambda})=\prod_{\nu \in Q^*} 1_\nu$, only happen when $2\lambda \in 2P$ and $\nu$ runs over $Q^*$, another justification for the lattice restrictions in our arguments.}. Therefore, the Hopf algebra homomorphism \[ \dU_\e(\g) \xrightarrow[]{\psi_1} \tU(\g, P)\xrightarrow[]{\cFr} \tU(\g, Q^*)\]
gives a rise to the desired Hopf algebra homomorphism $\dU_\e(\g)\rightarrow \dU_R(\g^d)$.
\end{proof}

\begin{Lem}\label{KerFr}  
The kernel of  $\cFr\colon \dU_\e(\g) \rightarrow \dU_R(\g^d)$ is the two-sided ideal generated by 
\begin{equation}\label{eq: generators of Frokernel} \{K^\lambda-1 \,|\, \lambda \in 2P\}  \cup \{\tE^{(n)}_i, \tF^{(n)}_i \,|\, \ell_i \geq 2, \ell_i \nmid n\}.
\end{equation}
\end{Lem}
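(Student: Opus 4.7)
The inclusion $J \subseteq \ker\cFr$ is immediate from the explicit formulas defining $\cFr$ in Proposition \ref{Fr in Lus form}. For the reverse, my plan is to show that $\cFr$ factors through an isomorphism $\bar\cFr\colon \dU_\e(\g)/J \xrightarrow{\sim} \dU_R(\g^d)$, by comparing PBW bases on both sides through the triangular decompositions of Lemmas \ref{lem:twisted-triangular-PBW} and \ref{lem: triangular-PBW-kostant form}.

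On the positive and negative halves, $\cFr$ restricts to $\cFr^{>} = \CF^{>}\circ\tFr^{>}$ and $\cFr^{<} = \CF^{<}\circ\tFr^{<}$ up to invertible scalars, where $\CF^{\pm}$ are the isomorphisms of Lemma \ref{lem: dU*(g) iso Kostant form} and $\tFr^{\pm}$ the Frobenius components from Proposition \ref{prop: steps to construct Frobenious}. By Remark \ref{partial KerFr}, $\ker\cFr^{>}$ is the ideal $I^{>} \subset \dU^{>}_\e$ generated by $\{\tE_i^{(n)} : \ell_i \geq 2,\,\ell_i \nmid n\}$, and similarly $\ker\cFr^{<} = I^{<}\subset \dU^{<}_\e$; both descend to isomorphisms $\dU^{\pm}_\e/I^{\pm} \xrightarrow{\sim} \dU^{\pm}_R(\g^d)$. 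Consequently every root vector $\tE_{\beta_k}^{(n)}$ with $\ell_{i_k}\nmid n$ lies in $I^{>}\subseteq J$ (and the $\tF$-analog lies in $I^{<}\subseteq J$), since it is killed by $\tFr^{>}$ (respectively $\tFr^{<}$).

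The chief step is the Cartan reduction. Modulo $J$, each basis element of $\dU^0_\e$ from \eqref{eq:basis-Cartan-twisted-Lus} reduces to $\prod_i\binom{K_i;0}{t_i}$, since $K^{2\varsigma_j}$ and $K_i^{2\lfloor t_i/2\rfloor}$ are of the form $K^\mu$ with $\mu\in 2P$ and hence $\equiv 1$. For $1 \leq r \leq \ell_i - 1$, the formula $\binom{K_i;0}{r} = \prod_{s=1}^{r}\frac{1 - K_i^{-2}\e_i^{2(s-1)}}{1 - \e_i^{-2s}}$ is valid in $\dU^0_\e$ (the denominators are invertible as $\ell_i\nmid s$ in this range), and its $s=1$ numerator $1 - K_i^{-2}$ lies in $J$; hence $\binom{K_i;0}{r} \in J$. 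For a general $t = \ell_i m + r$ with $0 < r < \ell_i$, I would apply the recursion \eqref{eq: K-rel1} with $(t,t') = (\ell_i m, r)$: Lemma \ref{lem:aux-at-roots}(a) yields $\binom{\ell_i m + r}{\ell_i m}_{\e_i} = 1$ and kills the coefficients $\binom{\ell_i m + k - 1}{k}_{\e_i}$ for $1 \leq k \leq r$, so the recursion collapses to the identity $\binom{K_i;0}{\ell_i m + r} = \binom{K_i;0}{\ell_i m}\binom{K_i;0}{r}$ in $\dU^0_\e$, which therefore lies in $J$. Combining the triangular reduction with this Cartan computation, modulo $J$ the algebra $\dU_\e(\g)$ is spanned by monomials $\tF^{(\cev{k})}\prod_i\binom{K_i;0}{\ell_i t_i}\tE^{(\cev{r})}$ with $\ell_{i_n}\mid k_n,r_n$ for all $n$; these map under $\cFr$ bijectively (up to invertible scalars) to the PBW basis of $\dU_R(\g^d)$ from Lemma \ref{lem: triangular-PBW-kostant form}. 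Since $\dU_R(\g^d)$ is $R$-free on that basis, $\bar\cFr$ is injective, and $\ker\cFr = J$ follows. The main obstacle is the Cartan reduction just sketched, requiring the root-of-unity arithmetic in Lemma \ref{lem:aux-at-roots}.
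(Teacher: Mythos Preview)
Your approach is correct, and your Cartan reduction is actually cleaner than the paper's. Where the paper uses the interchange relation $\tE_i^{(p)}\tF_i^{(p)} = \sum_c \e_i^{2p^2-c^2}\,\tF_i^{(p-c)}\binom{K_i;2c-2p}{c}\tE_i^{(p-c)}$ to extract $\binom{K_i;0}{t}$ with $\ell_i \nmid t$ from $J$ inductively, you stay entirely inside the Cartan subalgebra: the factorization of $\binom{K_i;0}{r}$ for $1\leq r<\ell_i$ exhibits the factor $1-K_i^{-2}\in J$ directly, and the identity $\binom{K_i;0}{\ell_i m+r}=\binom{K_i;0}{\ell_i m}\binom{K_i;0}{r}$ in $\dU^0_\e$ (which you correctly derive from \eqref{eq: K-rel1} together with $\binom{k-1}{k}_{\e_i}=0$) reduces the general case to the small one. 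This avoids mixing positive and negative generators and is arguably more transparent.

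There is one dependency you should flag. Your assertion that each root vector $\tE_{\b_k}^{(n)}$ with $\ell_{i_k}\nmid n$ is killed by $\tFr^>$ (hence lies in $I^>$), and the final claim that the good PBW monomials map to invertible scalar multiples of PBW basis elements of $\dU_R(\g^d)$, both rely on the compatibility of $\tFr$ with Lusztig's braid group action---equivalently on Lemmas~\ref{lem: PBW basis of Ker of Fr} and~\ref{lem: computations of cFr}, which appear \emph{after} Lemma~\ref{KerFr} in the paper (though they are logically independent of it). The paper's Step~1 sidesteps this by using only that $\cFr^<,\cFr^0,\cFr^>$ are split $R$-module surjections, yielding
\[
\Ker(\cFr)=\Ker(\cFr^<)\otimes\dU^0_\e\otimes\dU^>_\e+\dU^<_\e\otimes\Ker(\cFr^0)\otimes\dU^>_\e+\dU^<_\e\otimes\dU^0_\e\otimes\Ker(\cFr^>)
\]
without ever invoking a PBW description of $\Ker(\tFr^\pm)$. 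If you substitute that split-surjection argument for your PBW reduction on the positive and negative parts, your Cartan argument combines with it seamlessly and all forward references disappear.
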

\begin{proof} We have 
\[ \cFr\circ \psi_1\left(\binom{K_i;0}{t_i}\right)= \prod_{\nu \in Q^*} \binom{(\nu, \a^\vee_i)}{t_i}_{\e_i} 1_\nu=\begin{cases} 0 \qquad &\text{if $\ell_i \nmid t_i$}\\
                   \prod_{\nu\in Q^*}\binom{(\nu, \a^{*\vee}_i)}{t_i/\ell_i} 1_\nu=\phi_Q\left(\binom{h_i;0}{t_i/\ell_i}\right) , &\text{if $\ell_i \mid t_i$}
    \end{cases}\]
therefore, 
\[ \phi\left(\binom{K_i;0}{t_i}\right)=\begin{cases} 0 &\text{if $\ell_i\nmid t_i$}\\
\binom{h_i;0}{t_i/\ell_i} &\text{if $\ell_i \mid t_i$}
\end{cases}.\]

\noindent
{\it Step 1:} We will show that $\Ker(\cFr)$ is the two-sided ideal generated by 
\[\{ K^\lambda-1|\lambda \in 2P\}\cup \left\{ \binom{K_i;0}{t_i}|\ell_i \nmid t_i\right\}\cup \{\tE^{(n)}_i, \tF^{(n)}_i \,|\, \ell_i \geq 2, \ell_i \nmid n\}.\]
The map $\cFr$ is compatible with triangular decomposition, that is $\cFr \cong \cFr^<\otimes \cFr^0 \otimes \cFr^>$ with 
\[ \cFr^>: \dU^>_\e \rightarrow \dU^>_R(\g^d), \qquad \cFr^0: \dU^0_\e\rightarrow \dU^0_R(\g^d), \qquad \cFr^<: \dU^<_\e \rightarrow \dU^<_R(\g^d).\]
By Remark \ref{partial KerFr}, $\Ker(\cFr^>)$ is the two side ideal generated by $\{ \tE_i^{(n)}|\ell_i\geq 2, \ell_i \nmid n\}$, meanwhile, $\ker(\cFr^<)$ is the two sided ideal generated by $\{\tF_i^{(n)}|\ell_i \geq 2, \ell_i \nmid n\}$. 
On the other hand, using the PBW-basis in Lemma \ref{lem:twisted-triangular-PBW}(c3), one can see that $\Ker(\cFr^0)$ is the two-sided ideal generated by 
\[\{ K^\lambda-1|\lambda \in 2P\}\cup \left\{ \binom{K_i;0}{t_i}|\ell_i \nmid t_i\right\}.\]

All of $\cFr^>, \cFr^<, \cFr^0$ are surjective. All of $\dU^>_R(\g^d), \dU^0_R(\g^d), \dU^>_R(\g^d)$ are free $R$-modules. Hence $\cFr^<, \cFr^0, \cFr^>$ are split morphisms of $R$-modules. Therefore, the compatibility of $\cFr$ with triangular decompositions implies that 
\[ \Ker(\cFr)= \Ker(\cFr^<)\otimes \cFr^0 \otimes \cFr^> +\cFr^< \otimes \Ker(\cFr^0) \otimes \cFr^> +\cFr^< \otimes \cFr^0 \otimes \Ker(\cFr^>).\]
This finishes the first step .

\noindent
{\it Step 2:} The lemma follows if we can show that $\ds \binom{K_i;-c}{t_i}, \ell_i \nmid t_i, \ell_i \mid  c\geq 0$ belongs to the two-sided ideal $\mathcal{J}$ generated by \eqref{eq: generators of Frokernel}. We have 
\begin{equation}
\label{eq: eq2} \binom{K_i; -c}{t} = \sum_{0\leq k \leq t} (-1)^k 
    \epsilon_i^{2t(c+k)-k(k+1)} \binom{c+k-1}{k}_i \binom{K_i;0}{t-k} \;\; (t\geq 0, c\geq 1)
\end{equation}

Since $\binom{c+k-1}{k}_i=0$ for $\ell_i \mid c$ and $\ell_i \nmid k$, we have 
\[ \binom{K_i;-c}{t}=\sum_{0\leq k \leq t, \ell_i \nmid k} (-1)^k \e_i^{2t(c+k)-k(k+1)}\binom{c+k-1}{k}_i \binom{K_i;0}{t-k}, ~~(t\geq 0, c\geq 1, \ell_i \mid c).\]
This implies that $\binom{K_i;-c}{t}$ with $c\geq 1, \ell_i \mid c, \ell_i \nmid t$ is a linear combination of $\binom{K_i;0}{t'}$ with $\ell_i \nmid t'\leq t$. 
Now the second step is proved by induction on $t_i$ and the following equation
\begin{equation}
\label{eq: eq1} \tE_i^{(p)}\tF^{(p)}_i=\sum_{c=0}^{\min(p,s)}\e^{2ps-c^2}_i \tF_i^{(p-c)}\binom{K_i;2c-2p}{c} \tE_i^{(p-c)}.
\end{equation}
By using  \eqref{eq: eq1} for $0<p<\ell_i$, we have $\binom{K_i;0}{t_i} \in \mathcal{J}$ for $0<t_i <\ell_i$, hence $\binom{K_i;-c}{t_i}$ with $c \geq 0, \ell_i \mid c, 0<t_i <\ell_i$ belongs to $\mathcal{J}$. By induction and \eqref{eq: eq1}, $\binom{K_i;0}{t}\in \mathcal{J}$ for $\ell_i\nmid t$ hence $\binom{K_i;-c}{t_i} \in \mathcal{J}$ for all $ c\geq 0, \ell_i \mid c, \ell_i \nmid t_i \leq t$.
\end{proof}

\begin{Prop}\label{support lem} 
(a) There is an $R$-Hopf algebra homomorphism $\phi\colon \dU^*_\epsilon(\g) \rightarrow \dU_R(\g^d)$ defined by 
\begin{equation}\label{eq: image of phi} 
  \te_i^{(n)} \mapsto (\e^*_i)^{-n} e_i^{(n)} \,, \quad \tf_i^{(n)} \mapsto f_i^{(n)} \,, \quad
  K^\lambda \mapsto 1 \ (\lambda\in 2P^*), \quad \binom{K^*_i;0}{t}\mapsto \binom{h_i;0}{t} \,.
\end{equation}

\noindent
(b) The kernel of $\phi$ is the Hopf ideal generated (as an ideal) by $\{K^\lambda-1\}_{\lambda \in 2P^*}$. 
\end{Prop}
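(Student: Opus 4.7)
My plan is to construct $\phi$ directly via the triangular decomposition of Lemma~\ref{lem:*twisted-trinagular-PBW}, building it on each factor separately, and then to identify its kernel using the same decomposition. The hardest step will be the verification of the exchange relation~\eqref{eq: ef*-relation} under $\phi$, which requires tracking sign cancellations via Lemma~\ref{lem:aux-at-roots}(a).

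For part (a), I will first define $\phi$ on each tensor factor of the triangular decomposition. On $\dU^{*<}_\e$ I take the isomorphism $\CF^<$ of Lemma~\ref{lem: dU*(g) iso Kostant form}(b); on $\dU^{*>}_\e$ I take the composition of $\CF^>$ with the $R$-algebra automorphism of $\dU^>_R(\g^d)$ determined by $e_i\mapsto(\e^*_i)^{-1}e_i$, which is well-defined because every homogeneous classical Serre relation rescales by a common scalar. On $\dU^{*0}_\e$ I will define $\phi^0$ by $K^\mu\mapsto 1$ (for $\mu\in 2P^*$) and $\binom{K^*_i;0}{t}\mapsto\binom{h_i;0}{t}$, checking that the starred analogs of~(\ref{eq: K-rel1})--(\ref{eq: K-rel3}) descend to classical identities among the $\binom{h_i;0}{t}$'s once all $K^*_i$ are set to $1$ and $(\e^*_i)^2=1$ is invoked. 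Combining via the triangular isomorphism yields an $R$-linear map $\phi$ that still needs to be shown multiplicative and Hopf.

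The central observation for both multiplicativity and coproduct compatibility is that $\e^{(\mu,\a^*_j)}=1$ for any $\mu\in 2P^*$ and any $j$: writing $\mu=2\sum_i k_i\w^*_i$ gives $(\mu,\a^*_j)=2k_j\ell_j^2\sd_j$, and the relation $\ell_j\operatorname{GCD}(2\sd_j,\ell)=\ell$ forces $\e^{2\ell_j\sd_j}=1$. Consequently every $K^\mu$ with $\mu\in 2P^*$ is central in $\dU^*_\e(\g)$, so all $K^\mu$-commutation relations with $\te^{(n)}_j,\tf^{(n)}_j$ are trivially matched on the image side. The Cartan scalars $K^{-(s-c)\zeta^{*>}_i}$ and $K^{c\zeta^{*<}_i}$ appearing in~\eqref{eq:*twisted Hopf on dived powers} map to $1$ since $\zeta^{*>}_i,\zeta^{*<}_i\in 2P^*$ by the starred analog of Lemma~\ref{lem: lambda and mu in 2P} (Remark~\ref{rem:lattice-star}), and the remaining twisting scalars collapse because $(\e^*_i)^2=1$, leaving a uniform factor $(\e^*_i)^{-s}$ that matches the rescaling on the positive side. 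The divided Cartan coproduct will specialize to $\Delta\binom{h_i;0}{t}=\sum_{a+b=t}\binom{h_i;0}{a}\otimes\binom{h_i;0}{b}$ by the same mechanism, and antipode and counit compatibilities will follow similarly; the one remaining non-trivial check is the exchange relation~\eqref{eq: ef*-relation}, handled by expanding $\binom{K^*_i;2c-p-s}{c}$ via~\eqref{eq: K-rel2} and tracking how the $v$-binomials $\binom{\cdot}{\cdot}_{\e^*_i}$ specialize via Lemma~\ref{lem:aux-at-roots}(a), by an argument parallel to the one in the proof of Proposition~\ref{Fr in Lus form}.

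For part (b), let $J$ be the two-sided ideal generated by $\{K^\lambda-1\mid\lambda\in 2P^*\}$; since each generator is a difference of group-likes, $J$ is automatically a Hopf ideal, and $J\subseteq\Ker\phi$ is immediate. The centrality of $K^\lambda$ for $\lambda\in 2P^*$ shown above yields $J=\dU^{*<}_\e\cdot J_0\cdot\dU^{*>}_\e$, where $J_0\subseteq\dU^{*0}_\e$ is the ideal generated by the same set; since $\phi$ is an isomorphism on the positive and negative parts, the triangular decomposition reduces the claim to $\Ker\phi^0=J_0$. Using the basis~\eqref{eq: *basis-Cartan-twisted-Lus} of $\dU^{*0}_\e$, the map $\phi^0$ sends each $K^{2\varsigma^*_j}\prod_i(K^*_i)^{2\lfloor t_i/2\rfloor}\binom{K^*_i;0}{t_i}$ to $\prod_i\binom{h_i;0}{t_i}$, and the latter form an $R$-basis of $\dU^0_R(\g^d)$ by Lemma~\ref{lem: triangular-PBW-kostant form}(d); therefore $\Ker\phi^0$ is spanned by differences of basis elements sharing a common $(t_i)$-tuple, each of which lies in $J_0$ since modulo $J_0$ one has $K^{2\varsigma^*_j}=(K^*_i)^{2}=1$, concluding the argument.
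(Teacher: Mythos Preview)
Your approach is correct but differs from the paper's in part~(a). The paper constructs $\phi$ indirectly: it uses the maps $\psi^*_1\colon\dU^*_\e(\g)\to\tU^*_\e(\g,P^*)$ and $\phi_Q\colon\dU_R(\g^d)\hookrightarrow\tU^*_\e(\g,Q^*)$ from Section~\ref{ssec: complete form}, composes $\psi^*_1$ with the projection $\tU^*_\e(\g,P^*)\to\tU^*_\e(\g,Q^*)$, and observes that the images of the generators land in $\phi_Q(\dU_R(\g^d))$. This yields $\phi$ as a Hopf homomorphism automatically, with no relation-checking required. Your approach instead builds $\phi$ piece by piece on the triangular factors and then verifies the cross-relations directly; this is more elementary (it avoids the completed-form machinery entirely) but costs you the explicit verification of the exchange relation~\eqref{eq: ef*-relation}, the $\binom{K^*_i;0}{t}$--$\te_j^{(n)}$ commutation, and the Hopf compatibilities. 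One small correction: your reference to ``an argument parallel to the one in the proof of Proposition~\ref{Fr in Lus form}'' is slightly off, since that proof also proceeds via completed forms; what you really want is the direct binomial-identity computation using Lemma~\ref{lem:aux-at-roots}(a) with the observation that $\binom{a}{b}_{\e^*_i}=\binom{a}{b}$ since $(\e^*_i)^2=1$.

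For part~(b) the two arguments are essentially identical: both use the triangular decomposition (facilitated by the centrality of $K^\lambda$ for $\lambda\in 2P^*$, which you established and the paper uses implicitly via the split of $\phi^0$), reduce to the Cartan part, and read off $\Ker\phi^0$ from the basis~\eqref{eq: *basis-Cartan-twisted-Lus}.
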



\begin{proof} 
(a) We have Hopf algebra homomorphisms:
\[ \dU^*_\e(\g) \xrightarrow[]{\psi^*_1} \tU_\e(\g, P^*) \rightarrow \tU_\e(\g, Q^*) \xhookleftarrow[]{\phi_Q} \dU_R(\g^d).\]
The images of $\te^{(n)}_i, \tf^{(n)}_i , K^\lambda$ in $\tU_\e(\g, Q^*)$ are contained in $\phi_Q(\dU_R(\g^d))$. Hence we obtain the desired $R$-Hopf algebra homomorphism $\phi: \dU^*_\e(\g) \rightarrow \dU_R(\g^d)$.

\noindent
(b) The map $\phi$ is compatible with triangular decompositions, that is $\phi \cong \phi^< \otimes \phi^0 \otimes \phi^>$ with 
\[ \phi^<: \dU^{*<}_\e\rightarrow \dU^<_R(\g^d), \qquad \phi^0: \dU^{*0}_\e\rightarrow \dU^0_R(\g^d), \qquad \phi^>: \dU^{*>}_\e \rightarrow \dU^>_R(\g^d).\]
We note that $\phi^<, \phi^>$ are isomorphisms. By \eqref{eq: image of phi} and the PBW-basis of $\dU^*_\e(\g)$ in Lemma \ref{lem:*twisted-trinagular-PBW}, one see that $\Ker(\phi^0)$ is the two-sided ideal generated by $\{ K^\lambda-1|\lambda \in 2P^*\}$. On the other hand, $\phi^0$ is a split morphism of $R$-modules since $\dU^0_R(\g^d)$ is free over $R$, therefore, 
\[\Ker(\phi)=\dU^{*<}_\e\otimes \Ker(\phi^0)\otimes \dU^{*>}_\e.\]
The lemma follows.
\end{proof}

\begin{Rem}
The left adjoint action of $\dU^*_\epsilon(\g)$ on itself  and the adjoint action of $\dU^*_\e(\g)$ on $\tU_\e(\g,X^*)$ factor through 
$\phi\colon \dU^*_\epsilon(\g)\rightarrow \dU_R(\g^d)$ of Proposition~\ref{support lem}. This is because the adjoint action of the elements $K^\lambda, \lambda \in 2P^*,$ is trivial.
Therefore, $\phi: \dU^*_\e(\g) \rightarrow \dU_R(\g^d)$ and $\phi_Q: \dU_R(\g^d) \hookrightarrow \tU^*_\e(\g, X^*)$, where $X$ is either $P$ or $Q$,  are morphisms of $\dU_R(\g^d)$-modules.
\end{Rem}

We now state some technical results that will be needed in Section $5$. Let $A_{Q^*}$ be the $R$-subalgebra of $\dU^*_\epsilon(\g)$ generated by $ \te_i^{(n)}K^{n\gamma(\a^*_i)}$, 
$\tf_i^{(n)}K^{n\kappa(\a^*_i)}$, $\ds \binom{K^*_i;0}{t_i}$ for $1\leq i \leq r; $ $t_i, n \geq 1$.

\begin{Lem}\label{Lem about A_Q and tU^*}
(a) $A_{Q^*}$ is 
a $\dU_R(\g^d)$-submodule of $\dU^*_\epsilon(\g)$ and the restriction map $\phi\colon A_{Q^*}\rightarrow \dU_R(\g^d)$ is an $R$-algebra isomorphism.

\noindent
(b) The restriction map $\psi^*_1: A_{Q^*} \rightarrow \tU^*_\e(\g, P^*)$ is an inclusion of $\dU_R(\g^d)$-modules with dense image in the codomain.
\end{Lem}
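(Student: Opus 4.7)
The plan is to prove part (a) by showing directly that $\phi|_{A_{Q^*}}\colon A_{Q^*}\to \dU_R(\g^d)$ is an $R$-algebra isomorphism, and then deduce part (b) by comparing $\psi^*_1|_{A_{Q^*}}$ with $\phi_Q\circ \phi|_{A_{Q^*}}$. A crucial preliminary observation is the hidden identity $K^{2\a^*_i}=1$ in $\dU^*_\e(\g)$: this follows by specializing the $\dU^{*0}_\CA$-relation $(1-(v^*_i)^{-2})\binom{K^*_i;0}{1}=1-(K^*_i)^{-2}$ to $R$, since $\e^{2\sd_i\ell_i^2}=1$ forces $(1-(v^*_i)^{-2})$ to vanish. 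Combined with $\e^{(\a^*_j,\gamma(\a^*_i))}=1=\e^{(\a^*_j,\kappa(\a^*_i))}$ and $(n)_{v^*_i}!=n!$ in $R$ (since $v^*_i=\pm1$), all arising from the divisibility $\ell\mid 2\sd_i\ell_i$, I would construct a candidate section $\iota\colon \dU_R(\g^d)\to\dU^*_\e(\g)$ of $\phi$ on Kostant generators via
\[
  e_i^{(n)}\mapsto(\e^*_i)^n\te_i^{(n)}K^{n\gamma(\a^*_i)},\qquad
  f_i^{(n)}\mapsto\tf_i^{(n)}K^{n\kappa(\a^*_i)},\qquad
  \binom{h_i;0}{t}\mapsto\binom{K^*_i;0}{t}.
\]
Verifying that $\iota$ respects the Kostant relations is a direct but nontrivial computation: the Serre identities reduce to \eqref{eq:Serre-star} after factoring out $\e$-powers that turn out to be constant in the summation index; the $EF$-commutator identity $[\iota(e_i),\iota(f_i)]=\binom{K^*_i;0}{1}K^{2\a^*_i}=\binom{K^*_i;0}{1}=\iota(h_i)$ crucially uses $K^{2\a^*_i}=1$; and the Cartan-$E$ and Cartan-$F$ commutations use the $v^*_i=\pm 1$ simplification of \eqref{eq: K-rel3}. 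The image of $\iota$ is $A_{Q^*}$ by construction, and $\phi\circ\iota=\mathrm{Id}$ on generators, whence $\phi|_{A_{Q^*}}$ is an isomorphism with inverse $\iota$.

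For the $\dU_R(\g^d)$-submodule claim, I would first confirm the remark preceding the lemma that the adjoint action of $\dU^*_\e(\g)$ on itself factors through $\phi$: since the $Q^*$-grading gives $\ad'(K^\lambda)(z)=\e^{(\deg z,\lambda)}z$ and $\e^{(\mu,\lambda)}=1$ for $\mu\in Q^*,\lambda\in 2P^*$ by the same divisibility, the generators $\{K^\lambda-1\}_{\lambda\in 2P^*}$ of $\Ker(\phi)$ act trivially. Closure of $A_{Q^*}$ under the resulting $\dU_R(\g^d)$-action then reduces via \eqref{ad on products} to a routine verification on generators using \eqref{eq:explicit adjoint action}.

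For part (b), I would introduce the natural projection $\pi\colon \tU^*_\e(\g,P^*)\twoheadrightarrow\tU^*_\e(\g,Q^*)$ annihilating the idempotents $1_\nu$ for $\nu\in P^*\setminus Q^*$; this is an $R$-algebra map because $\te_i,\tf_i$ have $Q^*$-weights. A direct check on generators of $A_{Q^*}$ shows $\pi\circ\psi^*_1|_{A_{Q^*}}=\phi_Q\circ\phi|_{A_{Q^*}}$, the essential input being $\e^{n(\gamma(\a^*_i),\nu)}=1$ for $\nu\in Q^*$; since the right-hand composition is injective by part (a) and Lemma \ref{lem: Kostant form and complete form}(b), so is $\psi^*_1|_{A_{Q^*}}$. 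The density assertion is the main obstacle. Following the strategy of Lemma \ref{lem: dense subalgebras}, one reduces by base change to the case where $R$ is a field, and then to showing that for every finite $\mathcal{B}\subset P^*$ the image of the Cartan piece $A_{Q^*}^0=\iota(\dU^0_R(\g^d))$ in $\prod_{\nu\in\mathcal{B}}R1_\nu$ is surjective. The challenge is that $A_{Q^*}^0$ consists only of polynomial expressions in the $\binom{K^*_i;0}{t}$, whose values at $\nu$ depend on $\nu$ only through the integer $r$-tuples $((\a_i^{*\vee},\nu))_{i=1}^r$; however, these integers separate points of $P^*$, so an argument parallel to Lemma \ref{lem: independent of characters}(a) should yield the desired surjectivity.
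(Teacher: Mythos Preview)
Your proposal is correct and, for part (a), takes a genuinely different route from the paper. The paper proves (a) by a direct PBW-basis comparison: it exhibits an $R$-basis of $A_{Q^*}$ of the form $u^<K^{\kappa(-\deg u^<)}\cdot\prod_i\binom{K^*_i;0}{t_i}\cdot u^>K^{\gamma(\deg u^>)}$ using Lemma~\ref{lem:*twisted-trinagular-PBW}, and then observes that $\phi$ carries this bijectively to a PBW basis of $\dU_R(\g^d)$ via Lemmas~\ref{lem: triangular-PBW-kostant form} and~\ref{lem: dU*(g) iso Kostant form}; the submodule claim is by direct checking on generators. Your approach instead constructs an explicit section $\iota$, and the ``hidden identity'' $K^{2\alpha^*_i}=1$ in $\dU^*_\e(\g)$ you uncover is both correct and conceptually attractive (the paper does not state it explicitly, though it is implicit in the collapse of the basis~\eqref{eq: *basis-Cartan-twisted-Lus} upon specialization, since $(K^*_i)^{-2}=1-(1-(v^*_i)^{-2})\binom{K^*_i;0}{1}$ specializes to $1$). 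One point to tighten: the phrase ``Kostant relations'' is imprecise, since $\dU_R(\g^d)$ is defined as a base change of a $\BZ$-lattice in $U_\BQ(\g^d)$, not by a finite presentation. The cleanest fix is the standard base-change reduction to $\CK=\operatorname{Frac}(\CA')$ (as in the proof of Lemma~\ref{lem: dU*(g) iso Kostant form}), where the Serre presentation of $U_\CK(\g^d)$ is available and the relations you list suffice; one then checks that $\iota_\CK$ preserves the $\CA'$-forms, which is immediate from your formulas on divided-power generators.

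For (b), the paper simply defers to the argument of Lemma~\ref{lem: Kostant form and complete form}(b). Your factorization $\pi\circ\psi^*_1|_{A_{Q^*}}=\phi_Q\circ\phi|_{A_{Q^*}}$ for injectivity is a pleasant alternative to the distribution-algebra argument there. For density, your strategy is essentially the paper's (cf.\ Lemma~\ref{lem: dense subalgebras}): in fact, under the identification $A^0_{Q^*}\cong\dU^0_R(\g^d)$ via $\phi$, the restriction $\psi^*_1|_{A^0_{Q^*}}$ \emph{is} the map $\phi^0_P$ of Lemma~\ref{lem: Kostant form and complete form} (using $\binom{m}{t}_{\e^*_i}=\binom{m}{t}$ from Lemma~\ref{lem:aux-at-roots}(a) and $(\e^*_i)^2=1$), so the separation of characters you describe is already established there and no new argument is needed.
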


\begin{proof} 
(a) By Lemma \ref{lem:*twisted-trinagular-PBW}, we can choose $Q^*$-weight $R$-bases $\{u^<\}$ and $\{u^>\}$ of $\dU^{*<}_\epsilon$ 
and $\dU^{*>}_\epsilon$, respectively, so that $A_{Q^*}$ has an $R$-basis of 
the form
\[ 
  u^< K^{\kappa(-\deg(u^<))} \cdot \prod_{i}\binom{K^*_i;0}{t_i} \cdot u^> K^{\gamma(\deg(u^>))} \,.
\]
Combining this with Lemmas \ref{lem: triangular-PBW-kostant form} and \ref{lem: dU*(g) iso Kostant form}, we see that 
$\phi\colon A_{Q^*}\rightarrow \dU_R(\g^d)$ is an isomorphism. It is straightforward to see that $A_{Q^*}$ is stable 
under the left adjoint action of $\dU^*_\epsilon(\g)$ by checking the action  on the set of generators 
$\{\te_i^{(n)}, \tf_i^{(n)}, K^\lambda \,|\, \lambda \in 2P^*\}$, hence $A_{Q^*}$ is a $\dU_R(\g^d)$-submodule of $\dU^*_\e(\g)$.  

\noindent
(b) The proof is the same as that of Lemma \ref{lem: Kostant form and complete form}(b).
\end{proof}
\begin{Rem}We have two inclusions of $\dU_R(\g^d)$-modules with dense image in the codomains: 
\[ \dU_R(\g^d) \overset{\phi^{-1}}{\cong}A_{Q^*} \xhookrightarrow[]{\psi^*_1} \tU^*_\e(\g, P^*), \qquad \dU_R(\g^d) \xhookrightarrow[]{\phi_P}\tU^*_\e(\g, P^*).\]
which coincide after composing with the map $\tU^*_\e(\g, P) \rightarrow \tU^*_\e(\g, Q^*)$. The first inclusion and the algebra $A_{Q^*}$ are used in the construction of the pairing $Z_{Fr}\x \dU_R(\g^d) \rightarrow R$ in Section \ref{sec: Frobenius kernel}. 
\end{Rem}


\begin{Rem}\label{rem:A< and A>} 
Let $A^>( \text{resp. } A^<)$ be the $R$-subalgebra of $\dU_\e(\g)$ generated by $\dU^>_\e(\text{resp. } \dU^<_\e)$ and $K^\lambda$ with all $\lambda \in 2P$. Then $A^>$ and $A^<$  are $R$-Hopf subalgebras of $\dU_\e(\g)$.
\end{Rem}

\begin{Lem}\label{lem: ad(x)(y) in Ker(Fr)}
Let us recall the algebra homomorphism $\tFr\colon \hU_\e(\g, P)\rightarrow \hU^*_\e(\g, P^*)$ of~\eqref{Fr in idem form}. 
For any $x\in \{\tE_i^{(n)}, \tF_i^{(n)}, K^\mu-1 \,|\, \ell_i \nmid n, \mu \in 2P\}$ and $y\in \hU_\e(\g, P)$, 
we have $\ad'(x)(y) \in \Ker(\tFr)$.
\end{Lem}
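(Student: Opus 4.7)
The plan is to show directly that $\tFr\bigl(\ad'(x)(y)\bigr)=0$ for each of the three types of $x$, exploiting that $\tFr$ is an algebra homomorphism whose action on generators (Proposition~\ref{Fr in idem form}) kills a large portion of $\hU_\e(\g,P)$. The argument splits naturally into the three cases.

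For $x=\tE_i^{(n)}$ with $\ell_i\nmid n$, I would expand, via the coproduct formula~\eqref{eq:twisted Hopf on divided powers},
\[
  \ad'(\tE_i^{(n)})(y) \;=\; \sum_{c=0}^{n}\tE_i^{(n-c)}\cdot y\cdot S'\!\bigl(\tE_i^{(c)}K^{-(n-c)\zlambda_i}\bigr).
\]
Each summand is a product in which a divided power $\tE_i^{(n-c)}$ appears as a left factor and (up to Cartan twists coming from $S'$) a divided power $\tE_i^{(c)}$ appears as a right factor. Since $\ell_i\nmid n$, for every $0\le c\le n$ either $\ell_i\nmid (n-c)$ or $\ell_i\nmid c$; but $\tFr$ sends $\tE_i^{(m)}1_\lambda$ to zero for every $\lambda\in P$ whenever $\ell_i\nmid m$, so by multiplicativity of $\tFr$ each summand lies in $\Ker(\tFr)$. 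The case $x=\tF_i^{(n)}$ is handled identically, using the coproduct of $\tF_i^{(n)}$ from~\eqref{eq:twisted Hopf on divided powers}.

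For $x=K^\mu-1$ with $\mu\in 2P$, I would start from $\Delta'(K^\mu)=K^\mu\otimes K^\mu$ to compute $\ad'(K^\mu-1)(y)=K^\mu y K^{-\mu}-y$. By linearity one may assume $y$ is homogeneous of weight $\nu\in Q$, so that the expression becomes $(\epsilon^{(\mu,\nu)}-1)\,y$. Two sub-cases arise. If $\nu\notin Q^*$, observe that every weight shift in the target $\hU^*_\e(\g,P^*)$ lies in $Q^*=\bigoplus_i\BZ\alpha_i^*=\bigoplus_i\BZ\ell_i\alpha_i$, since its generators $\te_i^{(n)}1_\lambda$, $\tf_i^{(n)}1_\lambda$ shift weight by multiples of $\alpha_i^*$; as $\tFr$ preserves weight shifts, necessarily $\tFr(y)=0$, and the scalar multiple $(\epsilon^{(\mu,\nu)}-1)y$ lies in $\Ker(\tFr)$ trivially. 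If $\nu\in Q^*$, write $\mu=\sum_j 2m_j\omega_j$ and $\nu=\sum_i n_i\ell_i\alpha_i$, so that
\[
  (\mu,\nu)\;=\;\sum_i 2m_in_i\ell_id_i\;\in\;\ell\BZ,
\]
where the inclusion is immediate from the definition $\ell_i=\ell/\gcd(2d_i,\ell)$, giving $2\ell_id_i/\ell=2d_i/\gcd(2d_i,\ell)\in\BZ$; hence $\epsilon^{(\mu,\nu)}=1$ and the expression itself already vanishes.

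The main obstacle is organizing the $K^\mu-1$ case around the correct dichotomy (weight of $y$ lying in $Q^*$ or not); the divisibility $\ell\mid 2\ell_id_i$ is the precise input that makes this work, and it is a direct consequence of the definition of $\ell_i$ recorded in~\eqref{eq:li_equation}. The two remaining cases reduce to the bookkeeping observation that each summand in $\ad'(\tE_i^{(n)})(y)$ or $\ad'(\tF_i^{(n)})(y)$ contains two divided-power factors of the same root vector whose exponents $n-c$ and $c$ partition $n$, so at least one of them is not divisible by $\ell_i$ as soon as $\ell_i\nmid n$.
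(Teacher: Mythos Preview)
Your proof is correct and follows essentially the same approach as the paper: for $x=\tE_i^{(n)}$ (and $\tF_i^{(n)}$) you both expand via the coproduct and use that one of the exponents $n-c,\,c$ fails the divisibility by $\ell_i$, while for $x=K^\mu-1$ the paper's observation that $\ad'(K^\mu-1)$ annihilates the ``good'' PBW monomials and sends the rest into $\Ker(\tFr)$ is exactly your weight dichotomy $\nu\in Q^*$ versus $\nu\notin Q^*$, the former case relying on the same divisibility $\ell\mid 2\ell_i d_i$. Your organization of the $K^\mu-1$ case via the $Q$-grading is arguably cleaner than the paper's PBW-monomial bookkeeping, but the content is identical.
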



\begin{proof} 
First, we note that for $y=\tF_{i_1}^{(k_1)}\dots \tF_{i_m}^{(k_m)}1_\lambda \tE_{j_1}^{(r_1)}\dots \tE_{j_n}^{(r_n)}$ 
such that $\ell_{i_t}\mid k_t$ for all $1\leq t\leq m$ and $\ell_{j_t}\mid r_t$ for all $1\leq t \leq n$, we clearly have 
$\ad'(K^\mu-1)(y)=0$. Hence, for any $y\in \hU_\e(\g, P)$, the image $\ad'(K^\mu-1)(y)$ is an $R$-linear combination of 
elements of the form  $\tF_{i_1}^{(k_1)}\dots \tF_{i_m}^{(k_m)}1_\lambda \tE_{j_1}^{(r_1)}\dots \tE_{j_n}^{(r_n)}$ such that 
there is either $t\leq m$ such that $\ell_{i_t}\geq 2, \ell_{i_t}\nmid k_t$ or $t\leq n$ such that 
$\ell_{j_t}\geq 2, \ell_{j_t}\nmid r_t$. This implies that $\ad'(K^\mu-1)(y)\in \Ker(\tFr)$ for any $y\in \hU_\e(\g, P)$.

Any element of $\hU_\e(\g,P)$ is an $R$-linear combination of elements $\{y 1_\lambda \,|\, y \in \dU_\e(\g, P), \lambda \in P\}$. 
According to~\eqref{eq:twisted Hopf on divided powers}, we have:
\begin{align*}
  \ad'(\tE_i^{(n)})(y1_\lambda)
  &= \sum_{c=0}^n \tE_i^{(n-c)}y1_\lambda(-1)^c K^{(n-c)\zlambda_i} \e_i^{c(c-1)}\tE_i^{(c)}K^{c\zlambda_i} \\
  &= \sum_{c=0}^n(-1)^c \e^{n(\lambda, \zlambda_i)}\e_i^{-c(c+1)} \tE_i^{(n-c)}y1_\lambda\tE_i^{(c)} \,.
\end{align*}
If $\ell_i \nmid  n$, then either $n-c$ or $c$ is not divisible by $\ell_i$, and so 
$\tE_i^{(n-c)}y1_\lambda\tE_i^{(c)} \in \Ker(\tFr)$. This proves $\ad'(\tE_i^{(n)})(y) \in \Ker(\tFr)$ for any 
$y \in \hU_\e(\g, P)$ if $\ell_i \nmid n$. The proof for $\tF_i^{(n)}$ is similar.
\end{proof}


\subsection{Comparison with Lusztig's quantum Frobenius map}\label{comparison with untwisted Fr}
\

Let $\hmU_\epsilon(\g,P)$, $\hmU^*_\epsilon(\g, P^*)$ be the corresponding idempotented Lusztig forms of \cite{l-book}. 
Let $\dmU^>_\epsilon,$ $ \dmU^<_\epsilon$ be the $R$-subalgebras of $\dmU_\e(\g)$ generated by 
$\{E_i^{[n]}\}_{1\leq i \leq r}^{n \geq 1}$, $\{F_i^{[n]}\}_{1\leq i \leq r}^{ n \geq 1}$, respectively. 
Let $\dmU^{*>}_\e, \dmU^{*<}_\e$ be the $R$-subalgebras of $\dmU^*_\e(\g)$ generated by 
$\{\he_i^{[n]}\}^{n \geq 1}_{1\leq i \leq r}, \{\hf_i^{[n]}\}^{n \geq 1}_{ 1\leq i \leq r}$, respectively. 
According to~\cite{l-book}, we have the quantum Frobenius homomorphisms:
\begin{equation}\label{eq:usual-qFrob}
  \Fr\colon \hmU_\e(\g,P) \longrightarrow \hmU^*_\e(\g, P^*) \,, \qquad 
  \Fr^>\colon \dmU^{>}_\e \longrightarrow \dmU^{*>}_\e \,, \qquad 
  \Fr^<\colon \dmU^<_\e \longrightarrow \dmU^{*<}_\e \,.
\end{equation}

The twists $\sF$ and $\sF^*$ equip $\hmU_\e(\g, P)$ and $\hmU^*_\e(\g, P^*)$ new (topological) Hopf algebra structures so that $\Fr$ is still a morphism of Hopf algebras. 

Assume $R$ contains an element $\e^{1/2}$. Then we can identify $\hU_\e(\g,P)$ with (twisted Hopf structure) $\hmU_\e(\g, P)$  via \[ \tE_i^{(n)}1_\lambda = \e_i^{\frac{n(1-n)}{2}} \e^{(n \nu^>_i,\lambda)}E_i^{[n]}1_\lambda,\quad \quad F_i^{(n)}1_\lambda= \e_i^{\frac{n(n-1)}{2}}\e^{-(n \nu^<_i, \lambda)} F_i^{[n]} 1_\lambda.\]
Similarly, we can identify $\hU^*_\e(\g, P^*)$ with (twisted Hopf structure via the twist $\sF^*$) $\hmU^*_\e(\g, P^*)$ via
\[ \te_i^{(n)}1_\lambda =(\e^*_i)^{\frac{n(1-n)}{2}}\e^{(n \nu^{*>}_1, \lambda)}\he_i^{[n]}1_\lambda, \quad \quad \tf_i^{(n)}1_\lambda=(\e^*_i)^{\frac{n(n-1)}{2}}\e^{-(n\nu^{*<}_i, \lambda)}\hf^{[n]}_i1_\lambda.\]

Let us define the  map $\Phi: \hmU^*_\e(\g,P^*)\rightarrow \hmU^*_\e(\g,P^*)$ by 
\[ e_i^{[n]}1_\lambda \mapsto \Big(\e_i^{\frac{\ell_i^2-\ell_i}{2}}\Big)^n e_i^{[n]}1_\lambda, \quad \quad f_i^{[n]}1_\lambda \mapsto \Big(\e_i^{\frac{\ell_i-\ell_i^2}{2}}\Big)^n f_i^{[n]}1_\lambda,\]
The map $\Phi$ is an automorphism of Hopf algberas with either the usual or twisted Hopf structure (via the twist $\sF^*$) on $\hmU^*_\e(\g, P^*)$.
\begin{Lem}\label{lem: comparison of Frs} Recall the map $\tFr: \hU_\e(\g, P)\rightarrow \hU^*_\e(\g,P^*)$. Under the above identifications of Hopf algebras,   $\tFr=\Phi \circ \Fr$.   
\end{Lem}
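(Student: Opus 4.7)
The plan is to verify the identity $\tFr = \Phi \circ \Fr$ on a set of algebra generators. Both $\tFr$ and $\Phi\circ\Fr$ are topological $R$-algebra homomorphisms (the former by Proposition~\ref{Fr in idem form}, the latter as a composition of such), so it suffices to check equality on $1_\lambda$, $\tE_i^{(n)}1_\lambda$, and $\tF_i^{(n)}1_\lambda$ for all $1\leq i\leq r$, $n\geq 1$, $\lambda\in P$.

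The case of $1_\lambda$ is immediate: both $\tFr$ and $\Fr$ send $1_\lambda$ to $1_\lambda$ if $\lambda\in P^*$ and to $0$ otherwise, and $\Phi(1_\lambda)=1_\lambda$. For $\tE_i^{(n)}1_\lambda$, if either $\ell_i\nmid n$ or $\lambda\notin P^*$, then both sides vanish: the LHS by definition of $\tFr$, and the RHS because $\Fr(E_i^{[n]}1_\lambda)=0$ under the same conditions. The remaining case $\ell_i\mid n$ and $\lambda\in P^*$ is the substantive one, and the key computation is as follows. Using the identification and $\Fr(E_i^{[n]}1_\lambda)=\he_i^{[n/\ell_i]}1_\lambda$, the RHS equals
\begin{equation*}
  \Phi(\Fr(\tE_i^{(n)}1_\lambda)) =
  \e_i^{\frac{n(1-n)}{2}}\,\e^{(n\nu^>_i,\lambda)}\,
  \bigl(\e_i^{\frac{\ell_i^2-\ell_i}{2}}\bigr)^{n/\ell_i}\,\he_i^{[n/\ell_i]}1_\lambda
  = \e_i^{\frac{n(\ell_i-n)}{2}}\,\e^{(n\nu^>_i,\lambda)}\,\he_i^{[n/\ell_i]}1_\lambda \,.
\end{equation*}
On the other hand, applying the $*$-version of the identification to $\tFr(\tE_i^{(n)}1_\lambda)=\te_i^{(n/\ell_i)}1_\lambda$ and using $\nu^{*>}_i=\ell_i\nu^>_i$ together with $\e^*_i=\e_i^{\ell_i^2}$, the LHS equals
\begin{equation*}
  (\e^*_i)^{\frac{(n/\ell_i)(1-n/\ell_i)}{2}}\,\e^{((n/\ell_i)\nu^{*>}_i,\lambda)}\,\he_i^{[n/\ell_i]}1_\lambda
  = \e_i^{\frac{n(\ell_i-n)}{2}}\,\e^{(n\nu^>_i,\lambda)}\,\he_i^{[n/\ell_i]}1_\lambda \,,
\end{equation*}
which agrees with the RHS.

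The case of $\tF_i^{(n)}1_\lambda$ is entirely analogous, using $\nu^{*<}_i=\ell_i\nu^<_i$ and the $\Phi$-eigenvalue $(\e_i^{(\ell_i-\ell_i^2)/2})^{n/\ell_i}=\e_i^{n(1-\ell_i)/2}$, which combines with the exponent $\frac{n(n-1)}{2}$ of the $\tF$-identification to produce $\frac{n(n-\ell_i)}{2}$, matching the exponent $(\e^*_i)^{(n/\ell_i)(n/\ell_i-1)/2}=\e_i^{n(n-\ell_i)/2}$ coming from the $\tf$-identification. The only genuine bookkeeping is to track the scalar exponents of $\e_i$; everything else is forced by $\nu^{*>}_i=\ell_i\nu^>_i$ and $\nu^{*<}_i=\ell_i\nu^<_i$. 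This is precisely the obstruction that the automorphism $\Phi$ is designed to absorb, and no further difficulty arises.
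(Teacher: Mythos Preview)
Your proof is correct and follows exactly the approach the paper indicates: the paper's proof consists of the single sentence ``This follows by directly checking the equality on the generators $\tE_i^{(n)}1_\lambda, \tF_i^{(n)}1_\lambda$,'' and you have carried out precisely that check in full detail, with the scalar bookkeeping done correctly.
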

\begin{proof} This follows by directly checking the equality on the generators $\tE_i^{(n)}1_\lambda, \tF_i^{(n)}1_\lambda$.  
\end{proof}

We now state some technical results that will be needed in Section $5$. 

\begin{Lem}\label{lem: PBW basis of Ker of Fr} 
Assume that $\ell_i \geq \textnormal{max}\{2, 1-a_{ij}\}_{1\leq j \leq r}$ for all $i$. Let $\mathscr{K}$ be a subset 
of $\BZ^N_{\geq 0}$ consisting of all $\vec{k}=(k_1,\ldots, k_N)$ with some $k_i$ not divisible by $\ell_{\b_i}$. 

\noindent
(a) The sets $\{F^{[\vec{k}]}\}_{\vec{k}\in \mathscr{K}}$, $\{F^{[\cev{k}]}\}_{\vec{k}\in \mathscr{K}}$ are $R$-bases of 
$\Ker(\Fr^<)$. Similarly, the sets $\{E^{[\vec{k}]}\}_{\vec{k}\in \mathscr{K}}$, $\{E^{[\cev{k}]}\}_{\vec{k}\in \mathscr{K}}$ 
are $R$-bases of $\Ker(\Fr^>)$.

\noindent
(b) The sets $\{\tF^{(\vec{k})}\}_{\vec{k}\in \mathscr{K}}$, $\{\tF^{(\cev{k})}\}_{\vec{k}\in \mathscr{K}}$ are $R$-bases of 
$\Ker(\tFr^<)$. Similarly, the sets $\{\tE^{(\vec{k})}\}_{\vec{k}\in \mathscr{K}}$, $\{\tE^{(\cev{k})}\}_{\vec{k}\in \mathscr{K}}$ 
are $R$-bases of $\Ker(\tFr^>)$.
\end{Lem}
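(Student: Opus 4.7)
The plan is to establish the statements for $\Ker(\Fr^<)$ and $\Ker(\tFr^<)$; the analogous results for $\Ker(\Fr^>)$ and $\Ker(\tFr^>)$ will follow by completely parallel arguments on the positive halves. Within each half, the two orderings $\vec{k}$ and $\cev{k}$ yield PBW bases related by a unitriangular change of basis with respect to the height filtration on $Q_+$, so establishing one ordering will imply the other.

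The central step in the plan is to show that Frobenius acts diagonally on PBW monomials modulo a unit: $\Fr^<(F^{[\cev{k}]})$ equals a unit scalar multiple of the corresponding PBW monomial $\hf_{\beta^*_N}^{[k_N/\ell_{\beta_N}]}\cdots \hf_{\beta^*_1}^{[k_1/\ell_{\beta_1}]}\in \dmU^{*<}_\e$ whenever $\ell_{\beta_j} \mid k_j$ for every $j$, and vanishes otherwise; and analogously $\tFr^<(\tF^{(\cev{k})})$ equals a unit multiple of $\tf^{(\cev{k}/\vec{\ell})}$ in the divisible case and zero otherwise, where $\vec{\ell}=(\ell_{\beta_j})_{j=1}^N$. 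Granting this formula, the lemma will follow at once: the PBW monomials indexed by $\mathscr{K}$ all lie in $\Ker(\Fr^<)$ (resp.\ $\Ker(\tFr^<)$), while the complementary PBW monomials map to distinct elements of PBW bases of $\dmU^{*<}_\e$ (resp.\ $\dU^{*<}_\e$) furnished by Lemma~\ref{lem:*twisted-trinagular-PBW} and its untwisted analogue, and hence are linearly independent modulo the kernel. Combined with the freeness of $\{F^{[\cev{k}]}\}_{\vec{k} \in \BZ^N_{\geq 0}}$ as an $R$-basis of $\dmU^<_\e$ by Lemma~\ref{triangular_Lus} (respectively, the freeness of $\{\tF^{(\cev{k})}\}_{\vec{k} \in \BZ^N_{\geq 0}}$ in $\dU^<_\e$ by Lemma~\ref{lem:twisted-triangular-PBW}), this forces the $\mathscr{K}$-indexed monomials to constitute an $R$-basis of the kernel.

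To establish the image formula on non-simple root vectors, my plan is to invoke the braid-group compatibility of the quantum Frobenius. Under the hypothesis $\ell_i \geq \max\{2, 1-a_{ij}\}$ (exactly the setting of~\cite[\S35.1.2]{l-book}), the braid automorphisms $T_i$ on $\dmU_\e(\g)$ and $T^*_i$ on $\dmU^*_\e(\g)$ intertwine through $\Fr$ up to explicit Cartan corrections, so applying them iteratively to the simple-root identity ``$\Fr^<$ kills $F_i^{[n]}$ unless $\ell_i \mid n$'' will yield the formula for arbitrary $F_{\beta_k}^{[n]}$ via the machinery of~\cite[Chapter~41]{l-book}. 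An alternative and cleaner route, already partially present in this paper, uses Lemma~\ref{lem: Spliting of Fr}: the $R$-module isomorphism $\mathfrak{F}\colon \dU^{*<}_\e \otimes \tilde{\mathfrak{f}} \iso \dU^<_\e$, together with the identity $\tFr^< \circ \hFr^< = \Id$ from Remark~\ref{partial KerFr}, identifies $\Ker(\tFr^<)$ with $\hFr^<(\dU^{*<}_\e) \cdot \tilde{\mathfrak{f}}^+$, which one can then match against the PBW basis to recover the $\mathscr{K}$-indexed description. For part~(b) this splitting argument applies directly; for part~(a) one repeats the construction in the untwisted setting using Lusztig's original splitting~\cite[Theorem~35.4.2]{l-book} and passing from $\CK$ to $\CA$ and then to $R$ via standard base change (both sides being free modules).

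The main obstacle will be carefully tracking the unit scalars (signs and $\e$-powers) that appear when the braid group acts on divided-power root vectors under the twisted conventions~\eqref{eq:twisted-root-vector}, particularly because $\tF_{\beta_k}$ involves auxiliary Cartan factors $K^{-\nu^<_{\beta_k}}$ that must be commuted past $\tFr^<$ and that prevent a purely symbolic reduction to the untwisted case. However, since all constants arising in this bookkeeping lie in $\{\pm 1\} \cdot \e^{\BZ}$ and are therefore units in $R$, they affect only the proportionality factor between the image and the expected PBW monomial, not its vanishing locus; and it is precisely the vanishing locus that determines the kernel basis.
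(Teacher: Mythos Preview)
Your approach to part~(a) via braid-group compatibility is essentially the paper's proof: the paper cites \cite[Remark~41.1.9]{l-book} for the fact that $\Fr$ intertwines the braid automorphisms, deduces that $\Fr^<(F^{[\vec{k}]})$ vanishes exactly when $\vec{k}\in\mathscr{K}$ and otherwise hits a PBW basis element of $\dmU^{*<}_\e$, and concludes. (The paper works through the idempotented form, fixing a $1_\lambda$ with $\lambda\in P^*$, but this is just a device to make the braid-group action well-defined.)

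For part~(b), however, the paper takes a shorter route than either of your proposed ones. Rather than tracking Cartan factors and $\e$-powers in the twisted root vectors, or redoing the splitting argument, the paper invokes Lemma~\ref{lem: comparison of Frs}: under the identification $\hU_\e(\g,P)\cong\hmU_\e(\g,P)$ and $\hU^*_\e(\g,P^*)\cong\hmU^*_\e(\g,P^*)$ (available once $\e^{1/2}\in R$), one has $\tFr=\Phi\circ\Fr$ for an explicit Hopf \emph{automorphism} $\Phi$ of $\hmU^*_\e(\g,P^*)$. Since $\Phi$ is invertible, $\Ker(\tFr^<)$ coincides with $\Ker(\Fr^<)$ under these identifications, and the twisted PBW monomials $\tF^{(\cev{k})}$ differ from the untwisted $F^{[\cev{k}]}$ only by unit scalars and Cartan factors absorbed into the idempotents. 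This sidesteps the bookkeeping you flag as the main obstacle. Your splitting-lemma alternative would also work but is more laborious; the comparison lemma is the cleaner bridge.
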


\begin{proof}
(a) Let us fix  $\lambda \in P^*$. We have that  $\dmU^<_\e 1_\lambda$ is a free $\dmU^<_\e$-module and $\dmU^{*<}_\e 1_\lambda$ is a free $\dmU^{*<}_\e$-module. Since the homomorphism $\Fr\colon \hmU_\e(\g,P)\rightarrow \hmU^*_\e(\g, P^*)$ is compatible with the braid group action 
by~\cite[Remark 41.1.9]{l-book}, we see that $\Fr^<(F^{[\vec{k}]}1_\lambda)=\Fr^<(F^{[\vec{k}]})1_\lambda=0$ iff $\vec{k}\in \mathscr{K}$. Furthermore, the images 
$\Fr^<(F^{[\vec{k}]}1_\lambda)$ with $\vec{k}\in \BZ^N_{\geq 0}\setminus \mathscr{K}$ form an $R$-basis for $\dmU^{*<}_\e 1_\lambda$. Thus, the set 
$\{F^{[\vec{k}]}\}_{\vec{k}\in \mathscr{K}}$ is an $R$-basis of $\Ker(\Fr^<)$. The proofs for the other statements are analogous.

\noindent
(b) Follows from part (a) and Lemma \ref{lem: comparison of Frs}.
\end{proof}

\begin{Cor}\label{cor: kernel of Fr} (a) The kernel of $\tFr$ has an $R$-basis consisting of  elements
\begin{itemize}
    \item $\tF^{(\vec{k})} 1_\lambda \tE^{(\vec{r})}$ with $\lambda \in P\backslash P^*$ and $\vec{k}, \vec{r}\in \BZ^N_{\geq 0}$.
    \item $\tF^{(\vec{k})}1_\lambda \tE^{(\vec{r})}$ with $\lambda \in P^*$ and either $\vec{k}$ or $\vec{r}$ contained in $\mathscr{K}$.
\end{itemize}

\noindent
(b) The kernel of $\tFr$ is the two-sided ideal generated by $\{ 1_\mu, \tE^{(n)}_i1_\lambda, \tF^{(n)}_i1_\lambda\}$ with $1\leq i \leq r, \lambda \in P, \mu \not \in P^*, \ell_i \nmid n$ for $\ell_i \geq 2$.
\end{Cor}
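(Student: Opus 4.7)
The plan for part~(a) is to combine an idempotented analog of Lemma~\ref{lem:twisted-triangular-PBW} with the explicit action of $\tFr$ on PBW monomials. The algebras $\hU_\e(\g, P)$ and $\hU^*_\e(\g, P^*)$ have $R$-bases
\[
\{\tF^{(\vec{k})} 1_\lambda \tE^{(\vec{r})}\}_{\vec{k}, \vec{r} \in \BZ^N_{\geq 0},\, \lambda \in P}
\quad\text{and}\quad
\{\tf^{(\vec{k})} 1_\lambda \te^{(\vec{r})}\}_{\vec{k}, \vec{r} \in \BZ^N_{\geq 0},\, \lambda \in P^*},
\]
respectively. Because $\tFr$ respects the triangular decomposition, Proposition~\ref{Fr in idem form} together with Lemma~\ref{lem: PBW basis of Ker of Fr}(b) shows that $\tFr\bigl(\tF^{(\vec{k})} 1_\lambda \tE^{(\vec{r})}\bigr)$ vanishes precisely when $\lambda \notin P^*$, or when $\lambda \in P^*$ with $\vec{k} \in \mathscr{K}$ or $\vec{r} \in \mathscr{K}$, and otherwise sends distinct PBW monomials to distinct PBW monomials of the codomain (up to rescaling $k_i \leftrightsquigarrow k_i/\ell_{\beta_i}$). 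This partitions the basis of $\hU_\e(\g, P)$ into one part spanning $\ker(\tFr)$ and another part mapping bijectively onto a basis of $\hU^*_\e(\g, P^*)$, yielding the two families claimed in~(a).

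For part~(b), let $J$ denote the two-sided ideal generated by the listed elements. The inclusion $J \subseteq \ker(\tFr)$ is immediate from Proposition~\ref{Fr in idem form}. For the reverse inclusion, we use the basis from~(a). A basis element $\tF^{(\vec{k})} 1_\lambda \tE^{(\vec{r})}$ with $\lambda \notin P^*$ contains the generator $1_\lambda \in J$ as a factor and so lies in $J$. For a basis element with $\lambda \in P^*$ and (without loss of generality) $\vec{k} \in \mathscr{K}$, we invoke Remark~\ref{partial KerFr} to write $\tF^{(\vec{k})} = \sum_j u_j\, \tF_{i_j}^{(n_j)}\, v_j$ in $\dU^<_\e$ with each $\ell_{i_j} \geq 2$ and $\ell_{i_j} \nmid n_j$. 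Inserting appropriate idempotents between these factors in the idempotented algebra converts each summand into an expression involving the generator $\tF_{i_j}^{(n_j)} 1_{\mu_j}$ of $J$, placing $\tF^{(\vec{k})} 1_\lambda \tE^{(\vec{r})}$ in $J$. The case $\vec{r} \in \mathscr{K}$ is entirely symmetric using $\ker(\tFr^>)$.

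The principal obstacle is the passage between the non-idempotented ideal structure of Remark~\ref{partial KerFr} and the idempotented ideal $J$. Each factorization $\tF^{(\vec{k})} = u\, \tF_i^{(n)}\, v$ in $\dU^<_\e$ lifts, for any $\lambda \in P$, to the identity $\tF^{(\vec{k})} 1_\lambda = u \cdot \bigl(\tF_i^{(n)} 1_{\lambda + \deg(v)}\bigr) \cdot v\, 1_\lambda$ in $\hU_\e(\g, P)$. The intermediate weight $\lambda + \deg(v)$ can be arbitrary in $P$, which is precisely why the generators of $J$ in~(b) must include $\tF_i^{(n)} 1_\lambda$ and $\tE_i^{(n)} 1_\lambda$ for every $\lambda \in P$, not just $\lambda \in P^*$. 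Modulo this bookkeeping, the two parts of the corollary fit together cleanly.
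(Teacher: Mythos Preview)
Your proposal is correct and follows essentially the same route as the paper: part~(a) is deduced from Lemma~\ref{lem: PBW basis of Ker of Fr} via the triangular decomposition of the idempotented form, and part~(b) from part~(a) combined with Remark~\ref{partial KerFr}, exactly as the paper indicates in its one-line proof. Your discussion of lifting the factorization $\tF^{(\vec{k})} = \sum_j u_j\, \tF_{i_j}^{(n_j)}\, v_j$ to the idempotented setting, and the observation that the intermediate weights $\lambda + \deg(v)$ range over all of $P$, are the right details to supply; the only minor imprecision is that the nonvanishing images $\tFr(\tF^{(\vec{k})}1_\lambda\tE^{(\vec{r})})$ need not literally be PBW monomials in the codomain but merely form an $R$-basis (cf.\ the proof of Lemma~\ref{lem: PBW basis of Ker of Fr}), which is all that is required.
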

\begin{proof}
(a) Follows immediately from Lemma \ref{lem: PBW basis of Ker of Fr}. Then (b) follows by (a) and Remark \ref{partial KerFr}.
\end{proof}

We shall now relate the root generators in the PBW-bases of $\dU_\e(\g), \dU^*_\e(\g), \dU_R(\g^d)$ under the morphisms 
$\cFr$ in Proposition \ref{Fr in Lus form} and $\phi$ in Proposition \ref{support lem}.

Let us introduce a notation: in an $R$-module $M$, we write $m_1 \dsim m_2$ if $m_1=am_2$ for some $a\in R^\x$. We start with the following technical lemma, see \eqref{eq: nu and normalizr b for *} for definitions of some terms:

\begin{Lem}\label{lem: tedious lemma}
Let $\mu \in Q^*_+$ and suppose $s^*_i(\mu)\in Q^*_+$. 

\noindent
(a) Let $x\in \dU^{*>}_{\e, \mu}$ and $\hat{x}:=\e^{b^>_{s^*_i(\mu)}}T^*_i(\e^{-b^>_\mu}xK^{-\nu^>_\mu})K^{\nu^>_{s^*_i(\mu)}}$. 
Then $\phi(\hat{x})\dsim T^*_i(\phi(x))$. 

\noindent
(b) Suppose $y\in \dU^{*<}_{\e,-\mu}$ and $\hat{y}:=\e^{b^<_{s^*_i(\mu)}}K^{-\nu^<_{s^*_i(\mu)}} T^*_i(\e^{-b^<_\mu}K^{\nu^<_\mu} y)$. 
Then $\phi(\hat{y})\dsim T^*_i(\phi(y))$ 
\end{Lem}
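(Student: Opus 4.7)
\emph{Proof plan.} I treat part (a); part (b) is analogous, with $\te,\nu^>,b^>$ replaced throughout by $\tf,\nu^<,b^<$ and the formula for $T^*_i(\he_j)$ in~\eqref{eq: *braid action} replaced by that for $T^*_i(\hf_j)$.

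The strategy is to reduce, by $R$-linearity and the PBW decomposition (Lemma~\ref{lem:*twisted-trinagular-PBW}), to the case where $x$ is a monomial $\te_{j_1}\cdots\te_{j_n}$ with $\mu=\sum_l \a^*_{j_l}\in Q^*_+$. For such a monomial, writing $\te_{j_l}=\he_{j_l}K^{\nu^{*>}_{j_l}}$ and collecting the Cartan factors to the right using $K^\lambda\he_j K^{-\lambda}=\epsilon^{(\a^*_j,\lambda)}\he_j$ yields $\epsilon^{-b^>_\mu}xK^{-\nu^>_\mu}=\bar{x}:=\he_{j_1}\cdots\he_{j_n}\in\dmU^{*>}_{\e,\mu}$, so that
\begin{equation*}
  \hat{x}=\epsilon^{b^>_{s^*_i(\mu)}}\,T^*_i(\bar{x})\,K^{\nu^>_{s^*_i(\mu)}}\,.
\end{equation*}
Applying the braid action via~\eqref{eq: *braid action} expresses $T^*_i(\bar{x})$ as an explicit $R$-linear combination of $\he$-monomials of weight $s^*_i(\mu)$, and re-inserting the Cartan factors converts $\hat{x}$ back into a linear combination of $\te$-monomials (a well-defined element of $\dU^{*>}_{\e,s^*_i(\mu)}$).

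With $\hat{x}$ written in the $\te$-basis, the map $\phi$ (which kills $K^\lambda\in \dU^{*0}_\e$ and sends $\te_j^{(n)}\mapsto(\epsilon^*_j)^{-n}e_j^{(n)}$) can be applied directly, producing a linear combination of classical monomials in $\dU^>_R(\g^d)$. Independently, $T^*_i(\phi(x))$ is computed from $T^*_i(e_j)=\sum_k(-1)^k e_i^{(-a^*_{ij}-k)}e_je_i^{(k)}$ (Section~\ref{ssec:Kostant Z-form}) applied to each factor of $\phi(x)=\prod_l(\epsilon^*_{j_l})^{-1}e_{j_l}$. The claim $\phi(\hat{x})\dsim T^*_i(\phi(x))$ reduces to a scalar comparison: each classical monomial appears on both sides with coefficients that differ by the same unit in $R$. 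As a representative calculation, for $x=\te_j$ with $i\ne j$ and $a^*_{ij}=-1$ one finds that the $K^{\pm\nu^>}$ factors cancel against the commutation scalars coming from $\he_i\he_j$ and $\he_j\he_i$, leaving $\hat{x}=c\cdot(\te_i\te_j-\te_j\te_i)$ for some $c\in R^\times$; applying $\phi$ and comparing with $T^*_i((\epsilon^*_j)^{-1}e_j)=(\epsilon^*_j)^{-1}(e_ie_j-e_je_i)$ confirms $\phi(\hat{x})\dsim T^*_i(\phi(x))$.

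The main obstacle is the sign-scalar bookkeeping in the general monomial case. It is handled using: $(\epsilon^*_i)^2=1$ (a consequence of $\ell\mid 2\sd_i\ell_i$ coming from $\ell_i=\ell/\gcd(2\sd_i,\ell)$); the identity $(\nu^{*>}_i,\a^*_j)-(\a^*_i,\nu^{*>}_j)=\epsilon_{ij}(\a^*_i,\a^*_j)\in\BZ$ (the starred analog of the proof of Lemma~\ref{lem: some values are integer}); and the explicit form of $b^>_\mu$ in~\eqref{eq: nu and normalizr b for *}. Once these are invoked, the scalars on both sides match summand-by-summand up to a global unit depending only on $\mu$ and $i$, yielding the desired $\dsim$-relation.
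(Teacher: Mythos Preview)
Your approach is the same as the paper's, but the central technical step is asserted rather than proved. The relation $\dsim$ is not preserved under addition: if $\phi(\hat{x}_k)=c_k\,T^*_i(\phi(x_k))$ for units $c_k$, one cannot conclude $\phi(\widehat{\sum x_k})\dsim T^*_i(\phi(\sum x_k))$ unless all $c_k$ are equal. Your opening ``reduce by $R$-linearity to a monomial'' is therefore not a valid reduction; you later recognize this by asserting the scalars agree ``up to a global unit depending only on $\mu$ and $i$'', but that uniformity is exactly the content of the lemma. The paper establishes it by writing $x=\sum_{(i_1,\dots,i_m)}p_{i_1,\dots,i_m}\te_{i_1}\cdots\te_{i_m}$ over \emph{all} tuples of weight $\mu$, tracking the tuple-dependent scalar $\e^{B_{i_1,\dots,i_m}}$ that arises from commuting Cartan factors and applying $T^*_i$, and then showing (their Step~1) that for two tuples differing by an adjacent transposition $(i_j\leftrightarrow i_{j+1})$ one has $B-B'=\sd^*_i\,a^*_{i,i_{j+1}}a^*_{i,i_j}(\epsilon_{i,i_{j+1}}-\epsilon_{i,i_j})\in 2\sd^*_i\BZ$, whence $\e^{B-B'}=1$ since $(\e^*_i)^2=1$. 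Your list of ingredients is correct, but this transposition calculation is what actually does the work.

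Two smaller points. First, your identity $\e^{-b^>_\mu}xK^{-\nu^>_\mu}=\he_{j_1}\cdots\he_{j_n}$ is only literally correct when the tuple $(j_1,\dots,j_n)$ is the sorted one; for a general ordering there is an additional tuple-dependent unit (compare the $c_I$ discussion in the proof of Lemma~\ref{lem: modified elements}), which is precisely why the paper's Step~1 is needed. Second, you do not treat the case $j_l=i$, where $T^*_i(\he_i)=-\hf_iK^{\a^*_i}$ lands in the negative part; the paper handles this in its Step~2 by observing that $T^*_i(\he_i)K^{-\nu^{*>}_i}=-\tf_iK^{2\a^*_i}$ and $\phi(-\tf_iK^{2\a^*_i})=-f_i=T^*_i(e_i)$, so the relation persists under $\phi$.
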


\begin{proof}
Let us prove (a) only. Let $\mu=\sum_j u_j\a^*_j$. Arguing as in Lemma \ref{lem: dU*(g) iso Kostant form}, we can assume that 
$R=\CK$ the field of characteristics $0$ which contains an element $\e^{1/2}$. Let 
\[
  x=\sum_{\a^*_{i_1}+\ldots+\a^*_{i_m}=\mu} p_{i_1, \dots, i_m}\te_{i_1}\dots \te_{i_m}
\]
with $p_{i_1, \dots, i_m}\in \CK$. Then
\begin{align*}
  \e^{-b^>_\mu}xK^{-\nu^>_\mu}=\sum_{\a^*_{i_1}+\ldots+\a^*_{i_m}=\mu}
  p_{i_1, \dots, i_m}\e^{-b^>_\mu+\sum_{1\leq j<l\leq m}(\nu^>_{i_j}, \a^*_{i_l})}\he_{i_1}\dots \he_{i_m} \,.
\end{align*}
Hence
\begin{align*}
  \hat{x}
  &= \sum_{\a^*_{i_1}+\ldots+\a^*_{i_m}=\mu} p_{i_1, \dots, i_m} \e^{b^>_{s^*_i(\mu)}-b^>_\mu+\sum_{j<l}(\nu^{*>}_{i_j}, \a^*_{i_l})} 
     \Big(\Rprod_{1\leq j\leq m} T^*_i(\he_{i_j})\Big)K^{\nu^>_{s^*_i(\mu)}}\\
  &= \sum_{\a^*_{i_1}+\ldots+\a^*_{i_m}=\mu} p_{i_1, \dots, i_m} \e^{b^>_{s^*_i(\mu)}-b^>_\mu+B_{i_1,\dots, i_m}}
     \Rprod_{1\leq j\leq m} \Big(T^*_i(\he_{i_j})K^{\nu^{*>}_{i_j}-a^*_{i,i_j}\nu^{*>}_i}\Big) \,,
\end{align*}
in which 
\[ 
  B_{i_1\dots, i_m}=\sum_{j<l}(\nu^{*>}_{i_j}, \a^*_{i_l})-\sum_{j<l}(\nu^{*>}_{i_j}-a^*_{i,i_j}\nu^{*>}_i, s^*_i(\a^*_{i_l})) \,.
\]

\noindent
\emph{Step 1:} We claim that for different $(i'_1, \dots, i'_m)$ we have $B_{i_1,\dots i_m}-B_{i'_1, \dots, i'_m}\in \BZ$ and 
\[ 
  \e^{B_{i_1, \dots, i_m}-B_{i'_1, \dots, i'_m}}=1 \,.
\]
It is enough to prove it in the case when $(i'_1, \dots, i'_m)$ is obtained from $(i_1, \dots, i_m)$ by permuting two 
consecutive indices $i_{j}$ and $i_{j+1}$. Then $B_{i_1, \dots, i_m}-B_{i'_1, \dots, i'_m}$ equals 
\begin{align*}
  & (\nu^{*>}_{i_j},\a^*_{i_{j+1}})-(\nu^{*>}_{i_j}-a^*_{i,i_j}\nu^{*>}_i, s^*_i(\a^*_{i_{j+1}}))-
    (\nu^{*>}_{i_{j+1}}, \a^*_{i_j})+(\nu^{* >}_{i_{j+1}}-a^*_{i,i_{j+1}}\nu^{*>}_i, s^*_i(\a^*_{i_j}))\\
  &= \sd^*_i a^*_{i, i_{j+1}}a^*_{i,i_{j}}(\e_{i,i_{j+1}}-\e_{i,i_j}) \in 2\sd^*_i \BZ \,.
\end{align*}

\noindent
\emph{Step 2:} We compute $T^*_i(\he_{i_j})K^{\nu^{*>}_{i_j}-a^*_{i,i_j}\nu^{*>}_i}$. 

\noindent
If $i_j=i$ then 
\[
  T^*_i(\he_{i_j})K^{\nu^{*>}_{i_j}-a^*_{i,i_j}\nu^{*>}_i} = 
  -\hf_{i}K^{\a^*_i}K^{-\nu^{*>}_i}=-\tf_i K^{2\a^*_i} \,.
\]
If $i_j\neq i$ then 
\[
  T^*_i(\he_{i_j})K^{\nu^{*>}_{i_j}-a^*_{i,i_j}\nu^{*>}_i} = 
  (\e^*_i)^{a^*_{i,i_j}(a^*_{i,i_j}+1)/2}\e^{a^*_{i,i_j}(\nu^{*>}_i, \a^*_{i_j})}
  \sum_{k=0}^{-a^*_{i,i_j}} (-1)^k \te_i^{(-a^*_{i,i_j}-k)}\te_{i_j}\te_i^{(k)} \,,
\]
where we use $(\e_i^*)^2=1$. 



\noindent
\emph{Step 3:} Therefore, we have
\begin{align*}
  \phi(\hat{x}) = 
  \sum_{\a^*_{i_1}+\ldots +\a^*_{i_m}=\mu} p_{i_1, \dots, i_m} \e^{b^>_{s^*_i(\mu)}-b^>_\mu+B_{i_1,\dots, i_m}+C_\mu} 
  \Rprod_{1\leq j\leq m} T^*_i(e_{i_j}) \,,
\end{align*}
in which 
\[
  C_\mu = \sum_{j\neq i}u_j\left(-\sd^*_j+\frac{\sd^*_ia^*_{ij}(1-a^*_{ij})}{2}+ a^*_{ij}(\nu^{*>}_i, \a^*_j)\right) \,.
\]
Since $\e^{b^>_{s^*_i(\mu)}-b^>_\mu+B_{i_1,\dots, i_m}+C_\mu}$ does not depend on the choice of $(i_1, \dots, i_m)$ by Step 1, 
let us fix such $(i_1, \dots, i_m)$. Then we have:
\begin{align*}
  \phi(\hat{x})&=
  \e^{b^>_{s^*_i(\mu)}-b^>_\mu+B_{i_1,\dots, i_m}+C_\mu} \sum_{\a^*_{i_1}+\ldots +\a^*_{i_m}} p_{i_1,\dots,i_m}\prod_j T^*_i(e_{i_j})\\
  &= \e^{b^>_{s^*_i(\mu)}-b^>_\mu+B_{i_1,\dots, i_m}+C_\mu}T^*_i(\phi(x)) \,.
\end{align*}
This finishes the proof since $\e^{b^>_{s^*_i(\mu)}-b^>_\mu+B_{i_1,\dots, i_m}+C_\mu} \in R^\x$. 
\end{proof}

\begin{Lem}\label{lem: computations of cFr}
(a) We have  $\phi(\te_{\b^*_k}^{(n)})\dsim e_{\b^*_k}^{(n)}$ and  $\phi(\tf_{\b^*_k}^{(n)})\dsim f_{\b^*_k}^{(n)}$ .

\noindent
(b) Let us recall the homomorphism $\cFr\colon \dU_\e(\g)\rightarrow \dU_R(\g^d)$ from Proposition \ref{Fr in Lus form}. 
Then 
\[ \cFr(\tE_{\b_k}^{(n)})\dsim\begin{cases}0 \qquad &\text{ if $\ell_{\b_k}\nmid n$}\\
 e_{\b^*_k}^{(n/\ell_{\b_k})}&\text{ 
if $\ell_{\b_k}\mid n$}
\end{cases} , \qquad   \cFr(\tF_{\b_k}^{(n)})\dsim \begin{cases} 0\qquad & \text{if $\ell_{\b_k}\nmid n$}\\
  f_{\b^*_k}^{(n/\ell_{\b_k})}& \text{ if $\ell_{\b_k}\mid n$.}
\end{cases}\]
\end{Lem}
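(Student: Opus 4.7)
The plan is to prove part~(a) by induction on $k$, leveraging Lemma~\ref{lem: tedious lemma} to propagate the claim through the iterated braid action $T^*_{i_1}\cdots T^*_{i_{k-1}}$. The base case $k=1$ is immediate: from \eqref{eq:tilde-vs-hat} one finds $\te_{\b^*_1}^{(n)}=\te_{i_1}^{(n)}$ (the scalars in \eqref{eq: nu and normalizr b for *} collapse to $0$ when the sum is over a single index), and then $\phi(\te_{i_1}^{(n)})=(\e^*_{i_1})^{-n}e_{i_1}^{(n)}\dsim e_{\b^*_1}^{(n)}$ by \eqref{eq: image of phi}. For the inductive step with $k\geq 2$, set $\gamma:=s^*_{i_1}(\b^*_k)=s^*_{i_2}\cdots s^*_{i_{k-1}}(\a^*_{i_k})$; this is again a positive root since $s_{i_2}\cdots s_{i_N}$ remains a reduced word. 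The induction hypothesis gives $\phi(\te_\gamma^{(n)})\dsim e_\gamma^{(n)}$. The normalisations in \eqref{eq:tilde-vs-hat} and \eqref{eq: nu and normalizr b for *} are engineered so that $\te_{\b^*_k}^{(n)}$ equals, up to a unit in $R$, the element $\hat{x}$ produced by Lemma~\ref{lem: tedious lemma}(a) with $i=i_1$, $\mu=n\gamma$, and $x=\te_\gamma^{(n)}$. Lemma~\ref{lem: tedious lemma}(a) then yields
\begin{equation*}
  \phi(\te_{\b^*_k}^{(n)})\;\dsim\;\phi(\hat{x})\;\dsim\;T^*_{i_1}(\phi(\te_\gamma^{(n)}))\;\dsim\;T^*_{i_1}(e_\gamma^{(n)})\;=\;e_{\b^*_k}^{(n)} \,,
\end{equation*}
completing the induction. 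The case of $\tf_{\b^*_k}^{(n)}$ is entirely parallel, using Lemma~\ref{lem: tedious lemma}(b) in place of (a).

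Part~(b) follows by combining part~(a) with the braid-compatibility of Lusztig's untwisted Frobenius. From Proposition~\ref{Fr in Lus form} together with Proposition~\ref{prop: steps to construct Frobenious} one reads off the factorisation $\cFr|_{\dU^>_\e}=\phi\circ\tFr^>$ (and likewise on the negative side). The compatibility of the untwisted Frobenius $\Fr^>$ with the braid group action, established as Remark~$41.1.9$ of~\cite{l-book}, transfers to $\tFr^>$ via Lemma~\ref{lem: comparison of Frs}, yielding $\tFr^>(\tE_{\b_k}^{(n)})\dsim \te_{\b^*_k}^{(n/\ell_{i_k})}$ when $\ell_{i_k}\mid n$ and $\tFr^>(\tE_{\b_k}^{(n)})=0$ otherwise (all scalar discrepancies produced by these two comparisons are units in $R$, hence absorbed into~$\dsim$). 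Since $\b_k$ lies in the Weyl orbit of $\a_{i_k}$ one has $\ell_{\b_k}=\ell_{i_k}$, so applying $\phi$ and invoking part~(a) gives the desired formula for $\cFr(\tE_{\b_k}^{(n)})$; the $\tF$-case is analogous.

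The main obstacle is the bookkeeping in the inductive step of part~(a): one must verify that the prefactor $v^{b^>_{\b^*_k}}$ and the Cartan tail $K^{\nu^>_{\b^*_k}}$ in \eqref{eq:tilde-vs-hat} precisely reproduce the $\hat{x}$ formula of Lemma~\ref{lem: tedious lemma}, when expressed through the corresponding data for $\gamma=s^*_{i_1}(\b^*_k)$. This boils down to the additivity relations
\begin{equation*}
  \nu^>_{n\b^*_k}=\nu^>_{s^*_{i_1}(n\gamma)} \,, \qquad
  b^>_{n\b^*_k}-b^>_{n\gamma}\in \BZ \,,
\end{equation*}
and to showing that the residual scalar produced when commuting $\he_\gamma^n$ past $K^{n\nu^>_\gamma}$ and reassembling the divided power lies in $R^\times$, so that $\dsim$ absorbs it. The analogous check in the $\tf$-case uses part~(b) of Lemma~\ref{lem: tedious lemma}, with mirror-symmetric sign conventions.
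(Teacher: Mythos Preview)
Your overall strategy—strip off one braid operator at a time via Lemma~\ref{lem: tedious lemma}—is exactly the paper's approach, and your treatment of part~(b) matches the paper's. However, there is a genuine gap in your induction scheme for part~(a).

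You claim the induction hypothesis gives $\phi(\te_\gamma^{(n)})\dsim e_\gamma^{(n)}$ for $\gamma=s^*_{i_2}\cdots s^*_{i_{k-1}}(\a^*_{i_k})$. But $\gamma$ is not one of $\b^*_1,\ldots,\b^*_{k-1}$ in general (for instance, in $A_2$ with $w_0=s_1s_2s_1$ and $k=2$, one gets $\gamma=\a^*_2=\b^*_3$). Moreover, the element that naturally appears when you peel off $T^*_{i_1}$ is the root vector for $\gamma$ \emph{with respect to the shifted reduced word} $s_{i_2}\cdots s_{i_N}$, which is not the same element as $\te_{\b^*_j}$ defined via the original word, even when $\gamma=\b^*_j$ for some $j$. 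So your induction on $k$ alone does not supply what you need.

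The paper fixes this by keeping $k$ fixed and running a \emph{decreasing} induction on an auxiliary index $j\in\{1,\ldots,k\}$: it defines explicit elements
\[
  x_j:=\e^{nb^>_{\b'_j}}\,T^*_{i_j}\cdots T^*_{i_{k-1}}(\he^{(n)}_{i_k})\,K^{n\nu^>_{\b'_j}}, \qquad \b'_j:=s^*_{i_j}\cdots s^*_{i_{k-1}}(\a^*_{i_k}),
\]
proves $\phi(x_j)\dsim e_{\b'_j}^{(n)}$ by descending from the base $j=k$ (where $x_k=\te_{i_k}^{(n)}$) using Lemma~\ref{lem: tedious lemma} at each step, and reads off the claim at $j=1$. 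Alternatively, you could salvage your argument by strengthening the inductive statement to cover all reduced words simultaneously and inducting on their length; but as written, the hypothesis you invoke is not available.
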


\begin{proof}
(a) Let us prove the first statement only. According to~\eqref{eq:tilde-vs-hat}, we have 
\[
  \te_{\b^*_k}^{(n)} = \e^{nb^>_{\b^*_k}}\e^{(\nu^>_{\b^*_k}, \b^*_k)n(n-1)/2}
  T^*_{i_1}\dots T^*_{i_{k-1}}(\he^{(n)}_{i_k})K^{n\nu^>_{\b^*_k}} \,.
\]
Let $\b'_j=s^*_{i_j}\dots s^*_{i_{k-1}}\a_{i_k}$. Then $\b'_j \in Q^*_+$ and $s^*_{j-1}(\b'_{j})=\b'_{j-1}\in Q^*_+$. 
For $1\leq i \leq k$, let 
\[ 
  x_{j}= \e^{nb^>_{\b'_j}}T^*_{i_j}\dots T^*_{i_{k-1}}(\he^{(n)}_{i_k})K^{n\nu^>_{\b'_j}}\in \dU^>_\e \,, \qquad 
  e_{\b'_j}^{(n)}=T^*_{i_j}\dots T^*_{i_{k-1}}(e^{(n)}_{i_k})\in \dU^>_R(\g^d) \,.
\]
Let us prove by a decreasing induction on $1\leq j \leq k$ that $\phi(x_j)\dsim  e^{(n)}_{\b'_j}$. 
The base case $j=k$ is obvious as $x_k=\te^{(n)}_{i_k}$. 
Let us now prove the induction step. We have $x_j \in \dU^{*>}_{\e, \b^{'}_j}$, so that 
$\phi(x_{j-1})\dsim  T^*_{i_{j-1}}(\phi(x_j))$,  due to Lemma \ref{lem: tedious lemma}. 
By the induction hypothesis, we have $\phi(x_j)\dsim e_{\b'_j}^{(n)}$. Therefore 
$\phi(x_{j-1})\dsim T^*_{i_{j-1}}(e^{(n)}_{\b'_j})= e^{(n)}_{\b'_{j-1}}$, completing the step of induction. In particular, for $j=1$ we get 
$\phi(\te^{(n)}_{\b^*_k})\dsim e^{(n)}_{\b^*_k}$.  The claim follows.

\noindent
(b) Let us show the first statement only. By Lemma \ref{lem: comparison of Frs}, we have:
\begin{equation*}
    \begin{split}
        \tFr^>(\tE_{\b_k}^{(n)})\dsim
        \begin{cases}
              0   \quad &\text{if $\ell_{\b_k} \nmid n$}\\
             \te_{\b^*_k}^{(n/\ell_{\b_k})} \;\;\;&\text{if $\ell_{\b_k}\mid n$}
        \end{cases}.
   \end{split}
\end{equation*}
Note that the restriction of $\cFr\colon \dU_\e(\g)\rightarrow \dU_R(\g^d)$ on $\dU^>_\e(\g)$ is a composition of $\tFr^>$ and 
the restriction of $\phi$ on $\dU^{*>}_\e(\g)$. Therefore, combining the above formula with the part (a), 
we get the first statement of part (b). 
\end{proof}



\subsection{$R$-matrix}\label{sec10}
\

Following \cite[\S8.30]{j} and using the divided powers $\{F_{\b_i}^{[n]},E_{\b_i}^{[n]}\}_{1\leq i\leq N}^{n\geq 1}$, 
we have the following $R$-matrix of $\hmU_\epsilon(\g,P)$: 
\begin{equation} \label{R matrix for original U}
  \CR = \sum_{\lambda, \mu \in P} \epsilon^{-(\lambda, \mu)}1_\lambda \otimes 1_\mu + 
  \left(\sum_{\vec{k}\in \BZ_{\geq 0}^N \backslash (0,\dots,0)}c_{\vec{k}} F^{[\cev{k}]}\otimes E^{[\cev{k}]} \right)
  \left(\sum_{\lambda, \mu \in P} \epsilon^{-(\lambda, \mu)} 1_\lambda \otimes 1_\mu\right) \,,
\end{equation}
where $c_{\vec{k}}=\prod_{t=1}^N \left( \epsilon_{i_t}^{k_t(1-k_t)/2}(\epsilon_{i_t}^{-1}-\epsilon_{i_t})^{k_t} [k_t]_{i_t}! \right)$. 


Evoking~\eqref{R-matrix} and $F$ of~\eqref{formal Cartan twist}, we get the following $R$-matrix for $\hU_\e(\g,P)$:  
\begin{multline}\label{R matrix for U}
  \CR^\sF=\sF^{-1} \CR \sF^{-1}=\\
  \sum_{\lambda, \mu \in P}\e^{-(\lambda, \mu)-\sum_{i,j} 2\phi_{ij}(\w_i^\vee, \lambda)(\w^\vee_j, \mu)}1_\lambda\otimes 1_\mu + 
  \sF^{-1}\left(\sum_{\vec{k} \in \BZ^N_{\geq 0} \backslash (0, \dots, 0)} \sum_{\lambda, \mu \in P} d_{\lambda,\mu,\vec{k}} 
  \tF^{(\cev{k})} 1_\lambda \otimes \tE^{(\cev{k})}1_\mu\right)\sF^{-1}
\end{multline}
with 
  $d_{\lambda, \mu, \vec{k}} = \epsilon^{a_{\lambda, \mu, \vec{k}}} \cdot \prod_{t=1}^N(\epsilon^{-1}_{i_t}-\epsilon_{i_t})^{k_t}(k_t)_{i_t}!$ 
for some $a_{\lambda, \mu, \vec{k}}\in \BZ[\frac{1}{2}]$.

\begin{Lem} 
Assume that $\ell_i \geq \textnormal{max}\{2, 1-a_{ij}\}_{1\leq j \leq r}$ for all $i$. Then the image of  $\CR^{\sF}$ 
under the homomorphism (under suitable completions) $\tFr^{\otimes 2}\colon \hU_\e(\g, P)^{\otimes 2} \rightarrow \hU^*_\e(\g, P^*)^{\otimes 2}$ is
\begin{align}\label{R matrix for U*} 
  \CR^*=\sum_{\lambda, \mu \in P^*}\e^{-(\lambda, \mu)-\sum_{i,j} 2\phi_{ij}(\w_i^\vee, \lambda)(\w^\vee_j, \mu)}1_\lambda\otimes 1_\mu \,.
\end{align}
\end{Lem}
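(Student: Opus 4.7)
My approach will be to split $\CR^\sF$ into its Cartan part and non-Cartan correction, apply $\tFr^{\otimes 2}$ to each separately, and show that the non-Cartan correction vanishes while the Cartan part restricts precisely to the stated formula for $\CR^*$.

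For the Cartan part, I would directly apply $\tFr^{\otimes 2}$ to the first summand in \eqref{R matrix for U}:
\[
 \sum_{\lambda, \mu \in P}\e^{-(\lambda, \mu)-\sum_{i,j} 2\phi_{ij}(\w_i^\vee, \lambda)(\w^\vee_j, \mu)}1_\lambda\otimes 1_\mu.
\]
Since Proposition \ref{Fr in idem form} tells us $\tFr(1_\lambda)=1_\lambda$ if $\lambda\in P^*$ and $0$ otherwise, this sum descends to the corresponding sum restricted to $\lambda,\mu\in P^*$. This is exactly the formula \eqref{R matrix for U*} defining $\CR^*$.

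For the non-Cartan correction, the key observation is a vanishing of coefficients. The factor $(k_t)_{\e_{i_t}}! = \prod_{c=1}^{k_t} (c)_{\e_{i_t}}$ in $d_{\lambda,\mu,\vec{k}}$ contains $(\ell_{i_t})_{\e_{i_t}} = (1-\e_{i_t}^{-2\ell_{i_t}})/(1-\e_{i_t}^{-2}) = 0$ as soon as $k_t \geq \ell_{i_t}$, since $\ell_{i_t}\geq 2$ by assumption. Hence the only nonzero terms in the non-Cartan correction have $0 \leq k_t < \ell_{i_t}$ for every $t$. For $\vec{k}\neq (0,\ldots,0)$, at least one coordinate $k_t$ satisfies $0 < k_t < \ell_{i_t}$, which in particular forces $\ell_{i_t}\nmid k_t$, so $\vec{k}\in \mathscr{K}$ in the notation of Lemma \ref{lem: PBW basis of Ker of Fr}.

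Now, for each such $\vec{k}\in \mathscr{K}$, Lemma \ref{lem: PBW basis of Ker of Fr}(b) gives $\tF^{(\cev{k})} \in \Ker(\tFr^<)$ and $\tE^{(\cev{k})}\in \Ker(\tFr^>)$. Since $\tFr$ is an algebra homomorphism compatible with the triangular decomposition in the sense that $\tFr(\tF^{(\cev{k})}1_\lambda) = \tFr^<(\tF^{(\cev{k})})\,\tFr(1_\lambda)$ and analogously on the other side (this follows either by direct computation on PBW monomials or from Corollary \ref{cor: kernel of Fr}(a)), we conclude that every term in the non-Cartan sum is annihilated by $\tFr\otimes \tFr$. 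The Cartan elements $\sF^{\pm 1}$ multiplying the non-Cartan sum act diagonally on weight spaces by scalars in the completion and therefore preserve this kernel, so the entire expression $\sF^{-1}(\cdots)\sF^{-1}$ maps to $0$. Combining the two parts gives $\tFr^{\otimes 2}(\CR^\sF) = \CR^*$. The main subtlety, which is nonetheless mild, is simply verifying that the termwise application of $\tFr^{\otimes 2}$ is legitimate in the completion and that the $\sF^{\pm 1}$-conjugation commutes with restricting to the kernel of $\tFr^{\otimes 2}$; this is straightforward because $\sF^{\pm 1}$ is a Cartan (weight-diagonal) element and $\tFr$ respects the weight decomposition.
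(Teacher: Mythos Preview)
Your proof is correct and follows essentially the same approach as the paper: split $\CR^\sF$ into its Cartan part and the non-Cartan correction, observe that the Cartan idempotents restrict to $P^*$ under $\tFr$, and show that every non-Cartan term vanishes either because $(k_t)_{\e_{i_t}}!=0$ when some $k_t\geq \ell_{i_t}$, or because $\tF^{(\cev{k})}\in\Ker(\tFr^<)$ and $\tE^{(\cev{k})}\in\Ker(\tFr^>)$ via Lemma~\ref{lem: PBW basis of Ker of Fr}(b) when all $k_t<\ell_{i_t}$. Your additional remarks on the $\sF^{\pm 1}$-conjugation and the termwise application in the completion are correct and make explicit a point the paper leaves implicit.
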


\begin{proof} 
If $\vec{k} \in \BZ^N_{\geq 0} \backslash (0,\dots,0)$ is such that $k_t\geq \ell_{i_t}$ for some $1\leq t \leq N$, then $(k_t)_{i_t}!=0$ 
and so $d_{\lambda, \mu, \vec{k}}=0$ for all $\lambda, \mu \in P$. If $\vec{k} \in \BZ^N_{\geq 0}\backslash (0,\dots,0)$ is such that 
$0\leq k_t <\ell_{i_t}$ for all $1\leq t\leq N$, then there is $t$ such that $0<k_t < \ell_{i_t}$ and so 
$\tF^{(\cev{k})} \in \Ker(\tFr^<), \tE^{(\cev{k})}\in \Ker(\tFr^>)$ by Lemma~\ref{lem: PBW basis of Ker of Fr}. 
Therefore, for any $\vec{k} \in \BZ^N_{\geq 0} \backslash (0, \dots, 0)$ and $\lambda, \mu \in P$, we have 
\[
  \tFr^{\otimes 2}(d_{\lambda, \mu, \vec{k}} \tF^{(\cev{k})}1_\lambda \otimes \tE^{(\cev{k})}1_\mu)=0.
\]
This implies the lemma.
\end{proof}
\begin{Rem}For $\lambda, \mu \in Q^*$, the coefficient of $1_\lambda \otimes 1_\mu$ in $\CR^*$ is $1$. Coefficients of other $1_\lambda \otimes 1_\mu$ may be equal to $1$ depending on the root of unity $\e$.
\end{Rem}

\begin{Rem} 
We will take this $\CR^*$ as an $R$-matrix of $\hU^*_\epsilon(\g, P^*)$ henceforth.
\end{Rem}


\section{The Frobenius center $Z_{Fr}$}\label{sec: Frobenius kernel}

In this section, we study the so-called Frobenius center of $U^{ev}_\e(\g)$ . 
We impose the following condition on $\ell$: 
$\ell_i \geq \textnormal{max}\{2, 1-a_{ij}\}_{1\leq j \leq r}$. Under this condition, the results of Section \ref{Quantum Frobenius}
hold, and furthermore in Remarks \ref{partial KerFr} and \ref{KerFr}, the index $i$ runs over all $\{1, \dots, r\}$. We assume that 
$R$  has an element $\e^{1/\sN}$. Let 
  $\CA_\sN:=\BZ[v^{\pm 1/\sN}]\Big[\Big\{ \tfrac{1}{v^{2k}-1} \Big\}_{1\leq k\leq \max\{\sd_i\}} \Big]$. 
Then with the choice of $\e^{1/\sN}$, the ring homomorphism $\sigma\colon \CA \rightarrow R$ factors through $\CA_\sN \rightarrow R$ 
which maps $v^{\pm 1/\sN}$ to $\e^{\pm 1/\sN}$.


\subsection{The pairing for the idempotented version}\label{pairing with idemp version}
\

Let us consider the  $R$-linear pairing $\<\cdot, \cdot\>'\colon U^{ev}_\e(\g) \x \hU_\e(\g, P) \rightarrow R$ defined by
\begin{equation}\label{idem pair} 
  \Big\< (yK^{\kappa(\nu)}) K^{\lambda}( xK^{\gamma(\mu)}), \dy 1_{\dlambda}\dx\Big\>' = 
  \delta_{-\lambda/2, \dlambda}(y,\dx)'(\dy,x)'\e^{(2\rho, \nu)} \e^{(\dmu, \gamma(\dmu))-(\dlambda, \kappa(\dnu)+\gamma(\dmu))} 
\end{equation}
for any $\lambda\in 2P, \dlambda\in P, y\in U^{ev<}_{-\nu}, x\in U^{ev>}_{\mu}, \dy\in \dU^{<}_{-\dnu}, \dx \in \dU^{>}_{\dmu}$ 
and $\mu,\nu,\dmu,\dnu\in Q_+$. 

\begin{Lem}The pairing \eqref{idem pair} is adjoint $\dU_\e(\g)$-invariant and non-degenerate in the first argument.
\end{Lem}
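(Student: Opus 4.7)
\emph{Non-degeneracy in the first argument.} My plan is to use the triangular decomposition of $U^{ev}_\e(\g)$ together with the PBW bases from Lemma~\ref{lem:twisted-triangular-PBW} to write any $y_0 \in U^{ev}_\e(\g)$ uniquely as
\begin{equation*}
  y_0 \,=\, \sum_{\vec k,\vec r \in \BZ_{\geq 0}^N,\, \lambda \in 2P} c_{\vec k,\lambda,\vec r}\,(\tF^{\cev k}K^{\kappa(\nu_{\vec k})})\, K^\lambda\, (\tE^{\cev r}K^{\gamma(\mu_{\vec r})}),
\end{equation*}
where $\nu_{\vec k}, \mu_{\vec r} \in Q_+$ denote the respective weights of $\tF^{\cev k}$ and $\tE^{\cev r}$. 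Assuming $\langle y_0, z\rangle' = 0$ for all $z \in \hU_\e(\g, P)$, I will test against the PBW-type elements $z_{\vec k',\dlambda,\vec r'} := \tF^{(\cev{k'})} 1_{\dlambda} \tE^{(\cev{r'})}$. The $Q$-grading argument of~\eqref{eq:degree-zero property}, applied to each half-pairing appearing in~\eqref{idem pair}, will force $\vec k = \vec r'$ and $\vec r = \vec k'$, while $\delta_{-\lambda/2,\dlambda}$ will fix $\lambda = -2\dlambda$. Only a single coefficient $c_{\vec k,-2\dlambda,\vec r}$ then survives, multiplied by a product of PBW pairings which are invertible in $R$ by Corollary~\ref{cor: pairing of PBW} and its divided-power variants from Corollary~\ref{Lem:pairing_quantum_Borel_R}. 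Letting $(\vec k',\vec r',\dlambda)$ range over $\BZ_{\geq 0}^N \times \BZ_{\geq 0}^N \times P$ will recover each coefficient, yielding $y_0 = 0$.

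\emph{Ad-invariance.} For this part, I plan to verify the identity $\langle \ad'(x) y, z\rangle' = \langle y, \ad'(S'(x)) z\rangle'$ directly on the algebra generators $x \in \{K^\mu,\, \tE_i^{(n)},\, \tF_i^{(n)}\}$ of $\dU_\e(\g)$. This reduction is valid because the identity is multiplicative in $x$: if it holds for $x_1$ and $x_2$, then $\ad'(x_1 x_2) = \ad'(x_1)\ad'(x_2)$ and $S'(x_1 x_2) = S'(x_2)S'(x_1)$ yield the same identity for $x_1 x_2$. For $x = K^\mu$ with $\mu \in 2P$, the adjoint action simply rescales $y$ and $z$ by Cartan characters, and the identity becomes an immediate consequence of $\Delta'(K^\mu) = K^\mu \otimes K^\mu$. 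For $x = \tE_i^{(n)}, \tF_i^{(n)}$, the coproducts in~\eqref{eq:twisted Hopf on divided powers} expand the invariance identity into a finite sum linking $\ad'(\tE_i^{(c)})$ acting on $y$ with $\ad'(\tE_i^{(n-c)})$ acting on $z$; using the explicit formulas~\eqref{eq:explicit adjoint action} and the commutation relations~\eqref{EF relations} in $\hU_\e(\g, P)$, this reduces to a finite calculation analogous to the one in Proposition~\ref{action of Lus on DCK}.

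The hard part will be the Cartan-exponent bookkeeping in the ad-invariance check for $x \in \{\tE_i^{(n)}, \tF_i^{(n)}\}$: the exponential factor $\e^{(\dmu,\gamma(\dmu))-(\dlambda,\kappa(\dnu)+\gamma(\dmu))}$ in~\eqref{idem pair} interacts nontrivially with the twisted coproduct and with the weight shifts $\dlambda \mapsto \dlambda \pm n\alpha_i$ appearing in~\eqref{EF relations}, so tracking the various Cartan factors produced by~\eqref{eq:twisted Hopf on divided powers} and their evaluation against $1_\dlambda$ via the character $\e^{(\cdot,\dlambda)}$ will require care. If this becomes too unwieldy, a fallback plan is to invoke the already-established ad-invariance of the pairing~\eqref{pairing of DCK and Lus whole R} (Proposition~\ref{Prop:pairing_whole_R}(a)) through the Hopf algebra morphism $\psi_1\colon \dU_\e(\g) \to \tU_\e(\g,P)$ of Lemma~\ref{lem: dense subalgebras}: first verify a compatibility identity matching the two pairings on PBW elements, then transfer the invariance from $\psi_1(\dU_\e(\g))$ to $\hU_\e(\g, P)$ via the density statement in Lemma~\ref{lem: dense subalgebras}.
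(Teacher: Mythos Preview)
Your proposal is correct, but the paper takes a different—and more economical—route that you identify only as your fallback. The paper extends the pairing~\eqref{idem pair} to a continuous pairing $U^{ev}_\e(\g)\times \tU_\e(\g,P)\to R$ on the completion, observes that the already-established pairing of Proposition~\ref{Prop:pairing_whole_R} factors through it via $\psi_1$, and then uses the density of both $\psi_1(\dU_\e(\g))$ and $\psi_2(\hU_\e(\g,P))$ in $\tU_\e(\g,P)$ (Lemma~\ref{lem: dense subalgebras}) to transfer non-degeneracy and ad-invariance simultaneously. This handles both claims in one stroke, with no direct PBW computation and no generator-by-generator verification.

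Your direct approach for non-degeneracy is perfectly valid and arguably more transparent: it is essentially the same computation as in the proof of Proposition~\ref{Prop:pairing_whole_R}(b), adapted to the idempotented setting. For ad-invariance, your primary plan (verify on generators $K^\mu,\tE_i^{(n)},\tF_i^{(n)}$) would work in principle but, as you anticipate, the Cartan-exponent bookkeeping coming from the factor $\e^{(\dmu,\gamma(\dmu))-(\dlambda,\kappa(\dnu)+\gamma(\dmu))}$ interacting with the idempotent shifts in~\eqref{EF relations} makes this unpleasant. The paper simply skips this and goes straight to what you call the fallback: since ad-invariance is already known for the pairing with $\dU_\e(\g)$, density of $\psi_1$ pushes it to $\tU_\e(\g,P)$, and the embedding $\psi_2$ pulls it back to $\hU_\e(\g,P)$. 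What your approach buys is self-containedness at the cost of computation; what the paper's approach buys is brevity by leveraging Proposition~\ref{Prop:pairing_whole_R} as a black box.
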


\begin{proof}
The pairing \eqref{idem pair} can be clearly extended to a pairing $U^{ev}_\e(\g) \x \tU_\e(\g, P) \rightarrow R$:
\[ 
  \Big\< (yK^{\kappa(\nu)}) K^{\lambda}( xK^{\gamma(\mu)}), \dy\prod a_{\dlambda}1_{\dlambda}\dx \Big\>' = 
  a_{-\lambda/2}(y,\dx)'(\dy,x)'\epsilon^{(2\rho, \nu)}\epsilon^{(\dmu, \gamma(\dmu))-(\dlambda, \kappa(\dnu)+\gamma(\dmu))} \,.
\]
This pairing is continuous, whereas $R$ and $U^{ev}_\e(\g)$ are equipped with the discrete topology and topology of $\tU_\e(\g, P)$ is defined 
in Section \ref{ssec: complete form}. 
The left adjoint $\dU_\e(\g)$-action on $\tU_\e(\g, P)$ is continuous. Furthermore, the pairing $U^{ev}_\e(\g) \x \dU_\e(\g)\rightarrow R$ 
of Proposition~\ref{Prop:pairing_whole_R} factors through the above pairing $U^{ev}_\e(\g) \x \tU_\e(\g,P) \rightarrow R$ via the homomorphism 
$\psi_1\colon \dU_\e(\g)\rightarrow \tU_\e(\g, P)$. Since $U^{ev}_\e(\g) \x \dU_\e(\g)\rightarrow R$ is non-degenerate in the first argument, 
so is $U^{ev}_\e(\g)\x \tU_\e(\g,P) \rightarrow R$. Moreover, since the pairing $U^{ev}_\e(\g) \x \dU_\e(\g)\rightarrow R$ is adjoint $\dU_\e(\g)$-invariant 
and the map $\psi_1\colon \dU_\e(\g) \rightarrow \tU_\e(\g,P)$ is a $\dU_\e(\g)$-module homomorphism with a dense image, the pairing 
$U^{ev}_\e(\g) \x \tU_\e(\g, P)\rightarrow R$ must be adjoint $\dU_\e(\g)$-invariant.

Since $\psi_2\colon \hU_\e(\g, P) \rightarrow \tU_\e(\g, P)$ is a $\dU_\e(\g)$-module embedding with a dense image, the pairing \eqref{idem pair}
 must be adjoint  $\dU_\e(\g)$-invariant and non-degenerate in the first argument.
\end{proof}

\subsection{The Frobenius center $Z_{Fr}$.}\label{ssec:e-center}
\

Recall the  following homomorphisms from Section~\ref{Quantum Frobenius}:
\begin{equation*}
  \tFr\colon \hU_\e(\g, P)\rightarrow \hU^*_\e(\g, P^*) \,,\quad 
  \tFr^>\colon \dU^>_\e\rightarrow \dU^{*>}_\e \,,\quad 
  \tFr^<\colon \dU^<_\e \rightarrow \dU^{*<}_\e \,.
\end{equation*}
By Corollary \ref{cor: kernel of Fr}, the kernel of the homomorphism $\tFr$ is spanned over $R$ by  the following elements: 
\begin{itemize} 

\item 
$\dy1_{\dlambda}\dx$, with $\dy\in \dU^<_\e, \dx \in \dU^>_\e$ and $\dlambda\in P\setminus P^*$;

\item 
$\dy1_{\dlambda}\dx$, with $\dlambda\in P^*$ and either $\dx\in \Ker (\tFr^>) $ or $\dy \in \Ker(\tFr^<)$. 
  
\end{itemize}

\begin{defi} 
The \emph{Frobenius center}, denoted by $Z_{Fr}$, is the orthogonal complement in $U^{ev}_\e$ of $\Ker(\tFr)$ under the adjoint $\dU_\e(\g)$-invariant pairing~(\ref{idem pair}).

\end{defi}

Let $Z_{Fr}^>:=Z_{Fr} \cap U^{ev\, >}_\e, Z_{Fr}^0:=Z_{Fr}\cap U^{ev\, 0}_\e$, $Z^<_{Fr}:=Z_{Fr}\cap U^{ev\, <}_\e$.

\begin{Lem}\label{basis properties of Ze}
(a) $Z_{Fr}$ is central in $U^{ev}_\e$. Moreover, $\ad'(x)(z)=0$ for any $z\in Z_{Fr}$ and 
$x\in \{\tE_i^{(n)}, \tF_i^{(n)}, K^\lambda-1 \,|\, \ell_i \nmid n, \lambda \in 2P\}$. 

\medskip
\noindent
(b) $Z^0_{Fr}=\oplus_{\lambda \in 2P^*} R\cdot K^\lambda$, $Z^>_{Fr}=R[\tE_{\a}^{\ell_\a}]_{\a\in \Delta_+}$, 
and $Z^<_{Fr}=R[\tF_\a^{\ell_\a}]_{\a\in \Delta_+}$. In particular, $Z^<_{Fr}, Z^0_{Fr}, Z^>_{Fr}$ are $R$-subalgebras of $U^{ev}_\e$.

\medskip
\noindent
(c) $Z_{Fr}=Z_{Fr}^<\otimes_R Z_{Fr}^0\otimes_R Z^>_{Fr}$.
\end{Lem}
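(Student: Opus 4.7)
The plan for part~(a) is to combine the adjoint $\dU_\e(\g)$-invariance of the pairing $\langle\cdot,\cdot\rangle'$ from \eqref{idem pair} with its non-degeneracy in the first argument. Given $z\in Z_{Fr}$ and $x\in\{\tE_i^{(n)},\tF_i^{(n)},K^\mu-1 : \ell_i\nmid n,\,\mu\in 2P\}$, I would compute $\langle \ad'(x)(z),y\rangle' = \langle z,\ad'(S'(x))(y)\rangle'$ for arbitrary $y\in \hU_\e(\g,P)$, and show the right-hand side vanishes. From \eqref{eq:twisted Hopf on divided powers}, $S'(\tE_i^{(n)})$ equals an invertible scalar times $\tE_i^{(n)}K^{n\zlambda_i}$, and likewise $S'(\tF_i^{(n)}) \dsim \tF_i^{(n)}K^{-n\zmu_i}$; using multiplicativity of $\ad'$, applying $S'(x)$ first conjugates $y$ by an appropriate $K$-element (which preserves $\hU_\e(\g,P)$ setwise via its realization in the completion $\tU_\e(\g,P)$ of Section~\ref{ssec: complete form}) and then applies $\ad'(\tE_i^{(n)})$ (resp.~$\ad'(\tF_i^{(n)})$), which by Lemma~\ref{lem: ad(x)(y) in Ker(Fr)} lands in $\Ker(\tFr)$. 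For $x=K^\mu-1$, $S'(x)=K^{-\mu}-1$ and the same lemma applies directly. Non-degeneracy then forces $\ad'(x)(z)=0$. Under the running assumption $\ell_i\geq 2$ we have $\ell_i\nmid 1$, so $\ad'(\tE_i)(z)=[\tE_i,z]K^{\zlambda_i}=0$ gives $[\tE_i,z]=0$, and similarly for $\tF_i$; the case $K^\mu-1$ yields $[K^\mu,z]=0$ for all $\mu\in 2P$. Since these are the generators of $U^{ev}_\e(\g)$, $z$ is central.

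For part~(b), I will expand in the PBW bases of Lemma~\ref{lem:twisted-triangular-PBW} and pair against the explicit basis of $\Ker(\tFr)$ from Corollary~\ref{cor: kernel of Fr}. For $z=\sum c_{\vec{r}}\tE^{\cev{r}}\in U^{ev,>}_\e$, pairing with the $\Ker(\tFr^<)$-basis element $\tF^{(\cev{k})}\cdot 1_0$ for $\vec{k}\in\mathscr{K}$ (Lemma~\ref{lem: PBW basis of Ker of Fr}) isolates $c_{\vec{k}}$ via the diagonal formula of Corollary~\ref{cor: pairing of PBW}; the remaining basis elements of $\Ker(\tFr)$ pair trivially because the relevant counit factor $\varepsilon'(\dx)$ vanishes or the Cartan-weight matching fails. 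Thus $Z^>_{Fr}$ is contained in the span of $\tE^{\cev{r}}$ with $\ell_{\b_i}\mid r_i$ for all $i$. The reverse inclusion $\tE_\a^{\ell_\a}\in Z_{Fr}$ follows from the same diagonal analysis. Part~(a) then guarantees the $\tE_\a^{\ell_\a}$ are central in $U^{ev}_\e(\g)$ and in particular pairwise commute, so $R[\tE_\a^{\ell_\a}]$ is spanned by the ordered monomials $\prod_i\tE_{\b_i}^{\ell_{\b_i}m_i}$, matching the span identified above. The case of $Z^<_{Fr}$ is symmetric. For $Z^0_{Fr}$, writing $z=\sum a_\lambda K^\lambda$ and pairing with $1_{\dlambda}$ gives $a_{-2\dlambda}=0$ for $\dlambda\in P\setminus P^*$; conversely $K^\lambda$ for $\lambda\in 2P^*$ is orthogonal to all of $\Ker(\tFr)$ by the same weight and counit analysis.

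For part~(c), the strategy is to extend the pairing analysis to the full PBW expansion $z=\sum c_{\vec{k},\mu,\vec{r}}\tF^{\cev{k}}K^\mu \tE^{\cev{r}}$. Double diagonality of Corollary~\ref{cor: pairing of PBW} forces the only possibly nonzero pairing of the $(\vec{k},\mu,\vec{r})$-monomial with a basis element $\tF^{(\cev{k'})}1_{\dlambda}\tE^{(\cev{r'})}$ of $\hU_\e(\g,P)$ to satisfy $\vec{k'}=\vec{r}$, $\vec{r'}=\vec{k}$, and $\dlambda=-\lambda/2$, where $\lambda=\mu-\kappa(\sum k_i\b_i)-\gamma(\sum r_i\b_i)$ up to an invertible scalar from the normal-form rewriting in \eqref{idem pair}. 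The three families of $\Ker(\tFr)$ from Corollary~\ref{cor: kernel of Fr} then translate into $c_{\vec{k},\mu,\vec{r}}=0$ unless $\vec{k},\vec{r}\notin\mathscr{K}$ (equivalently $\ell_{\b_i}\mid k_i,r_i$) and $\dlambda\in P^*$. Since $\ell_{\b_i}\b_i=\b_i^*\in Q^*$ and (by the proof of Lemma~\ref{Q-to-2P} applied to the starred data, cf.~Remark~\ref{rem:lattice-star}) $\kappa(Q^*)=\gamma(Q^*)=2P^*$, the constraint $\dlambda\in P^*$ reduces to $\mu\in 2P^*$. Hence $Z_{Fr}$ is precisely the span of PBW monomials satisfying these three constraints, which by part~(b) and the triangular decomposition of $U^{ev}_\e(\g)$ in Lemma~\ref{lem:twisted-triangular-PBW} is exactly $Z^<_{Fr}\otimes_R Z^0_{Fr}\otimes_R Z^>_{Fr}$; injectivity of multiplication is the PBW theorem, and the image lies in $Z_{Fr}$ by the same coefficient analysis.

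The main technical hurdle will be the accurate book-keeping of Cartan prefactors: expressing a generic monomial $\tF^{\cev{k}}K^\mu\tE^{\cev{r}}$ in the normal form $(yK^{\kappa(\nu)})K^\lambda(xK^{\gamma(\mu')})$ of \eqref{idem pair} requires commuting $K$'s past $\tE^{\cev{r}}$ and collects $v$-scalars. These scalars are invertible in $R$ and do not affect the vanishing conditions, but they must be tracked precisely to secure the identification $\lambda = \mu-\kappa(\sum k_i\b_i)-\gamma(\sum r_i\b_i)$ (up to an invertible constant) that drives the $\mu\in 2P^*$ conclusion in part~(c). A secondary subtlety in part~(a) is verifying that conjugation by $K^{n\zlambda_i}$ preserves $\hU_\e(\g,P)$; this uses the realization $K^\mu=\sum_\lambda\e^{(\mu,\lambda)}1_\lambda$ inside $\tU_\e(\g,P)$, which scales each PBW basis element of $\hU_\e(\g,P)$ by an invertible constant and therefore preserves $\hU_\e(\g,P)$ and $\Ker(\tFr)$ setwise.
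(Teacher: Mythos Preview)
Your proposal is correct and follows essentially the same route as the paper. Both arguments hinge on the same three ingredients: adjoint invariance of the pairing together with Lemma~\ref{lem: ad(x)(y) in Ker(Fr)} for part~(a), the PBW-diagonal pairing formula of Corollary~\ref{cor: pairing of PBW} against the explicit basis of $\Ker(\tFr)$ from Lemma~\ref{lem: PBW basis of Ker of Fr} and Corollary~\ref{cor: kernel of Fr} for parts~(b) and~(c), and Remark~\ref{rem:lattice-star} to convert the idempotent constraint $\dlambda\in P^*$ into $\mu\in 2P^*$.

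Two small points. In part~(a) you make explicit what the paper glosses over: the lemma is stated for $\ad'(x)$, not $\ad'(S'(x))$, and your reduction via $S'(\tE_i^{(n)})=(\text{unit})\cdot\tE_i^{(n)}K^{n\zlambda_i}$ plus multiplicativity of $\ad'$ is exactly the right way to bridge this. In part~(b), your test element $\tF^{(\cev{k})}\cdot 1_0$ has the wrong idempotent: the $\delta_{-\lambda/2,\dlambda}$ in~\eqref{idem pair} forces $\dlambda=\gamma(\mu_{\vec{k}})/2$, not $0$, to isolate the $\vec{k}$-coefficient of $z\in U^{ev,>}_\e$. You already flag this Cartan bookkeeping as the main hurdle, and your part~(c) analysis gets it right. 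The paper avoids this nuisance in part~(b) by observing that for $x\in U^{ev,>}_\e$ the factor $(1,\dx)'$ kills everything except $\dx=1$, reducing the condition to $(\dy,x)'=0$ for all $\dy\in\Ker(\tFr^<)$ in the Borel pairing~\eqref{pairing of DCK and Lus halves 1R}; this is a cleaner packaging of the same computation.
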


\begin{proof}
(a) Let $z\in Z_{Fr}$. By Lemma \ref{lem: ad(x)(y) in Ker(Fr)}, we have $\ad'(S'(x))(y)\in \Ker(\tFr)$ for any $y \in \hU_\e(\g, P)$ and 
$x\in  \{\tE_i^{(n)}, \tF_i^{(n)}, K^\lambda-1 \,|\, \ell_i \nmid n, \lambda \in 2P\}$. Therefore, with these $x$ and $y$, we get:
\[ 
  \< \ad'(x) z,y\>'=\<z, \ad'(S'(x))y\>'=0 \,.
\]
Since the pairing \eqref{idem pair} is non-degenerate in the first argument, it follows that $\ad'(x)(z)=0$ for all 
$x\in \{\tE_i^{(n)}, \tF_i^{(n)}, K^\lambda-1 \,|\, \ell_i \nmid n, \lambda \in 2P\}$. 
In particular, we can take $n=1$. 
This implies that $z$ is central in $U^{ev}_\e$, due to \eqref{eq:explicit adjoint action}.

(b) From equation \eqref{idem pair}, it is easy to see that $Z^0_{Fr}=\bigoplus_{\lambda \in 2P^*} R \cdot K^{\lambda}$. 
For $x \in U^{ev\, >}_\e$, we have 
\[ 
  \< x, \dy1_{\dlambda}\dx\>' = \delta_{\gamma(\deg(x))/2, \dlambda}
  (\dy, x)'(1,\dx)' \e^{(\dmu, \gamma(\dmu))-(\dlambda, \kappa(\dnu)+\gamma(\dmu))}
  \e^{(\gamma(\deg(x)),\deg(x))}  \,,
\] 
which implies that 
\[Z^>_{Fr}=\big\{ x\in U^{ev\, >}_\e \,\big|\, (\dy,x)'=0 \;\text{for all $\dy \in \Ker(\tFr^<)$} \big\}.\]  
By Corollary \ref{cor: pairing of PBW} and Lemma \ref{lem: PBW basis of Ker of Fr}, $Z^>_{Fr}$ is a free $R$-module with a basis 
$\{\tE^{\cev{k}} \,|\, \vec{k}\in \BZ^N_{\geq 0}\setminus \mathscr{K}\}$, see Lemma \ref{lem: PBW basis of Ker of Fr} for the 
definition of $\mathscr{K}$. This implies that $Z^>_{Fr}=R[\tE_\a^{\ell_\a}]_{\a\in \Delta_+}$. The proof for $Z^<_{Fr}$ is the same.


(c) Due to Lemma \ref{lem:twisted-triangular-PBW}, any $u\in U^{ev}_\e$ can be written as 
\[u=\sum_{\vec{k}, \vec{r}\in \BZ^N_{\geq 0}}^{\lambda \in P} a_{\vec{k}, \vec{r}, \lambda} \tF^{\cev{k}}K^{2\lambda} \tE^{\cev{r}}\]
for finitely many nonzero $a_{\vec{k}, \vec{r}, \lambda}\in R$. By Corollary \ref{cor: pairing of PBW}, we have 
\[\<u, \tF^{(\cev{r})}1_{-\lambda} \tE^{(\cev{k})}\>'=a_{\vec{k}, \vec{r}, \lambda} b_{\vec{k}, \vec{r}, \lambda}\] 
for some invertible element $b_{\vec{k}, \vec{r}, \lambda}\in R$. Therefore, if $u \in Z_{Fr}$ then we must have 
$a_{\vec{k}, \vec{r}, \lambda}=0$ if either $\lambda \in P\setminus P^*$, or $\vec{k}\in \mathscr{K}$, or $\vec{r}\in \mathscr{K}$. 
Thus, if $u \in Z_{Fr}$ then 
  \[u=\sum_{\vec{k}, \vec{r}\in \BZ^N_{\geq 0}\setminus \mathscr{K}}^{\lambda \in P^*} 
   a_{\vec{k}, \vec{r}, \lambda} \tF^{\cev{k}}K^{2\lambda}\tE^{\cev{r}},\]so that  
$u \in Z^<_{Fr}\otimes_R Z^0_{Fr} \otimes_R Z^>_{Fr}$. 
\end{proof}

\begin{Cor}\label{U^ev is free over Z_e} 
$U^{ev}_\epsilon$ is a free module over $Z_{Fr}$ of rank 
$d=\underset{\a\in \Delta_+}{\prod} \ell^2_\a \cdot \underset{1\leq i\leq r}{\prod} \ell_i$ with $\ell_\a$  in \eqref{eq: l_a}.
\end{Cor}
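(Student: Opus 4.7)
The plan is to deduce freeness from the triangular decompositions of both $U^{ev}_\e$ (Lemma~\ref{lem:twisted-triangular-PBW}) and $Z_{Fr}$ (Lemma~\ref{basis properties of Ze}(c)), together with the centrality of $Z_{Fr}$ in $U^{ev}_\e$ (Lemma~\ref{basis properties of Ze}(a)). Concretely, I would first establish that
\[
  U^{ev\,>}_\e \text{ is free over } Z_{Fr}^>=R[\tE_\alpha^{\ell_\alpha}]_{\alpha\in\Delta_+} \text{ of rank } \prod_{\alpha\in\Delta_+}\ell_\alpha,
\]
and symmetrically for $U^{ev\,<}_\e$ over $Z_{Fr}^<$; then handle the Cartan part; and finally assemble via triangular decomposition.

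For the positive part, I claim that the set $\{\tE^{\cev{k}}\mid 0\le k_i<\ell_{\beta_i}\text{ for all }i\}$ is a basis of $U^{ev\,>}_\e$ as a $Z_{Fr}^>$-module. Given any $\vec{r}\in\BZ_{\ge 0}^N$, write $r_i=(k_0)_i+q_i\ell_{\beta_i}$ with $0\le(k_0)_i<\ell_{\beta_i}$. Since each $\tE_{\beta_i}^{\ell_{\beta_i}}$ lies in $Z_{Fr}\subset Z(U^{ev}_\e)$ (Lemma~\ref{basis properties of Ze}(a),(b)), one may slide all factors $\tE_{\beta_i}^{q_i\ell_{\beta_i}}$ past the $\tE_{\beta_j}^{(k_0)_j}$, yielding
\[
  \tE^{\cev{r}}=\tE^{\cev{k_0}}\cdot\tE_{\beta_N}^{q_N\ell_{\beta_N}}\cdots\tE_{\beta_1}^{q_1\ell_{\beta_1}},
\]
and the second factor lies in $Z_{Fr}^>$ (by the PBW basis of $Z_{Fr}^>$ from Lemma~\ref{basis properties of Ze}(b)). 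For independence, suppose $\sum c_{\vec{k}_0}\tE^{\cev{k_0}}=0$ with $c_{\vec{k}_0}\in Z_{Fr}^>$; expanding each $c_{\vec{k}_0}$ in the $\CA$-basis $\{\tE^{\cev{k_1\cdot\ell}}\}$ and applying the PBW basis of $U^{ev\,>}_\e$ from Lemma~\ref{lem:twisted-triangular-PBW}(b1), together with the bijection $(\vec{k}_0,\vec{k}_1)\mapsto\vec{k}_0+\vec{k}_1\cdot\ell$ between $\prod_i\{0,\dots,\ell_{\beta_i}-1\}\times\BZ_{\ge 0}^N$ and $\BZ_{\ge 0}^N$, all coefficients vanish. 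The argument for $U^{ev\,<}_\e$ over $Z_{Fr}^<$ is identical (working with the $\cev{k}$-ordered monomials in the $\tF_\alpha$'s).

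For the Cartan part, $U^{ev\,0}_\e=\bigoplus_{\lambda\in 2P}R\cdot K^\lambda$ and $Z_{Fr}^0=\bigoplus_{\lambda\in 2P^*}R\cdot K^\lambda$; thus $U^{ev\,0}_\e$ is free over $Z_{Fr}^0$ with basis indexed by $2P/2P^*\cong P/P^*$, which has order $\prod_{i=1}^{r}\ell_i$ since $P^*=\bigoplus_i\BZ\,\ell_i\omega_i$.

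Finally, the triangular isomorphisms
\[
  U^{ev\,<}_\e\otimes_R U^{ev\,0}_\e\otimes_R U^{ev\,>}_\e\iso U^{ev}_\e,\qquad
  Z_{Fr}^<\otimes_R Z_{Fr}^0\otimes_R Z_{Fr}^>\iso Z_{Fr}
\]
are compatible because $Z_{Fr}\subset Z(U^{ev}_\e)$, so tensoring the three freeness statements over $R$ gives the desired freeness of $U^{ev}_\e$ over $Z_{Fr}$ with rank
\[
  \Big(\prod_{\alpha\in\Delta_+}\ell_\alpha\Big)\cdot\Big(\prod_{i=1}^{r}\ell_i\Big)\cdot\Big(\prod_{\alpha\in\Delta_+}\ell_\alpha\Big)=\prod_{\alpha\in\Delta_+}\ell_\alpha^2\cdot\prod_{i=1}^{r}\ell_i.
\]
The only step that requires any genuine verification is the sliding argument in the first paragraph; once centrality of the $\tE_\alpha^{\ell_\alpha}$ and $\tF_\alpha^{\ell_\alpha}$ is in hand from Lemma~\ref{basis properties of Ze}, everything else is bookkeeping on PBW bases.
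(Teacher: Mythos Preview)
Your proof is correct and is precisely the argument the paper leaves implicit: the corollary is stated without proof immediately after Lemma~\ref{basis properties of Ze}, and your PBW/centrality bookkeeping is exactly how one unpacks it. The key inputs you identify---centrality of $\tE_\alpha^{\ell_\alpha},\tF_\alpha^{\ell_\alpha}$ and the compatible triangular decompositions of $U^{ev}_\e$ and $Z_{Fr}$---are what make the corollary immediate.
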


\begin{Rem}\label{actions of U(gd)}  Since $\Ker(\tFr)$ is a two-side ideal in $\hU_\e(\g, P)$ hence closed under the adjoint action of $\dU_\e(\g)$, the subalgebra $Z_{Fr}$ is also closed under the adjoint action of $\dU_\e(\g)$. The Hopf algebra $\dU_\e(\g)$ also naturally acts on $\hU^*_\e(\g, P^*)$ as in Remark \ref{rem: dU acts on U*(g,P*)}. Furthermore, by the description of $\Ker(\cFr)$ in Lemma \ref{KerFr} and computations in Lemma \ref{lem: ad(x)(y) in Ker(Fr)} and Lemma \ref{basis properties of Ze}(a), it follows that both $\dU_\e(\g)$-actions on $Z_{Fr}$ and $\hU^*_\e(\g, P^*)$ factor through the morphism $\cFr: \dU_\e(\g) \rightarrow \dU_R(\g^d)$.
\end{Rem}
By definition of $Z_{Fr}$, we have an induced pairing 
\begin{equation}\label{center pair}
  \<\cdot, \cdot\>'\colon Z_{Fr}\x \hU^*_\e(\g, P^*) \longrightarrow R \,,
\end{equation}
which is adjoint $\dU_R(\g^d)$-invariant, see Remark \ref{actions of U(gd)}, and is non-degenerate in the first argument.

Let $G^d$ be the simply connected algebraic group with the Lie algebra $\g^d$. Note that $\g^d$ has the root lattice $Q^*$ and 
the weight lattice $P^*$. Let $G^d_0$ be the open Bruhat cell in $G^d$, that is $G^d_0=U^d_- \x T^d \x U^d_+$, where $T^d$ is 
the maximal torus and $U^d_\pm$ are opposite maximal unipotent subgroups (so that $U^d_+$ corresponds to $\Delta_+$). Everything is defined over $\BZ$, hence makes sense over $R$ by base change.  We are now 
going to identify $Z_{Fr}$ with $R[G_0^d]$, see Proposition \ref{Ze and Bruhat cell}.\footnote{Let us comment on the analog of $Z_{Fr}$ for the usual De Concini-Kac forms: when $q$ is an odd order root of unity, the analog is the central subalgebra $Z_0$ studied in \cite[$\mathsection 5.4, 6$]{dckp}; meanwhile, when $q$ is an even order root of unity, the analog is studied in \cite[$\mathsection 4$]{t2}.} To do so, we start with some technical results.

The perfect  pairings (the "perfect" follows by PBW bases in Lemma \ref{lem:twisted-triangular-PBW} and Corollary \ref{cor: pairing of PBW})
\[ 
  (\cdot, \cdot)'\colon U^{ev\, <}_\e \x \dU^>_\e \longrightarrow R \,, \qquad 
  (\cdot,\cdot)'\colon \dU^<_\e \x U^{ev\, >}_\e \longrightarrow R
\]
give us the pairings
\begin{equation}\label{<> center pair} 
  (\cdot, \cdot)'\colon Z^<_{Fr} \x \dU^>_R(\g^d) \longrightarrow R \,, \qquad 
  (\cdot, \cdot)'\colon \dU^<_R(\g^d) \x Z^>_{Fr} \longrightarrow R \,,
\end{equation}
through restricting to a subalgebra $Z^<_{Fr}\subset U^{ev\, <}_\e$ (resp.\ $Z^>_{Fr}\subset U^{ev\, >}_\e$) and passing through the 
quotient map $\cFr^>: \dU^>_\e\rightarrow  \dU^>_R(\g^d)$ (resp.\  $ \cFr^<: \dU^<_\e\to \dU^<_R(\g^d)$), see Lemma \ref{Fr in Lus form}. 

\begin{Lem}\label{lem: perfect pairings} Two pairings in \eqref{<> center pair} are perfect. In particular,
\begin{equation}\label{eq: perfect pairings}
\begin{split}
    (\tF^{\cev{k}}, e^{(\cev{r}^*)})'&=\begin{cases} \in R^\x &\text{if $\vec{k}=\vec{r}\in \BZ^N_{\geq 0}/\mathscr{K}$}\\
    0  &\text{if $\vec{k},  \vec{r}\in \BZ^N_{\geq 0} /\mathscr{K}$ and $\vec{k}\neq \vec{r}$},
    \end{cases} \\
    (f^{(\cev{r}^*)}, \tE^{\cev{k}})'&=\begin{cases}
        \in R^\x &\text{ if $\vec{k}=\vec{r}\in \BZ^N_{\geq 0} /\mathscr{K}$}\\
        0  &\text{if $\vec{k}, \vec{r}\in \BZ^N_{\geq 0}/\mathscr{K}$ and $\vec{k}\neq \vec{r}$}, 
    \end{cases}
\end{split}
\end{equation}
here $\vec{r}^*=(r_1/\ell_{\b_1}, \dots, r_N/\ell_{\b_N})$ for $\vec{r}\in \mathscr{K}$. The subset $\mathscr{K}$ is defined in Lemma \ref{lem: PBW basis of Ker of Fr}.
\end{Lem}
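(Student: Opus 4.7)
The plan is to deduce both perfectness statements directly from the perfect Hopf pairing
\[
  (\cdot,\cdot)'\colon U^{ev\,\leqslant}_\e \times \dU^{\geqslant}_\e \longrightarrow R
\]
of Corollary~\ref{Lem:pairing_quantum_Borel_R}, combined with the diagonality formulas of Corollary~\ref{cor: pairing of PBW}, the PBW description of $Z^<_{Fr}$, $Z^>_{Fr}$ in Lemma~\ref{basis properties of Ze}(b), and the identification of $\cFr$ on root vectors given by Lemma~\ref{lem: computations of cFr}(b). The argument will be entirely symmetric in the two pairings, so I will describe only the first; the second is handled by switching the roles of $\tE$'s and $\tF$'s.

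First, I will verify that the pairing $(\cdot,\cdot)'\colon Z^<_{Fr}\times \dU^>_R(\g^d)\to R$ is well-defined. Since $\cFr^>\colon \dU^>_\e\twoheadrightarrow \dU^>_R(\g^d)$ factors as $\phi^>\circ \tFr^>$ with $\phi^>$ an isomorphism (Lemma~\ref{lem: dU*(g) iso Kostant form}(a)), we have $\Ker(\cFr^>)=\Ker(\tFr^>)$, which by Lemma~\ref{lem: PBW basis of Ker of Fr}(b) has $R$-basis $\{\tE^{(\cev{r})}\}_{\vec{r}\in \mathscr{K}}$. By Lemma~\ref{basis properties of Ze}(b), $Z^<_{Fr}$ has $R$-basis $\{\tF^{\cev{k}}\}_{\vec{k}\in \BZ^N_{\geq 0}\setminus \mathscr{K}}$. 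Corollary~\ref{cor: pairing of PBW} (applied after the obvious comparison of the elements $\tE^{(\cev{r})}$ and $\tE^{\cev{r}}$ via divided powers) shows that $(\tF^{\cev{k}},\tE^{(\cev{r})})'$ vanishes unless $\vec{k}=\vec{r}$; since $\vec{k}\notin \mathscr{K}$ and $\vec{r}\in \mathscr{K}$ forces $\vec{k}\ne\vec{r}$, the orthogonality $(Z^<_{Fr},\Ker(\cFr^>))'=0$ follows, and the pairing descends to $Z^<_{Fr}\times \dU^>_R(\g^d)$.

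Next, I will compute the values. For any $\vec{r}\in \BZ^N_{\geq 0}\setminus \mathscr{K}$, set $\vec{r}^*=(r_1/\ell_{\b_1},\ldots,r_N/\ell_{\b_N})$. Since $\cFr=\phi\circ\tFr$, Lemma~\ref{lem: computations of cFr}(b) gives $\cFr^>(\tE^{(r_k)}_{\b_k})\dsim e^{(r_k/\ell_{\b_k})}_{\b^*_k}$ whenever $\ell_{\b_k}\mid r_k$, and multiplying these together yields $\cFr^>(\tE^{(\cev{r})})\dsim e^{(\cev{r}^*)}$ in $\dU^>_R(\g^d)$. Thus $\tE^{(\cev{r})}$ is a lift of a unit multiple of $e^{(\cev{r}^*)}$, and for $\vec{k},\vec{r}\in \BZ^N_{\geq 0}\setminus \mathscr{K}$ we obtain
\[
  (\tF^{\cev{k}}, e^{(\cev{r}^*)})' \,\dsim\, (\tF^{\cev{k}}, \tE^{(\cev{r})})' \,=\,
  \delta_{\vec{k},\vec{r}}\cdot \prod_{p=1}^{N}\frac{1}{(v_{i_p}^{-1}-v_{i_p})^{k_p}}\cdot(\text{unit}),
\]
by Corollary~\ref{cor: pairing of PBW} (using that $1/(v^{-1}_{i_p}-v_{i_p})\in \CA$ so the specialization is a unit). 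This is precisely \eqref{eq: perfect pairings}.

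Finally, perfectness is now immediate from linear algebra: $Z^<_{Fr}$ and $\dU^>_R(\g^d)$ are free $R$-modules with bases $\{\tF^{\cev{k}}\}_{\vec{k}\in \BZ^N_{\geq 0}\setminus \mathscr{K}}$ (Lemma~\ref{basis properties of Ze}(b)) and $\{e^{(\cev{r}^*)}\}_{\vec{r}\in \BZ^N_{\geq 0}\setminus \mathscr{K}}$ (Lemma~\ref{lem: triangular-PBW-kostant form}(b), reindexed by $\vec{r}\mapsto \vec{r}^*$), respectively, and in these bases the pairing matrix is diagonal with entries in $R^\times$, so both induced maps to the duals are isomorphisms. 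No step looks like a genuine obstacle; the only point needing a little care is the well-definedness argument, which relies on ensuring that the orthogonality of $Z^<_{Fr}$ to $\Ker(\cFr^>)$ is read off from the same diagonal PBW pairing, rather than needing an independent invariance computation.
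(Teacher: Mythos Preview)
Your proposal is correct and follows essentially the same approach as the paper: identify PBW bases of $Z^<_{Fr}$ and $\dU^>_R(\g^d)$ via Lemma~\ref{basis properties of Ze}(b) and Lemma~\ref{lem: computations of cFr}(b), then read off diagonality and invertibility of the pairing matrix from Corollary~\ref{cor: pairing of PBW}. One minor inaccuracy: the pairing $U^{ev\,\leqslant}_\e \times \dU^{\geqslant}_\e \to R$ you invoke at the outset is not asserted to be perfect in the paper (only to have zero kernel in the first argument); what is perfect is its restriction $U^{ev\,<}_\e \times \dU^{>}_\e \to R$, which is all you actually use.
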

\begin{proof} By Lemma \ref{basis properties of Ze}, $Z^<_{Fr}$ has a $R$-basis $\{ \tF^{\cev{k}}| \vec{k} \in \BZ^N_{\geq 0}\backslash \mathscr{K}\}.$
By Lemma \ref{lem: PBW basis of Ker of Fr}, the elements 
$\{\cFr^<(\tE^{(\cev{k})})| \vec{k} \in \BZ^N_{\geq 0} \backslash \mathscr{K}\}$
form a $R$-basis of $\dU_R^>(\g^d)$. Moreover, by Lemma \ref{lem: computations of cFr}
\[\cFr^<(\tE^{(\cev{k})})= a_{\vec{k}}e^{(\cev{k}^*)} \qquad \text{for some $a_{\vec{k}}\in R^\x$.}\]
 Combining these results  with computations in  Corollary \ref{cor: pairing of PBW}, we get the first computation in \eqref{eq: perfect pairings}, hence the first pairing in \eqref{<> center pair} is perfect. The proof for the second pairing is the same.
\end{proof}


For any $\lambda \in P^*$, we have the corresponding homomorphism $\chi_{\lambda}\colon \dU^0_R(\g^d) \rightarrow R$. 
In other words, we have a pairing
\[
  R[T^d]\x \dU^0_R(\g^d) \longrightarrow R \,.
\]
This is just the Hopf algebra pairing between the ring of regular functions $R[T^d]$ and the distribution algebra 
$\text{Dist}_R(T^d) \simeq \dU^0_R(\g^d)$. 
In particular, we have
\[ 
  \chi_{\lambda_1+\lambda_2}(u_0)=\chi_{\lambda_1}(u_{0(1)})\chi_{\lambda_2}(u_{0(2)}) 
  \qquad \forall\, u_0 \in \dU^0_R(\g^d) \,.
\]
\begin{Lem}\label{lem center pair}
The pairing (\ref{center pair}) gives rise to a $\dU_R(\g^d)$-adjoint invariant pairing
\[ 
  \< \cdot, \cdot\>'\colon Z_{Fr} \x \dU_R(\g^d) \longrightarrow R \,,
\] 
which is non-degenerate in the first argument. The pairing is given by 
\begin{equation}\label{eq: center pair 3}
  \Big\<(yK^{\kappa(\nu)})K^\lambda (xK^{\gamma(\mu)}), \dy u_0\dx\Big\>'=\chi_{-\lambda/2}(u_0)(y,\dx)'(\dy,x)' \,,
\end{equation}
where  $\lambda \in 2P^*, u_0\in \dU^0_R(\g^d), y \in Z^<_{Fr, -\nu}, x\in Z^>_{Fr, \mu}, \dy \in \dU^<_R(\g^d)_{-\dnu}, \dx \in \dU^>_R(\g^d)_{\dmu}$
with $\nu, \mu, \dnu, \dmu\in Q^*_+$. The pairings $(y, \dx)', (\dy, x)'$ come from \eqref{<> center pair}.
\end{Lem}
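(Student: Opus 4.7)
The plan is to construct the pairing by extending~\eqref{center pair} to a continuous pairing $Z_{Fr} \times \tU^*_\e(\g, P^*) \to R$ and then restricting along the algebra embedding $\phi_P\colon \dU_R(\g^d) \hookrightarrow \tU^*_\e(\g, P^*)$ from Lemma~\ref{lem: Kostant form and complete form}. The extension is well-defined because formula~\eqref{idem pair}, evaluated on a fixed $z \in Z_{Fr}$ (whose Cartan part lies in $2P^*$ by Lemma~\ref{basis properties of Ze}), contains a Kronecker delta $\delta_{-\lambda/2, \dlambda}$ that selects only finitely many components of any element of $\tU^*_\e(\g, P^*)$. Thus $\<z, \cdot\>'$ depends only on finitely many coordinates, and is \emph{locally constant} in the topology of the completion.

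To verify the explicit formula~\eqref{eq: center pair 3}, I would compute $\<z, \phi_P(\dy u_0 \dx)\>'$ directly using the multiplicativity $\phi_P(\dy u_0 \dx) = \phi_P(\dy)\phi_P(u_0)\phi_P(\dx)$ and the explicit description of $\phi_P$ in Lemma~\ref{lem: Kostant form and complete form}. The Cartan factor $\phi_P(u_0) = \prod_{\nu \in P^*}\chi_\nu(u_0)1_\nu$ picks out exactly $\chi_{-\lambda/2}(u_0)$ via the Kronecker delta, while the positive and negative parts produce the pairings $(y, \dx)'$ and $(\dy, x)'$ from~\eqref{<> center pair} (which factor through $\cFr^>$ and $\cFr^<$, respectively). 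Normalizations, including factors like $\e^{(2\rho,\nu)}$ and $\e^{(\dmu,\gamma(\dmu))-(\dlambda,\kappa(\dnu)+\gamma(\dmu))}$ from~\eqref{idem pair}, will need to be tracked carefully so that~\eqref{eq: center pair 3} emerges without spurious scalars.

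Adjoint $\dU_R(\g^d)$-invariance is the easiest part: the pairing~\eqref{idem pair} is adjoint $\dU_\e(\g)$-invariant, and by Remark~\ref{actions of U(gd)} both actions on $Z_{Fr}$ and on $\hU^*_\e(\g, P^*)$ factor through $\cFr$, upgrading the invariance to adjoint $\dU_R(\g^d)$-invariance. This is preserved upon extending the pairing by continuity to $\tU^*_\e(\g, P^*)$, and precomposing with $\phi_P$ preserves it because $\phi_P$ is a $\dU_R(\g^d)$-module homomorphism, as noted in the Remark following Proposition~\ref{support lem}.

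Non-degeneracy in the first argument is a density argument: if $z \in Z_{Fr}$ pairs to zero with $\phi_P(\dU_R(\g^d))$, then by local constancy of $\<z, \cdot\>'$ and the density of $\phi_P(\dU_R(\g^d))$ in $\tU^*_\e(\g, P^*)$ established in Lemma~\ref{lem: Kostant form and complete form}(b), we deduce that $\<z, \cdot\>'$ vanishes on all of $\tU^*_\e(\g, P^*)$, hence in particular on the subspace $\hU^*_\e(\g, P^*)$; the non-degeneracy of~\eqref{center pair} then forces $z = 0$. The main obstacle I anticipate is the bookkeeping in verifying~\eqref{eq: center pair 3}: since $\phi_P$ fails to be a Hopf morphism, any coproduct-based reasoning must be carefully avoided, and only the algebra structure can be used in the verification.
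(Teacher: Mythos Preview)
Your overall architecture—extend to $\tU^*_\e(\g,P^*)$ by continuity, then restrict along an embedding of $\dU_R(\g^d)$—is exactly the paper's strategy. The gap is in \emph{which} embedding you chose. The paper does \emph{not} use $\phi_P$; it uses the composite
\[
  \dU_R(\g^d)\ \xleftarrow[\sim]{\ \phi\ }\ A_{Q^*}\ \xhookrightarrow{\ \psi^*_1\ }\ \tU^*_\e(\g,P^*),
\]
cf.\ Lemma~\ref{Lem about A_Q and tU^*} and the Remark following it, where the authors explicitly say the \emph{first} inclusion (via $A_{Q^*}$) is the one used in Section~\ref{sec: Frobenius kernel}. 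These two embeddings agree only after projecting to $\tU^*_\e(\g,Q^*)$; on the components $1_{\dlambda}$ with $\dlambda\in P^*\setminus Q^*$, which the pairing with $K^\lambda$ for general $\lambda\in 2P^*$ does access, they differ.

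This difference causes two concrete failures. First, the Remark after Proposition~\ref{support lem} does not establish that $\phi_P$ is $\dU_R(\g^d)$-equivariant—its argument relies on the map being a Hopf morphism, which $\phi_P$ is not (as the paper itself notes). Indeed, the action of $e_i$ on $\tU^*_\e(\g,P^*)$ is $\e^*_i[\te_i,-]\cdot\psi^*_1(K^{\zeta^{*>}_i})$, and $\psi^*_1(K^{\zeta^{*>}_i})=\prod_\nu\e^{(\zeta^{*>}_i,\nu)}1_\nu$ is \emph{not} the identity on components $\nu\in P^*\setminus Q^*$ (e.g.\ in type $A_2$ with $3\nmid\ell$). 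So your ``easiest part'' actually breaks. Second, the formula you obtain carries an extra factor $\e^{(\lambda/2,\,\kappa(\dnu)+\gamma(\dmu))}$ that depends on $\lambda$ and does not cancel; the formula~\eqref{eq: center pair 3} therefore cannot emerge from $\phi_P$. The paper's route through $A_{Q^*}$ is designed precisely so that the elements have the shape $(\dy K^{\kappa(\dnu)})u_0(\dx K^{\gamma(\dmu)})$, which reproduces the non-idempotented pairing~\eqref{pairing of DCK and Lus rewritten} and kills the spurious factor—this is the content of the sentence ``the coefficient \ldots does not show up because we go back to the non-idempotent version'' in the paper's proof.
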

The construction is given in the proof.

\begin{proof} 
We have the following inclusions of $\dU_R(\g^d)$-modules:
\begin{equation}\label{eq: morphisms}\dU_R(\g^d) \overset{\phi^{-1}}{\cong}A_{Q^*}\xhookrightarrow[]{\psi^*_1} \tU^*_\e(\g, P^*) \hookleftarrow \hU^*_\e(\g, P^*).
\end{equation}
The pairing $Z_{Fr} \x \hU^*_\e(\g, P^*) \rightarrow R$ naturally extends to a pairing $Z_{\Fr} \x \tU^*_\e(\g, P^*)\rightarrow R$. Since $\hU^*_\e(\g, P^*)$ is a dense  $\dU_R(\g^d)$-submodule of $\tU^*_\e(\g, P^*)$, the pairing $Z_{Fr} \x \tU^*_\e(\g, P^*) \rightarrow R$ is adjoint $\dU_R(\g^d)$-invariant and non-degenerate in the first argument. Since the map $\psi^*_1: A_{Q^*} \hookrightarrow \tU^*_\e(\g, P^*)$  is an injective morphism of $\dU_R(\g^d)$-modules with a dense image by Lemma \ref{Lem about A_Q and tU^*}, the pairing $Z_{\Fr} \x A_{Q^*} \rightarrow R$ obtained by restriction is adjoint $\dU_R(\g^d)$-invariant and non-degenerate in the first argument. The pairing $Z_{Fr} \x A_{Q^*} \rightarrow R$ is explicitly given by (cf.~\eqref{idem pair}):
\begin{equation*}
  \Big\<(yK^{\kappa(\nu)})K^\lambda (xK^{\gamma(\mu)}), (\dy K^{\kappa(\dnu)})u_0(\dx K^{\gamma(\dmu)})\Big\>' = 
  (y,\dx)'(\dy,x)' \hat{\chi}_{-\lambda/2}(u_0)\epsilon^{(2\rho, \nu)}
\end{equation*}
with $\hat{\chi}_{-\lambda/2}$ of~\eqref{eq: defi of chi-lambda}, for any $\nu, \mu, \dnu, \dmu \in Q^*_+, \lambda \in 2P^*$, 
$u_0 \in A^0_{Q^*}$, $y \in Z^<_{Fr, -\nu}, x\in Z^>_{Fr, \mu}, \dy \in \dU^{*<}_\e(\g)_{-\dnu}, \dx \in \dU^{*>}_\e(\g)_{\dmu}$. Note that the coefficient $\e^{(\dmu, \gamma(\dmu))-(\dlambda, \kappa(\dnu)+\gamma(\dmu))}$ in \eqref{idem pair} does not show up because we go back to the non-idempotent version, compare to \eqref{pairing of DCK and Lus rewritten}. 
Moreover, $\epsilon^{(2\rho, \nu)}=1$ for $\nu \in Q^*_+$, hence
\begin{equation}\label{center pair 2} 
  \Big\<(yK^{\kappa(\nu)})K^\lambda (xK^{\gamma(\mu)}), (\dy K^{\kappa(\dnu)})u_0(\dx K^{\gamma(\dmu)})\Big\>' = 
  (y,\dx)'(\dy,x)' \hat{\chi}_{-\lambda/2}(u_0),
\end{equation}
with indices as above.

 Now using the isomorphism of $\dU_R(\g^d)$-modules $A_{Q^*} \xrightarrow[]{\phi} \dU_R(\g^d)$ in Lemma  \ref{Lem about A_Q and tU^*},
we obtain the desired $\dU_R(\g^d)$-invariant pairing $Z_{Fr}\x \dU_R(\g^d)\rightarrow R$. The equation \eqref{eq: center pair 3} follows from \eqref{center pair 2} by the following observations:

$\bullet$ $\hat{\chi}_{-\lambda/2}: \dU_R^0(\g^d)\cong A^0_{Q^*} \rightarrow R$ coincides with $\chi_{-\lambda/2}: \dU_R^0(\g^d)\rightarrow R$.

$\bullet$ The pairing $(y, \dx)': Z^<_{Fr}\x \dU^{*>}_\e \rightarrow R$ in \eqref{center pair 2} is obtained from the pairing $Z^<_{Fr}\x \dU^>_\e \rightarrow R$ by passing through the quotient $\tFr^>: \dU^>_\e \rightarrow \dU^{*>}_\e$. On the other hand, the morphism $\cFr^>: \dU^>_\e \rightarrow \dU^>_R(\g^d)$ is equal to the composition $ \dU^>_\e\xrightarrow[]{\tFr^>} \dU^{*>}_\e \xrightarrow[]{\phi} \dU^>_R(\g^d)$.
\end{proof}

\begin{Lem}\label{Z< and U>} 
(a) For any $x_1,x_2\in Z^>_{Fr}$ and $\dy \in \dU^<_R(\g^d)$, we have
\begin{equation}\label{eq19}
  (\dy, x_1x_2)'=(\dy_{(1)}, x_1)'(\dy_{(2)}, x_2)' \,.
\end{equation}

\noindent
(b) For any $y_1, y_2 \in Z^<_{Fr}$ and $\dx \in \dU^>_R(\g^d)$, we have
\begin{equation}\label{eq18}
  (y_1y_2, \dx)'=(y_1, \dx_{(1)})'(y_2, \dx_{(2)})' \,.
\end{equation}
\end{Lem}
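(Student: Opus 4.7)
The strategy for both parts is the same: lift an element of the Kostant form $\dU_R(\g^d)$ along the Hopf surjection $\cFr$ of Proposition~\ref{Fr in Lus form} to a representative in $\dU_\e(\g)$, apply the Hopf pairing identity \eqref{pairing condition 1} for the underlying pairing $(\cdot,\cdot)'\colon \dU^\leqslant_\e\times U^{ev\geqslant}_\e\to R$ (and its partner), absorb the Cartan factors appearing in the twisted coproduct \eqref{eq:twisted Hopf on divided powers} via the Cartan reduction \eqref{eq:new-Cartan-reduction}, and finally compare with the cocommutative coproduct on $\dU_R(\g^d)$. The crucial input is that $\cFr$ is a Hopf morphism with $\cFr(K^\mu)=1$ for all $\mu\in 2P$, so the Cartan pieces of $\Delta'(\tilde x)$ (resp.\ $\Delta'(\tilde y)$) are annihilated on passing to the target, and what remains is exactly the cocommutative Hopf structure of the Kostant form.

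For part (b), I would pick any lift $\tilde{x}\in\dU^>_\e$ of $\dx$, so that $(y,\dx)'=(y,\tilde{x})'$ for $y\in Z^<_{Fr}$ by the construction in \eqref{<> center pair}. By \eqref{pairing condition 1},
\[
(y_1 y_2,\tilde{x})'=\sum_i (y_1,\alpha_i)'\,(y_2,\beta_i^+K^{\lambda_i})',
\]
where $\Delta'(\tilde{x})=\sum_i \alpha_i\otimes \beta_i^+ K^{\lambda_i}$ with $\alpha_i,\beta_i^+\in\dU^>_\e$ and $\lambda_i\in 2P$ (from \eqref{eq:twisted Hopf on divided powers} the first tensorand has no Cartan part). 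Since $y_2\in U^{ev<}_\e\subset \bU^<_\Phi$ has no Cartan factor, the Cartan reduction \eqref{eq:new-Cartan-reduction} (with $\lambda=0$ in its notation) gives $(y_2,\beta_i^+K^{\lambda_i})'=(y_2,\beta_i^+)'$ with no correction. Applying $\cFr\otimes\cFr$ to $\Delta'(\tilde{x})$ and invoking $\cFr(K^{\lambda_i})=1$ yields $\Delta(\dx)=\sum_i \cFr^>(\alpha_i)\otimes \cFr^>(\beta_i^+)$; combining this with the defining identity $(y,\alpha)'=(y,\cFr^>(\alpha))'$ of the restricted pairing from \eqref{<> center pair} for $y\in Z^<_{Fr}$, the assertion \eqref{eq18} drops out.

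Part (a) proceeds analogously with a lift $\tilde{y}\in\dU^<_\e$ of $\dy$ and an expansion $\Delta'(\tilde{y})=\sum_i \alpha_i\otimes \beta_i^- K^{\mu_i}$, again with $\alpha_i,\beta_i^-\in\dU^<_\e$ and $\mu_i\in 2P$; the Hopf pairing produces $(\tilde{y},x_1x_2)'=\sum_i (\alpha_i,x_2)'(\beta_i^-K^{\mu_i},x_1)'$. The hard part will be handling the Cartan reduction here, since \eqref{eq:new-Cartan-reduction} now contributes a nontrivial factor $v^{(\mu_i,\deg(x_1))}$. I expect to verify that this factor, together with the $v$-scalars arising when normalizing the coproduct into the form above, evaluates to $1$ at $v=\epsilon$ on every summand contributing a nonzero pairing. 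For a generator $\tilde{y}=\tF_j^{(s)}$, the nonzero terms require $\deg(x_2)=c\alpha_j$ and $\deg(x_1)=(s-c)\alpha_j$ with $\ell_j\mid c$ and $\ell_j\mid(s-c)$ (because $Z^>_{Fr}=R[\tE_\alpha^{\ell_\alpha}]$), and then both the prefactor $v_j^{2c(s-c)}$ in \eqref{eq:twisted Hopf on divided powers} and the Cartan correction $v^{(c\zmu_j,\deg(x_1))}=v_j^{-2c(s-c)}$ evaluate to $1$ using $\epsilon_j^{2\ell_j}=1$; for more complicated monomials $\tilde{y}$, the commutation scalar picked up when moving $K^{\mu_i}$ past $\tF$-factors exactly cancels the residual Cartan correction, a cancellation ultimately reflecting the Hopf-morphism identity $(\cFr\otimes\cFr)\Delta'(\tilde{y})=\Delta(\dy)$. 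Once these factors are absorbed, cocommutativity of $\dU^<_R(\g^d)$ lets one reorder the Sweedler legs into the shape required by \eqref{eq19}, completing the argument.
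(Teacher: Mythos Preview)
Your approach and the paper's share the same three ingredients: the Hopf pairing property \eqref{pairing condition 1}, the fact that $\cFr$ is a Hopf algebra morphism sending $K^\lambda\mapsto 1$, and cocommutativity of $\dU_R(\g^d)$. The paper packages this more cleanly by working directly with the Hopf subalgebras $A^<,A^>$ of Remark~\ref{rem:A< and A>} (generated by $\dU^<_\e$ resp.\ $\dU^>_\e$ together with $\{K^\lambda\}_{\lambda\in 2P}$). Since $A^<$ is closed under $\Delta'$, the identity $(\dy,x_1x_2)'=(\dy_{(2)},x_1)'(\dy_{(1)},x_2)'$ holds on $A^<\times Z^>_{Fr}$ with no splitting into $\dU^<_\e$-part and Cartan part; one then checks in a single line that $Z^>_{Fr}$ is orthogonal to $\ker(\cFr|_{A^<})$ via $(\dy(K^\lambda-1),x)'=(\dy,x)'(\e^{(\lambda,\mu)}-1)=0$ for $\lambda\in 2P$, $\mu=\deg(x)\in Q^*$. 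So the pairing descends and cocommutativity finishes. This bypasses your manual decomposition and scalar tracking entirely.

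Your account of the general case of (a) is confused, though fixable. The ``commutation scalars'' picked up when rewriting $\Delta'(\tilde y)$ in the form $\sum_i\alpha_i\otimes\beta_i^-K^{\mu_i}$ do \emph{not} cancel the Cartan correction $\epsilon^{(\mu_i,\deg(x_1))}$, nor do they evaluate to $1$ on the nonzero-pairing summands in general (already for $\tilde y=\tF_j^{(\ell_j)}\tF_j^{(\ell_j)}$ one finds contributing terms with nontrivial prefactor $\epsilon_j^{2c(\ell_j-c)}\ne 1$). The correct observation is that the Cartan correction is $1$ on its own: since $\mu_i\in 2P$ and $\deg(x_1)\in Q^*$, one has $\epsilon^{(\mu_i,\deg(x_1))}=1$ by exactly the computation you did in the single-generator case. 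All remaining scalars (the $v_j^{2c(s-c)}$ prefactors and the commutation scalars) are part of $\Delta'(\tilde y)$ itself and get carried to $\Delta(\dy)$ by $\cFr\otimes\cFr$; they need not, and do not, vanish separately. With this one correction, your argument in (a) goes through just as cleanly as your argument in (b).
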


\begin{proof}
Let us recall the $R$-Hopf subalgebras $A^>$ and $A^<$ from Remark~\ref{rem:A< and A>}. The homomorphism 
of Hopf algebras $\cFr\colon \dU_\e(\g)\rightarrow \dU_R(\g^d)$ restricts to  homomorphisms of Hopf algebras:
\[ 
  \cFr^{A,>}\colon A^> \longrightarrow \dU^>_R(\g^d) \,, \qquad \cFr^{A,<}\colon A^< \longrightarrow \dU^<_R(\g^d) \,.
\]

\noindent
(a) The Hopf pairing $(\cdot,\cdot)'\colon \dU^{\leqslant}_\e \x U^{ev\, \geqslant}_\e \rightarrow R$ restricts to the pairing 
$(\cdot,\cdot)'\colon A^< \x Z^>_{Fr} \rightarrow R$ which satisfies
\begin{equation}\label{eq: A< x Z>} (\dy, x_1x_2)'=(\dy_{(2)}, x_1)'(\dy_{(1)}, x_2)'
\end{equation}
for $\dy \in A^<$ and $x_1, x_2\in Z^>_{Fr}$.
The kernel of the map $\cFr^{A,<}$ is spanned over $R$ by $\ker(\cFr^<)$ and $\dU^<_\e(K^\lambda-1)$ with $\lambda \in 2P$. Moreover, $Z^>_{Fr}$ is orthogonal to $\dU^<_\e(K^\lambda-1)$, indeed, for any $x\in Z^>_{Fr, \mu}$ for any $\mu \in Q^*_+$ and $\dy \in \dU^<_\e$, we have 
\[ 
  (\dy(K^\lambda-1), x)'=(\dy, x)'(\e^{(\lambda, \mu)}-1)=0 \,,
\]
since $\mu \in Q^*_+$ and $\lambda \in 2P$. Therefore, $Z^>_{Fr}$ is orthogonal to $\Ker(\cFr^{A,<})$. Hence the pairing $A^<\x Z^>_{Fr} \rightarrow R$ gives a rise to a pairing $\dU^<_R(\g^d) \x Z^>_{Fr}\rightarrow R$ by descending to the quotient, which coincides with the pairing $\dU^<_R(\g^d) \x Z^>_{Fr} \rightarrow R$ in \eqref{<> center pair}. By \eqref{eq: A< x Z>}, it follows that the pairing $\dU^<_R(\g^d) \x Z^>_{Fr} \rightarrow R$ satisfies:
\begin{equation*} (\dy, x_1x_2)'=(\dy_{(2)}, x_1)'(\dy_{(1)}, x_2)'
\end{equation*}
for $\dy\in \dU^<_R(\g^d)$ and $x_1, x_2 \in Z^>_{Fr}$. But $\dU^<_R(\g^d)$ is cocommutative, hence part $(a)$ follows.


\noindent
(b) The proof is similar.
\end{proof}

Let us consider the embedding $Z_{Fr} \hookrightarrow \Hom_R(\dU_R(\g^d), R)$ arising from Lemma \ref{lem center pair}. Note that the algebra structure on $\Hom_R(\dU_R(\g^d), R)$ comes from the coalgebra structure on $\dU_R(\g^d)$.

\begin{Lem}\label{lem1}
The above embedding $Z_{Fr} \hookrightarrow \Hom_R(\dU_R(\g^d), R)$ is an $R$-algebra homomorphism.
\end{Lem}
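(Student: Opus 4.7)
The plan is to verify the multiplicativity of the map $z \mapsto \langle z,-\rangle'$ directly on a system of generators, using the explicit formula \eqref{eq: center pair 3} together with the standard Hopf-pairing manipulations available in our setting.

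First I would reduce to homogeneous generators: by $R$-bilinearity of $\langle \cdot,\cdot\rangle'$ and the triangular decomposition $Z_{Fr}=Z^<_{Fr}\otimes_R Z^0_{Fr}\otimes_R Z^>_{Fr}$ established in Lemma~\ref{basis properties of Ze}(c), it suffices to take $z_i=(y_i K^{\kappa(\nu_i)}) K^{\lambda_i}(x_i K^{\gamma(\mu_i)})$ for $i=1,2$, where $y_i\in Z^<_{Fr,-\nu_i}$, $x_i\in Z^>_{Fr,\mu_i}$ are homogeneous, $\nu_i,\mu_i\in Q^*_+$, and $\lambda_i\in 2P^*$. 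The centrality of $Z_{Fr}$ inside $U^{ev}_\e$ from Lemma~\ref{basis properties of Ze}(a) makes $Z_{Fr}$ commutative, and the three subalgebras $Z^<_{Fr},Z^0_{Fr},Z^>_{Fr}$ pairwise commute. Therefore the product $z_1z_2$ can be rewritten as $(y_1y_2)\,K^{\Lambda}\,(x_1x_2)$ with $\Lambda=\kappa(\nu_1)+\lambda_1+\gamma(\mu_1)+\kappa(\nu_2)+\lambda_2+\gamma(\mu_2)$, and this corresponds to the normalized form $(y_1y_2\,K^{\kappa(\nu_1+\nu_2)})\,K^{\lambda_1+\lambda_2}\,(x_1x_2\,K^{\gamma(\mu_1+\mu_2)})$ demanded by \eqref{eq: center pair 3}.

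Next I would unwind the right-hand side $\langle z_1,u_{(1)}\rangle'\,\langle z_2,u_{(2)}\rangle'$. By the triangular decomposition of Lemma~\ref{lem: triangular-PBW-kostant form} it is enough to take $u=\dy\, u_0\, \dx$ with $\dy\in\dU^<_R(\g^d)$, $u_0\in\dU^0_R(\g^d)$, $\dx\in\dU^>_R(\g^d)$. Each of these three subalgebras is a Hopf subalgebra of $\dU_R(\g^d)$ (it is the distribution algebra of $U^d_-$, $T^d$, or $U^d_+$ respectively; alternatively, one inspects the explicit coproducts of $f_i^{(n)}$, $\binom{h_i;0}{n}$, $e_i^{(n)}$). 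Consequently the coproduct of $u$ factors as $\Delta(u)=\sum \dy_{(1)}u_{0(1)}\dx_{(1)}\otimes \dy_{(2)}u_{0(2)}\dx_{(2)}$ with each tensorand landing in the corresponding subalgebra. Applying \eqref{eq: center pair 3} to each factor gives
\[
\langle z_1,u_{(1)}\rangle'\langle z_2,u_{(2)}\rangle'=\sum \chi_{-\lambda_1/2}(u_{0(1)})\chi_{-\lambda_2/2}(u_{0(2)})\,(y_1,\dx_{(1)})'(y_2,\dx_{(2)})'\,(\dy_{(1)},x_1)'(\dy_{(2)},x_2)'.
\]

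I would then collapse the three sums separately: the Cartan sum collapses to $\chi_{-(\lambda_1+\lambda_2)/2}(u_0)$ because each $\chi_\mu\colon \dU^0_R(\g^d)\to R$ is a character and $\chi_{\mu_1}\chi_{\mu_2}=\chi_{\mu_1+\mu_2}$; the $x$-sum collapses to $(y_1y_2,\dx)'$ by Lemma~\ref{Z< and U>}(b); and the $y$-sum collapses to $(\dy,x_1x_2)'$ by Lemma~\ref{Z< and U>}(a). Comparing with the direct application of \eqref{eq: center pair 3} to $z_1z_2$, which yields $\chi_{-(\lambda_1+\lambda_2)/2}(u_0)(y_1y_2,\dx)'(\dy,x_1x_2)'$, I obtain the desired equality $\langle z_1z_2,u\rangle'=\langle z_1,u_{(1)}\rangle'\langle z_2,u_{(2)}\rangle'$, as required for a homomorphism into $\Hom_R(\dU_R(\g^d),R)$.

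The main obstacle is purely bookkeeping: checking that the Cartan normalizations $(yK^{\kappa(\nu)})K^\lambda(xK^{\gamma(\mu)})$ required by formula \eqref{eq: center pair 3} match up cleanly under multiplication, i.e.\ that the ``normalized'' $\lambda$-parameter is additive under the product in $Z_{Fr}$. This follows from the identity $\Lambda_1+\Lambda_2-\kappa(\nu_1+\nu_2)-\gamma(\mu_1+\mu_2)=\lambda_1+\lambda_2$ verified above, which depends on the commutativity of $Z_{Fr}$ established in Lemma~\ref{basis properties of Ze}(a).
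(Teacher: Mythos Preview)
Your proof is correct and follows essentially the same approach as the paper's: both reduce to homogeneous elements in the triangular normalized form, expand $\langle z_1,u_{(1)}\rangle'\langle z_2,u_{(2)}\rangle'$ via \eqref{eq: center pair 3}, and collapse the three resulting sums using the character identity $\chi_{\mu_1}\chi_{\mu_2}=\chi_{\mu_1+\mu_2}$ together with Lemma~\ref{Z< and U>}. The only cosmetic difference is that you justify the rewriting of $z_1z_2$ in normalized form via the commutativity of $Z_{Fr}$, whereas the paper invokes directly that $K^{\gamma(\mu_i)},K^{\kappa(\nu_i)}$ are central in $U^{ev}_\e$ (Remark~\ref{rem:lattice-star}); both justifications are valid.
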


\begin{proof} 
 Let $a_i=(y_iK^{\kappa(\nu_i)})K^{\lambda_i}(x_iK^{\gamma(\mu_i)})$ with  
$y_i \in Z^<_{Fr, -\nu_i}, x_i\in Z^>_{Fr, \mu_i}$, $\lambda_i\in 2P^*$, $\nu_i, \mu_i \in Q^*_+$ for $i=1,2$. 
Then, for any $u=\dy u_0\dx\in \dU_R(\g^d)$, we need to verify the following equality: 
\[ 
  \< a_1a_2, u\>'= \<a_1, u_{(1)}\>'\<a_2, u_{(2)}\>' \,.
\]
Indeed, using the Sweedler's notation~\eqref{Sweedler}, we have:
\begin{align*}
  \<a_1, u_{(1)}\>' \<a_2, u_{(2)}\>' &= 
  \chi_{-\lambda_1/2}(u_{0(1)})(y_1, \dx_{(1)})'(\dy_{(1)}, x_1)' \,\cdot\, \chi_{-\lambda_2/2}(u_{0(2)})(y_2, \dx_{(2)})'(\dy_{(2)}, x_2)'\\
  &= \chi_{-\lambda_1/2}(u_{0(1)})\chi_{-\lambda_2/2}(u_{0(2)}) \,\cdot\, (y_1, \dx_{(1)})'(y_2, \dx_{(2)})' \,\cdot\, 
     (\dy_{(1)}, x_1)'(\dy_{(2)}, x_2)'\\
  &= \chi_{(-\lambda_1-\lambda_2)/2}(u_0)(y_1y_2, \dx)'(\dy, x_1x_2)' = \<a_1a_2, u\>' \,,
\end{align*}
where we used that $K^{\gamma(\mu_i)},K^{\kappa(\nu_i)}$ are central, due to Remark~\ref{rem:lattice-star}.
This completes the proof.
\end{proof}

We are now ready to identify $Z_{Fr}$ with $R[G^d_0]$. There are direct sum decompositions into weight components: 
$Z^>_{Fr}=\bigoplus_{\mu \in Q^*_+} Z^>_{Fr, \mu}$ and $Z^<_{Fr}=\bigoplus_{\nu \in Q^*_+} Z^<_{Fr, -\nu}$. Let 
\[ 
  \tZ^<_{Fr} =\bigoplus_{\nu\in Q^*_+} \tZ^<_{Fr, -\nu}\,, \quad 
  \tZ^>_{Fr}=\bigoplus_{\mu\in Q^*_+} \tZ^>_{Fr, \mu} \quad \mathrm{with} \quad
  \tZ^<_{Fr, -\nu}=Z^<_{Fr, -\nu}K^{\kappa(\nu)}\,, \quad 
  \tZ^>_{Fr, \mu}=Z^>_{Fr, \mu} K^{\gamma(\mu)}\,.
\]
Then we still have a decomposition $Z_{Fr}=\tZ^<_{Fr}\otimes_R Z^0_{Fr}\otimes_R \tZ^>_{Fr}$, cf.~Lemma \ref{basis properties of Ze}(c).

\begin{Prop}\label{Ze and Bruhat cell} 
There is a $\dU_R(\g^d)$-linear algebra isomorphism 
  $$ \varphi\colon Z_{Fr} \,\iso\, R[G^d_0] \simeq R[U^d_-]\otimes_R R[T^d]\otimes_R R[U^d_+] \,. $$ 
Furthermore, under this isomorphism, we have: $\tZ_{Fr}^< \simeq R[U^d_+]$, $\tZ_{Fr}^> \simeq R[U^d_-]$, $Z^0_{Fr} \simeq R[T^d]$. 
\end{Prop}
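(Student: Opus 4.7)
The plan is to leverage the non-degenerate adjoint invariant pairing $\langle\cdot,\cdot\rangle' \colon Z_{Fr} \times \dU_R(\g^d) \to R$ of Lemma~\ref{lem center pair}, which by Lemma~\ref{lem1} already provides an injective $R$-algebra homomorphism $\varphi \colon Z_{Fr} \hookrightarrow \Hom_R(\dU_R(\g^d), R)$. On the other side, $R[G^d_0]$ embeds into $\Hom_R(\dU_R(\g^d), R)$ via the standard ``evaluation at the identity'' pairing $(f, D) \mapsto D(f)(e)$, which makes sense because $e \in G^d_0$ and distributions at $e$ naturally extend from $R[G^d]$ to the localization $R[G^d_0]$. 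The task is to identify $\Img(\varphi)$ with $R[G^d_0]$ inside this common ambient space.

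I would then exploit the triangular decomposition $Z_{Fr} = \tilde{Z}^<_{Fr} \otimes_R Z^0_{Fr} \otimes_R \tilde{Z}^>_{Fr}$ of Lemma~\ref{basis properties of Ze}(c), which is a tensor decomposition of $R$-algebras because $Z_{Fr}$ is central in $U^{ev}_\e$ and hence its three factors mutually commute. This matches $R[G^d_0] = R[U^d_-] \otimes_R R[T^d] \otimes_R R[U^d_+]$ coming from the variety decomposition $G^d_0 = U^d_- \times T^d \times U^d_+$, so one can identify each tensor factor separately. For $Z^0_{Fr} = \bigoplus_{\lambda \in 2P^*} R K^\lambda$, formula~\eqref{eq: center pair 3} reduces to $\langle K^\lambda, u_0\rangle' = \chi_{-\lambda/2}(u_0)$, and since $R[T^d] = R[P^*]$ and $\lambda \mapsto -\lambda/2$ bijects $2P^*$ onto $P^*$, this yields an $R$-algebra isomorphism $Z^0_{Fr} \xrightarrow{\sim} R[T^d]$. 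For $\tilde{Z}^<_{Fr} \xrightarrow{\sim} R[U^d_+]$, Lemma~\ref{lem: perfect pairings} supplies a perfect pairing on each $Q^*_+$-graded component between $Z^<_{Fr}$ and $\dU^>_R(\g^d) = \Dist(U^d_+)$, identifying $\tilde{Z}^<_{Fr}$ with the graded dual of $\Dist(U^d_+)$; this graded dual is precisely $R[U^d_+]$ (via Chevalley coordinates, which realize $U^d_+$ as $\mathbb{A}^{|\Delta_+|}$ over $R$), and the algebra compatibility is delivered by Lemma~\ref{Z< and U>}(b), which matches multiplication in $\tilde{Z}^<_{Fr}$ with the convolution product induced by the coproduct on $\dU^>_R(\g^d)$. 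The symmetric argument using Lemma~\ref{Z< and U>}(a) yields $\tilde{Z}^>_{Fr} \xrightarrow{\sim} R[U^d_-]$.

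Combining these three isomorphisms via the algebra tensor decompositions on both sides produces the desired $R$-algebra isomorphism $\varphi \colon Z_{Fr} \xrightarrow{\sim} R[G^d_0]$. Its $\dU_R(\g^d)$-equivariance is immediate from the adjoint invariance of the pairing, with $R[G^d_0]$ carrying the $\dU_R(\g^d)$-action dual to the left adjoint action on $\dU_R(\g^d)$ (equivalently, the infinitesimal conjugation action of $G^d$ extended to the open subvariety $G^d_0$). The main technical hurdle I anticipate is checking that the graded-dual identification $R[U^d_\pm] \simeq \Dist(U^d_\pm)^*_{\mathrm{gr}}$ holds uniformly over $R$, in particular in positive characteristic; this is standard but requires invoking the free divided-power structure of $\Dist(U^d_\pm)$ together with the explicit polynomial description of $R[U^d_\pm]$ in Chevalley generators, and is bookkeeping rather than a genuine obstruction.
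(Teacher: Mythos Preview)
Your proposal is correct and follows essentially the same route as the paper: both embed $Z_{Fr}$ and $R[G^d_0]$ into $\Hom_R(\dU_R(\g^d),R)$ via their respective adjoint-invariant pairings and then match the three triangular factors using the perfect pairings of Lemma~\ref{lem: perfect pairings} and the identification $K^\lambda \leftrightarrow \chi_{-\lambda/2}$. The only cosmetic difference is that the paper invokes Lemma~\ref{lem1} once to know the global map is an algebra homomorphism and then matches the factors merely as $R$-submodules, whereas you rebuild the algebra structure factor-by-factor via Lemma~\ref{Z< and U>}; both are equivalent, and your concern about the graded-dual identification over general $R$ is exactly what Lemma~\ref{lem: perfect pairings} handles.
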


\begin{proof} 
We have two adjoint $\dU_R(\g^d)$-invariant pairings which are non-degenerate in the first arguments:  
\begin{equation*}
  Z_{Fr} \x \dU_R(\g^d) \longrightarrow R \qquad \mathrm{and} \qquad R[G^d_0] \x \dU_R(\g^d) \longrightarrow R \,.
\end{equation*}
They give us $R$-algebra embeddings 
\[ 
  Z_{Fr} \hookrightarrow \text{Hom}_R(\dU_R(\g^d), R) \hookleftarrow R[G^d_0] \,,
\]
which intertwine the $\dU_R(\g^d)$-actions. So it is enough to show that the images of $\tZ^<_{Fr}, \tZ^>_{Fr}, Z^0_{Fr}$ 
coincide with the images of $R[U^d_+], R[U^d_-], R[T^d]$ as $R$-submodules, respectively.

Due to Lemma~\ref{lem center pair}, we see that  $K^\lambda$ is identified with 
$\chi_{-\lambda/2} \in R[T^d]$, hence $Z^0_{Fr}$ is identified with $R[T^d]$ viewed as $R$-subalgebras of $ \text{Hom}_R(\dU_R(\g^d), R)$.

Let $\dU^0_R(\g^d)_+, \dU^>_R(\g^d)_+$ be the kernels of the counit maps restricted to $\dU^0_R(\g^d), \dU^>_R(\g^d)$, respectively. 
For any $z\in \tZ^>_{Fr, \mu}$, we have
\[ 
  \< z, \dy u_0 \dx\>'=0 \,,
\]
if either $u_0 \in \dU^0_R(\g^d)_+$ or $\dx\in \dU^>_R(\g^d)_+$. Moreover, the pairing 
$\tZ^>_{Fr, \mu} \x \dU^<_R(\g^d)_{ -\mu} \rightarrow R$ is a perfect pairing by Lemma \ref{lem: perfect pairings}
. Therefore, $\tZ^>_{Fr, \mu} $ must be identified with $R[U^d_-]_{-\mu}$, and so $\tZ^>_{Fr} \simeq R[U^d_-]$.

The proof of the coincidence of the images $\tZ^<_{Fr} \simeq R[U^d_+]$ is similar.
\end{proof}

\subsection{The locally finite part of $Z_{Fr}$ when $R=\BF$ algebraically closed field}\

 Let us define
\begin{equation}\label{eq: def of Zfin}
Z^{fin}_{Fr}:=\{ z\in Z_{Fr}| \dim_\BF(\dU_\BF(\g^d)z)< \infty\}
\end{equation}
This is an  subalgebra of $Z_{Fr}$  since $Z_{Fr}$ is a $\dU_{\BF}(\g^d)$-module algebra.

\begin{Lem}\label{lem: Zfin in field F}
(a) $Z^{fin}_{Fr} \cong \BF[G^d]$ under the isomorphism $\varphi$ in Proposition \ref{Ze and Bruhat cell}.

\noindent
(b) Let $\lambda_0=\sum_i \w^*_i=\sum_i \ell_i \w_i$, where $\w_i$ are the fundamental weights and $\ell_i$ are as in Section \ref{Quantum Frobenius}. Then $Z^{fin}_{Fr}[K^{2\lambda_0}]=Z_{Fr}$, where the left hand side is the localization of $Z^{fin}_{Fr}$ by $K^{2\lambda_0}$.

\noindent
(c) Suppose $\BF$ has characteristic $0$ then $Z^{fin}_{Fr}=\bigoplus_{\lambda \in P^*_+} \dU_\BF(\g^d)K^{-2\lambda}$.
\end{Lem}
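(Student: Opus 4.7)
The plan is to work through the isomorphism $\varphi\colon Z_{Fr}\iso\BF[G^d_0]$ of Proposition~\ref{Ze and Bruhat cell}, which intertwines the $\ad'$-action of $\dU_\BF(\g^d)$ with the infinitesimal conjugation action on $\BF[G^d_0]$ (dualizing the adjoint action on the cocommutative Hopf algebra $\dU_\BF(\g^d)=\Dist(G^d)$ yields conjugation on functions). Under this identification $Z^{fin}_{Fr}$ corresponds to the $\Dist(G^d)$-locally finite regular functions on $G^d_0$, and the task is to show this equals $\BF[G^d]$ in~(a), to identify $\varphi(K^{-2\lambda_0})$ with an equation of the divisor $G^d\setminus G^d_0$ in~(b), and to invoke Peter--Weyl in~(c).

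For part~(a), the inclusion $\BF[G^d]\hookrightarrow\BF[G^d_0]^{\Dist(G^d)\text{-lf}}$ is immediate since $\BF[G^d]$ is a union of finite dimensional rational $G^d$-modules under conjugation. For the reverse containment, given a locally finite $f\in\BF[G^d_0]$, the cyclic submodule $\Dist(G^d)\cdot f$ is finite dimensional and integrates to a rational $G^d$-module (Lie's theorem in characteristic zero; the hyperalgebra--rational module equivalence in positive characteristic). The integrated conjugation action realises every translate $g\cdot f$ as a regular function on $G^d_0$, whence $f$ is regular on $\bigcup_{g\in G^d}gG^d_0g^{-1}$; the complement $\bigcap_g g(G^d\setminus G^d_0)g^{-1}$ is a $G^d$-stable proper closed subset of the divisor $G^d\setminus G^d_0$ (proper because $G^d_0$ contains regular semisimple elements of $T^d$ whose $G^d$-conjugates form a dense open subset of $G^d$), so it has codimension at least two in the smooth variety $G^d$, and $f$ extends regularly to all of $G^d$ by normality.

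For part~(b), a direct calculation with~\eqref{eq:explicit adjoint action} using $(\a_j,2\lambda_0)=2\ell_j\sd_j\in\ell\BZ$ gives $\ad'(\tE_j)(K^{\pm 2\lambda_0})=0$, and similarly for $\tF_j$ and $K^\mu-1$, so $K^{\pm 2\lambda_0}\in Z^{fin}_{Fr}$. I then identify $\varphi(K^{-2\lambda_0})$, up to a unit, with the regular function $\Delta_{\lambda_0}:=\prod_{i=1}^{r}\Delta_{\w^*_i}\in\BF[G^d]$, where $\Delta_{\w^*_i}(g):=\langle v^*_{\w^*_i},g\cdot v_{\w^*_i}\rangle$ is the extremal matrix coefficient of the fundamental $G^d$-representation $V(\w^*_i)$: the pairing~\eqref{eq: center pair 3} shows that both sides restrict to $\chi_{\lambda_0}$ on $T^d\subset G^d_0$ and are invariant under left $U^d_-$- and right $U^d_+$-translation, so they coincide on the whole big cell $G^d_0=U^d_-T^d U^d_+$. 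Classically $G^d_0$ is the nonvanishing locus of $\Delta_{\lambda_0}$, hence $\BF[G^d_0]=\BF[G^d][\Delta^{-1}_{\lambda_0}]$, which transports through $\varphi$ to $Z_{Fr}=Z^{fin}_{Fr}[K^{2\lambda_0}]$.

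For part~(c), combining~(a) with the characteristic-zero Peter--Weyl decomposition yields
\[
  Z^{fin}_{Fr}\cong\BF[G^d]=\bigoplus_{\lambda\in P^*_+}V(\lambda)^*\otimes V(\lambda)
\]
as a direct sum of $\dU_\BF(\g^d)$-stable submodules under the diagonal conjugation action. The calculation of~(b) identifies $K^{-2\lambda}$ under $\varphi$ with (a unit multiple of) the pure tensor $v^*_\lambda\otimes v_\lambda\in V(\lambda)^*\otimes V(\lambda)$, so the remaining step is to show that the $\dU_\BF(\g^d)$-cyclic submodule of $V(\lambda)^*\otimes V(\lambda)$ generated by $v^*_\lambda\otimes v_\lambda$ fills the entire summand: applying the raising operators $e_i$ collapses to $(e_iv^*_\lambda)\otimes v_\lambda$ since $e_iv_\lambda=0$, iterated applications produce $V(\lambda)^*\otimes v_\lambda$ (as $v^*_\lambda$ is a highest weight vector for $V(\lambda)^*$ relative to the opposite Borel), and further $f_i$'s then generate the remaining $V(\lambda)$-factor modulo already-obtained terms via a short induction on total weight. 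The main obstacle I anticipate is the positive-characteristic case of part~(a) --- both stating cleanly that finite dimensional locally finite $\Dist(G^d)$-modules are rational $G^d$-modules, and carrying out the codimension-two extension without recourse to genuine $G^d$-integration --- while in part~(b) the bookkeeping required to pin down $\varphi(K^{-2\lambda_0})=c\cdot\Delta_{\lambda_0}$ with an explicit unit $c\in\BF^{\times}$ (rather than merely up to a unit) coming from the Cartan-twist conventions in~\eqref{eq: center pair 3} will need attention but should be routine.
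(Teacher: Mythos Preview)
Your arguments for (b) and (c) coincide with the paper's: identify $\varphi(K^{-2\lambda})$ with the extremal matrix coefficient $c_{(v^d_\lambda)^*,v^d_\lambda}$ via~\eqref{eq: center pair 3}, recognise $G^d_0$ as the nonvanishing locus of $c_{(v^d_{\lambda_0})^*,v^d_{\lambda_0}}$, and invoke Peter--Weyl in characteristic~$0$. The paper does not bother proving cyclicity of $v^*_\lambda\otimes v_\lambda$ separately, writing instead $\BF[G^d]=\bigoplus_\lambda \dU_\BF(\g^d)c_{(v^d_\lambda)^*,v^d_\lambda}$ directly from Peter--Weyl.

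For (a) your route differs from the paper's. The paper observes that \emph{every} conjugacy class meets $G^d_0$ (since $G^d_0\supset B^d$ and every element is conjugate into a Borel), then forms the finitely generated $G^d$-algebra $C=\BF[G^d][V]\subset\BF[G^d_0]$ for a finite dimensional locally finite $V$, sets $X=\Spec C$, and argues via the map $X\to G^d$ that the $G^d$-saturation of $X_f\cong G^d_0$ already covers $G^d$, forcing $C=\BF[G^d]$. Your translate-and-extend argument is also valid, but one step is stated too loosely: from ``$G^d$-stable proper closed subset of the divisor $G^d\setminus G^d_0$'' you jump to codimension~$\geq 2$, which is not automatic when the divisor is reducible (it has one component per simple root). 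You would need to check that no component $D_i$ is conjugation-stable --- true, since $D_i$ meets the dense regular semisimple locus yet misses $T^d$ --- or, more simply, adopt the paper's observation that $\bigcup_g gG^d_0g^{-1}=G^d$ outright, making the Hartogs step unnecessary. Both approaches implicitly use that finite dimensional $\Dist(G^d)$-modules integrate to rational $G^d$-modules; you rightly flag this, and the paper leaves it tacit.
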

\begin{proof}
(a) We only need to show that the $\dU_\BF(\g^d)$-locally finite part of $\BF[G^d_0]$ is $\BF[G^d]$. First, all conjugacy classes of $G^d(\BF)$ intersect $G^d_0(\BF)$ because $G^d_0(\BF)$ contains a Borel subgroup $B^d(\BF)$ and any element of $G^d(\BF)$ can be conjugated to some element in $B^d(\BF)$.

Let $V^d(\lambda_0)$ be the Weyl representation of $\dU_\BF(\g^d)$. Let $v^d_{\lambda_0}$ be the highest weight vector of $V^d(\lambda_0)$ and $(v^d_{\lambda_0})^*$ be the dual weight basis of $v^d_{\lambda_0}$ in $(V^d(\lambda_0))^*$. By using the Bruhat cell decomposition $G^d(\BF)=\bigsqcup_{w\in W} U^d_-(\BF) \dot{w} T^d(\BF)U^d_+(\BF)$, one can show that the vanishing locus of  $f= c_{(v^d_{\lambda_0})^*, v^d_{\lambda_0}}$ is $G^d(\BF)\backslash G^d_0(\BF)$, the complement of the open Bruhat cell $G^d_0(\BF)$ in $G^d(\BF)$.

Let $V$ be a finite dimensional $\dU_\BF(\g^d)$-submodule of $\BF[G^d_0]$. Let $C$ be a subalgebra of $\BF[G^d_0]$ generated by $\BF[G^d]$ and $V$. Then $C$ is a $\dU_\BF(\g^d)$-locally finite and  finitely generated as an algebra. Let $X=\mathrm{Spec}(C)$. We have an inclusion of reduced $G^d(\BF)$-algebras $\BF[G^d] \hookrightarrow C$, inducing an isomorphism of localizations $\BF[G^d]_f \cong C_f \cong \BF[G^d_0]$. Hence a map of $G^d(\BF)$-varieties $\psi: X\rightarrow G^d(\BF)$ such that $X_f \rightarrow G^d_0[\BF]$ is an isomorphism, where $X_f =\mathrm{Spec}(C_f)$. Let $G^d X_f$ be the minimal $G^d$-subvariety of $X$ containing $X_f$. Since $G^d_0(\BF)$ intersects all $G^d(\BF)$-orbits, we have that $\psi: G^d X_f \rightarrow G^d$ is an isomorphism. On the other hand, since $C$ is an integral domain, it follows that $X$ is an irreducible variety and $G^d X_f$ is an open dense subset of $X$. Therefore, a composition of the inclusion map $\BF[G^d_0] \hookrightarrow C \rightarrow \BF[G^d X_f]$ is an isomorphism, so that $\BF[G^d] =C$. This implies that any finite dimensional $\dU_\BF(\g^d)$-submodule $V$ of $\BF[G^d_0]$ is contained to $\BF[G^d]$.

\noindent
(b)Using the equation \eqref{eq: center pair 3}, it is easy to show that $\varphi(K^{-2\lambda_0})=c_{(v^d_{\lambda_0})^*, v_{\lambda_0}}$. Then part (b) follows by part (a).

\noindent
(c) For any $\lambda \in P^*_+$, let $V^d(\lambda)$ be the Weyl module of $\dU_\BF(\g^d)$. Let $v^d_\lambda$ be the nonzero highest weight vector of $V^d(\lambda)$ and $(v^d_\lambda)^*$ be the dual weight vector. When $\BF$ has characteristics 0, $\BF[G^d]=\bigoplus \dU_\BF(\g^d) c_{(v^d_\lambda)^*, v^d_\lambda}$ by the Peter-Weyl theorem. Using the equation \eqref{eq: center pair 3} again, $\varphi(K^{-2\lambda})=c_{v^d_\lambda, v_\lambda}$ for $\lambda \in P^*_+$. Hence part (c) follows.
\end{proof}

\section{Rational representations of $\dmU_q(\g)$ and $\dU_q(\g)$}\label{sec rational reps}


Consider the ring $\uCA:=\BZ[v,v^{-1}]$ so that $\CA$ is a finite localization of $\uCA$. 
Let $R$ be an $\uCA$-algebra and $q$ denote the image of $v$ in $R$. Then we can form the Lusztig form 
$\dmU_q(\g)$ over $R$ as above. 

The goal of this section is to review several known results and constructions related 
to the rational representations of $\dmU_q(\g)$. We then carry  these results to rational representations of $\dU_q(\g)$. 


\begin{defi}\label{defi:rat_rep}
Let $M$ be a $\dmU_q(\g)$-module. We say that it is {\it rational of type 1} (in what follows, just rational) 
if the following conditions hold:
\begin{itemize}
    \item[(i)] $M$ is a  weight module, i.e., there is a decomposition $M=\bigoplus_{\lambda\in P}M_\lambda$, 
    where $K_i$ acts on $M_\lambda$ by $q^{(\lambda, \a_i)}$ and $\bmat{K_i; 0 \\ t}$ acts by $\bmat{(\lambda, \a^\vee_i)\\ t}_{q_i}$ for 
    all $i=1,\ldots,r$ and all $t>0$.
    \item[(ii)] For any $m\in M$ there is $k>0$ such that $E_i^{(s)}m=0$ for all $s>k$ and all $i=1,\ldots,r$.
    \item[(iii)] For any $m\in M$ there is $k>0$ such that $F_i^{(s)}m=0$ for all $s>k$ and all $i=1,\ldots,r$.
\end{itemize}
\end{defi}
\begin{defi} Let $\dmU_q(\g)\Lmod$ be the category of left $\dmU_q(\g)$-modules. Let $\Rep(\dmU_q(\g))$ be the category of rational representations of $\dmU_q(\g)$. The morphisms in $\Rep(\dmU_q(\g))$ are morphisms of left $\dmU_q(\g)$-modules which preserve the weight decompositions. 
\end{defi}


\begin{Prop}\label{prop: Rep(Uq(g)) is full subcat} The natural functor $\mathscr{I}: \Rep(\dmU_q(\g))\rightarrow \dmU_q(\g)\Lmod$ is fully faithful. Moreover, the image is closed under taking subquotients.
\end{Prop}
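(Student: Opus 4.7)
The plan is to reduce everything to the statement that the characters $\hat{\chi}_\lambda\colon \dmU^0_q\to R$, $\lambda\in P$, given by $K^\mu\mapsto q^{(\mu,\lambda)}$ and $\binom{K_i;0}{t}\mapsto \binom{(\alpha_i^\vee,\lambda)}{t}_{q_i}$, are pairwise distinct and ``separate points'' in an appropriate sense. These are exact analogs of Lemmas \ref{lem: independent of characters} and \ref{lem: dense subalgebras}, which were formulated for the twisted form $\dU^0_q$; the proofs carry over to $\dmU^0_q$ verbatim since they only use Lemma \ref{lem: simple spectrum}, which is insensitive to the Cartan twist. In particular, for any finite set $S\subset P$ the $R$-module map $\dmU^0_q\to \prod_{\lambda\in S} R$, $x\mapsto (\hat{\chi}_\lambda(x))_{\lambda\in S}$, is surjective.

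For faithfulness there is nothing to prove, since the morphisms in $\Rep(\dmU_q(\g))$ are by definition a subset of those in $\dmU_q(\g)\Lmod$. For fullness, let $\phi\colon M\to N$ be $\dmU_q(\g)$-linear with $M,N$ rational, and pick $m\in M_\lambda$. Write $\phi(m)=\sum_{\mu\in S} n_\mu$ with $n_\mu\in N_\mu$ and $S$ finite. For each $x\in \dmU^0_q$, comparing $x\cdot \phi(m)=\phi(x\cdot m)$ against the weight decomposition of $N$ yields $(\hat{\chi}_\mu(x)-\hat{\chi}_\lambda(x))\,n_\mu=0$ for every $\mu\in S$. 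Thanks to the distinctness of the characters (and, if $R$ is not a field, reducing modulo a maximal ideal in the annihilator of $n_\mu$ exactly as in Lemma \ref{lem: independent of characters}(b)), this forces $n_\mu=0$ for $\mu\neq\lambda$, so $\phi(m)\in N_\lambda$.

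For closure under subobjects, let $M'\subseteq M$ be a $\dmU_q(\g)$-submodule. Conditions (ii) and (iii) of Definition \ref{defi:rat_rep} are inherited trivially. For (i), take $m'\in M'$ and decompose $m'=\sum_{\lambda\in S} m_\lambda$ in $M$. Choosing $x_\mu\in \dmU^0_q$ with $\hat{\chi}_\lambda(x_\mu)=\delta_{\lambda\mu}$ for $\lambda\in S$ (available by the surjectivity recalled above), we get $x_\mu\cdot m'=m_\mu\in M'$, so $M'=\bigoplus_\lambda(M'\cap M_\lambda)$; since on each weight component the elements $K_i,\binom{K_i;0}{t}$ act by the scalars demanded by (i), $M'$ is rational. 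For quotients $M/M'$, the decomposition $M/M'=\bigoplus_\lambda M_\lambda/(M'\cap M_\lambda)$ is inherited from $M$, and (ii), (iii) pass to quotients trivially.

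The main obstacle is purely organizational: verifying that the results of Section \ref{S_inv_pair} about the characters of $\dU^0_q$ transfer without change to $\dmU^0_q$. Once those are in place, the two claims follow by bookkeeping with the Cartan action as sketched. Everything else about the generators $E_i^{(s)},F_i^{(s)}$ is automatic, since the defining conditions (ii), (iii) involve only local nilpotence, which is preserved by arbitrary submodules and quotients.
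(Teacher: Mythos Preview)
Your proposal is correct and follows essentially the same strategy as the paper. Both arguments reduce everything to the fact that the Cartan characters $\chi_\lambda$ (your $\hat\chi_\lambda$) are pairwise distinct and, in an appropriate sense, linearly independent; the paper packages this as Lemma~\ref{lem: weight direct sum}, which is precisely the $\dmU^0_q$-analog of Lemma~\ref{lem: independent of characters} that you invoke.

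There are only minor organizational differences. For closure under subquotients, the paper treats quotients first (using that the images $\pi(M_\lambda)\subset M_{2,(\lambda)}$ form a direct sum by Lemma~\ref{lem: weight direct sum}) and then deduces subobjects as kernels; you instead handle subobjects directly by producing separating elements $x_\mu\in\dmU^0_q$ with $\chi_\lambda(x_\mu)=\delta_{\lambda\mu}$, and then quotients follow. Your route invokes the stronger surjectivity statement (the analog of Lemma~\ref{lem: dense subalgebras}), whereas the paper only needs the linear independence encoded in Lemma~\ref{lem: weight direct sum}. Both are valid; the paper's version is marginally more economical, but yours gives a more explicit reason why weight components of a submodule stay in the submodule.
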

Before going to the proof, we need the following result. For any $\lambda \in P$, let $\chi_\lambda: \dmU^0_q(\g)\rightarrow R$ be the $R$-algebra homomorphism defined by 
\begin{equation}\label{eq: defi of character for usual form}
\chi_\lambda: \qquad \qquad K_i \mapsto q^{(\lambda, \a_i)},  \qquad \bmat{K_i;0\\t}_{q_i} \mapsto \bmat{(\lambda, \a^\vee_i)\\t}_{q_i}.
\end{equation}
For any $N\in \dmU_q(\g)\Lmod$, let $N_{(\lambda)}:=\{ n \in N|xn=\chi_\lambda(x)n \; \forall x\in \dmU^0_q(\g)\}$.
\begin{Lem}\label{lem: weight direct sum} The sum $\sum_{\lambda\in P} N_{(\lambda)} $ in $N$ is a direct sum $\bigoplus_\lambda N_{(\lambda)}$. As a result, for any $M\in\Rep(\dmU_q(\g))$, the weight space $M_\lambda$ is uniquely characterized by $M_\lambda=\{m \in M|xm=\chi_\lambda(x)m\; \forall x\in \dmU^0_q(\g)\}$.
\end{Lem}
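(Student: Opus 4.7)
The approach is to mirror the proof of Lemma \ref{lem: independent of characters}(b) almost verbatim, the only adjustment being to replace the twisted characters $\hat\chi_\lambda$ by the characters $\chi_\lambda$ of \eqref{eq: defi of character for usual form}. The ``as a result'' clause is a formal consequence of directness: given $m\in M$ with $xm=\chi_\lambda(x)m$ for all $x\in\dmU^0_q(\g)$, decompose $m=\sum_\mu m_\mu$ via the given weight decomposition; since each $M_\mu\subseteq M_{(\mu)}$, the difference $m-m_\lambda$ lies simultaneously in $M_{(\lambda)}$ and in $\sum_{\mu\neq\lambda}M_{(\mu)}$, which must vanish by directness, so $m=m_\lambda\in M_\lambda$, proving $M_\lambda = M_{(\lambda)}$.

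For the directness assertion, assume for contradiction there is a minimal relation $\sum_{i=1}^k n_i=0$ with $n_i\in N_{(\lambda_i)}$, the $\lambda_i$ pairwise distinct and $n_k\neq 0$. Applying an element $x_1\in \dmU^0_q(\g)$ to the relation and subtracting $\chi_{\lambda_1}(x_1)$ times the original kills the $n_1$-term; iterating with $x_2,\ldots,x_{k-1}$ (the commutativity of $\dmU^0_q(\g)$ making the iteration unambiguous) yields
\begin{equation*}
  \prod_{i=1}^{k-1}\bigl(\chi_{\lambda_k}(x_i)-\chi_{\lambda_i}(x_i)\bigr)\cdot n_k \;=\; 0 \qquad \forall\,x_1,\ldots,x_{k-1}\in \dmU^0_q(\g).
\end{equation*}
Thus the indicated product lies in the proper ideal $I:=\operatorname{Ann}_R(n_k)$; pick a maximal ideal $\m\supset I$, pass to the quotient field $\BF:=R/\m$, and let $\bar\chi_{\lambda_i}\colon \dmU^0_\BF(\g)\to \BF$ be the induced characters.

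The crux is that these $\bar\chi_{\lambda_i}$ are pairwise distinct. If $\bar\chi_{\lambda_i}=\bar\chi_{\lambda_j}$, evaluation on $K_s^2$ and on $\bmat{K_s;-(\lambda_j,\a^\vee_s)\\t}$ for every simple index $s$ and every integer $t\geq 1$ produces $\bar q_s^{2N_s}=1$ and $\bmat{N_s\\t}_{\bar q_s}=0$, where $N_s:=(\lambda_i-\lambda_j,\a^\vee_s)\in \BZ$; since $\bmat{m\\n}_q$ differs from $\binom{m}{n}_q$ by the invertible factor $q^{n(m-n)}$, Lemma \ref{lem: simple spectrum} forces $N_s=0$ for every $s$, whence $\lambda_i=\lambda_j$, a contradiction. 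Pairwise distinctness of $\BF$-valued characters now supplies, for each $i<k$, an $\bar x_i\in \dmU^0_\BF(\g)$ separating $\bar\chi_{\lambda_k}$ from $\bar\chi_{\lambda_i}$; lifting each to $x_i\in \dmU^0_q(\g)$ makes $\prod_{i=1}^{k-1}(\chi_{\lambda_k}(x_i)-\chi_{\lambda_i}(x_i))$ a unit modulo $\m$, contradicting its membership in $I\subseteq\m$. The main technical point is the character-separation step; once the correct evaluation elements are chosen, Lemma \ref{lem: simple spectrum} applies essentially without further work.
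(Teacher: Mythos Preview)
Your proposal is correct and follows essentially the same approach as the paper: the paper's proof simply says ``arguing as in Lemma~\ref{lem: independent of characters}(b), we have $n_{\lambda_i}=0$ for all $i$,'' and you have spelled out precisely that argument with the characters $\chi_\lambda$ in place of $\hat\chi_\lambda$, including the separation of characters via Lemma~\ref{lem: simple spectrum}. Your handling of the ``as a result'' clause is also more explicit than the paper, but the idea is the same.
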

\begin{proof}Suppose $\sum_{i=1}^m n_{\lambda_i}=0$ with $n_{\lambda_i}\in N_{(\lambda_i)}$. Then   $\chi_{\lambda_1}(x)n_{\lambda_1}+\dots +\chi_{\lambda_m}(x) n_{\lambda_m}=0$ for all $x\in \dmU^0_q(\g)$. Arguing as in Lemma \ref{lem: independent of characters}.b, we have $n_{\lambda_i}=0$ for all $i$.
\end{proof}

\begin{proof}[Proof of Proposition \ref{prop: Rep(Uq(g)) is full subcat}]Let $M_1, M_2\in \Rep(\dmU_q(\g))$. To show the fully faithfulness it is enough to show that  any morphism of $\dmU_q(\g)$-modules $M_1\rightarrow M_2$ preserves the weight decompositions.  This follows by Lemma \ref{lem: weight direct sum}. Let us prove the image is closed under taking subquotients. Let $M\in \Rep(\dmU_q(\g))$. Suppose we have a short exact sequence in $\dmU_q(\g)\Lmod$: $0\rightarrow  M_1 \xrightarrow[]{\iota} M \xrightarrow[]{\pi} M_2\rightarrow 0$.  We have $M_2=\sum_\lambda \pi(M_\lambda)$ and $\pi(M_\lambda) \subset M_{2,(\lambda)}$. By Lemma \ref{lem: weight direct sum}, it follows that $M_2=\bigoplus_\lambda \pi(M_\lambda)$, hence, $M_2\in \Rep(\dmU_q(\g))$ and then $M_1 \in \Rep(\dmU_q(\g))$.   
\end{proof}
\begin{Rem}\label{rem: max-rat-subrep} Let $N \in \dmU_q(\g)\Lmod$. By Proposition \ref{prop: Rep(Uq(g)) is full subcat} and Lemma \ref{lem: weight direct sum} , we can talk about the maximal rational subrepresentation $N^{ral}$ of $N$. Moreover, for any $M\in \Rep(\dmU_q(\g))$,
\[\Hom_{\dmU_q(\g)}(M, N^{ral})=\Hom_{\dmU_q(\g)}(M,N).\]
\end{Rem}
We will need the following result.

\begin{Prop}\label{prop:rational_finiteness}
Let $M\in \Rep(\dmU_q(\g))$ and  $m\in M$. Then $\dmU_q(\g)m$ is finitely generated over $R$
\end{Prop}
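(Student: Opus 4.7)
The approach is to use the triangular decomposition of Lemma~\ref{triangular_Lus} together with the integrability conditions (ii)--(iii) in Definition~\ref{defi:rat_rep}. Since $M=\bigoplus_\lambda M_\lambda$, I first reduce to the case that $m\in M_\mu$ is a weight vector: $m$ is a finite sum of such, and finite sums of finitely generated $R$-modules are finitely generated. The Cartan subalgebra $\dmU^0_q$ acts on each weight-homogeneous vector by a scalar in $R$ via the character $\chi_\lambda$, so $\dmU_q(\g)\cdot m=\dmU^<_q\cdot\dmU^>_q\cdot m$ as $R$-submodules of $M$. By the symmetry between conditions (ii) and (iii), it then suffices to prove the auxiliary claim: for any weight vector $m'\in M$, the submodule $\dmU^>_q\cdot m'$ is finitely generated over $R$; once this is known, taking finitely many weight-vector generators $m'_1,\dots,m'_N$ of $\dmU^>_q m$ over $R$ and applying the symmetric statement with $F$'s in place of $E$'s to each $m'_j$ gives $\dmU^<_q\dmU^>_q m=\sum_j\dmU^<_q m'_j$ finitely generated over $R$.

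To prove the auxiliary claim I would consider the increasing filtration $F_0\subseteq F_1\subseteq\cdots$ with $F_0=Rm$ and $F_{n+1}=\sum_{i,s}R\cdot E_i^{(s)}F_n$, so that $\dmU^>_q m=\bigcup_n F_n$. By induction on $n$, each $F_n$ is finitely generated over $R$: if $F_n$ is $R$-spanned by weight vectors $v_1,\dots,v_p$, then condition (ii) applied to each $v_j$ yields an integer $k_n$ with $E_i^{(s)}v_j=0$ for all $s>k_n$ and all $i$, so $F_{n+1}$ is $R$-spanned by the finite set $\{E_i^{(s)}v_j:1\leq i\leq r,\ 0\leq s\leq k_n,\ 1\leq j\leq p\}$. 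The claim now follows if the filtration stabilizes, that is, $F_{n_0}=F_{n_0+1}$ for some $n_0$, equivalently, only finitely many graded components $\dmU^>_{q,\nu}\cdot m$ are nonzero as $\nu$ ranges over $Q_+$.

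The main obstacle is establishing this stabilization. My plan is to reduce to the case where $R$ is a field: the graded pieces $\dmU^>_{q,\nu}$ are finitely generated $R$-modules (Lemma~\ref{triangular_Lus}), and the vanishing of $E^{[\vec k]}m$ inside the finite-rank component $\dmU^>_{q,\nu}\cdot m$ is a closed condition that is preserved on passage to residue fields, so it suffices to bound the weight support over an arbitrary residue field. Over a field the required property is the classical local finiteness for type-1 integrable $U_q(\g)$-modules, which I would prove by induction on the rank of $\g$: the $\mathfrak{sl}_2$-case is immediate from condition (ii), and the inductive step propagates finiteness through the Weyl group orbit of $\mu$ using Lusztig's braid operators $T_i$ (well-defined on rational modules, with $T_i(M_\lambda)=M_{s_i\lambda}$) together with the Serre relations bounding the intermediate heights. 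An alternative route over a field is to realise $\dmU_q(\g)m$ as a quotient of a Weyl-style module attached to a dominant weight $\lambda\geq\mu$ and invoke the classical Weyl character formula for its rank; I expect this alternative to be cleaner but requires first identifying a dominant envelope for $\mu$, which is the delicate combinatorial input.
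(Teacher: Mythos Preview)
Your setup matches the paper's—triangular decomposition, reduce to a weight vector, show $\dmU^>_q m$ is finitely generated—but you diverge at the crucial step. The paper never runs into your stabilization problem: it invokes a factorization lemma (established in \cite{jl} over $k[v^{\pm1}]$ and extended to general $R$ by a local–Nakayama argument) asserting that for any reduced expression $w_0=s_{i_1}\cdots s_{i_N}$ one has
\[
\dmU^{>}_q=\dmU^{>}_{i_1}\dmU^{>}_{i_2}\cdots \dmU^{>}_{i_N},
\]
where $\dmU^>_i$ is the $R$-subalgebra generated by $\{E_i^{[s]}\}_{s\geq 0}$. With this, finite generation of $\dmU^>_q m$ is immediate: condition (ii) gives that $\dmU^>_{i_N}m$ is spanned by finitely many weight vectors, then apply (ii) to each of those under $\dmU^>_{i_{N-1}}$, and iterate exactly $N$ times. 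In effect, your filtration $F_n$ terminates at $n=N$ once you order the $E_i$'s along a reduced word rather than applying all of them at every stage.

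Your proposed route through residue fields is not wrong in spirit but has a gap as written: to deduce that only finitely many $\dmU^>_{q,\nu}m$ are nonzero over $R$, you need the bound on the weight support to be \emph{uniform} over all maximal ideals $\mathfrak m\subset R$, since a priori a different $\mathfrak m$ could witness the nonvanishing of each $\nu$. This would follow if your over-a-field argument produced a bound depending only on the integrability constant $k$ and the weight $\mu$ (both fixed with $m$), but you do not carry that out, and your sketched inductions on rank via braid operators or Weyl-module envelopes would each need real work to make uniform. The factorization lemma sidesteps all of this in one stroke.
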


 \begin{proof}
Pick any reduced decomposition $w_0=s_{i_1}s_{i_2}\cdots s_{i_N}$ of the longest element $w_0$ of the Weyl group $W$.
The desired result follows immediately from the following lemma:
\end{proof}
\begin{Lem}
(a) Let $\dmU^>_i$ denote the $R$-subalgebra generated by $\{E_i^{(s)}\}_{s\in \BN}$. Then:
  $$\dmU^{>}=\dmU^{>}_{i_1}\dmU^{>}_{i_2}\cdots \dmU^{>}_{i_N}.$$

\noindent
(b) Let $\dmU^<_i$ denote the $R$-subalgebra generated by $\{F_i^{(s)}\}_{s\in \BN}$. Then:
  $$\dmU^{<}=\dmU^{<}_{i_1}\dmU^{<}_{i_2}\cdots \dmU^{<}_{i_N}.$$
\end{Lem}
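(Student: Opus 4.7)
The plan is to prove part (a) first; part (b) will then follow by applying the $\BQ$-algebra anti-involution $\tau$ of~\eqref{eq: anti-involution map} (which interchanges $E_i^{[s]}\leftrightarrow F_i^{[s]}$ and reverses multiplication) to the reversed reduced expression $s_{i_N}\cdots s_{i_1}$, which is again a reduced expression since $w_0=w_0^{-1}$.

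For part (a), set $V:=\dmU^>_{i_1}\cdots \dmU^>_{i_N}$; the inclusion $V\subseteq \dmU^>$ is trivial, so the task is to show every PBW basis element from Lemma~\ref{triangular_Lus}(d) lies in $V$. I would establish by induction on $k=1,\ldots,N$ the following stronger statement: \emph{the $R$-span of the truncated PBW monomials $\{E_{\b_1}^{[m_1]}\cdots E_{\b_k}^{[m_k]}:\vec{m}\in\BZ_{\geq 0}^k\}$ coincides with the image of the multiplication map $\dmU^>_{i_1}\otimes_R\cdots\otimes_R\dmU^>_{i_k}\to \dmU^>$.} Specializing this assertion to $k=N$ and invoking Lemma~\ref{triangular_Lus}(d) yields (a). The base case $k=1$ is trivial since $E_{\b_1}^{[m]}=E_{i_1}^{[m]}$ generates $\dmU^>_{i_1}$.

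The inductive step would exploit the identity $E_{\b_k}^{[n]}=T_{i_1}(\widetilde{E}_k)$, where $\widetilde{E}_k:=T_{i_2}\cdots T_{i_{k-1}}(E_{i_k}^{[n]})$ is the root-vector divided power attached to the shorter reduced expression $(s_{i_2},\ldots,s_{i_k})$; by the inductive hypothesis $\widetilde{E}_k\in \dmU^>_{i_2}\cdots \dmU^>_{i_k}$. Since $T_{i_1}$ is an algebra automorphism compatible with divided powers, one expands $T_{i_1}(\widetilde{E}_k)$ using the explicit formulas~\eqref{eq:braid group} together with their divided-power refinements in \cite[Chapter~41]{l-book}, and then applies iteratively the integral $q$-Serre and commutation relations (whose $v$-integer denominators of the form $v+v^{-1}$, $(n)_{v_i}!$, etc.\ are already units in $\CA$) to rewrite the resulting expression as an $R$-linear combination of ordered monomials lying in $\dmU^>_{i_1}\cdots \dmU^>_{i_k}$.

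The main obstacle lies precisely in this last normal-form rewriting. Applying $T_{i_1}$ to a monomial in $\dmU^>_{i_2}\cdots \dmU^>_{i_k}$ produces terms in which $\dmU^>_{i_1}$-factors become interleaved with the remaining $\dmU^>_{i_j}$-factors, and collapsing them back into the canonical order requires careful bookkeeping based on the higher Serre-type identities while preserving integrality over $\CA$. This is essentially the content of Lusztig's integral PBW theory of~\cite[\S40--41]{l-book}; in finite type the reduction procedure terminates and gives the desired inclusion $E_{\b_k}^{[n]}\in \dmU^>_{i_1}\cdots \dmU^>_{i_k}$, completing the induction and thereby the proof of~(a).
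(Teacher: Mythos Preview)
Your inductive hypothesis for intermediate $k$ is \emph{false}, so the induction cannot be carried out. Take $\g=\mathfrak{sl}_3$ with $(i_1,i_2,i_3)=(1,2,1)$ and $k=2$. Then $\beta_1=\alpha_1$, $\beta_2=\alpha_1+\alpha_2$, and at weight $\alpha_1+\alpha_2$ the space $\dmU^>$ is free of rank~$2$ with basis $\{E_1E_2,\,E_2E_1\}$. The product $\dmU^>_1\dmU^>_2$ contributes the line $R\cdot E_1E_2$, while the truncated PBW span contributes the line $R\cdot E_{\beta_2}=R\cdot(E_1E_2-v^{-1}E_2E_1)$. These are distinct rank-$1$ submodules, so the two sides of your claimed equality differ already for $k=2$. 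The obstruction you flag in the last paragraph is therefore not merely technical: the intermediate objects you are trying to identify are genuinely different, and no amount of Serre-type rewriting will make them coincide. A secondary issue is that the ambient ring here is a $\uCA=\BZ[v,v^{-1}]$-algebra, not an $\CA$-algebra, so no $v$-integers are invertible; your appeal to ``denominators \ldots\ already units in $\CA$'' does not apply.

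The paper's argument is entirely different and avoids any direct combinatorics. It cites \cite[\S5.8--5.11]{jl} for the case $R=k[v,v^{-1}]$ with $k$ a field; base-changes to get the statement over an arbitrary field; then, for a local ring $R$ with residue field $k$, compares the weight-$\mu$ piece of $\dmU^>_{i_1}\cdots\dmU^>_{i_N}$ with that of $\dmU^>$ (both finitely generated $R$-modules) and applies Nakayama's lemma using the field case; finally, for general $R$ it localizes at each maximal ideal. This commutative-algebra bootstrap is short and requires nothing beyond the known field result.
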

\begin{proof}
For the case of $R=k[v,v^{-1}]$ (where $k$ is any field), this was proved in~\cite[\S5.8-5.11]{jl}. 

The case of general $R$ is handled below in several cases.

{\it Case 1:} $R$ is a field. This follows from the case of $R=k[v^{\pm 1}]$ by base change. 

{\it Case 2:} $R$ is a local ring with residue field $k$. Consider the situation of (a), (b) is similar. Pick $\mu\in Q^+$. Then the $\mu$-weight components in $\dmU^{>}$ and
its submodule $\dmU^{>}_{i_1}\dmU^{>}_{i_2}\cdots \dmU^{>}_{i_N}$ are finitely generated $R$-module.
We note that the following diagram is commutative:
\[
\begin{tikzcd} 
  (\dmU^{>}_{R,i_1}\dmU^{>}_{R,i_2}\cdots \dmU^{>}_{R,i_N})_\mu \arrow[r] \arrow[d]& \dmU^{>}_{R,\mu} \arrow[d]\\
  (\dmU^{>}_{k,i_1}\dmU^{>}_{k,i_2}\cdots \dmU^{>}_{k,i_N})_\mu \arrow[r]& \dmU^{>}_{k,\mu} 
\end{tikzcd} \,.
\]
The vertical arrows are base changes from $R$ to $k$. By Case 1, the bottom horizontal arrow is surjective. We apply the Nakayma lemma and deduce that the top horizontal arrow is surjective. 

{\it Case 3}: $R$ is general. By Case 2, the homomorphism $(\dmU^{>}_{R,i_1}\dmU^{>}_{R,i_2}\cdots \dmU^{>}_{R,i_N})_\mu\rightarrow \dmU^{>0}_{R,\mu}$ becomes surjective  after localizing at every maximal ideal. Hence it is surjective. 
\end{proof}
%


\subsection{Weyl and dual Weyl modules}\label{SS_Weyl}
\

Let $\Rep(\dmU^\geqslant_q)$ (resp. $\Rep(\dmU^\leqslant_q)$) denote the category of rational representations of $\dmU^\geqslant_q$( resp. $\dmU^\leqslant_q$), those satisfying the condition (i) and (ii) (resp. (iii)) of Definition \ref{defi:rat_rep}. Let $\Rep^{fd}(\dmU_q(\g))$, $ \Rep^{fd}(\dmU^\geqslant_q)$, $\Rep^{fd}(\dmU^{\leqslant}_q)$ denote the subcategories of the corresponding categories consisting of all objects which are finitely generated over $R$.

Following \cite[$\mathsection 1.1.14$]{APW1}, we define Joseph's induction functor $\fJ: \Rep^{fd}(\dmU^\geqslant_q)\rightarrow \Rep^{fd}(\dmU_q(\g))$ as follows: for any $N \in \Rep(\dmU^\geqslant_q)$, let $\fJ(N)$ be the maximal rational quotient of $\dmU_q(\g)\otimes_{\dmU^\geqslant_q} N$. The existence  of such maximal rational quotient is in \cite[Proposition 1.14]{APW1}. 
\begin{Rem} \label{rem: 1rem about APW} The algebras considered in \cite{APW1} are defined over the localization $\uCA_{\m_0}$ at the maximal ideal $\m_0=(v-1,p)$ for some odd prime $p\in \BZ$. However, at the moment, the results about Joseph's induction functor in \cite{APW1} up to Section $1.20$ hold for general $R$. We note that Lemma $1.13$ in \cite{APW1} follows by using Lusztig's braid group action in  arbitrary rational $\dmU_q(\g)$-modules constructed in \cite[$\mathsection 41.2$]{l-book}, so that we do not use the proof in \cite{APW1}  which does not apply to general ring $R$. The proof of Lemma $1.11$ in \cite{APW1} also holds with care. 
\end{Rem}
The following result is in \cite[Proposition 1.17]{APW1}
\begin{Prop}\label{prop: left adjoint to F}The functor $\fJ$ is left adjoint  to the restriction functor $\fF: \Rep^{fd}(\dmU_q(\g))\rightarrow \Rep^{fd}(\dmU^\geqslant_q)$.
\end{Prop}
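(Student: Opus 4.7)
My plan is to deduce this from the standard tensor-Hom adjunction together with a universal-property argument for the maximal rational quotient. The starting observation is that for any $N\in\Rep^{fd}(\dmU^\geqslant_q)$ and $M\in\Rep^{fd}(\dmU_q(\g))$ we have the familiar adjunction
\begin{equation*}
  \Hom_{\dmU_q(\g)}\bigl(\dmU_q(\g)\otimes_{\dmU^\geqslant_q}N,\,M\bigr)\iso\Hom_{\dmU^\geqslant_q}(N,\fF(M))
\end{equation*}
arising from restriction and extension of scalars. What remains is to check that the left-hand side may be replaced by $\Hom_{\dmU_q(\g)}(\fJ(N),M)$.

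First I would verify that $\fJ$ and $\fF$ actually land in the indicated subcategories. For $\fF$ this is immediate from Definition \ref{defi:rat_rep} and the fact that restriction preserves finite generation over $R$. For $\fJ(N)$, rationality holds by construction; moreover, since $N$ is finitely generated over $R$, the composition $N\to\dmU_q(\g)\otimes_{\dmU^\geqslant_q}N\twoheadrightarrow\fJ(N)$ has image that generates $\fJ(N)$ as a $\dmU_q(\g)$-module. Then Proposition \ref{prop:rational_finiteness}, applied to each of finitely many generators, ensures that $\fJ(N)$ is finitely generated over $R$.

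Next I would establish the key factorization property: any $\dmU_q(\g)$-module map $\varphi\colon\dmU_q(\g)\otimes_{\dmU^\geqslant_q}N\to M$ factors uniquely through the quotient $\fJ(N)$. The image $\varphi(\dmU_q(\g)\otimes_{\dmU^\geqslant_q}N)$ is a submodule of $M$, hence is itself rational by Proposition \ref{prop: Rep(Uq(g)) is full subcat} (which asserts that rational modules are closed under subquotients). Thus $\dmU_q(\g)\otimes_{\dmU^\geqslant_q}N\twoheadrightarrow\varphi(\dmU_q(\g)\otimes_{\dmU^\geqslant_q}N)$ is a rational quotient, so by the defining universal property of $\fJ(N)$ as the \emph{maximal} such quotient, $\ker\varphi$ contains the kernel of the projection to $\fJ(N)$. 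The induced factorization gives the natural isomorphism
\begin{equation*}
  \Hom_{\dmU_q(\g)}(\fJ(N),M)\iso\Hom_{\dmU_q(\g)}\bigl(\dmU_q(\g)\otimes_{\dmU^\geqslant_q}N,\,M\bigr),
\end{equation*}
and composing with the tensor-Hom adjunction above yields the claim.

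The only genuinely delicate point I foresee is justifying the existence of the maximal rational quotient itself, i.e.\ that the family of submodules with rational quotient is closed under intersection. This is exactly the existence statement cited after the definition of $\fJ$ (see \cite[Proposition 1.14]{APW1}, which applies in our generality by Remark \ref{rem: 1rem about APW}); once this is in hand, the rest of the argument is purely categorical and requires no further quantum-group input.
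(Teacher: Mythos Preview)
Your argument is correct and follows the standard route: tensor-Hom adjunction combined with the universal property of the maximal rational quotient, using closure of rational modules under subquotients (Proposition~\ref{prop: Rep(Uq(g)) is full subcat}) and finite generation via Proposition~\ref{prop:rational_finiteness}. The paper itself does not give a proof but simply cites \cite[Proposition~1.17]{APW1}; your write-up is essentially the argument one finds there, so there is nothing to compare.
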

Any $\lambda \in P$ gives a rise to the  homomorphism $\dmU^\geqslant_q\rightarrow R$ given by  the character $\chi_\lambda: \dmU^0_q(\g) \rightarrow R$ inflated to $\dmU_q^\geqslant(\g)$. We write $R_\lambda$ for the rank $1$ module over $\dmU^\geqslant_q$ with the action corresponding to this homomorphism. 
\begin{defi} The $\dmU_q(\g)$-module $\Delta_q(\lambda):=\dmU_q(\g)\otimes_{\dmU^\geqslant_q} R_\lambda$is called {\it the Verma module}. For $\lambda \in P_+$, the module $W_q(\lambda):=\fJ(R_\lambda)$ is called  {\it the Weyl module}.
\end{defi}
 Proposition \ref{prop: left adjoint to F}  yields natural isomorphism:
\begin{equation}\label{eq:Weyl_universal}
  \Hom_{\dmU_q(\g)}(W_q(\lambda),?)\xrightarrow{\sim} \Hom_{\dmU_q^{\geqslant}}(R_\lambda,?).  
\end{equation}
Follow \cite[Proposition 1.20]{APW1}, we have
\begin{Prop}\label{prop: Description of Weyl modules} $W_q(\lambda)$ is the quotient of $\Delta_q(\lambda)$ by the relations $F^{[k]}_i v_\lambda =0$ for all $k>(\lambda, \a^\vee_i)$ and all $i=1, \dots,r$. Here, $v_\lambda$ denotes the element in $R_\lambda$ corresponding to $1$.
\end{Prop}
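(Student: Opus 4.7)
The plan is to identify $W_q(\lambda) = \fJ(R_\lambda)$---which by construction (see Remark~\ref{rem: max-rat-subrep}) is the maximal rational quotient of $\Delta_q(\lambda)$---with $N(\lambda) := \Delta_q(\lambda)/J$, where $J$ is the $\dmU_q(\g)$-submodule generated by $\{F_i^{[k]}v_\lambda : k > (\lambda,\alpha_i^\vee),\ 1\leq i\leq r\}$. The task thus reduces to showing that $J$ coincides with the kernel of the canonical surjection $\Delta_q(\lambda) \twoheadrightarrow W_q(\lambda)$.

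First I would establish $J \subseteq \ker$. For any rational $\dmU_q(\g)$-module $M$ and any $\dmU_q(\g)$-linear $\phi\colon \Delta_q(\lambda) \to M$, the element $\phi(v_\lambda)\in M_\lambda$ is annihilated by each $E_i$, so it is a highest weight vector of nonnegative integral weight $(\lambda,\alpha_i^\vee)$ in the rational $U_{v_i}(\mathfrak{sl}_2)$-module $M$. A standard rank-one computation using the commutation formula~\eqref{EF-formula} applied to $\phi(v_\lambda)$ then forces $F_i^{[k]}\phi(v_\lambda) = 0$ for $k > (\lambda,\alpha_i^\vee)$. Hence $J$ is killed by every such $\phi$, and in particular by the universal map $\Delta_q(\lambda) \twoheadrightarrow W_q(\lambda)$.

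For the reverse inclusion it suffices to verify that $N(\lambda)$ is itself rational, as then the universality of $W_q(\lambda)$ yields $\ker\subseteq J$. The weight decomposition and local $E_j^{(s)}$-finiteness descend routinely from $\Delta_q(\lambda) \cong \dmU^<_q\otimes_R R_\lambda$: $J$ is a weight submodule, and applying $E_j^{(s)}$ to a PBW monomial $F^{[\vec{k}]}v_\lambda$ reduces via~\eqref{EF-formula} to lower $F$-degree monomials acting on the $E_j$-annihilated vector $v_\lambda$. The substantive difficulty is local $F_j^{(s)}$-finiteness on all of $N(\lambda)$: I would prove by induction on the $Q_+$-degree of a PBW monomial $\bar{F}^{[\vec{k}]}\bar{v}_\lambda$ that each $F_j$ acts locally nilpotently, with the base case $\bar{v}_\lambda$ being exactly the defining relations of $J$. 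The inductive step uses the quantum Serre relations together with Lusztig's commutation identities (cf.\ \cite[\S39.3.2]{l-book} and \cite[Proposition~1.20]{APW1}) to move a high power $F_j^{[N]}$ past a given PBW monomial and reduce the question to strictly lower $Q_+$-degree, where the induction hypothesis applies.

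The main obstacle is precisely this propagation of integrability from $\bar{v}_\lambda$ to all of $N(\lambda)$; the required commutation formulas are known but delicate, and the induction must be organized so that the uniform bound on $F_j^{(s)}m = 0$ produced at a given degree controls the bounds needed at higher degrees. Once $N(\lambda)$ is established to be rational, Yoneda applied to the adjunction~\eqref{eq:Weyl_universal} yields the desired isomorphism $N(\lambda) \cong W_q(\lambda)$.
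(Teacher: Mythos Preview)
Your approach is correct and is precisely the argument of \cite[Proposition~1.20]{APW1}, which is exactly what the paper cites for this proposition (the paper gives no independent proof, merely invoking that reference together with Remark~\ref{rem: 1rem about APW} to justify its validity over general $R$). One minor point: your citation of Remark~\ref{rem: max-rat-subrep} is misplaced, as that remark concerns maximal rational \emph{sub}representations; the fact that $W_q(\lambda)$ is the maximal rational \emph{quotient} of $\Delta_q(\lambda)$ is the very definition of $\fJ$ given just before Proposition~\ref{prop: left adjoint to F}.
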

Proposition \ref{prop: Description of Weyl modules} implies that $W_q(\lambda)=W_{\uCA}(\lambda)\otimes_{\uCA} R$. Then by Equation $2.6.2$ in \cite[$\mathsection 2.6$]{rh}, we have an inportant property of the Weyl modules: 

\begin{Lem}\label{Lem:Weyl_char_formula}
The module $W_{q}(\lambda)$ is free over $R$, admits a $R$-basis consisting of weight vectors and its character is given by the Weyl character formula.
\end{Lem}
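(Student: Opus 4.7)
The plan is to reduce the statement to the universal base ring $\uCA=\BZ[v,v^{-1}]$ and then invoke the standard theory of Lusztig's $\uCA$-lattice inside the generic irreducible highest weight module. In the preceding excerpt we already have the explicit presentation of $W_q(\lambda)$ from Proposition~\ref{prop: Description of Weyl modules}: it is the quotient of $\Delta_q(\lambda)$ by the relations $F_i^{[k]}v_\lambda=0$ for $k>(\lambda,\alpha_i^\vee)$. Since $\Delta_q(\lambda)=\dmU_q(\g)\otimes_{\dmU_q^{\geqslant}}R_\lambda$ is defined by integral data and the imposed relations are integral as well, base change gives $W_q(\lambda)\simeq W_{\uCA}(\lambda)\otimes_{\uCA}R$. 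So it suffices to establish the claim for $R=\uCA$.

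Over $\uCA$ the argument I would run is the classical one. First, by Proposition~\ref{prop:rational_finiteness}, every weight space of $W_{\uCA}(\lambda)$ is a finitely generated $\uCA$-module. Next, I would compare with the generic fiber: localize at the generic point to obtain $W_{\BQ(v)}(\lambda)$. Over $\BQ(v)$ the quantum group $\dmU_{\BQ(v)}(\g)$ is semisimple on category $\mathcal{O}$-type modules, and one has the Verma-style computation showing that $W_{\BQ(v)}(\lambda)$ is the irreducible highest weight module $V(\lambda)$, whose character is given by the Weyl character formula (this is classical Lusztig/Rosso, and follows for instance from the existence of the contravariant Shapovalov form on $\Delta_{\BQ(v)}(\lambda)$ whose radical accounts precisely for the relations $F_i^{[k]}v_\lambda=0$). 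In particular each weight space $V(\lambda)_\mu$ has $\BQ(v)$-dimension prescribed by Weyl's formula.

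To pass from the generic fiber back to $W_{\uCA}(\lambda)$ and establish freeness, the key input is that the $\dmU_{\uCA}(\g)$-submodule of $V(\lambda)$ generated by the highest weight vector $v_\lambda$ is a $\uCA$-lattice, weight-graded, and its weight components are free of the correct rank; this is precisely~\cite[Eq.~2.6.2, \S2.6]{rh} (and originates in the work of Lusztig). By the universal property~\eqref{eq:Weyl_universal} applied to this lattice, there is a surjection $W_{\uCA}(\lambda)\twoheadrightarrow \dmU_{\uCA}(\g)\cdot v_\lambda$. After base change to $\BQ(v)$ this surjection becomes an isomorphism between finite-dimensional spaces of equal dimension in each weight, so it is already an isomorphism in each weight over $\uCA$ (a surjection between finitely generated $\uCA$-modules which becomes an isomorphism after inverting no elements on a torsion-free target forces injectivity). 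Hence $W_{\uCA}(\lambda)$ is the Lusztig lattice, whence free over $\uCA$ with a weight basis, and its character equals the character of $V(\lambda)$, i.e.\ the Weyl character formula.

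The main obstacle in this plan is the passage from the generic freeness to freeness over $\uCA$ weight by weight; what makes it go through is that $\uCA$-torsion in $W_{\uCA}(\lambda)$ would have to vanish after localization at every prime, but the universal property identifies $W_{\uCA}(\lambda)$ with the integral lattice inside $V(\lambda)$, which is torsion-free by construction. Once this identification is pinned down the character formula is automatic, since characters are preserved under faithfully flat base change. All of this is precisely what is packaged into the cited Equation~2.6.2 of~\cite{rh}, so in the final write-up I expect the proof to be essentially a one-line citation after the identification $W_q(\lambda)=W_{\uCA}(\lambda)\otimes_{\uCA}R$ is noted.
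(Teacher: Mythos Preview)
Your proposal is correct and matches the paper's approach exactly: the paper simply notes that Proposition~\ref{prop: Description of Weyl modules} yields $W_q(\lambda)=W_{\uCA}(\lambda)\otimes_{\uCA}R$ and then cites Equation~2.6.2 in \cite[\S2.6]{rh} for the freeness, weight basis, and Weyl character formula. Your final paragraph anticipates this precisely; the additional detail you supply about the Lusztig lattice and the generic-fiber comparison is a correct unpacking of what that citation contains.
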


\begin{Rem}\label{Rem:APW1_results_generalize}
Thus, the results of \cite[$\mathsection$1-4]{APW1} over the ring $\uCA_{\m_0}$ also hold over any $\uCA$-algebra $R$ where one should replace the condition of freeness of finitely generated $\CA_{\m_0}$-modules with the condition of projectivity of finitely generated $R$-modules (except some places).

\end{Rem}

Furthermore,  we  have the usual induction functor $H^0\colon \Rep(\dmU_q^{\leqslant})\rightarrow \Rep(\dmU_q(\g))$ defined as follows. For any $M\in \Rep(\dmU^{\leqslant}_q)$, the space $\Hom_{\dmU^{\leqslant}_q}(\dmU_q(\g), M)$ carries a left $\dmU_q(\g)$-module structure: $(xf)(u)=f(ux)$ for any $x,u \in \dmU_q(\g),f\in \Hom_{\dmU^{\leqslant}_q}(\dmU_q(\g), M)$. Then $H^0(M)$  is defined as the maximal rational $\dmU_q(\g)$-subrepresentation of $\Hom_{\dmU^{\leqslant}_q}(\dmU_q(\g), M)$. The following result is in \cite[Proposition 2.12]{APW1}

\begin{Prop}\label{prop: right adjoint to F}The functor $H^0: \Rep(\dmU^\leqslant_q)\rightarrow \Rep(\dmU_q(\g)$ is right adjoint to the  restriction functor $\fF: \Rep(\dmU_q(\g))\rightarrow \Rep(\dmU^\leqslant_q)$.
\end{Prop}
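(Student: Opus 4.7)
The plan is to deduce this from the classical tensor-hom adjunction together with the maximal-rational-subobject construction (exactly as in the dual case of Proposition \ref{prop: left adjoint to F}). The key input will be Proposition \ref{prop: Rep(Uq(g)) is full subcat}, which guarantees that $\Rep(\dmU_q(\g))$ is closed under subquotients inside $\dmU_q(\g)\Lmod$, so that ``maximal rational subrepresentation'' is well defined.

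First, I would record the standard (non-rational) Frobenius reciprocity: for any $N\in \dmU^{\leqslant}_q\Lmod$ and any $M\in \dmU_q(\g)\Lmod$, the map
\begin{equation*}
  \Psi_{M,N}\colon \Hom_{\dmU_q(\g)}\!\bigl(M,\Hom_{\dmU^{\leqslant}_q}(\dmU_q(\g),N)\bigr)\longrightarrow \Hom_{\dmU^{\leqslant}_q}(M,N) \,, \qquad \phi\mapsto \bigl(m\mapsto \phi(m)(1)\bigr),
\end{equation*}
is a natural isomorphism with inverse sending $\psi\colon M\to N$ to the map $m\mapsto (u\mapsto \psi(um))$. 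This is purely formal and uses only the $(\dmU_q(\g),\dmU^{\leqslant}_q)$-bimodule structure on $\dmU_q(\g)$.

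Next, suppose $M\in \Rep(\dmU_q(\g))$ and $N\in \Rep(\dmU^{\leqslant}_q)$. Given $\phi\colon M\to \Hom_{\dmU^{\leqslant}_q}(\dmU_q(\g),N)$ in $\dmU_q(\g)\Lmod$, its image $\phi(M)$ is a quotient of $M$ as a $\dmU_q(\g)$-module, hence rational by Proposition \ref{prop: Rep(Uq(g)) is full subcat}. Therefore $\phi(M)\subseteq H^0(N)$, so $\phi$ factors uniquely through the inclusion $H^0(N)\hookrightarrow \Hom_{\dmU^{\leqslant}_q}(\dmU_q(\g),N)$. Conversely, every $\dmU_q(\g)$-morphism $M\to H^0(N)$ yields one to the ambient module by post-composition. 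This shows that $\Psi_{M,N}$ restricts to a bijection
\begin{equation*}
  \Hom_{\dmU_q(\g)}(M,H^0(N)) \,\iso\, \Hom_{\dmU^{\leqslant}_q}(\fF(M),N) \,,
\end{equation*}
which is what we need. Naturality in $M$ and $N$ is inherited from naturality of $\Psi_{M,N}$.

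The only slightly delicate point is to make sure that $H^0(N)$ is well defined, i.e.\ that a maximal rational $\dmU_q(\g)$-subrepresentation of $\Hom_{\dmU^{\leqslant}_q}(\dmU_q(\g),N)$ actually exists. This follows because the class of rational submodules is closed under arbitrary sums (each summand is rational, so the sum has a weight decomposition by Lemma \ref{lem: weight direct sum}, and conditions (ii),(iii) of Definition \ref{defi:rat_rep} are preserved by sums since they are element-wise), and under taking subobjects by Proposition \ref{prop: Rep(Uq(g)) is full subcat}. Hence the sum of all rational submodules is itself rational and is the desired $H^0(N)$. I do not anticipate any substantial obstacle beyond this verification; everything else is formal adjunction.
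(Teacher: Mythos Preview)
Your proof is correct and follows the standard Frobenius reciprocity argument that the paper simply cites from \cite[Proposition 2.12]{APW1}; the paper gives no independent proof here. Your use of Proposition~\ref{prop: Rep(Uq(g)) is full subcat} and Remark~\ref{rem: max-rat-subrep} to ensure the maximal rational subrepresentation exists and absorbs all rational images is exactly the point, and your verification that sums of rational submodules remain rational is the only non-formal ingredient needed.
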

\begin{defi}For any dominant $\lambda$ , the module $H^0_q(\lambda):=H^0(R_\lambda)$ is called {\it the dual Weyl module}.
\end{defi}
By Proposition \ref{prop: right adjoint to F}, we have
\begin{equation}\label{eq:dual_Weyl_universal}
    \Hom_{\dmU_q(\g)}(?,H^0_q(\lambda)) \iso \Hom_{\dmU_q^{\leqslant}}(?,R_\lambda).
\end{equation}
\begin{Rem} We follow the definition of the functor $H^0$ in \cite{APW1} in contrast to \cite{rh}, see \cite[$\mathsection 2.9$]{rh}. This is just a matter of convention so we can apply the results in \cite{rh} to $H^0$.
\end{Rem}


\subsection{Some homological properties}\label{ssec:homological properties}\

Replacing $\dmU_q^{\leqslant}$ with $R$ and $\dmU^0_q$ we have the following induction functors
\begin{equation*}
    \begin{split}
H^0(\dmU/R, -)&: R\Lmod \rightarrow \Rep(\dmU_q(\g)),\\
H^0(\dmU/\dmU^0,-)&: \Rep(\dmU^0_q)\rightarrow \Rep(\dmU_q(\g)).
\end{split}
\end{equation*}
\begin{Lem}\label{rem: H(dmU/R,-)}(a) The functor $H^0(\dmU/R,-)$ is exact, sends injective objects to injective objects and $H^0(\dmU/R, M)\cong H^0(\dmU/R, R) \otimes_R M$ for any $R$-module $M$. In particular, $H^0(\dmU/R,-)$ commutes with direct limits. 

\noindent
(b) The functor $H^0(\dmU/\dmU^0,-)$ is exact and sends injective objects to injective objects.

\noindent
(c) $H^0(\dmU/\uCA, \uCA)\otimes_{\uCA} R\cong H^0(\dmU/R, R)$ for any $\uCA$-algebra $R$. As a result, for any $R$-algebra $R'$, we have $H^0(\dmU/R, R)\otimes_R R'\cong H^0(\dmU/R', R')$.
\end{Lem}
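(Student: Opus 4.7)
The plan is to reduce all three parts to a single structural fact: $H^0(\dmU/R,R)$ is a free $R$-module admitting a Peter--Weyl type decomposition through matrix coefficients of dual Weyl modules $\{H^0_q(\lambda)\}_{\lambda\in P_+}$. This rests on Lemma~\ref{Lem:Weyl_char_formula} together with the universal properties~\eqref{eq:Weyl_universal} and~\eqref{eq:dual_Weyl_universal}. Once this is in hand, parts (a)--(c) follow from adjunction arguments combined with base change.

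For part (a), I would construct the natural $\dmU_q(\g)$-linear map
\[
  \phi_M\colon H^0(\dmU/R,R)\otimes_R M \longrightarrow H^0(\dmU/R,M), \qquad f\otimes m \longmapsto \bigl(u\mapsto f(u)\,m\bigr),
\]
and show it is an isomorphism. For any $g\in H^0(\dmU/R,M)$, the subrepresentation $\dmU_q(\g)\cdot g$ is rational and, by Proposition~\ref{prop:rational_finiteness}, finitely generated as an $R$-module; in particular, the image of $g$ lies in a finitely generated $R$-submodule $N\subset M$, so $g$ factors through $H^0(\dmU/R,N)$. A free presentation of $N$ combined with the trivial case $M=R$ and the additivity of $H^0(\dmU/R,-)$ over arbitrary direct sums (rational subreps being supported on finitely many summands, again by Proposition~\ref{prop:rational_finiteness}) then reduces the general case to this. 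Exactness of $H^0(\dmU/R,-)$ follows from the $R$-freeness of $H^0(\dmU/R,R)$. The statement on injectives is the standard fact that right adjoints of exact functors preserve injectives, applied to the adjunction with the forgetful functor $\Rep(\dmU_q(\g))\to R\Mod$. Commutation with direct limits is then immediate from exactness and additivity.

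Part (b) is proved in parallel with (a): preservation of injectives again follows from adjunction, since restriction $\Rep(\dmU_q(\g))\to \Rep(\dmU^0_q)$ is exact, while exactness of $H^0(\dmU/\dmU^0_q,-)$ stems from the triangular decomposition (Lemma~\ref{triangular_Lus}), which shows that $\dmU_q(\g)$ is free as a right $\dmU^0_q$-module via PBW (after moving Cartan elements past root vectors using the $Q$-grading). For part (c), the extension-of-scalars adjunction along $\uCA\to R$ identifies $\Hom_{\uCA}(\dmU_{\uCA}(\g),R)$ with $\Hom_R(\dmU_q(\g),R)$, and the rationality conditions of Definition~\ref{defi:rat_rep} transfer under this identification because they are defined by the action of explicit elements (divided powers, binomial Cartan elements) whose defining relations survive base change. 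Combining this with part (a) yields $H^0(\dmU/\uCA,\uCA)\otimes_{\uCA}R \cong H^0(\dmU/\uCA,R) \cong H^0(\dmU/R,R)$. The second claim of (c) follows by applying this base-change isomorphism twice: $H^0(\dmU/R,R)\otimes_R R' \cong H^0(\dmU/\uCA,\uCA)\otimes_{\uCA} R' \cong H^0(\dmU/R',R')$.

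The main obstacle is establishing the Peter--Weyl isomorphism $\bigoplus_{\lambda\in P_+} H^0_q(\lambda)\otimes_R W_q(\lambda)^* \iso H^0(\dmU/R,R)$ over an arbitrary $\uCA$-algebra $R$. This requires adapting the arguments of~\cite[\S 2]{APW1}, as indicated by Remark~\ref{Rem:APW1_results_generalize}, and is the essential input without which neither the exactness nor the base-change statements can be derived. A secondary subtlety, needed for surjectivity of $\phi_M$, is that additivity of the rational Hom functor over arbitrary direct sums relies on the finite generation property from Proposition~\ref{prop:rational_finiteness}.
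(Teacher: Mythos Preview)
The paper's proof is just citations to \cite{APW1} and \cite{AW}, so your task is really to supply the argument those references contain. Your approach for (a) and (c) is essentially sound, but there is a detour and a genuine gap.

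For (a), going through the full Peter--Weyl decomposition to obtain freeness of $H^0(\dmU/R,R)$ is a detour: Peter--Weyl (Proposition~\ref{Prop:coord_good_filtration}) relies on Kempf vanishing (Proposition~\ref{Prop:Kempf_vanishing}), which sits logically after this lemma. The direct route, which is what \cite[\S1.30--1.31]{APW1} does, is to exhibit a cofinal system of two-sided quotients $D(\mu,\lambda)$ of $\dmU_q(\g)$ that are \emph{free of finite rank over $R$} by an explicit PBW-type count (these are the quotients through which any matrix coefficient factors). Then $H^0(\dmU/R,M)=\varinjlim_{\mu,\lambda}\Hom_R(D(\mu,\lambda),M)\cong\varinjlim_{\mu,\lambda}D(\mu,\lambda)^*\otimes_R M$, which gives both the tensor identity and exactness in one stroke, with no appeal to Kempf vanishing. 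Your surjectivity argument for $\phi_M$ via Proposition~\ref{prop:rational_finiteness} is correct, but note that your reduction ``free presentation of $N$ plus the case $M=R$'' tacitly uses right exactness of $H^0(\dmU/R,-)$, which is what you are trying to prove; the direct-limit description above avoids this circularity.

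For (b), there is a real gap. Freeness of $\dmU_q(\g)$ over $\dmU^0_q$ via PBW gives exactness of $\Hom_{\dmU^0_q}(\dmU_q(\g),-)$ as a functor on $\dmU^0_q$-modules, but $H^0(\dmU/\dmU^0,-)$ is the \emph{maximal rational subrepresentation} of this Hom, and passing to the rational part is only left exact in general: given a rational $f''\in\Hom_{\dmU^0_q}(\dmU_q(\g),M'')$, an arbitrary lift $f\in\Hom_{\dmU^0_q}(\dmU_q(\g),M)$ need not be rational. The argument in \cite[\S2.11, 2.13]{APW1} instead computes $H^0(\dmU^\leqslant/\dmU^0,R_\lambda)$ explicitly (it is the dual of $\dmU^<$ shifted by $\lambda$, hence free) and then uses transitivity of induction together with the weight decomposition $M=\bigoplus_\lambda M_\lambda$ to reduce to this case; exactness then follows because taking $\lambda$-weight spaces is exact and each $H^0(\dmU/\dmU^0,R_\lambda)$ is $R$-free.
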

\begin{proof}Part (a) follows by   \cite[$\mathsection 1.31$]{APW1} and Remark \ref{Rem:APW1_results_generalize}. Part (b) follows by \cite[$\mathsection 2.11,2.13$]{APW1} and Remark \ref{Rem:APW1_results_generalize}. Part(c) is in \cite[$\mathsection 4.2$]{AW}(thanks to the results explained in Sections \ref{SS_Weyl} and \ref{SS_Kempf_vanish} we can work with the ring $\uCA$ instead of the ring $\CA_1$ from \cite[$\mathsection 4.1$]{AW}).
\end{proof}
Let us construct an  injective resolution  for $N\in \Rep(\dmU_q(\g))$. First, we embed $N$ into an  injective $R$-module $I^0$. Then $N$ is a $\dmU_q(\g)$-submodule of $N^0:=H^0(\dmU/R, I^0)$. Repeat the process with $N^0/N$. We then obtain an exact sequence $\dots 0\rightarrow N\rightarrow N^0 \rightarrow N^1 \rightarrow \dots$  such that $N^i=H^0(\dmU/R, I^i)$ for some injective $R$-module $I^i$, and then $N^0 \rightarrow N^1 \dots $ is an injective resolution of $N$.

There is a \textbf{big standard resolution} of $N$  constructed as follows: $N \hookrightarrow H^0(\dmU/R, N)$ makes $N$ into a $\dmU_q(\g)$-submodule and a direct $R$-module summand  of $H^0(\dmU/R, N)$. Let $Q^0=H^0(\dmU/R,N)$. Repeat the process with $Q^0/N$ then we obtain a resolution $N \rightarrow Q^0 \rightarrow Q^1 \rightarrow \dots $ in which $Q^i =H^0(\dmU/R, Q^{i-1}/Q^{i-2})$.


Moreover, there is a \textbf{standard resolution} of $N$ constructed as follows: $N \hookrightarrow H^0(\dmU/\dmU^0, N)$ makes $N$ into a $\dmU_q(\g)$-submodule and a direct $\dmU^0_q$-module summand  of $H^0(\dmU/\dmU^0,M)$. Let $Q^0:=H^0(\dmU/\dmU^0,N)$. Repeat the process with $Q^0/N$ then we obtain a resolution $N \rightarrow Q^0 \rightarrow Q^1 \rightarrow \dots $ in which $Q^i=H^0(\dmU/\dmU^0, Q^{i-1}/Q^{i-2})$. We abuse the notation $Q^i$ here. This standard resolution is the one considered in \cite[$\mathsection 2.17$]{APW1}.

We now establish some homological properties of $\Rep(\dmU_q(\g))$, which are not automatic since $\dmU_q(\g)$ is neither Noetherian nor finitely generated as an algebra. The point is that the induction functor $H^0(\dmU/R,-)$ allows us to translate those homological statements to the ones in $R\Lmod$. To emphasize the base ring of $\dmU_q(\g)$, we will denote $\dmU_q(\g)$ by $\dmU_R(\g)$. 

\begin{Lem}\label{lem: homological properties}Let $R$ be  a Noetherian ring. Let $M\in \Rep(\dmU_R(\g))$ such that $M$ is a finitely generated over $R$. Then the functor $\Ext^i_{\Rep(\dmU_R(\g))}(M, -)$ commutes with filtered direct limits. 
\end{Lem}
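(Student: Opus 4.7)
\noindent The plan is to compute $\Ext^i_{\Rep(\dmU_R(\g))}(M,-)$ via the \emph{big standard resolution} $N\to Q^\bullet(N)$ constructed in Section~\ref{ssec:homological properties}, thereby reducing the claim to the assertion that $\Hom_{\Rep(\dmU_R(\g))}(M,-)$ commutes with filtered direct limits. The big standard resolution is built by iteratively applying $H^0(\dmU/R,-)$ to cokernels. By Lemma~\ref{rem: H(dmU/R,-)}(a), $H^0(\dmU/R,-)\cong H^0(\dmU/R,R)\otimes_R(-)$ is exact and commutes with filtered direct limits; since cokernels and filtered colimits also commute in module categories, the assignment $N\mapsto Q^i(N)$ commutes with filtered direct limits for every $i$. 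Because filtered colimits are exact they commute with taking cohomology, so granted the commutation of $\Hom_{\Rep}(M,-)$ with filtered colimits we would obtain
\[
  \Ext^i_{\Rep}\bigl(M,\varinjlim_j N^{(j)}\bigr)
  = H^i\Hom_{\Rep}\bigl(M,Q^\bullet(\varinjlim_j N^{(j)})\bigr)
  \cong \varinjlim_j H^i\Hom_{\Rep}(M,Q^\bullet(N^{(j)}))
  = \varinjlim_j \Ext^i_{\Rep}(M,N^{(j)}).
\]

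\noindent To show $\Hom_{\Rep(\dmU_R(\g))}(M,-)$ commutes with filtered direct limits I would extract three finiteness properties of $M$. First, $M$ has only finitely many nonzero weight spaces: a finite $R$-generating set $m_1,\ldots,m_n$ may, after decomposing each $m_l$ into weight components, be replaced by a finite set of homogeneous generators, so the weights appearing in $M$ form a finite set $S$. Second, $M$ is finitely presented as an $R$-module since $R$ is Noetherian, hence the $R$-linear relations among $m_1,\ldots,m_n$ are finitely generated. Third, by conditions (ii)--(iii) of Definition~\ref{defi:rat_rep} applied to each generator, there exists a uniform integer $s_0$ such that $E_j^{(s)}m_l=F_j^{(s)}m_l=0$ for all $s>s_0$, all $j=1,\ldots,r$, and all $l=1,\ldots,n$; consequently the full $\dmU_R(\g)$-action on $M$ is encoded by the finitely many structure constants expressing $E_j^{(s)}m_l$ and $F_j^{(s)}m_l$ as $R$-linear combinations of the $m_k$ for $1\le s\le s_0$.

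\noindent Given a filtered system $N=\varinjlim_j N^{(j)}$ and a morphism $\phi\colon M\to N$ in $\Rep$, each $\phi(m_l)$ lifts to some $n_l\in N^{(j_0)}_{\mathrm{wt}(m_l)}$ for a common index $j_0$ by filteredness. After finitely many further enlargements of $j_0$, I would arrange that (i) the finitely many $R$-linear relations defining $M$ hold for the tuple $(n_l)$, (ii) the finitely many action identities $E_j^{(s)}m_l=\sum_k c_k^{(j,s,l)}m_k$ and $F_j^{(s)}m_l=\sum_k d_k^{(j,s,l)}m_k$ for $1\le s\le s_0$ transfer to $(n_l)$, and (iii) the vanishings $E_j^{(s)}n_l=F_j^{(s)}n_l=0$ hold for $s>s_0$ in $N^{(j_0)}$; the last point reduces to finitely many identities because $N^{(j_0)}$ is itself rational and therefore has its own internal divided-power vanishing threshold for each chosen lift $n_l$. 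The resulting data $(n_l)$ then extend $R$-linearly to a morphism $\phi'\colon M\to N^{(j_0)}$ in $\Rep$ whose image in the colimit is $\phi$, giving surjectivity; injectivity is immediate from filteredness applied to the finitely many values $\phi(m_l)$. The main obstacle is reconciling in (iii) the \emph{external} vanishing threshold $s_0$ coming from $M$ with the \emph{internal} rationality threshold coming from a chosen lift in $N^{(j_0)}$, which is precisely where the Noetherian hypothesis on $R$ and the finite generation of $M$ enter decisively.
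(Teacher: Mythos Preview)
There is a genuine gap in your approach. You propose to compute $\Ext^i_{\Rep}(M,-)$ via the big standard resolution $Q^\bullet(N)$, but this resolution does \emph{not} compute these Ext groups for arbitrary $M$ that is merely finitely generated over $R$. Each term has the form $Q^j=H^0(\dmU/R,C^j)$ for a cokernel $C^j$, and since $H^0(\dmU/R,-)$ is exact and right adjoint to the forgetful functor, one gets
\[
  \Ext^i_{\Rep}\bigl(M,H^0(\dmU/R,C^j)\bigr)\;\cong\;\Ext^i_{R}\bigl(M,C^j\bigr).
\]
So $Q^j$ is $\Hom_{\Rep}(M,-)$-acyclic only when $\Ext^i_R(M,C^j)=0$ for $i>0$, which requires $M$ to be projective over $R$ (this is exactly the hypothesis under which the paper later invokes the big standard resolution in Lemma~\ref{lem: homological properties 3}). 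Under the present hypothesis of mere finite generation, the resolution need not be acyclic, and the displayed chain of isomorphisms in your first paragraph breaks down.

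The paper's proof circumvents this by working with honest injective resolutions. It functorially embeds each $N^{(j)}$ into an injective $R$-module $I^{(j)}$, applies $H^0(\dmU/R,-)$ (which carries injectives to injectives by Lemma~\ref{rem: H(dmU/R,-)}(a)), and iterates to obtain a \emph{filtered system of injective resolutions} in $\Rep$. The Noetherian hypothesis on $R$ then guarantees that filtered colimits of injective $R$-modules remain injective, so the colimit resolution is again injective. Finally the adjunction $\Hom_{\Rep}(M,H^0(\dmU/R,I))\cong\Hom_R(M,I)$ reduces the commutation with filtered colimits to the classical statement for $\Hom_R(M,-)$ with $M$ finitely presented over Noetherian $R$. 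Your direct argument that $\Hom_{\Rep}(M,-)$ commutes with filtered colimits is essentially correct and pleasant, but it does not by itself handle the higher Ext groups without an acyclic resolution that is compatible with colimits.
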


\begin{proof} First, we recall a functorial way to embed $R$-modules into injective $R$-modules. Let $M$ be an $R$-module. Set $M^\vee:=\Hom_{\BZ}(M,\BQ/\BZ)$. We have an embedding of $R$-modules $M \hookrightarrow (M^\vee)^\vee$. Let $F_{M^\vee}:=\oplus_{m\in M^\vee} R$ be a free $R$-module where the index runs over all elements $m \in M^\vee$ then $F^\vee_{M^\vee}=\Hom_{\BZ}(F_{M^\vee}, \BQ/\BZ)$ is an injective $R$-module. There is a natural surjective morphism of $R$-modules $F_{M^\vee}\twoheadrightarrow M^\vee$, which gives us an injective morphism of $R$-modules $(M^\vee)^\vee \hookrightarrow F^\vee_{M^\vee}$. Then the embedding $M\hookrightarrow F^\vee_{M^\vee}$ is functorial in $M$. In particular, for any direct system of $R$-modules $\{M_i\}_I$, we can construct a direct system of embeding $\{M_i \hookrightarrow F^\vee_{M^\vee_i}\}_I$.

Let $\{N_i\}$ be a direct system in $\Rep(\dmU_R(\g))$ and $\varinjlim N_i=N \in \Rep(\dmU_R(\g))$. By the above paragraph, we can construct a direct system of embeddings of $R$-modules $\{N_i \hookrightarrow I^0_i\}_I$, here $I^0_i$ are injective $R$-modules. This gives us a direct system of embeddings $\{N_i \hookrightarrow N^0_i\}_I$, where $N^0_i=H^0(\dmU/R, I^0_i)$. Repeating this process, we can construct a direct system of injective resolution $\{N \rightarrow N^0_i \rightarrow N^1_i\dots\}_I$ in $\Rep(\dmU_R(\g))$, in which $N^j_i=H^0(\dmU/R, I^j_i)$ for some injective $R$-module $I^j_i$.


Filtered direct limits preserve exact sequences in the category of modules and the direct limit of injective $R$-modules is again an injective $R$-module when $R$ is Noetherian. Therefore, the direct limit of $\{N_i\rightarrow N_i^0\rightarrow N_i^1 \dots\}$ gives us an injective resolution $N \rightarrow N^0 \rightarrow N^1 \rightarrow \dots $ in which $N^j=\varinjlim H^0(\dmU/R, I_i^j)=H^0(\dmU/R, \varinjlim I_i^j)$ ($\varinjlim I^j_i$ is an injective $R$-module), the last equality holds since $H^0(\dmU/R, -)$ commutes with direct limits by Lemma \ref{rem: H(dmU/R,-)}. Now the lemma follows from
\[ \varinjlim\Hom_{\dmU_R(\g)}(M, N^j_i)=\varinjlim\Hom_R(M, I_i^j) =\Hom_R(M, \varinjlim I_i^j)=\Hom_{\dmU_R(\g)}(M, N^j),\]
the second equality holds since $R$ is Noetherian and $M$ is a finitely generated $R$-module.
\end{proof}

\begin{Lem}\label{lem: homological properties 2}
 Let $R$ be a Noetherian ring.  Let $M\in \Rep(\dmU_R(\g))$ such that $M$ is a finitely generated $R$-module. Then for any $N \in \Rep(\dmU_R(\g))$, we have an isomorphism
\begin{equation}\label{eq: Ext and loc}\Ext^i_{\Rep(\dmU_R(\g))}(M, N) \otimes_R R_\p \cong \Ext^i_{\Rep(\dmU_{R_\p}(\g))}(M_\p, N_\p),
\end{equation}
for $i \geq 0$, here $\dmU_{R_\p}(\g):=\dmU_R(\g)\otimes_R R_\p$, $M_\p:=M\otimes_R R_\p$ and $ N_\p:=N\otimes_R R_\p$.
\end{Lem}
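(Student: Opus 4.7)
The strategy is to compute both sides of \eqref{eq: Ext and loc} from a single injective resolution of $N$ built via the induction functor $H^0(\dmU/R,-)$, and then apply the standard commutation of $\Hom_R$ with localization for finitely presented modules.

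First, I would construct an injective resolution $N\to N^\bullet$ in $\Rep(\dmU_R(\g))$ of the form $N^j=H^0(\dmU/R,I^j)$, where $I^j$ is an injective $R$-module, exactly as explained in Section~\ref{ssec:homological properties} (functorially embed $N$ into $H^0(\dmU/R,I^0)$ using a functorial embedding of $N$ into an injective $R$-module $I^0$, and iterate). Since $H^0(\dmU/R,-)$ is exact and sends injective $R$-modules to injective objects of $\Rep(\dmU_R(\g))$ by Lemma~\ref{rem: H(dmU/R,-)}(a), the complex $N^\bullet$ is indeed an injective resolution, and the adjunction of Lemma~\ref{rem: H(dmU/R,-)}(a) (which is just the identification $H^0(\dmU/R,-)\cong H^0(\dmU/R,R)\otimes_R -$) combined with Frobenius reciprocity gives
\[
  \Hom_{\Rep(\dmU_R(\g))}(M,N^j) \,\cong\, \Hom_R(M,I^j)\,,
\]
so $\Ext^i_{\Rep(\dmU_R(\g))}(M,N)$ is the $i$-th cohomology of $\Hom_R(M,I^\bullet)$.

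Next I would localize the whole resolution at $\p$. Since $R$ is Noetherian, three facts are available: (i) localization at $\p$ is exact, so $N_\p\to N^\bullet_\p$ remains a resolution; (ii) if $I$ is an injective $R$-module then $I_\p$ is an injective $R_\p$-module (Matlis); and (iii) by Lemma~\ref{rem: H(dmU/R,-)}(c),
\[
  N^j_\p \,=\, H^0(\dmU/R,I^j)\otimes_R R_\p \,\cong\, H^0(\dmU/R_\p,I^j_\p)\,.
\]
Applying Lemma~\ref{rem: H(dmU/R,-)}(a) over $R_\p$ we conclude that each $N^j_\p$ is injective in $\Rep(\dmU_{R_\p}(\g))$, so $N_\p\to N^\bullet_\p$ is an injective resolution over $R_\p$, and hence $\Ext^i_{\Rep(\dmU_{R_\p}(\g))}(M_\p,N_\p)$ is the $i$-th cohomology of $\Hom_{R_\p}(M_\p,I^\bullet_\p)$.

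Finally, because $R$ is Noetherian and $M$ is finitely generated (hence finitely presented) over $R$, the standard identity
\[
  \Hom_R(M,I^j)\otimes_R R_\p \,\cong\, \Hom_{R_\p}(M_\p,I^j_\p)
\]
holds termwise. As localization is exact it commutes with taking cohomology, and putting these identifications together yields the required isomorphism. The only nontrivial input is the preservation of injectivity under localization for Noetherian $R$ together with the base-change statement of Lemma~\ref{rem: H(dmU/R,-)}(c); these are precisely the steps that force the Noetherian hypothesis and the finite-generation assumption on $M$.
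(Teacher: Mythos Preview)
Your argument is correct and follows essentially the same approach as the paper: build an injective resolution $N^j = H^0(\dmU/R,I^j)$, localize it, and use that injectivity of $R$-modules is preserved under localization over Noetherian $R$ together with the base-change Lemma~\ref{rem: H(dmU/R,-)}(c). The one difference is that where the paper first proves intermediate lemmas showing $(-)^{\dmU}$ and then $\Hom_{\dmU_R}(M,-)$ commute with localization (their Steps~1--2), you bypass this by invoking the adjunction $\Hom_{\Rep(\dmU_R(\g))}(M,H^0(\dmU/R,I^j))\cong\Hom_R(M,I^j)$ directly, which is a clean streamlining; the paper's route has the side benefit of isolating the statement that $N^{\dmU}\otimes_R R_\p\cong (N_\p)^{\dmU}$ for arbitrary $N$.
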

\begin{proof}
{\it Step 1.} We will prove that for any $N \in \Rep(\dmU_R(\g))$, the natural map $N^{\dmU}\otimes_R R_\p \rightarrow (N_{\p})^{\dmU}$ is an isomorphism. By Proposition  \ref{prop:rational_finiteness}, $N$ is a union of modules in $\Rep(\dmU_R)$ which are finitely generated over $R$. So by Lemma \ref{lem: homological properties}, we can assume $N$ is finitely generated over $R$. Then there is $m_0$ such that $E^{(m)}_in=F^{(m)}_i n=0$ for  all $m\geq m_0, n \in N$ and $1\leq i \leq r$. Hence,
\begin{align*}N^{\dmU}&=\{ n \in N_0| E^{(m)}_in=F^{(m)}_i n=0 \;\; \forall 1\leq m <m_0, 1\leq i \leq r\}\\
N^{\dmU}_\p&=\{ n \in (N_\p)_0| E^{(m)}_i n=F^{(m)}_i n=0\;\; \forall 1\leq m <m_0, 1 \leq i \leq r\}
\end{align*}
Since localization commutes with finite intersections, it follows that $N^{\dmU}\otimes_R R_\p \cong (N_\p)^{\dmU}$.

{\it Step 2.} For any $N \in \Rep(\dmU_R(\g))$, the natural map 
\begin{equation}\label{eq: Hom and loc}\Hom_{\dmU_R}(M,N) \otimes_R R_\p \rightarrow \Hom_{\dmU_{R_\p}}(M_\p, N_\p),
\end{equation}
is an isomorphism. Indeed, 
\[\Hom_{\dmU_R}(M,N)\otimes_R R_\p =\Hom_R(M,N)^{\dmU}\otimes_R R_\p, \quad \Hom_{\dmU_{R_{\p}}}(M_\p, N_\p)=\Hom_{R_\p}(M_\p, N_\p)^{\dmU}.\]
Since  $M$ is a finitely generated $R$-module, it follows that $\Hom_R(M,N) \in \Rep(\dmU_R(\g))$ and $\Hom_{R_\p}(M_\p, N_\p)\in \Rep(\dmU_{R_\p}(\g))$ and $\Hom_{R_{\p}}(M_\p, N_\p)=\Hom_R(M,N)\otimes_R R_\p$. Therefore, \eqref{eq: Hom and loc} holds by Step 1.

{\it Step 3.} Let $N \rightarrow N^0 \rightarrow N^1 \rightarrow \dots $ be an injective resolution of $N$, in which $N^i=H^0(\dmU/R, I^i) \cong H^0(\dmU/R, R)\otimes_R I^i$ for some injective $R$-module $I^i$, the isomorphism follows by Lemma \ref{rem: H(dmU/R,-)}. We have $N^i\otimes_R R_\p=H^0(\dmU/R_\p, R_\p)\otimes_{R_{\p}} I^i_{\p}$ and $I^i_\p$ is an injective $R_\p$-module because $R$ is Noetherian. Therefore, applying $-\otimes_R R_\p$ to the injective resolution of $N$, we get an injective resolution of $N_\p$. By using Step 2 and computing Ext groups with  the injective resolutions of $N$ and $N_\p$, we obtain \eqref{eq: Ext and loc}.
\end{proof}

Let $I$ be an ideal of $R$ and $R_I:=R/I$. Let $N\in \Rep(\dmU_{R_I}(\g))$ then $N$ can be viewed as an object in $\Rep(\dmU_R(\g))$. Let $M\in \Rep(\dmU_R(\g))$. Then $M_{R_I}:=M_R\otimes_R R_I \in \Rep(\dmU_{R_I}(\g))$.
\begin{Lem}\label{lem: homological properties 3}Let $R$ be a Noetherian ring. Assume $M$ is a finitely generated projective module over $R$ then for all $i \geq 0$, 
\begin{equation}\label{eq: Ext2} \Ext^i_{\Rep(\dmU_R(\g))}(M_R, N_{R_I})\cong \Ext^i_{\Rep(\dmU_{R_I}(\g))}(M_{R_I}, N_{R_I}).
\end{equation}

\end{Lem}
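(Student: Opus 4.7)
The plan is to construct a single resolution of $N:=N_{R_I}$ that computes both Ext groups simultaneously. Following the construction in the proof of Lemma \ref{lem: homological properties}, I would build an injective resolution $0\to N\to N^0\to N^1\to\cdots$ in $\Rep(\dmU_{R_I}(\g))$ whose terms have the explicit form $N^k=H^0(\dmU_{R_I}/R_I, I^k)$ with each $I^k$ an injective $R_I$-module: embed $N$ in an injective $R_I$-module $I^0$, apply $H^0(\dmU_{R_I}/R_I,-)$, use the unit of the adjunction from Proposition \ref{prop: right adjoint to F} to embed $N\hookrightarrow N^0$, and iterate on $N^0/N$. The first key observation is that, for any $R_I$-module $L$, one has the natural identification $H^0(\dmU_R/R, L) = H^0(\dmU_{R_I}/R_I, L)$, obtained from
$$\Hom_R(\dmU_R(\g), L) = \Hom_{R_I}(\dmU_R(\g)\otimes_R R_I, L) = \Hom_{R_I}(\dmU_{R_I}(\g), L)$$
by passing to rational submodules. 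Hence each $N^k$ is canonically an object of $\Rep(\dmU_R(\g))$ of the form $H^0(\dmU_R/R, I^k)$.

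The main step will be to show that each $N^k$ is acyclic for $\Hom_{\dmU_R(\g)}(M_R,-)$. By Frobenius reciprocity, $\Hom_{\dmU_R(\g)}(K, H^0(\dmU_R/R,L))=\Hom_R(K,L)$ for any $K\in\Rep(\dmU_R(\g))$ and any $R$-module $L$. Because $H^0(\dmU_R/R,-)$ is exact and carries injective $R$-modules to injective objects of $\Rep(\dmU_R(\g))$ (Lemma \ref{rem: H(dmU/R,-)}(a)), the Grothendieck spectral sequence for this factorization degenerates and yields
$$\Ext^j_{\Rep(\dmU_R(\g))}\bigl(M_R,\, H^0(\dmU_R/R,\, L)\bigr)\;=\;\Ext^j_R(M_R,\, L)$$
for every $R$-module $L$. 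Applied to $L=I^k$, the projectivity of $M_R$ over $R$ forces the right-hand side to vanish for all $j>0$, so $N^\bullet$ is a $\Hom_{\dmU_R(\g)}(M_R,-)$-acyclic resolution of $N$ and thus computes $\Ext^*_{\Rep(\dmU_R(\g))}(M_R, N)$.

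To conclude, I would identify the two Hom complexes term by term via the base change/restriction adjunction: since each $N^k$ is annihilated by $I$, Frobenius reciprocity in both categories gives
$$\Hom_{\dmU_R(\g)}(M_R, N^k) = \Hom_R(M_R, I^k) = \Hom_{R_I}(M_{R_I}, I^k) = \Hom_{\dmU_{R_I}(\g)}(M_{R_I}, N^k),$$
naturally in $k$. As $N^\bullet$ is an honest injective resolution in $\Rep(\dmU_{R_I}(\g))$, the cohomology of the right-hand complex is precisely $\Ext^*_{\Rep(\dmU_{R_I}(\g))}(M_{R_I}, N_{R_I})$, so taking cohomology of this common complex yields the required isomorphism~\eqref{eq: Ext2}.

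The main obstacle I expect is exactly the acyclicity step: an injective $R_I$-module is typically not injective over $R$, so the $N^k$ are generally not injective in $\Rep(\dmU_R(\g))$. This is precisely where the hypothesis that $M$ is projective (and not merely flat) over $R$ is essential, since it is only through the vanishing of $\Ext^j_R(M_R,-)$ that Frobenius reciprocity upgrades from degree zero to all higher degrees.
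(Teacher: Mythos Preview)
Your proof is correct and follows essentially the same strategy as the paper's: build a resolution of $N_{R_I}$ by objects of the form $H^0(\dmU/R_I,-)$, identify these with the corresponding objects over $R$ via the base-change identity for $H^0(\dmU/R,-)$ (the paper invokes Lemma~\ref{rem: H(dmU/R,-)}(a,c) here rather than the direct Hom computation you give), use projectivity of $M_R$ to get acyclicity for $\Hom_{\dmU_R(\g)}(M_R,-)$, and then identify the Hom complexes term by term. The only cosmetic difference is that the paper uses the ``big standard resolution'' $Q^i=H^0(\dmU/R_I,\,Q^{i-1}/Q^{i-2})$ (no injectivity required, since acyclicity comes from $M$ being projective anyway), whereas you start from an honest injective resolution over $R_I$; the underlying mechanism is identical.
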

\begin{proof} Since $M_R$ is a finitely generated projective $R$-module, $M_{R_I}$ is a finitely generated projective $R_I$-module. Therefore, we can use the big  standard resolutions of $N_{R_I}$ in $\Rep(\dmU_R(\g))$ and $\Rep(\dmU_{R_I}(\g))$ to compute the Ext groups in \eqref{eq: Ext2}, see the argument in last paragraph in the proof of Lemma \ref{lem: homological properties 2}. These two big standard resolutions of $N_{R_I}$ are as follows:
\begin{equation*}
\begin{split}
    N_{R_I}\rightarrow Q^0_R\rightarrow Q^1_R\rightarrow \dots, \qquad N_{R_I}\rightarrow Q^0_{R_I}\rightarrow Q^1_{R_I} \rightarrow \dots
\end{split}
\end{equation*}
in which $Q^i_R=H^0(\dmU/R, Q^{i-1}_R/Q^{i-2}_R)$ and $Q^i_{R_I}=H^0(\dmU/{R_I}, Q^{i-1}_{R_I}/Q^{i-2}_{R_I})$.

By Lemma \ref{rem: H(dmU/R,-)}, 
\[Q^0_R=H^0(\dmU/R, N_{R_I})\cong H^0(\dmU/R, R)\otimes_R N_{R_I}\cong H^0(\dmU/R_I, R_I)\otimes_{R_I} N_{R_I}\cong H^0(\dmU/R_{I}, N_{R_I})=Q^0_{R_I}\]
By induction, $Q^i_R \cong Q^i_{R_I}$. On the other hand,
\[ \Hom_{\Rep(\dmU_R(\g))}(M_R, Q^i_{R_I}) \cong \Hom_{\Rep(\dmU_{R_I}(\g))}(M_{R_I}, Q^i_{R_I}).\]
Therefore, \eqref{eq: Ext2} holds.    
\end{proof}

\subsection{Kempf vanishing}\label{SS_Kempf_vanish}\

The category $\Rep(\dmU_q^{\leqslant})$ has enough injectives, see \cite[$\mathsection$2.13]{APW1}. So we can define the higher derived functors $H^i$ of the induction functor $H^0$. 

%

We write $H^i_q(\lambda)$ for $H^i(R_\lambda)$. Let $\lambda^* :=-w_0 \lambda$.

\begin{Prop}\label{Prop:Kempf_vanishing}
Suppose $R$ is Noetherian and $\lambda$ is dominant. The  following claims are true:
\begin{itemize}
    \item[(i)] We have $H^0_q(\lambda) \simeq \Hom_R(W_q(\lambda^*),R)$.
    \item[(ii)] We have $H^i_q(\lambda)=\{0\}$ for all $i>0$.
\end{itemize}
\end{Prop}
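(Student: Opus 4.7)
The plan is to handle (i) and (ii) separately, with (ii) being the substantive part and (i) following by a Frobenius reciprocity argument once the character of $H^0_q(\lambda)$ is known.

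For part (i), I would first endow $W_q(\lambda^*)^\vee := \Hom_R(W_q(\lambda^*),R)$ with a rational left $\dmU_q(\g)$-module structure via the antipode, and then construct the desired isomorphism via Frobenius reciprocity. Concretely, let $v_{\lambda^*} \in W_q(\lambda^*)_{\lambda^*}$ be the canonical generator from Proposition~\ref{prop: Description of Weyl modules}, and let $v_{\lambda^*}^\vee \in (W_q(\lambda^*)^\vee)_{\lambda}$ be the dual weight vector (here I use that $W_q(\lambda^*)$ is $R$-free with a weight basis by Lemma~\ref{Lem:Weyl_char_formula}, and that the antipode sends $K^\mu \mapsto K^{-\mu}$ so weights flip sign after composition with $-w_0$ at the level of lowest-to-highest). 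A direct calculation, combined with $E_i v_{\lambda^*} = 0$ and the degree of nilpotence $F_i^{[k]} v_{\lambda^*} = 0$ for $k > (\lambda^*,\alpha_i^\vee)$, shows that $v_{\lambda^*}^\vee$ generates a $\dmU_q^{\leqslant}$-submodule isomorphic to $R_\lambda$, giving a $\dmU_q^{\leqslant}$-morphism $W_q(\lambda^*)^\vee \to R_\lambda$ sending $v_{\lambda^*}^\vee \mapsto 1$. Applying the adjunction \eqref{eq:dual_Weyl_universal} yields a $\dmU_q(\g)$-morphism $\Phi\colon W_q(\lambda^*)^\vee \to H^0_q(\lambda)$. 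To verify $\Phi$ is an isomorphism, I would compare weight multiplicities: the left side has character equal to the Weyl character formula (Lemma~\ref{Lem:Weyl_char_formula}), while on the right one checks by a direct analysis (using the standard resolution and the explicit description of $\dmU_q(\g) \otimes_{\dmU_q^{\leqslant}} R_\lambda$ as a ``dual Verma'') that each weight component is $R$-free of the correct rank.

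For part (ii), the plan is to reduce to the case where $R$ is a field by base change. First, by Lemma~\ref{rem: H(dmU/R,-)}(c) and the fact that $H^i$ commutes with flat base change (which extends the proof of Lemma~\ref{lem: homological properties 2} from $\Ext$-groups to the derived functors of induction, using that the big standard resolution of $R_\lambda$ is made from modules of the form $H^0(\dU/R,I)$ that base-change correctly), it suffices to prove vanishing over each residue field $R_\fp/\fp R_\fp$ for $\fp \in \Spec(R)$. Noetherianness of $R$ combined with Proposition~\ref{prop:rational_finiteness} ensures each weight component $H^i_q(\lambda)_\mu$ is a finitely generated $R$-module, so Nakayama's lemma completes this reduction.

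For the remaining case $R = k$ a field, I would invoke (and recall the proof of) the quantum Kempf vanishing theorem due to Andersen--Polo--Wen and Ryom-Hansen. The key geometric input is the parabolic induction from the rank-one Levi: for each simple root $\alpha_i$ one verifies by an explicit $\mathfrak{sl}_2$-computation that $R^j\mathrm{Ind}_{P_i}^G(\lambda) = 0$ for $j>0$ whenever $(\lambda,\alpha_i^\vee)\geq 0$. Iterating along a reduced expression of $w_0$ and using the composition-of-derived-functors spectral sequence (i.e., the quantum Bott--Demazure argument) yields the full vanishing $H^i_q(\lambda) = 0$ for $i > 0$ and $\lambda$ dominant, regardless of whether $q$ is a root of unity.

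The main obstacle will be the flat base change in step 2: APW work over the local ring $\uCA_{\mathfrak{m}_0}$, and we must extend their homological arguments to an arbitrary Noetherian $\uCA$-algebra $R$. Per Remark~\ref{Rem:APW1_results_generalize}, this amounts to systematically replacing ``free of finite rank over $R$'' with ``finitely generated projective over $R$,'' and to verifying that the big standard resolution of Section~\ref{ssec:homological properties} is compatible with the base change $R \to R_\fp \to R_\fp/\fp R_\fp$; the technical heart is already in Lemmas~\ref{rem: H(dmU/R,-)}--\ref{lem: homological properties 3}, so the obstacle is essentially bookkeeping rather than a new idea.
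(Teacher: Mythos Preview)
Your strategy for (ii) has the right ingredients but runs the base change in the opposite direction from the paper, and this creates a concrete gap. The paper first proves vanishing over the universal base $\uCA = \BZ[v,v^{-1}]$: every residue field of $\uCA$ is a \emph{finite} field in which $q$ is a root of unity, so a single reference (Ryom-Hansen \cite{rh}) handles all of them, and the Demazure-type surjectivity results of \cite[\S5]{APW1} lift to $\uCA$ by checking at these residue fields. Then for arbitrary $R$ the spectral sequence of \cite[\S3.4--3.5]{APW1} applied to $\uCA \to R$ gives $H^i_R(\lambda) = R \otimes_{\uCA} H^i_{\uCA}(\lambda) = 0$ for $i>0$ in one stroke.

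Your route---reducing from $R$ to its residue fields via Nakayama---has a genuine gap at the finiteness step: Proposition~\ref{prop:rational_finiteness} says only that each \emph{cyclic} $\dmU_q(\g)$-submodule of a rational module is finitely generated over $R$, not that individual weight spaces are; an infinite direct sum of trivial representations is a counterexample. Finite generation of $H^i_R(\lambda)$ is true, but proving it over arbitrary Noetherian $R$ already requires the APW spectral-sequence machinery, so you gain nothing by descending through residue fields of $R$. Your route also forces you to invoke quantum Kempf vanishing over arbitrary fields with arbitrary $q$ (characteristic $0$, generic $q$; characteristic $0$, $q$ a root of unity; characteristic $p$), which the literature does not package in one place---the paper's reduction to $\uCA$ makes all residue fields finite and avoids this case split.

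For (i), your Frobenius-reciprocity construction of $\Phi$ is correct, but verifying that $\Phi$ is an isomorphism by computing the rank of each weight space of $H^0_q(\lambda)$ ``by a direct analysis'' is not straightforward over general $R$: the $R$-freeness and character of $H^0_q(\lambda)$ are normally \emph{consequences} of Kempf vanishing (via $H^0_R(\lambda) = R\otimes_{\uCA} H^0_{\uCA}(\lambda)$), not independent inputs. The paper instead cites \cite[Proposition~3.3]{APW1}, whose argument (a pairing between Weyl and dual Weyl modules) does not rely on (ii).
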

The second part of the proposition is known as the Kempf vanishing theorem. In \cite{APW1}, specalizing $\uCA_{\m_0}$ to its residue field $k$ with $q=1 \in k $ allowed the authors to use some results about modular representations in the proof of the Kempf vanishing theorem. Let us now consider the case when the base ring is $\uCA$. For any maximal ideal $\m \in \uCA$, the field $\uCA/\m$ is a finite field and hence $q\in \uCA/\m$ is a root of unity. Some results in \cite{rh}  in  the case when the field $k$ of positive characteristics and $q\in k$ is a root of unity can replace the references to modular representations in the proof of the  Kempf vanishing theorem in \cite{APW1}. This allows us to prove the Kempf vanishing theorem over $\uCA$ and then over the $\uCA$-algebra $R$ via base change.

\begin{proof}
(i) Following  Remark \ref{Rem:APW1_results_generalize}, this part is proved in \cite[Proposition 3.3]{APW1}.

\noindent
(ii)
{\it Step 1:} First, we consider the case when the base ring is $\uCA$. This is shown by generalizing results from \cite[$\mathsection 5.5$]{APW1} as follows.  For any maximal ideal $\m \in \uCA$, let $\uCA_\m$ be the localization  and $k:=\uCA_\m /\m \uCA_m \cong \uCA/\m$ be the residue field of $\uCA_\m$. Then $k$ is a finite field of positive characteristics. Let $\bar{q}$ denote the image of $q$ in $k$ then $\bar{q}$ is a root of unity in $k$.  
\begin{itemize}
    \item \cite[Theorem $5.1$]{APW1} holds over $\uCA$. To show that the natural homomorphism $H^0_q(\lambda)\rightarrow H^0_q(s,\lambda)$ (the target is introduced before the theorem) is surjective, it is enough to show the surjectivity over the localization $\uCA_\m$  at any maximal ideal $\m \subset \uCA$. The later is done as in \emph{loc.cit.} In the proof of (ii), to show the surjectivity  of the natural homomorphism $H^0_{\bar{q}}(\lambda) \rightarrow H^0_{\bar{q}}(s,\lambda)$  one replaces the references there with the reference to \cite[Lemma $5.3$]{rh}: that lemma  combined the isomorphism $\mathcal{D}_{w_0}(\lambda) \cong \mathcal{D}(\lambda)$ in \cite[Theorem $5.4$]{rh} imply that the dual of $H^0_{\bar{q}}(\lambda)  \rightarrow H^0_{\bar{q}}(s, \lambda)$ is injective. 
    \item \cite[Lemma $5.3$]{APW1} holds over $\uCA$. As in the proof in \emph{loc. cit.}, inside $H^0(\dmU^\leqslant/\dmU^0, \mu)$, we produce a union of submodules isomorphic to $H^0_{\uCA}(m \rho)\otimes \uCA_{m \rho+\mu}$ over $m \geq 0$. But then this ascending chain of submodules is equal to $H^0(\dmU^\leqslant/\dmU^0, \mu)$ after localizing at any maximal ideal $\m \subset \uCA$ by the same proof in \emph{loc. cit.}. Therefore, \cite[Lemma 5.3]{APW1} holds over $\uCA$. 
    \item Using this we see that \cite[Theorem 5.4]{APW1} holds over $\uCA$.
    \item \cite[Proposition $5.6$]{APW1} holds over $\uCA$. Part (i) of that proposition holds over localization $\uCA_\m$ at any  maximal ideal $\m$: in the proof of $H^0_{\bar{q}}(\lambda) \iso H^0_{\bar{q}}(w_0, \lambda)$ one replaces the reference there with \cite[Theorem 5.4]{rh}. Therefore part(i) holds over $\uCA$ and then parts (ii), (iii) follow.
\end{itemize}
Then the claims that $H^i_{\uCA}(\lambda)=0$ for $i \geq 1$ follow by combining \cite[Theorem 5.4(i)]{APW1} and \cite[Proposition 5.6 (iii)]{APW1}. 

{\it Step 2:} For a general $\uCA$-algebra $R$, we now can use the spectral sequence from \cite[$\mathsection 3.4-3.5$]{APW1}  (with $\uCA_\m$ is replaced by $\uCA$ and  $\Gamma=R$), see also \cite[Remark 3.5]{APW1},  to see that $H^i_q(\lambda)=\{0\}$ for all $i>0$, while $$H^0_q(\lambda)=R\otimes_{\uCA}H^0_{\uCA}(\lambda).$$
\end{proof}

We remark that for two dominant weights $\lambda,\lambda'$, we have 
\begin{equation}\label{eq:Hom_Weyl_dual_Weyl} \Hom_{\dmU_q(\g)}(W_q(\lambda), H^0_q(\lambda'))\simeq R^{\oplus \delta_{\lambda,\lambda'}}.
\end{equation}

When $R$ is a field, the image of a nonzero homomorphism $W_q(\lambda)\rightarrow H^0_q(\lambda)$ is simple. We denote it by $L_q(\lambda)$. The assignment $\lambda\mapsto L_q(\lambda)$ is a bijection between the dominant weights and the simple modules in $\operatorname{Rep}(\dmU_q(\g))$.

\subsection{Good filtrations}\label{ssec: good filtration}

\begin{defi}\label{defi:good_filtration}
Let $M\in \operatorname{Rep}(\dmU_q(\g))$. 
An exhaustive $\dmU_q(\g)$-module filtration $\{0\}=M_0\subsetneq M_1\subsetneq M_2\subsetneq\ldots$ is called {\it good} if, for each $i$ we have, $M_i/M_{i-1} \simeq H^0_q(\lambda_i)\otimes_R P_i$ for some dominant weight $\lambda_i$ and some finitely generated projective $R$-module $P_i$.
\end{defi}

\begin{Lem}\label{Lem:good_filtration_equivalent}
Let $R$ be a field.
Suppose that $M\in \operatorname{Rep}(\dmU_q(\g))$ satisfies the following condition
\begin{equation}\label{eq:condition_finite}
    \dim_R\Hom_{\dmU_q(\g)}(W_q(\lambda), M)<\infty , \;\;\forall \lambda\in P_+,
\end{equation}
where $P_+$ denotes the set of dominant weights.
Then the following two conditions are equivalent.
\begin{enumerate}
    \item $M$ admits a good filtration. 
    \item  $\operatorname{Ext}^i_{\operatorname{Rep}(\dmU_q(\g))}(W_q(\lambda), M)=0, 
$    
    for all $i>0$ and dominant weights $\lambda$.
\end{enumerate}
\end{Lem}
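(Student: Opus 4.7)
The plan is to reduce everything to the \emph{Ext-orthogonality} statement
\[
\operatorname{Ext}^i_{\operatorname{Rep}(\dmU_q(\g))}\!\bigl(W_q(\lambda),H^0_q(\mu)\bigr)\;\simeq\;\delta_{i,0}\,\delta_{\lambda,\mu}\,R
\qquad\forall\,\lambda,\mu\in P_+,\ i\ge 0,
\]
which for $i=0$ is precisely \eqref{eq:Hom_Weyl_dual_Weyl}. For $i>0$ I would first combine Proposition \ref{prop: right adjoint to F} with the exactness and injective-preservation of $H^0(\dmU/\dmU^{\leqslant},-)$ (Lemma \ref{rem: H(dmU/R,-)}) to identify $\operatorname{Ext}^i_{\dmU}(W_q(\lambda),H^0_q(\mu))$ with $\operatorname{Ext}^i_{\dmU^{\leqslant}}(W_q(\lambda),R_\mu)$; an analogous dual argument going through Proposition \ref{prop: left adjoint to F} then kills it by a weight count on $W_q(\lambda)|_{\dmU^{\leqslant}}$ unless $\lambda=\mu$ and $i=0$. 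Kempf vanishing (Proposition \ref{Prop:Kempf_vanishing}) enters here to ensure that the relative derived functors can be bypassed in degree zero.

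For (1)$\Rightarrow$(2), I would argue by induction on the filtration. For each $i\ge 0$ the short exact sequence
\[
0\longrightarrow M_{i-1}\longrightarrow M_i\longrightarrow H^0_q(\lambda_i)\otimes_R P_i\longrightarrow 0
\]
together with Ext-orthogonality (and the harmlessness of tensoring with a vector space $P_i$ over a field) gives $\operatorname{Ext}^j(W_q(\lambda),M_i)=0$ for $j>0$ by induction in $i$. Since $M=\varinjlim M_i$ and $W_q(\lambda)$ is finitely generated over $R$ (indeed finite-dimensional by Lemma \ref{Lem:Weyl_char_formula}), Lemma \ref{lem: homological properties} yields $\operatorname{Ext}^j(W_q(\lambda),M)=\varinjlim_i\operatorname{Ext}^j(W_q(\lambda),M_i)=0$.

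For (2)$\Rightarrow$(1), the plan is a transfinite inductive construction. Let $\Pi(M):=\{\lambda\in P_+\mid\operatorname{Hom}(W_q(\lambda),M)\ne 0\}$. Pick $\lambda_1\in\Pi(M)$ minimal in the dominance order (this is possible because a nonzero $M$ must contain a simple submodule $L_q(\mu)$, whence $\mu\in\Pi(M)$, and because the weights of $M$ are bounded from below on any individual vector). Set $V:=\operatorname{Hom}(W_q(\lambda_1),M)$, finite-dimensional by hypothesis. I would then show that the evaluation map $W_q(\lambda_1)\otimes_R V\to M$ extends uniquely to a map $H^0_q(\lambda_1)\otimes_R V\to M$ whose image $M_1$ is isomorphic to $H^0_q(\lambda_1)\otimes_R V$. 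Applying $\operatorname{Hom}(-,M)$ to $0\to W_q(\lambda_1)\to H^0_q(\lambda_1)\to Q\to 0$, existence of the lift amounts to $\operatorname{Ext}^1(Q,M)=0$, while injectivity of $H^0_q(\lambda_1)\otimes V\hookrightarrow M$ follows from $\operatorname{Hom}(Q,M)=0$; both reduce (via the filtration of $Q$ by simple objects $L_q(\mu)$ with $\mu<\lambda_1$ dominant) to checking $\operatorname{Hom}(L_q(\mu),M)=\operatorname{Ext}^1(L_q(\mu),M)=0$ for all dominant $\mu<\lambda_1$. These in turn are bootstrapped from the given vanishing $\operatorname{Ext}^i(W_q(\mu),M)=0$ by downward induction on $\mu$ using the short exact sequences relating $L_q(\mu)$, $W_q(\mu)$ and their radicals/composition factors. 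Having produced $M_1$, I would pass to $M/M_1$ (which still satisfies (2) by Ext-orthogonality applied to $0\to M_1\to M\to M/M_1\to 0$) and iterate transfinitely; exhaustiveness of the resulting filtration is checked one weight space at a time using the character formula (Lemma \ref{Lem:Weyl_char_formula}) and the fact that each stage strictly decreases the formal character at $\lambda_1$.

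The main obstacle is the lifting step in (2)$\Rightarrow$(1): the vanishing of $\operatorname{Ext}^1(L_q(\mu),M)$ for dominant $\mu<\lambda_1$ is not directly given by hypothesis and must be extracted from the vanishing of $\operatorname{Ext}^i(W_q(\mu),M)$ by unwinding short exact sequences $0\to \operatorname{rad} W_q(\mu)\to W_q(\mu)\to L_q(\mu)\to 0$ and exploiting the fact that composition factors of $\operatorname{rad} W_q(\mu)$ are $L_q(\nu)$ with $\nu<\mu$ dominant, on which the inductive hypothesis already applies.
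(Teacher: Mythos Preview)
Your overall strategy matches the paper's closely, but there is one concrete error and one organizational difference worth noting.

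\textbf{The error.} Your short exact sequence $0\to W_q(\lambda_1)\to H^0_q(\lambda_1)\to Q\to 0$ is wrong: over a field, the natural map $W_q(\lambda_1)\to H^0_q(\lambda_1)$ has image $L_q(\lambda_1)$ but is generally \emph{not} injective (its kernel is the radical of $W_q(\lambda_1)$). Both the kernel and the cokernel have composition factors $L_q(\mu)$ with $\mu<\lambda_1$. Your bootstrap argument for $\Hom(L_q(\mu),M)=\Ext^1(L_q(\mu),M)=0$ when $\mu<\lambda_1$ is fine, and it does yield $\Hom(W_q(\lambda_1),M)\cong\Hom(H^0_q(\lambda_1),M)$ directly by applying it to kernel and cokernel separately; you just cannot package this as a single three-term sequence.

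\textbf{The organizational difference.} The paper fixes once and for all a total order $\lambda_1<\lambda_2<\cdots$ on $P_+$ refining dominance, and carries along a stronger inductive invariant: $\Ext^n(N,M/M_i)=0$ for all $n\ge 0$ and all $N$ in the Serre subcategory generated by $W_q(\lambda_j)$, $j\le i$. This avoids re-bootstrapping at every stage and makes exhaustiveness immediate: $M_i$ is then the maximal submodule of $M$ whose weights are $\le\lambda_i$, so any $m\in M$ lies in some $M_i$ because $\dmU_q(\g)m$ is finite-dimensional (Proposition~\ref{prop:rational_finiteness}). Your transfinite ``pick a minimal element'' scheme is workable, but your exhaustiveness sketch (``one weight space at a time using the character formula'') is not an argument; you would need something like the paper's characterization of $M_i$ to make it precise.
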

\begin{proof}The proof is in several steps.

{\it Step 1}. Let us show $(1) \Rightarrow (2)$. The submodule $M_j$ has a good filtration and is finite dimensional. Hence $\Ext^i_{\Rep(\dmU_q(\g))}(W_q(\lambda), M_j)=0$ for all $i>0$ and dominant weights $\lambda$ by Theorem $3.1$ in \cite{p}. By Lemma \ref{lem: homological properties}, 
\[\Ext^i_{\Rep(\dmU_q(\g))}(W_q(\lambda),M)=\varinjlim \Ext^i_{\Rep(\dmU_q(\g))}(W_q(\lambda), M_j)=0,\]
for all $i>0$ and dominant weights $\lambda$. 

{\it Step 2}. It remains to show $(2) \Rightarrow (1)$. We refine the order of $P_+$ to a total order: $\lambda_1 <\lambda_2< \dots$. We will construct inductively a filtration  $\{0\}=M_0\subset M_1\subset \dots $ on $M$ that is exhaustive and satisfies the following conditions
\begin{itemize}
\item[(3)] $M_i/M_{i-1}$ is the direct sum of several (finitely many) copies of $H^0_q(\lambda_i)$,
\item[(4)] $\Ext^n_{\Rep(\dmU_q(\g))}(N, M/M_i)=0$ for all $n\geq 0$ and finite dimensional module $N$ contained in the Serre span of $W_q(\lambda_j)$ with $j \leq i$. 
\end{itemize}

{\it Step 2.1. The base case $i=0$.} $M_0=\{0\}$ and $(3)$, $(4)$ are vacuously true.

{\it Step 2.2. The induction step}. Assume we have constructed $M_0 \subset M_1 \subset \dots M_i$ satisfying (3) and (4). Then $M/M_i$ admits no nonzero homomorphism from objects filtered by $L_q(\lambda_j)$ with $j\leq i$. Therefore, the following map is injective 
\begin{equation}\label{eq: Hom to M/Mi}\Hom_{\Rep(\dmU_q(\g))}(H^0_q(\lambda_{i+1}), M/M_i)\otimes H^0_q(\lambda_{i+1}) \rightarrow M/M_i.
\end{equation}
Let $M_{i+1}$ be the preimage of the image of \eqref{eq: Hom to M/Mi}  under the projection $M \rightarrow M/M_i$. Since the kernel and cokernel of the map $W_q(\lambda_{i+1})\rightarrow H^0_q(\lambda_{i+1})$ are contained in the Serre span of $W_q(\lambda_j)$ with $j \leq i$, we have by (4) that 
\[ \Hom_{\Rep(\dmU_q(\g))}(H^0_q(\lambda_{i+1}), M/M_i) \cong \Hom_{\Rep(\dmU_q(\g))}(W_q(\lambda_{i+1}), M/M_i).\]
Then $\Ext^n_{\Rep(\dmU_q(\g))}(W_q(\lambda_j), M/M_{i+1})=0$ for all $n \geq 0$ and $j \leq i+1$. This implies that $M_{i+1}$ satisfies (4). 


{\it Step 2.3}. It is left to show that the filtration we have constructed is exhastive. By $(4)$, for any $N\in \Rep(\dmU_q(\g))$ where all weights are $\leq \lambda_i$, we have 
\[ 0\rightarrow \Hom_{\dmU_q(\g)}(N,M_i)\rightarrow \Hom_{\dmU_q(\g)}(N,M) \rightarrow \Hom_{\dmU_q(\g)}(N,M/M_i)=0,\]
This implies that $M_i$ is the maximal submodule of $M$ where all weights are $\leq \lambda_i$. Thanks to Proposition  \ref{prop:rational_finiteness}, $\dmU_q(\g)m$ is finite dimensional for all $m\in M$. Suppose that all weights in this submodule are less than or equal to $\lambda_i$. Then $m \in M_i$. 
\end{proof}

The following is the main result of \cite{p}. 
\begin{Prop}\label{Prop:good_filt_tensor}
Let $R$ be a field. Then the tensor product of two finite dimensional modules in $\Rep(\dU_q(\g))$ with good filtrations also has a good filtration.
\end{Prop}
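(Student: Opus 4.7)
The plan is to deduce this from the cohomological characterization of good filtrations in Lemma \ref{Lem:good_filtration_equivalent} combined with a tensor identity for the induction functor $H^0$ and the Kempf vanishing of Proposition \ref{Prop:Kempf_vanishing}. Specifically, given finite dimensional $M,N\in \Rep(\dmU_q(\g))$ with good filtrations, I first verify that $M\otimes_R N$ satisfies the finiteness condition \eqref{eq:condition_finite}, which is automatic from finite dimensionality. Then I reduce to proving $\Ext^i_{\Rep(\dmU_q(\g))}(W_q(\lambda), M\otimes_R N)=0$ for all $i>0$ and dominant $\lambda$. Using the long exact sequences in Ext attached to the filtrations of $M$ and $N$, together with Lemma \ref{Lem:good_filtration_equivalent}(1)$\Rightarrow$(2), the problem reduces to the case $M=H^0_q(\mu)$, $N=H^0_q(\nu)$ for two dominant weights $\mu,\nu$.

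The next step is to establish a tensor identity of the form
\begin{equation*}
  H^i\bigl(V\otimes_R N_0\bigr) \;\simeq\; V\otimes_R H^i(N_0),
\end{equation*}
for any finite dimensional $V\in\Rep(\dmU_q(\g))$ and any $N_0\in\Rep(\dmU_q^{\leqslant})$ (where $V|_{\dmU_q^{\leqslant}}$ is taken on the left factor). This is the quantum analog of the classical tensor identity for induction from a Borel; it can be verified on injective resolutions of $N_0$, using that $\Hom_{\dmU_q^{\leqslant}}(\dmU_q(\g), V\otimes_R N_0)\simeq V\otimes_R\Hom_{\dmU_q^{\leqslant}}(\dmU_q(\g), N_0)$ when $V$ is finite dimensional. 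Applying this with $V=H^0_q(\nu)$ and $N_0=R_\mu$ yields
\begin{equation*}
  H^0_q(\mu)\otimes_R H^0_q(\nu) \;\simeq\; H^0\bigl(R_\mu\otimes_R H^0_q(\nu)|_{\dmU_q^{\leqslant}}\bigr),
\end{equation*}
and similar identifications for the higher $H^i$.

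The key technical obstacle — and the heart of Paradowski's argument \cite{p} — is to filter $R_\mu\otimes_R H^0_q(\nu)|_{\dmU_q^{\leqslant}}$ by $\dmU_q^{\leqslant}$-submodules in such a way that, after applying $H^0$ and using Kempf vanishing together with the tensor identity, only the \emph{dominant} translates $H^0_q(\mu+\eta)$ contribute and the nondominant pieces cancel cohomologically. In the classical and modular setting this is handled via Demazure-type filtrations refining the weight filtration of $H^0_q(\nu)$; in the quantum setting, Paradowski's proof adapts this filtration together with Joseph's characterization of good filtrations (Lemma \ref{Lem:good_filtration_equivalent}) and the vanishing $H^{>0}_q(\lambda)=0$ for dominant $\lambda$. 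One then concludes that $H^i\bigl(R_\mu\otimes_R H^0_q(\nu)|_{\dmU_q^{\leqslant}}\bigr)=0$ for $i>0$, so $H^0_q(\mu)\otimes_R H^0_q(\nu)$ has no higher $\Ext$ from any Weyl module, and by Lemma \ref{Lem:good_filtration_equivalent} it admits a good filtration.

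Since our setup for $\dmU_q(\g)$ over an arbitrary field $R$ (after the generalizations recorded in Remark \ref{Rem:APW1_results_generalize} and Proposition \ref{Prop:Kempf_vanishing}) furnishes all the ingredients Paradowski uses over $\BC(v)$ or a modular field — namely Weyl and dual Weyl modules with the expected universal properties, the tensor identity, Kempf vanishing, and the cohomological criterion for good filtrations — the proof of \cite{p} applies verbatim in the present generality, which is how I would cite it.
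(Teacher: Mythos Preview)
Your proposal is correct and aligns with the paper's treatment: the paper does not give its own proof but simply attributes the result to \cite{p}, which is exactly what you end up doing. Your sketch of the argument (tensor identity, Kempf vanishing, cohomological criterion) is a reasonable outline of Paradowski's method and goes beyond what the paper itself records.
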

 
We will need analogs of Lemma  \ref{Lem:good_filtration_equivalent} 
and Proposition \ref{Prop:good_filt_tensor}
over regular Noetherian domains $R$ (we will be interested in the completions of localizations of $\uCA$). A technical result we are going to use is the following claim  from 
\cite[$\mathsection 5.13$]{APW1}, see also \cite[$\mathsection 4.2$]{p}. 

\begin{Lem}\label{Lem:good_filtr_specialization}
Let $M$ be an object in $\Rep(\dmU_q(\g))$ that is a finitely generated projective $R$-module. Then $M$ admits a good filtration if and only if $k\otimes_{R}M\in 
\Rep(\dmU_k(\g))$ admits a good filtration for every epimorphism $R\twoheadrightarrow k$ onto a field.
\end{Lem}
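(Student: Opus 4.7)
The plan is to handle the two implications separately, with the backward one carrying essentially all the content. For the forward direction, given a good filtration $0=M_0\subsetneq M_1\subsetneq\cdots$ with $M_i/M_{i-1}\cong H^0_q(\lambda_i)\otimes_R P_i$, each subquotient is projective over $R$: indeed $H^0_q(\lambda_i)$ is finitely generated and free over $R$ by Proposition~\ref{Prop:Kempf_vanishing}(i) together with Lemma~\ref{Lem:Weyl_char_formula}, and $P_i$ is projective by hypothesis. Consequently every short exact sequence $0\to M_{i-1}\to M_i\to M_i/M_{i-1}\to 0$ splits as $R$-modules, so tensoring with $k$ preserves the filtration; by Lemma~\ref{rem: H(dmU/R,-)}(c) the subquotients become $H^0_k(\lambda_i)\otimes_k(k\otimes_R P_i)$, which is a good filtration of $M_k$.

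For the backward direction I would combine the Ext criterion of Lemma~\ref{Lem:good_filtration_equivalent} with a base-change isomorphism for Ext. The key input I need is that, for any epimorphism $R\twoheadrightarrow k$ onto a field and any dominant $\lambda$, there is a natural isomorphism
\[
\Ext^i_{\Rep(\dmU_R(\g))}(W_q(\lambda),M)\otimes_R k\;\cong\;\Ext^i_{\Rep(\dmU_k(\g))}(W_k(\lambda),M_k).
\]
I would establish this by combining Lemma~\ref{lem: homological properties 2} (localization at the maximal ideal $\mathfrak{m}=\ker(R\twoheadrightarrow k)$) with Lemma~\ref{lem: homological properties 3} (quotienting by $\mathfrak{m} R_\mathfrak{m}$), using that $W_q(\lambda)$ is finitely generated projective over $R$ by Lemma~\ref{Lem:Weyl_char_formula} and that $M$ is projective over $R$ so that a resolution $M\to H^0(\dmU/R,I^\bullet)$ built from injective $R$-modules $I^\bullet$ via Lemma~\ref{rem: H(dmU/R,-)} base-changes faithfully. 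By the field-case criterion applied to each $M_k$, the right-hand side vanishes for $i\geq 1$, so Nakayama applied locally at every maximal ideal yields $\Ext^i_{\Rep(\dmU_R(\g))}(W_q(\lambda),M)=0$ for $i\geq 1$. A parallel fiber-dimension argument, using that the multiplicity of $H^0_k(\lambda)$ in a good filtration of $M_k$ is independent of $k$ (being determined by the $R$-rank of an appropriate weight piece of $M$), shows that $\Hom_{\dmU_q(\g)}(W_q(\lambda),M)$ is finitely generated projective over $R$.

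With these facts in hand, the filtration is constructed by adapting the inductive argument in the proof of Lemma~\ref{Lem:good_filtration_equivalent}. Refine the dominance order on the dominant weights appearing in $M$ to a total order $\lambda_1<\lambda_2<\cdots$, and at stage $i$ set $M_i$ to be the preimage under $M\twoheadrightarrow M/M_{i-1}$ of the image of the evaluation map
\[
\Hom_{\dmU_q(\g)}\bigl(H^0_q(\lambda_i),M/M_{i-1}\bigr)\otimes_R H^0_q(\lambda_i)\longrightarrow M/M_{i-1}.
\]
The vanishing $\Ext^1_{\Rep(\dmU_R(\g))}(W_q(\lambda_i),M/M_{i-1})=0$ combined with the fact that the kernel and cokernel of $W_q(\lambda_i)\to H^0_q(\lambda_i)$ have composition factors of strictly smaller weight identifies this Hom with $\Hom_{\dmU_q(\g)}(W_q(\lambda_i),M/M_{i-1})$, which is finitely generated projective over $R$ by the previous paragraph; the evaluation map is then injective (by minimality of $\lambda_i$ among weights of $M/M_{i-1}$) and the subquotient has the required shape. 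Exhaustiveness follows from Proposition~\ref{prop:rational_finiteness}, exactly as in Step~2.3 of the proof of Lemma~\ref{Lem:good_filtration_equivalent}.

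The main obstacle will be the base-change isomorphism for $\Ext^i$: the category $\Rep(\dmU_R(\g))$ is not well behaved enough to admit manifestly flat injective resolutions over $R$, so the argument must leverage projectivity of both $W_q(\lambda)$ and $M$ over $R$ and pass through the cohomology functors $H^i(\dmU/R,-)$, ultimately reducing to a Grothendieck spectral sequence of the type used in \cite[$\mathsection 3.4$]{APW1} to prove Proposition~\ref{Prop:Kempf_vanishing}.
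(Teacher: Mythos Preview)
Your forward direction is fine. For the backward direction, however, your strategy diverges substantially from the paper's and has a genuine gap.

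The paper follows Paradowski's direct construction: pick $\lambda$ maximal among the weights of $M$, so that $M_\lambda$ is finitely generated projective over $R$, and form the adjunction map $\phi\colon M\to H^0_q(\lambda)\otimes_R M_\lambda$ coming from the $\dmU^\leqslant_q$-projection $M\twoheadrightarrow M_\lambda$. One checks that for each residue field $k$ the map $\phi_k$ is surjective (since $M_k$ has a good filtration with $\lambda$ as a top weight), whence $\phi$ is surjective by Nakayama; the target is projective over $R$, so $\ker\phi$ is again finitely generated projective, with strictly smaller weight set, and one iterates. No Ext computation enters.

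The gap in your approach is the claimed base-change isomorphism $\Ext^i_{\Rep(\dmU_R)}(W_q(\lambda),M)\otimes_R k\cong \Ext^i_{\Rep(\dmU_k)}(W_k(\lambda),M_k)$. Lemmas~\ref{lem: homological properties 2} and~\ref{lem: homological properties 3} give you localization and change-of-rings for the \emph{second} argument once it is already an $R/I$-module, but they do not supply the step from $\Ext^i_{R_\mathfrak{m}}(W,M_\mathfrak{m})\otimes k$ to $\Ext^i_{R_\mathfrak{m}}(W,M_k)$. That step is exactly what the later Lemma~\ref{Lem:good_filtration_equivalent_deformed} handles by induction along a regular sequence, and it genuinely uses that $R$ is a regular Noetherian domain. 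Your appeal to Nakayama also presupposes that $\Ext^i_{\Rep(\dmU_R)}(W_q(\lambda),M)$ is finitely generated over $R$, which you have not established and which is again part of the delicate work in that later lemma. In short, you are reproving Lemma~\ref{Lem:good_filtration_equivalent_deformed} under hypotheses the present lemma does not carry; the paper's top-down Paradowski argument sidesteps all of this and works over any Noetherian $R$.
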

\begin{proof}The proof follows \cite[$\mathsection 4.2$]{p}. We only write down needed modifications . Let $\lambda\in P_+$ be maximal among the weights of $M$. By  the assumption on $M$, it follows that $M_\lambda$ is a finitely generated projective $R$-module. The $\dmU^\leqslant_q$-modules homomorphism $M \twoheadrightarrow M_\lambda$ gives rise to a $\dmU_q(\g)$-homomorphism $M \rightarrow H^0_q(M_\lambda)$. We note that $H^0_q(M_\lambda)\cong H^0_q(\lambda)\otimes_R M_\lambda$. This is because $H^0_q(-)$ commutes with direct sums, $H^0_q(R_\lambda^{\oplus n})\cong H^0_q(\lambda)^{\oplus n}$ and $M_\lambda$ is a finitely generated projective $R$-module. So we have a $\dmU_q(\g)$-modules homomorphism $M \rightarrow H^0_q(\lambda)\otimes_R M_\lambda$. The rest of the proof follows \cite[$\mathsection 4.2$]{p}.
\end{proof}

The direct analog of Proposition \ref{Prop:good_filt_tensor} follows:
\begin{Cor}\label{Cor:good-filt-tensor-over-R} Tensor product of two finitely generated $R$-modules in  $\Rep(\dmU_q(\g))$ with good filtrations also has a good filtration.
\end{Cor}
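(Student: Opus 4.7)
The plan is to reduce to the field case (Proposition \ref{Prop:good_filt_tensor}) via the specialization criterion of Lemma \ref{Lem:good_filtr_specialization}. Let $M$ and $N$ be finitely generated $R$-modules in $\Rep(\dmU_q(\g))$ with good filtrations. The first task is to check that $M\otimes_R N$ is finitely generated projective over $R$; that is where Lemma \ref{Lem:good_filtr_specialization} requires input.

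I would first observe that since $R$ is a Noetherian domain and $M$ is finitely generated over $R$, any exhaustive ascending $R$-module filtration of $M$ stabilizes. Hence the good filtration $0=M_0\subsetneq M_1\subsetneq\ldots$ must be finite, and each subquotient $M_i/M_{i-1}\simeq H^0_q(\lambda_i)\otimes_R P_i$ is finitely generated over $R$. Now $H^0_q(\lambda_i)\simeq \Hom_R(W_q(\lambda_i^*),R)$ by Proposition \ref{Prop:Kempf_vanishing}(i), and $W_q(\lambda_i^*)$ is free of finite rank over $R$ by Lemma \ref{Lem:Weyl_char_formula}, so $H^0_q(\lambda_i)$ is a free $R$-module of finite rank. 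Combined with the projectivity of $P_i$, each subquotient is projective and finitely generated. Extensions of finitely generated projective modules split, so by induction on the length of the filtration $M$ is itself finitely generated projective over $R$. The same holds for $N$, hence so does $M\otimes_R N$.

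Next I would verify the specialization condition. Fix an epimorphism $R\twoheadrightarrow k$ onto a field with $q\mapsto q'$. Because each step of the good filtration of $M$ is a short exact sequence of finitely generated projective $R$-modules, it splits as a sequence of $R$-modules, so base change to $k$ preserves the filtration and its subquotients: $k\otimes_R(M_i/M_{i-1})\simeq (k\otimes_R H^0_q(\lambda_i))\otimes_k(k\otimes_R P_i)$. Using $H^0_q(\lambda_i)\simeq \Hom_R(W_q(\lambda_i^*),R)$ and the fact that $W_q(\lambda)$ is defined by base change from $\uCA$ (see the discussion after Proposition \ref{prop: Description of Weyl modules}), one checks that $k\otimes_R H^0_q(\lambda_i)\simeq H^0_{q'}(\lambda_i)$ in $\Rep(\dmU_k(\g))$. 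Thus $k\otimes_R M$ (and similarly $k\otimes_R N$) inherits a good filtration over $k$.

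Finally I would apply Proposition \ref{Prop:good_filt_tensor} over the field $k$ to conclude that the finitely generated $\dmU_k(\g)$-module
\[
k\otimes_R(M\otimes_R N)\ \simeq\ (k\otimes_R M)\otimes_k(k\otimes_R N)
\]
admits a good filtration. Since this holds for every epimorphism $R\twoheadrightarrow k$, Lemma \ref{Lem:good_filtr_specialization} produces a good filtration of $M\otimes_R N$ itself. The main technical point to watch is the compatibility of $H^0_q(\lambda)$ with base change (which is why the projectivity/freeness in Proposition \ref{Prop:Kempf_vanishing}(i) and Lemma \ref{Lem:Weyl_char_formula} is indispensable); once that and the projectivity of $M,N$ are in hand, the rest is a straightforward reduction.
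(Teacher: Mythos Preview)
Your proof is correct and follows exactly the approach the paper intends: reduce to the field case (Proposition~\ref{Prop:good_filt_tensor}) via the specialization criterion of Lemma~\ref{Lem:good_filtr_specialization}, after verifying that $M\otimes_R N$ is finitely generated projective over $R$. One minor remark: you invoke Noetherianity to conclude the good filtration on $M$ is finite, but this already follows from $M$ being finitely generated over $R$ (a finite generating set lies in some $M_i$, hence $M=M_i$), so the argument works without that hypothesis.
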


We will now get to an analog of Lemma \ref{Lem:good_filtration_equivalent}. Let us start with  the following lemma.
\begin{Lem}\label{lem:Ext(Weyl, dualWeyl)} Let $R$ be a Noetherian ring. Then $\Ext^i_{\Rep(\dmU_q(\g))}(W_q(\lambda), H^0_q(\lambda'))=0$ for all $i>0$ and dominant weights $\lambda, \lambda'$.
\end{Lem}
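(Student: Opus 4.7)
The plan is to reduce to the case when $R$ is a field and then apply Kempf vanishing together with the induction-restriction adjunction. Both $W_q(\lambda)$ (by Lemma~\ref{Lem:Weyl_char_formula}) and $H^0_q(\lambda') \simeq \Hom_R(W_q((\lambda')^*), R)$ (by Proposition~\ref{Prop:Kempf_vanishing}(i)) are finitely generated free $R$-modules that commute with base change. Localizing at each prime $\mathfrak{p} \subset R$ via Lemma~\ref{lem: homological properties 2}, then applying Lemma~\ref{lem: homological properties 3} with $I = \mathfrak{p} R_\mathfrak{p}$ together with Nakayama's lemma (the Ext groups in question are finitely generated over $R_\mathfrak{p}$ since both arguments are finitely generated projective), reduces the vanishing over $R$ to the corresponding statement over every residue field $R_\mathfrak{p}/\mathfrak{p} R_\mathfrak{p}$.

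Assume now that $R$ is a field, and fix an injective resolution $R_{\lambda'} \hookrightarrow I^\bullet$ in $\Rep(\dmU^{\leqslant}_q)$. Since the restriction functor $\fF$ is exact, its right adjoint $H^0$ (Proposition~\ref{prop: right adjoint to F}) preserves injective objects; moreover Kempf vanishing (Proposition~\ref{Prop:Kempf_vanishing}(ii)) yields $R^iH^0(R_{\lambda'}) = H^i_q(\lambda') = 0$ for every $i > 0$. Consequently $H^0_q(\lambda') \hookrightarrow H^0(I^\bullet)$ is an injective resolution in $\Rep(\dmU_q(\g))$, and the adjunction produces the isomorphism
\begin{equation*}
  \Ext^i_{\Rep(\dmU_q(\g))}(W_q(\lambda), H^0_q(\lambda')) \simeq \Ext^i_{\Rep(\dmU^{\leqslant}_q)}(W_q(\lambda), R_{\lambda'}).
\end{equation*}

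The final step is to show the right-hand side vanishes for $i > 0$ and dominant $\lambda, \lambda'$. This is a quantum analog of the classical Bott--Kostant vanishing: viewing $R_{\lambda'}$ as inflated from $\dmU^0_q$ and using a Hochschild--Serre spectral sequence for the semidirect product decomposition $\dmU^{\leqslant}_q = \dmU^0_q \ltimes \dmU^>_q$, the computation reduces to showing that the $\dmU^>_q$-homology $H_i(\dmU^>_q; W_q(\lambda))$ has no $\lambda'$-weight component in positive degrees. By a quantum Kostant-type statement, the weights appearing are of the form $w\cdot \lambda$ with $w \in W$ of length $i$, and none of these is dominant unless $w = e$. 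The main obstacle is verifying this Kostant-type result uniformly at roots of unity; a cleaner route is to invoke the corresponding Ext-vanishing already established for the quantum induction functor in~\cite[\S2]{APW1}, whose proof carries over to the present twisted setting with only cosmetic modifications, and to transfer it through the reduction above.
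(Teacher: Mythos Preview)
Your reduction via Kempf vanishing to $\Ext^i_{\Rep(\dmU^{\leqslant}_q)}(W_q(\lambda), R_{\lambda'})$ is correct, but the final step has a genuine gap. The Kostant-type description of $\dmU^<_q$-homology of $W_q(\lambda)$ fails at roots of unity: Weyl modules are not simple there, and even for simple modules the homology is governed by Kazhdan--Lusztig combinatorics rather than the classical formula $w\bullet\lambda$. Your fallback citation to \cite[\S2]{APW1} is too vague to patch this---that section sets up the induction functor and adjunction but does not contain a ready-made vanishing statement of the form $\Ext^i_{\dmU^{\leqslant}_q}(W_q(\lambda), R_{\lambda'})=0$, and ``the proof carries over with cosmetic modifications'' is asserted without identifying which proof.

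The paper sidesteps this entirely by first reducing to $W_q(\lambda)=R_0$. Since $W_q(\lambda)$ is free over $R$ with $R$-dual $H^0_q(\lambda^*)$, tensor--hom gives
\[
\Ext^i(W_q(\lambda), H^0_q(\lambda')) \cong \Ext^i(R_0, H^0_q(\lambda^*)\otimes_R H^0_q(\lambda')).
\]
The tensor product of dual Weyl modules has a good filtration by Corollary~\ref{Cor:good-filt-tensor-over-R} (Paradowski's theorem), so one only needs $\Ext^i(R_0, H^0_q(\mu)) = 0$ for $i>0$. After your own adjunction step this becomes $\Ext^i_{\Rep(\dmU^\leqslant_q)}(R_0, R_\mu)$, and now the computation is trivial: the standard resolution of $R_\mu$ built from $H^0(\dmU^\leqslant/\dmU^0,-)$ has all weights in $\mu + Q_+$, so taking the zero-weight part is exact. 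No Kostant theorem is needed, and the argument runs directly over any Noetherian $R$ without your reduction to fields.
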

\begin{proof}Since $W_q(\lambda)$ is free over $R$ and $\Hom_R(W_q(\lambda), R)\cong H^0_q(\lambda^*)$, we have
\[ \Ext^i_{\Rep(\dmU_q(\g))}(W_q(\lambda), H^0_q(\lambda'))\cong \Ext^i_{\Rep(\dmU_q(\g))}(R_0, H^0_q(\lambda^*)\otimes_R H^0_q(\lambda')),\]
here $R_0$ is the $\dmU_q(\g)$-module via the counit $\varepsilon: \dmU_q(\g) \rightarrow R$. By Corollary \ref{Cor:good-filt-tensor-over-R}, $H^0_q(\lambda^*)\otimes_R H^0_q(\lambda')$ has a good filtration.  Therefore, it is enough to show that 
    \begin{equation}\label{eq: Ext=0} \Ext^i_{\Rep(\dmU_q(\g))}(R_0, H^0_q(\lambda))=0,
    \end{equation}
for $i>0$ and dominant weights $\lambda$. Since $H^i_q(\lambda)=0$ for $i>0$ and dominant $\lambda$, it follows that
\[ \Ext^i_{\Rep(\dmU_q(\g))}(R_0, H^0_q(\lambda)) \cong \Ext^i_{\Rep(\dmU^\leqslant_q)}(R_0, R_\lambda).\]
Let consider the induction functor $H^0(\dmU^\leqslant/\dmU^0,-): \Rep(\dmU^0_q)\rightarrow \Rep(\dmU^\leqslant_q)$. We then form a standard resolution for $R_\lambda$ in $\Rep(\dmU^\leqslant_q)$ as in Section \ref{ssec:homological properties}:
\begin{equation}\label{eq: standard rel} R_\lambda \rightarrow Q^0\rightarrow Q^1 \rightarrow \dots,
\end{equation}
where $Q^0=H^0(\dmU^\leqslant/\dmU^0, R)$,  $Q^i=H^0(\dmU^\leqslant/\dmU^0,Q^{i-1}/Q^{i-2})$ for $i>0$; here we set $Q^{-1}:=R_\lambda$. Then the space $\Ext^i_{\Rep(\dmU^\leqslant_q)}(R_0,R_\lambda)$ can be computed by this standard resolution. We note the following: all weights of $Q^i$ are contained in $\lambda+Q_+$. Hence, $Q^i$ does not have weights smaller than $0$. Therefore, applying $\Hom_{\Rep(\dmU^\leqslant_q)}(R_0, -)$ to \eqref{eq: standard rel}, we get $(R_\lambda)_0 \rightarrow Q^0_0 \rightarrow Q^1_0\rightarrow \dots$ which is exact. It follows that $\Ext^i_{\Rep(\dmU^\leqslant_q)}(R_0,R_\lambda)=0$ for $i >0$, hence \eqref{eq: Ext=0} holds.
\end{proof}

\begin{Lem}\label{Lem:good_filtration_equivalent_deformed}
Let $R$ be a regular Noetherian domain.  Suppose that $M\in \operatorname{Rep}(\dmU_q(\g))$ satisfies the following condition
\begin{equation}\label{eq:condition_finite 2}
    \Hom_{\dmU_q(\g)}(W_q(\lambda), M)\text{ is a finitely generated }R\text{-module}, \forall \lambda\in P_+,
\end{equation}
Then the following two conditions are equivalent.
\begin{enumerate}
    \item $M$ admits a good filtration. 
    \item  The $R$-module $M$ is isomorphic to the direct sum of finitely generated projective $R$-modules,  and $$\Ext^i_{\Rep(\dmU_q(\g))}(W_q(\lambda), M)=0, 
$$    
    for all $i>0$ and dominant weights $\lambda$.
\end{enumerate}
\end{Lem}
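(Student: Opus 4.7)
\medskip

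The plan is to prove the two implications separately, with direction $(1)\Rightarrow(2)$ being a rather direct adaptation of the field case via Ext-commuting-with-colimits, while direction $(2)\Rightarrow(1)$ requires a transfinite induction mimicking Step~2 of the proof of Lemma~\ref{Lem:good_filtration_equivalent}, where the regularity of $R$ is used to upgrade ``finitely generated'' to ``finitely generated projective.'' Throughout, I would freely invoke Proposition~\ref{Prop:Kempf_vanishing}(i) and Lemma~\ref{Lem:Weyl_char_formula} to assert that $H^0_q(\mu)\cong \Hom_R(W_q(\mu^*),R)$ is a free $R$-module, Lemma~\ref{lem:Ext(Weyl, dualWeyl)} to assert vanishing of $\Ext^i(W_q(\lambda),H^0_q(\mu))$ for $i>0$, Lemma~\ref{lem: homological properties} to commute $\Ext$ from a finitely $R$-generated module with filtered direct limits (and hence arbitrary direct sums), and Lemma~\ref{lem: homological properties 2} for the compatibility of $\Ext$ with localization at primes of $R$.

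For $(1)\Rightarrow(2)$, I would first observe that each $H^0_q(\lambda_i)\otimes_R P_i$ is a projective $R$-module, since $H^0_q(\lambda_i)$ is free over $R$ and $P_i$ is finitely generated projective; hence short exact sequences $0\to M_{i-1}\to M_i\to H^0_q(\lambda_i)\otimes_R P_i\to 0$ split over $R$ and every $M_i$ is projective over $R$ with $M_\mu=\bigcup_i (M_i)_\mu$. For a fixed weight $\mu$, classical weight considerations show that $(H^0_q(\lambda_i))_\mu\neq 0$ for only finitely many indices $i$, so the chain $(M_i)_\mu$ stabilizes to the finitely generated projective $R$-module $M_\mu$, giving the desired direct-sum decomposition. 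The Ext vanishing $\Ext^n(W_q(\lambda), M)=0$ for $n\geq 1$ follows by combining Lemma~\ref{lem:Ext(Weyl, dualWeyl)} (applied to each quotient $H^0_q(\lambda_i)$, and then to $H^0_q(\lambda_i)\otimes_R P_i$ since direct summands of direct sums commute with $\Ext$ out of a finitely $R$-generated module by Lemma~\ref{lem: homological properties}), a long-exact-sequence induction for $M_i$, and a final passage to the colimit $M=\varinjlim M_i$ using Lemma~\ref{lem: homological properties} once more.

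For $(2)\Rightarrow(1)$, I would refine the partial order on $P_+$ to a total order $\lambda_1<\lambda_2<\ldots$ and recursively construct an exhaustive filtration $0=M_0\subset M_1\subset\cdots$ satisfying, in addition to $(3')$ $M_i/M_{i-1}\cong H^0_q(\lambda_i)\otimes_R P_i$ with $P_i$ finitely generated projective, the condition $(4')$ $\Ext^n_{\Rep(\dmU_q(\g))}(N,M/M_i)=0$ for all $n\geq 0$ and all $N$ in the Serre span of $\{W_q(\lambda_j)\}_{j\leq i}$. At the inductive step, I would set $P_{i+1}:=\Hom_{\dmU_q(\g)}(W_q(\lambda_{i+1}),M/M_i)$ and, using $(4')$ together with the fact that the kernel and cokernel of $W_q(\lambda_{i+1})\to H^0_q(\lambda_{i+1})$ have composition factors of strictly lower highest weight, identify it with $\Hom_{\dmU_q(\g)}(H^0_q(\lambda_{i+1}),M/M_i)$. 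Then $M_{i+1}$ is defined as the preimage in $M$ of the image of the evaluation map $H^0_q(\lambda_{i+1})\otimes_R P_{i+1}\to M/M_i$; the exhaustiveness of $\bigcup_i M_i=M$ is argued exactly as in Step~2.3 of the proof of Lemma~\ref{Lem:good_filtration_equivalent}, using Proposition~\ref{prop:rational_finiteness} to reduce weight-boundedness to finiteness.

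The main obstacle is therefore to verify that $P_{i+1}$ is finitely generated projective over $R$ and that the evaluation map $H^0_q(\lambda_{i+1})\otimes_R P_{i+1}\to M/M_i$ is injective. The projectivity claim is exactly where the regularity hypothesis enters. I would argue as follows: by induction $M_i$ carries a good filtration, so by the already established $(1)\Rightarrow(2)$ direction it is a direct sum of finitely generated projective $R$-modules and $\Ext^n(W_q(\lambda),M_i)=0$ for all $n\geq 1$; this forces $(M/M_i)_\mu$ to be a direct summand of $M_\mu$ for every weight $\mu$, hence finitely generated projective, and also gives $\Ext^n(W_q(\lambda_{i+1}),M/M_i)=0$ for $n\geq 1$ together with finite generation of $P_{i+1}$ over $R$. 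To upgrade $P_{i+1}$ from finitely generated to projective, I would use Lemma~\ref{lem: homological properties 2} to commute the $\Hom$ with localization at any prime $\mathfrak{p}$ of $R$, reducing to showing freeness of $(P_{i+1})_{\mathfrak{p}}$ over the regular local ring $R_{\mathfrak{p}}$; applying Lemma~\ref{Lem:good_filtr_specialization} after further specializing to the residue field (where the field-case Lemma~\ref{Lem:good_filtration_equivalent} produces a good filtration, hence rank information on the localized $P_{i+1}$) together with regularity of $R_{\mathfrak{p}}$ yields the desired projectivity. Injectivity of the evaluation map then follows because its kernel would contribute additional $\dmU^>_q$-primitive vectors of weight $\lambda_{i+1}$ in $M/M_i$, contradicting the maximality characterization of $P_{i+1}$.
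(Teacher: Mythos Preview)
Your outline is correct in spirit, and the inductive construction with invariant $(4')$ matches the paper. The real gap is in the projectivity of $P_{i+1}$. You propose to localize at $\mathfrak{p}$, pass to the residue field $k$, and invoke the field-case lemma together with regularity. But the field case only computes $\Hom_{\dmU_k}(W_k(\lambda_{i+1}),(M/M_i)_k)$, and you have not established the base-change isomorphism
\[
P_{i+1}\otimes_R k \;\cong\; \Hom_{\dmU_k}\bigl(W_k(\lambda_{i+1}),(M/M_i)_k\bigr).
\]
Without it, rank data at $k$ say nothing about the $R$-module $P_{i+1}$. The paper's Step~2 proves exactly this: after localizing, one chooses a regular system of parameters $x_1,\dots,x_n$ of $R_{\mathfrak{p}}$ and, using that $M/M_i$ is a direct sum of projectives so that multiplication by each $x_j$ is injective, runs the long exact $\Hom(W_q(\lambda),-)$ sequence through the successive quotients $R/(x_1,\dots,x_j)$ (invoking Lemma~\ref{lem: homological properties 3}). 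This simultaneously yields the base-change formula and shows that $x_1,\dots,x_n$ is a regular sequence on $P_{i+1}$, whence $(P_{i+1})_{\mathfrak{p}}$ is maximal Cohen--Macaulay over the regular local ring $R_{\mathfrak{p}}$ and therefore free. This depth argument is the essential content you are missing; it also requires carrying through the induction the invariant that $M/M_i$ remains a direct sum of finitely generated projectives (the paper's condition (4), established by checking the evaluation map has projective cokernel after every base change to a residue field), which your conditions $(3')$, $(4')$ omit.

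Two smaller points. In $(1)\Rightarrow(2)$ your claim that $(H^0_q(\lambda_i))_\mu\neq 0$ for only finitely many $i$ is false (a fixed $\mu$ occurs as a weight of infinitely many $H^0_q(\lambda)$); the correct observation is simply that the $R$-split extensions give $M\cong\bigoplus_i H^0_q(\lambda_i)\otimes_R P_i$ as an $R$-module. And your injectivity argument for the evaluation map via highest-weight vectors, while plausible, would need $(4')$ at step $i+1$ to rule out maps from $W_q(\lambda_j)$ with $j\le i$ into the kernel---but propagating $(4')$ already requires $P_{i+1}$ projective; the paper instead deduces injectivity from injectivity after base change to every residue field, which again rests on the base-change formula from Step~2.
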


\begin{proof}The proof is in several steps. We fix some notations: For any $R$-algebra $R'$ and $R$-module $M$, we write $W_{R'}(\lambda):=W_q(\lambda)\otimes_R R'$, $H^0_{R'}(\lambda):=H^0_q(\lambda)\otimes_R R'$ and $M_{R'}:=M\otimes_R R'$.

{\it Step 1.} Let us show $(1) \Rightarrow (2)$. By Lemma \ref{lem:Ext(Weyl, dualWeyl)}, $\Ext^i_{\Rep(\dmU_q(\g))}(W_q(\lambda), M_j)=0$ for $i>0$ and all $j$. By Lemma \ref{lem: homological properties}, we have
\[ \Ext^i_{\Rep(\dmU_q(\g))}(W_q(\lambda),M)=\varinjlim \Ext^i_{\Rep(\dmU_q(\g))}(W_q(\lambda), M_j)=0.\]

{\it Step 2.} Assume $M$ satisfies $(2)$ and \eqref{eq:condition_finite 2}.  We will show that
$M$ satifies the following:
\begin{enumerate}
  \item[(a)] $\Hom_{\Rep(\dmU_q(\g))}(W_q(\lambda), M)$ is a finitely generated projective $R$-module,
  \item[(b)] $\Hom_{\Rep(\dmU_k(\g))}(W_k(\lambda), M_k)$ is finite dimensional over $k$, and $\Ext^i_{\Rep(\dmU_k(\g))}(W_k(\lambda), M_k)=0$ for all $i>0$, all dominant $\lambda$, and all epimorphisms $R\twoheadrightarrow k$ onto a field. 
\end{enumerate}
Lemma \ref{lem: homological properties 2} allows us to reduce to the local case. So we can assume $R$ is a local regular Noetherian domain. Let $\m=(x_1, \dots, x_n)$ be the maximal ideal of $R$, where $x_1, \dots, x_n$ is a maximal regular sequence. Let $R_i=R/(x_1, \dots, x_i)$ and $M_{R_i}=M/(x_1M +\dots +x_i M)$. We see that $M_{R_i}\in \Rep(\dmU_{R_i}(\g))$.  Since $M$ is projective over $R$, we have a short exact sequence
\[ 0\rightarrow M \xrightarrow[]{ \cdot x_1} M \rightarrow M_{R_1}\rightarrow 0.\]
Applying $\Hom_{\Rep(\dmU_q(\g))}(W_q(\lambda), -)$ to get long exact sequence of Ext's group, we obtain
\begin{equation*}
    \begin{split}
        &\Hom_{\Rep(\dmU_q(\g))}(W_q(\lambda), M)\xhookrightarrow[]{\cdot x_1}\Hom_{\Rep(\dmU_q(\g))}(W_q(\lambda), W)\twoheadrightarrow \Hom_{\Rep(\dmU_q(\g))}(W_q(\lambda), M_{R_1}),\\
        &\Ext^i_{\Rep(\dmU_q(\g))}(W_q(\lambda), M_{R_1})=0 \;\; \forall i>0.
    \end{split}
\end{equation*}
Combining these with  Lemma \ref{lem: homological properties 3}, we have 
\begin{equation*}
    \begin{split}
        &\Hom_{\Rep(\dmU_q(\g))}(W_q(\lambda), M)\xhookrightarrow[]{\cdot x_1} \Hom_{\Rep(\dmU_q(\g))}(W_q(\lambda), W)\twoheadrightarrow \Hom_{\Rep(\dmU_{R_1}(\g))}(W_{R_1}(\lambda), M_{R_1}),\\
        &\Ext^i_{\Rep(\dmU_{R_1}(\g))}(W_{R_1}(\lambda), M_{R_1})=0 \;\; \forall i>0.
    \end{split}
\end{equation*}
Since $M$ is a direct sum of finitely generated projective $R$-modules, $M_{R_1}$ is a direct sum of finitely generated projective $R_1$-modules. So we can proceed inductively to get (b) and also the claim that $x_1, \dots, x_n$ is a regular sequence of $\Hom_{\Rep(\dmU_q(\g))}(W_q(\lambda), M)$. So $\Hom_{\Rep(\dmU_q(\g))}(W_q(\lambda), M)$ is a finitely generated Cohen-Macaulay module over a local regular Noetherian domain $R$. Hence it is free over $R$, equivalently, (a) holds. 


{\it Step 3.} Let us show $(2) \Rightarrow (1)$. We will construct inductively a filtration $\{0\}=M_0 \subset M_1\subset M_2\subset \dots $ on $M$ that is exhaustive and satisfies
\begin{enumerate}
    \item[(3)] $M_i/M_{i-1} \cong H^0_q(\lambda_i)\otimes_R P_i$ for some finitely generated projective $R$-module $P_i$.
    \item[(4)] $M_i$ is a direct summand of $M$ as an $R$-module.
    \item[(5)] For any epimorphism $R\twoheadrightarrow k$ onto a field, $M_{k,i}:=M_i\otimes_R k$ is the $i$-th component of $M_k:=M\otimes_R k$ constructed in Step 2 of the proof of Lemma \ref{Lem:good_filtration_equivalent}.
    \item[(6)] $\Hom_{\Rep(\dmU_q(\g))}(W_q(\lambda), M/M_{i-1})$ is a finitely generated $R$-module for all dominant $\lambda$.
    \item[(7)] $\Ext^n_{\Rep(\dmU_q(\g))}(N,M/M_i)=0$ for all $n \geq 0$ and finitely generated $R$-module $N$ contained in Serre span of $W_q(\lambda_j)$ with $j \leq i$.
\end{enumerate}

{\it Step 3.1. The base case $i=0$.} $M_0=\{0\}$ and $(3)$-$(7)$ are  vacuously true.

{\it Step 3.2. The induction step.} Suppose we have already constructed $M_0\subset M_1 \subset \dots \subset M_{i-1}$. Since the kernel and cokernel of the natural homomorphism $W_q(\lambda_i)\rightarrow H^0_q(\lambda_i)$ have weights strictly less than $\lambda_i$, it follows that they lie in the Serre span of the objects $W_q(\lambda_j)$ with $j<i$. By $(7)$, we have
\begin{equation}\label{eq: Hom is proj} \Hom_{\Rep(\dmU_q(\g))}(H^0_q(\lambda_i), M/M_{i-1})\cong \Hom_{\Rep(\dmU_q(\g))}(W_q(\lambda_i), M/M_{i-1}).
\end{equation}
By (6)-(7) and Step 2, \eqref{eq: Hom is proj} is a  finitely generated projective $R$-module. Let us consider the homomorphism
\begin{equation}\label{eq: inj map}
\Hom_{\Rep(\dmU_q(\g))}(H^0_q(\lambda_i), M/M_{i-1})\otimes_R H^0_q(\lambda_i) \rightarrow M/M_{i-1}.
\end{equation}
In Step 2, we saw that 
\[ \Hom_{\Rep(\dmU_q(\g))}(W_q(\lambda_i), M/M_{i-1})\otimes_R k \cong \Hom_{\Rep(\dmU_k(\g))}(W_k(\lambda_i), (M/M_{i-1})_k).\]
Combining this with \eqref{eq: Hom is proj} and its version over $k$, we have
\[ \Hom_{\Rep(\dmU_q(\g))}(H^0_q(\lambda_i), M/M_{i-1})\otimes_R k \cong \Hom_{\Rep(\dmU_k(\g))}(H^0_k(\lambda_i), (M/M_{i-1})_k).\]
Therefore, the functor $\bullet \otimes_R k$ sends \eqref{eq: inj map} to the similarly defined homomorphism over $k$. The latter is injective. Combining this with the fact that $M/M_{i-1}$ is a direct sum of finitely generated projective $R$-modules, we conclude that \eqref{eq: inj map} is injective and the cokernel is a direct sum of finitely generated projective $R$-modules. 

We now let $M_i$ be the preimage of the left hand side of \eqref{eq: inj map} under the projection $M\rightarrow M/M_{i-1}$. Then $M_i$ satisfies (3)-(6). The cokernel of \eqref{eq: inj map} is $M/M_i$.  Applying the functor $\Hom_{\Rep(\dmU_q(\g))}(W_q(\lambda_j),-)$ with $j \leq i$ to \eqref{eq: inj map}, one can show that $\Ext^n_{\Rep(\dmU_q(\g))}(W_q(\lambda_j),M/M_i)=0$ for all $n \geq 0$ and $j \leq i$. This implies that $M_i$ satisfies (7).

{\it Step 3.3.} By using (7) we see that $M_i$ is the maximal submodule where all weights are $\leq \lambda_i$.  Thanks to Proposition  \ref{prop:rational_finiteness}, $\dmU_q(\g)m$ is finitely generated over $R$ for any $m \in M$. Suppose that all weights in this submodule are less than or equal to $\lambda_i$ then $\dmU_q(\g) m \subset M_i$. So the filtration we constructed is exhaustive.
\end{proof}

\subsection{Quantized coordinate algebra}\label{ssec: quantize coordinate algebra}\

Let $R$ be a Noetherian ring. 
Following \cite[Section 1]{APW1} we define the {\it quantized coordinate algebra} $R[\dmU_q(\g)]$ as the maximal rational subrepresentation in $\Hom_{R}(\dmU_q(\g),R)$ (with respect to either left or right action of $\dmU_q(\g)$, the result does not depend on which action we choose by 
\cite[Section 1.30]{APW1}). By the definition, for $M\in \operatorname{Rep}(\dmU_q(\g))$ we have a natural isomorphism
\begin{equation}\label{eq:dual_module}
\Hom_{\dmU_q(\g)}(M,R[\dmU_q(\g)])=
\Hom_{\dmU_q(\g)}(M,\Hom_R(\dmU_q(\g),R)) \iso \Hom_R(M,R).    
\end{equation}
So,  $R[\dmU_q(\g)]=H^0(\dmU/R,R)$ in Section \ref{ssec:homological properties}.  If $M$ is projective over $R$, we have 
\begin{equation}\label{eq: R[U] relative R-inj}
\Ext^i_{\Rep(\dmU_q(\g))}(M, R[\dmU_q(\g)])=0, \quad \forall i>0.
\end{equation}

\begin{Rem}On $R[\dmU_q(\g)]$, we have two left $\dmU_q(\g)$-actions $\gamma, \delta$ as follows: 
\[(\delta(x)\cdot f)(u)=f(ux), \qquad (\gamma(x)\cdot f)(u)=f(S(x)u),\] for $x,u \in \dmU_q(\g)$ and $f\in R[\dmU_q(\g)]$. So $R[\dmU_q(\g)]$ is a $\dmU_q(\g)\otimes_R \dmU_q(\g)$-module (the first action is $\delta$ and the second action is $\gamma$). The left $\dmU_q$-action on $R[\dmU_q(\g)]$ defined via $\delta$ is what we considered in the induction functors in the previous sections as well as the one used  in \eqref{eq:dual_module}-\eqref{eq: R[U] relative R-inj}.  Moreover,  \eqref{eq:dual_module} is an isomorphism of $\dmU_q(\g)$-modules with $\dmU_q(\g)$-module structures on both sides as follows: the left $\dmU_q(\g)$-action on $R[\dmU_q(\g)]$ defined via $\gamma$ gives the left $\dmU_q(\g)$-module structure on $\Hom_{\dmU_q(\g)}(M, R[\dmU_q(\g)])$, meanwhile, the left $\dmU_q(\g)$-module structure on $\Hom_R(M,R)$ is defined as usual. 
\end{Rem}

\begin{Lem}\label{Lem:coord_alg_freeness}
The following claims are true:
\begin{enumerate}
    \item $\uCA[\dmU_{\uCA}(\g)]$ is a free over $\uCA$.
    \item $R[\dmU_q(\g)] \simeq R\otimes_{\uCA}\uCA[\dmU_{\uCA}(\g)].$
\end{enumerate}
\end{Lem}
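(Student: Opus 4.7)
Part~(2) is a direct consequence of the first statement in Lemma~\ref{rem: H(dmU/R,-)}(c): that lemma gives $H^0(\dmU/\uCA,\uCA)\otimes_{\uCA} R\cong H^0(\dmU/R,R)$ for any $\uCA$-algebra $R$, which, unpacking $R[\dmU_q(\g)]=H^0(\dmU/R,R)$, is exactly part~(2). For part~(1), the plan is to construct a good filtration $\{0\}=A_0\subsetneq A_1\subsetneq\cdots$ of $\uCA[\dmU_{\uCA}(\g)]$ by invoking Lemma~\ref{Lem:good_filtration_equivalent_deformed} over the regular Noetherian domain $\uCA=\BZ[v,v^{-1}]$. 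Granted such a filtration, each quotient $A_i/A_{i-1}\cong H^0_{\uCA}(\lambda_i)\otimes_{\uCA} P_i$ is free over $\uCA$: the factor $H^0_{\uCA}(\lambda_i)\cong\Hom_{\uCA}(W_{\uCA}(\lambda_i^*),\uCA)$ is free of finite rank by Lemma~\ref{Lem:Weyl_char_formula} and Proposition~\ref{Prop:Kempf_vanishing}(i), while $P_i$ is finitely generated projective and hence free by the Quillen--Suslin theorem for projective modules over $\BZ[v,v^{-1}]$. The short exact sequences $0\to A_{i-1}\to A_i\to A_i/A_{i-1}\to 0$ split (projectivity of the quotient), so by induction every $A_i$ is free; the directed union $\bigcup_i A_i=\uCA[\dmU_{\uCA}(\g)]$ along split monomorphisms of free modules is then itself free over $\uCA$.

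To apply Lemma~\ref{Lem:good_filtration_equivalent_deformed}, I verify its hypothesis~(2) for $M=\uCA[\dmU_{\uCA}(\g)]$. The finiteness of $\Hom$-modules, condition~\eqref{eq:condition_finite 2}, follows from~\eqref{eq:dual_module}, which identifies $\Hom_{\dmU_{\uCA}(\g)}(W_{\uCA}(\lambda),M)$ with $\Hom_{\uCA}(W_{\uCA}(\lambda),\uCA)$, a free $\uCA$-module of finite rank. The Ext vanishing $\Ext^i_{\Rep(\dmU_{\uCA}(\g))}(W_{\uCA}(\lambda),M)=0$ for $i>0$ is immediate from~\eqref{eq: R[U] relative R-inj}, using projectivity of $W_{\uCA}(\lambda)$ over $\uCA$ (Lemma~\ref{Lem:Weyl_char_formula}). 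The remaining condition -- that $M$ is a direct sum of finitely generated projective $\uCA$-modules -- I address through the bi-weight decomposition of $M$ under the two commuting left $\dmU^0_{\uCA}$-actions $\delta$ and $\gamma$, yielding $M=\bigoplus_{(\lambda,\mu)\in P\times P} M_{\lambda,\mu}$. Each piece $M_{\lambda,\mu}$ is torsion-free over $\uCA$ (as it embeds in $\Hom_{\uCA}(\dmU_{\uCA}(\g),\uCA)$), and by combining part~(2) with the classical Peter--Weyl decomposition of $\BQ(v)[\dmU_{\BQ(v)}(\g)]$, each $M_{\lambda,\mu}$ has finite $\uCA$-rank.

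The main obstacle is upgrading \emph{finite rank and torsion-free} to \emph{finitely generated} for each $M_{\lambda,\mu}$; this step is nontrivial since $\uCA$ is not a PID (witness the non-finitely-generated torsion-free rank-one $\uCA$-module $\BQ(v)$). My plan is to identify $M_{\lambda,\mu}$ as the image of an integral matrix coefficient map $\bigoplus_{\nu}\bigl(W_{\uCA}(\nu^*)\otimes_{\uCA} H^0_{\uCA}(\nu)\bigr)_{(\lambda,\mu)}\to M_{\lambda,\mu}$, summed over the finitely many dominant $\nu$ whose contribution to the generic Peter--Weyl decomposition can carry bi-weight $(\lambda,\mu)$. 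The source is finitely generated and free over $\uCA$, so its image is finitely generated. The delicate point is integral surjectivity of this map -- an integral Peter--Weyl statement asserting that $\uCA[\dmU_{\uCA}(\g)]$ is spanned over $\uCA$ by matrix coefficients of integral Weyl modules; I plan to reduce this to a weight-by-weight comparison using the freeness of $H^0_{\uCA}(\lambda)$ and $W_{\uCA}(\lambda)$ from Lemma~\ref{Lem:Weyl_char_formula} and Proposition~\ref{Prop:Kempf_vanishing}, together with the universal property~\eqref{eq:dual_module}. Once finite generation of each $M_{\lambda,\mu}$ is established, Quillen--Suslin again delivers freeness, completing the verification of the hypotheses of Lemma~\ref{Lem:good_filtration_equivalent_deformed} and hence part~(1).
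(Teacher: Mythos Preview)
Your reduction of part~(2) to Lemma~\ref{rem: H(dmU/R,-)}(c) is fine; both ultimately rest on the same result in \cite{AW}.

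The approach to part~(1) has a genuine error in the verification of the hypothesis of Lemma~\ref{Lem:good_filtration_equivalent_deformed}. You claim that each bi-weight space $M_{\lambda,\mu}$ under the two Cartan actions $\delta,\gamma$ has finite $\uCA$-rank, but this is false. Already in the classical limit, the $(0,0)$-bi-weight space of $\BC[G]$ consists of all $T\times T$-invariant regular functions; for $G=SL_2$ with coordinates $a,b,c,d$ and $ad-bc=1$, this space is $\BC[ad]$, which is infinite-dimensional. More generally, under Peter--Weyl the $(\lambda,\mu)$-piece is $\bigoplus_{\nu\in P_+} L(\nu)_\lambda\otimes L(\nu)^*_\mu$, and infinitely many dominant $\nu$ can contribute (e.g.\ every $\nu$ in the root lattice contributes to the $(0,0)$-piece). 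So your proposed decomposition does not exhibit $M$ as a direct sum of finitely generated projectives, and the subsequent ``integral Peter--Weyl'' surjectivity argument---which you correctly flag as delicate---never gets off the ground because its target is not finitely generated.

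More structurally, the hypothesis you are trying to verify (``direct sum of finitely generated projective $\uCA$-modules'') is, over $\uCA=\BZ[v^{\pm1}]$ and via Quillen--Suslin, essentially equivalent to freeness itself, so the strategy is close to circular. The paper simply cites \cite[\S4.4]{AW}, where the argument proceeds more directly: one builds the Peter--Weyl-type filtration of $\uCA[\dmU_{\uCA}(\g)]$ by hand using the induction functors and Kempf vanishing (Proposition~\ref{Prop:Kempf_vanishing}), and freeness of the successive quotients then follows from freeness of the $H^0_{\uCA}(\lambda)$. The abstract criterion of Lemma~\ref{Lem:good_filtration_equivalent_deformed} is not the right tool here precisely because its structural hypothesis on $M$ presupposes what you are trying to prove.
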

\begin{proof}
(1) is essentially in \cite[Section 4.4]{AW} , while (2) is in \cite[Section 4.2]{AW} (thanks to the results explained in Sections \ref{SS_Weyl} and \ref{SS_Kempf_vanish} we can work with the ring $\uCA$ instead of the ring $\CA_1$ from \cite[Section 4.1]{AW}).  
\end{proof}


\begin{Rem} We have a natural morphism
\begin{equation}\label{eq:coord_alg_embed}R[\dmU_q(\g)]\otimes_R R[\dmU_q(\g)]\rightarrow R[\dmU_q(\g\times \g)]
\end{equation}
arising from 
$\Hom_R(\dmU_q(\g),R)\otimes_R \Hom_R(\dmU_q(\g),R)
\rightarrow \Hom_R(\dmU_q(\g)\otimes_R\dmU_q(\g),R)$. This morphism (\ref{eq:coord_alg_embed}) is an isomorphism. 
Indeed, thanks to Proposition  \ref{prop:rational_finiteness}, it is enough to show that 
$$\Hom_{\dmU_q(\g)\otimes \dmU_q(\g)}(M, R[\dmU_q(\g)]\otimes_R R[\dmU_q(\g)]) \iso
\Hom_{\dmU_q(\g)\otimes \dmU_q(\g)}(M, R[\dmU_q(\g\times\g)])$$
for all rational  $\dmU_q(\g)\otimes_R\; \dmU_q(\g)$-modules $M$ that are finitely generated over $R$.
This is an easy consequence of (\ref{eq:dual_module}) and its analog for $\g\times\g$. 
\end{Rem}

Here is our main result about the $\dmU_q(\g)\otimes_R \dmU_q(\g)$-module structure of $R[\dmU_q(\g)]$. 

\begin{Prop}\label{Prop:coord_good_filtration}
Equip $P_+$ with a total order refining the usual partial order and let $\lambda_1<\lambda_2<\ldots$ be the elements. There is an exhaustive filtration $\{0\}=M_0\subset M_1\subset\ldots$ on $R[\dmU_q(\g)]$ such that $M_i/M_{i-1}\simeq H^0_q(\lambda_i)\otimes_R H^0_q(\lambda_i^*)$.  
\end{Prop}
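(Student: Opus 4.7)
The plan is to regard $R[\dmU_q(\g)]$ as an object of $\Rep(\dmU_q(\g)\otimes_R\dmU_q(\g))=\Rep(\dmU_q(\g\times\g))$, equipped with the two commuting left and right translation actions of the Lusztig form of the semisimple algebra $\g\times\g$. Since the theory of Section~\ref{sec rational reps} applies verbatim to any semisimple Lie algebra, the Weyl and dual Weyl modules of $\dmU_q(\g\times\g)$ are the outer products $W_q(\lambda)\otimes_R W_q(\mu)$ and $H^0_q(\lambda)\otimes_R H^0_q(\mu)$ for $(\lambda,\mu)\in P_+\times P_+$, and the strategy is to apply Lemma~\ref{Lem:good_filtration_equivalent_deformed} for $\g\times\g$ to $R[\dmU_q(\g)]$.

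First I will work over $R=\uCA$, where $\uCA[\dmU_\uCA(\g)]$ is free over $\uCA$ by Lemma~\ref{Lem:coord_alg_freeness}(1). The key computation is that, via the adjunction of the two commuting actions combined with \eqref{eq:dual_module} and the identification $H^0_q(\lambda^*)\simeq\Hom_R(W_q(\lambda),R)$ from Proposition~\ref{Prop:Kempf_vanishing}(i), one obtains
\[
  \Hom_{\dmU_q(\g)^{\otimes 2}}(W_q(\lambda)\otimes_R W_q(\mu),R[\dmU_q(\g)])\iso\Hom_{\dmU_q(\g)}(W_q(\mu),H^0_q(\lambda^*)),
\]
which equals $R^{\oplus\delta_{\mu,\lambda^*}}$ by \eqref{eq:Hom_Weyl_dual_Weyl}. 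A Grothendieck spectral sequence then computes the higher $\Ext^i$ on the left: since $W_q(\lambda)$ is free over $R$ (Lemma~\ref{Lem:Weyl_char_formula}) and \eqref{eq: R[U] relative R-inj} (valid for either translation action, as noted at the start of Section~\ref{ssec: quantize coordinate algebra}) gives $\Ext^{>0}_{\dmU_q(\g)}(W_q(\lambda),R[\dmU_q(\g)])=0$, the $E_2$-page is concentrated in the row $q=0$, yielding
\[
  \Ext^i_{\dmU_q(\g)^{\otimes 2}}(W_q(\lambda)\otimes W_q(\mu),R[\dmU_q(\g)])\simeq\Ext^i_{\dmU_q(\g)}(W_q(\mu),H^0_q(\lambda^*)),
\]
which vanishes for $i>0$ by Lemma~\ref{lem:Ext(Weyl, dualWeyl)}.

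With these inputs, Lemma~\ref{Lem:good_filtration_equivalent_deformed} applied to $\g\times\g$ produces an exhaustive good filtration on $\uCA[\dmU_\uCA(\g)]$ whose non-zero graded pieces are exactly the modules $H^0_\uCA(\lambda)\otimes_\uCA H^0_\uCA(\lambda^*)$, each with multiplicity one. Because $-w_0$ preserves the dominance partial order (as $-w_0(Q_+)=Q_+$), the partial order on $\{(\lambda,\lambda^*):\lambda\in P_+\}\subset P_+\times P_+$ coincides with the usual one on $P_+$ under the bijection $\lambda\mapsto(\lambda,\lambda^*)$; I will select the refining total order on $P_+\times P_+$ used in the construction so that its restriction to this subset matches the prescribed order $\lambda_1<\lambda_2<\cdots$ on $P_+$. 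To descend to an arbitrary Noetherian $R$, I apply $R\otimes_\uCA-$ using Lemma~\ref{Lem:coord_alg_freeness}(2): each graded piece is free (hence flat) over $\uCA$ by Proposition~\ref{Prop:Kempf_vanishing}(i) and Lemma~\ref{Lem:Weyl_char_formula}, so base change preserves the short exact sequences of the filtration, and Lemma~\ref{rem: H(dmU/R,-)}(c) identifies the resulting graded pieces as $H^0_q(\lambda_i)\otimes_R H^0_q(\lambda_i^*)$.

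The main obstacle is the Ext vanishing step for the tensor action: one must make rigorous the Grothendieck spectral sequence for the two commuting actions on $R[\dmU_q(\g)]$, and identify the leftover $\dmU_q(\g)$-structure on $\Hom_R(W_q(\lambda),R)$ with the dual Weyl module $H^0_q(\lambda^*)$. One must also verify that Lemma~\ref{Lem:good_filtration_equivalent_deformed} and its proof transfer from $\g$ to $\g\times\g$, and arrange the total order on $P_+\times P_+$ so that its restriction to the diagonal $\{(\lambda,\lambda^*)\}$ recovers the total order prescribed in the statement.
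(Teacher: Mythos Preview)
Your proposal is correct and follows essentially the same route as the paper: reduce to $R=\uCA$ via Lemma~\ref{Lem:coord_alg_freeness}, verify condition~(2) of Lemma~\ref{Lem:good_filtration_equivalent_deformed} for $\g\times\g$ by a Grothendieck spectral sequence built from the two commuting translation actions (using \eqref{eq:dual_module}, \eqref{eq: R[U] relative R-inj}, Proposition~\ref{Prop:Kempf_vanishing}(i), and Lemma~\ref{lem:Ext(Weyl, dualWeyl)} exactly as you outline), and then base-change the resulting filtration. The paper carries out the spectral-sequence step by explicitly exhibiting enough injectives of the form $R[\dmU_q(\g\times\g)]\otimes_R Q$ and checking that $\Hom_{\delta(\dmU_q)}(W_q(\mu),\bullet)$ sends them to injectives, which is the rigorous content behind the ``main obstacle'' you flagged; your total-order remark is handled in the paper by Remark~\ref{rem: filtration on Oq[G]}.
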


\begin{proof} 

Thanks to (2) of Lemma \ref{Lem:coord_alg_freeness}, it is enough to prove the claim when $\uCA\rightarrow R$ is an isomorphism. From (\ref{eq:dual_module})
combined with Proposition \ref{Prop:Kempf_vanishing} we conclude that 
\begin{equation}\label{eq:Hom_Weyl_coordinate}
    \Hom_{\dmU_q(\g)}(W_q(\lambda),R[\dmU_q(\g)]) \simeq H^0_q(\lambda^*), \;\;\forall \lambda\in P_+,
\end{equation}
here the $\dmU_q(\g)$-module structure on $R[\dmU_q(\g)]$ is defined via $\delta$. 

 We are now going to check condition (2) of Lemma \ref{Lem:good_filtration_equivalent_deformed} for $\g$ replaced with $\g\times \g$ -- note that $\dmU_q(\g)\otimes_R \dmU_q(\g) \simeq \dmU_q(\g\times\g)$ -- and $M=R[\dmU_q(\g)]$. The lemma is applicable when $R\cong \uCA$.
The claim that the $R$-module $R[\dmU_q(\g)]$ is isomorphic to the direct sum of finitely generated projective $R$-modules follows from (1) of 
Lemma \ref{Lem:coord_alg_freeness}. So we need to show that 
\begin{equation}\label{eq:Ext_vanish_coord}
    \Ext^i_{\Rep(\dmU_q(\g\times\g))}(W_q(\lambda)\otimes_R W_q(\mu), R[\dmU_q(\g)])=0,
\end{equation}
for all $i>0$ and dominant weights $\lambda, \mu$.

 We are going to check \eqref{eq:Ext_vanish_coord}. By Lemma \ref{lem: homological properties}, any object of the form $R[\dmU_q(\g \times \g)]\otimes_R Q$, where $Q$ is an injective $R$-module, is injective in $\Rep(\dmU_q(\g \x \g))$.

Every object in $\Rep(\dmU_q(\g \times \g))$ admits a resolution by injective objects of this form. By \eqref{eq:coord_alg_embed}, in $\Rep(\dmU_q(\g \x \g))$, 
\[R[\dmU_q(\g \times \g)]\otimes Q=R[\dmU_q(\g)]\otimes_R R[\dmU_q(\g)]\otimes_R Q.\]
Thanks to the isomorphism $\dmU_q(\g \x \g) \cong \dmU_q(\g)\otimes_R \dmU_q(\g)$, any $M\in \Rep(\dmU_q(\g \x \g))$ admits two left $\dmU(\g)$-actions. We have the functor 
\[ \Hom_{\Rep(\delta(\dmU_q))}(W_q(\mu), \bullet): \Rep(\dmU_q(\g \times \g)) \rightarrow \Rep(\dmU_q(\g)),\]
where we consider the first $\dmU_q(\g)$-action on modules in $\Rep(\dmU_q(\g \x \g)$. We use $\delta(\dmU_q)$ in the supscript since the first $\dmU_q(\g)$-action on $R[\dmU_q(\g)]$ is defined via $\delta$. This functor sends $R[\dmU_q(\g \times \g)]\otimes_R Q$ to 
\[ \Hom_{\delta(\dmU_q)}(W_q(\mu), R[\dmU_q(\g)]\otimes_R R[\dmU_q(\g)]\otimes_R Q) = R[\dmU_q(\g)]\otimes_R \Hom_R(W_q(\lambda), Q).\]
The $R$-module $W_q(\mu)$ is  free of finite rank hence $\Hom_R(W_q(\mu),Q)$ is injective. Therefore, $R[\dmU_q(\g)]\otimes_R \Hom_R(W_q(\mu), Q)$ is an injective object in $\Rep(\dmU_q(\g))$ (where $\Hom_R(W_q(\mu), Q)$  has  the trivial action).


Hence we have a spectral sequence with second page equal to 
\begin{equation}\label{eq:2nd_page}
\Ext^i_{\Rep(\dmU_q(\g))}(W_q(\lambda),
\Ext^j_{\Rep(\delta(\dmU_q))}(W_q(\mu), R[\dmU_q(\g)]))
\end{equation}
converging to $\Ext^{i+j}_{\Rep(\dmU_q(\g\times\g))}(W_q(\lambda)\otimes_R W_q(\mu), R[\dmU_q(\g)])$. 
Thanks to (\ref{eq:dual_module})-\eqref{eq: R[U] relative R-inj}, we have 
$\Ext^j_{\Rep(\delta(\dmU_q))}(W_q(\mu), R[\dmU_q(\g)]))=H^0_q(\mu^*)$ for $j=0$ and $\{0\}$ for $j>0$. So (\ref{eq:2nd_page}) becomes  
$\Ext^i_{\Rep(\dmU_q(\g))}(W_q(\lambda), H^0_q(\mu^*))$ for $j=0$ and zero else. This is $R^{\oplus \delta_{\lambda,\mu^*}}$ if $i=0$ and zero else. 
In particular, (\ref{eq:Ext_vanish_coord}) follows
and $R[\dmU_q(\g)]\in \Rep(\dmU_q(\g\times \g))$ admits a good filtration. 

The previous paragraph implies also that the only dual Weyl modules that appear in a good filtration of $R[\dmU_q(\g)]$ are of the form $H^0_q(\lambda)\otimes_R H^0_q(\lambda^*)$. Furthermore,
\[ \Hom_{\dmU_q(\g)\times \dmU_q(\g)}(W_q(\lambda)\otimes_R W_q(\lambda^*), R[\dmU_q(\g)])\cong R. \]
Therefore, our claim on the filtration holds.
\end{proof}
\begin{Rem}\label{rem: filtration on Oq[G]} Let us define a total order on $P_+ \times P_+$ as follows. Firstly, we equip the first component $P_+$ with the total order used in Proposition \ref{Prop:coord_good_filtration}. Secondly,  applying $-w_0$, where $w_0$ is the longest element in $W$, to the total order used in Proposition \ref{Prop:coord_good_filtration}, we obtain another total order on $P_+$. We then equip the second component $P_+$ with this new total order. Finally, we consider $P_+ \times P_+$ with the lexicographic order $(\lambda, 0) >(0, \mu)$. For this order on $P_+\x P_+$, the module $M_i$ in Proposition \ref{Prop:coord_good_filtration} will be  the maximal $\dmU_q(\g)\otimes_R\dmU_q(\g)$-subrepresentation of $R[\dmU_q(\g)]$ whose weights are bounded by $(\lambda_i, \lambda^*_i)$.
\end{Rem}

\subsection{Rational representations of $\dmU_q(\g)$ at roots of unity}\   \label{ssec: rat rep at roots}

In this section, we will work over a field $\BF$. Let $q:=\epsilon\in \BF$ be  a root of unity  of order $\ell$. We assume that $\ell_i \geq \max\{2, 1-a_{ij}\}_{1\leq j \leq r}$. We will review some results about the rational representations of $\dmU_\e(\g)$ at roots of unity.  Many arguments follow \cite{N23} and \cite{APW1}. To simplify the notations, tensor products without subscript in this section are over $\BF$.

We will need a technical lemma. Let $\hmU^\geqslant_\e(\g,P), \hmU^{*\geqslant}_\e(\g, P^*)$ be the idempotented versions of $\dmU^\geqslant_\e, \dmU^{*\geqslant}_\e$, respectively, defined as in  Section \ref{comparison with untwisted Fr}. Let $\hmU^{\leqslant}_\e(\g, P), \hmU^{*\leqslant}_\e(\g, P^*)$ be the idempotented versions of $\dmU^{\leqslant}_\e, \dmU^{*\leqslant}_\e$, respectively.
\begin{Lem}\label{lem: normality of u}(a) The kernel of $\Fr: \hmU_\e(\g, P)\rightarrow \hmU^*_\e(\g, P^*)$ is equal to the left ideal generated by $\{ 1_\mu, E_i 1_\lambda, F_i 1_\lambda| \mu \not  \in P^*, \lambda \in P, 1\leq i \leq r\}$ and also equal to the right ideal generated by the same set of elements.

(b) The kernel of $\Fr^{\geqslant}: \hmU^\geqslant_\e(\g, P)\rightarrow \hmU^{*\geqslant}_\e(\g, P^*)$ is equal to the left ideal generated by $\{ 1_\mu, E_i1_\lambda|\mu \not \in P^*, \lambda \in P, 1\leq i \leq r\}$ and also equal to the right ideal generated by the same set of elements. There is the similar statement for $\Fr^\leqslant: \hmU^\leqslant_\e(\g, P)\rightarrow \hmU^{*\leqslant}_\e(\g, P^*)$.
\end{Lem}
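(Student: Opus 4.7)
The plan is to establish the nontrivial containment $\ker(\Fr)\subseteq J^L$, where $J^L$ denotes the left ideal generated by the specified set $G$; the reverse containment is immediate since $\ker(\Fr)$ is an ideal containing each element of $G$. The right-ideal statement in both (a) and (b) will then follow by applying the antipode $S$, which is a bijection of $\ker(\Fr)$ onto itself (as $\Fr$ is a Hopf morphism) and which sends the generating set $G$ to itself up to nonzero scalars; part~(b) is a simplification of part~(a).

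The main preparatory step is a sharper description of the half-kernels. The untwisted analogue of Lemma~\ref{lem: Spliting of Fr} provides an isomorphism of vector spaces $\dmU^>_\e\cong\dmU^{*>}_\e\otimes\mathfrak{f}^+$ via $x\otimes y\mapsto\hFr^>(x)y$, where $\mathfrak{f}^+$ is the finite-dimensional ``small'' positive subalgebra generated by the simple $E_i$ (all of which participate under our assumption $\ell_i\geq 2$). Since $\Fr^>$ annihilates the augmentation ideal $\mathfrak{f}^+_+$, and $\mathfrak{f}^+_+=\sum_i\mathfrak{f}^+\cdot E_i$ as a right ideal of $\mathfrak{f}^+$, one concludes
\[
  \ker(\Fr^>) \,=\, \hFr^>(\dmU^{*>}_\e)\cdot\mathfrak{f}^+_+ \,=\, \sum_i\dmU^>_\e\cdot E_i,
\]
and analogously $\ker(\Fr^<)=\sum_i\dmU^<_\e\cdot F_i$. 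Part~(b) follows at once: a PBW basis element $E^{[\vec r]}1_\lambda$ of $\ker(\Fr^\geqslant)$ either has $\lambda\notin P^*$, in which case $E^{[\vec r]}1_\lambda=E^{[\vec r]}\cdot 1_\lambda\in\hmU^{\geqslant}_\e(\g,P)\cdot G$, or has $\vec r\in\mathscr K$, in which case writing $E^{[\vec r]}=\sum_j a_jE_j$ yields $E^{[\vec r]}1_\lambda=\sum_j a_j\cdot(E_j1_\lambda)\in J^L$.

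For part~(a), I will prove by induction on $|\vec r|$ that every PBW basis element $F^{[\vec k]}1_\lambda E^{[\vec r]}$ of $\ker(\Fr)$, enumerated via Lemma~\ref{lem: PBW basis of Ker of Fr} together with the triangular decomposition, belongs to $J^L$. The base case $\vec r=\vec 0$ is direct: for $\lambda\notin P^*$ the element equals $F^{[\vec k]}\cdot 1_\lambda$, while for $\vec k\in\mathscr K$ one writes $F^{[\vec k]}=\sum_i b_iF_i$ to get $F^{[\vec k]}1_\lambda=\sum_i b_i\cdot(F_i1_\lambda)$. In the induction step, the case $\vec r\in\mathscr K$ is handled by $E^{[\vec r]}=\sum_j a_jE_j$, giving $F^{[\vec k]}1_\lambda E^{[\vec r]}=\sum_j(F^{[\vec k]}1_\lambda a_j)\cdot(E_j1_{\lambda-\nu})\in J^L$ with $\nu=\mathrm{wt}(E^{[\vec r]})$. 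The case $\vec r\notin\mathscr K$ with $\lambda\notin P^*$ uses that each $r_j$ is divisible by $\ell_{\b_j}$, so $\nu\in Q^*\subseteq P^*$ and $\lambda-\nu\notin P^*$, whence $F^{[\vec k]}1_\lambda E^{[\vec r]}=F^{[\vec k]}E^{[\vec r]}\cdot 1_{\lambda-\nu}\in J^L$.

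The main obstacle is the remaining case $\lambda\in P^*$, $\vec k\in\mathscr K$, $\vec r\notin\mathscr K$. I will decompose $F^{[\vec k]}=\sum_i b_iF_i$ and, for each summand, invoke the commutation identity
\[
  F_i\cdot 1_\lambda E^{[\vec r]} \,=\, E^{[\vec r]}\cdot(F_i1_{\lambda-\nu}) \,+\, [F_i,E^{[\vec r]}]\cdot 1_{\lambda-\nu}.
\]
The first summand manifestly lies in $\hmU\cdot G\subseteq J^L$. The second belongs to $\ker(\Fr)$ (both sides of the identity do, since $\Fr(F_i)=0$), and the crucial observation is that every nonzero term of its PBW expansion has strictly smaller $|\vec r'|<|\vec r|$: when $F_i$ is commuted past $E^{[\vec r]}$ one root-vector factor at a time, the ``passes silently'' contribution exactly cancels $F_iE^{[\vec r]}$, while every correction term arises from a bracket $[F_i,E_{\b_j}^{[r_j]}]$ that strictly reduces the total $E$-degree. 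The inductive hypothesis then closes the case, completing the proof.
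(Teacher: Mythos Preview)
Your proof is correct and takes a genuinely different route from the paper's. The paper first reduces the two-sided ideal generators of $\ker(\Fr)$ to $S=\{1_\mu, E_i 1_\lambda, F_i 1_\lambda\}$ via the factorization $E_i^{[n]}=E_i^{[\ell_i n_1]}E_i^{n_0}/[n_0]_{\e_i}!$, and then proves \emph{normality} of the left ideal $\hmU S$ by checking that $s\cdot E_j^{[n]}$ and $s\cdot F_j^{[n]}$ lie in $\hmU S$ for each generator $s\in S$, using explicit commutation and in particular the higher-order quantum Serre relations of \cite[\S7.1.6]{l-book} to handle products like $E_i 1_\lambda\cdot E_j^{[n]}$; the right-ideal statement is obtained by the symmetric calculation. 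Your approach instead uses Lusztig's splitting (the untwisted form of Lemma~\ref{lem: Spliting of Fr}) to obtain the one-sided identities $\ker(\Fr^>)=\sum_i \dmU^>_\e E_i$ and $\ker(\Fr^<)=\sum_i \dmU^<_\e F_i$ at the outset, then runs a PBW case analysis, and dispatches the right-ideal claim with the antipode. This trades the higher Serre relations for the splitting lemma as the main technical input, and the antipode argument is cleaner than repeating the normality check.

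Two small points worth making precise in your write-up: (i) the induction parameter $|\vec r|$ must be read as the height $\het(\mathrm{wt}(E^{[\vec r]}))$, not $\sum_j r_j$; for instance in type $A_2$ the bracket $[F_1,E_{\alpha_1+\alpha_2}]$ produces a term with $E_2$, so $\sum r'_j$ need not drop while the height does. (ii) The clean justification that $[F_i,E^{[\vec r]}]\in\dmU^0_\e\cdot\dmU^>_{\e,\nu-\alpha_i}$ (no $F$-part, weight drops by $\alpha_i$) is Leibniz on monomials in the \emph{simple} generators together with $[F_i,E_j]=-\delta_{ij}[K_i;0]$, rather than commuting past the root vectors $E_{\beta_j}^{[r_j]}$ directly. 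With this in hand, since $\nu-\alpha_i\notin Q^*$ under the standing hypothesis $\ell_i\geq 2$, every PBW term arising from the commutator automatically has $\vec r'\in\mathscr K$, so each lies in $\ker(\Fr)$ and the induction closes.
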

\begin{proof}(a) {\it Step 1:} We have a  result similar to Corollary \ref{cor: kernel of Fr}(b) for $\Fr$ so that $\Ker(\Fr)$ is a two-sided ideal generated by 
\[ \{ 1_\mu, E^{(n)}_i 1_\lambda, F^{(n)}_i 1_\lambda| \mu \not \in P^*, \lambda \in P, \ell_i \nmid n\}.\]
For $\ell_i \nmid n$, let $n=\ell_i n_1+n_0$ with $0<n_0<\ell_i$ then 
\[ E_i^{(n)}=\frac{1}{(n_0)_{\e_i}}E_i^{(\ell_i n_1)}E_i^{n_0}, \qquad F_i^{(n)}=\frac{1}{(n_0)_{\e_i}} F_i^{(\ell_i n_1)}F_i^{n_0}.\]
Therefore, $\Ker(\Fr)$ is the two-sided ideal generated by 
\[ S= \{ 1_\mu, E_i 1_\lambda, F_i 1_\lambda| \mu \not \in P^*, \lambda \in P, 1\leq i \leq r\}.\]

\noindent
{\it Step 2:} We will prove that $\Ker(\Fr)$ is the left ideal generated by $S$. The proof that $\Ker(\Fr)$ is the right ideal generated by $S$ is similar. We write $\hmU$ for $\hmU_\e(\g, P).$
Let $\hmU S$ denote the left ideal  of $\hmU$ generated by $S$. Let $1_\mu \hmU, E_i 1_\lambda \hmU , F_i 1_\lambda \hmU $ be the right ideals generated by the corresponding elements in $S$. Then it is enough  to prove that these right ideals are contained in $\hmU S$.

Let  $\mu \not \in P^*$. We have: 
\[ 1_\mu E_j^{(n)}1_\nu =\delta_{\nu, \mu -n\a_j} E_j^{(n)}1_{\mu -n\a_j}, \qquad 1_\mu F_j^{(n)}1_\nu =\delta_{\nu, \mu +n \a_j} F_j^{(n)}1_{\mu+n\a_j}.\]
If  $\ell_j \nmid n$ then $1_\mu E_j^{(n)}1_\nu , 1_\mu F_j^{(n)}1_\nu \in \hmU S$. On the other hand, if $\ell_j \mid n$ then $\mu \pm n \a_j \not \in P^*$, hence, $E_j^{(n)}1_{\mu -n\a_j}, F_j^{(n)}1_{\mu +n \a_j} \in \hmU S$. Therefore, $1_\mu \hmU \subset \hmU S$ for $\mu \not \in P^*$.

Let us consider $E_i 1_\lambda$. We have
\begin{equation}\label{eq: commutator with E1-lambda}
\begin{split}
    E_i 1_\lambda F_j^{(n)}1_\nu&= \delta_{\lambda+n\a_j}F_j^{(n)}E_i 1_\nu \qquad (i \neq j), \qquad E_i1_\lambda E_i^{(n)}1_\nu =\delta_{\lambda-n \a_i, \nu} E_i^{(n)} E_i 1_\nu\\
    E_i 1_\lambda F_i^{(n)}1_\nu &=\delta_{\lambda+n \a_i, \nu}\left(  F_i^{(n)}E_i 1_\nu+b_n F_i^{(n-1)}1_\nu\right)\\
    E_i1_\lambda E_j^{(n)}1_\nu &= \delta_{\lambda -n \a_j, \nu}\sum_{0 \leq s \leq -a_{ij}<\ell_i} a_s E_i^{(n-s)}E_j E^{(s)}_i1_\nu \qquad (i\neq j;~ n\geq 1-a_{ij})
\end{split}
\end{equation}
The last equality follows from {\it higher order quantum Serre relations} in \cite[$\mathsection 7.1.6$]{l-book} for some $b_n, a_s \in \BF$.

If $\ell_i \nmid n$ then $E_i 1_\lambda F_i^{(n)}1_\nu \in \hmU S$. On the other hand, if $\ell_i \mid n$ then $F_i^{(n)}E_i1_\nu +b_nF_i^{(n-1)}1_\nu \in \hmU S$. So we always have $E_i 1_\lambda F_i^{(n)}1_\nu \in \hmU S$. So \eqref{eq: commutator with E1-lambda} implies that $E_i 1_\lambda \hmU \subset \hmU S$. Similarly, we have $F_i 1_\lambda \hmU \subset \hmU S$. This finishes the proof of part (a). 

\noindent
(b) The proof is similar to part (a).   
\end{proof}
\begin{Rem}\label{rem: ideal EF in Lus form} By the same analysis in Lemma \ref{lem: normality of u}, one can show that the left ideal of $\dmU_\e(\g)$ generated by $\{ E_i, F_i\}_{1\leq i \leq r}$ is equal to the right ideal of $\dmU_\e(\g)$ generated by $\{E_i, F_i\}_{1\leq i\leq r}$. Therefore, the weight space $\dmU_\e(\g)_\nu$ with $\nu \not \in Q^*$ is contained in this left ideal of $\dmU_\e(\g)$.
\end{Rem}

\begin{defi}Let $\fu\,\dmU^0$ be the Hopf subalgebra of $\dmU_\e(\g)$ generated by $E_i, F_i, \dmU^0_\e$. Let $\fu^>$ (resp. $\fu^<$) be the subalgebras of $\dmU_\e(\g)$ generated by $E_i$ (resp. $F_i$). Let $\fu$ be the Hopf subalgebra of $\dmU_\e(\g)$ generated by $E_i, F_i, K^{\a_i}$. Here $1\leq i\leq r$.
\end{defi}
\begin{Lem}\label{lem: basis of uU}

(a) We have a triangular decomposition via the multiplication map:
\[ \fu^< \otimes \dmU^0_\e\otimes \fu^> \xrightarrow[]{\m} \fu \; \dmU^0\]

\noindent
(b) The algebra  $\fu^<$ has a $\BF$-basis $\{ F^{\vec{k}}:=F_{\b_1}^{k_1}\dots F_{\b_N}^{k_N}| 1\leq k_i \leq \ell_{\b_i}-1\}$. The algebra $\fu^>$ has a $\BF$-basis $\{E^{\vec{k}}:=E_{\b_1}^{k_1}\dots E_{\b_N}^{k_N}| 0\leq k_i \leq \ell_{\b_i}-1\}$. As a result, $\dim_\BF \fu^<=\dim_\BF \fu^>=\prod_{\a\in \Delta_+} \ell_\a$.
\end{Lem}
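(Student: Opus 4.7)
The plan is to reduce part (a) to the triangular decomposition of the Lusztig form, and to establish (b) by combining Lusztig's braid action with the divided-power identity $F_{\beta_k}^n=[n]_{v_{i_k}}!\,F_{\beta_k}^{[n]}$ and the Levendorskii--Soibelman style commutation relations.

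For (a), I would invoke Lemma~\ref{triangular_Lus}, which gives the injective multiplication map $\dmU^<_\e\otimes \dmU^0_\e\otimes \dmU^>_\e\hookrightarrow \dmU_\e(\g)$. Since $\fu^<\subseteq \dmU^<_\e$ and $\fu^>\subseteq \dmU^>_\e$, the restriction $\fu^<\otimes \dmU^0_\e\otimes \fu^>\to \dmU_\e(\g)$ remains injective. Its image $\fu^<\dmU^0_\e\fu^>$ contains the generators $E_i,F_i$ and all of $\dmU^0_\e$, so I would check it is already closed under multiplication: the only nontrivial straightening needed is $E_iF_j=F_jE_i+\delta_{ij}(K_i-K_i^{-1})/(\e_i-\e_i^{-1})$, and here $\e_i-\e_i^{-1}$ is invertible because $\ell_i\geq 2$. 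An easy induction on word length then shows the image equals the algebra $\fu\,\dmU^0$ generated by $E_i,F_i,\dmU^0_\e$.

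For (b), I would first place $F_{\beta_k}$ inside $\fu^<$. Applying Lusztig's braid action~\eqref{eq:braid group} to $F_{i_k}$ produces $\BQ(v)$-linear combinations of monomials purely in the $F_j$'s whose denominators are products $[m]_{v_i}![k]_{v_i}!$ with $m+k=-a_{ij}$; under the hypothesis $\ell_i\geq\max\{2,1-a_{ij}\}$ these denominators specialize to nonzero scalars, so $F_{\beta_k}=T_{i_1}\cdots T_{i_{k-1}}(F_{i_k})\in\fu^<$. Next, the identity $F_{\beta_k}^n=[n]_{v_{i_k}}!\,F_{\beta_k}^{[n]}$ lives in $\dmU_\CA(\g)$; since $\e^\ell=1$ forces $\e_{i_k}^{2\ell_{i_k}}=1$ and hence $\e_{i_k}^{\ell_{i_k}}=\e_{i_k}^{-\ell_{i_k}}$, we get $[\ell_{i_k}]_{\e_{i_k}}=0$ and therefore $F_{\beta_k}^{\ell_{\beta_k}}=0$ in $\dmU_\e(\g)$, where I use $\ell_{\beta_k}=\ell_{i_k}$ (a consequence of $(\beta_k,\beta_k)=(\alpha_{i_k},\alpha_{i_k})=2\sd_{i_k}$).

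Spanning follows from the Levendorskii--Soibelman relations in $\mU^<_\CA$: for $i<j$ one has $F_{\beta_j}F_{\beta_i}-v^{-(\beta_i,\beta_j)}F_{\beta_i}F_{\beta_j}\in \CA$-span of ordered monomials $F_{\beta_{j_1}}^{a_1}\cdots F_{\beta_{j_s}}^{a_s}$ with $i<j_1\leq\cdots\leq j_s<j$; these are standard (cf.~\cite[\S9]{dck}) and remain valid in $\fu^<\subseteq \dmU_\e(\g)$ after base change. Combining them with $F_{\beta_k}^{\ell_{\beta_k}}=0$, a rewriting argument reduces any word in the $F_i$'s to an $\BF$-combination of ordered monomials $F^{\vec k}$ with $0\leq k_i\leq \ell_{\beta_i}-1$. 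Linear independence is inherited from Lemma~\ref{triangular_DCK}: the full family $\{F^{\vec k}\}_{\vec k\in\BZ^N_{\geq 0}}$ is $\CA$-linearly independent in $\mU^<_\CA$, and remains so in $\dmU^<_\e$ through the identity $F^{\vec k}=\prod_j [k_j]_{v_{i_j}}!\cdot F^{[\vec k]}$ whose coefficients are nonzero whenever $k_j<\ell_{\beta_j}$. This yields $\dim_\BF\fu^<=\prod_{\alpha\in\Delta_+}\ell_\alpha$, and the analogous argument handles $\fu^>$. The main technical point is organizing the Levendorskii--Soibelman rewriting so that it terminates inside the region $k_i<\ell_{\beta_i}$; this is a classical but careful bookkeeping of the same flavor as in the construction of the small quantum group, and I expect it to be the most delicate step.
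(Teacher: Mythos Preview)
Your argument for (a) matches the paper's: surjectivity by checking closure under the generators via the commutation relations, injectivity by restricting the triangular decomposition of $\dmU_\e(\g)$.

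For (b) your approach is correct but more laborious than the paper's. You separately verify that each $F_{\beta_k}$ lies in $\fu^<$, that $F_{\beta_k}^{\ell_{\beta_k}}=0$, and then invoke the Levendorskii--Soibelman relations to straighten arbitrary words. The paper compresses all of this into a single observation: $\fu^<$ is by definition the image of the natural map $\iota\colon \mU^<_\e\to \dmU^<_\e$, and on PBW monomials this map reads $\iota(F^{\vec k})=\prod_j [k_j]_{\e_{i_j}}!\cdot F^{[\vec k]}$. Since $\{F^{[\vec k]}\}$ is a basis of $\dmU^<_\e$ and the scalar $\prod_j[k_j]_{\e_{i_j}}!$ is nonzero precisely when every $k_j<\ell_{\beta_j}$, the image is exactly the span of $\{F^{\vec k}:0\le k_j\le \ell_{\beta_j}-1\}$ and these elements are automatically linearly independent. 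This bypasses the need to place root vectors into $\fu^<$ by hand, to argue nilpotency separately, or to organize any rewriting procedure; those facts are all consequences of the single identity on PBW monomials. Your route has the minor advantage of being self-contained at the level of relations, but the paper's image argument is both shorter and avoids the bookkeeping you flag as ``the most delicate step.''
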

\begin{proof}By using commutations between $E_i, F_i, \dmU^0_\e$,  it is easy to show that the image of $\m$ is a subalgebra  containing all generators of $\fu \dmU^0$  hence $\m$ is surjective. Using the triangular decomposition of $\dmU_\e(\g)$ in Lemma \ref{triangular_DCK}, we see that $\m$ is injective. Hence part (a) follows. Let us prove part (b). The algebra $\fu^<$ is the image of the algebra morphism $\iota: \mU^<_\e \rightarrow \dmU^<_\e$. Recall the PBW-bases of $\dmU^<_\e$ and $\mU^<_\e$ in Lemma \ref{triangular_Lus} and Lemma \ref{triangular_DCK}. We have $\iota(F^{\vec{k}})= \prod_{j=1}^N [k_j]_{\e_{i_j}}! F^{[\vec{k}]}$. On the other hand, $[k_j]_{\e_{i_j}}=0$ if $k_j\geq \ell_{\b_j}$ and $\neq 0$ if $0 \leq k_j \leq \ell_{\b_j}-1$. This implies the statement of part (b) for $\fu^<$. The proof for $\fu^>$ is similar.
\end{proof}

\begin{defi} A $\fu\, \dmU^0$-module $M$ is called a rational representation of  type $1$ if there is a decomposition $M=\bigoplus_{\lambda \in P} M_\lambda$, where  $\dmU^0_\e$ acts on $M_\lambda$ via the character $\chi_\lambda$.
\end{defi}
Let us consider the idempotented version of $\fu\, \dmU^0$ to denoted by $\hat{\fu}$. The $R$-matrix $\CR$ in \eqref{R matrix for original U} is an element in $\hat{\fu}$: if there is  any entry $k_t$ of $\vec{k}$ such that $k_t \geq \ell_t $ then $c_{\vec{k}}=0$, meanwhile if all $k_t <\ell_t$ then $F^{[\cev{k}]}$ and $E^{[\cev{k}]}$ are elements in $\fu \dmU^0$. 

We can define the following induction functors:
\begin{align*}
H^0(\fu\,\dmU^0/\BF, -) &: \BF \text{-mod}\rightarrow \Rep(\fu\,\dmU^0),\\
H^0(\fu\,\dmU^0/\dmU^0, -)&: \Rep(\dmU^0) \rightarrow \Rep(\fu\,\dmU^0),\\
H^0(\dmU/\fu\,\dmU^0, -)&: \Rep(\fu\,\dmU^0) \rightarrow \Rep(\dmU).
\end{align*}
The following lemma is proved as \cite[Proposition 2.16]{APW1}
\begin{Lem}\label{lem: tensor and induction}(a) Let $V\in \Rep(\dmU_\e(\g))$ and $M\in \Rep(\fu\,\dmU^0)$. Then there is a natural isomorphism of $\dmU_\e(\g)$-modules:  $H^0(\dmU/\fu\,\dmU^0, M\otimes V)\cong H^0(\dmU/\fu\,\dmU^0, M)\otimes V$.

\noindent
(b) Let $V\in \Rep(\fu\,\dmU^0)$ and $N\in \Rep(\dmU^0_\e)$. Then there is a natural isomorphism of $\fu\,\dmU^0$-modules:  $H^0(\fu\,\dmU^0/\dmU^0, N\otimes V)\cong H^0(\fu\,\dmU^0/\dmU^0, N)\otimes V$. 
\end{Lem}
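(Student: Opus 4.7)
The proof follows the template of \cite[Proposition 2.16]{APW1}: one constructs a natural tensor-identity isomorphism at the level of the underlying $\Hom$ spaces using the Hopf structure, and then verifies that it descends to the maximal rational subrepresentations. Both (a) and (b) are instances of the same construction applied to the Hopf inclusions $\dmU^0_\e \hookrightarrow \fu\,\dmU^0 \hookrightarrow \dmU_\e(\g)$, all of which are Hopf subalgebras with respect to the twisted Hopf structure.

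For (a), I would define
$$\Phi_1\colon \Hom_{\fu\,\dmU^0}(\dmU_\e(\g), M) \otimes V \longrightarrow \Hom_{\fu\,\dmU^0}(\dmU_\e(\g), M \otimes V)$$
in Sweedler notation for the twisted coproduct $\Delta'$ by $\Phi_1(\phi \otimes v)(u) := \phi(u_{(1)}) \otimes u_{(2)} v$. The $\fu\,\dmU^0$-linearity of $\Phi_1(\phi \otimes v)$ is immediate from the diagonal action on $M \otimes V$ and the fact that $\fu\,\dmU^0 \subset \dmU_\e(\g)$ is a subcoalgebra; equivariance of $\Phi_1$ under the left $\dmU_\e(\g)$-action $(x \cdot f)(u) = f(ux)$ follows from coassociativity. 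Next, I would construct a candidate inverse $\Phi_2$ via the antipode-twist: for $\Psi$ in the rational subrepresentation $H^0(\dmU/\fu\,\dmU^0, M \otimes V)$ and a weight basis $\{v_i\}$ of the smallest rational $\dmU_\e(\g)$-submodule of $V$ containing the second tensor components of $\Psi(\dmU_\e(\g))$, set $\Phi_2(\Psi) = \sum_i \psi_i \otimes v_i$, where $\psi_i(u)$ is the coefficient of $v_i$ in $(\mathrm{id}_M \otimes S'(u_{(2)})) \Psi(u_{(1)}) \in M \otimes V$. The identities $\Phi_1 \circ \Phi_2 = \mathrm{id}$ and $\Phi_2 \circ \Phi_1 = \mathrm{id}$ then reduce to the antipode axiom $\sum u_{(1)} S'(u_{(2)}) = \varepsilon(u) \cdot 1$ (using that $\varepsilon' = \varepsilon$).

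It remains to check that $\Phi_1$ and $\Phi_2$ restrict to maximal rational subrepresentations. If $\phi \in H^0(\dmU/\fu\,\dmU^0, M)$ and $v \in V$, then $\dmU_\e(\g) \cdot \Phi_1(\phi \otimes v)$ sits inside the image under $\Phi_1$ of $(\dmU_\e(\g) \cdot \phi) \otimes (\dmU_\e(\g) \cdot v)$, which is rational since $\Phi_1$ is $\dmU_\e(\g)$-equivariant and the tensor factors are rational; a symmetric argument using $\Phi_2$ closes the loop. Part (b) is obtained by verbatim substitution with $\fu\,\dmU^0$ in place of $\dmU_\e(\g)$ and $\dmU^0_\e$ in place of $\fu\,\dmU^0$. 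The main technical obstacle is the well-definedness of $\Phi_2$, namely that the antipode-twist decomposition yields a finite sum in $\Hom(\dmU_\e(\g), M) \otimes V$ rather than in a completion: this is exactly where the rationality of $V$ and of $\dmU_\e(\g) \cdot \Psi$ is essential, since for each $u$ only finitely many basis vectors $v_i$ occur in $\Psi(u)$ and in $S'(u_{(2)}) v_i$. Once $\Phi_2$ is correctly constructed, the remaining verifications are routine Sweedler computations.
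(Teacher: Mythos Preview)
Your proposal is correct and follows exactly the approach the paper indicates: the paper's proof is simply the one-line reference ``proved as \cite[Proposition 2.16]{APW1}'', and you have spelled out the standard tensor-identity argument from that reference in detail. One minor notational slip: Section~\ref{ssec: rat rep at roots} works with the \emph{untwisted} Lusztig form $\dmU_\e(\g)$ and its standard Hopf structure $(\Delta,S,\varepsilon)$, not the twisted one, so your $\Delta',S',\varepsilon'$ should be replaced by $\Delta,S,\varepsilon$ throughout---the argument is otherwise unchanged.
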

For any $\lambda \in P$, let us recall the character $\chi_\lambda: \dmU^0_\e\rightarrow\BF$ defined in \eqref{eq: defi of character for usual form}. Then we have a $\fu^> \dmU^0$-module $\BF_\lambda$ via the homomorphism $\fu^> \dmU^0\rightarrow \dmU^0 \xrightarrow[]{\chi_\lambda}\BF$. Set 
\[ \hW_\e(\lambda):=\fu\,\dmU^0\otimes_{\fu^> \dmU^0} \BF_\lambda.\]
Let $\tau$ be the anti-involution on $\fu\,\dmU^0$ defined by $\tau(E_i)=F_i, \tau(F_i)=E_i, \tau(u_0)=u_0$ for all $u_0\in \dmU^0$, $1\leq i \leq r$. For any finite dimensional module $M$ in $ \Rep(\fu\,\dmU^0)$, let $M^\tau:=\Hom_\BF(M, \BF)$ as vector spaces, where $\fu\,\dmU^0$ acts via $\tau$. The following lemma is standard:
\begin{Lem}\label{lem: simple uU-mod}
(a) $\hW_\e(\lambda)$ has a unique simple quotient $\hL_\e(\lambda)$. The assignment $\lambda \mapsto \hL_\e(\lambda)$ is one-to-one correspondence between $P$ and simple modules in $\Rep(\fu\,\dmU^0)$.

\noindent
(b) $\hL_\e(\lambda)^\tau \cong \hL_\e(\lambda)$. For any finite dimensional modules $M,N$ in $\Rep(\fu\,\dmU^0)$,
\[ \Ext^1_{\Rep(\fu\,\dmU^0)}(M,N) \cong \Ext^1_{\Rep(\fu\,\dmU^0)}(N^\tau, M^\tau).\]
\end{Lem}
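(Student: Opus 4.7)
For part (a), my plan is to exploit the triangular decomposition of Lemma~\ref{lem: basis of uU}. It yields an isomorphism $\fu^< \otimes_\BF \BF_\lambda \iso \hW_\e(\lambda)$ of $\fu^<$-modules, so $\hW_\e(\lambda)$ is finite-dimensional with weight decomposition $\hW_\e(\lambda) = \bigoplus_{\mu\in\lambda - Q_+} \hW_\e(\lambda)_\mu$ and $\dim_\BF \hW_\e(\lambda)_\lambda = 1$, the top weight space being spanned by the canonical generator $v_\lambda$. Any proper submodule must lie in $\bigoplus_{\mu\ne \lambda}\hW_\e(\lambda)_\mu$, so the sum of all proper submodules is again proper; call it $N(\lambda)$, and set $\hL_\e(\lambda):=\hW_\e(\lambda)/N(\lambda)$. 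This is the unique simple quotient, and $\lambda$ is its highest weight.

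For the classification of simples, let $L\in\Rep(\fu\,\dmU^0)$ be simple. I first observe that $L$ is finite-dimensional: fixing a nonzero weight vector $v$, the triangular decomposition gives $L=\fu\,\dmU^0\cdot v=\fu^<\cdot\fu^>\cdot v$, since $\dmU^0$ acts by scalars on each weight component of $\fu^> v$; both $\fu^>$ and $\fu^<$ are finite-dimensional by Lemma~\ref{lem: basis of uU}(b). Picking then a weight $\lambda$ of $L$ maximal in the partial order on $P$, any $0\ne v'\in L_\lambda$ satisfies $E_i v'\in L_{\lambda+\alpha_i}=0$ by maximality, so $v'$ is a highest weight vector. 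The universal property of induction produces a surjection $\hW_\e(\lambda)\twoheadrightarrow L$, and simplicity forces $L\cong \hL_\e(\lambda)$; distinctness for different $\lambda$ is immediate, since the highest weight is an isomorphism invariant.

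For part (b), I will verify that $(-)^\tau$ is a contravariant, $\BF$-linear, exact auto-equivalence of the subcategory of finite-dimensional objects in $\Rep(\fu\,\dmU^0)$. Since $\tau$ fixes $\dmU^0$ pointwise, every weight space $M_\mu$ is sent isomorphically to $(M^\tau)_\mu$, so weight decompositions are preserved; and $\tau^2=\Id$ supplies a natural isomorphism $((-)^\tau)^\tau\cong \Id$. Consequently $\hL_\e(\lambda)^\tau$ is simple with the same weights as $\hL_\e(\lambda)$, hence with the same highest weight $\lambda$, and part (a) yields $\hL_\e(\lambda)^\tau\cong \hL_\e(\lambda)$. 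The $\Ext^1$-isomorphism is then formal from the contravariant exact equivalence: a short exact sequence $0\to N\to E\to M\to 0$ is sent to $0\to M^\tau\to E^\tau\to N^\tau\to 0$, and applying $(-)^\tau$ once more recovers the original, producing a bijection on Yoneda $\Ext^1$. The only step that requires any real attention is the finite-dimensionality claim in the second paragraph; everything else is formal from the highest-weight machinery and the standard yoga of contravariant duality.
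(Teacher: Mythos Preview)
Your argument is correct and is exactly the standard highest-weight-theory proof that the paper has in mind; the paper itself declares the lemma ``standard'' and omits the proof. One small clarification worth making explicit: once you have shown $L\cong\hL_\e(\lambda)$ for some maximal weight $\lambda$, it follows that all weights of $L$ lie in $\lambda-Q_+$, so $\lambda$ is in fact the unique maximum weight and the assignment $L\mapsto\lambda$ is well-defined.
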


Let $\lambda_\St:=\sum_i (\ell_i-1)\w_i$ and $\St:=L_\e(\lambda_\St)$, the simple $\dmU_\e(\g)$-module with highest weight $\lambda_\St$. We are now going to study properties of $\St$ as a $\fu\,\dmU^0$-module and show that  the functor $H^0(\dmU/\fu\,\dmU^0,-)$ is exact. 

The proof of the next two lemmas follow \cite[$\mathsection 8.2$]{N23}, where similar results are proved over complex number $\BC$.
\begin{Lem}\label{lem: restricted simple mod}For any $\lambda \in P_\ell:=\{ \sum_i a_i \w_i| 0\leq a_i \leq \ell_i-1, \;\forall 1\leq i \leq r\}$, the $\dmU_\e(\g)$-module $L_\e(\lambda)$ is still a simple $\fu\,\dmU^0$-module. 
\end{Lem}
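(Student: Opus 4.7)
The plan is to establish simplicity by verifying two complementary statements: first, that $L_\e(\lambda) = \fu\,\dmU^0 \cdot v_\lambda$, where $v_\lambda$ is the highest weight vector of $L_\e(\lambda)$; second, that every nonzero $\fu\,\dmU^0$-submodule of $L_\e(\lambda)$ contains $v_\lambda$. Together these force the submodule to be all of $L_\e(\lambda)$.

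For the cyclicity step, I would start from $L_\e(\lambda) = \dmU^<_\e \cdot v_\lambda$, which follows from simplicity of $L_\e(\lambda)$ as a $\dmU_\e(\g)$-module and the triangular decomposition. The task reduces to showing that every PBW monomial $F^{[\vec{k}]} v_\lambda$ with some exponent $k_j \geq \ell_{\beta_j}$ already lies in $\fu^< \cdot v_\lambda$. For a simple root $\alpha_i$, the relation $F_i^{[(\lambda,\alpha_i^\vee)+1]} v_\lambda = 0$ in $W_\e(\lambda)$ descends to $L_\e(\lambda)$, and since $(\lambda,\alpha_i^\vee) < \ell_i$ for $\lambda \in P_\ell$, all higher divided powers $F_i^{[n]}$ with $n \geq \ell_i$ already kill $v_\lambda$. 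For a non-simple positive root $\beta_j$, I would transport the statement from the simple root case via the Lusztig braid operators $T_{i_1}\cdots T_{i_{j-1}}$, using their compatibility with the divided powers. This is complemented by the splitting $\dmU^<_\e \cong \dmU^{*<}_\e \otimes \tilde{\mathfrak{f}}$ from Lemma~\ref{lem: Spliting of Fr}, whose second factor $\tilde{\mathfrak{f}}$ coincides with $\fu^<$ under our hypothesis $\ell_i \geq 2$, together with Lemma~\ref{lem: basis of uU}(b) which gives the PBW basis of $\fu^<$ indexed precisely by $0 \leq k_j < \ell_{\beta_j}$.

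For the irreducibility step, take a nonzero $\fu\,\dmU^0$-submodule $M \subseteq L_\e(\lambda)$ and a weight vector $m \in M$ of maximal weight $\mu$; then $E_i m = 0$ for all $i$. Since $L_\e(\lambda)$ is simple over $\dmU_\e(\g)$, some sequence of divided powers $E_{i_1}^{[n_1]}\cdots E_{i_k}^{[n_k]}$ sends $m$ to a nonzero multiple of $v_\lambda$. Applying the argument of step one in reversed roles (with $E$ replacing $F$ and using the dual splitting of $\dmU^>_\e$), one shows that under the restricted weight assumption these divided powers can be replaced by products of undivided $E_i$'s lying in $\fu^>$, so that $v_\lambda \in \fu^> \cdot m \subseteq M$.

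The main obstacle will be controlling the ``Frobenius-like'' divided powers $F_{\beta_j}^{[n\ell_{\beta_j}]}$ on $v_\lambda$ in step one. Concretely, one must verify that the action of $\hFr^<(\dmU^{*<}_\e)$ on $v_\lambda$ inside $L_\e(\lambda)$ factors through $\fu^<$; this is a fragment of the quantum Steinberg tensor product theorem adapted to our Cartan-twisted setting of Sections~\ref{ssec Sevostyanov phi}--\ref{ssec: new Lusztig form}, and checking compatibility with the modified root vectors $\tE_{\beta_k}, \tF_{\beta_k}$ and the elements $\nu^>_i,\nu^<_i$ will require careful bookkeeping. Once this is in place, simplicity follows: the cyclic $\fu\,\dmU^0$-module generated by $v_\lambda$ equals $L_\e(\lambda)$, and any nonzero submodule contains $v_\lambda$, so the two coincide.
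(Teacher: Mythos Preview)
Your two-step outline is the right shape, and your cyclicity step can be made to work cleanly—in fact the paper uses it implicitly. The key observation you are circling around is that, using the splitting $\dmU^<_\e \cong \fu^< \cdot \hFr^<(\dmU^{*<}_\e)$ (the version with $\fu^<$ on the \emph{left}, obtained from Lemma~\ref{lem: Spliting of Fr} by applying an anti-involution), one has $\dmU^<_\e v_\lambda = \fu^< \cdot \hFr^<(\dmU^{*<}_\e) v_\lambda$, and since $F_i^{[\ell_i]} v_\lambda = 0$ directly from $(\lambda,\alpha_i^\vee) < \ell_i$, the Frobenius factor acts by scalars on $v_\lambda$. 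So cyclicity is immediate and does not require the Steinberg tensor product machinery you worry about.

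The genuine gap is in your irreducibility step. You take a $\fu^>$-highest weight vector $m$ of weight $\mu$, note that divided powers of $E_i$'s connect $m$ to $v_\lambda$, and propose to replace them by undivided ones ``by the argument of step one in reversed roles.'' But the roles are not symmetric: $v_\lambda$ has the restricted property $(\lambda,\alpha_i^\vee) < \ell_i$, which is exactly what makes the Frobenius divided powers vanish on it. For $m$ you know only $E_i m = 0$, which at a root of unity does \emph{not} imply $E_i^{[\ell_i]} m = 0$; indeed $\fu^> m = \BF m$, so $v_\lambda \in \fu^> m$ forces $\mu = \lambda$, which is precisely what must be proved. The paper handles this differently: rather than trying to connect $m$ to $v_\lambda$ inside $\fu^>$, it shows directly that $(L_\e(\lambda))^{\fu^>}$ is one-dimensional by a weight argument. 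If a $\fu^>$-invariant of weight $\mu \neq \lambda$ existed, then tensoring with $\BF_{-\mu}$ and passing to the $\hmU^{\geqslant}_\e$-submodule $(L_\e(\lambda)\otimes\BF_{-\mu})^\bullet$ annihilated by all $E_i$ and supported on $P^*$, one finds a weight $\lambda - \mu - m\ell_i\alpha_i$ with $m>0$ in $L_\e(\lambda)$; applying $s_i$ yields a weight $\lambda + (m\ell_i - a_i)\alpha_i > \lambda$, contradicting that $\lambda$ is highest. This Weyl-group reflection trick is the missing idea in your step two.
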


\begin{proof}
{\it Step 1:} We will prove that $(L_\e(\lambda))^{\fu^>}$ is one dimensional.
The proof follows  \cite[Lemma 8.4]{N23}. 
Since $\lambda$ is a highest weight of $L_\e(\lambda)$, we have $\dim\Hom_{\fu^>\dmU^0}(\BF_\lambda, L_\e(\lambda))=1$.
It remains to show that $\Hom_{\fu^>\dmU^0}(\BF_\mu, L_\e(\lambda))=0$ for all $\mu \neq \lambda$, equivalently, $(L_\e(\lambda) \otimes \BF_{-\mu})^{\fu^> \dmU^0}=0$ for $\mu \neq \lambda$. Note that $\BF_{-\mu}$ is naturally a $\dmU^\geqslant_\e$-module. 

 For any $M\in \Rep(\dmU^\geqslant_\e(\g)) =\Rep(\hmU^\geqslant_\e(\g, P))$, let 
\[M^\bullet:=\{ m \in M| 1_\mu m=E_i1_\lambda m =0 \; \forall  1\leq i \leq r, \lambda \in P, \mu \not \in P^* \}.\]
By Lemma \ref{lem: normality of u}(b), $M^\bullet$ is a $\hmU^\geqslant_\e(\g,P)$-submodule and then $M^\bullet$ is a module over $\hmU^{*\geqslant}_\e(\g, P^*)$. 

Let us assume $(L_\e(\lambda)\otimes \BF_{-\mu})^{\fu^> \dmU^0}\neq 0$ for $\mu \neq \lambda$. Since this space is contained in $(L_\e(\lambda)\otimes \BF_{-\mu})^\bullet$, the latter is nonzero. Since $L_\e(\lambda)$ is a simple $\dmU_\e(\g)$-module, both $L_\e(\lambda)$ and $L_\e(\lambda)\otimes \BF_{-\mu}$ have one dimensional highest weight spaces which are $L_\e(\lambda)_\lambda$ and $L_\e(\lambda)_\lambda\otimes \BF_{-\mu}$, respectively. Therefore, $L_\e(\lambda)_\lambda\otimes \BF_{-\mu}$ is the highest weight vector space of $(L_\e(\lambda)\otimes \BF_{-\mu})^\bullet$. 

Since $\lambda \neq \mu$, there is a simple root $\a_i$ such that $(L_\e(\lambda)\otimes \BF_{-\mu})^\bullet$ has a nonzero vector  of a weight $\lambda -\mu -m \ell_i \a_i$ for some $m>0$ (if $\text{char}~ \BF=0$ the we can choose $m=1$). Then the weight space of $L_\e(\lambda)$ contains the $W$-orbit of $\lambda -m \ell_i \a_i$. However, $s_i(\lambda- m \ell_i \a_i)=\lambda+(m \ell_i-a_i) \a_i >\lambda$, contradiction. Therefore, if $\lambda \neq \mu$ then $(L_\e(\lambda) \otimes \BF_{-\mu})^{\fu^> \dmU^0} =0$.



\noindent
{\it Step 2:} If $L_\e(\lambda)$ is not simple as a $\fu\,\dmU^0$-module, then $L_\e(\lambda)$ must contain a non-zero $\fu\,\dmU^0$-submodule $N \subset \oplus_{\mu<\lambda} L_\e(\lambda)_\mu$. Note that $N^{\fu^>} \neq 0$, hence, $L_\e(\lambda)^{\fu^>}$ must be at least two-dimensional, contradiction. Hence $L_\e(\lambda)$ is simple over $\fu\,\dmU^0$.
\end{proof}

\begin{Lem}\label{lem: solce of uU-mod} The socle of $\hW_\e(\lambda)$ in the category of $\Rep(\fu^<\dmU^0)$ is one-dimensional and is generated by the lowest weight space.
\end{Lem}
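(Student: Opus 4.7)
The plan is to reduce the socle computation to a statement about the finite-dimensional algebra $\fu^<$ via the triangular decomposition, then identify the socle explicitly via the PBW basis together with a Frobenius-algebra argument. First, by Lemma~\ref{lem: basis of uU}(a), the multiplication map gives an isomorphism of left $\fu^<\dmU^0$-modules
\[
  \hW_\e(\lambda)\;\cong\;\fu^<\otimes_\BF \BF_\lambda,
\]
where $\fu^<$ acts on the first tensor factor by left multiplication and $\dmU^0$ acts on the weight component $\fu^<_{-\mu}\otimes\BF_\lambda$ through the character $\chi_{\lambda-\mu}$. Every simple object of $\Rep(\fu^<\dmU^0)$ is a one-dimensional weight module on which the augmentation ideal $\fu^<_+:=\bigoplus_{\mu\in Q_+\setminus\{0\}}\fu^<_{-\mu}$ acts by zero, so the socle in question is identified with the space of $x\in\fu^<$ satisfying $F_i\cdot x = 0$ for every $1\leq i\leq r$, graded by $P$ via $\mu\mapsto\lambda-\mu$.

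Second, using the PBW basis $\{F^{\vec{k}}\}_{0\le k_j\le\ell_{\b_j}-1}$ of Lemma~\ref{lem: basis of uU}(b), set
\[
  F_\top\,:=\,F_{\b_1}^{\ell_{\b_1}-1}\cdots F_{\b_N}^{\ell_{\b_N}-1}.
\]
This is the unique PBW monomial of maximal total degree, and its weight $-\sum_{\alpha\in\Delta_+}(\ell_\alpha-1)\alpha$ is the unique lowest weight occurring in $\hW_\e(\lambda)$; the corresponding weight space is one-dimensional, spanned by $F_\top\otimes v_\lambda$. I would then check that $F_i\cdot F_\top = 0$ in $\fu^<$ for all $i$ by rewriting $F_i\cdot F_\top$ in the PBW order via the Levendorskii--Soibelman-type straightening relations among the $F_{\b_k}$: each resulting summand carries at least one factor $F_{\b_k}^{m}$ with $m\geq \ell_{\b_k}$, hence vanishes in $\fu^<$ because the PBW basis of $\fu^<$ is truncated at exponent $\ell_{\b_k}-1$. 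Consequently $F_\top\otimes v_\lambda$ lies in the socle.

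Third, to obtain the converse inclusion---that the socle is no larger than this one-dimensional subspace---I would invoke the fact that $\fu^<$ is a connected graded Frobenius algebra, whence $\dim\,\mathrm{soc}(\fu^<)=\dim(\fu^</\fu^<_+)=1$. The Frobenius property can be extracted by restricting the non-degenerate Hopf pairing of Lemma~\ref{Lem:pairing_quantum_Borel} (and its root-of-unity specialization Corollary~\ref{Lem:pairing_quantum_Borel_R}) to the finite-dimensional subquotient $\fu^<\times\fu^>$: the explicit PBW-pairing formula of Corollary~\ref{cor: pairing of PBW} shows that this truncated pairing is non-degenerate, which in turn yields a non-degenerate Frobenius form on $\fu^<$. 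Standard Frobenius-algebra theory then forces the left annihilator of $\fu^<_+$ in $\fu^<$ to be one-dimensional and thus to coincide with $\BF\cdot F_\top$. The main obstacle is precisely this last step: one must verify the non-degeneracy of the restricted pairing in our Cartan-twisted setting and check that the Levendorskii--Soibelman straightening does not produce surviving summands beyond those killed by the PBW truncation, after which the description of the socle as the one-dimensional lowest weight space of $\hW_\e(\lambda)$ follows.
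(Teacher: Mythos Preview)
Your first paragraph reduces the problem exactly as the paper does: both identify the socle with $\{u\in\fu^<\mid F_iu=0\text{ for all }i\}$. Step~2 is also fine, and in fact no Levendorskii--Soibelman analysis is needed: $F_iF_\top$ has weight strictly below the lowest weight appearing in the PBW basis of $\fu^<$ (Lemma~\ref{lem: basis of uU}(b)), so it vanishes on degree grounds alone.

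The gap is in Step~3, and it is not where you locate it. The non-degeneracy of the truncated pairing $\fu^<\times\fu^>\to\BF$ is indeed immediate from PBW orthogonality; the real difficulty is that such a pairing does \emph{not} by itself yield a Frobenius form on $\fu^<$. A Frobenius structure requires an isomorphism $\fu^<\cong(\fu^<)^*$ of left $\fu^<$-modules, whereas the Hopf pairing only supplies a vector-space identification with $(\fu^>)^*$. If you push the condition $F_ix=0$ through the pairing via the axiom $(y_1y_2,e)=(y_1\otimes y_2,\Delta(e))$, you obtain a dual statement about the image of the Kashiwara-type skew-derivations $\partial_i$ on $\fu^>$, which is equivalent to the original claim---so the argument becomes circular. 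You can salvage the Frobenius route, but it needs an independent ingredient: for instance, a direct verification that the ``coefficient of $F_\top$'' functional gives a non-degenerate associative form, i.e., that the multiplication pairing $\fu^<_{-\mu}\times\fu^<_{-(\mu_{\mathrm{top}}-\mu)}\to\BF\cdot F_\top$ is perfect for every $\mu$. That is a nontrivial combinatorial fact about products of PBW monomials and is not a consequence of the Hopf pairing alone.

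The paper bypasses the Frobenius question entirely. It embeds the socle of $\fu^<$ into the space of left integrals of the finite-dimensional Hopf algebra $\fu^{\leqslant}$ (by multiplying a homogeneous invariant in $\fu^<$ by a suitable Cartan element), and then invokes the classical theorem that the space of left integrals in any finite-dimensional Hopf algebra is one-dimensional \cite[Theorem~2.1.3]{SM}. This is shorter and avoids the combinatorics of PBW products.
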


\begin{proof}The proof follows \cite[Lemma 9.5]{N23}.
We have $\hW_\e(\lambda) \cong \fu^<$ as $\fu^<$-modules. It is enough to show that the space of invariants $(\fu^<)^{\fu^<}:=\{ u\in \fu^<| xu=0 \; \forall x\in \fu^<\}$ is one dimensional. This space of invariants is at least one-dimensional because it contains the one-dimensional lowest weight space of $\fu^<$.  Note that $\fu^<$ is $Q$-graded. So, any invariant vector decomposed  into the sum of  homogeneous invariant vectors. 

Let $u$ be an invariant vector of weight $\lambda$ in $\fu^<$, then $u(\sum_{\mu} \e^{(\lambda, \mu)}K^\lambda)$ is a homogeneous invariant vector in $\fu^{\leqslant}$. This gives us an embedding of vector space $(\fu^<)^{\fu^<}\hookrightarrow (\fu^\leqslant)^{\fu^\leqslant}$. But $\fu^{\leqslant}$ is a finite dimensional Hopf algebra and  the invariants in any finite dimensional Hopf algebra are 1-dimensional \cite[Theorem 2.1.3]{SM}. Therefore the dimension of $(\fu^<)^{\fu^<}$ must be one.   
\end{proof}

\begin{Lem}\label{lem: St-mod} The natural morphism $\hW_\e(\lambda_\St) \rightarrow \St$ is an isomorphim of $\fu\,\dmU^0$-modules. As a result, $\St\cong W_\e(\lambda_\St)$ as $\dmU_\e(\g)$-modules.
\end{Lem}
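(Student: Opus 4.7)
The plan is to exploit the fact, just established in Lemma~\ref{lem: restricted simple mod}, that $\St = L_\e(\lambda_\St)$ remains simple upon restriction to $\fu\,\dmU^0$. This gives $\St \cong \hL_\e(\lambda_\St)$, the unique simple quotient of $\hW_\e(\lambda_\St)$ (Lemma~\ref{lem: simple uU-mod}). Because the highest weight vector $v_\St\in\St_{\lambda_\St}$ is annihilated by the $E_i$ and acts via $\chi_{\lambda_\St}$ on $\dmU^0$, the universal property of $\hW_\e(\lambda_\St) = \fu\,\dmU^0\otimes_{\fu^>\dmU^0}\BF_{\lambda_\St}$ produces a canonical $\fu\,\dmU^0$-equivariant surjection $\pi\colon \hW_\e(\lambda_\St)\twoheadrightarrow \St$.

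To prove that $\pi$ is injective I will use Lemma~\ref{lem: solce of uU-mod}: the $\fu^<\dmU^0$-socle of $\hW_\e(\lambda_\St)$ is one-dimensional and lies in the lowest weight space $\hW_\e(\lambda_\St)_{\mu_\St}$ with $\mu_\St := \lambda_\St - \sum_{\a\in\Delta_+}(\ell_\a - 1)\a$. Any nonzero $\fu\,\dmU^0$-submodule $K\subseteq\hW_\e(\lambda_\St)$, being finite-dimensional and weight-graded, contains a minimal-weight vector, which is automatically $\fu^<$-invariant; hence $K$ must contain this entire one-dimensional socle. The key ingredient will then be the root-system identity
\[
  \sum_{\a\in\Delta_+}(\ell_\a-1)\,\a \;=\; 2\lambda_\St,
\]
which reduces, via the fact that $\ell_\a$ depends only on the length class of $\a$, to the classical observation (provable by computing $s_k(\rho^{(j)})$) that the half-sum of positive roots in a given length class equals the sum of the fundamental weights at simple roots of that class. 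Granted this, $\mu_\St = -\lambda_\St$; combined with the $W$-invariance of the character of $\St$ and the symmetry $-w_0(\lambda_\St)=\lambda_\St$, it yields $\dim_\BF \St_{-\lambda_\St} = 1$. Since $\pi$ is a surjection of weight modules, its restriction $\pi_{-\lambda_\St}\colon \hW_\e(\lambda_\St)_{-\lambda_\St}\twoheadrightarrow \St_{-\lambda_\St}$ is a surjection of one-dimensional spaces, hence an isomorphism. In particular $\pi$ is nonzero on the $\fu^<$-socle, contradicting its inclusion in $\ker(\pi)$; so $\ker\pi = 0$ and $\pi$ is an isomorphism.

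The consequence $\St\cong W_\e(\lambda_\St)$ as $\dmU_\e(\g)$-modules then follows from a dimension count. From the isomorphism $\hW_\e(\lambda_\St)\cong\St$ just established, together with the triangular decomposition of Lemma~\ref{lem: basis of uU}(a) and the PBW basis of Lemma~\ref{lem: basis of uU}(b), one obtains $\dim_\BF\St = \dim_\BF\fu^< = \prod_{\a\in\Delta_+}\ell_\a$; on the other hand the Weyl character formula of Lemma~\ref{Lem:Weyl_char_formula}, applied to $\lambda_\St+\rho=\sum_i\ell_i\w_i$, yields $\dim_\BF W_\e(\lambda_\St) = \prod_\a\ell_\a$ by the same length-class identity reorganized in product form. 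Since $\St = L_\e(\lambda_\St)$ is a quotient of $W_\e(\lambda_\St)$ of equal dimension, the two must coincide. The main obstacle throughout is the length-stratified identity $\sum_\a(\ell_\a-1)\a = 2\lambda_\St$, which governs both the matching of lowest weights in the injectivity argument and the dimension of the Weyl module; although purely combinatorial, it must be handled with care to ensure it persists under the standing restrictions $\ell_i\geq \max\{2, 1-a_{ij}\}$.
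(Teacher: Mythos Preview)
Your proposal is correct and follows essentially the same approach as the paper: surjectivity from the simplicity of $\St$ over $\fu\,\dmU^0$, injectivity via the socle lemma together with the root-system identity $\sum_{\a\in\Delta_+}(\ell_\a-1)\a = 2\lambda_\St$ (equivalently, the lowest weight of $\hW_\e(\lambda_\St)$ equals $w_0(\lambda_\St)$), and the dimension count via the Weyl dimension formula for the second claim. The paper derives the root identity from the equality $\sum_{\b\in\CI_{\a_i}}\b = \w_i+\w_i^*$ (Lemma~\ref{lem: properties of root systems}(d')) combined with $\ell_\b=\ell_i$ for $\b\in\CI_{\a_i}$, which is the precise form of the ``length-class'' argument you sketch.
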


\begin{proof} Since $\St$ is a simple $\fu\,\dmU^0$-module by Lemma \ref{lem: restricted simple mod}, the natural morphism $\hW_\e(\lambda_\St)\rightarrow \St$ is surjective. The lowest weight of $\St$ is $w_0(\sum_i (\ell_i-1)\w_i) =\sum_i (\ell_i-1) w_0(\w_i)$ and the dimension of this lowest weight subspace is $1$. On the other hand,  $\hW_\e(\lambda_\St)$ has a unique one-dimensional lowest weight subspace of the weight 
\[ \sum_i (\ell_i-1) \w_i - \sum_{\a \in \Delta_+}(\ell_\a-1) \a =\sum_i(\ell_i-1) w_0(\w_i),\]
where the equality follows by properties of the root system in Lemma \ref{lem: properties of root systems}.d' below and the equality $\ell_\a =\ell_i$ if $\a =w(\a_i)$ for some $w\in W$. Hence the lowest weight space of $\hW_\e(\lambda)$ maps into $\St$. By Lemma \ref{lem: solce of uU-mod}, the morphism $\hW_\e(\lambda_\St)\rightarrow \St$ is injective.

Hence $\dim_\BF(\St)=\dim_\BF(\hW_\e(\lambda_\St))=\prod_{\a\in \Delta_+}\ell_\a$. On the other hand,  by the Weyl character formula, $\dim_\BF(W_\e(\lambda_\St))=\prod_{\a\in \Delta_+} \ell_\a$. Therefore, the surjective morphism of $\dmU_\e(\g)$-modules $W_\e(\lambda_\St)\rightarrow \St$ must be an isomorphism.
\end{proof}
\begin{Lem} $\St$ is projective  in $\Rep(\fu\,\dmU^0)$.
\end{Lem}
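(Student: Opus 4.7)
The plan is to reduce the statement to the classical fact that the Steinberg module is projective over the small quantum group $\fu$, which is a finite-dimensional Frobenius Hopf algebra, and then lift projectivity from $\fu$-mod to $\Rep(\fu\,\dmU^0)$ by exploiting the semisimple action of $\dmU^0$ through characters.

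First I would analyze $\St$ as a $\fu$-module via restriction. By Lemma \ref{lem: St-mod}, the natural map $\hW_\e(\lambda_\St) \to \St$ is an isomorphism of $\fu\,\dmU^0$-modules, and the domain equals $\fu\,\dmU^0 \otimes_{\fu^>\dmU^0}\BF_{\lambda_\St}$. Restricting to $\fu$ and using the triangular decomposition in Lemma \ref{lem: basis of uU}(a), one obtains $\St|_\fu \cong \fu \otimes_{\fu^{\geqslant}}\BF_{\lambda_\St}$, where $\fu^{\geqslant}$ denotes the subalgebra generated by $\fu^>$ and $\{K^{\a_i}\}_{1\leq i\leq r}$; that is, $\St|_\fu$ is the baby Verma module for the small quantum group $\fu$ with highest weight $\lambda_\St$. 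Its simplicity over $\fu$ follows from (the proof of) Lemma \ref{lem: restricted simple mod}, and a dimension count via Lemma \ref{lem: basis of uU}(b) gives $\dim_\BF\St = \prod_{\a\in\Delta_+}\ell_\a = \dim_\BF \fu^<$.

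Next I would invoke the classical fact that the Steinberg module is projective over $\fu$. Since $\fu$ is a finite-dimensional Hopf algebra, it is Frobenius, so its projective and injective modules coincide. Using the $\tau$-self-duality $\St^\tau\cong\St$ (Lemma \ref{lem: simple uU-mod}(b)) together with the statement of Lemma \ref{lem: solce of uU-mod} applied both directly and through $\tau$, one checks that $\St|_\fu$ has a one-dimensional head (of weight $\lambda_\St$) and a one-dimensional socle (of weight $w_0(\lambda_\St)$) and coincides with the image of a non-zero map from its projective cover; a standard Loewy-length argument over the Frobenius algebra $\fu$ then forces $\St|_\fu$ to be its own projective cover, hence projective.

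Finally I would promote projectivity from $\fu$-mod to $\Rep(\fu\,\dmU^0)$. The category decomposes as $\Rep(\fu\,\dmU^0) = \bigoplus_{\bar\lambda\in P/Q}\Rep(\fu\,\dmU^0)_{\bar\lambda}$ according to the $Q$-coset of weights, and within each block the divided-power Cartan elements $\binom{K_i;0}{t}$ act semisimply by the scalars $\binom{(\a^\vee_i,\lambda)}{t}_{\e_i}$ determined by the $P$-weight $\lambda$, see \eqref{eq: defi of character for usual form}. Any short exact sequence in $\Rep(\fu\,\dmU^0)$ is in particular a short exact sequence of weight-graded $\fu$-modules; and conversely a $\fu$-linear splitting that preserves weights is automatically $\dmU^0$-equivariant, since on each weight component the $\dmU^0$-action is determined by the corresponding character $\chi_\lambda$. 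Consequently $\Ext^1_{\Rep(\fu\,\dmU^0)}(\St,M)$ is controlled by $\Ext^1_{\fu\text{-mod}}(\St|_\fu,M|_\fu)$, which vanishes by the previous step, giving the desired projectivity of $\St$. The main obstacle is the final reduction: one must verify carefully that no extensions in $\Rep(\fu\,\dmU^0)$ are introduced by the infinite-dimensional divided-power part $\dmU^0 \setminus \fu^0$, which amounts to the observation that on each weight space the ``extra'' generators $\binom{K_i;0}{t}$ with $t\geq\ell_i$ act by scalars already determined by the $\fu^0$-action.
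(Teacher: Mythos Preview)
Your approach differs from the paper's, which follows \cite[Proposition~10.2]{j2} directly in $\Rep(\fu\,\dmU^0)$: one shows $\Ext^1(\St,\hL_\e(\lambda))=0$ for all $\lambda\in P$ by using the identification $\St=\hW_\e(\lambda_\St)$ together with the induction adjunction $\Hom_{\fu\,\dmU^0}(\hW_\e(\mu),-)\cong\Hom_{\fu^>\dmU^0}(\BF_{\mu},-)$ when $\lambda\not>\lambda_\St$, and the $\tau$-duality of Lemma~\ref{lem: simple uU-mod}(b) plus the dimension equality $\dim\hW_\e(\lambda)=\dim\St$ when $\lambda>\lambda_\St$. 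No reduction to $\fu$-mod is needed.

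The genuine gap is your second paragraph. Once you know $\St|_\fu$ is simple, its head and socle are $\St|_\fu$ itself, not one-dimensional; you are conflating these with the highest and lowest weight spaces (which is what Lemma~\ref{lem: solce of uU-mod} concerns, for the $\fu^<\dmU^0$-action). More seriously, no ``standard Loewy-length argument'' deduces projectivity from the properties you list: over any Frobenius algebra a simple self-dual module need not be projective (e.g.\ the unique simple over $\BF[x]/(x^2)$). Proving that $\St|_\fu$ is projective over $\fu$ requires essentially the same Jantzen argument the paper employs, so your detour through $\fu$-mod does not bypass the core difficulty. By contrast, the lifting step you flag as the ``main obstacle'' is in fact routine: given a $\fu$-linear splitting of a short exact sequence in $\Rep(\fu\,\dmU^0)$, projecting onto $P$-weight components yields a weight-preserving $\fu$-map (since $E_i,F_i$ shift weights by $\pm\alpha_i$), and any such map is automatically $\dmU^0$-linear.
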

\begin{proof}With Lemma \ref{lem: simple uU-mod}, \ref{lem: restricted simple mod} and \ref{lem: St-mod}, the proof is the same as in \cite[Proposition 10.2]{j2} 
\end{proof}
 \begin{Lem}
The functor $H^0(\dmU/\fu\,\dmU^0,-)$ is exact.
 \end{Lem}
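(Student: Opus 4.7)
The plan is to deduce the exactness from the Steinberg-module trick. Since $H^0(\dmU/\fu\,\dmU^0,-)$ is the right adjoint of restriction, it is automatically left exact, so it suffices to prove right exactness of the functor.

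Given a short exact sequence $0\to M_1\to M_2\to M_3\to 0$ in $\Rep(\fu\,\dmU^0)$, I would first tensor with $\St$ (using the diagonal $\fu\,\dmU^0$-action), yielding another short exact sequence in $\Rep(\fu\,\dmU^0)$. The key reduction is to show that each $M_i\otimes\St$ is acyclic for $H^0(\dmU/\fu\,\dmU^0,-)$: for finite-dimensional $M_i$, the $\fu\,\dmU^0$-action factors through a finite-dimensional Hopf-algebra quotient $H$ where the Frobenius property forces projective modules to be injective; since $\St$ is projective in $H\Lmod$ by the previous lemma, the standard Hopf-algebra isomorphism $\Hom_H(\St\otimes V,-)\cong \Hom_H(\St,\Hom_\BF(V,-))$ shows $M_i\otimes\St$ is projective, hence injective, hence $H^0$-acyclic. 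For infinite-dimensional $M_i$, one reduces to the finite-dimensional case by writing $M_i$ as a directed union of finite-dimensional submodules, which is possible because $\fu$ is finite-dimensional by Lemma \ref{lem: basis of uU}(b), and arguing that $H^0(\dmU/\fu\,\dmU^0,-)$ commutes with such colimits.

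Granted the acyclicity, applying $H^0(\dmU/\fu\,\dmU^0,-)$ to the tensored sequence produces a short exact sequence whose terms are $H^0(\dmU/\fu\,\dmU^0, M_i\otimes\St)$. Invoking the tensor identity of Lemma \ref{lem: tensor and induction}(a), which applies because $\St\cong W_\e(\lambda_\St)$ carries a genuine $\dmU_\e(\g)$-module structure by Lemma \ref{lem: St-mod}, I would rewrite each term as $H^0(\dmU/\fu\,\dmU^0, M_i)\otimes\St$. Since $\St$ is a nonzero finite-dimensional $\BF$-vector space, the functor $(-)\otimes_\BF\St$ is faithfully exact on $\BF$-vector spaces, so stripping off $\St$ yields the desired exactness.

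The main obstacle is the acyclicity step, specifically the justification of the commutation of $H^0(\dmU/\fu\,\dmU^0,-)$ with directed colimits needed in the infinite-dimensional reduction. Unlike the analogous statement in Lemma \ref{rem: H(dmU/R,-)}(a) for $H^0(\dmU/R,-)$, the present case involves quotienting by a nontrivial Hopf subalgebra, so the argument must use more refined structural information, for instance by identifying $H^0(\dmU/\fu\,\dmU^0, M)$ with a union of weight-bounded subspaces, or by combining the triangular decomposition of Lemma \ref{lem: basis of uU}(a) with the Frobenius-type splitting of Lemma \ref{lem: Spliting of Fr} to exhibit $\dmU$ as a free $\fu\,\dmU^0$-module with a weight basis of controlled growth, thereby reducing colimit commutativity to the weight-by-weight situation where it becomes transparent.
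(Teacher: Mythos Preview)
Your overall strategy—tensoring with the Steinberg module and then invoking the tensor identity of Lemma~\ref{lem: tensor and induction}(a)—matches the paper's. The essential difference is that the paper tensors with $\St^*$ rather than $\St$, and this choice eliminates precisely the two obstacles you identified.

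The paper's computation goes as follows: writing out the induction functor explicitly,
\[
H^0(\dmU/\fu\,\dmU^0, M\otimes\St^*)
\cong (M\otimes\St^*\otimes\BF[\dmU_\e(\g)])^{\fu\,\dmU^0}
\cong \Hom_{\fu\,\dmU^0}(\St,\, M\otimes\BF[\dmU_\e(\g)]),
\]
where the last step moves $\St^*$ across as a dual. Since $\St$ is projective in $\Rep(\fu\,\dmU^0)$ by the preceding lemma, the right-hand side is exact in $M$ directly—no acyclicity argument, no finite-dimensional Hopf quotient, no colimit commutation is needed. After that, the tensor identity and the faithful exactness of $(-)\otimes_\BF\St^*$ finish the proof exactly as you describe.

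By contrast, your route via acyclicity of $M_i\otimes\St$ requires both the passage to a finite-dimensional Hopf quotient $H$ of $\fu\,\dmU^0$ (which is not finite-dimensional, since $\dmU^0$ is not) in which $\St$ remains projective, and the colimit reduction you flagged. Neither step is false in spirit, but both are genuinely extra work that the dual trick avoids. If you wanted to salvage your version, the simplest fix is to observe that $\St^*\cong\St$ as $\fu\,\dmU^0$-modules (since $-w_0\lambda_\St=\lambda_\St$), so that $\Hom_{\fu\,\dmU^0}(\BF, N\otimes\St)\cong\Hom_{\fu\,\dmU^0}(\St, N)$ as well, and then proceed exactly as the paper does.
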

\begin{proof} Let $0\rightarrow M\rightarrow N \rightarrow P\rightarrow 0$ be an exact sequence in $\Rep(\fu\,\dmU^0)$. Let $\St^*$ be the dual of $\St$ in $\Rep(\dmU_\e(\g))$. Note that 
\begin{align*}
H^0(\dmU/\fu\,\dmU^0, M\otimes \St^*)&\cong (M\otimes \St^*\otimes \BF[\dmU_\e(\g)])^{\fu\,\dmU^0}\\
&\cong \Hom_{\fu\,\dmU^0}(\BF, M\otimes \St^*\otimes \BF[\dmU_\e(\g)])\\
&\cong \Hom_{\fu\,\dmU^0}(\BF, M\otimes \BF[\dmU_\e(\g)]\otimes \St^*)\\
&\cong \Hom_{\fu\,\dmU^0}(\St, M\otimes \BF[\dmU_\e(\g)])
\end{align*}
Since $\St$ is projective in $\Rep(\fu\,\dmU^0)$, it follows that we have an exact sequence
\[ 0\rightarrow H^0(\dmU/\fu\,\dmU^0, M\otimes \St^*)\rightarrow H^0(\dmU/\fu\,\dmU^0, N\otimes \St^*)\rightarrow H^0(\dmU/\fu\,\dmU^0,P\otimes \St^*)\rightarrow 0.\]
Since $\St^*\in \Rep(\dmU_\e(\g))$, by Lemma \ref{lem: tensor and induction}, $H^0(\dmU/\fu\,\dmU^0, M\otimes \St^*)\cong H^0(\dmU/\fu\,\dmU^0, M)\otimes \St^*.$ Therefore, the sequence
\[ 0\rightarrow H^0(\dmU/\fu\,\dmU^0, M)\rightarrow H^0(\dmU/\fu\,\dmU^0, N) \rightarrow H^0(\dmU/\fu\,\dmU^0, P)\rightarrow 0\]
is exact. 
\end{proof}


Let $\BF[\fu\,\dmU^0]:=H^0(\fu\,\dmU^0/\BF, \BF)$. By restriction, we have a natural morphism of $\fu\,\dmU^0$-modules: $\BF[\dmU_\e(\g)]\rightarrow \BF[\fu\,\dmU^0]$. 
 \begin{Lem}\label{lem: res coordinate rings} The morphism $\BF[\dmU_\e(\g)]\rightarrow \BF[\fu\,\dmU^0]$ is surjective.
 \end{Lem}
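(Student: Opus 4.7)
The plan is to work via matrix coefficients. Recall that $\BF[\fu\,\dmU^0]=\bigcup_V c(V)$, where $V$ ranges over finite-dimensional rational $\fu\,\dmU^0$-modules and $c(V)$ denotes the image of the matrix coefficient map $V^*\otimes V\to\BF[\fu\,\dmU^0]$; analogously $\BF[\dmU_\e(\g)]=\bigcup_W c(W)$. The restriction map sends $c(W)$ into $c(W|_{\fu\,\dmU^0})$, and an elementary verification shows $c(V)\subseteq c(V')$ whenever $V$ is a subquotient of $V'$ in $\Rep(\fu\,\dmU^0)$. Hence the lemma reduces to showing that every finite-dimensional $V\in\Rep(\fu\,\dmU^0)$ is a subquotient of $W|_{\fu\,\dmU^0}$ for some finite-dimensional $W\in\Rep(\dmU_\e(\g))$.

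For simple modules, the case $\hL_\e(\lambda)$ with $\lambda\in P_\ell$ is covered by Lemma~\ref{lem: restricted simple mod}. For general $\lambda\in P$, the quantum Steinberg tensor product theorem decomposes $L_\e(\lambda_0+\ell\mu_1)\cong L_\e(\lambda_0)\otimes \Fr^*(V_{\mu_1})$ with $\lambda_0\in P_\ell$ and $\mu_1\in P_+$, where $V_{\mu_1}$ is the simple $\dmU^*_\e(\g)$-module of highest weight $\mu_1$ and $\Fr^*$ is pullback along the quantum Frobenius. Restricting to $\fu\,\dmU^0$, the Frobenius-twisted factor breaks into its $\dmU^0$-weight spaces $\BF_{\ell\nu}$ (with $\fu$ acting trivially), so $\hL_\e(\lambda_0)\otimes\BF_{\ell\nu}=\hL_\e(\lambda_0+\ell\nu)$ appears as a direct summand of $L_\e(\lambda_0+\ell\mu_1)|_{\fu\,\dmU^0}$; choosing $\mu_1$ large enough realises any $\lambda\in P$ in this way.

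For arbitrary (possibly non-semisimple) $V$, I would use the adjunction between restriction and coinduction $H^0(\dmU/\fu\,\dmU^0,-)$. Granting surjectivity of the counit $H^0(\dmU/\fu\,\dmU^0,V)|_{\fu\,\dmU^0}\to V$, $f\mapsto f(1)$, Proposition~\ref{prop:rational_finiteness} yields a finite-dimensional $\dmU_\e(\g)$-subrepresentation $W\subseteq H^0(\dmU/\fu\,\dmU^0,V)$ whose restriction still surjects onto $V$. Surjectivity of the counit reduces, via the orbit maps $\phi_v\colon u\mapsto uv$, to showing that $\dmU_\e(\g)$ is free as a left $\fu\,\dmU^0$-module. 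This freeness follows from the Lusztig-type decompositions $\dmU^{>}_\e\cong\fu^{>}\otimes_\BF \dmU^{*>}_\e$ and $\dmU^{<}_\e\cong\dmU^{*<}_\e\otimes_\BF \fu^{<}$ provided by the quantum Frobenius splitting, combined with the triangular decomposition of $\dmU_\e(\g)$ and the inclusion $\dmU^0_\e\subseteq\fu\,\dmU^0$.

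The main obstacle is the rationality of the extension $\tilde\phi_v$ produced by prolonging $\phi_v$ by zero on the chosen $\fu\,\dmU^0$-basis complement: one needs $\tilde\phi_v$ to lie in $H^0(\dmU/\fu\,\dmU^0,V)$, not merely in $\Hom_{\fu\,\dmU^0}(\dmU_\e(\g),V)$. I would address this by choosing the free basis to consist of weight vectors under the adjoint action, so that only finitely many basis elements can contribute to the right-translates of $\tilde\phi_v$ given that $V$ is finite dimensional. A possible alternative, avoiding the adjunction altogether, is to exploit that $\St$ is simultaneously a $\dmU_\e(\g)$-module and projective-injective in $\Rep(\fu\,\dmU^0)$: every finite-dimensional $V$ is then a subquotient of some $(\St\otimes N)|_{\fu\,\dmU^0}$ with $N\in\Rep(\dmU_\e(\g))$ finite-dimensional, constructed by combining the simple analysis above with the adjunction $\Hom_{\fu\,\dmU^0}(\St\otimes N,V)=\Hom_{\fu\,\dmU^0}(N,\St\otimes V)$.
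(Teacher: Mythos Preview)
Your reduction to showing that every finite-dimensional $V\in\Rep(\fu\,\dmU^0)$ arises as a subquotient of a restriction is valid, and your treatment of the simple modules $\hL_\e(\lambda)$ is essentially correct. However, the simple case does not suffice: for a non-split extension $V$, the coefficient space $c(V)$ is strictly larger than the sum of the coefficient spaces of its composition factors (already for the two-dimensional indecomposable representation of $\BG_a$ one has $c(V)=\BF\oplus\BF t$ while the composition factors only contribute $\BF\cdot 1$). So the burden really falls on your general-case argument.

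There the gap is genuine. Granting that $\dmU_\e(\g)$ is free as a left $\fu\,\dmU^0$-module, your extension $\tilde\phi_v$ obtained by zero-extension on a basis complement lies in $\Hom_{\fu\,\dmU^0}(\dmU_\e(\g),V)$, but rationality under right translation requires that the divided powers $E_i^{(s)},F_i^{(s)}$ act locally nilpotently, not merely that $\tilde\phi_v$ decomposes into weight components. Right multiplication by $E_i^{(s)}$ does not respect a left-module basis decomposition in any controlled way, so choosing weight-vector basis elements does not secure this. Your $\St$-based alternative is also incomplete: to realise an arbitrary $V$ as a subquotient of $(\St\otimes N)|_{\fu\,\dmU^0}$ with $N\in\Rep(\dmU_\e(\g))$ one still needs to produce such an $N$, and the adjunction $\Hom(\St\otimes N,V)\cong\Hom(N,\St\otimes V)$ only relates $\fu\,\dmU^0$-maps, not $\dmU_\e(\g)$-modules $N$.

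The paper bypasses these difficulties entirely. It works weight by weight: the $\nu$-weight space $\BF[\fu\,\dmU^0]_\nu$ is identified with the dual of the cyclic module $D(\nu)=\fu\,\dmU^0/\fu\,\dmU^0\ker(\chi_\nu)\cong\fu^<\fu^>$, and one constructs an explicit $\dmU_\e(\g)$-module $W'_\e(-\mu)\otimes W_\e(\lambda)$ (with $\lambda-\mu=\nu$ and $\lambda,\mu$ large) together with a cyclic vector $x_{-\mu}\otimes x_\lambda$ such that the orbit map $D(\nu)\hookrightarrow W'_\e(-\mu)\otimes W_\e(\lambda)$ is injective. Dualising gives the surjectivity onto each weight space directly. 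The key point---that $\fu^<\hookrightarrow W_\e(\lambda)$ and $\fu^>\hookrightarrow W'_\e(-\mu)$ for large $\lambda,\mu$---is exactly what your adjunction argument is missing, and once you have it you can also complete your approach by noting that any finite-dimensional $V$ is a quotient of a finite direct sum of $D(\nu_i)$'s.
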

\begin{proof} Recall the characters $\chi_\nu: \dmU^0_\e\rightarrow \BF$ for $\nu \in P$. Set $D(\nu):=\fu\,\dmU^0/\fu\,\dmU^0 \text{Ker}(\chi_\nu)$, then $\BF[\fu\,\dmU^0]_\nu \cong \Hom_\BF(D(\nu), \BF)$ as vector spaces. The natural map $\fu^<\fu^> \rightarrow D(\nu)$ is an isomorphism of vector spaces, here $\fu^<\fu^>$ is the linear span of elements $u_1u_2$ with $u_1\in \fu^<, u_2\in \fu^>$.

For any dominant weight $\lambda \in P_+$, let $W'_\e(-\lambda)$ be the maximal rational quotient of $\dmU_\e(\g)\otimes_{\dmU^\leqslant_\e} \BF_{-\lambda}$. Then $W'_\e(-\lambda)$ is the lowest weight analog of the Weyl module $W_\e(\lambda)$. The following results in this paragraph are in \cite[$\mathsection 1$]{APW1}. Let $\lambda, \mu $ be dominant weights such that $\lambda-\mu=\nu$. Let $x_\lambda$ be the highest weight vector of $W_\e(\lambda)$ and $x_{-\mu}$ be the lowest weight vector of $W'_\e(-\mu)$. Then $x_{-\mu}\otimes x_\lambda$ generates $W'_\e(-\mu)\otimes W_\e(\lambda)$ as a $\dmU_\e(\g)$-module. Let $J(\mu, \lambda)$ be the left ideal of $\dmU_\e(\g)$ generated by $\text{Ker}(\chi_\mu), \text{Ann}_{\dmU^<_\e}x_\lambda$ and $\text{Ann}_{\dmU^>_\e}(x_{-\mu})$. Set $D(\mu, \lambda):=\dmU_\e(\g)/J(\mu, \lambda)$. Then the map $\dmU_\e(\g) \rightarrow W'_\e(-\mu)\otimes W_\e(\lambda)$  defined by $u \mapsto u(x_{-\mu}\otimes x_\lambda)$ factors through an isomorphism of vector spaces $D(\mu, \lambda) \cong W'_\e(-\mu)\otimes W_\e(\lambda)$. Furthermore, $\Hom_\BF(D(\mu, \lambda), \BF) \subset \BF[\dmU_\e(\g)]_\nu$.

Let us consider the following maps:
\[ \fu^< \rightarrow W_\e(\lambda), \quad u \mapsto u x_\lambda; \quad \quad \fu^> \rightarrow W'_\e(-\mu), \quad u \mapsto u x_{-\mu}.\]
One can show that for sufficiently large $\lambda, \mu$, the above two maps will be injective. From this, for sufficiently large $\lambda, \mu$ with $\lambda -\mu=\nu$, the  map $\fu^<\fu^> \rightarrow W'_\e(-\mu)\otimes W_\e(\lambda)$ defined by $u \mapsto u(x_{-\mu}\otimes x_\lambda)$ will be injective. On the other hand, the map $\fu\,\dmU^0 \rightarrow W'_\e(-\mu)\otimes W_\e(\lambda)$ defined by $u \mapsto u(x_{-\mu}\otimes x_\lambda)$ factors through the map $D(\nu)\rightarrow W'_\e(-\mu)\otimes W_\e(\lambda)$.  Therefore, for sufficiently large $\lambda, \mu$ with $\lambda-\mu=\nu$, the natural map $\fu\,\dmU^0 \rightarrow W'_\e(-\mu)\otimes W_\e(\lambda)$ factors through an injection $D(\nu)\hookrightarrow W'_\e(-\mu)\otimes W_\e(\lambda)$.

For any $\lambda, \mu $ such that $\lambda-\mu=\nu$, the inclusion $\fu\,\dmU^0\hookrightarrow \dmU_\e(\g)$ induces the natural map $D(\nu)\rightarrow D(\mu, \lambda)$. The previous two paragraphs imply that for sufficiently large $\lambda, \mu$ with $\lambda-\mu=\nu$, the map $D(\nu)\rightarrow D(\mu, \lambda)$ becomes injective, then $\Hom_\BF(D(\mu, \lambda), \BF) \rightarrow \Hom_\BF(D(\nu), \BF)$ is surjective. This implies that the map $\BF[\dmU_\e(\g)]_\nu \rightarrow \BF[\fu\,\dmU^0]_\nu$ is surjective. Hence $\BF[\dmU_\e(\g)]\rightarrow \BF[\fu\,\dmU^0]$ is surjective. 
\end{proof}
 Let us recall  the idempotented version of $\fu\,\dmU^0$ to be denoted by $\hat{\fu}$. By restriction, the Frobenius homomorphism $\Fr: \hmU_\e(\g, P)\rightarrow \hmU^*_\e(\g, P^*)$ gives rise to a homomorphism $\fr: \hat{\fu}\rightarrow \oplus_{\lambda \in P^*} \BF 1_\lambda$. The latter gives us the pullback functor $\fr^*: \Rep(\dmU^{*0}_\e)\rightarrow \Rep(\fu\,\dmU^0)$.
\begin{Lem}\label{lem: Ind and Fr} Let $M\in \Rep(\dmU^{*0}_\e)$. Then $H^0(\dmU/\fu\,\dmU^0, \fr^*(M))$ is contained in the image of the functor $\Fr^*: \Rep(\dmU^*_\e(\g))\rightarrow \Rep(\dmU_\e(\g))$.
\end{Lem}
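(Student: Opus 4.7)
We view $V := H^0(\dmU/\fu\,\dmU^0, \fr^*(M))$ as a $\dmU_\e(\g)$-submodule of $\Hom_{\fu\,\dmU^0}(\dmU_\e(\g), \fr^*(M))$ with action $(xf)(u) = f(ux)$. The goal is to show that the $\dmU_\e(\g)$-action on $V$ factors through the quantum Frobenius $\Fr\colon \dmU_\e(\g) \to \dmU^*_\e(\g)$, which will place $V$ in the image of $\Fr^*$. The crucial feature of $\fr^*(M)$ is that every $E_j$ and $F_j$ acts as zero (since $\fr(E_j) = \fr(F_j) = 0$ under our standing hypothesis $\ell_j \geq 2$), while $\dmU^0_\e$ acts on $M = \bigoplus_{\lambda \in P^*} M_\lambda$ through the characters $\hchi_\lambda$ with $\lambda \in P^*$.

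First I would show that $E_i, F_i$ annihilate $V$. By Remark \ref{rem: ideal EF in Lus form} the left and right ideals of $\dmU_\e(\g)$ generated by $\{E_j, F_j\}_{j=1}^r$ coincide, so any $u E_i$ can be written $\sum_k y_k z_k$ with $y_k \in \{E_j, F_j\}$ and $z_k \in \dmU_\e(\g)$. Using the $\fu\,\dmU^0$-linearity of $f$ we get
\[
(E_i f)(u) = f(u E_i) = \sum_k y_k \cdot f(z_k) = 0,
\]
since each $y_k$ acts by zero on $\fr^*(M)$. The same argument shows $F_i f = 0$, and, writing any $E_i^{(n)}$ or $F_i^{(n)}$ with $\ell_i \nmid n$ as a scalar multiple of $E_i^{(\ell_i n_1)} E_i^{n_0}$, $F_i^{(\ell_i n_1)} F_i^{n_0}$ with $0 < n_0 < \ell_i$ as in the proof of Lemma \ref{lem: normality of u}, shows that all such divided powers also annihilate $V$.

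Next I would verify that every weight of $V$ lies in $P^*$. For $y \in \dmU_\e(\g)$ of weight $\gamma \in Q \setminus Q^*$, Remark \ref{rem: ideal EF in Lus form} places $y$ in the same two-sided ideal, so $f(y) = 0$ for every $f \in V$ by the previous calculation. Consequently, if $f \in V_\nu$ is nonzero then $f(y) \neq 0$ for some $y \in \dmU_{\e,\gamma}$ with $\gamma \in Q^*$, and $f(y) \in M_\lambda$ for some $\lambda \in P^*$. Commuting $K^\mu$ past $y$ gives
\[
(K^\mu f)(y) = \e^{-(\mu, \gamma)} K^\mu f(y) = \e^{(\mu, \lambda - \gamma)} f(y),
\]
while $f \in V_\nu$ requires $(K^\mu f)(y) = \e^{(\mu, \nu)} f(y)$ for all $\mu \in Q$. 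Using $(\alpha_i, \omega_j) = \delta_{ij}\sd_i$ and $\e_i^{\ell_i} = 1$, the resulting identity $\e^{(\mu, \nu - \lambda + \gamma)} = 1$ for all $\mu \in Q$ forces $\nu - \lambda + \gamma \in P^*$; combined with $\lambda \in P^*$ and $\gamma \in Q^* \subset P^*$, this gives $\nu \in P^*$.

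The two conclusions together imply that the whole kernel of $\Fr\colon \dmU_\e(\g) \to \dmU^*_\e(\g)$ acts by zero on $V$: divided powers $E_i^{(n)}, F_i^{(n)}$ with $\ell_i \nmid n$ are handled by the first step; $\binom{K_i;0}{t}$ with $\ell_i \nmid t$ acts by zero on any weight space $V_\nu$ with $\nu \in P^*$, since $(\alpha_i^\vee, \nu) \in \ell_i \BZ$ and Lemma \ref{lem:aux-at-roots}(a) yields $\binom{\ell_i k}{t}_{\e_i} = 0$ for $\ell_i \nmid t$; and any $K^\mu - 1$ lying in $\Ker(\Fr)$ acts as zero because $\e^{(\mu, P^*)} = 1$. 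Hence the $\dmU_\e(\g)$-action on $V$ descends to a $\dmU^*_\e(\g)$-module structure, realizing $V$ as $\Fr^*$ of that structure. The main obstacle is the weight bookkeeping in the second step; ultimately everything reduces to the single input, provided by Remark \ref{rem: ideal EF in Lus form}, that the left and right ideals of $\dmU_\e(\g)$ generated by $\{E_j, F_j\}$ coincide.
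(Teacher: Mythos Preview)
Your argument is correct and rests on the same key fact as the paper's proof—the normality of the ideal generated by $\{E_i,F_i\}$ (Remark~\ref{rem: ideal EF in Lus form}, resp.\ Lemma~\ref{lem: normality of u}). The paper streamlines things by passing to the idempotented algebra $\hmU:=\hmU_\e(\g,P)$: since $\Ker(\Fr)$ is simultaneously the left and the right ideal generated by $\{E_i 1_\lambda,\,F_i 1_\lambda,\,1_\mu : \mu\notin P^*\}$, and each of these generators already acts as zero on $\fr^*(M)$, every $f\in\hom_{\fu\,\dmU^0}(\hmU,\fr^*(M))$ vanishes on $\Ker(\Fr)$, so the right $\hmU$-action on this whole space factors through $\Fr$ in one stroke. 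Your step~2 is thereby absorbed: the weight condition is encoded by the generator $1_\mu$, and its vanishing on $\fr^*(M)$ is immediate since $M$ has weights in $P^*$. Your explicit weight computation is correct but unnecessary once one works idempotently.

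Two small points of phrasing. In step~2, ``$f(y)\in M_\lambda$'' should read ``$f(y)$ has a nonzero component in some $M_\lambda$''; the $K^\mu$-identity then holds component by component and still forces $\nu-\lambda+\gamma\in P^*$. In step~3, there is no Hopf algebra map $\dmU_\e(\g)\to\dmU^*_\e(\g)$, so speaking of its kernel (and of $K^\mu-1$ or $\bmat{K_i;0\\t}$ lying in it) is imprecise; what you have actually established—that $E_i^{(n)},F_i^{(n)}$ with $\ell_i\nmid n$ annihilate $V$ and all weights of $V$ lie in $P^*$—is exactly the statement that the $\hmU_\e(\g,P)$-action factors through $\Fr\colon\hmU_\e(\g,P)\to\hmU^*_\e(\g,P^*)$, which is what ``$V\in\operatorname{Im}\Fr^*$'' means.
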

\begin{proof} Recall $\hmU:=\hmU_\e(\g, P)=\bigoplus_\lambda \dmU_\e(\g) 1_\lambda$. Observe that $\hmU_\e(\g, P)$ is a bimodule over $\dmU_\e(\g)$. 

Let $\hom_{\fu\,\dmU^0}(\hmU, \fr^*(M))$ $\subset \Hom_{\fu\,\dmU^0}(\hmU, \fr^*(M))$ denote the subspace consisting of all homomorphisms $f$ that vanish on all except finitely many summands $\dmU_\e(\g) 1_\lambda$. We see that $\hom_{\fu\,\dmU^0}(\hmU, \fr^*(M))$ is a $\dmU_\e(\g)$-submodule of $\Hom_{\fu\,\dmU^0}(\hmU, \fr^*(M))$. Furthermore, 
\[\hom_{\fu\,\dmU^0}(\hmU, \fr^*(M))=\bigoplus_\lambda \Hom_{\fu\,\dmU^0}(\dmU_\e(\g)1_\lambda, \fr^*(M)).\]
So $\hom_{\fu\,\dmU^0}(\hmU, \fr^*(M))$ is also a module over $\hmU_\e(\g, P)$.

 By Lemma \ref{lem: normality of u}, the kernel of $\Fr: \hmU_\e(\g, P) \rightarrow \hmU^*_\e(\g, P^*)$ is equal to the left ideal generated by $\{E_i 1_\lambda, F_i 1_\lambda, 1_\mu| \lambda \in P, \mu \in P/P^*\}$ as well as the right ideal generated by the same set of elements. Therefore, the action of $\hmU_\e(\g, P)$ on $\hom_{\fu\,\dmU^0}(\hmU, \fr^*(M))$ factors through Frobenius morphism $\Fr: \hmU_\e(\g, P)\rightarrow \hmU^*_\e(\g, P^*)$. On the other hand, we can interpret the module $H^0(\dmU/\fu\,\dmU^0, \fr^*(M))$ as the maximal rational $\dmU_\e(\g)$-submodule of $\hom_{\fu\,\dmU^0}(\hmU, \fr^*(M))$.  Therefore, $H^0(\dmU/\fu \dmU, \fr^*(M))$ viewed as the maximal rational $\dmU_\e(\g)$-submodule of $\hom_{\fu \dmU^0}(\hmU, \fr^*(M))$ will be contained in the image of the functor $\Fr^*: \Rep(\dmU^*_\e(\g)) \rightarrow \Rep(\dmU_\e(\g))$. 
\end{proof}

\begin{Lem}\label{lem: mat coefficient} There is a finite dimensional module $V \in \Rep(\dmU_\e(\g))$ with the following property. For any finite dimensional module $M$ in $\Rep(\dmU_\e(\g))$ there is a finite dimensional module $N$ in $\Rep(\dmU^*_\e(\g))$
such that $M$ is isomorphic to a subquotient of  $\Fr^*(N) \otimes V$.
\end{Lem}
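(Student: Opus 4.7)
We take $V:=\St^*\otimes\St$ and show that any finite-dim $M\in\Rep(\dmU_\e(\g))$ embeds as a $\dmU_\e(\g)$-submodule into $\Fr^*(N)\otimes V$ for a suitably chosen finite-dim $N\in\Rep(\dmU^*_\e(\g))$. The key ingredients are the projectivity of $\St$ in $\Rep(\fu\,\dmU^0)$, the exactness of the induction functor $H^0(\dmU/\fu\,\dmU^0,-)$, the tensor identity of Lemma~\ref{lem: tensor and induction}(a), and Lemma~\ref{lem: Ind and Fr}.

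First, the $\dmU_\e(\g)$-equivariant coevaluation $\BF\to\St^*\otimes\St$ yields an injection $M\hookrightarrow M\otimes\St^*\otimes\St=M\otimes V$, so it suffices to embed $M\otimes\St^*$ into $\Fr^*(N)\otimes\St^*$ as $\dmU_\e(\g)$-modules. Upon restriction to $\fu\,\dmU^0$, the module $M\otimes\St^*$ is projective, since $\St^*$ is projective in $\Rep(\fu\,\dmU^0)$ and tensoring a projective with any finite-dim module in a Hopf-algebra module category preserves projectivity.

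The crucial structural step is the claim that every finite-dim projective $P\in\Rep(\fu\,\dmU^0)$ arises as a direct summand of $(\fr^*(E)\otimes\St^*)|_{\fu\,\dmU^0}$ for some finite-dim $E\in\Rep(\dmU^{*0}_\e)$. Granting this, we apply the exact functor $H^0(\dmU/\fu\,\dmU^0,-)$ to the summand inclusion $(M\otimes\St^*)|_{\fu\,\dmU^0}\hookrightarrow(\fr^*(E)\otimes\St^*)|_{\fu\,\dmU^0}$, obtaining a $\dmU_\e(\g)$-summand inclusion of $H^0$'s; by Lemma~\ref{lem: tensor and induction}(a) the target is $H^0(\dmU/\fu\,\dmU^0,\fr^*(E))\otimes\St^*$, and by Lemma~\ref{lem: Ind and Fr} the first factor equals $\Fr^*(N)$ for some finite-dim $N\in\Rep(\dmU^*_\e(\g))$ (we truncate to a finite-dimensional part to ensure finiteness). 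The source contains $M\otimes\St^*$ itself via the injective adjunction unit $M\otimes\St^*\hookrightarrow H^0(\dmU/\fu\,\dmU^0,(M\otimes\St^*)|_{\fu\,\dmU^0})$, yielding the desired embedding $M\otimes\St^*\hookrightarrow\Fr^*(N)\otimes\St^*$ in $\Rep(\dmU_\e(\g))$.

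The main obstacle is the structural claim on projective $\fu\,\dmU^0$-modules, which is the root-of-unity quantum analog of the classical description of indecomposable projective covers of simple $G_1T$-modules as tensors of the Steinberg module with Frobenius twists. Writing any weight $\lambda\in P$ as $\lambda_0+\mu$ with $\lambda_0$ ranging over a fixed set of coset representatives for $P/P^*$ and $\mu\in P^*$, one expects $\hat{Q}(\lambda)\cong\fr^*(\BF_\mu)\otimes\hat{Q}(\lambda_0)$, where $\hat{Q}(\lambda_0)$ denotes the projective cover of $\hL_\e(\lambda_0)$ in $\Rep(\fu\,\dmU^0)$ and is itself a summand of $\St^*$ tensored with a finite-dim module depending only on $\lambda_0$. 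Establishing this decomposition in our twisted setup, using Steinberg-tensor-product type arguments adapted to the lattice $P^*$ and the modified coproduct, concentrates the technical effort; the remaining steps above are formal consequences of the lemmas already in place.
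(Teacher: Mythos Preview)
Your approach is different from the paper's, which never works with projectives in $\Rep(\fu\,\dmU^0)$ at all: instead it builds a surjection $H^0(\dmU/\fu\,\dmU^0,\BF[\dmU^{*0}_\e])\otimes V\twoheadrightarrow\BF[\dmU_\e(\g)]$ from the comparison of coordinate rings (Lemma~\ref{lem: res coordinate rings}) and then realizes $M$ as a subquotient via the embedding $M\hookrightarrow M_{\mathrm{triv}}\otimes\BF[\dmU_\e(\g)]$. Your idea of using projectivity of $\St$ and the unit of adjunction is more structural and would, if it worked, give $M$ as a \emph{subobject} rather than merely a subquotient.

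However, your key structural claim is false with the stated choice $V=\St^*\otimes\St$. The module $\fr^*(\BF_\mu)\otimes\St$ is indecomposable for every $\mu\in P^*$ (its restriction to $\fu$ is the simple module $\St|_\fu$), and as a $\fu\,\dmU^0$-module it is isomorphic to $\hat{Q}(\lambda_\St+\mu)$. Hence every direct summand of $\fr^*(E)\otimes\St^*$ is a sum of such modules, and you only reach projectives whose parameters lie in the single coset $\lambda_\St+P^*$. But $(M\otimes\St^*)|_{\fu\,\dmU^0}$ for general $M$ involves projectives $\hat{Q}(\nu)$ with $\nu$ in other cosets: already for $\g=\mathfrak{sl}_2$ and $M=L(\omega_1)$, the module $L(\omega_1)\otimes\St$ restricts over $\fu$ to the indecomposable projective $P(\ell-2)$ of dimension $2\ell$, so it cannot be a sum of copies of the $\ell$-dimensional $\St$. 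Your own justification in the last paragraph actually hints at the problem: you write $\hat{Q}(\lambda_0)$ as a summand of $\St^*\otimes W_{\lambda_0}$ with an auxiliary module $W_{\lambda_0}$, but then drop this factor when asserting the claim.

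The argument is salvageable if you enlarge $V$ to $\St^*\otimes W\otimes\St$ where $W$ is a fixed finite-dimensional $\dmU_\e(\g)$-module such that every $\hat{Q}(\lambda_0)$, for $\lambda_0$ ranging over a set of coset representatives of $P/P^*$, is a summand of $(\St\otimes W)|_{\fu\,\dmU^0}$. Proving the existence of such $W$ in the present twisted setting (for arbitrary $\ell$ and arbitrary characteristic of $\BF$) still requires real work not contained in your sketch; the paper avoids this altogether by its coordinate-ring argument.
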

\begin{proof} The proof is in several steps.

{\it Step 1.} We have an identification $\BF[\dmU^{*0}_\e]\cong \bigoplus_{\lambda \in P^*} \BF_\lambda$. Thanks to this identification, $\BF[\dmU^{*0}_\e]$ becomes a $\fu\,\dmU^0$-module. There is a finite dimensional $\dmU^0_\e$-module $V_1$ such that $\BF[\dmU^0_\e]\cong V_1\otimes \BF[\dmU^{*0}]$ as $\dmU^0_\e$-modules. By Lemma \ref{lem: tensor and induction}, 
\[\BF[\fu\,\dmU^0]\cong H^0(\fu\,\dmU^0/\dmU^0, V_1)\otimes \BF[\dmU^{*0}_\e].\]
Note that $H^0(\fu\,\dmU^0/\dmU^0, V_1)$ is finite dimensional.

{\it Step 2}. We will show that there is a finite dimensional $\dmU_\e(\g)$-module $V$ in $\Rep(\dmU_\e(\g))$ such that   $\BF[\dmU^{*0}_\e]\otimes V \twoheadrightarrow \BF[\fu\,\dmU^0]$ in $\Rep(\fu\,\dmU^0)$. Indeed, since the trivial representation is a direct summand of $\BF[\dmU^{*0}]$, we have a surjective morphism of $\fu\,\dmU^0$-modules $\BF[\dmU^{*0}_\e]\rightarrow \BF$. This gives us a surjective morphism of $\fu\,\dmU^0$-modules 
\[ \BF[\fu\,\dmU^0] \cong H^0(\fu\,\dmU^0/\dmU^0, V_1)\otimes \BF[\dmU^{*0}_\e] \twoheadrightarrow H^0(\fu\,\dmU^0/\dmU^0, V_1).\]
Combining this with Lemma \ref{lem: res coordinate rings}, we have a surjective morphism of $\fu\,\dmU^0$-modules 
\[\BF[\dmU_\e(\g)]\twoheadrightarrow \BF[\fu\,\dmU^0] \twoheadrightarrow H^0(\fu\,\dmU^0/ \dmU^0, V_1).\]
Then any finite dimensional $\dmU_\e(\g)$-submodule $V$ of $\BF[\dmU_\e(\g)]$ that maps onto $H^0(\fu\,\dmU^0/\dmU^0, V_1)$ will work.

{\it Step 3}. The exactness of $H^0(\dmU/\fu\,\dmU^0,-)$ yields a surjective morphism 
\[H^0(\dmU/\fu\,\dmU^0, \BF[\dmU^{*0}_\e]) \otimes V\twoheadrightarrow H^0(\dmU/\fu\,\dmU^0, \BF[\fu\,\dmU^0])\cong \BF[\dmU_\e(\g)].\]
Note that $V$ is a $\dmU_\e(\g)$-module hence $H^0(\dmU/\fu\,\dmU^0, \BF[\dmU^{*0}_\e] \otimes V)\cong H^0(\dmU/\fu\,\dmU^0, \BF[\dmU^{*0}_\e])\otimes V$.

{\it Step 4}. Let us fix a module $V$ as in Step $2$.  Let $M$ be a finite dimensional module in $\Rep(\dmU_\e(\g))$. Then we have an embedding $M \hookrightarrow H^0(\dmU/\BF, M)\cong M_{triv}\otimes \BF[\dmU_\e(\g)]$, where  $M_{triv}$ is the same space as $M$ but with a trivial $\dmU_\e(\g)$-action. Consider the surjective homomorphism \[\pi: M_{triv} \otimes H^0(\dmU/\fu\,\dmU^0, \BF[\dmU^{*0}_\e])\otimes V\twoheadrightarrow M_{triv}\otimes \BF[\dmU_\e(\g)],\]
 We can find a finite dimensional  $\dmU_\e(\g)$-submodule $N$ of $M_{triv}\otimes H^0(\dmU/\fu\,\dmU^0, \BF[\dmU^{*0}_\e])$ such that the image of $N\otimes V$ under $\pi$ contains $M$. By Lemma \ref{lem: Ind and Fr}, $H^0(\dmU/\fu\,\dmU^0, \BF[\dmU^{*0}_\e])$ is contained in the image of the functor $\Fr^*$, hence so is $N$. Therefore, $M$ is a subquotient of $\Fr^*(N)\otimes V$. This completes the proof.    
\end{proof}

\subsection{Rational representations of $\hmU_q(\g,P)$}\
\label{ssec: rat of idem form}

Let us recall the idempotented versions $\hmU_q(\g,P)$, $\hmU^\geqslant_q(\g,P)$, $\hmU^\leqslant_q(\g, P)$.  
\begin{defi}A $\hmU_q(\g, P)$-module $M$ is called \emph{rational} if for any $m\in M$ we have 
\begin{enumerate}
    \item[(i)] There are only finitely many $\lambda \in P$ such that $1_\lambda m \neq 0$, and $\sum_\lambda 1_\lambda m =m$.
    \item[(ii)]There is $k >0$ such that $E_i^{[s]}1_\lambda m=0$ for all $s>k$ and all $i=1, \dots, r$.
    \item[(iii)] There is $k>0$ such that $F_i^{[s]}1_\lambda m=0$ for all $s>k$ and all $i=1,\dots, r$.
\end{enumerate}
\end{defi}
Then we define $\Rep(\hmU_q(\g,P))$ to be the category of all rational $\hmU_q(\g,P)$-modules. The categories $\Rep(\hmU_q^\geqslant(\g,P)),$ $\Rep(\hmU^\leqslant_q(\g,P)$ are defined similarly. The categories $\Rep^{fd}(\hmU_q(\g,P))$, $\Rep^{fd}(\hmU^\geqslant_q(\g,P)$ and $\Rep^{fd}(\hmU^\leqslant_q(\g,P)$ are the full subcategories of the corresponding categories consisting of all objects which are finitely generated over $R$.

There is a  natural equivalence of braided monoidal categories $\Rep(\dmU_q(\g))\rightarrow \Rep(\hmU_q(\g,P))$, here we equip $\Rep(\hmU_q(\g, P))$ with the braided structure via the $R$-matrix $\CR$ in \eqref{R matrix for original U}. Therefore, the following constructions and results in Sections \ref{SS_Weyl}-\ref{ssec: good filtration} carry over to $\Rep(\hmU_q(\g,P))$:
\begin{itemize}
    \item Joseph's induction  functor $\fJ: \Rep^{fd}(\hmU^\geqslant_q(\g,P))\rightarrow \Rep^{fd}(\hmU_q(\g,P))$ 
    \item The induction functor $H^0: \Rep(\hmU_q^\leqslant(\g,P))\rightarrow \Rep(\hmU_q(\g,P))$ and various invariants in Section \ref{ssec:homological properties}
    \item The Weyl module $W_q(\lambda):=\fJ(R_\lambda)$, the dual Weyl module $H^0_q(\lambda):=H^0(R_\lambda)$, the quantized coordinate algebra $R[\dmU_q(\g)]:= H^0(\dmU/R, R)$.
    \item The notion of good filtrations and the good filtration on the  $\dmU_q(\g) \x \dmU_q(\g)$-module $R[\dmU_q(\g)]$.
\end{itemize}

Let $\hmU^\sF_q(\g,P)$, $\hmU^{\geqslant,\sF}_q(\g,P)$, $ \hmU^{\leqslant, \sF}_q(\g,P)$ denote the new Hopf algebras obtained from $\hmU_q(\g,P)$, $\hmU^\geqslant_q(\g,P)$, $\hmU^\leqslant_q(\g,P)$ via the twist 
\[ \sF=\prod_{\lambda, \mu \in P} q^{\sum_{ij}\phi_{ij}(\w^\vee_i, \lambda)(\w^\vee_j, \mu)} 1_\lambda\otimes 1_\mu.\]
Then the category $\Rep(\hmU^\sF_q(\g,P)$ is just the abelian category $\Rep(\hmU_q(\g,P))$ with new  monoidal structure, similarly with $\Rep(\hmU^{\geqslant, \sF}_q(\g,P))$, $\Rep(\hmU^{\leqslant, \sF}_q(\g,P))$. Furtheremore, $\Rep(\hmU^\sF_q(\g,P)$ is braided with the $R$-matrix $\CR^\sF$ in \eqref{R matrix for U}.

\begin{Rem}The categories $\Rep(\hmU^\sF_q(\g,P))$ and $\Rep(\hmU_q(\g,P))$ are equivalent as braided monoidal categories.
\end{Rem}
\subsection{Rational representations of  $\dU_q(\g)$}
\begin{defi} A $\dU_q(\g)$-module $M$ is called \emph{rational} if the following conditions hold:
\begin{enumerate}
    \item[(i)] There is a weight decomposition $M=\bigoplus_{\lambda\in P} M_\lambda$, where $\dU^0_q$ acts on $M_\lambda$ via the character $\hat{\chi}_\lambda$.
    \item[(ii)] For any $m \in M$ there is $k>0$ such that $\tE_i^{(s)}m=0$ for all $s>k$ and all $i=1,\dots, r$.
    \item[(iii)] For any $m\in M$ there is $k>0$ such that $\tF_i^{(s)}m=0$ for all $s>k$ and all $i=1,\dots, r$.
\end{enumerate}
\end{defi}
We define $\Rep(\dU_q(\g))$ to be the category of all rational $\dU_q(\g)$-representations. Similarly, we can define the categories $\Rep(\dU^\geqslant_q)$, $\Rep(\dU^\leqslant_q)$, $\Rep(\dU^0_q)$. 

Then similarly to Sections \ref{SS_Weyl}-\ref{ssec: quantize coordinate algebra}, we can define the following objects
\begin{itemize}
    \item Joseph's induction functor $\fJ: \Rep^{fd}(\dU^\geqslant_q)\rightarrow \Rep^{fd}(\dU_q(\g))$. 
    \item The induction functor $H^0(\dU_q/\mathcal{B},-):  \Rep(\mathcal{B})\rightarrow \Rep(\dU_q(\g))$, here $\mathcal{B}$ is one of these algebras: $R, \dU^0_q, \dU^\geqslant_q, \dU^\leqslant_q$.
    \item The Weyl module $W_q(\lambda):=\fJ(R_\lambda)$ . The dual Weyl module $H^0_q(\lambda):=H^0(\dU_q/\dU^\leqslant_q, R_\lambda)$.
    \item The quantized coordinated algebra $R[\dU_q(\g)]$.
    \item The notion of good filtration.
\end{itemize}






The results in Sections \ref{SS_Weyl}-\ref{ssec: rat rep at roots} carry over to similar results in $\Rep(\dU_q(\g))$. The justification for this is as follows: we have an equivalence of braided  monoidal categories 
\[\Rep(\dU_q(\g))\rightarrow \Rep(\hmU^\sF_q(\g,P))\]
and equivalences of monoidal categories
\[ \Rep(\dU^\geqslant_q(\g))\rightarrow \Rep(\hmU^{\geqslant, \sF}_q(\g,P)), \qquad \Rep(\dU^\leqslant_q(\g))\rightarrow \Rep(\hmU^{\leqslant, \sF}_q(\g,P)),\]
combining with  the discussions in Section \ref{ssec: rat of idem form}.

We will need an analog of  Lemma \ref{lem: mat coefficient} for $\Rep(\dU_q(\g))$
\begin{Lem}\label{lem: mat coefficient of dU}Let $R=\BF$ and $q=\e\in \BF$ be a root of unity of order $\ell$ such that $\ell_i \geq \max\{ 2, 1-a_{ij}\}_{1\leq j\leq r}$.
There is a finite dimensional module $V\in \Rep(\dU_q(\g))$ with the following property.  For any finite dimensional module $M$ in $\Rep(\dU_\e(\g))$, there is a finite dimensional module $N$ in $\Rep(\dU_\e(\g))$ such that $M$  is isomorphic to a subquotient of $\tFr^*(V) \otimes V$.
\end{Lem}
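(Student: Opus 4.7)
The plan is to reduce the statement to the untwisted version, Lemma~\ref{lem: mat coefficient}, by transporting through the braided monoidal equivalences assembled at the end of the previous subsection. Recall that we have equivalences of braided monoidal categories
\[
  \Rep(\dU_\e(\g)) \iso \Rep(\hmU^{\sF}_\e(\g,P)) \iso \Rep(\hmU_\e(\g,P)) \iso \Rep(\dmU_\e(\g)),
\]
where the middle equivalence is obtained from the fact that $\hmU^\sF_\e(\g,P)$ and $\hmU_\e(\g,P)$ have the same underlying abelian category and their monoidal structures differ only by the Cartan-type twist $\sF$. Parallel equivalences exist for the $^*$-versions, relating $\Rep(\dU^*_\e(\g))$ to $\Rep(\dmU^*_\e(\g))$. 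Let $F \colon \Rep(\dmU_\e(\g)) \iso \Rep(\dU_\e(\g))$ and $G \colon \Rep(\dmU^*_\e(\g)) \iso \Rep(\dU^*_\e(\g))$ denote these equivalences.

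First I would use Lemma~\ref{lem: comparison of Frs} to compare $\Fr$ and $\tFr$ under the identifications above: the quantum Frobenius homomorphisms differ by a Hopf algebra automorphism $\Phi$ of the target, so upon pullback we have a natural isomorphism of functors $F \circ \Fr^* \simeq \tFr^* \circ G \circ \Phi^*$, where $\Phi^*$ is an autoequivalence of $\Rep(\dmU^*_\e(\g))$. In particular, the essential image of $\tFr^*$ in $\Rep(\dU_\e(\g))$ agrees (up to $F$ and $\Phi^*$) with the essential image of $\Fr^*$ in $\Rep(\dmU_\e(\g))$.

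Next I would invoke Lemma~\ref{lem: mat coefficient} to obtain a finite-dimensional module $V' \in \Rep(\dmU_\e(\g))$ with the required universal property, and set $V := F(V') \in \Rep(\dU_\e(\g))$. Given any finite-dimensional $M \in \Rep(\dU_\e(\g))$, I would apply $F^{-1}$ to obtain $M' := F^{-1}(M) \in \Rep(\dmU_\e(\g))$; by Lemma~\ref{lem: mat coefficient} there exists $N' \in \Rep(\dmU^*_\e(\g))$ finite-dimensional such that $M'$ is a subquotient of $\Fr^*(N') \otimes V'$. Setting $N := G(\Phi^*(N')) \in \Rep(\dU^*_\e(\g))$ and applying $F$ (which is an equivalence of monoidal categories, hence preserves tensor products and subquotients), we obtain that $M \simeq F(M')$ is a subquotient of $F(\Fr^*(N') \otimes V') \simeq \tFr^*(N) \otimes V$.

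The main point to verify carefully is the compatibility of the monoidal structures with the identification of Frobenius maps: concretely, that the twist $\sF$ does not obstruct the isomorphism $F(\Fr^*(N') \otimes V') \simeq \tFr^*(N) \otimes V$. Since $\sF$ is a Cartan-type (hence weight-diagonal, invertible) element, it acts by nonzero scalars on tensor products of weight spaces and so preserves sub- and quotient structures as well as finite-dimensionality; this reduces the compatibility to the already established relation $\tFr = \Phi \circ \Fr$ from Lemma~\ref{lem: comparison of Frs}. Once this bookkeeping is pinned down, the statement follows at once from the untwisted case.
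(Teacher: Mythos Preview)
Your proposal is correct and follows essentially the same approach as the paper: the paper's proof is a single sentence noting that the only difference from Lemma~\ref{lem: mat coefficient} is the use of $\tFr^*$ in place of $\Fr^*$, and that Lemma~\ref{lem: comparison of Frs} identifies $\tFr$ with $\Phi\circ\Fr$ (so $\tFr^*$ differs from $\Fr^*$ by an autoequivalence of $\Rep(\dU^*_\e(\g))$). Your write-up simply spells out the transport-of-structure bookkeeping that the paper leaves implicit.
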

The only difference here is that we use the morphism $\tFr^*$ instead of $\Fr^*$, but this nuisance does not cause issues by Lemma \ref{lem: comparison of Frs} that says $\tFr^*$ is $\Fr^*$ composed with a self-equivalence $\Phi^*$ of the category $\Rep(\dU^*_\e(\g))$.

We end this section with a result  about the submodule of $\dU_q(\g)$-invariants for the adjoint action on $R[\dU_q(\g)]$. Let us restate a result about the good filtration of $\dU_q(\g)\otimes_R \dU_q(\g)$-module $R[\dU_q(\g)]$.
\begin{Lem}\label{prop: good filtration on Oq[G]} Equip $P_+$ with a total order refining the usual partial order and let $\lambda_1<\lambda_2<\dots $ be the elements so that we have a total order on $P_+ \x P_+$ as in Remark \ref{rem: filtration on Oq[G]}. There is an exhaustive filtration $\{0\}=M_0\subset M_1\subset \dots $ on $R[\dU_q(\g)]$ such that $M_i/M_{i-1} \cong H^0_q(\lambda_i)\otimes H^0_q(\lambda^*_i)$. The module $M_i$ is the maximal $\dU_q(\g)\otimes_R\dU_q(\g)$-subrepresentation of $R[\dU_q(\g)]$ whose weights are bounded from the above by $(\lambda_i, \lambda^*_i)$.
\end{Lem}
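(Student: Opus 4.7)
The plan is to transport the proof of Proposition \ref{Prop:coord_good_filtration} from the original Lusztig form $\dmU_q(\g)$ to the twisted form $\dU_q(\g)$. The author has already emphasized that via the braided monoidal equivalence $\Rep(\dU_q(\g)) \simeq \Rep(\hmU^\sF_q(\g, P))$, which is abelian-equivalent to $\Rep(\hmU_q(\g, P)) \simeq \Rep(\dmU_q(\g))$, all the homological machinery in Sections \ref{SS_Weyl}--\ref{ssec:homological properties} carries over, including the notions of Weyl module $W_q(\lambda)$, dual Weyl module $H^0_q(\lambda)$, and the quantized coordinate algebra $R[\dU_q(\g)]$. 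Thus the statement is essentially a repackaging of Proposition \ref{Prop:coord_good_filtration} together with Remark \ref{rem: filtration on Oq[G]}.

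I would execute the proof in four steps. First, reduce to $R = \uCA$ by establishing the twisted analog of Lemma \ref{Lem:coord_alg_freeness}: namely, that $\uCA[\dU_{\uCA}(\g)]$ is free over $\uCA$ and $R[\dU_q(\g)] \simeq R \otimes_{\uCA} \uCA[\dU_{\uCA}(\g)]$. This follows from the triangular PBW (Lemma \ref{lem:twisted-triangular-PBW}), the Kempf vanishing (Proposition \ref{Prop:Kempf_vanishing} in the $\dU$-version), and the argument of \cite[$\mathsection 4$]{AW}. Second, invoking the universality \eqref{eq:dual_module} together with Kempf vanishing I obtain
\[
\Hom_{\dU_q(\g)}(W_q(\lambda), R[\dU_q(\g)]) \iso H^0_q(\lambda^*), \qquad \forall\, \lambda \in P_+.
\]
Third, I verify the hypotheses of the twisted analog of Lemma \ref{Lem:good_filtration_equivalent_deformed} applied to $\g \times \g$ with $M = R[\dU_q(\g)]$: freeness gives the projectivity condition, and the required Ext-vanishing
\[
\Ext^i_{\Rep(\dU_q(\g \times \g))}(W_q(\lambda) \otimes_R W_q(\mu),\, R[\dU_q(\g)]) = 0, \qquad i > 0,
\]
is obtained from the spectral sequence argument in the proof of Proposition \ref{Prop:coord_good_filtration}: resolve by injective objects of the form $R[\dU_q(\g \times \g)] \otimes_R Q \simeq R[\dU_q(\g)] \otimes_R R[\dU_q(\g)] \otimes_R Q$ (with $Q$ an injective $R$-module), apply $\Hom_{\delta(\dU_q)}(W_q(\mu), -)$, use \eqref{eq: R[U] relative R-inj} to collapse to $H^0_q(\mu^*)$, and then compute $\Ext^i_{\Rep(\dU_q(\g))}(W_q(\lambda), H^0_q(\mu^*)) \simeq R^{\oplus \delta_{\lambda, \mu^*}}$ concentrated in degree zero via Lemma \ref{lem:Ext(Weyl, dualWeyl)}-analog. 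This forces a good filtration whose subquotients are necessarily $H^0_q(\lambda) \otimes H^0_q(\lambda^*)$ (each appearing once), since
\[
\Hom_{\dU_q(\g) \otimes_R \dU_q(\g)}(W_q(\lambda) \otimes_R W_q(\mu),\, R[\dU_q(\g)]) \simeq R^{\oplus \delta_{\mu, \lambda^*}}.
\]

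Finally, the claim that $M_i$ is the maximal $\dU_q(\g)\otimes_R \dU_q(\g)$-subrepresentation whose weights are bounded from above by $(\lambda_i, \lambda_i^*)$ for the chosen total order on $P_+ \times P_+$ is a formal consequence of the filtration structure: each $H^0_q(\lambda_j) \otimes H^0_q(\lambda_j^*)$ has highest weight exactly $(\lambda_j, \lambda_j^*)$, so the filtration step by step exhausts precisely the submodule supported on weights $\leq (\lambda_i, \lambda_i^*)$. The main technical point (really, the only nontrivial input) is the $\uCA$-freeness and base-change compatibility of $\uCA[\dU_{\uCA}(\g)]$ in Step 1; everything else is a direct transport of arguments already established in the untwisted setting.
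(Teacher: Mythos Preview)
Your proposal is correct and matches the paper's approach exactly. The paper does not give a separate proof of this lemma: it introduces it with ``Let us restate a result about the good filtration of $\dU_q(\g)\otimes_R \dU_q(\g)$-module $R[\dU_q(\g)]$,'' treating it purely as a transport of Proposition~\ref{Prop:coord_good_filtration} and Remark~\ref{rem: filtration on Oq[G]} through the categorical equivalences $\Rep(\dU_q(\g)) \simeq \Rep(\hmU^\sF_q(\g,P)) \simeq \Rep(\hmU_q(\g,P)) \simeq \Rep(\dmU_q(\g))$ already established in Sections~\ref{ssec: rat of idem form}--7.8.
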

The $\dU_q(\g)\otimes \dU_q(\g)$-module structure on $R[\dU_q(\g)]$ is as follows: \[(x\otimes y)f(u)=f(S(y)ux)\qquad \text{for all $x,y,u \in \dU_q(\g)$ and $f\in R[\dU_q(\g)]$}.\]
Let us make a small modification. Lemma \ref{prop: good filtration on Oq[G]} is the same if we consider the following $\dU_q(\g)\otimes \dU_q(\g)$-module structure on $R[\dU_q(\g)]$: \[(x\otimes y)f(u)=f(S(y)uS^2(x))\qquad \text{ for all $x,y,u \in \dU_q(\g)$ and $f\in R[\dU_q(\g)]$.}\]
Using this modified action and the coproduct map $\Delta': \dU_q(\g)\rightarrow\dU_q(\g)\otimes_R \dU_q(\g)$, we get the adjoint action of $\dU_q(\g)$ on $R[\dU_q(\g)]$ given by $(xf)(u)=f(\ad'_l(S(x))u)$ for all $x, u \in \dU_q(\g)$ and $f\in R[\dU_q(\g)]$.

For any $V\in \Rep(\dU_q(\g))$, we define an $R$-linear map 
\[V\otimes_R V^* \rightarrow R[\dU_q(\g)],\qquad  v\otimes f \mapsto c_{f, K^{-2\rho} v}, \forall\,\, v\in V, f\in V^*.\]
This is a homomorphism of $\dU_q(\g)\otimes_R \dU_q(\g)$-modules (note that we use the modified action of $\dU_q(\g)\otimes_R\dU_q(\g)$ on $R[\dU_q(\g)]$ here). 

The Weyl module $W_q(\lambda)$ is free over $R$ with a weight basis to be denoted by $\{v_i\}$. Hence $\End_R(W_q(\lambda))\cong W_q(\lambda)\otimes_R W_q(\lambda)^*$ as $\dU_q(\g)$-modules. The image of $\Id: W_q(\lambda)\rightarrow W_q(\lambda)$ in $W_q(\lambda)\otimes_R W_q(\lambda)^*$ is $\sum_{i} v_i\otimes v^*_i$, here $v^*_i$ is the dual weight basis of $W_q(\lambda)^*$. Hence $\sum_i v_i \otimes v_i^* \in (W_q(\lambda)\otimes_R W_q(\lambda)^*)^{\dU_q}$. 

Let $c_\lambda$ be the image of $\sum_i v_i\otimes v_i^*$ under the map $W_q(\lambda)\otimes_R W_q(\lambda)^* \rightarrow R[\dU_q(\g)]$. Then $c_\lambda \in R[\dU_q(\g)]^{\dU_q}$.  
\begin{Lem}\label{lem: basis of invariant of R[U]}Let us consider the adjoint action of $\dU_q(\g)$ on $R[\dU_q(\g)]$. The $\dU_q(\g)$-invariant part $R[\dU_q(\g)]^{\dU_q}$ is a free $R$-module with a basis $\{c_\lambda|\lambda \in P_+\}$.
\end{Lem}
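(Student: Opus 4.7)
The plan is to use the good filtration $\{0\}=M_0\subset M_1\subset\ldots$ of $R[\dU_q(\g)]$ from Lemma \ref{prop: good filtration on Oq[G]}, with $M_i/M_{i-1}\cong H^0_q(\lambda_i)\otimes_R H^0_q(\lambda_i^*)$ as $\dU_q(\g)\otimes_R\dU_q(\g)$-modules under the modified action. Composing with the coproduct $\Delta'\colon\dU_q(\g)\to\dU_q(\g)\otimes_R\dU_q(\g)$ turns each $M_i$ into an adjoint $\dU_q(\g)$-submodule, so taking adjoint invariants yields an exhaustive filtration on $R[\dU_q(\g)]^{\dU_q}$ with $i$-th graded piece embedded in $(H^0_q(\lambda_i)\otimes_R H^0_q(\lambda_i^*))^{\dU_q}$.

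First I will compute that $(H^0_q(\lambda)\otimes_R H^0_q(\lambda^*))^{\dU_q}$ is a free $R$-module of rank one. Kempf vanishing (the $\dU_q$-analog of Proposition \ref{Prop:Kempf_vanishing}(i)) yields $H^0_q(\lambda^*)\cong\Hom_R(W_q(\lambda),R)$ as $\dU_q(\g)$-modules, and since $W_q(\lambda)$ is finitely generated projective over $R$, the standard tensor-hom adjunction gives
\[
  (H^0_q(\lambda)\otimes_R H^0_q(\lambda^*))^{\dU_q}\cong\Hom_{\dU_q}(W_q(\lambda),H^0_q(\lambda))\cong R,
\]
the last isomorphism being the $\dU_q$-analog of \eqref{eq:Hom_Weyl_dual_Weyl} (which goes through verbatim via the equivalence of rational representation categories invoked in Section \ref{ssec: rat of idem form}). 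The main bookkeeping issue here is reconciling the adjoint action obtained from $\Delta'$ with the modified $\dU_q\otimes_R\dU_q$-action of Lemma \ref{prop: good filtration on Oq[G]}; the twist by $K^{-2\rho}$ built into the definition of $c_\lambda$ is precisely what makes the coevaluation $\dU_q$-equivariant.

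Next I will locate $c_\lambda$ in the filtration and show its image generates the graded invariants. The map $W_q(\lambda)\otimes_R W_q(\lambda)^*\to R[\dU_q(\g)]$, $v\otimes f\mapsto c_{f,K^{-2\rho}v}$, is a $\dU_q\otimes_R\dU_q$-module homomorphism whose image has weights bounded by $(\lambda,\lambda^*)$; by the characterization of $M_{i_\lambda}$ as the maximal such subrepresentation (last sentence of Lemma \ref{prop: good filtration on Oq[G]}), this image lies in $M_{i_\lambda}$, where $\lambda_{i_\lambda}=\lambda$. Using Kempf vanishing to rewrite the target graded piece as $(W_q(\lambda)^*\otimes_R W_q(\lambda)^*)$\,\textemdash\,no, rather to trace the natural surjection $W_q(\lambda)\twoheadrightarrow \im(W_q(\lambda)\to H^0_q(\lambda))\subset H^0_q(\lambda)$, the element $\sum_i v_i\otimes v_i^*$ (the identity of $\End_R(W_q(\lambda))$) projects to the canonical coevaluation element inside $H^0_q(\lambda)\otimes_R H^0_q(\lambda^*)$, which is exactly the generator of the rank-one invariant subspace identified above. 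Hence $c_\lambda\in M_{i_\lambda}^{\dU_q}$ and its image in $(M_{i_\lambda}/M_{i_\lambda-1})^{\dU_q}$ is an $R$-basis.

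Finally I will conclude by induction on filtration level. Given $x\in R[\dU_q(\g)]^{\dU_q}$, pick the smallest $n$ with $x\in M_n$; the image of $x$ in $(M_n/M_{n-1})^{\dU_q}\cong R\cdot[\overline{c_{\lambda_n}}]$ equals $a\cdot[\overline{c_{\lambda_n}}]$ for a unique $a\in R$, so $x-a\,c_{\lambda_n}\in M_{n-1}^{\dU_q}$ and we recurse. Exhaustiveness of the filtration ensures the process terminates, proving that the $c_\lambda$ span. For linear independence, if $\sum_{\lambda\in S}a_\lambda c_\lambda=0$ for a finite $S\subset P_+$, let $\lambda_n$ be the largest index appearing; reducing modulo $M_{n-1}$ gives $a_{\lambda_n}\,[\overline{c_{\lambda_n}}]=0$ in a free rank-one $R$-module, so $a_{\lambda_n}=0$, and iterating finishes the argument. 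The main technical obstacle is the first step: verifying the tensor-hom adjunction and the rank-one computation with the correct compatibility between the modified $\dU_q\otimes_R\dU_q$-action on $R[\dU_q(\g)]$, the pivotal twist $K^{-2\rho}$, and the adjoint action $\ad'$; everything else is a formal consequence of that computation together with the good filtration.
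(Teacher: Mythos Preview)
Your proposal is correct and follows essentially the same approach as the paper's proof: both use the good filtration $\{M_i\}$ from Lemma~\ref{prop: good filtration on Oq[G]}, compute $(H^0_q(\lambda)\otimes_R H^0_q(\lambda^*))^{\dU_q}\cong R$ via $H^0_q(\lambda^*)\cong W_q(\lambda)^*$ and~\eqref{eq:Hom_Weyl_dual_Weyl}, verify that the image of $c_{\lambda_i}$ in $M_i/M_{i-1}$ generates this rank-one module, and conclude by induction along the filtration. The only stylistic difference is that the paper explicitly invokes the $\Ext^1$-vanishing (from the good filtration of $M_{i-1}$ and $R\cong W_q(0)$) to obtain a short exact sequence $0\to M_{i-1}^{\dU_q}\to M_i^{\dU_q}\to (M_i/M_{i-1})^{\dU_q}\to 0$, whereas your direct subtraction argument bypasses this and only needs left exactness together with the fact that $\overline{c_{\lambda_i}}$ is a basis of the quotient invariants; both achieve the same conclusion.
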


\begin{proof}
{\it Step 1.} Since the dominant weights of the $\dU_q(\g)\otimes_R \dU_q(\g)$-module $W_q(\lambda_i)\otimes_R W_q(\lambda_i)^*$ are bounded by $(\lambda_i, \lambda^*_i)$, the image of $W_q(\lambda_i)\otimes_R W_q(\lambda_i)^*\rightarrow R[\dU_q(\g)]$ is contained in $M_i$.

{\it Step 2.}  Under the first $\dU_q(\g)$-module structure on $R[\dU_q(\g)]$ (that is defined via $\delta$), the  map 
\[ \Hom_{\dU_q(\g)}(W_q(\lambda_i), W_q(\lambda_i)\otimes_R W_q(\lambda_i)^*)\rightarrow \Hom_{\dU_q(\g)}(W_q(\lambda_i), R[\dU_q(\g)]),\]
is just the identity morphism $W_q(\lambda_i)^*\xrightarrow[]{\text{Id}} W_q(\lambda_i)^* \cong H^0_q(\lambda_i^*)$. Therefore, the composition
\[ W_q(\lambda_i)\otimes_R W_q(\lambda_i)^* \rightarrow M_i\twoheadrightarrow H_q(\lambda_i)\otimes_R H^0_q(\lambda^*_i)\]
just comes from the natural homomorphism $W_q(\lambda_i)\rightarrow H^0_q(\lambda_i)$.

Note that $(H^0_q(\lambda_i)\otimes_R H^0_q(\lambda^*_i))^{\dU_q}\cong \Hom_{\dU_q(\g)}(W_q(\lambda_i), H^0_q(\lambda_i)) \cong R$, here the first isomorphism uses the $R$-freeness of $W_q(\lambda_i)$ and the isomorphism $H^0_q(\lambda^*_i)\cong W_q(\lambda_i)^*$.
Hence we see that the image of $c_{\lambda_i}$ in $H^0_q(\lambda_i)\otimes_R H^0_q(\lambda^*_i)$ spans the free $R$-module of rank one $(H^0_q(\lambda_i) \otimes_R H^0_q(\lambda^*_i))^{\dU_q}$.

{\it Step 3.} We now proceed by induction to show that $M_i^{\dU_q}$ is a free $R$-module with basis $\{c_{\lambda_j}|0\leq j \leq i\}$. The base case $i=1$ holds by Step 2. Now we do the induction step. Assume that $M_{i-1}^{\dU_q}$ is a free $R$-module with the basis $\{ c_{\lambda_j}| 0 \leq j \leq i-1\}$. The module $M_{i-1}$ has a good filtration and the trivial $\dU_q(\g)$-module $R$ is just the Weyl module $W_q(0)$. Therefore, the short exact sequence 
\[0\rightarrow M_{i-1}\rightarrow M_i\rightarrow H^0_q(\lambda_i)\otimes_R H^0_q(\lambda^*_i)\rightarrow 0\]
gives us a short exact sequence
\[ 0\rightarrow (M_{i-1})^{\dU_q} \rightarrow (M_i)^{\dU_q} \rightarrow (H^0_q(\lambda_i)\otimes_R H^0_q(\lambda^*_i))^{\dU_q}\rightarrow 0.\]
Since $(H^0_q(\lambda_i)\otimes_R H^0_q(\lambda^*_i))^{\dU_q}$ is a free  $R$-module of rank one, by induction hypothesis, $(M_i)^{\dU_q}$ is a free $R$-module of finite rank. Furthermore, the image of $c_{\lambda_i}\in (M_i)^{\dU_q}$ spans $(H^0_q(\lambda_i)\otimes_R H^0_q(\lambda^*_i))^{\dU_q}$ by Step 2. Combining this with the induction hyhothesis again, it follows that  the elements $\c_{\lambda_j}$ for $0\leq j\leq i$ form an $R$-basis in $(M_i)^{\dU_q}$.

{\it Step 4.} Since the filtration $M_0\subset M_1\subset M_2\dots $ of $R[\dU_q(\g)]$ is exhaustive, the lemma follows by Step 3.
\end{proof}





\section{Reflection equation algebras}\label{sec: REA}

 Recall that an {\it $H$-module algebra} $A$ over a Hopf algebra $H$ is a  unital algebra $A$ with a left $H$-module structure such that 
 \begin{equation}
  h1_A=\varepsilon(h)1_A, \qquad h\cdot ab=\sum (h_{(1)}\cdot a)(h_{(2)}\cdot b) \qquad \forall\, a, b \in A,\, h\in H \,.
 \end{equation}
In this section, we show that one can twist an algebra structure on $R[\dU_q(\g)]$ by the $R$-matrix so that $R[\dU_q(\g)]$ becomes an $\dU_q(\g)$-module algebra with the $\dU_q(\g)$-module structure given by
\[ (xf)(y)=f(\ad'_l(S'(x))(y)), \qquad \forall x,y \in \dU_q(\g), f\in R[\dU_q(\g)].\]
The new algebra, denoted by $O_q[G]$ , is called the \emph{reflection equation algebra} in the literature. 
 
\subsection{Reflection equation algebra $O_q[G]$}\
\label{ssec: description of REA}

For a technical reason, see Remark \ref{rem: technical mod}, we will work with $\rU_q:=\dU_q^{\text{op,cop}}(\g)$, the Hopf algebra obtained from $\dU_q(\g)$ by taking the opposites of both product and coproduct structures. Let $\ad_l^\circ, \ad_r^\circ$ denote the left and right adjoint actions of $\rU_q$ on itself. We will use $\cdot^\op$ to denote the product on $\rU_q$ as well as the action of $\rU_q$ on its modules, to distinguish from the corresponding constructions for $\dU_q(\g)$. Let us begin by recording  Proposition-Definition 2.5 in \cite{KV}:
\begin{PropDef}\label{prop: Drinfeld twist}(a) Let $(H, 1_H, \mu, \varepsilon, \Delta)$ be a bialgebra  and let $F\in H\otimes H$ be an element such that 
\[ (\Delta \otimes \Id)(F) F_{12}=(\Id \otimes \Delta)(F)F_{23}, \qquad  (\varepsilon\otimes \Id)(F)=1_H=(\Id\otimes \varepsilon) (F).\]
For all $x\in H$ define 
\[ \Delta^F(x)=F^{-1}\Delta(x) F.\]
Then $H_F:=(H, 1_H, \mu, \varepsilon, \Delta_F)$ is a bialgebra. We say $F$ is a {\rm twist} for $H$.

\noindent
(b) Let $H$ be a bialgebra and $F\in H\otimes H$ a twist for $H$. Let $(A, 1_A, m)$ be  a unital left $H$-module algebra with multiplication $m: A\otimes A \rightarrow A$. Define a linear map 
\[ m_F: A\otimes A\rightarrow A, \qquad m_F(a\otimes b)=m(F(a\otimes b)).\]
Then $A_F:=(A, 1_A, m_F)$ is an algebra. The left $H$-module structure on $A$ turns $A_F$ into a left $H_F$-module algebra.

\noindent
(c) Let $(U, 1, \mu, \varepsilon, \Delta,\CR)$ be a braided  bialgebra with universal $R$-matrix $\CR$ and $H:= U^{\textnormal{cop}}\otimes U$ the product  bialgebra. Then the two following elements are twists for $H$: 
\[ F:=\CR_{13}\CR_{23} \in H\otimes H, \qquad \overline{F}:=\CR^{-1}_{24} \CR^{-1}_{14} \in H\otimes H.\]

\end{PropDef}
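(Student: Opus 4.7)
The plan is to verify each axiom by direct computation, with the cocycle identity for $F$ doing all the work in (a) and (b), and the quasitriangular axioms for $\CR$ doing all the work in (c). Throughout I would adopt the shorthand $F=\sum f^{(1)}\otimes f^{(2)}$ so that the various tensor placements $F_{12},F_{13},F_{23}$ can be manipulated symbolically.

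For part (a) the counit identities for $\Delta^F$ follow at once from $(\varepsilon\otimes\Id)(F)=(\Id\otimes\varepsilon)(F)=1_H$. Coassociativity is obtained by expanding
\begin{equation*}
(\Delta^F\otimes\Id)\Delta^F(x) = [F_{12}^{-1}(\Delta\otimes\Id)(F^{-1})]\cdot(\Delta\otimes\Id)\Delta(x)\cdot[(\Delta\otimes\Id)(F)F_{12}]
\end{equation*}
and the analogous formula on the other side. The middle factor matches by coassociativity of $\Delta$, while the outer bracketed factors match by the cocycle condition $(\Delta\otimes\Id)(F)F_{12}=(\Id\otimes\Delta)(F)F_{23}$ (and its inverse form). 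For part (b), associativity of $m_F$ reduces, after two applications of the module-algebra axiom for $(A,m)$, to the identity $(\Delta\otimes\Id)(F)F_{12}=(\Id\otimes\Delta)(F)F_{23}$ acting on $a\otimes b\otimes c$; the unit axiom follows from the counit conditions on $F$; and the $H_F$-module algebra compatibility $h\cdot_F m_F(a\otimes b)=m_F(\Delta^F(h)(a\otimes b))$ is immediate upon substituting $\Delta^F(h)=F^{-1}\Delta(h)F$ and cancelling the outer $F,F^{-1}$.

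For part (c) the main step is the cocycle identity for $F=\CR_{13}\CR_{23}\in H^{\otimes 2}$, where $H=U^{\text{cop}}\otimes U$ sits inside $U^{\otimes 2}$ with coproduct $\Delta_H(x\otimes y)=(x_{(2)}\otimes y_{(1)})\otimes(x_{(1)}\otimes y_{(2)})$. Applying $\Delta_H\otimes\Id$ and $\Id\otimes\Delta_H$ to $F$ gives products of $\CR$'s at specific tensor positions in $U^{\otimes 6}$; repeatedly using $(\Delta\otimes\Id)(\CR)=\CR_{13}\CR_{23}$ and $(\Id\otimes\Delta)(\CR)=\CR_{13}\CR_{12}$ both sides of the cocycle equation become monomials in the $\CR_{ij}$, and one applies the Yang--Baxter equation $\CR_{12}\CR_{13}\CR_{23}=\CR_{23}\CR_{13}\CR_{12}$ (at the appropriate embedded triples of indices) to move factors past each other until both expressions coincide. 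The counit axioms are immediate from $(\varepsilon\otimes\Id)(\CR)=(\Id\otimes\varepsilon)(\CR)=1$. The case of $\overline{F}=\CR_{24}^{-1}\CR_{14}^{-1}$ is then obtained by applying the same argument with the opposite quasitriangular identities $(\Delta^{\text{cop}}\otimes\Id)(\CR^{-1})$ and $(\Id\otimes\Delta^{\text{cop}})(\CR^{-1})$ and the inverse Yang--Baxter equation, or alternatively by noting that $\CR_{21}^{-1}$ is an $R$-matrix for $U^{\text{cop}}$ and invoking the construction for $F$ applied to $U^{\text{cop}}$.

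The main bookkeeping obstacle is in part (c): one has to carefully track which of the six tensor slots in $U^{\otimes 6}$ correspond to which factors of $H=U^{\text{cop}}\otimes U$, especially because the $\text{cop}$ swaps the order in half of the slots; this creates a danger of sign/order mistakes when applying Yang--Baxter. Parts (a) and (b) are essentially formal and present no difficulty beyond careful symbol-pushing.
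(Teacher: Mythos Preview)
The paper does not prove this Proposition-Definition; it simply records it as Proposition-Definition~2.5 of \cite{KV}. Your direct verification approach is correct and is exactly the standard argument one finds in that reference (or in any textbook treatment of Drinfeld twists), so there is nothing to compare.
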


\begin{Lem}\label{lem: coproduct gives Hopf}With notations from Proposition \ref{prop: Drinfeld twist}.c), the coproduct of $U$ defines a bialgebra homomophism $\Delta: U\rightarrow (U^{\text{cop}}\otimes U)_F$ with $F=\CR_{13}\CR_{23}$.
\end{Lem}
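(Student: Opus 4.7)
The plan is to verify the two bialgebra axioms for $\Delta\colon U\to (U^{\textnormal{cop}}\otimes U)_F$ separately, noting that twisting does not alter the underlying algebra structure or the counit. For the algebra axiom, observe that as an algebra $(U^{\textnormal{cop}}\otimes U)_F$ coincides with $U^{\textnormal{cop}}\otimes U = U\otimes U$ (since $U^{\textnormal{cop}}$ and $U$ agree as algebras and twists by Proposition \ref{prop: Drinfeld twist}(a) modify only the coproduct). Hence $\Delta\colon U\to U\otimes U$ being an algebra morphism is just the bialgebra axiom for $U$ itself. Compatibility of counits reduces to the identity $\varepsilon(x_{(1)})\varepsilon(x_{(2)})=\varepsilon(x)$.

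The substantive step is the coalgebra compatibility, i.e.\ the identity
\begin{equation*}
    (\Delta\otimes\Delta)\Delta(x) \;=\; F^{-1}\,\Delta_H(\Delta(x))\,F \quad\text{in } U^{\otimes 4}, \qquad F=\CR_{13}\CR_{23}.
\end{equation*}
Writing $\Delta^{(3)}(x)=x_{[[1]]}\otimes x_{[[2]]}\otimes x_{[[3]]}\otimes x_{[[4]]}$ via iterated coassociativity, I would first unravel the left-hand side of the above identity as $\Delta^{(3)}(x)$ itself and the coproduct $\Delta_H$ on the tensor bialgebra $U^{\textnormal{cop}}\otimes U$ as
\begin{equation*}
  \Delta_H(\Delta(x)) \;=\; x_{[[2]]}\otimes x_{[[3]]}\otimes x_{[[1]]}\otimes x_{[[4]]},
\end{equation*}
the asymmetry in the first/third positions coming from the use of $\Delta^{\textnormal{cop}}$ on the $U^{\textnormal{cop}}$ factor. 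It then suffices to verify the equivalent identity $F\cdot\Delta^{(3)}(x)=\Delta_H(\Delta(x))\cdot F$.

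This last identity I would prove by applying the quasitriangularity axiom $\CR\Delta(y)=\Delta^{\textnormal{op}}(y)\CR$ twice. First, the slot $x_{[[2]]}\otimes x_{[[3]]}$ at positions $(2,3)$ equals $\Delta(x_{(2)})$ where $(\Delta\otimes\Id)\Delta(x)=x_{(1)}\otimes x_{(2)}\otimes x_{(3)}$; so $\CR_{23}$ moves past $\Delta^{(3)}(x)$ swapping positions $2$ and $3$. Next, after this swap, positions $(1,3)$ contain $x_{[[1]]}\otimes x_{[[2]]}=\Delta(x_{[1]})$, so a second application of the intertwining property allows $\CR_{13}$ to cross over, swapping positions $1$ and $3$. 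The two swaps compose to the required permutation $1234\mapsto 2314$, yielding $F\cdot\Delta^{(3)}(x)=\Delta_H(\Delta(x))\cdot F$.

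The main obstacle is purely combinatorial: tracking the interplay of positions, Sweedler indices, and the two opposite conventions ($\Delta^{\textnormal{cop}}$ on the first tensor factor versus $\Delta^{\textnormal{op}}$ from the $R$-matrix intertwining). Once the two applications of $\CR\Delta=\Delta^{\textnormal{op}}\CR$ are correctly identified with the factors $\CR_{23}$ and $\CR_{13}$ of $F$, and coassociativity is invoked to recognize $x_{[[2]]}\otimes x_{[[3]]}$ and $x_{[[1]]}\otimes x_{[[2]]}$ as coproducts of appropriate Sweedler components, the calculation becomes essentially forced, with no further quasitriangularity input needed.
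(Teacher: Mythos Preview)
Your proposal is correct and follows essentially the same approach as the paper: both reduce the coalgebra compatibility to two applications of the quasitriangularity axiom $\CR\Delta=\Delta^{\textnormal{op}}\CR$, one for each factor of $F=\CR_{13}\CR_{23}$. The only cosmetic difference is the order: the paper first absorbs $\CR_{13}$ (for a general element $x\otimes y\in U^{\textnormal{cop}}\otimes U$, obtaining the intermediate formula $\tilde{\Delta}(x\otimes y)=\CR_{23}^{-1}(x_{(1)}\otimes y_{(1)}\otimes x_{(2)}\otimes y_{(2)})\CR_{23}$) and then specializes and absorbs $\CR_{23}$, whereas you work directly with $\Delta^{(3)}(x)$ and move $\CR_{23}$ first, then $\CR_{13}$.
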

\begin{proof}Since the algebra structure of $(U^{\text{cop}}\otimes U)_F$ is the same as of $U\otimes U$, we only need to check that the map is a coalgebra homomorphism. Let $\tilde{\Delta}, \mathring{\Delta}$ denote the coproducts of $(U^{\text{cop}}\otimes U)_F, U^{\text{cop}}\otimes U$, respectively. Then 
\begin{align*} \tilde{\Delta}(x\otimes y)&=\CR^{-1}_{23}\CR^{-1}_{13}\mathring{\Delta}(x\otimes y) \CR_{13}\CR_{23}\\
&=\CR^{-1}_{23}\CR^{-1}_{13}(x_{(2)}\otimes y_{(1)}\otimes x_{(1)}\otimes y_{(2)})\CR_{13}\CR_{23}\\
&=\CR^{-1}_{23}(x_{(1)}\otimes y_{(1)}\otimes x_{(2)}\otimes y_{(2)})\CR_{23}
\end{align*}
Therefore, 
\begin{multline*}
    \tilde{\Delta}(\Delta(x))=\tilde{\Delta}(x_{(1)}\otimes x_{(2)})=\CR^{-1}_{23}(x_{(1)}\otimes x_{(3)}\otimes x_{(2)}\otimes x_{(4)})\CR_{23}
    =x_{(1)}\otimes \CR^{-1}\Delta^{\op}(x_{(2)}) \CR\otimes x_{(3)}\\
    =x_{(1)}\otimes \Delta(x_{(2)})\otimes x_{(3)}=(\Delta\otimes \Delta)\Delta(x)
\end{multline*}
\end{proof}
There is a direct analog of the category of rational representations for the algebra $\Rep(\rU_q)$. We also have the quantized coordinate algebra $R[\rU_q]$ as in Section \ref{ssec: quantize coordinate algebra}. Moreover, by viewing $\rU_q$ and $\dU_q(\g)$ as the same $R$-module, we can identify the $R$-modules $R[\rU_q]$ and $R[\dU_q(\g)]$. 

The quantized coordinate algebra $R[\rU_q]$ naturally carries an algebra structure coming from the coproduct of $\rU_q$. Furthermore, $R[\rU_q]$ is a $\rU_q^{\text{cop}}\otimes \rU_q$-module algebra with the $\rU_q^{\text{cop}}\otimes \rU_q$-module structure defined by 
\[ (x\otimes y)f(u)=f(S'(x)\cdot^\op u \cdot^\op y),\]
for any $x,y,u \in \rU_q$ and $f\in R[\rU_q]$.

Let us  consider the following $R$-matrix for $\rU_q$
\begin{equation}\label{eq: R-matrix for rU} 
\oR:=(\CR^\sF_{21})^{-1}= \sum^{\nu, \mu \in P}_{\vec{k}\in \BZ_{\geq 0}^\BN} q^{(\nu, \mu)-(\nu+\mu, \kappa(\weight(\tE^{(\cev{k})}))}d_{\vec{k}} ( \tE^{(\cev{k})} \cdot^\op 1_\mu) \otimes (S'(\tF^{(\cev{k})}) \cdot^\op 1_\nu),
\end{equation}
with the coefficients $d_{\vec{k}} \in R$ in which $d_{\vec{0}}=1$

We have the following lemma by applying the construction in Proposition \ref{prop: Drinfeld twist} to 
\[H=\rU_q^{\textnormal{cop}}\otimes \rU_q,\qquad  A=R[\rU_q], \qquad F=\oR_{13} \oR_{23},\]
\begin{Lem} $R[\rU_q]_F$
is a left $(\rU_q^{\text{cop}}\otimes \rU_q)_F$-module algebra.
\end{Lem}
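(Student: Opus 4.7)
The plan is to apply parts~(b) and~(c) of Proposition-Definition~\ref{prop: Drinfeld twist} directly, with $H = \rU_q^{\text{cop}}\otimes\rU_q$, $A = R[\rU_q]$, and $F = \oR_{13}\oR_{23}$. Two inputs are required: that $R[\rU_q]$ is already an $H$-module algebra before twisting, and that $\rU_q$ is a braided bialgebra with universal $R$-matrix $\oR$.

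For the first input, the two commuting actions of $\rU_q$ on $R[\rU_q]$ given by $(x\cdot f)(u) := f(S'(x)\cdot^{\op}u)$ and $(y\cdot f)(u) := f(u\cdot^{\op}y)$ assemble into a left $\rU_q^{\text{cop}}\otimes\rU_q$-action (the coproduct of $\rU_q^{\text{cop}}$ being $\Delta'^{\op}$). The module-algebra compatibility $(x\otimes y)(fg) = \sum\bigl((x_{(1)}\otimes y_{(1)})f\bigr)\bigl((x_{(2)}\otimes y_{(2)})g\bigr)$ is the dual statement of associativity in $\rU_q$; since the product on $R[\rU_q]$ is by definition dual to the coproduct of $\rU_q$, the identity follows by expanding $(fg)(S'(x)\cdot^{\op}u\cdot^{\op}y)$ in Sweedler notation and matching terms.

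For the second input, Section~\ref{sec10} produces $\CR^{\sF}$ as a universal $R$-matrix for $\dU_q(\g)$ (in a suitable completion), and $\oR = (\CR^{\sF}_{21})^{-1}$ is then the canonical $R$-matrix for $\rU_q = \dU_q^{\op,\text{cop}}(\g)$: taking the opposite product sends an $R$-matrix $\CR$ to $\CR_{21}$, and taking the opposite coproduct further sends it to its inverse, so the two operations together produce $(\CR^{\sF}_{21})^{-1}$. The three quasi-triangularity axioms for $\oR$ on $\rU_q$ then follow by direct substitution from those for $\CR^{\sF}$. With both inputs in place, Proposition-Definition~\ref{prop: Drinfeld twist}(c) shows that $F=\oR_{13}\oR_{23}$ is a twist for $\rU_q^{\text{cop}}\otimes\rU_q$, and part~(b) yields the desired module-algebra structure on $R[\rU_q]_F$.

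The main technical obstacle is that $\oR$ lies in a completion of $\rU_q\otimes\rU_q$ rather than in $\rU_q\otimes\rU_q$ itself, so $F$ is a priori only a formal series. However, every $f\in R[\rU_q]$ has finite weight support (by definition of the rational dual), so when $F$ is applied to $f\otimes g$ only finitely many terms of the series contribute, and $m_F(f\otimes g)$ is a well-defined element of $R[\rU_q]$; the Proposition-Definition then applies verbatim.
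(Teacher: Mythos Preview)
Your proposal is correct and follows essentially the same approach as the paper: the paper states the lemma as an immediate application of Proposition-Definition~\ref{prop: Drinfeld twist} with $H=\rU_q^{\text{cop}}\otimes\rU_q$, $A=R[\rU_q]$, and $F=\oR_{13}\oR_{23}$, having already noted that $R[\rU_q]$ is an $H$-module algebra and that $\oR$ is the $R$-matrix for $\rU_q$. Your final paragraph handling the completion issue matches the paper's remark immediately following the lemma almost verbatim.
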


\begin{Rem}$\oR$ is not an element in $\rU_q\otimes \rU_q$ and $\rU_q$ is not quasi-triangular. However the algebra $R[\rU_q]_F$ is still well-defined. This is because any element of $R[\rU_q]$ is a matrix coefficient of some finitely generated $R$-module object in $\Rep(\rU_q)$.
Hence, in the formula of Proposition \ref{prop: Drinfeld twist}.b applying to $R[\rU_q]$, all but finitely many components of $F=\oR_{13}\oR_{23}$ act by zero. 
\end{Rem}

By Lemma \ref{lem: coproduct gives Hopf}, the coproduct $\Delta^\op$  of $\rU_q$ is a Hopf algebra homomorphism $\Delta^\op: \rU_q\rightarrow (\rU_q^{cop}\otimes \rU_q)_F$. So that we obtain the following
\begin{Lem}\label{lem: module algebra Oq[G]}$R[\rU_q]_F$ is a left $\rU_q$-module algebra with the $\rU_q$-module structure defined by 
\[ (x\cdot^\op f)(u)=f(\ad_r^\circ(x)(u)),\]
for $x, u \in \rU_q$ and $f\in R[\rU_q]_F$.
\end{Lem}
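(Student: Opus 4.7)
The proof will proceed by combining Proposition-Definition \ref{prop: Drinfeld twist}(b) with Lemma \ref{lem: coproduct gives Hopf} and then checking that the resulting module-algebra action is the one given by the right adjoint action.

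First, I would apply Proposition-Definition \ref{prop: Drinfeld twist}(b) to the Hopf algebra $H = \rU_q^{\textnormal{cop}} \otimes \rU_q$, the $H$-module algebra $A = R[\rU_q]$ (with the action $(x\otimes y)f(u) = f(S'(x)\cdot^\op u\cdot^\op y)$ recalled just before the lemma), and the twist $F = \oR_{13}\oR_{23}$. This yields that $R[\rU_q]_F$ is a left $(\rU_q^{\textnormal{cop}}\otimes \rU_q)_F$-module algebra. The (topological/rational) caveat that $\oR$ is not a genuine element of $\rU_q\otimes \rU_q$ is handled by the preceding remark: every $f\in R[\rU_q]$ is a matrix coefficient of some finitely generated rational $\rU_q$-module, so for any such $f$ only finitely many components of $F$ act non-trivially and the twisted product $m_F$ is well defined.

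Next I would invoke Lemma \ref{lem: coproduct gives Hopf}, which says that the coproduct of $\rU_q$ (namely $\Delta^\op$) is a bialgebra homomorphism $\rU_q \to (\rU_q^{\textnormal{cop}}\otimes \rU_q)_F$. Pulling back the $(\rU_q^{\textnormal{cop}}\otimes \rU_q)_F$-module algebra structure on $R[\rU_q]_F$ along this bialgebra map automatically endows $R[\rU_q]_F$ with the structure of a left $\rU_q$-module algebra (since bialgebra homomorphisms pull back module-algebra structures to module-algebra structures).

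It remains only to verify that the resulting $\rU_q$-action is the one described by the right adjoint action formula. Writing $\Delta^\op(x) = x_{(1)}^{\op}\otimes x_{(2)}^{\op}$ in Sweedler notation for the coproduct of $\rU_q$, the pullback action gives
\[
(x\cdot^\op f)(u) = \bigl((x_{(1)}^{\op}\otimes x_{(2)}^{\op})\cdot f\bigr)(u) = f\bigl(S'(x_{(1)}^{\op})\cdot^\op u\cdot^\op x_{(2)}^{\op}\bigr).
\]
But the right adjoint action of the Hopf algebra $\rU_q$ on itself is by definition $\ad_r^\circ(x)(u) = S'(x_{(1)}^{\op})\cdot^\op u\cdot^\op x_{(2)}^{\op}$, which gives the claimed formula.

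The main conceptual obstacle, such as it is, lies in keeping track of the various ``op/cop'' conventions: one must confirm that the Sweedler decomposition dictated by the coproduct of $\rU_q$ (which is the opposite of the coproduct of $\dU_q$), together with the antipode $S'$ of $\rU_q$, really does reproduce the right adjoint action as defined at the start of Section \ref{ssec: description of REA}. Once the conventions are aligned, the statement is a direct and essentially formal consequence of Proposition-Definition \ref{prop: Drinfeld twist}(b) and Lemma \ref{lem: coproduct gives Hopf}; no further computation involving the explicit expression for $\oR$ in \eqref{eq: R-matrix for rU} is needed.
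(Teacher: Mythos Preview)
Your proposal is correct and follows exactly the approach indicated in the paper: the paper states this lemma immediately after observing (in the sentence preceding it) that $\Delta^\op\colon \rU_q\to (\rU_q^{\textnormal{cop}}\otimes \rU_q)_F$ is a bialgebra homomorphism by Lemma~\ref{lem: coproduct gives Hopf}, so the lemma is obtained by pulling back the $(\rU_q^{\textnormal{cop}}\otimes \rU_q)_F$-module algebra structure along this map. Your explicit verification that the pulled-back action is $\ad_r^\circ$ makes precise what the paper leaves implicit.
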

We arrive at the definition of $O_q[G]$.
\begin{defi}\label{def Oq[G]}The {\it reflection equation algebra} $O_q[G]$ is the $R$-module $R[\dU_q(\g)]$ with the algebra structure of $R[\rU_q]_F$. Here we identify the $R$-modules $R[\dU_q(\g)]$ and $R[\rU_q]_F$ as above. 
\end{defi}

\begin{Lem}\label{lem: Oq[G] is left module algebra} $O_q[G]$ is a left $\dU_q(\g)$-module algebra with the $\dU_q(\g)$-module structure given by 
\[ (xf)(u)=f(\ad'_l(S'(x)) (u)),\]
for $x, u \in \dU_q(\g)$ and $f\in O_q[G]$    
\end{Lem}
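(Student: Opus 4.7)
The idea is to reinterpret the $\rU_q$-module algebra structure on $R[\rU_q]_F$ from the preceding lemma as a $\dU_q(\g)$-module algebra structure via the antipode $S'$. All the work is already done on the $\rU_q$ side; we only need to transport it correctly through the identifications $\rU_q=\dU_q(\g)^{\operatorname{op},\operatorname{cop}}$ and $O_q[G]=R[\rU_q]_F$ (as $R$-modules).

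The first step is a direct computation identifying the right adjoint action of $\rU_q$ with the left adjoint action of $\dU_q(\g)$. Writing $\Delta'(x)=x_{(1)}\otimes x_{(2)}$ in the Sweedler notation of $\dU_q(\g)$, the coproduct of $\rU_q$ is $\Delta'^{\operatorname{cop}}(x)=x_{(2)}\otimes x_{(1)}$ and its antipode is $S'$. Hence
\[
  \ad_r^{\circ}(x)(y)=S'(x_{(2)})\cdot^{\op} y\cdot^{\op} x_{(1)}=x_{(1)}\, y\, S'(x_{(2)})=\ad'_l(x)(y),
\]
since $a\cdot^{\op}b=ba$. So the action of $\rU_q$ on $R[\rU_q]_F$ from the previous lemma is given by $(x\cdot^{\op}f)(u)=f(\ad'_l(x)(u))$ using only the original $\dU_q(\g)$-product.

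Next, I transport this to a left $\dU_q(\g)$-action on $O_q[G]$. Set $x\triangleright f:=S'(x)\cdot^{\op}f$; by the formula above this reads $(x\triangleright f)(u)=f(\ad'_l(S'(x))(u))$, which is the action claimed in the statement. To see that this is a genuine left $\dU_q(\g)$-module structure, observe that $S'(x)\cdot^{\op}_{\rU_q}S'(y)=S'(y)\cdot S'(x)=S'(xy)$ in $\dU_q(\g)$, so $x\triangleright(y\triangleright f)=S'(x)\cdot^{\op}(S'(y)\cdot^{\op}f)=S'(xy)\cdot^{\op}f=(xy)\triangleright f$; the unit acts trivially because $S'(1)=1$.

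It remains to verify the module algebra axioms, which are inherited from the $\rU_q$-module algebra $R[\rU_q]_F$. Unitality $x\triangleright 1=\varepsilon'(x)\cdot 1$ follows immediately from $\varepsilon'\circ S'=\varepsilon'$. For multiplicativity one uses the standard Hopf identity $\Delta'\circ S'=(S'\otimes S')\circ\Delta'^{\operatorname{cop}}$, which gives
\[
  \Delta_{\rU_q}(S'(x))=\Delta'^{\operatorname{cop}}(S'(x))=S'(x_{(1)})\otimes S'(x_{(2)}).
\]
Therefore
\[
  x\triangleright(fg)=S'(x)\cdot^{\op}(fg)=\sum\bigl(S'(x_{(1)})\cdot^{\op}f\bigr)\bigl(S'(x_{(2)})\cdot^{\op}g\bigr)=\sum (x_{(1)}\triangleright f)(x_{(2)}\triangleright g),
\]
which is exactly the module algebra condition for $\dU_q(\g)$ acting on $O_q[G]$.

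The only genuine subtlety is the careful bookkeeping with the three layers of ``opposites'' (opposite product in $\cdot^{\op}$, cocommutator-flip in $\Delta'^{\operatorname{cop}}$, and the antipode identity relating $\Delta'\circ S'$ to $\Delta'^{\operatorname{cop}}$); once these are laid out, the lemma reduces to the content of the previous one.
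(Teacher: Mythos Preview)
Your proof is correct and follows essentially the same approach as the paper: both transport the $\rU_q$-module algebra structure on $R[\rU_q]_F$ to a $\dU_q(\g)$-module algebra structure via $x\mapsto S'(x)$, using the identity $\ad_r^\circ=\ad'_l$ and the antipode relation $\Delta'^{\operatorname{cop}}(S'(x))=S'(x_{(1)})\otimes S'(x_{(2)})$ for the multiplicativity check. You have supplied more detail (the explicit verification that $x\triangleright f:=S'(x)\cdot^{\op}f$ defines a left action, and the unitality check), but the argument is the same.
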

\begin{proof} We have that 
\begin{equation*}
(xf)(u)=f(\ad'_l(S'(x))(u))=f(\ad^\circ_r(S'(x))(u))=(S'(x)\cdot^\op f)(u),  
\end{equation*}
for $x,u \in \dU_q(\g)$ and $f\in O_q[G]$. In the expression $S'(x) \cdot^\op f$, we view $S'(x)$ as an element in $\oU_q$. Therefore, we have
\begin{equation*}
    x(fg)=S'(x) \cdot^\op (fg)=(S'(x_{(1)})\cdot^\op f)(S'(x_{(2)}) \cdot^\op g)=(x_{(1)} f)(x_{(2)} g),
\end{equation*}
for $x\in \dU_q(\g)$ and $f, g \in O_q[G]$.
\end{proof}
\begin{Rem} \label{rem: technical mod} One can apply the construction in Proposition \ref{prop: Drinfeld twist}.b) directly with $\dU_q(\g)$, however, the  algebra $O_q[G]$  obtained will be a left $\dU_q(\g)$-module structure with the $\dU_q(\g)$-module structure defined by $(xf)(u)=f(\ad'_r(x))(u)$ for $x, u \in \dU_q(\g)$ and $f\in O_q[G]$. Later, we want the $\dU_q(\g)$-module structure as in Lemma \ref{lem: Oq[G] is left module algebra}, which is why we need to work with $\rU_q$.
\end{Rem}

\begin{Rem}\label{rem: Oq[G] is functorial} The construction of $O_q[G]$ is functorial with respect to the base ring. Namely, for any $\CA_\sN$-algebra $R$, the algebra $O_{\CA_\sN}[G]\otimes_{\CA_{\sN}} R$ is the same as $O_R[G]$ (we already know the $R$-module isomomorphism $O_{\CA_\sN}[G]\otimes_{\CA_\sN} R \cong O_R[G]$ by Lemma \ref{Lem:coord_alg_freeness}).
\end{Rem}

\subsubsection{Categorical construction of $O_q[G]$} \label{ssec: cat def of REA}
We end  this subsection by interpreting the algebra structure of $O_q[G]$ using the categorical language. For any $X\in \Rep(\rU_q)$, set $X^*:=\Hom_R(X, R)\in \Rep(\rU_q)$. Then 
\begin{align}
  O_q[G] \simeq \bigsqcup X^*\otimes X/\sim,
\end{align}
where $X$ runs over  all modules in $\Rep(\rU_q)$ that are finitely generated over $R$. The equivalence relation $\sim$ is given by  
\[ 
  \phi^* y^*\otimes x \sim y^*\otimes \phi x \qquad  \forall \, \phi\colon X\rightarrow Y, x\in X, y^*\in Y^* \,.
\]
 Let $\sigma_{-,-}$ be the braiding on $\Rep(\rU_q)$. Then the multiplication $\mm$ on $O_q[G]$ can be described as follows: 
\begin{equation}\label{eq: cat rel of mult}\begin{tikzcd}[column sep=large] X^*\otimes X\otimes Y^*\otimes Y \arrow[r, "\sigma_{X^*\otimes X, Y^*}"]\arrow[d] & Y^*\otimes X^*\otimes X\otimes Y \arrow[r]& (X\otimes Y)^*\otimes (X\otimes Y) \arrow[d]\\
 O_q[G]\otimes O_q[G]\arrow[rr, "\mm"]&&O_q[G]
 \end{tikzcd}
\end{equation}
One routinely  recovers the  algebraic description of algebra structure on $O_q[G]$ constructed above from \eqref{eq: cat rel of mult}, c.f.  \cite[$\mathsection 2.3$]{jo1}

\subsection{The case when $R=\BF$ is a field}\

 Let $\Rep^{fd}(\rU_q)$ be the full subcategory of $\Rep(\rU_q)$ consisting of all finite dimensional representations. Recall that any element  of $\BF[\rU_q]$ can be represented  as a linear combination of  matrix coefficients of some object in $\Rep^{fd}(\rU_q)$. Let $X\in \Rep^{fd}(\rU_q)$ and $X^*=\Hom_\BF(X,\BF)$. Elements $v\in X$ and $f\in X^*$ give rise to the  matrix coefficient $\rc_{f,v}$. We use the notation $\rc_{-,-}$ to distinguish these matrix coefficients from the matrix coefficients  $c_{-,-}$ of $\dU_q(\g)$. 
 
 The Hopf algebra structure of $\BF[\rU_q]$ can be described as follows:
\begin{align*}
  \rc_{f,v}\cdot \rc_{g,w}=\rc_{f\otimes g, v\otimes w} \,, \quad \Delta(\rc_{f,v})=\sum \rc_{f,v_i}\otimes \rc_{v_i^*, v} \,, \\
  \varepsilon(\rc_{f,v})=f(v) \,, \qquad  [S^{\pm 1}(\rc_{f,v})](u)=f((S^{\pm 1}u)v)\,,
\end{align*}
where $v\in V, f\in V^*$ and $w\in W$ with $V, W\in \Rep^{fd}(\rU_q)$; $u \in \rU_q$ and  $\{v_i\},\{v^*_i\}$ are dual bases of $V, V^*$.

Let us define a bilinear form $\r: \BF[\rU_q]\otimes \BF[\rU_q]\rightarrow \BF$ by 
\[ \r(\rc_{f,v}, \rc_{g,w})= \<f\otimes g, \oR(v\otimes w)\>,\]
with $\oR$ from \eqref{eq: R-matrix for rU}. This bilinear form makes $\BF[\rU_q]$  a {\it coquasi-triangular Hopf algebra} according to the following definition, see \cite[$\mathsection 10.1.1$]{KS} or \cite[Definition 1.1]{KV}.

\begin{defi}
A coquasi-triangular bialgebra/Hopf algebra $(A,\r)$ over $\k$ is a pair consisting of a bialgebra/Hopf algebra $A$ over $\k$ 
with a convolution invertible linear map $\r\colon A\otimes A\rightarrow \k$ which satisfies the following relations:
\begin{align*}
  \r(a_{(1)}\otimes b_{(1)})a_{(2)}b_{(2)} &= b_{(1)}a_{(1)}\r(a_{(2)}\otimes b_{(2)}) \,,\\
  \r(ab\otimes c) &= \r(a\otimes c_{(1)})\r(b\otimes c_{(2)}) \,,\\
  \r(a\otimes bc) &= \r(a_{(1)}\otimes c) \r(a_{(2)}\otimes b) \,.
\end{align*}
\end{defi}
We can twist the algebra structure on $\BF[\rU_q]$ as follows, see \cite[Proposition-Definition 2.1]{KV}
\begin{equation}\label{eq:coquasi-twist} a \cdot_{\r} b:=\r(a_{(1)}, b_{(2)})\r(a_{(3)}, Sb_{(1)}) a_{(2)} b_{(3)}=\r(a_{(2)}, b_{(3)})\r(a_{(3)}, Sb_{(1)})b_{(2)} a_{(1)},
\end{equation}
with $a, b\in \BF[\rU_q]$.  
\begin{Lem}[Lemma 2.6, \cite{KV}] The algebra $O_q[G]$ can be obtained via the twist \eqref{eq:coquasi-twist}.
\end{Lem}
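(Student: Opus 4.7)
The plan is to show that both constructions reproduce the same categorical multiplication on matrix coefficients, namely the one described by the diagram \eqref{eq: cat rel of mult}. Every element of $\BF[\rU_q]$ is a matrix coefficient $\rc_{f,v}$ with $v\in X$, $f\in X^*$ for some $X\in\Rep^{fd}(\rU_q)$, so it suffices to verify equality of the two products on such elements.

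First, I would recall the standard dictionary between the $R$-matrix $\oR$ of \eqref{eq: R-matrix for rU} and two equivalent pieces of data: the coquasi-triangular pairing $\r$ on $\BF[\rU_q]$ defined above, and the braiding $\sigma_{X,Y}=\tau_{X,Y}\circ \oR$ on $\Rep^{fd}(\rU_q)$. In particular $\r(\rc_{f,v},\rc_{g,w})=\langle f\otimes g,\oR(v\otimes w)\rangle$ and both $\r$ and $\sigma$ are assembled from the same element $\oR$; the convolution inverse of $\r$ corresponds to $\oR^{-1}$ and to $\sigma^{-1}$.

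Next I would unpack Definition \ref{def Oq[G]} on $\rc_{f,v}\otimes \rc_{g,w}$. The element $F=\oR_{13}\oR_{23}\in H\otimes H$ with $H=\rU_q^{\text{cop}}\otimes \rU_q$ acts through $(x\otimes y)\cdot h(u)=h(S'(x)\cdot^{\op}u\cdot^{\op}y)$. A direct expansion shows that the $\oR_{23}$-factor, acting on the right slots of both tensorands, produces the braiding $\sigma_{X,Y^*}$ applied after evaluation, while $\oR_{13}$ contributes the braiding $\sigma_{X^*,Y^*}$ between the left slots. After composing with the untwisted product $\rc_{f,v}\cdot\rc_{g,w}=\rc_{f\otimes g,v\otimes w}$, the outcome is precisely the composition along the top row of \eqref{eq: cat rel of mult}, i.e.\ the categorical product $\mm_{O_q[G]}$.

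Then I would compute $\rc_{f,v}\cdot_{\r}\rc_{g,w}$ using formula \eqref{eq:coquasi-twist} together with the explicit coproduct $\Delta(\rc_{f,v})=\sum\rc_{f,v_i}\otimes\rc_{v_i^*,v}$ and the action of $S$ on matrix coefficients. The two factors $\r(a_{(1)},b_{(2)})$ and $\r(a_{(3)},Sb_{(1)})$ insert $\oR$ and $\oR^{-1}$ respectively, which, via the dictionary of the first step, correspond to the two braidings $\sigma_{X^*,Y^*}$ and $\sigma_{X,Y^*}^{-1}$ appearing in the categorical formula; the appearance of $S$ accounts for the swap between an action on $Y$ and the dual action on $Y^*$. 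After rewriting, this expression also coincides with the composition along the top row of \eqref{eq: cat rel of mult}, proving the equality of the two products.

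The main obstacle will be the combinatorial bookkeeping: both recipes involve several $\oR$-insertions and antipodes, applied to different tensor slots, and a careful use of the axioms of coquasi-triangularity (including that $\r$ is convolution invertible with inverse $\r\circ(S\otimes\id)$) is needed to match the factors. The categorical picture \eqref{eq: cat rel of mult} serves as the organizing principle which makes both computations land in the same expression.
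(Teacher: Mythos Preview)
The paper does not prove this statement itself; it simply records it as a citation of \cite[Lemma~2.6]{KV}. There the argument is a direct algebraic comparison: one expands $m_F(a\otimes b)=m\bigl(F\cdot(a\otimes b)\bigr)$ with $F=\oR_{13}\oR_{23}$ using the explicit $H=\rU_q^{\text{cop}}\otimes\rU_q$-action $(x\otimes y)\cdot f\,(u)=f(S'(x)\cdot^{\op}u\cdot^{\op}y)$ on matrix coefficients, rewrites each $\oR$-insertion as a value of $\r$ via $\r(\rc_{f,v},\rc_{g,w})=\langle f\otimes g,\oR(v\otimes w)\rangle$, and checks the result against formula~\eqref{eq:coquasi-twist}. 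The categorical picture is not invoked.

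Your strategy of funneling both products through the categorical multiplication \eqref{eq: cat rel of mult} is a legitimate alternative, and indeed the paper already records (last line of \S\ref{ssec: cat def of REA}) that the Drinfeld-twist side matches the categorical one. But your bookkeeping on the coquasi side is off. By the hexagon axiom, $\sigma_{X^*\otimes X,\,Y^*}=(\sigma_{X^*,Y^*}\otimes\id_X)\circ(\id_{X^*}\otimes\sigma_{X,Y^*})$; both factors are \emph{forward} braidings, and $\sigma_{X,Y^*}^{-1}$ does not appear. You are correct that $\r(a_{(3)},Sb_{(1)})=\bar\r(a_{(3)},b_{(1)})$ inserts $\oR^{-1}$, but matching this to a \emph{forward} braiding with $Y^*$ requires tracking how the antipode in $Sb_{(1)}$ interacts with the $\rU_q$-action on duals---a point that depends delicately on the choice of left versus right duals and on $S'$ versus $(S')^{-1}$, and which your outline does not address. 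Without that step the argument does not close. Once the identifications are corrected the route can be made to work, but the direct comparison in \cite{KV} is shorter and avoids these pitfalls.
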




\subsection{The case when $\BF=\BQ(v^{1/\sN})$ and $q=v$}\

Set $\BF[\rU_q]^*:=\Hom_\BF(\BF[\rU_q], \BF)$. This is naturally an algebra. Let us define a bilinear map $\textbf{q}(\cdot,\cdot): \BF[\rU_q]\otimes \BF[\rU_q]\rightarrow \BF$ via:
\[ \textbf{q}(a\otimes b)= \r(b_{(1)}, a_{(1)})\r(a_{(2)}, b_{(2)}),\qquad \text{for $a, b\in \BF[\rU_q]$.}\]
 This gives rise to a linear map $\tl_\r: \BF[\rU_q] \rightarrow \BF[\rU_q]^*$ via 
\[ \tl_\r(a)=\textbf{q}(a, \cdot), \qquad \text{for $a\in \BF[\rU_q]$.}\]
When $\BF=\BQ(v^{1/\sN})$, the natural Hopf pairing $\BF[\rU_q] \x \rU_q\rightarrow \BF$ is non-degenerate in the second argument, see  \cite[Proposition 5.11]{j}. So we have an algebra embedding $\rU_q\hookrightarrow \BF[\rU_q]^*$.
\begin{Lem}\label{lem: tilde(l)r morphism}
(a) $\tl_\r(\BF[\rU_q]) \subset \rU_q$.

\noindent
(b) The map $\tl_\r: O_q[G]\rightarrow \rU_q$ is a homomorphism of left $\rU_q$-module algebras, where the $\rU_q$-action on $O_q[G]$ is in Lemma \ref{lem: module algebra Oq[G]} and the $\rU_q$-action on $\rU_q$ is the left adjoint action $\ad_l^\circ$.
\end{Lem}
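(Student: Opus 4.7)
For part (a), the plan is that since every element of $\BF[\rU_q]$ is a linear combination of matrix coefficients $\rc_{f,v}$ with $v\in V$, $f\in V^*$, $V\in \Rep^{fd}(\rU_q)$, it suffices to exhibit an element $\xi(f,v)\in \rU_q$ such that $\tl_\r(\rc_{f,v})(b)=b(\xi(f,v))$ for every $b\in \BF[\rU_q]$. Using the coproduct formula $\Delta(\rc_{f,v}) = \sum_i \rc_{f,v_i}\otimes \rc_{v_i^*, v}$ and unwinding the definition of $\textbf{q}$, a direct computation yields
\[ \tl_\r(\rc_{f,v})(\rc_{g,w}) = g\!\left(\sum_{\alpha,\beta} f(\oR^{(2)}_\alpha \oR^{(1)}_\beta\, v)\cdot \oR^{(1)}_\alpha \oR^{(2)}_\beta\, w\right), \]
so $\xi(f,v)$ should be realized as the ``$(f,v)$-matrix coefficient'' of the reflection operator $\oR_{21}\oR$ evaluated in its first tensor factor on the module $V$.

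The principal obstacle is to verify that $\xi(f,v)$ lies in the non-completed algebra $\rU_q$ rather than in the idempotented completion in which $\oR$ itself naturally lives. I plan to decompose $\oR=\Theta\, K$ into its Cartan part $K=\sum_{\mu,\nu\in P} q^{(\nu,\mu)} 1_\mu\otimes 1_\nu$ and its $\tE\otimes S'(\tF)$-part $\Theta$, which acts locally finitely on any finite-dimensional module. A direct computation gives $K_{21}K=\sum_{\mu,\nu\in P} q^{2(\nu,\mu)} 1_\nu\otimes 1_\mu$, and upon pairing the first tensorand with a weight vector $v\in V_{\lambda_v}$, the series $\sum_\nu q^{2(\lambda_v,\nu)} 1_\nu$ in the second tensorand collapses to the action of the Cartan element $K^{2\lambda_v}$; the crucial point is the \emph{doubling} of the exponent which forces $2\lambda_v\in 2P$ for any $\lambda_v\in P$, placing this Cartan element in $\rU^0_q$ rather than only in its idempotented completion. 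Conjugating $\Theta_{21}\Theta$ past the Cartan then contributes only finitely many terms of the form $\tF^{(\cev{k_1})}K^\mu\tE^{(\cev{k_2})}$ with $\mu\in 2P$, so $\xi(f,v)\in \rU_q$ as required. The technical hurdle is careful bookkeeping of the $\kappa(\weight(\tE^{(\cev k)}))$-corrections that appear in \eqref{eq: R-matrix for rU}, and verifying via Lemma \ref{Q-to-2P} (namely $\kappa(Q)=2P$) together with $\kappa+\gamma=2\,\Id$ that every Cartan weight produced in the expansion genuinely lies in $2P$.

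For part (b), granted (a), both the multiplicativity $\tl_\r(a\cdot_\r b)=\tl_\r(a)\cdot^\op\tl_\r(b)$ and the intertwining of the $\rU_q$-actions are formal consequences of the coquasi-triangular axioms for $(\BF[\rU_q],\r)$. I will verify multiplicativity via a direct Sweedler calculation starting from the explicit twisted product \eqref{eq:coquasi-twist} and applying the two identities
\[ \r(ab\otimes c)=\r(a\otimes c_{(1)})\r(b\otimes c_{(2)}), \qquad \r(a\otimes bc)=\r(a_{(1)}\otimes c)\r(a_{(2)}\otimes b); \]
this is essentially the standard calculation underlying the embedding of the REA into the quantum group, cf.~\cite{KV}. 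The $\rU_q$-equivariance $\tl_\r(x\cdot a)=\ad^\circ_l(x)(\tl_\r(a))$ will then follow from the third coquasi-triangular identity $\r(a_{(1)}\otimes b_{(1)})a_{(2)}b_{(2)}=b_{(1)}a_{(1)}\r(a_{(2)}\otimes b_{(2)})$, combined with the description of the $\rU_q$-action on $O_q[G]$ from Lemma~\ref{lem: module algebra Oq[G]}. Both verifications are routine but demand careful tracking of the opposite/coopposite conventions relating $\rU_q$ to $\dU_q(\g)$.
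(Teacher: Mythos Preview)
Your proposal is correct and follows essentially the same route as the paper. For part (a), the paper likewise computes $\q(\rc_{f,v},\rc_{g,w})=g\big(\sum_{i,j} s_i t_j\, f(t_i s_j v)\, w\big)$ with $\oR=\sum s_i\otimes t_i$, then expands using the explicit formula \eqref{eq: R-matrix for rU} to obtain a finite sum of terms $\tE^{(\cev{k})}\cdot^\op S'(\tF^{(\cev{r})})\cdot^\op K^{2\weight(v)-\kappa(\weight(\tE^{(\cev{k})}))-\kappa(\weight(\tE^{(\cev{r})}))}$ and observes, exactly as you do, that the Cartan exponent lies in $2P$ because $2\weight(v)\in 2P$ and $\kappa(Q)=2P$; your $\Theta K$-decomposition is just a repackaging of the same computation. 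For part (b), the paper simply cites \cite[\S10.1.3, Proposition 11]{KS} and \cite[Propositions 1.7, 2.8]{KV}, which is the Sweedler calculation you outline.
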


\begin{proof}(a) Let $\oR=\sum_{i \in I} s_i \otimes t_i$. Let $V, W\in\Rep^{fd}(\rU_q)$. Let $v,w,f,g$ be weight vectors in $V,W, V^*, W^*$, respectively. Let $\{v_l\}, \{w_k\}$ be weight bases of $V, W$, respectively. Let $\{v^*_l\}, \{w^*_k\}$ be the corresponding dual bases of $V^*,W^*$. Then we have
\begin{equation}\label{eq: computation of q(.,.)}
\begin{split}
    \q(\rc_{f,v}, \rc_{g,w})&=\sum_{l,k}\Big\< g\otimes f, \oR(w_k\otimes v_l)\Big\>\Big\<v^*_l \otimes w^*_k, \oR(v\otimes w)\Big\>\\
                &=\sum_{l,k,i,j} g(s_i w_k)f(t_i v_l)v^*_l(s_j v)w^*_k(t_j w)\\
                &=g(\sum_{i,j} s_it_jf(t_i s_j v)w)
\end{split}
\end{equation}
Using \eqref{eq: computation of q(.,.)} and \eqref{eq: R-matrix for rU}, we get 
\begin{multline*}
\tl_\r(\rc_{f,v})=\sum_{\vec{k}, \vec{r}\in \BZ^N_{\geq 0}}  d_{\vec{k}}d_{\vec{r}}q^{-(\weight(v), \kappa(\weight(\tE^{(\cev{k})}))+\kappa(\weight(\tE^{(\vec{r})}))}f(S'(\tF^{(\cev{k})}) \cdot^\op \tE^{(\cev{r})}(v))\\
\tE^{(\cev{k})}\cdot^\op S'(\tF^{(\cev{r})}) \cdot^\op K^{2\weight(v)-\kappa(\weight(\tE^{(\cev{k})}))-\kappa(\weight(\tE^{(\vec{r})}))}.
\end{multline*}
Then we see that $\tl_\r(\rc_{f,v})\in \rU_q$ since $\kappa(\weight(\tE^{(\cev{k})}))\in 2P$ for all $\vec{k} \in \BZ^N_{\geq 0}$.


\noindent
(b) This follows by Proposition 11 (equations 10.1.3(28)-(30)) in \cite[$\mathsection 10.1.3$]{KS}, see also Proposition 1.7 and 2.8 in \cite{KV}. Note that in equation 10.1.3(28)  \cite{KS}, the term $g(a_{(1)}S(a_{(3)}))$ should be replaced by $g(S(a_{(1)})a_{(3)})$. This misprint has been noticed in \cite[Proposition 1.7]{KV}. 
\end{proof}

Let us consider the following composition:
\begin{equation}\label{eq: def of lr} l_\r: O_q[G]\xrightarrow[]{\tl_\r} \rU_q\xrightarrow[]{(S')^{-1}} \dU_q(\g).
\end{equation}
For each $\lambda \in P_+$, let $L_q(\lambda)$ be the simple module of highest weight $\lambda$ in $\Rep(\dU_q(\g))$. Let $v_\lambda$ be a nonzero highest weight vector in $L_q(\lambda)$ and $v^*_\lambda$ be the dual weight vector in $(L_q(\lambda))^*$. So we have the matrix coefficient $c_{v^*_\lambda, v_\lambda} \in O_q[G]$.

\begin{Lem}\label{lem: lr morphism}
(a) The map $l_\r$ is a morphism of left $\dU_q(\g)$-module algebras, where the $\dU_q(\g)$-action on $O_q[G]$ is  in  Lemma \ref{lem: Oq[G] is left module algebra} and the $\dU_q(\g)$-action on $\dU_q(\g)$ is the left adjoint action $\ad'_l$.

\noindent
(b) $l_\r(c_{v^*_\lambda, v_\lambda})=K^{-2\lambda}$.
\end{Lem}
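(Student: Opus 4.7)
The plan for (a) is to deduce it formally from Lemma~\ref{lem: tilde(l)r morphism}(b) using the fact that $(S')^{-1}\colon \rU_q\to\dU_q(\g)$ is a Hopf algebra isomorphism. Indeed, because the antipode $S'$ of $\dU_q(\g)$ is both an anti-algebra and an anti-coalgebra map, the formula $a\mapsto S'(a)$ gives a Hopf algebra isomorphism $\dU_q(\g)\iso \dU_q(\g)^{\op,\cop}=\rU_q$, whose inverse is $(S')^{-1}\colon \rU_q\iso \dU_q(\g)$. In particular, $(S')^{-1}$ intertwines the left adjoint actions $\ad_l^{\circ}\colon \rU_q\curvearrowright \rU_q$ and $\ad'_l\colon \dU_q(\g)\curvearrowright \dU_q(\g)$. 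Moreover, the proof of Lemma~\ref{lem: Oq[G] is left module algebra} records that the $\dU_q(\g)$-action on $O_q[G]$ used there is obtained from the $\rU_q$-action of Lemma~\ref{lem: module algebra Oq[G]} by precomposition with the Hopf map $S'\colon \dU_q(\g)\to \rU_q$. Since $(S')^{-1}\circ S'=\Id$, composing the $\rU_q$-equivariant algebra map $\tl_\r$ with $(S')^{-1}$ produces a $\dU_q(\g)$-equivariant algebra map, which is exactly $l_\r$.

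For (b), the plan is to specialize the explicit formula for $\tl_\r(\rc_{f,v})$ displayed in the proof of Lemma~\ref{lem: tilde(l)r morphism}(a)---a double sum over $\vec{k},\vec{r}\in\BZ^N_{\geq 0}$ whose $(\vec{k},\vec{r})$-summand carries the scalar factor $v^*_\lambda\big(S'(\tF^{(\cev{k})})\cdot^{\op}\tE^{(\cev{r})}(v_\lambda)\big)$ alongside the Cartan piece $K^{2\weight(v_\lambda)-\kappa(\weight(\tE^{(\cev{k})}))-\kappa(\weight(\tE^{(\cev{r})}))}$---to $V=L_q(\lambda)$, $v=v_\lambda$, $f=v^*_\lambda$. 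Because $v_\lambda$ is a highest weight vector, $\tE^{(\cev{r})}\cdot^{\op}v_\lambda=0$ for every $\vec{r}\neq \vec{0}$, collapsing the $\vec{r}$-sum. After enforcing $\vec{r}=\vec{0}$, the vector $S'(\tF^{(\cev{k})})\cdot^{\op}v_\lambda$ has weight strictly below $\lambda$ unless $\vec{k}=\vec{0}$, so the functional $v^*_\lambda$ detects only the $\vec{k}=\vec{r}=\vec{0}$ summand. The prefactors $d_{\vec{0}}=1$, the Cartan-twist exponent $q^{-(\weight(v_\lambda),0)}=1$, and $v^*_\lambda(v_\lambda)=1$ all collapse, leaving the bare Cartan factor $K^{2\lambda}\in \rU_q$. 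Applying $(S')^{-1}$, which sends $K^\mu$ to $K^{-\mu}$, yields $l_\r(c_{v^*_\lambda,v_\lambda})=K^{-2\lambda}$ in $\dU_q(\g)$.

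The main obstacle is the bookkeeping of conventions. One must carefully match the $\rU_q$-matrix-coefficient $\rc_{v^*_\lambda,v_\lambda}\in R[\rU_q]$ appearing in Lemma~\ref{lem: tilde(l)r morphism}(a) with the matrix coefficient $c_{v^*_\lambda,v_\lambda}\in O_q[G]$ featured in the statement (the two live in the same underlying $R$-module, with the $\rU_q$-module structure on $L_q(\lambda)$ induced from the $\dU_q(\g)$-structure via $(S')^{-1}$), and must track how $\weight(v_\lambda)$ is normalized in the formula so that the surviving Cartan factor is precisely $K^{2\lambda}$ prior to antipode inversion, rather than $K^{\pm 2\lambda}$ with a stray Cartan correction. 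Once the conventions are aligned, the two vanishing steps and the $(S')^{-1}$ step proceed with no room for error.
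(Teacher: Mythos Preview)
Your argument for (a) is correct and essentially identical to the paper's: both use that $(S')^{-1}\colon \rU_q\to\dU_q(\g)$ is a Hopf algebra isomorphism intertwining $\ad_l^\circ$ with $\ad'_l$, combined with the $\rU_q$-equivariance of $\tl_\r$ from Lemma~\ref{lem: tilde(l)r morphism}(b).

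For (b) there is a genuine gap, and it is precisely the ``bookkeeping of conventions'' that you flag as the main obstacle. The explicit sum formula from the proof of Lemma~\ref{lem: tilde(l)r morphism}(a) computes $\tl_\r(\rc_{f,v})$ for a matrix coefficient of a $\rU_q$-module. Plugging in $f=v^*_\lambda$, $v=v_\lambda$ therefore yields $\tl_\r(\rc_{v^*_\lambda,v_\lambda})$, and after applying $(S')^{-1}$ you obtain $l_\r(\rc_{v^*_\lambda,v_\lambda})$. But $\rc_{v^*_\lambda,v_\lambda}$ and $c_{v^*_\lambda,v_\lambda}$ are \emph{different} elements of the underlying $R$-module $O_q[G]$: with the $\rU_q$-action $x\cdot^\op v=S'(x)v$ on $\mathring{L}_q(\lambda)$ one has $\rc_{v^*_\lambda,v_\lambda}(x)=v^*_\lambda(S'(x)v_\lambda)=c_{v^*_\lambda,v_\lambda}(S'(x))$, so $c_{v^*_\lambda,v_\lambda}=(S')^{-1}\rc_{v^*_\lambda,v_\lambda}$. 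Your parenthetical remark that ``the two live in the same underlying $R$-module'' is true but does not make them equal. Relatedly, in the explicit formula the weight entering the Cartan factor is the $\rU_q$-weight, and $v_\lambda$ has $\rU_q$-weight $-\lambda$, not $\lambda$; so a careful execution of your computation gives $\tl_\r(\rc_{v^*_\lambda,v_\lambda})=K^{-2\lambda}$ and hence $l_\r(\rc_{v^*_\lambda,v_\lambda})=K^{2\lambda}$, the wrong sign for the wrong element.

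The paper resolves this by first proving $c_{v^*_\lambda,v_\lambda}=(S')^{-1}\rc_{v^*_\lambda,v_\lambda}$ explicitly, and then computing $\mathbf{q}\big((S')^{-1}\rc_{v^*_\lambda,v_\lambda},\rc_{g,w}\big)$ directly from the definition of $\mathbf{q}$ (rather than from the summed formula), using that $v_\lambda$ has $\rU_q$-weight $-\lambda$. This yields $\tl_\r(c_{v^*_\lambda,v_\lambda})=K^{2\lambda}$, and then $(S')^{-1}$ gives $K^{-2\lambda}$. Your high-level strategy (collapse the sum via the highest-weight property, then invert the antipode) is right, but you need that extra identification step and the correct weight sign; as written, your argument reaches the right answer only because two sign errors cancel.
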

\begin{proof}(a) Both $\tl_\r$ and $(S')^{-1}$ are algebra homomorphisms hence so is $l_\r$. It remains to check that $l_\r$ is $\dU_q(\g)$-linear:
\begin{multline*}
  l_\r(x f) = (S')^{-1}[\tl_\r(xf)] = 
  (S')^{-1}[\tl_\r(S'(x)\cdot^\op f)] = (S')^{-1}[\ad^\circ_l(S'(x))(\tl_\r(f))] = \\ 
  \ad'_l(x)((S')^{-1}[\tl_\r(f)]) = \ad'_l(x)(l_\r(f)) \,,
\end{multline*}
for $x\in \dU_q(\g)$ and $ f\in O_q[G]$. In the expression $S'(x) \cdot^\op f$, we view $S'(x)$ as an element in $\oU_q$.

\noindent
(b) Any left $\dU_q(\g)$-module $V$ acquires the left $\rU_q$-module structure as follows: $x\cdot^\op v=S'(x) v$ for all $x\in \rU_q$ and $v\in V$.  Let $\mathring{L}_q(\lambda)$ be the left $\rU_q$-module obtained from $L_q(\lambda)$. So we have a matrix coefficient $\rc_{v^*_\lambda, v_\lambda}$ of $\rU_q$. Note that $v_\lambda$ has the weight $-\lambda$ in $\mathring{L}_q(\lambda)$.

It turns out that $c_{v^*_\lambda, v_\lambda}=(S')^{-1}\rc_{v^*_\lambda, v_\lambda}$ in $O_q[G]$. Indeed,
\[ (S')^{-1}\rc_{v^*_\lambda, v_\lambda}(x)=v^*_\lambda((S')^{-1}x \cdot^\op v_\lambda)=v^*_\lambda(x v_\lambda)=c_{v^*_\lambda, v_\lambda}(x),\]
for all $x\in \rU_q$, where we identify $\rU_q$ with $\dU_q(\g)$  as $\BF$-vector spaces.

On the other hand,  notice that $v_\lambda$ has the weight $-\lambda$ in $\mathring{L}_q(\lambda)$. Then one argues as in the proof of part a) of Lemma \ref{lem: tilde(l)r morphism} to get
\[ \textbf{q}({S'}^{-1}\rc_{v^*_\lambda, v_\lambda}, \rc_{g,w})=g(q^{2(\lambda, \weight(w))} w)=g(K^{2\lambda} w),\]
where $w, g$ are weight vectors in $W, W^*$ for some $W\in \Rep^{fd}(\rU_q)$. It follows that
\begin{gather*}\tl_\r((S')^{-1}\rc_{v^*_\lambda, v_\lambda})=K^{2\lambda},\\
l_\r(c_{v^*_\lambda, v_\lambda})=(S')^{-1}(\tl_\r((S')^{-1} \rc_{v^*_\lambda, v_\lambda}))=(S')^{-1}(K^{2\lambda})=K^{-2\lambda}.
\end{gather*}
This finishes the proof.
\end{proof}

\begin{Lem}\label{lem: U-direct sum of Oq[G]}$O_q[G] \cong \bigoplus_{\lambda \in P_+} \dU_q(\g)c_{v^*_\lambda, v_\lambda}$.
\end{Lem}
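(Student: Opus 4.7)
The plan is to combine the good-filtration decomposition of $O_q[G]$ with the $\dU_q(\g)$-equivariant homomorphism $l_\r$ from Lemma \ref{lem: lr morphism}, exploiting semisimplicity in the generic setting.

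Since $\BF = \BQ(v^{1/\sN})$ has characteristic zero and $q=v$ is not a root of unity, the category $\mathrm{Rep}(\dU_q(\g))$ is semisimple, Weyl modules coincide with simples ($W_q(\lambda) = L_q(\lambda)$), and $H^0_q(\lambda^*) \cong L_q(\lambda)^*$. Applied to the good filtration of $O_q[G]$ from Lemma \ref{prop: good filtration on Oq[G]}, viewed as a filtration of $\dU_q(\g) \otimes_\BF \dU_q(\g)$-modules, this yields a splitting
\[
O_q[G] = \bigoplus_{\lambda \in P_+} N_\lambda, \qquad N_\lambda \cong L_q(\lambda) \otimes_\BF L_q(\lambda)^*,
\]
where $N_\lambda$ is the image of the matrix coefficient map $v \otimes f \mapsto c_{f, K^{-2\rho} v}$. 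In particular $c_{v^*_\lambda, v_\lambda} = q^{(2\rho,\lambda)} c_{v^*_\lambda, K^{-2\rho} v_\lambda}$ lies in $N_\lambda$. Since each $N_\lambda$ is a $\dU_q(\g)\otimes\dU_q(\g)$-submodule, it is stable under the adjoint $\dU_q(\g)$-action (coming from the diagonal via $\Delta'$), so $\dU_q(\g) c_{v^*_\lambda, v_\lambda} \subseteq N_\lambda$ and the sum $\sum_\lambda \dU_q(\g) c_{v^*_\lambda, v_\lambda}$ is automatically direct.

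To upgrade the inclusion $\dU_q(\g) c_{v^*_\lambda, v_\lambda} \subseteq N_\lambda$ to an equality I would invoke Lemma \ref{lem: lr morphism}: the map $l_\r \colon O_q[G] \to \dU_q(\g)$ is $\dU_q(\g)$-linear for the adjoint action and sends $c_{v^*_\lambda, v_\lambda}$ to $K^{-2\lambda}$. In the generic case the natural Hopf pairing between $\BF[\dU_q(\g)]$ and $\dU_q(\g)$ is non-degenerate (see \cite[Proposition 5.11]{j}), which forces $\tl_\r$, hence $l_\r$, to be injective. Consequently $l_\r$ restricts to an isomorphism $\dU_q(\g) c_{v^*_\lambda, v_\lambda} \iso \dU_q(\g)\cdot K^{-2\lambda}$ of adjoint $\dU_q(\g)$-modules. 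By the Joseph--Letzter structural theorem for the locally finite part of $\dU_q(\g)$ under the adjoint action, in the generic setting one has $\dU_q(\g)\cdot K^{-2\lambda} \cong L_q(\lambda) \otimes L_q(\lambda)^*$ as a $\dU_q(\g)$-module; in particular $\dim \dU_q(\g) \cdot K^{-2\lambda} = (\dim L_q(\lambda))^2 = \dim N_\lambda$. Combining this with the inclusion $\dU_q(\g) c_{v^*_\lambda, v_\lambda} \subseteq N_\lambda$ forces equality, and summing over $\lambda \in P_+$ completes the argument.

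The main obstacle is the cyclicity behind the last step: one needs that a single element ($K^{-2\lambda}$, or equivalently the highest-weight projection $P_\lambda = v_\lambda \otimes v^*_\lambda$ under the isomorphism $L_q(\lambda) \otimes L_q(\lambda)^* \cong \operatorname{End}_\BF(L_q(\lambda))$) generates the entire adjoint $\dU_q(\g)$-module $L_q(\lambda) \otimes L_q(\lambda)^*$, despite the fact that some isotypic components in its decomposition can have multiplicity $>1$. By Jacobson density the image of $\Delta'(\dU_q(\g))$ in $\operatorname{End}_\BF(N_\lambda)$ equals $\bigoplus_\mu \operatorname{End}_\BF(L_q(\mu))$, so cyclicity reduces to showing that the projections of $P_\lambda$ onto the various multiplicity spaces are in sufficiently general position; this is precisely the nontrivial content of the Joseph--Letzter theorem and is where the real work lies.
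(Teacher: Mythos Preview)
Your overall strategy matches the paper's: use semisimplicity to get $O_q[G]\cong\bigoplus_\lambda L_q(\lambda)\otimes L_q(\lambda)^*$, identify $c_{v^*_\lambda,v_\lambda}$ with a scalar multiple of $v_\lambda\otimes v^*_\lambda$ under the $\dU_q(\g)$-linear map $v\otimes f\mapsto c_{f,K^{-2\rho}v}$, and then argue that this element is cyclic for the adjoint action. The divergence is entirely in how you handle cyclicity.

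You treat cyclicity as the hard step and route through $l_\r$, Joseph--Letzter, and a dimension count. The paper simply says it is ``easy to show'', and indeed there is a direct two-line argument using the explicit form of $\Delta'$. From $\Delta'(\tF_i)=1\otimes\tF_i+\tF_i\otimes K^{\zmu_i}$ and the fact that $v^*_\lambda$ is the lowest-weight vector of $L_q(\lambda)^*$ (so $\tF_i v^*_\lambda=0$), one gets that $\tF_i\cdot(w\otimes v^*_\lambda)$ is a nonzero scalar times $(\tF_i w)\otimes v^*_\lambda$; since $v_\lambda$ generates $L_q(\lambda)$ under $\dU_q^<$, the cyclic submodule already contains all of $L_q(\lambda)\otimes v^*_\lambda$. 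For the second step, suppose inductively that $L_q(\lambda)\otimes f$ lies in the submodule for some weight vector $f\in L_q(\lambda)^*$. Then $\tE_i\cdot(w\otimes f)=w\otimes(\tE_i f)+q^{-(\zlambda_i,\operatorname{wt}(f))}(\tE_i w)\otimes f$ by $\Delta'(\tE_i)=1\otimes\tE_i+\tE_i\otimes K^{-\zlambda_i}$; the second term is already in the submodule, hence so is $w\otimes(\tE_i f)$. Since $v^*_\lambda$ generates $L_q(\lambda)^*$ under the $\tE_i$'s, induction finishes.

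Your worry about multiplicities and ``general position'' is therefore misplaced: the cyclicity comes from the specific structure of $v_\lambda\otimes v^*_\lambda$ (highest weight in the first factor, lowest in the second), not from any genericity, and Jacobson density is irrelevant. Invoking Joseph--Letzter is circular in spirit, since their result ultimately rests on exactly this kind of elementary observation. Separately, your claim that non-degeneracy of the Hopf pairing forces $\tl_\r$ to be injective needs more care: $\textbf{q}(a,b)=\r(b_{(1)},a_{(1)})\r(a_{(2)},b_{(2)})$ is a twisted convolution of two copies of $\r$, and its non-degeneracy in the first argument does not follow immediately from \cite[Proposition 5.11]{j}.
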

\begin{proof}
For any $V\in \Rep^{fd}(\dU_q(\g))$, the map $V\otimes V^* \rightarrow O_q[G]$ defined by $v\otimes f\mapsto c_{f, K^{-2\rho}v}$ is  $\dU_q(\g)$-linear, where the $\dU_q(\g)$-action on $O_q[G]$ is from Lemma \ref{lem: Oq[G] is left module algebra}.
Then we have 
\[ O_q[G]\cong \bigsqcup_{X\in \Rep^{fd}(\dU_q(\g))} X\otimes X^*/ \sim,\]
where the equivalence relation $\sim$ is given by 
\[ x\otimes \phi^* y^* \sim \phi x\otimes y^* \qquad \forall \phi: X\rightarrow Y, x\in X, y^*\in Y^*.\]

Since we are working over $\BF=\BQ(v^{1/\sN})$, the category $\Rep^{fd}(\dU_q(\g))$ is semisimple with simple objects $\{ L_q(\lambda)|\lambda \in P_+\}$. Therefore, 
\[ O_q[G]\cong \bigoplus_{\lambda\in P_+} L_q(\lambda)\otimes L_q(\lambda)^*.\]
On the other hand, it is easy to show that $L_q(\lambda) \otimes L_q(\lambda)^*$ is generated by $(K^{2\rho}v_\lambda) \otimes v^*_\lambda=q^{(2\rho, \lambda)}v_\lambda \otimes v^*_\lambda$ over $\dU_q(\g)$. Moreover, $(K^{2\rho}v_\lambda)\otimes v^*_\lambda$ is mapped to $c_{v^*_\lambda, v_\lambda}$ under the morphism $L_q(\lambda)\otimes L_q(\lambda)^* \rightarrow O_q[G]$ defined in the beginning of the proof.  Therefore, the lemma follows.
\end{proof}


\section{Isomorphism between REA and the locally finite part}\label{sec:REA as locfin}
In this section, let $R$ be an $\CA_\sN$-algebra where $\CA_\sN=\CA[v^{\pm 1/\sN}]$. 

\subsection{Ad-invariant bilinear form and the induced embedding} 
\

Let us recall the pairing $\<.,.\>': U^{ev}_q(\g) \x \dU_q(\g) \rightarrow R$ given by 
\begin{equation}\label{eq16}
  \left\<(yK^{\kappa(\nu)})K^\lambda (xK^{\gamma(\mu)}), (\dy K^{\kappa(\dnu)})u_0(\dx K^{\gamma(\dmu)})\right \>' = 
  (y,\dx)' \cdot (\dy,x)' \cdot \hat{\chi}_{-\lambda/2}(u_0)\cdot q^{(2\rho, \nu)},
\end{equation}
 for $y\in U^{ev<}_{-\nu}, x\in U^{ev>}_{\mu}, \dy\in \dU^{<}_{-\dnu}, \dx \in \dU^{>}_{\dmu}$, 
 $\nu, \mu, \dnu, \dmu \in Q_+$, and $\dlambda, \lambda \in 2P$,
see 
Proposition~\ref{Prop:pairing_whole_R}. 


This pairing induces an embedding of $\dU_q(\g)$-modules:
\begin{equation}\label{eq:iota-map}
  \iota\colon  U^{ev}_q(\g) \hookrightarrow \Big(\dU_q(\g)\Big)^* :=\text{Hom}_R(\dU_q(\g), R) \,.
\end{equation}

\begin{Lem}\label{lem: matrix coeff in Uev}
(a) Let $\phi\colon \dU^{<}_{-\nu} \x \dU^{>}_{\mu} \rightarrow R$ be a bilinear homomorphism and $\lambda\in P$. 
Then there is a unique element $u \in (U^{ev<}_{-\mu}K^{\kappa(\mu)})K^{2\lambda}(U^{ev>}_{\nu}K^{\gamma(\nu)})$ such that 
\begin{equation}\label{eq:elt-via-pairing} 
  \<u, (\dy K^{\kappa(\nu)})u_0(\dx K^{\gamma(\mu)})\>'=\phi(\dy,\dx)\hat{\chi}_{-\lambda}(u_0) 
  \qquad \forall\, \dx \in \dU^{>}_\mu, \dy \in \dU^{<}_{-\nu}, u_0\in \dU^0_q \,.
\end{equation}

\noindent
(b) The image of $\iota$ from~\eqref{eq:iota-map} contains $R[\dU_q(\g)]$.
\end{Lem}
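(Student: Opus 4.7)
By Lemma \ref{lem:twisted-triangular-PBW}, the space $(U^{ev<}_{-\mu}K^{\kappa(\mu)})K^{2\lambda}(U^{ev>}_{\nu}K^{\gamma(\nu)})$ is a free $R$-module with basis the ordered monomials $\{\tF^{\cev{k}}K^{\kappa(\mu)+2\lambda+\gamma(\nu)}\tE^{\cev{r}}\}$, where $\vec{k},\vec{r}\in\BZ^N_{\geq 0}$ satisfy $\sum_j k_j\beta_j=\mu$ and $\sum_j r_j\beta_j=\nu$. Writing $u=\sum a_{\vec{k},\vec{r}}\tF^{\cev{k}}K^{\kappa(\mu)+2\lambda+\gamma(\nu)}\tE^{\cev{r}}$ and expanding the pairing via \eqref{eq16}, the left-hand side of \eqref{eq:elt-via-pairing} reduces to $q^{(2\rho,\mu)}\hchi_{-\lambda}(u_0)\sum a_{\vec{k},\vec{r}}(\tF^{\cev{k}},\dx)'(\dy,\tE^{\cev{r}})'$, so the task is to solve $\sum a_{\vec{k},\vec{r}}(\tF^{\cev{k}},\dx)'(\dy,\tE^{\cev{r}})' = q^{-(2\rho,\mu)}\phi(\dy,\dx)$. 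The key input is that the restricted pairings $(\,,)'\colon U^{ev<}_{-\mu}\times\dU^>_\mu\to R$ and $(\,,)'\colon\dU^<_{-\nu}\times U^{ev>}_\nu\to R$ are \emph{perfect}: Corollary \ref{cor: pairing of PBW} combined with the identity $(n)_{v_i}!=v_i^{-n(n-1)/2}[n]_{v_i}!$ shows that the Gram matrices $(\tF^{\cev{k}},\tE^{(\cev{k'})})'$ and $(\tF^{(\cev{r'})},\tE^{\cev{r}})'$ are diagonal with entries in $R^\times$, because the denominators $(v_{i_p}-v_{i_p}^{-1})^{k_p}$ are units in $\CA$ by construction. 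The tensor product of these perfect pairings identifies $U^{ev<}_{-\mu}\otimes_R U^{ev>}_\nu$ with the $R$-dual of $\dU^<_{-\nu}\otimes_R\dU^>_\mu$, whence the coefficients $a_{\vec{k},\vec{r}}$ exist and are uniquely determined.

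\textbf{Part (b).} I will use the good filtration of $R[\dU_q(\g)]$ from Lemma \ref{prop: good filtration on Oq[G]}: every $f\in R[\dU_q(\g)]$ lies in some filtration piece $M_n$, which is a finite-rank free $R$-module whose successive subquotients are of the form $H^0_q(\lambda_i)\otimes_R H^0_q(\lambda_i^*)$. Thus $f$ is a finite $R$-linear combination of weight-homogeneous matrix coefficients $c_{\phi,v}(x):=\phi(xv)$ with $v,\phi$ weight vectors in (dual) Weyl modules, all of which have finite weight support, so it suffices to handle a single such $c_{\phi,v}$. With $v$ of weight $\eta$, a direct computation yields $c_{\phi,v}(\dy u_0\dx)=\hchi_{\eta+\mu}(u_0)\,\phi(\dy\dx v)$ for $\dy\in\dU^<_{-\nu}$, $u_0\in\dU^0_q$, $\dx\in\dU^>_\mu$; since $\dy\dx v\in V_{\eta+\mu-\nu}$ and $V$ has finite weight set, the set $S:=\{(\mu,\nu)\in Q_+\times Q_+ : \phi(\dy\dx v)\not\equiv 0\}$ is finite. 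For each $(\mu,\nu)\in S$ set $\lambda:=-\eta-\mu\in P$ and apply part (a) with the bilinear form
\[
\phi_{\mu,\nu,\lambda}(\dy,\dx):=q^{(\kappa(\nu),\mu)-(\kappa(\nu)+\gamma(\mu),\lambda)}\,\phi(\dy\dx v)
\]
to obtain $u_{\mu,\nu}\in(U^{ev<}_{-\mu}K^{\kappa(\mu)})K^{2\lambda}(U^{ev>}_{\nu}K^{\gamma(\nu)})$. The twist factor is tailored so that, after rewriting $\dy u_0\dx=(\dy K^{\kappa(\nu)})\bigl(q^{-(\kappa(\nu),\mu)}K^{-\kappa(\nu)-\gamma(\mu)}u_0\bigr)(\dx K^{\gamma(\mu)})$ and evaluating \eqref{eq:elt-via-pairing}, $\iota(u_{\mu,\nu})$ is supported on the $(\mu,\nu)$-weight component of $\dU_q$ and matches $c_{\phi,v}$ there. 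Then $u:=\sum_{(\mu,\nu)\in S}u_{\mu,\nu}\in U^{ev}_q(\g)$ is a \emph{finite} sum with $\iota(u)=c_{\phi,v}$.

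\textbf{Main obstacle.} The conceptual content is modest: part (a) extracts an existence/uniqueness statement from perfectness of the PBW-level pairing, and part (b) reduces to Weyl-module matrix coefficients via the good filtration. The principal technical nuisance is keeping the Cartan-twist scalars $q^{(\kappa(\nu),\mu)}$ and $K^{\kappa(\nu)+\gamma(\mu)}$ straight when translating between the symmetrized pairing argument $(\dy K^{\kappa(\nu)})u_0(\dx K^{\gamma(\mu)})$ appearing in \eqref{eq16} and the naive triangular form $\dy u_0\dx$; this is precisely where the explicit $q$-factor in the definition of $\phi_{\mu,\nu,\lambda}$ must be calibrated so that $\iota(u_{\mu,\nu})$ agrees with $c_{\phi,v}$ on the nose.
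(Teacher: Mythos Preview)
Your proof is correct, and for part (a) it follows essentially the same approach as the paper: both exploit that the PBW pairings on each weight component are perfect (diagonal Gram matrices with entries in $R^\times$, via Corollary~\ref{cor: pairing of PBW}), yielding existence, with uniqueness coming from non-degeneracy of $\langle\cdot,\cdot\rangle'$ in the first argument.

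For part (b) your argument works but is more circuitous than the paper's in two respects. First, the paper observes directly that $R[\dU_q(\g)]$ is spanned by matrix coefficients $c_{f,m}$ with $m$ a weight vector in an arbitrary finitely-generated rational module $V$ and $f$ a weight vector in $V^*$; the good filtration is not needed, and your assertion that $f$ is a combination of matrix coefficients \emph{of Weyl modules} does not follow from the filtration alone (the subquotients being $H^0_q(\lambda_i)\otimes H^0_q(\lambda_i^*)$ does not mean elements of $M_n$ are themselves Weyl-module matrix coefficients). This is harmless, since you only use finiteness of the weight set, but it is an unnecessary and slightly misstated detour. Second, the paper sidesteps your Cartan-twist bookkeeping entirely by evaluating $c_{f,m}$ directly at the already-twisted element $(\dy K^{\kappa(\nu)})u_0(\dx K^{\gamma(\mu)})$, which immediately gives $\hat\chi_{\mu+\eta}(u_0)\,f\bigl((\dy K^{\kappa(\nu)})(\dx K^{\gamma(\mu)})m\bigr)$ and feeds straight into part (a) with the bilinear form $(\dy,\dx)\mapsto f((\dy K^{\kappa(\nu)})(\dx K^{\gamma(\mu)})m)$ and no calibration factor. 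Your explicit twist accounting is correct but avoidable.
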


\begin{proof}
(a) Recall the perfect pairings $(\cdot,\cdot)'\colon U^{ev<}_{-\mu} \x \dU^{>}_\mu \rightarrow R$ and 
$(\cdot,\cdot)'\colon \dU^{<}_{-\nu} \x U^{ev>}_{\nu} \rightarrow R$. We choose a basis $u_1^\mu, \dots , u_{r(\mu)}^\mu$ for $\dU^{>}_\mu$ 
and the dual basis $v_1^{-\mu}, \dots,v_{r(\mu)}^{-\mu}$ of $U^{ev<}_{-\mu}$. Similarly, we choose a basis 
$u_1^{-\nu}, \dots, u^{-\nu}_{r(\nu)}$ for $\dU^{<}_{-\nu}$ and the dual basis $v^\nu_1, \dots, v^\nu_{r(\nu)}$ 
of $U^{ev>}_{\nu}$. Then
\[
  u=\sum \phi(u^{-\nu}_j, u^{\mu}_i)q^{-(2\rho, \nu)}(v_i^{-\mu}K^{\kappa(\mu)})K^{2\lambda}(v^{\nu}_jK^{\gamma(\nu)})
\]
satisfies~\eqref{eq:elt-via-pairing}. The uniqueness of $u$ is due to the non-degeneracy of $\< \cdot,\cdot \>'$ 
in the first argument  by Proposition \ref{Prop:pairing_whole_R} .

(b) Let $V \in \Rep(\dU_q(\g))$ such that $V$ is finitely generated over $R$. Let $m\in V$ be an element of 
weight $\lambda$ and $f\in V^*:=\text{Hom}_R(V,R)$ be an element of weight $\lambda'$. It suffices to show 
that $c_{f, m}$ is contained in the image of~$\iota$ since $R[\dU_q(\g)]$ is spanned by these matrix coefficients over $R$. We note that $c_{f,m}(\dU^{<}_{-\nu}\dU^0_q \dU^{>}_\mu) \neq 0$ 
for only finitely many pairs $(\nu, \mu)\in Q_+\x Q_+$. For any such $\nu,\mu$ and all 
$\dx\in \dU^>_{\mu}, \dy\in \dU^<_{-\nu},u_0\in U^0_q$, we have 
\[ 
  c_{f,m}((\dy K^{\kappa(\nu)})u_0(\dx K^{\gamma(\mu)}))=
  f((\dy K^{\kappa(\nu)})u_0(\dx K^{\gamma(\mu)})m)=
  \hat{\chi}_{\mu+\lambda}(u_0)f((\dy K^{\kappa(\nu)})(\dx K^{\gamma(\mu)})m) \,.
\]
By part (a), there is $u_{\mu\nu}\in U^{ev<}_{-\mu}U^{ev,0}U^{ev>}_{\nu}$ such that $\< u_{\mu\nu},v \>'=c_{f,m}(v)$ for 
all $v\in \dU^{<}_{-\nu}\dU^0_q\dU^{>}_\mu$. Hence, the element $u=\sum_{\mu, \nu} u_{\mu\nu}$ ( all but finitely many 
$u_{\mu\nu}$ are $0$) satisfies $\iota(u)=c_{f,m}$.
\end{proof}

For any $\lambda\in P_+$, we recall the Weyl module $W_q(\lambda)$ from Section \ref{sec rational reps}. By Lemma \ref{Lem:Weyl_char_formula}, $W_q(\lambda)$ is free of finite rank over $R$ and has a basis consisting of weight vectors. Let $v_\lambda$ be a non-zero vector spanning the highest weight space $W_q(\lambda)_\lambda$. Then we can find a weight vector $v^*_\lambda\in W_q(\lambda)^*$ such that $v^*_\lambda(v_\lambda)=1$ and $v^*_\lambda(v_i)=0$ for other weight vectors in the $R$-basis of $W_q(\lambda)$.
\begin{Lem}\label{lem: image of iota(K(-2lambda))} We have $\iota(K^{-2\lambda})=c_{v^*_\lambda, v_\lambda}$.
\end{Lem}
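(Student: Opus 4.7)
The plan is to verify the identity $\iota(K^{-2\lambda})(x)=c_{v^*_\lambda,v_\lambda}(x)$ for every $x\in \dU_q(\g)$ by evaluating both sides on an arbitrary PBW basis element. Using Lemma~\ref{lem:twisted-triangular-PBW}, any such $x$ decomposes as $x=(\dy K^{\kappa(\dnu)})\,u_0\,(\dx K^{\gamma(\dmu)})$ with $\dy\in\dU^{<}_{-\dnu}$, $\dx\in\dU^{>}_{\dmu}$, $u_0\in \dU^0_q$, and $\dnu,\dmu\in Q_+$, so by linearity it suffices to check agreement on such an element.

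First I would compute the left-hand side directly from \eqref{eq16}. Taking $y=1$, $x_{\text{left}}=1$ (hence $\nu=\mu=0$), and the Cartan middle factor equal to $K^{-2\lambda}$, the formula specializes to
\[
\bigl\langle K^{-2\lambda},\,(\dy K^{\kappa(\dnu)})\,u_0\,(\dx K^{\gamma(\dmu)})\bigr\rangle'
=(1,\dx)'\,(\dy,1)'\,\hat\chi_\lambda(u_0).
\]
The Hopf pairing properties satisfied by $(\cdot,\cdot)'$ (Corollary~\ref{Lem:pairing_quantum_Borel_R}) give $(1,\dx)'=\varepsilon(\dx)$ and $(\dy,1)'=\varepsilon(\dy)$, which vanish unless $\dmu=0$ and $\dnu=0$, in which case $\dx,\dy$ are scalars in $R$ equal to their own counits.

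Next I would compute $c_{v^*_\lambda,v_\lambda}(x)=v^*_\lambda(xv_\lambda)$ by letting $x$ act on $v_\lambda$ from the right. Since $v_\lambda$ spans the highest weight line of $W_q(\lambda)$, the operator $\dx\in \dU^{>}_{\dmu}$ kills $v_\lambda$ whenever $\dmu\ne 0$, so $\dx K^{\gamma(\dmu)}v_\lambda=\varepsilon(\dx)\,v_\lambda$ uniformly in $\dmu$. The element $u_0\in \dU^0_q$ then acts on $v_\lambda$ by the character $\hat\chi_\lambda$ (this is precisely the definition of the $\lambda$-weight condition in Definition~\ref{defi:rat_rep}, matching \eqref{eq: defi of chi-lambda}). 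Finally, $\dy K^{\kappa(\dnu)}$ shifts the weight by $-\dnu$, and since $v^*_\lambda$ was chosen to vanish on every weight basis vector except $v_\lambda$, the resulting scalar equals $\varepsilon(\dy)\,\varepsilon(\dx)\,\hat\chi_\lambda(u_0)$. Thus both sides coincide, completing the proof.

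The computation is direct with no real obstacle; the only subtle point is confirming that the specialization of \eqref{eq16} produces exactly the character $\hat\chi_\lambda$ (so that $\lambda\mapsto \hat\chi_{-\lambda/2}$ applied to $-2\lambda$ indeed yields $\hat\chi_\lambda$) and that this same character governs the action of $\dU^0_q$ on $v_\lambda$, which is the content of Definition~\ref{defi:rat_rep}(i) combined with \eqref{eq: defi of chi-lambda}.
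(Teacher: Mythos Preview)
Your proof is correct and follows essentially the same approach as the paper's own argument: both compute $\langle K^{-2\lambda},\cdot\rangle'$ and $c_{v^*_\lambda,v_\lambda}(\cdot)$ on a triangular basis element of $\dU_q(\g)$ and observe that each equals $\varepsilon(\dy)\,\varepsilon(\dx)\,\hat\chi_\lambda(u_0)$. The paper's version is terser, but your added justifications (the counit values of $(1,\dx)'$ and $(\dy,1)'$, and the weight argument for why $v^*_\lambda$ kills the $\dy$-shifted vector) are all accurate.
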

\begin{proof}
The pairing $\<K^\lambda, u\>'$ equals to $\hat{\chi}_{-\lambda/2}(u)$ for any $u \in \dU^0_q$ and vanishes for all elements $u \in \bigoplus_{(\nu,\mu)\in Q_+ \x Q_+ \backslash(0,0)} \dU^<_{-\nu} \dU^0_q \dU^>_\mu$. On ther other hand, $c_{v^*_\lambda, v_\lambda}(u)$ equals $\hat{\chi}_\lambda(u)$ for any $u \in \dU^0_q$ and vanishes for all  $u \in \bigoplus_{(\nu,\mu)\in Q_+ \x Q_+ \backslash(0,0)} \dU^<_{-\nu} \dU^0_q \dU^>_\mu$. This immediately implies the result.
\end{proof}
Thus, we obtain the embedding of left $\dU_q(\g)$-modules such that $\hiota(c_{v^*_\lambda, v_\lambda})=K^{-2\lambda}$,
\[ \hiota: R[\dU_q(\g)]\hookrightarrow U^{ev}_q(\g).\]
 Here we equip $R[\dU_q(\g)]$ with a left $\dU_q(\g)$-module structure via $(xf)(y)=f(\ad'_l(S'(x))y)$ for all $x, y\in \dU_q(\g)$  and $f\in R[\dU_q(\g)]$. Let us recall the algebra $O_q[G]$ in Section \ref{ssec: description of REA}.
\begin{Lem}\label{lem: Oq[G] --> even part} The embedding $\hiota: O_q[G]\hookrightarrow U^{ev}_q(\g)$ is a homomophism of $\dU_q(\g)$-module algebras.
\end{Lem}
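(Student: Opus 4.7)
The idea is to identify $\hiota$ with the algebra morphism $l_{\mathbf{r}}\colon O_q[G]\to \dU_q(\g)$ of Lemma~\ref{lem: lr morphism} and thereby transfer the multiplicativity of $l_{\mathbf{r}}$ to $\hiota$. The module-homomorphism part of the assertion is already built into the definition of $\hiota$: indeed, by the $\ad'$-invariance of $\langle\cdot,\cdot\rangle'$ (Proposition~\ref{Prop:pairing_whole_R}(a)), the inclusion $\iota\colon U^{ev}_q(\g)\hookrightarrow \Hom_R(\dU_q(\g),R)$ intertwines the $\ad'_l$-action on $U^{ev}_q(\g)$ with the action on $\Hom_R(\dU_q(\g),R)$ dual to $\ad'_l$, and the latter restricts on $R[\dU_q(\g)]$ to exactly the $\dU_q(\g)$-action of Lemma~\ref{lem: Oq[G] is left module algebra}. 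Hence only the multiplicativity remains to be established.

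First I would reduce to the generic case $R=\BQ(v^{1/\sN})$, $q=v$. The pairing $\langle\cdot,\cdot\rangle'$ and therefore $\hiota$ are defined at the $\CA_\sN$-level, and the formation of $O_q[G]$ commutes with base change by Remark~\ref{rem: Oq[G] is functorial}. Both $O_{\CA_\sN}[G]$ and $U^{ev}_{\CA_\sN}(\g)$ are free $\CA_\sN$-modules by Lemma~\ref{Lem:coord_alg_freeness} and Lemma~\ref{lem:twisted-triangular-PBW}(a1)--(a2), hence torsion-free over $\CA_\sN$. Consequently, the identity $\hiota(a\cdot b)-\hiota(a)\hiota(b)=0$ holds in $U^{ev}_{\CA_\sN}(\g)$ if and only if it holds after tensoring with the fraction field $\BQ(v^{1/\sN})$, which reduces everything to the generic setting.

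In the generic case, I would consider the composition
\[
  \tilde{\hiota}\colon O_q[G]\xrightarrow{\hiota} U^{ev}_q(\g)\hookrightarrow \dU_q(\g)
\]
and compare it with $l_{\mathbf{r}}$. Both $\tilde\hiota$ and $l_{\mathbf{r}}$ are left $\dU_q(\g)$-module homomorphisms when the source carries the action of Lemma~\ref{lem: Oq[G] is left module algebra} and the target carries the adjoint action $\ad'_l$: for $\tilde\hiota$ this follows from the preceding paragraph combined with the fact that $U^{ev}_q(\g)\subset \dU_q(\g)$ is $\ad'_l$-stable (Proposition~\ref{action of Lus on DCK}), while for $l_{\mathbf{r}}$ this is Lemma~\ref{lem: lr morphism}(a). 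Furthermore,
\[
  \tilde\hiota(c_{v^*_\lambda,v_\lambda}) \,=\, K^{-2\lambda} \,=\, l_{\mathbf{r}}(c_{v^*_\lambda,v_\lambda}) \qquad \forall\, \lambda\in P_+,
\]
by Lemma~\ref{lem: image of iota(K(-2lambda))} and Lemma~\ref{lem: lr morphism}(b). Since $\{c_{v^*_\lambda,v_\lambda}\}_{\lambda\in P_+}$ generates $O_q[G]$ as a $\dU_q(\g)$-module by Lemma~\ref{lem: U-direct sum of Oq[G]}, the uniqueness of $\dU_q(\g)$-equivariant extensions forces $\tilde\hiota=l_{\mathbf{r}}$. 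Because $l_{\mathbf{r}}$ is an algebra homomorphism and the inclusion $U^{ev}_q(\g)\hookrightarrow \dU_q(\g)$ is an injective algebra map, $\hiota$ must preserve products as well.

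The main thing to watch is the bookkeeping in the previous paragraph: one has to verify that the $\dU_q(\g)$-actions making $\hiota$ and $l_{\mathbf{r}}$ into module morphisms really are the same (in particular matching $\ad'_l$ on the target with the action of Lemma~\ref{lem: Oq[G] is left module algebra} on the source through the antipode $S'$). Once the conventions are tracked correctly, the remainder is a straightforward uniqueness argument, and the technically heavy input (the categorical/$R$-matrix description of $O_q[G]$) is packaged inside Lemma~\ref{lem: lr morphism}.
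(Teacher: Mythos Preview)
Your proposal is correct and follows essentially the same approach as the paper's own proof: reduce to the generic field $\BQ(v^{1/\sN})$ by base change through $\CA_\sN$ (using freeness of both sides), then identify $\hiota$ with $l_\r$ by observing that both are $\dU_q(\g)$-module maps agreeing on the generators $c_{v^*_\lambda,v_\lambda}$, and conclude multiplicativity from that of $l_\r$. Your write-up is in fact slightly more explicit about why $\hiota$ is a $\dU_q(\g)$-module map in the first place.
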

\begin{proof}
    Since $\hiota$ is a homomorphism of $\dU_q(\g)$-modules, it is enough to show that $\hiota$ is an algebra homomorphism. In the rest of this proof, we shall replace the subscript $q$ by the base ring that $q$ is contained in. Let $\K=\BQ(v^{1/\sN})$. Consider the following diagram
\begin{equation*}
\begin{tikzcd} 
  O_{\K}[G]\arrow[r, "\hiota_\K"]& U^{ev}_{\K}(\g)\\
  O_{\CA_{\sN}}[G] \arrow[r, "\hiota_{\CA_\sN}"]\arrow[u, hook]\arrow[d]& U^{ev}_{\CA_\sN}(\g)\arrow[d]\arrow[u, hook]\\
  O_R[G]\arrow[r, "\hiota_R"]&U^{ev}_R(\g)
\end{tikzcd}
\end{equation*}
The other  horizontal maps are obtained from the middle map by base change. The two upper vertical arrows are inclusions since $O_{\CA_\sN}[G]$ and $U^{ev}_{\CA_\sN}(g)$ are free over $\CA_{\sN}$ by Lemma \ref{Lem:coord_alg_freeness} and Lemma \ref{lem:twisted-triangular-PBW}. Therefore, it is enough to show $\hiota_\K: O_\K[G]\rightarrow U^{ev}_\K(\g)$ is an algebra homomorphism.

Let us recall the homorphism of $\dU_\K(\g)$-module algebras $l_\r: O_\K[G]\rightarrow \dU_\K(\g)$ in \eqref{eq: def of lr}. On the other hand, we consider the morphism $\hiota_\K: O_\K[G]\rightarrow U^{ev}_\K(\g)\cong \dU_\K(\g)$.
Both $\hiota_\K$ and $l_\r$ are morphisms of $\dU_q(\g)$-modules such that $\hiota_\K(c_{v^*_\lambda, v_\lambda})=l_\r(c_{v^*_\lambda, v_\lambda})=K^{-2\lambda}$, see Lemma \ref{lem: lr morphism} and Lemma \ref{lem: image of iota(K(-2lambda))} . Moreover, $O_\K[G]$ is generated by $c_{v^*_\lambda, v_\lambda}$ over $\dU_\K(\g)$ by Lemma \ref{lem: U-direct sum of Oq[G]}. Therefore, $l_\r$ and $\hiota_\K$ coincide. This implies that $\hiota_\K$ is an algebra homomorphism. 
\end{proof}

\subsection{The locally finite part $U^{fin}_q$}\

Let us define
\begin{equation*}
    U^{fin}_q:=\Big\{ a\in U^{ev}_q(\g)\Big|\ad'(\dU_q(\g))(a)\; \text{is a finitely generated $R$-module}\Big\}
\end{equation*}
Since $U^{ev}_q(\g)$ is a weight module, $U^{fin}_q$ is the maximal rational subrepresentation of $U^{ev}_q(\g)$ and then $\hiota(O_q[G])\subset U^{fin}_q$. It is obvious that $U^{fin}_q$ is an $R$-subalgebra of $U^{ev}_q(\g)$ by \eqref{ad on products}. It is this subalgebra that we call the {\it locally finite part}. We will show that $\hiota(O_q[G]) \iso U_q^{fin}$ for any  Noetherian $\CA_\sN$-algebra $R$, and identify $O_q[G]$ with a subalgebra of $U_q^{ev}$ via $\hiota$.

The key result of this section is similar to \cite[Theorem 6.4]{jl}
\begin{Prop}\label{prop: basis of Ufin}
(a) $K^{2\b} \in U^{fin}_q$ iff $\b \in -P_+$.

\noindent
(b) $U^{ev}_q(\g)$ is obtained from $U^{fin}_q$ ( resp. $O_q[G]$) by localizing the elements $\{K^{-2\w_i}\}_{i=1}^r$. 
\end{Prop}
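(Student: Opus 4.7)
For part (a), the implication $\beta\in -P_+\Rightarrow K^{2\beta}\in U^{fin}_q$ is essentially already in place. Writing $\beta=-\lambda$ with $\lambda\in P_+$, Lemma \ref{lem: image of iota(K(-2lambda))} gives $K^{-2\lambda}=\hiota(c_{v^*_\lambda,v_\lambda})$, and the $\dU_q(\g)$-equivariance of $\hiota$ from Lemma \ref{lem: Oq[G] --> even part} identifies the orbit $\ad'(\dU_q(\g))(K^{-2\lambda})$ with the image of the rational $\dU_q(\g)$-submodule $\dU_q(\g)\cdot c_{v^*_\lambda,v_\lambda}\subset O_q[G]$. This submodule is a quotient of the Weyl module $W_q(\lambda)$ applied to a highest-weight generator, hence finitely generated over $R$ by Lemma \ref{Lem:Weyl_char_formula}, so $K^{-2\lambda}\in U^{fin}_q$.

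For the converse, suppose $(\beta,\alpha_i^\vee)>0$ for some $i$. Specialising \eqref{dividedE-acts-K} at $K^{2\beta}$ gives
\[
  \ad'(\tE_i^{(m)})(K^{2\beta})\,=\,\frac{\prod_{s=0}^{m-1}\bigl(1-q_i^{2((\beta,\alpha_i^\vee)+s)}\bigr)}{(m)_{q_i}!}\,\tE_i^{\,m}K^{2\beta+m\zlambda_i},
\]
and the right-hand side lies in the weight space of weight $m\alpha_i$. Since $(\beta,\alpha_i^\vee)+s>0$ for every $s\ge 0$, none of the numerator factors vanish when $q$ is generic; at roots of unity the vanishing indices in the numerator and denominator pair up via Lemma \ref{lem:aux-at-roots}(a) so that the quotient is a nonzero element of $R$ for $m$ in an arithmetic progression. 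The corresponding PBW monomials $\tE_i^{\,m}K^{2\beta+m\zlambda_i}$ are nonzero by Lemma \ref{lem:twisted-triangular-PBW} and live in pairwise distinct weight spaces of $U^{ev}_q(\g)$, so the orbit has infinite $R$-rank and cannot be finitely generated. Hence $K^{2\beta}\notin U^{fin}_q$.

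For part (b), the strategy is to show that every PBW monomial becomes locally finite after a sufficiently negative Cartan shift. By Lemma \ref{lem:twisted-triangular-PBW} together with the commutation relations between the Cartan generators and $\tF_i,\tE_i$, any $x\in U^{ev}_q(\g)$ is an $R$-linear combination of monomials $\tF^{\cev{k}}K^{2\eta}\tE^{\cev{r}}$, and $K^{-2\lambda}\cdot\tF^{\cev{k}}K^{2\eta}\tE^{\cev{r}}$ equals an $R$-unit times $\tF^{\cev{k}}K^{2(\eta-\lambda)}\tE^{\cev{r}}$. The key claim is that for $\eta\in -P_+$ sufficiently deep (depending on $\vec k,\vec r$), this monomial lies in $U^{fin}_q$. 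One proves this by applying the coproduct formula \eqref{eq:twisted Hopf on divided powers} together with the Leibniz rule \eqref{ad on products}: the principal term of $\ad'(\tE_i^{(m)})(\tF^{\cev{k}}K^{2\eta}\tE^{\cev{r}})$ has all of $\tE_i^{(m)}$ acting on $K^{2\eta}$ and vanishes for large $m$ by the converse direction of (a) applied with $-\eta$ in place of $\beta$; the subleading terms involve proper divided subpowers $\tE_i^{(c)}$ with $c<m$ acting on the root-vector factors against a residual $\tE_i^{(m-c)}$ acting on $K^{2\eta}$, and these can be controlled by an induction on $|\vec k|+|\vec r|$ using the same vanishing mechanism. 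A symmetric argument handles $\ad'(\tF_i^{(m)})$. This yields $x=K^{2\lambda}z$ with $z\in U^{fin}_q$ and $\lambda\in P_+$, so the localization of $U^{fin}_q$ at $\{K^{-2\omega_i}\}$ coincides with $U^{ev}_q(\g)$. Since $\hiota(O_q[G])\supset\{K^{-2\lambda}:\lambda\in P_+\}$ by Lemma \ref{lem: image of iota(K(-2lambda))}, the same localization argument applied to $O_q[G]$ viewed inside $U^{ev}_q(\g)$ via $\hiota$ produces the analogous statement.

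The main technical obstacle is the coproduct bookkeeping in (b): the subleading terms in $\ad'(\tE_i^{(m)})$ applied to a PBW monomial mix divided powers acting on root-vector factors with a residual divided power acting on $K^{2\eta}$, and ensuring simultaneous vanishing of all such cross terms for $m$ in a cofinal set requires the inductive scheme sketched above to be run with some care, together with careful handling of the root-of-unity case via Lemma \ref{lem:aux-at-roots}.
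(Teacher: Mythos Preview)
Your approach to part (a) is essentially the paper's. The ``if'' direction in the paper is even terser than yours: since $\hiota(O_q[G])\subset U^{fin}_q$ and $K^{-2\lambda}=\hiota(c_{v^*_\lambda,v_\lambda})$, one is done immediately (the finiteness comes from $O_q[G]$ being rational, via Proposition~\ref{prop:rational_finiteness}; your remark that the orbit is a Weyl-module quotient is not quite accurate, since $c_{v^*_\lambda,v_\lambda}$ has adjoint weight $0$, but this is inessential). For the ``only if'' direction the paper uses exactly your formula and reduces to a residue field to show the coefficient $c_m$ is nonzero for infinitely many $m$, rewriting it as $(-1)^m(1-\bar q_i^{-2})^m\bar q_i^{(2c+m-1)m}\binom{c+m-1}{m}_{\bar q_i}$.

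For part (b), however, the paper's argument is dramatically simpler than your inductive scheme and sidesteps the ``main technical obstacle'' you identify entirely. Rather than showing that an arbitrary PBW monomial becomes locally finite after a deep Cartan shift, the paper observes that it suffices to capture the \emph{generators} of $U^{ev}_q(\g)$ in the localization. From \eqref{eq:explicit adjoint action} one computes directly
\[
  \ad'(\tE_i)(K^{-2\omega_i})=(1-q_i^{-2})\,\tE_iK^{\zlambda_i}K^{-2\omega_i},\qquad
  \ad'(\tF_i)(K^{-2\omega_i})=(1-q_i^{-2})\,\tF_iK^{-\zmu_i}K^{-2\omega_i}.
\]
Since $K^{-2\omega_i}\in U^{fin}_q\cap \hiota(O_q[G])$ by part (a), and $1-q_i^{-2}\in R^\times$ by the standing hypothesis on $q$, this gives $\tE_iK^{\zlambda_i},\,\tF_iK^{-\zmu_i}\in U^{fin}_q\cdot K^{2\omega_i}$ (and likewise for $O_q[G]$). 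As $U^{ev}_q(\g)$ is generated by $\{\tE_iK^{\zlambda_i},\tF_iK^{-\zmu_i},K^{\pm 2\omega_i}\}_{i=1}^r$ and $U^{fin}_q$ (resp.\ $\hiota(O_q[G])$) is a subalgebra, the localization equals $U^{ev}_q(\g)$.

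Your route would presumably work, but it requires controlling all cross terms in the Leibniz expansion and then invoking the factorization $\dU^>_q=\dU^>_{i_1}\cdots\dU^>_{i_N}$ to pass from generator-wise nilpotence to global local-finiteness; the paper's four-line computation avoids all of this.
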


\begin{proof}
(a) We claim that $K^\b \in U^{fin}_q \cap U^{ev,0}_q(\g)$ if and only if $\frac{(\a_i, \b)}{2d_i} \in \BZ_{\leq 0}$ for all $i$. The if part follows by $\hiota(c_{v^*_\lambda, v_\lambda})=K^{-2\lambda}$ for all $\lambda \in P_+$ and $\hiota(O_q[G])\subset U^{fin}_q$. Let us prove the only if part.

Assuming the contradiction, pick $\b\in 2P$ such that $K^\b \in U^{fin}_q$ and there is $i \in \{1, \dots, r\}$ with $c:=\frac{(\a_i, \b)}{2d_i} \in \BZ_{>0}$. According to \eqref{dividedE-acts-K}, we have
\begin{equation}
    \ad'(\tE^{(m)}_i)(K^\beta) = 
  \frac{\prod_{s=0}^{m-1} (1-q_i^{2(c+s)})}{(m)_{q_i}!} \cdot \tE^{m}_iK^{\beta+m\zlambda_i}\,,
\end{equation}
where, as always, the coefficient $\frac{\prod_{s=0}^{m-1} (1-q_i^{2(c+s)})}{(m)_{q_i}!}$ denotes the image of $\frac{\prod_{s=0}^{m-1} (1-v_i^{2(c+s)})}{(m)_{v_i}!}\in \CA$ under the algebra homomorphism $\sigma: \CA \rightarrow R$ of \eqref{sigma homom}. We will show that
\begin{equation}\label{eq: nonzero coefficients in R}\text{there are infinitely many $m\geq 1$ such that $c_m:= \frac{\prod_{s=0}^{m-1} (1-q_i^{2(c+s)})}{(m)_{q_i}}\neq 0$ in $R$.}
\end{equation}This implies that $\ad'(\dU_q(\g))(K^\b)$ contains nonzero multiplies of infinitely many elements of $\{ \tE_i^m K^{\b+m \zlambda_i}\}_{m \geq 1}$. These elements are $R$-linearly independent by  PBW-bases in Lemma \ref{lem:twisted-triangular-PBW}. Hence, $\ad'(\dU_q(\g))(K^\b)$ is not finitely generated over $R$, which leads to contradiction.

Let $\m$ be any maximal ideal of $R$ and let $\BF=R /\m$. Let $\bar{q}$ be the image of $q$ in $\BF$ then it is enough to show \eqref{eq: nonzero coefficients in R} when we replace $(R,q)$ with $(\BF, \bar{q})$. But we have 
\[ c_m = (-1)^m(1-\bar{q}_i^{-2})^m \bar{q}_i^{(2c+m-1)m}\binom{c+m-1}{m}_{\bar{q}_i}.\]
From this one can see that there are infinitely many $m \geq 1$ such that $c_m \neq 0$ in $\BF$ regardless of characteristics of $\BF$ and whether $\bar{q}$ is a root of unity in $\BF$ or not. 

\noindent
(b) Since $K^{-2\w_i}\in U^{fin}_q$ and $K^{-2\w_i}\in O_q[G]$, we get 
\begin{align*}
    (1-q_i^{-2}) \tE_iK^{\zlambda_i} K^{-2\w_i}&=\ad'(\tE_i)(K^{-2\w_i}) \in U^{fin}_q, \; O_q[G],\\
    (1-q_i^{-2})\tF_iK^{-\zmu_i} K^{-2\w_i}& =\ad'(\tF_i)(K^{-2\w_i}) \in U^{fin}_q,\; O_q[G].
\end{align*}
As $1-q_i^{-2}\in R$ is assumed to be invertible, we then obtain
\[ \tE_i K^{\zlambda_i}, \tF_i K^{-\zmu_i} \in U^{fin}_q K^{2\w_i}, \; O_q[G] K^{2\w_i} \qquad \forall 1\leq i \leq r.\]
This completes our proof since $U^{ev}_q(\g)$ is generated by $\{ \tE_iK^{\zlambda_i}, \tF_i K^{-\zmu_i}, K^{\pm 2\w_i} \}_{i=1}^r$.
\end{proof}
\subsection{$O_\e[G]\cong U^{fin}_\e$ when $R=\BF$ a field and $q$ is not a root of unity}\
\label{ssec: REA-Ufin nonroot case}

The content of this section follows \cite{jl2}. For reader convenience, we sketch main ideas in \cite{jl2} along with some modifications accounting for the twists in this paper.

Let $R=\BF$ be a field and $q\in \BF$ be not a root of unity. 
In this case, we have $U^{ev}_q(\g) \cong \dU_q(\g)$.
The category $\Rep(\dU_q(\g)) \cong \Rep(\dmU_q(\g))$ is semisimple, see \cite[Corollary 7.7]{APW1}. Combining this observation with the twisted pairing \eqref{pairing of DCK and Lus rewritten} and arguments in \cite[Section 6]{j},  we obtain
\begin{Prop}\label{prop: center not root of 1}Let $Z$ be the center of $U^{ev}_\e(\g) \cong \dU_q(\g)$.  The Harish Chandra homomorphism (for the construction, see \cite[$\mathsection 6.2$]{j} or Section \ref{sec: HC center} below) gives an isomorphism $Z \cong \BF[K^{2\lambda}]_{\lambda \in P}^{W_\bullet}$.
\end{Prop}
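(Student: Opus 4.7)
The plan is to mimic the classical Harish-Chandra argument of \cite[$\mathsection 6$]{j}, adapted to the twisted even-part setting. Using the triangular decomposition of Lemma \ref{lem:twisted-triangular-PBW}, define the Harish-Chandra projection $\pi\colon U^{ev}_q(\g) \to U^{ev\,0}_q$ as the $\BF$-linear map that is the identity on the middle factor and kills every summand $U^{ev\,<}_{-\nu} \cdot U^{ev\,0}_q \cdot U^{ev\,>}_\mu$ with $(\mu,\nu)\neq(0,0)$. The target $U^{ev\,0}_q$ is precisely the group algebra $\bigoplus_{\lambda\in P}\BF\cdot K^{2\lambda}$, so the statement amounts to showing that $\pi$ restricts to an isomorphism of $Z$ onto the $W_\bullet$-invariants.

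First I would verify that $\pi|_Z$ is an injective algebra homomorphism. Multiplicativity is standard: any $z\in Z$ commutes with every $K^\mu\in U^{ev\,0}_q$, which forces the nontrivial triangular components of $z$ to pair up so that $\pi(zz')=\pi(z)\pi(z')$, just as in \cite[$\mathsection 6.4$]{j}. For injectivity I would use the semisimplicity of $\Rep(\dU_q(\g))$ (via \cite[Corollary 7.7]{APW1}) together with the fact that each Weyl module $W_q(\lambda)$, $\lambda\in P_+$, is generated by its highest weight line: a central element $z$ acts on $W_q(\lambda)$ by the scalar $\hat\chi_\lambda(\pi(z))$, with $\hat\chi_\lambda$ from \eqref{eq: defi of chi-lambda}. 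If $\pi(z)=0$ then $z$ annihilates all simple objects of $\Rep(\dU_q(\g))$, hence $z=0$ by the linear independence of characters (Lemma \ref{lem: independent of characters}).

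Next I would identify $\pi(Z)$ with the $W_\bullet$-invariants. The Weyl group $W$ acts on the group algebra $\bigoplus_{\lambda\in P}\BF K^{2\lambda}$ by the appropriately shifted action $K^{2\lambda}\mapsto K^{2(w\bullet\lambda)}$, where the shift is dictated by the twist matrix $\Phi$ and the half-sum of positive roots so that the shifted orbits stay in $2P$. For any $z\in Z$ the function $\lambda\mapsto \hat\chi_\lambda(\pi(z))$ is a polynomial in $q^{(\,\cdot\,,\lambda)}$, and because the Weyl modules $W_q(\lambda)$ and $W_q(w\bullet\lambda)$ possess a common central character at the generic specialization (this follows from the existence of homomorphisms $W_q(w\bullet\lambda)\to W_q(\lambda)$ induced by the quantum Shapovalov form, exactly as in the arguments of \cite[$\mathsection 6.25$]{j}), we deduce $\pi(z)\in\BF[K^{2\lambda}]^{W_\bullet}$.

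Finally, for surjectivity I would exploit the quasi-triangular structure of $\dU_q(\g)$: for any finite-dimensional $V\in \Rep(\dU_q(\g))$, the quantum trace construction produces a central element whose Harish-Chandra image is the character of $V$ multiplied by a suitable Cartan factor; letting $V$ range over simples $L_q(\lambda)$, $\lambda\in P_+$, the resulting images span $\BF[K^{2\lambda}]^{W_\bullet}$ over $\BF$, completing the isomorphism. The main technical obstacle will be the bookkeeping of the twist: one must verify that the $W$-action produced by comparing central characters on linked Weyl modules really is the dot action $W_\bullet$ that matches $\Phi$, and not the naive shift by $\rho$. This verification reduces to evaluating $\pi$ on the basic quantum Casimirs (or equivalently, identifying the $\rho$-shift arising from the formulas \eqref{eq: twist-Hopf-2} for the twisted antipode $S'$), and is the only place where the precise choice of $\Phi$ from \eqref{eq:condition on Phi} enters the proof.
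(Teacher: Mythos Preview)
Your approach is correct and is precisely what the paper intends: it explicitly states that the result follows by ``combining this observation with the twisted pairing \eqref{pairing of DCK and Lus rewritten} and arguments in \cite[Section~6]{j}'', i.e., by running Jantzen's Harish--Chandra argument in the even/twisted setting. One minor correction to your bookkeeping worry: the dot action that emerges is the standard $\rho$-shift $w_\bullet(K^\mu)=q^{(w^{-1}\rho-\rho,\mu)}K^{w(\mu)}$ (as spelled out later in Theorem~\ref{thm: HC-center}), and the twist matrix $\Phi$ does not enter it, so that part of the verification is simpler than you anticipate.
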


Let us define the category $O$ for $\dU_q(\g)$. The objects in $O$ are $\dU_q(\g)$-modules $M$ with  weight decompositions $M=\oplus_\lambda M_\lambda$ such that  
\begin{itemize}
\item[(i)] The set $\{ \lambda| M_\lambda \neq \{0\}\}$ is bounded from above with respect to the usual order on the weight lattice.
\item[(ii)] $\dim_\BF M_\lambda <\infty$ for all $\lambda$. 
\end{itemize}

The morphisms in the category $O$ are homomorphisms of $\dU_q(\g)$-modules.  The {\it Verma module} $M_q(\lambda)=\dU_q(\g)\otimes_{\dU_q^{\geqslant }} \BF_\lambda$ is an object of $O$.

We want to introduce a duality $\tau$ on the category $O$. Let us consider the inclusion $\dU_q(\g) \subset \dU_q(\g, P/2)$. Then each object in $O$ is naturally a module over $\dU_q(\g, P/2)$. On $\dU_q(\g, P/2)$, we have the anti-automorphism $\tau$ defined by 
\[\tau(E_i)=F_i,\quad \tau(F_i)=E_i,\quad \tau(K^\lambda)=K^{\lambda}\qquad \text{ for $\lambda \in P/2$.}\]
For any $M\in O$, set $M^\tau:=\oplus_\lambda (M_\lambda)^* \subset \Hom_{\BF}(M, \BF)$, as vector spaces. The $\dU_q(\g,P/2)$-module structure on $M^\tau$ is defined by 
\[xf(m)=f(\tau(x)m)\qquad \text{for $ x \in \dU_q(\g, P/2),~ f\in M^\tau,~ m\in M$}.\]
Then $M^\tau$ becomes an object in $O$ by restricting the action to $\dU_q(\g)$. 

Once we have the result about the center of $\dU_q(\g)$ in Proposition \ref{prop: center not root of 1}, the following is proved by standard arguments \cite[Chapter 1]{h}.

\begin{Lem}\label{lem: cat O}
(a) Each object in $O$ has  finite length.

(b) The module $M_q(\lambda)$ has a unique simple quotient $L_q(\lambda)$. The module $M_q(\lambda)^\tau$ has a unique simple submodule $L_q(\lambda)$. The module $L_q(\lambda)$ is finite dimensional if and only if $\lambda \in P_+$.
\end{Lem}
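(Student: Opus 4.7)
The plan is to adapt the standard category $O$ arguments to the twisted quantum setting, using Proposition~\ref{prop: center not root of 1} and Lemma~\ref{Lem:Weyl_char_formula} as the main structural inputs. Since $q$ is not a root of unity, $U^{ev}_q(\g)\cong \dU_q(\g)$ and the classical reasoning transfers with essentially only notational modifications to account for the Cartan twist on the generators $\tE_i,\tF_i$.

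First I would handle the Verma-theoretic part of (b). The weight space $M_q(\lambda)_\lambda$ is one-dimensional, spanned by the cyclic generator $v_\lambda:=1\otimes 1_\lambda$, because the triangular decomposition of Lemma~\ref{lem:twisted-triangular-PBW} gives $M_q(\lambda)=\dU_q^{<}v_\lambda$ and only $1\in\dU_q^{<}$ has weight $0$. Any proper submodule $N\subset M_q(\lambda)$ must satisfy $N_\lambda=0$, so the sum $N_{\max}$ of all proper submodules is itself proper and is the unique maximal submodule; the quotient $L_q(\lambda):=M_q(\lambda)/N_{\max}$ is then the unique simple quotient. For the dual statement, the functor $(-)^\tau$ is exact, contravariant, and character-preserving (as $\tau(K^\mu)=K^\mu$), so it sends simples to simples with $L_q(\lambda)^\tau\cong L_q(\lambda)$; applying $\tau$ to the surjection $M_q(\lambda)\twoheadrightarrow L_q(\lambda)$ yields an embedding $L_q(\lambda)\hookrightarrow M_q(\lambda)^\tau$, and this must be the whole socle, since $M_q(\lambda)^\tau$ has a one-dimensional $\lambda$-weight space that any simple submodule of highest weight $\lambda$ would have to exhaust, precluding a second simple summand.

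For the finite-dimensionality criterion, I would first treat the sufficient direction by constructing the twisted analog of the Weyl module of Section~\ref{SS_Weyl}, obtained from $M_q(\lambda)$ by imposing $\tF_i^{(\lambda,\alpha_i^\vee)+1}v_\lambda=0$ (mirroring Proposition~\ref{prop: Description of Weyl modules}). This is a finite-dimensional quotient by Lemma~\ref{Lem:Weyl_char_formula}, and $L_q(\lambda)$ is a further quotient. Conversely, if $L_q(\lambda)$ is finite-dimensional, restriction to the quantum $\mathfrak{sl}_2$ subalgebra generated by $\tE_i,\tF_i,K_i^{\pm1}$ and the standard classification of finite-dimensional representations of that subalgebra force $(\lambda,\alpha_i^\vee)\in\BZ_{\geq 0}$ for each $i$.

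Finally, for part (a), every cyclic submodule $\dU_q(\g)m\subset M$ is generated over $\dU_q^{\leqslant}$ by $\dU_q^{>}m$, which is finite-dimensional by the upper bound on weights of $M$ combined with the finite-dimensionality of weight spaces. The center $Z$ preserves weight spaces, and by Proposition~\ref{prop: center not root of 1} its action on any $M\in O$ decomposes $M$ into finitely many generalized-eigenspace blocks indexed by central characters; each block contains only composition factors $L_q(\mu)$ with $\mu$ in a single $W_\bullet$-orbit, necessarily finite. Finite length then follows by induction on the total weight-multiplicity below the finitely many top weights of $M$. The main obstacle will be this last inductive step, namely matching multiplicities of simple subquotients against the finite $W_\bullet$-orbits of central characters; this is the standard linkage argument of Bernstein-Gelfand-Gelfand, which I expect to transfer verbatim once Proposition~\ref{prop: center not root of 1} provides the $W_\bullet$-orbit description of $\operatorname{Spec} Z$.
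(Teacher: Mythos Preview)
Your approach is exactly the paper's: defer to the standard category $O$ arguments in \cite[Chapter~1]{h} once Proposition~\ref{prop: center not root of 1} supplies the Harish-Chandra description of $Z$. Two gaps to patch.

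The socle argument for $M_q(\lambda)^\tau$ only rules out a second simple submodule \emph{of highest weight $\lambda$}; nothing you wrote excludes a simple submodule of strictly smaller highest weight. Use the duality directly instead: $(-)^\tau$ is an exact contravariant self-equivalence on $O$ (it preserves characters and satisfies $(M^\tau)^\tau\cong M$), so simple submodules of $M_q(\lambda)^\tau$ correspond bijectively to simple quotients of $M_q(\lambda)$, and uniqueness of the latter gives uniqueness of the former.

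For part (a), your assertion that $Z$ decomposes any $M\in O$ into \emph{finitely many} generalized-eigenspace blocks fails without a finite-generation hypothesis. The paper's definition of $O$ (bounded-above weights, finite-dimensional weight spaces) admits, for instance, $\bigoplus_{n\geq 0} M_q(\lambda - n\alpha_1)$, which lies in $O$ but has infinite length and infinitely many central characters. Your first sentence in that paragraph is the right move---it reduces to cyclic submodules, which are quotients of finite sums of Vermas---but then what you've actually proved is finite length for finitely-generated objects. That is what Humphreys proves (finite generation is built into his definition of $\mathcal O$) and what Section~\ref{ssec: REA-Ufin nonroot case} actually uses, so the discrepancy is with the paper's stated hypothesis rather than with your strategy.
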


We now define a filtration $\CF$ on $U^{ev}_q(\g)$ which is stable under the adjoint action of $\dU_q(\g)$ on $U^{ev}_q(\g)$. This filtration is  labeled by the weight lattice $P$ with the usual dominance order.  On the free associative algebra generated by $\tE_i, \tF_i, K^\lambda$, we put 
\[\deg(K^{2\lambda})=-\lambda,\qquad  \deg(\tE_i)=\zlambda_i/2,\qquad \deg(\tF_i)=-\zmu_i/2.\]
This gives a filtration labeled by the weight lattice $P$ with the dominant order. This induces the filtration $\CF$ on $U^{ev}_q(\g)$. Recall the adjoint action of $\dU_q(\g)$ on $U^{ev}_q(\g)$:
\[ \ad'(K^\mu)(x)=K^{\mu}xK^{-\mu}, \qquad \ad'(\tE_i)(x)=[\tE_i, x]K^{\zlambda_i}, \qquad \ad'(\tF_i)(x)=[\tF_i,x]K^{-\zmu_i}.\]
We see that the $\dU_q(\g)$-action preserves the filtration $\CF$ on $U^{ev}_q(\g)$.

Let $G^-$ and $G^+$ be the subalgebras of $\gr_\CF(U^{ev}_q(\g))$ generated by $\{\tF_i K^{-\zmu_i}\}$ and $\{\tE_i K^{\zlambda_i}\}$, respectively. Then 
\[ \gr_\CF(U^{ev}_q(\g))=\bigoplus_\lambda G(\lambda), \qquad \text{here $G(\lambda)=G^-\otimes_\BF \BF K^{-2\lambda} \otimes_\BF G^+$}.\]
We have $G(\lambda)G(\mu)\subset G(\lambda+\mu)$ and $G^-, G^+\subset G(0)$. The isomorphism $G^-\otimes_\BF \BF K^{-2\lambda} \otimes_\BF G^+ \rightarrow G(\lambda)$ is given by the multiplication map (we view $G^-,G^+$ as subsets of $G(0)$ and $K^{-2\lambda}$ as an element in $ G(\lambda)$). The action of $U_q(\g)$ on $\gr_\CF(U^{ev}_q(\g))$ satisfies 
\begin{equation}\label{ad on gr} \ad'(x)(ab)=\ad'(x_{(1)})(a)\ad'(x_{(2)})(b),
\end{equation}
for all $x\in \dU_q(\g)$ and $a, b \in \gr_\CF(U^{ev}_q(\g))$. Furthermore, the $Q$-grading on $U^{ev}_q(\g)$ induces $Q$-gradings on $G(\lambda), G^-, G^+$.

\begin{Lem}[c.f. \cite{jl2}] $G^{-}, G^{+}$ are $\dU_q(\g)$-submodules of $\gr_\CF(U^{ev}_q(\g))$.  
\end{Lem}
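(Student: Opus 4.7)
The plan has two stages. First I would verify that the adjoint action of $\dU_q(\g)$ on $U^{ev}_q(\g)$ preserves the filtration $\CF$, so it descends to an action on $\gr_\CF(U^{ev}_q(\g))$ inheriting the module-algebra property \eqref{ad on gr}. Filtration preservation is immediate from \eqref{eq:explicit adjoint action} together with the multiplicativity of $\CF$ (which holds by descent from the free algebra): the identity $\ad'(\tE_i)(x) = [\tE_i, x] K^{\zlambda_i}$ shows that each summand has $\CF$-degree at most $\zlambda_i/2 + \deg(x) - \zlambda_i/2 = \deg(x)$, and likewise for $\tF_i$; the case of $K^\mu$ is trivial.

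Second, using \eqref{ad on gr}, the stability of $G^-$ and $G^+$ reduces to checking that each $\dU_q(\g)$-generator $\tE_i,\tF_i,K^\mu$ sends each algebra generator $\tF_j K^{-\zmu_j}$ of $G^-$ (resp.\ $\tE_j K^{\zlambda_j}$ of $G^+$) into $G^-$ (resp.\ $G^+$). The reduction uses $\Delta'(\tE_i)=1\otimes\tE_i+\tE_i\otimes K^{-\zlambda_i}$, which yields
$$\ad'(\tE_i)(ab)=a\cdot\ad'(\tE_i)(b)+\ad'(\tE_i)(a)\cdot\ad'(K^{-\zlambda_i})(b),$$
and since $\ad'(K^{-\zlambda_i})$ merely rescales $Q$-homogeneous elements, it preserves $G^\pm$; analogous formulas hold for $\tF_i$ and $K^\mu$. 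Induction on monomial length then propagates stability from generators to all of $G^\pm$.

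The generator-level computations use the relations \eqref{eq:gen-rel-twistedDJ}. One finds: $\ad'(K^\mu)$ rescales each generator of $G^\pm$ by a power of $q$; $\ad'(\tE_i)(\tF_j K^{-\zmu_j})$ vanishes for $i\neq j$ (direct cancellation from $\tE_i\tF_j=v^{(\a_i,-\zmu_j)}\tF_j\tE_i$) and equals $v_i(K^{2\a_i}-1)/(1-v_i^{-2})$ for $i=j$, whose $\CF$-leading term is a scalar in $G^-$; $\ad'(\tE_i)(\tE_j K^{\zlambda_j})$ rearranges into an $R$-linear combination of $(\tE_i K^{\zlambda_i})(\tE_j K^{\zlambda_j})$ and $(\tE_j K^{\zlambda_j})(\tE_i K^{\zlambda_i})$, manifestly in $G^+$. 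The $\tF_i$ action is treated symmetrically; for $i\neq j$ the off-diagonal commutator $[\tF_i,\tE_j K^{\zlambda_j}]$ vanishes via the identity $(\a_j,\zmu_i)=-(\a_i,\zlambda_j)=-(\a_i,\a_j)-2\phi_{ij}$.

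The main obstacle will be the bookkeeping of twisting exponents introduced by the Sevostyanov modification: every commutation produces a nontrivial power of $q$, and the crucial cancellations (especially the vanishing of off-diagonal actions) rely on identities among $\zlambda_i$, $\zmu_i$, and $\phi_{ij}$ coming from the skew-symmetry of $\Phi$ together with $\zlambda_i-\zmu_i=2\a_i$. Once these exponents are handled, the argument runs parallel to the untwisted case treated in \cite{jl2}.
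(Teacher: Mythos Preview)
Your proposal is correct and follows essentially the same approach as the paper's proof: reduce to the action of the generators $K^\mu,\tE_i,\tF_i$ on the algebra generators of $G^\pm$ via the module-algebra identity \eqref{ad on gr}, and verify the required inclusions by direct computation. The paper is terser---it only writes out the $G^-$ case, declaring the action of $K^\lambda$ and $\tF_i$ on $G^-$ ``obvious'' and computing only $\ad'(\tE_i)(\tF_jK^{-\zmu_j})$---whereas you spell out the coproduct-based induction, treat $G^+$ explicitly, and record the identity $(\a_j,\zmu_i)=-(\a_i,\zlambda_j)$ needed for the off-diagonal vanishing on the $G^+$ side.
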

\begin{proof}Let us prove this in the case of $G^{-}$. The actions of the elements $K^{\lambda}, \tF_i\in \dU_q(\g)$ obviously preserve $G^{-}$. On the other hand, in $U^{ev}_q(\g)$ , we have
\[ \ad'(\tE_i)(\tF_jK^{-\zmu_i})=0, \quad \ad'(\tE_i)(\tF_iK^{-\zmu_i})= \frac{q_i}{1-q_i^{-2}}(K^{2\a_i}-1),\]
for $i \neq j$. Hence in $\gr_\CF(U^{ev}_q(\g))$, we have 
\[ \ad'(\tE_i)(\tF_jK^{-\zmu_i})=-\delta_{ij}\frac{q_i}{1-q_i^{-2}}.\]

Combining this with \eqref{ad on gr}, we can see that $G^{-}$ is a $\dU_q(\g)$-submodule of $\gr_\CF(U^{ev}_q(\g))$.
\end{proof}

Let $G^{fin}(\lambda):=\{ m \in G(\lambda)| \dim \ad'(\dU_q(\g)) m <\infty\}$. Here is  an important result from \cite{jl2}.
\begin{Lem} $G^{fin}(\lambda)=0$ unless $\lambda \in P^+$. When $\lambda \in P^+$ then $G^{fin}(\lambda)=\ad'(\dU_q(\g))(K^{-2\lambda})$.
\end{Lem}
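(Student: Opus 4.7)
Following the strategy of \cite[Section 5]{jl2} adapted to our twisted setup, the plan is to first identify the $\dU_q(\g)$-module structures on the subalgebras $G^-, G^+ \subset G(0)$ and then use the tensor product description of $G(\lambda) = G^- \cdot K^{-2\lambda} \cdot G^+$ to analyze local finiteness. From $\ad'(\tE_i)(\tF_j K^{-\zmu_j}) = -\delta_{ij}\,\frac{q_i}{1 - q_i^{-2}}$ in $\gr_\CF$ together with the Leibniz rule \eqref{ad on gr}, each $\tE_i$ acts on $G^-$ as a derivation that kills $\tF_j K^{-\zmu_j}$ for $i \neq j$ and is constant for $i=j$; in particular $1 \in G^-$ is a highest-weight vector of weight $0$ annihilated by all $\tE_i^{(m)}$ with $m \geq 1$. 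Matching $Q$-graded dimensions with the Verma module $M_q(0)$, I conclude $G^- \cong M_q(0)$; symmetrically, $G^+$ is identified with the opposite Verma module $M_q(0)^\tau$. Consequently $G(\lambda)$, equipped with the $\dU_q(\g)$-action via \eqref{ad on gr}, becomes the tensor product $M_q(0) \otimes \BF K^{-2\lambda} \otimes M_q(0)^\tau$, where the middle factor carries the trivial $\dU_q^0$-adjoint character (since Cartan elements commute with $K^{-2\lambda}$).

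For the vanishing $G^{fin}(\lambda) = 0$ when $\lambda \notin P_+$, I will use the formula \eqref{dividedE-acts-K} in $\gr_\CF$: since $q$ is not a root of unity and $(\lambda, \alpha_i^\vee) < 0$ for some $i$, the coefficient $\prod_{s=0}^{m-1}(1 - q^{2s\sd_i - 2(\lambda, \alpha_i)})/(m)_{q_i}!$ never vanishes, so the elements $\{\ad'(\tE_i^{(m)})(K^{-2\lambda})\}_{m \geq 0}$ are linearly independent in $G(\lambda)$. More generally, for any $y \cdot K^{-2\lambda} \cdot x \in G(\lambda)$ with $y \in G^-, x \in G^+$, the Leibniz rule combined with the nilpotent action of $\tE_i$ on $G^-$ (it is a Verma-type module with bounded $\tE_i$-action) forces sufficiently high $\tE_i^{(m)}$-iterates to produce linearly independent elements via the non-nilpotent action on $K^{-2\lambda}$, so no nonzero element of $G(\lambda)$ can be locally finite. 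For $\lambda \in P_+$, the same coefficient vanishes at $s = (\lambda, \alpha_i^\vee)$, giving $\ad'(\tE_i^{(m)})(K^{-2\lambda}) = 0$ in $\gr_\CF$ for $m > (\lambda, \alpha_i^\vee)$; together with the analogous vanishing for $\tF_i$, this shows $V(\lambda) := \ad'(\dU_q(\g))(K^{-2\lambda})$ is finite dimensional.

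The equality $V(\lambda) = G^{fin}(\lambda)$ for $\lambda \in P_+$ will follow from the tensor product identification of Step~1: the locally finite part of $M_q(0) \otimes \BF K^{-2\lambda} \otimes M_q(0)^\tau$ is precisely the cyclic submodule generated by $1 \otimes K^{-2\lambda} \otimes 1$, corresponding to the finite-dimensional summand $L_q(\lambda) \otimes L_q(\lambda)^*$ once the two Verma factors project onto their unique finite-dimensional quotients. Since this cyclic submodule coincides with $V(\lambda)$, we obtain $V(\lambda) = G^{fin}(\lambda)$. The main obstacle in executing this plan will be the precise identification $G^- \cong M_q(0)$ in Step~1 --- the Cartan twist in $\Delta'$ and the shifts $\zlambda_i, \zmu_i$ make it nontrivial to verify that the natural surjection $M_q(0) \twoheadrightarrow G^-$ sending $1 \mapsto 1$ has no kernel. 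Once this identification is settled, the remaining arguments combine \eqref{dividedE-acts-K}, a category $\mathcal{O}$-type highest-weight analysis, and an elementary dimension count.
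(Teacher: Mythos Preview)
Your identification $G^- \cong M_q(0)$ under the adjoint action is incorrect, and this breaks the entire argument. The vector $1 \in G^-$ is indeed a highest-weight vector of weight $0$, but it is \emph{not} cyclic: since $\ad'(\tF_i)(x) = [\tF_i, x]K^{-\zmu_i}$, we have $\ad'(\tF_i)(1) = 0$ for every $i$, so $\ad'(\dU_q(\g))(1) = \BF \cdot 1$. Matching graded dimensions is not enough---there is no surjection $M_q(0) \twoheadrightarrow G^-$ sending the highest-weight generator to $1$. Consequently your tensor decomposition $G(\lambda) \cong M_q(0) \otimes \BF K^{-2\lambda} \otimes M_q(0)^\tau$ fails, and so does the final step: $M_q(0)$ has $L_q(0)$, not $L_q(\lambda)$, as its unique simple quotient, so the claim that the locally finite part ``corresponds to $L_q(\lambda) \otimes L_q(\lambda)^*$'' has no basis.

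The paper takes a different route. Rather than putting the adjoint action on $G^-$ alone, it equips $G^- \otimes_\BF \BF K^{-2\lambda}$ with a \emph{new} $\dU_q(\g)$-module structure: $\tE_i$ acts via $q^{-(\lambda,\zlambda_i)}\ad'(\tE_i)$ on the $G^-$-factor only, while $K^\mu$ and $\tF_i$ act as scalar multiples of the full adjoint action on $G^- \cdot K^{-2\lambda}$. Under this twisted structure (which is \emph{not} the restriction of $\ad'$), one identifies $G^- \otimes K^{-2\lambda}$ with the \emph{dual} Verma module $M_q(\lambda)^\tau$, so that $1 \otimes K^{-2\lambda}$ becomes the (non-cyclic) highest-weight vector of weight $\lambda$. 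The crucial observation is that local finiteness under $\ad'(\dU^<_q)$ on $G^- \otimes K^{-2\lambda}$ coincides with local finiteness under this full twisted $\dU_q(\g)$-action, which can then be analyzed via category~$\mathcal{O}$: the locally finite part of $M_q(\lambda)^\tau$ is its unique simple submodule $L_q(\lambda)$ when $\lambda$ is dominant, and zero otherwise. A symmetric analysis handles $G^+ \otimes K^{-2\lambda}$, and the two are combined as in \cite[\S4.9]{jl2}.
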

\begin{proof}
{\it Step 1.} Here is the key observation:  $G^{-}\otimes_\BF \BF K^{-2\lambda}$ can be equipped with a $\dU_q(\g)$-module structure as follows: 
\begin{align*} \tE_i(u_-\otimes K^{-2\lambda})&=q^{-(\lambda, \zlambda_i)}\ad'(\tE_i)(u_-) \otimes K^{-2\lambda}\\
K^{\mu}(u_-\otimes K^{-2\lambda})&=q^{(\lambda, \mu)}\ad'( K^\mu)(u_-\otimes K^{-2\lambda})\\
\tF_i(u_-\otimes K^{-2\lambda})&= q^{(\lambda, \zmu_i)}\ad'(\tF_i)(u_-\otimes K^{-2\lambda}),
\end{align*}
for $u_-\in G^-$, where we use that $G^{-}\otimes_\BF \BF K^{-2\lambda}$ is a $\dU^<_q$-submodule and $G^{-}$ is a $\dU_q(\g)$-submodule of $\gr_\CF(U^{ev}_q(\g))$.  Then we see that 
\begin{equation}\label{eq: U-mod vs U<mod}
\{ m \in G^-\otimes_\BF \BF K^{-2\lambda}| \dim \ad'(\dU^<_q)(m)<\infty\}=\{ m \in G^-\otimes_\BF \BF K^{-2\lambda}| \dim \dU_q(\g)(m) < \infty\}
\end{equation}

{\it Step 2.} Arguing as in \cite[$\mathsection 4.8$]{jl2}, where we use Lemma \ref{lem: cat O}  , we have $G^-\otimes_\BF \BF K^{-2\lambda} \cong M_q(\lambda)^\tau$. Combining this with \eqref{eq: U-mod vs U<mod}, we have
\[ L(\lambda)^-:=\{ m\in G^-\otimes_\BF \BF K^{-2\lambda}| \dim \ad'(\dU^<)(m) <\infty\}=\begin{cases} 0 \qquad \text{if $\lambda$ is not dominant,}\\
                        \ad'(\dU^<)(K^{-2\lambda}) \quad  \text{if $\lambda$ is dominant.}
        \end{cases}
\]
If $\lambda$ is dominant, then $L(\lambda)^-=K(\lambda)^-\otimes \BF K^{-2\lambda}$ for some  $K(\lambda)^-\subset G^-$. By \eqref{eq: U-mod vs U<mod}, $L(\lambda)^-$ is a $\dU_q(\g)$-submodule of $G^-\otimes \BF K^{-2\lambda}$. By looking at the action of $\tE_i$  on $G^-\otimes \BF K^{-2\lambda}$ defined in Step $1$, it follows that $K(\lambda)^-$ is a $\dU^>_q$-submodule of $G^-$.

{\it Step 3.} Similarly, we have 
\[L(\lambda)^+:= \{ m \in G^+\otimes_\BF \BF K^{-2\lambda}| \dim \ad'(\dU^>)(m) <\infty\}=\begin{cases} 0 \qquad \text{if $\lambda$ is not dominant,}\\
                        \ad'(\dU^>)(K^{-2\lambda}) \quad \text{if $\lambda$ is dominant.}
        \end{cases}\]
When $\lambda$ is dominant, $L(\lambda)^+=K(\lambda)^+\otimes \BF K^{-2\lambda}$ for some $\dU^<_q$-submodule $K(\lambda)^+$ of $G^+$. 

{\it Step 4.} Using Steps 2 and 3, we proceed exactly the same as in \cite[$\mathsection 4.9$]{jl2} to finish the proof of the lemma.
\end{proof}



\begin{Cor}\label{span of Ufin} $U^{fin}_q$ is spanned by $\{K^{-2\lambda}\}_{\lambda \in P_+}$ as a $\dU_q(\g)$-module.
\end{Cor}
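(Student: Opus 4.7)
The inclusion $\sum_{\lambda\in P_+}\ad'(\dU_q(\g))(K^{-2\lambda})\subseteq U^{fin}_q$ is immediate from Proposition \ref{prop: basis of Ufin}(a), which gives $K^{-2\lambda}\in U^{fin}_q$ for all dominant $\lambda$. The content of the corollary is the reverse inclusion, and my plan is to obtain it by a descending induction on leading degrees with respect to the filtration $\CF$, using the previous lemma as the engine.

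More precisely: because $\CF$ is $\dU_q(\g)$-stable under $\ad'$, the subspace $U^{fin}_q$ inherits the filtration, and for any $x\in U^{fin}_q$ I will look at the finite set $\Lambda(x)$ of maximal elements $\lambda\in P$ such that the image $\bar x_\lambda$ of $x$ in $G(\lambda)$ is nonzero. The key observation is that, since $\lambda$ is maximal in $\Lambda(x)$, the $\dU_q(\g)$-action on the leading part $\bar x_\lambda$ is well-defined and matches the induced adjoint action on $G(\lambda)$; hence $\bar x_\lambda$ generates a finite-dimensional $\dU_q(\g)$-submodule of $G(\lambda)$, i.e.\ $\bar x_\lambda\in G^{fin}(\lambda)$. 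By the lemma immediately preceding the corollary, this forces $\lambda\in P_+$ and $\bar x_\lambda$ to lie in the image of $\ad'(\dU_q(\g))(K^{-2\lambda})$.

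Thus for each $\lambda\in\Lambda(x)$ I can lift to an element $z_\lambda\in\ad'(\dU_q(\g))(K^{-2\lambda})\subseteq U^{fin}_q$ whose image in $G(\lambda)$ equals $\bar x_\lambda$. The difference $x-\sum_{\lambda\in\Lambda(x)}z_\lambda$ still lies in $U^{fin}_q$ but its leading parts at all $\lambda\in\Lambda(x)$ have been killed, so its set of maximal degrees is strictly ``smaller'' than $\Lambda(x)$. To make this genuinely inductive I would track the finite multiset of monomials appearing in the PBW expansion of $x$ with degree in $\Lambda(x)$: each subtraction step strictly decreases this count (while possibly introducing lower-degree monomials), so the process terminates in finitely many steps and expresses $x$ as an element of $\sum_{\lambda\in P_+}\ad'(\dU_q(\g))(K^{-2\lambda})$.

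The main obstacle is making precise the descent along the partially ordered set $P$; the set $\Lambda(x)$ need not be a singleton, and one must verify that leading terms at incomparable maximal degrees can be treated independently. This is handled by the fact that $\gr_\CF U^{ev}_q(\g)=\bigoplus_\lambda G(\lambda)$ is $P$-graded, so the leading parts $\bar x_\lambda$ for distinct $\lambda\in\Lambda(x)$ lie in distinct summands and the associated $\dU_q(\g)$-actions do not mix them; thus the lifts $z_\lambda$ can be chosen simultaneously and the subtraction strictly reduces the leading-degree data in a well-founded sense. Everything else is bookkeeping with the filtration and the previous lemma.
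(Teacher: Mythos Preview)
Your approach is correct and is exactly what the paper has in mind: the corollary is stated without proof because it is the routine passage from the associated graded back to the filtered algebra, using the previous lemma as the input.

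One point deserves a sharper formulation: your termination argument is not quite right as written. Tracking ``the finite multiset of monomials at degrees in $\Lambda(x)$'' does not give a well-founded invariant across steps, since after one subtraction that multiset is empty and you are now looking at a \emph{new} set $\Lambda(x')$ whose monomial count bears no relation to the old one. The clean way to finish is this: once you fix the initial maximal set $\Lambda(x)\subset P_+$, every subsequent lift $z_\mu$ lies in $\CF_{\leq\mu}$ with $\mu\leq\lambda$ for some $\lambda\in\Lambda(x)$, so all maximal degrees encountered throughout the process belong to the finite set $\bigcup_{\lambda\in\Lambda(x)}\{\mu\in P_+:\mu\leq\lambda\}$. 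Descending antichains in a finite poset stabilize, so the process terminates. This is the only missing sentence; everything else in your sketch is sound.
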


\begin{Prop} The inclusion $O_q[G]\hookrightarrow U^{fin}_q$ is an isomorphism of $\dU_q(\g)$-module algebras.
\end{Prop}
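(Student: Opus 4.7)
The proof reduces to assembling pieces already established in this section. The plan: verify that $\hiota(O_q[G]) = U^{fin}_q$ as subsets of $U^{ev}_q(\g)$, then invoke that $\hiota$ is an injective $\dU_q(\g)$-module algebra homomorphism to conclude it is an isomorphism onto $U^{fin}_q$.

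First I would note the inclusion $\hiota(O_q[G]) \subseteq U^{fin}_q$, which has already been observed just after the definition of $U^{fin}_q$: $O_q[G] \simeq R[\dU_q(\g)]$ sits inside $U^{ev}_q(\g)$ as a rational $\dU_q(\g)$-submodule under the adjoint action, whereas $U^{fin}_q$ is the maximal such submodule. For the reverse inclusion, I would use Lemma \ref{lem: image of iota(K(-2lambda))}, which supplies the key identification $\hiota(c_{v^*_\lambda, v_\lambda}) = K^{-2\lambda}$ for every $\lambda \in P_+$. Combined with the $\dU_q(\g)$-equivariance of $\hiota$ (Lemma \ref{lem: Oq[G] --> even part}), this yields for all $x \in \dU_q(\g)$ and $\lambda \in P_+$
\[
  \ad'(x)(K^{-2\lambda}) \;=\; \ad'(x)\bigl(\hiota(c_{v^*_\lambda, v_\lambda})\bigr) \;=\; \hiota\bigl(x\cdot c_{v^*_\lambda, v_\lambda}\bigr) \;\in\; \hiota(O_q[G]).
\]

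Next I would apply Corollary \ref{span of Ufin}, which asserts that $U^{fin}_q$ is spanned as a $\dU_q(\g)$-module (for the adjoint action) by $\{K^{-2\lambda}\}_{\lambda \in P_+}$. By the preceding paragraph, each $\ad'(\dU_q(\g))(K^{-2\lambda})$ lies in $\hiota(O_q[G])$, so $U^{fin}_q \subseteq \hiota(O_q[G])$, and hence the two coincide. Since $\hiota$ is injective and an algebra map, the induced bijection $O_q[G] \to U^{fin}_q$ is the desired isomorphism of $\dU_q(\g)$-module algebras.

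The main work is not in this assembly but in Corollary \ref{span of Ufin} itself, which rested on the filtration $\CF$ and the graded analysis mimicking \cite{jl2}. In particular, the semisimplicity of $\Rep(\dU_q(\g))$ at $q$ not a root of unity (via \cite[Corollary 7.7]{APW1}) and the description of the center in Proposition \ref{prop: center not root of 1} are invoked through that corollary, so no new difficulty arises at this final step.
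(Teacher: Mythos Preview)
Your proof is correct and takes essentially the same approach as the paper: both use Lemma~\ref{lem: image of iota(K(-2lambda))} to place $K^{-2\lambda}$ in the image of $\hiota$, then invoke Corollary~\ref{span of Ufin} together with the $\dU_q(\g)$-equivariance of $\hiota$ to obtain surjectivity. Your exposition is slightly more detailed, but the argument is identical.
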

\begin{proof}It is enough to show that this is an epimorphism of $\dU_q(\g)$-modules. Lemma \ref{lem: image of iota(K(-2lambda))} implies that $K^{-2\lambda}, \lambda \in P_+$, is contained in the image. Therefore, by Corollary \ref{span of Ufin}, the inclusion $O_q[G]\hookrightarrow U_q^{fin}$ is surjective.
\end{proof}

\subsection{$O_q[G]\cong U^{fin}_q$ when $R=\BF$ a field and $q=\e$ is a root of unity}\
\label{ssec: REA-Ufin root case}

Let us first assume that $\BF$ is algebraically closed.

Recall the Hopf algebra $\rU_q$ from Section \ref{sec: REA}. Let $\rU^*_q$ be the Hopf algebra obtained from $\dU^*_q(\g)$ by taking the opposite of both product and coproduct structures. Then the  category of rational representations $\Rep(\rU^*_q)$ is defined in an obvious way. Let $\Rep^{fd}(\rU_q), \Rep^{fd}(\rU^*_q)$ be the full subcategories of $\Rep(\rU_q), \Rep(\rU^*_q)$ consisting of finite dimensional objects. We then can define the algebra $O_\e[G^d]$ for $\dU^*_\e(\g)$ as in Section \ref{sec: REA} with respect to the $R$-matrix  $\CR^*$ in \eqref{R matrix for U*}.

The Hopf algebra homomorphism $\tFr: \hU_\e(\g, P)\rightarrow \hU^*_\e(\g, P^*)$ from \eqref{eq: Fr for idemp} gives rise to braided monoidal fully faithful functor
\[ \tFr^*: \Rep(\rU^*_q)\rightarrow \Rep(\rU_q).\]
By using the categorical realization of the REA from Section \ref{ssec: cat def of REA}, we obtain an inclusion 
\[ O_\e[G^d]\hookrightarrow O_\e[G].\]
The categorical realization
\[O_\e[G^d]\cong \bigsqcup_{X\in \Rep^{fd}(\rU^*_q)} X^* \otimes X/ \sim\]
also tells us that $O_\e[G^d]$ is a $\dU_\e(\g)$-submodule of $O_\e[G]$. Furthermore, the action of $\dU_\e(\g)$ on $O_\e[G^d]$ factors through $\dU_\BF(\g^d)$: the elements $\tE_i, \tF_i$ act by zero; the weights of $O_\e[G^d]$ are contained in $Q^*$, hence $K^{\lambda}$ acts as the identity on $O_\e[G^d]$ for all $\lambda \in 2P$.

So we have the composition of embedding 
\[ \varphi: O_\e[G^d]\hookrightarrow O_\e[G] \xhookrightarrow[]{\hiota} U^{ev}_\e(\g),\]
where $\hiota$ comes from Lemma \ref{lem: Oq[G] --> even part}. Our next result describes the image  of $O_\e[G^d]$ under $\varphi$. To this end, we recall  $Z^{fin}_{Fr}$ in \eqref{eq: def of Zfin}:
\[ Z^{fin}_{Fr}=\{ z\in Z_{Fr}| \dim_\BF(\dU_\BF(\g^d)z) < \infty\}.\]

\begin{Lem}\label{lem: Oe[G^d] and Z^ev} Evoking the algebra homomorphism $Z_{Fr}\cong \BF[G^d_0]$ from Proposition \ref{Ze and Bruhat cell}, we have $\varphi(O_\e[G^d])=Z^{fin}_{Fr}\cong\BF[G^d]$.
\end{Lem}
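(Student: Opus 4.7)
The plan is to prove both inclusions $\varphi(O_\e[G^d])\subseteq Z^{fin}_{Fr}$ and $Z^{fin}_{Fr}\subseteq \varphi(O_\e[G^d])$, matching them up via the $\dU_\BF(\g^d)$-equivariant isomorphism $Z_{Fr}\cong \BF[G^d_0]$ of Proposition \ref{Ze and Bruhat cell}. For the forward inclusion, I would first verify $\hiota(O_\e[G^d])\subseteq Z_{Fr}$. By construction, $\hiota(c)$ is characterized by $\langle\hiota(c),y\rangle'=c(y)$ for all $y\in\hU_\e(\g,P)$; when $c$ lies in the image of $O_\e[G^d]\hookrightarrow O_\e[G]$ via $\tFr^*$, it is the pullback of a functional on $\hU^*_\e(\g,P^*)$ and so vanishes on $\Ker(\tFr)$, placing $\hiota(c)$ in the orthogonal complement of $\Ker(\tFr)$, which is $Z_{Fr}$ by definition. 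The refinement to $Z^{fin}_{Fr}$ is then automatic: every element of $O_\e[G^d]$ is a matrix coefficient of a finite-dimensional $\rU^*_\e$-module, the $\dU_\e(\g)$-action on $O_\e[G^d]$ factors through $\dU_\BF(\g^d)$, and $\varphi$ is $\dU_\e(\g)$-equivariant, so the $\dU_\BF(\g^d)$-orbit of $\varphi(c)$ is finite-dimensional.

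For the reverse inclusion, I would compute $\varphi$ on the distinguished elements $c^{(d)}_\lambda := c_{(v^d_\lambda)^*,\,v^d_\lambda}\in O_\e[G^d]$, where $\lambda\in P^*_+$ and $v^d_\lambda$ is the highest-weight vector of the simple $\dU_\BF(\g^d)$-module $V^d(\lambda)$. Since $\lambda\in P^*\subseteq P$, the Frobenius pullback $\tFr^*V^d(\lambda)$ has highest weight $\lambda$ as a $\dU_\e(\g)$-module, and Lemma \ref{lem: image of iota(K(-2lambda))} yields $\varphi(c^{(d)}_\lambda)=K^{-2\lambda}$ (the formula there depends only on $v$ being a highest-weight vector of weight $\lambda$, not on the specific module). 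Under the isomorphism $Z_{Fr}\cong \BF[G^d_0]$ of Proposition \ref{Ze and Bruhat cell}, these $K^{-2\lambda}$ match the usual highest-weight matrix coefficients inside $\BF[G^d]\subseteq\BF[G^d_0]$. By Lemma \ref{lem: Zfin in field F}, $Z^{fin}_{Fr}\cong\BF[G^d]$ is generated as a $\dU_\BF(\g^d)$-equivariant algebra by the $K^{-2\omega_i^*}$ corresponding to the fundamental dominant weights of $\g^d$. Since $\varphi$ is a $\dU_\BF(\g^d)$-equivariant algebra homomorphism whose image contains each such $K^{-2\omega_i^*}$, the image exhausts $Z^{fin}_{Fr}$.

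The main obstacle I expect is the last step: carefully verifying that the isomorphism $Z_{Fr}\cong\BF[G^d_0]$ of Proposition \ref{Ze and Bruhat cell} is compatible with the natural map $O_\e[G^d]\to\BF[G^d]\hookrightarrow\BF[G^d_0]$ sending $c^{(d)}_\lambda$ to its classical counterpart, so that the formula $\varphi(c^{(d)}_\lambda)=K^{-2\lambda}$ literally places $\varphi(O_\e[G^d])$ inside the subalgebra $\BF[G^d]\cong Z^{fin}_{Fr}$. This reduces to reconciling the pairing $Z_{Fr}\times\dU_\BF(\g^d)\to\BF$ of Lemma \ref{lem center pair} with the evaluation pairing intrinsic to the REA construction, which requires chasing the factorization of $\langle\cdot,\cdot\rangle'$ through $\tFr$ and $\phi\colon \dU^*_\e(\g)\to\dU_\BF(\g^d)$. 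In positive characteristic there is the additional subtlety that $\BF[G^d]$ is not a Peter--Weyl direct sum of orbits of highest-weight matrix coefficients, but the algebra-generation/localization description of Lemma \ref{lem: Zfin in field F}(b) still supplies the needed finite list of algebra generators, so the argument goes through uniformly.
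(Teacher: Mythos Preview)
Your forward inclusion $\varphi(O_\e[G^d])\subseteq Z^{fin}_{Fr}$ is correct and matches the paper's. The gap is in the reverse inclusion. You reduce to the claim that $Z^{fin}_{Fr}\cong\BF[G^d]$ is generated as a $\dU_\BF(\g^d)$-equivariant \emph{algebra} by the elements $K^{-2\omega_i^*}$. In characteristic~$0$ this is recoverable from Lemma~\ref{lem: Zfin in field F}(c), but in positive characteristic it can fail: already for $\g^d=\mathfrak{sl}_2$ in characteristic~$p$, the adjoint $\dU_\BF(\g^d)$-orbit of the rank-one projection $v_\lambda\otimes v_\lambda^*$ need not span $V^d(\lambda)\otimes V^d(\lambda)^*$ (for $\lambda=p$ one computes $e^{(k)}v_0^*=(-1)^k\binom{p}{k}v_k^*$, which vanishes for $1\le k\le p-1$, so $\dU^> v_0^*$ is two-dimensional inside a $(p{+}1)$-dimensional dual Weyl module). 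Your proposed patch via Lemma~\ref{lem: Zfin in field F}(b) does not help: that lemma is a localization statement $Z^{fin}_{Fr}[K^{2\lambda_0}]=Z_{Fr}$, not a generation statement for $Z^{fin}_{Fr}$ itself, and there is no evident way to descend algebra generators of $Z_{Fr}$ back through the localization.

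The paper avoids this entirely. Instead of proving surjectivity by generating from highest-weight coefficients, it observes that the pairing $Z_{Fr}\times\dU_\BF(\g^d)\to R$ of Lemma~\ref{lem center pair} was built by pushing the pairing $U^{ev}_\e\times\hU_\e(\g,P)\to R$ through $\tFr$ and then through the chain $\dU_\BF(\g^d)\cong A_{Q^*}\hookrightarrow\tU^*_\e(\g,P^*)\hookleftarrow\hU^*_\e(\g,P^*)$. Unwinding this, every matrix coefficient $c_{f,v}$ of a $\dU^*_\e(\g)$-module $V$ is sent by $\varphi$ precisely to the matrix coefficient of the \emph{same} $(f,v)$, now viewing $V$ as a $\dU_\BF(\g^d)$-module via the equivalence $\mathfrak{T}\colon\Rep(\dU^*_\e(\g))\simeq\Rep(\dU_\BF(\g^d))$. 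Since $\mathfrak{T}$ is an equivalence, every finite-dimensional $\dU_\BF(\g^d)$-module arises, so \emph{all} matrix coefficients of $\dU_\BF(\g^d)$ lie in the image, giving $\varphi(O_\e[G^d])\supseteq\BF[G^d]$ directly and uniformly in characteristic. What you flagged as ``the main obstacle'' is in fact the whole argument.
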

\begin{proof} The $\mathbb{F}$-vector space
$O_\e[G^d]$ is spanned by the matrix coefficients of the $\dU_\e(\g,P)$-modules that are objects in $\Rep^{fd}(\dU^*_\e)$. These matrix coefficients vanish on the kernel of $\tFr: \dU_\e(\g, P)\rightarrow \dU^*_\e(\g, P^*)$. By definition of $Z_{Fr}$, we have $\varphi(O_\e[G^d])\subset Z_{Fr}$.

 Recall homomorphisms \eqref{eq: morphisms}:
\begin{equation}\label{eq: U(g^d) embeded into U*}\dU_\BF(\g^d) \overset{\phi^{-1}}{\cong} A_{Q^*} \hookrightarrow \tU^*_\e(\g, P^*) \hookleftarrow \dU^*_\e(\g, P^*).
\end{equation}
All representations in $\Rep(\dU^*_\e)$  and $\Rep(\dU_\BF(\g^d))$ can be naturally extended to representations of $\tU^*_\e(\g, P^*)$. By taking these extensions and then restricting the actions to the other algebras, we obtain the following functors:
\begin{equation}\label{eq: Rep(U*) vs Rep(U(gd))}\mathfrak{T}:  \Rep(\dU^*_\e(\g)) \rightarrow \Rep(\dU_\BF(\g^d)), \qquad \mathfrak{T}': \Rep(\dU_\BF(\g^d))\rightarrow \Rep(\dU^*_\e(\g)),
\end{equation}
These functors are mutually quasi-inverse. In particular, $\mathfrak{T}$ is an equivalence of categories.

By analyzing the construction of $Z_{Fr}\cong \BF[G^d_0]$, one sees that $\varphi(O_\e[G^d])\subset Z_{Fr}\cong \BF[G^d_0]$ is spanned by the matrix coefficients of the  representations in $\Rep^{fd}(\dU^*_\e(\g))$ viewed as modules over $\dU_\BF(\g^d)$ via the functor $\mathfrak{T}$. However, $\mathfrak{T}$ is an equivalence of categories. Therefore, $\varphi(O_\e[G^d])\subset \BF[G^d_0]$ contains all matrix coefficients of the representations in $\Rep^{fd}(\dU_\BF(\g^d))$. But these matrix coefficients span $\BF[G^d]$ and $\varphi(O_\e[G^d]) \subset \BF[G^d]$. Hence $\varphi(O_\e[G^d])=\BF[G^d]$.
\end{proof}
\begin{Rem}\label{rem: Oq[G^d] is central} By Lemma \ref{lem: Oe[G^d] and Z^ev}, $O_\e[G^d]$ is central in $O_\e[G]$.
\end{Rem}
The following technical result will be established in Section \ref{ssec: prop of Oe[G] is proj}:

\begin{Prop}\label{prop: Oe[G] is proj} $O_\e[G]$ is a finitely generated projective left and right $O_\e[G^d]$-module.
\end{Prop}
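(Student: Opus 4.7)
The proof will proceed in two stages: finite generation, and then projectivity. Since $O_\e[G^d]$ is central in $O_\e[G]$ by Remark~\ref{rem: Oq[G^d] is central}, the left and right module structures coincide, so it suffices to establish both assertions on one side.

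\emph{Stage 1 (Finite generation).} The key input is Lemma~\ref{lem: mat coefficient of dU}, which provides a single finite-dimensional module $V\in\Rep^{fd}(\dU_\e(\g))$ with the property that every finite-dimensional $M\in\Rep^{fd}(\dU_\e(\g))$ is isomorphic to a subquotient of $\tFr^*(N)\otimes V$ for some $N\in\Rep^{fd}(\dU^*_\e(\g))$. By the categorical construction of the REA (Section~\ref{ssec: cat def of REA}), matrix coefficients of $\tFr^*(N)$ lie in $O_\e[G^d]\subseteq O_\e[G]$, while matrix coefficients of $V$ span a fixed finite-dimensional subspace $V_0\subset O_\e[G]$. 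Using the categorical product formula~\eqref{eq: cat rel of mult} together with the centrality of $O_\e[G^d]$, matrix coefficients of $\tFr^*(N)\otimes V$ can be written as products $O_\e[G^d]\cdot V_0=V_0\cdot O_\e[G^d]$. Because $O_\e[G]$ is spanned by matrix coefficients of finite-dimensional modules and matrix coefficients of subquotients inherit from those of the ambient module, I conclude $O_\e[G]=O_\e[G^d]\cdot V_0=V_0\cdot O_\e[G^d]$, giving finite generation on both sides.

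\emph{Stage 2 (Generic local freeness).} Since $O_\e[G^d]\cong\BF[G^d]$ is the coordinate ring of the smooth affine algebraic group $G^d$, it is a regular Noetherian commutative $\BF$-algebra. For such rings, a finitely generated module is projective iff it is flat iff it is locally free of constant rank at every maximal ideal. I first establish the generic rank. By Proposition~\ref{prop: basis of Ufin}(b) the localization of $O_\e[G]$ at $\{K^{-2\omega_i}\}_{i=1}^r$ equals $U^{ev}_\e(\g)$, and by Lemma~\ref{lem: Zfin in field F}(b) the localization of $O_\e[G^d]$ at $\varphi^{-1}(K^{-2\lambda_0})$ equals $Z_{Fr}$. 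A direct argument (as in the proof of Proposition~\ref{prop: basis of Ufin}(b)) shows inverting $K^{-2\lambda_0}=\prod_i(K^{-2\omega_i})^{\ell_i}$ in $O_\e[G]$ already produces all $K^{\pm 2\omega_j}$, so
\[
O_\e[G]\otimes_{O_\e[G^d]}Z_{Fr}\ \iso\ U^{ev}_\e(\g).
\]
By Corollary~\ref{U^ev is free over Z_e}, $U^{ev}_\e(\g)$ is free of rank $d=\prod_{\alpha\in\Delta_+}\ell_\alpha^2\cdot\prod_i\ell_i$ over $Z_{Fr}\cong\BF[G^d_0]$. Consequently the coherent sheaf $\widetilde{O_\e[G]}$ on $\Spec O_\e[G^d]=G^d$ is locally free of rank $d$ on the open dense Bruhat cell $G^d_0\subset G^d$.

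\emph{Main obstacle (Stage 3: extending local freeness globally).} The hard part is extending local freeness from $G^d_0$ to all of $G^d$. The natural adjoint $\dU_\BF(\g^d)$-action on $O_\e[G^d]$ corresponds to the conjugation action of $G^d$ on itself, which is not transitive, so one cannot just translate. My strategy is to use the good filtration $\{M_i\}$ on $O_\e[G]$ from Lemma~\ref{prop: good filtration on Oq[G]}, with subquotients $H^0_\e(\lambda_i)\otimes H^0_\e(\lambda_i^*)$, together with a Steinberg-type tensor product decomposition of each dual Weyl module: writing $\lambda=\lambda^{(1)}+\lambda^{(0)}$ with $\lambda^{(1)}\in P^*$ and $\lambda^{(0)}$ in the restricted region $P_\ell$, one has $H^0_\e(\lambda)\cong\tFr^*(H^0_{G^d}(\lambda^{(1)}))\otimes H^0_\e(\lambda^{(0)})$. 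Substituting this into the filtration produces a refined filtration of $O_\e[G]$ whose subquotients are of the form $(\text{matrix coefficients in }O_\e[G^d])\otimes_\BF(\text{finite-dimensional restricted piece})$, with the tensor factorization compatible with the $O_\e[G^d]$-action thanks to centrality. Combining this with Stage~1 yields an $O_\e[G^d]$-module splitting
\[
O_\e[G]\ \cong\ O_\e[G^d]\otimes_\BF W_0
\]
for some finite-dimensional $W_0\subset O_\e[G]$ of dimension $d$, establishing that $O_\e[G]$ is in fact a \emph{free} $O_\e[G^d]$-module of rank $d$, hence projective. The technical core of Stage~3 is verifying the Steinberg factorization in the present Cartan-twisted setup and checking that the induced filtration is $O_\e[G^d]$-linear; this will be the main work carried out in Section~\ref{ssec: prop of Oe[G] is proj}.
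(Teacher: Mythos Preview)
Your Stages~1 and~2 are correct and match the paper's arguments for finite generation (Lemma~\ref{lem: Oe[G] is fg}) and for local freeness over the open Bruhat cell $G^d_0$.

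Stage~3 contains a genuine gap, in fact two independent ones. First, the Steinberg-type factorization $H^0_\e(\lambda)\cong\tFr^*\bigl(H^0_{G^d}(\lambda^{(1)})\bigr)\otimes H^0_\e(\lambda^{(0)})$ you invoke does not hold: Steinberg's tensor product theorem is a statement about \emph{simple} modules $L_\e(\lambda)$, and at a root of unity the dual Weyl modules $H^0_\e(\lambda)$ are typically non-simple and admit no such tensor decomposition. Second, and more basically, the good filtration $\{M_i\}$ of Lemma~\ref{prop: good filtration on Oq[G]} is a filtration by $\dU_\e(\g)\otimes\dU_\e(\g)$-submodules, not by $O_\e[G^d]$-submodules. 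The $O_\e[G^d]$-action on $O_\e[G]$ is by multiplication in the algebra, and elements of $O_\e[G^d]$ carry nonzero weights in $Q^*$; multiplication by them therefore does not preserve the $M_i$, so the subquotients $M_i/M_{i-1}$ carry no induced $O_\e[G^d]$-module structure, and no $O_\e[G^d]$-module splitting can be read off from this filtration.

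The paper's route is entirely different and exploits equivariance rather than any explicit decomposition. One first shows (Lemma~\ref{lem: internal Ext}) that for $M\in O_\e[G^d]\rmod^{G_\e}$ the groups $\Ext^i_{O_\e[G^d]\Rmod}(M,O_\e[G^d])$ are again objects of $O_\e[G^d]\lmod^{G_\e}$, via an equivariant free resolution. For $M=O_\e[G]$ these Ext groups vanish after localizing to $G^d_0$, by your Stage~2. The key step (Lemma~\ref{lem: zero localization}) is a rigidity statement: any object of $O_\e[G^d]\lmod^{G_\e}$ whose restriction to $G^d_0$ vanishes is itself zero. This is proved by passing to $\fu_\e$-invariants to land in genuinely $G^d$-equivariant coherent sheaves on $G^d$, and using the geometric fact that $G^d_0$ meets every conjugacy class, so no nonzero conjugation-equivariant sheaf can be supported on $G^d\setminus G^d_0$. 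Once the higher Ext groups vanish globally, projectivity follows because $\BF[G^d]$ is regular, hence Cohen--Macaulay.
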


Recall that $\dU_\e(\g)$ acts on $O_\e[G^d]$ making $O_\e[G^d]$ into a $\dU_\e(\g)$-module algebra. Thus we can form the following categories: $O_\e[G^d]\Lmod^{\dU_\e},\; O_\e[G^d]\Rmod^{\dU_\e}, O_\e[G^d]\Bimod^{\dU_\e}$ of $\dU_\e(\g)$-equivariant left modules, right modules, and bimodules over $O_\e[G^d]$, respectively. We refer the reader to Appendix \ref{sec:equiv} for details.

\begin{defi}\label{def: module categories}Let $O_\e[G^d]\bimod^{G_\e}$ be the full subcategory of  $O_\e[G^d]\Bimod^{\dU_\e}$ consisting of all objects which are finitely generated as left and right $O_\e[G^d]$-modules, and rational as $\dU_\e(\g$)-representations. The categories $O_\e[G^d]\lmod^{G_\e}$ and $O_\e[G^d]\rmod^{G_\e}$ are defined similarly.
\end{defi}

Since $O_\e[G^d]\cong \BF[G^d]$ is Noetherian and the category $\Rep(\dU_\e(\g))$ is abelian, all three categories $O_\e[G^d]\bimod^{G_\e}, O_\e[G^d]\lmod^{G_\e}, O_\e[G^d]\rmod^{G_\e}$ are abelian. We note that $O_\e[G]$ belongs to $O_\e[G^d]\bimod^{G_\e}$, thanks to Proposition \ref{prop: Oe[G] is proj}. 

For any $\dU_\e(\g)$-representation $M$, let 
\begin{equation}\label{eq: Mfin}
M^{fin}:=\{ m\in M|\dim_\BF(\dU_\e(\g) m)<\infty\}.
\end{equation}

Since the action of $\dU_\e(\g)$ on $O_\e[G^d]$ is locally finite as $O_\e[G^d]$ is a rational $\dU_\e(\g)$-representation, for any $M \in O_\e[G^d]\Rmod^{\dU_\e}$, the locally finite part $M^{fin}$ is an object in $O_\e[G^d]\Rmod^{\dU_\e}$. Note that the action of $\dU_\e(\g)$ on $M^{fin}$ is not required to be rational.

\begin{Lem}\label{lem: taking finite part} Let $M\in O_\e[G^d]\Rmod^{\dU_\e}$ and $P\in O_\e[G^d]\lmod^{G_\e}$, where $P$ is a finitely generated projective left $O_\e[G^d]$-module. Then the natural map
\begin{equation}\label{eq: taking finite part}
    M^{fin}\otimes_{O_\e[G^d]} P \rightarrow M\otimes_{O_\e[G^d]}P
\end{equation}
is injective with image $(M\otimes_{O_\e[G^d]} P)^{fin}$.
\end{Lem}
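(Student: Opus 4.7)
First I handle injectivity and the easy containment. Since $O_\e[G^d]\simeq \BF[G^d]$ is commutative and $P$ is finitely generated projective over it, $P$ is flat, so the inclusion $M^{fin}\hookrightarrow M$ remains injective after tensoring with $P$ over $O_\e[G^d]$. The image lies in $(M\otimes_{O_\e[G^d]}P)^{fin}$: for $m\in M^{fin}$ and $p\in P$, the $\dU_\e$-orbit of $m\otimes p$ under the diagonal action $x(m\otimes p)=\sum x_{(1)}m\otimes x_{(2)}p$ is contained in the image of the finite-dimensional subspace $(\dU_\e m)\otimes_\BF(\dU_\e p)$ of $M\otimes_\BF P$; here $\dU_\e m$ is finite-dimensional by hypothesis and $\dU_\e p$ is finite-dimensional by rationality of $P$.

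For the reverse inclusion I reduce to the case where $P$ is an equivariant free module of the form $O_\e[G^d]\otimes_\BF V$ with $V$ a finite-dimensional rational $\dU_\e$-module. Using finite generation and rationality of $P$, I pick a finite-dimensional $\dU_\e$-stable subspace $V\subseteq P$ that generates $P$ over $O_\e[G^d]$, producing an equivariant surjection $\pi\colon F:=O_\e[G^d]\otimes_\BF V\twoheadrightarrow P$; the kernel $K$ of $\pi$ is again equivariant and projective over $O_\e[G^d]$, and projectivity of $P$ splits this sequence non-equivariantly as $O_\e[G^d]$-modules. In the free case $P=F$, I have $M\otimes_{O_\e[G^d]}F\simeq M\otimes_\BF V$, and I extract components via the equivariant evaluation $\Phi\colon M\otimes V\otimes V^*\to M$, $m\otimes v\otimes v^*\mapsto v^*(v)m$, which is $\dU_\e$-equivariant because the pairing $V\otimes V^*\to\BF$ is. Given $y=\sum_i m_i\otimes e_i\in (M\otimes V)^{fin}$ written in a basis $\{e_i\}$ of $V$ with dual basis $\{e_i^*\}$ of $V^*$, rationality of $V^*$ gives $y\otimes e_i^*\in (M\otimes V\otimes V^*)^{fin}$, so equivariance of $\Phi$ forces $m_i=\Phi(y\otimes e_i^*)\in M^{fin}$. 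Hence $y\in M^{fin}\otimes_\BF V$ and the free case is settled.

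To deduce the general case from the free case, I consider the equivariant short exact sequence $0\to M\otimes_{O_\e[G^d]}K\to M\otimes_\BF V\to M\otimes_{O_\e[G^d]}P\to 0$ (exact by flatness of $P$) and its analogue for $M^{fin}$. The image of $(M\otimes V)^{fin}=M^{fin}\otimes_\BF V$ under the equivariant surjection $\pi_M\colon M\otimes V\twoheadrightarrow M\otimes_{O_\e[G^d]}P$ is precisely $M^{fin}\otimes_{O_\e[G^d]}P$, yielding one containment. The reverse amounts to showing that every $y\in (M\otimes_{O_\e[G^d]}P)^{fin}$ has a locally finite preimage in $M\otimes_\BF V$, and this is the main obstacle: the non-equivariant splitting of $\pi$ provides a lift $\widetilde y\in M\otimes V$ that need not itself lie in $(M\otimes V)^{fin}$. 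I handle this by correcting $\widetilde y$ modulo the equivariant submodule $M\otimes_{O_\e[G^d]}K$, using that the finite-dimensionality of $\dU_\e y$ bounds the failure of equivariance of the splitting and that rationality of both $V$ and $K$ allows one to absorb this failure into $M\otimes_{O_\e[G^d]}K$, producing a locally finite lift. This correction step—controlling the interaction between the non-equivariant $O_\e[G^d]$-linear splitting and the diagonal $\dU_\e$-action through the rationality of $K$—is the technically delicate heart of the argument.
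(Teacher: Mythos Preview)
Your injectivity argument via flatness and the easy containment are fine. Your treatment of the equivariantly free case $P=O_\e[G^d]\otimes_\BF V$ is clean and correct: extracting each coefficient $m_i$ as the $\dU_\e$-equivariant image $\Phi(y\otimes e_i^*)$ of a locally finite element forces $m_i\in M^{fin}$.

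The gap is the reduction from general $P$ to the free case. You correctly identify this as the crux, but what you write there is not an argument. Given $y\in (M\otimes_{O_\e[G^d]}P)^{fin}$ and a non-equivariant lift $\tilde y\in M\otimes_\BF V$, you need $z\in M\otimes_{O_\e[G^d]}K$ with $\tilde y-z\in M^{fin}\otimes_\BF V$. Neither the finite-dimensionality of $\dU_\e y$ nor the rationality of $K$ produces such a $z$. Concretely, the defect $x\cdot\tilde y-(1\otimes s)(x\cdot y)$ lies in $M\otimes_{O_\e[G^d]}K$, but its $M$-components are of the form $x_{(1)}m_j$ for the original (possibly non-locally-finite) coefficients $m_j$ of $\tilde y$; iterating, $\dU_\e\tilde y$ need not be finite-dimensional and there is no evident way to ``absorb'' this. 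What you are really facing is an $\Ext^1$-type obstruction to lifting the finite-dimensional subrepresentation $\dU_\e y$ along the equivariant surjection $M\otimes_\BF V\twoheadrightarrow M\otimes_{O_\e[G^d]}P$, and this obstruction has no reason to vanish.

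The paper bypasses the lifting problem entirely by dualizing. Set $Q=\Hom_{O_\e[G^d]\Lmod}(P,O_\e[G^d])$, a finitely generated projective object of $O_\e[G^d]\rmod^{G_\e}$ with $P\simeq\Hom_{O_\e[G^d]\Rmod}(Q,O_\e[G^d])$. Tensor--Hom adjunction in the equivariant module categories then gives, for each finite-dimensional rational $V$,
\[
\Hom_{\dU_\e}(V,\,M\otimes_{O_\e[G^d]}P)\ \simeq\ \Hom_{O_\e[G^d]\Rmod^{\dU_\e}}(V\otimes_\BF Q,\,M).
\]
Since $V\otimes_\BF Q$ is rational, any equivariant map from it to $M$ automatically lands in $M^{fin}$; running the adjunction back yields the same $\Hom$ with $M$ replaced by $M^{fin}$. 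Bijectivity of $\Hom_{\dU_\e}(V,-)$ for all finite-dimensional $V$ is exactly what is needed for both injectivity and identification of the image. The point is that dualizing converts your lifting problem into a restriction-of-target problem, and the latter is trivial because the source $V\otimes_\BF Q$ is already locally finite.
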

\begin{proof}Since $P$ is a rational $\dU_\e(\g)$-representation, the action of $\dU_\e(\g)$ on $P$ is locally finite. Therefore, the action of $\dU_\e(\g)$ on $M^{fin}\otimes_{O_\e[G^d]} P$ is locally finite and the image of \eqref{eq: taking finite part} is contained in $(M\otimes_{O_\e[G^d]} P)^{fin}$.

It suffices to show that the induced map
\begin{equation}\label{eq: Hom(V,.)}
\Hom_{\dU_\e(\g)}(V, M^{fin}\otimes_{O_\e[G^d]} P)\rightarrow \Hom_{\dU_\e(\g)}(V, M\otimes_{O_\e[G^d]} P)   
\end{equation}
is bijective for any finite dimensional $\dU_\e(\g)$-module $V$.

Let $Q=\Hom_{O_\e[G^*]\Lmod}(P,O_\e[G^*])$. Since $P$ is a finitely generated projective left $O_\e[G^*]$-module, $Q$ is a finitely generated projective right $O_\e[G^*]$-module and we have an isomorphism $P \simeq \Hom_{O_\e[G^*]\Rmod}(Q, O_\e[G^*])$ in the category $O_\e[G^*]\lmod^{G_\e}$ by Lemma~\ref{tensor-hom-adj}. Using the tensor-hom adjunction of Lemma~\ref{tensor-hom-adj}, we have:
\begin{equation}\label{eq:(V,M xP)}
\begin{split}
  \Hom_{\dU_\e(\g)}(V, M\otimes_{O_\e[G^*]}P) &=\Hom_\BF(V, M\otimes_{O_\e[G^*]}P)^{\dU_\e(\g)}\\
  &=\Hom_{O_\e[G^*]\Rmod}(V\otimes_\BC Q, M)^{\dU_\e(\g)}\\
  &=\Hom_{O_\e[G^*]\Rmod^{\dU_\e(\g)}}(V\otimes_\BC Q,M) \,.
  \end{split}
\end{equation}
Since $Q\in O_\e[G^*]\Rmod^{G_\e}$, the action of $\dU_\e(\g)$ on $Q$ is locally finite, and thus we have:
\begin{align*}
  \Hom_{\dU_\e(\g)}(V, M\otimes_{O_\e[G^*]}P) &=\Hom_{O_\e[G^*]\Rmod^{\dU_\e(\g)}}(V\otimes_\BC Q, M^{fin})\\
  &=\Hom_{\dU_\e(\g)}(V, M^{fin}\otimes_{O_\e[G^*]} P) \,.
\end{align*}
 This completes the proof.
\end{proof}

We are now ready to prove the main result of this section. Here $\BF$ is not required to be algebraically closed.
\begin{Prop}\label{prop: Oe[G] iso Ufin } The inclusion $\hiota: O_\e[G]\hookrightarrow U^{fin}_\e$ is an isomorphism of $\dU_\e(\g)$-module algebras
\end{Prop}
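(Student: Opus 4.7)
Since the injection $\hiota\colon O_\e[G]\hookrightarrow U^{fin}_\e$ is already established via Lemma~\ref{lem: Oq[G] --> even part} together with the definition of $U^{fin}_\e$, only surjectivity remains. I would begin by reducing to algebraically closed $\BF$: both $O_\e[G]$ (Remark~\ref{rem: Oq[G] is functorial}) and the locally finite part $U^{fin}_\e$ commute with field extensions, and surjectivity of an $\BF$-linear map between $\BF$-vector spaces can be detected after passing to $\bar\BF$.

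With $\BF$ algebraically closed, I would view $U^{ev}_\e(\g)$ as an object of $O_\e[G^d]\Rmod^{\dU_\e}$ via the central inclusion $O_\e[G^d]=Z^{fin}_{Fr}\hookrightarrow U^{ev}_\e(\g)$ endowed with the adjoint left $\dU_\e$-action; by construction its locally finite part is exactly $U^{fin}_\e$. Proposition~\ref{prop: Oe[G] is proj} then gives that $O_\e[G]$ is a finitely generated projective object of $O_\e[G^d]\lmod^{G_\e}$, so Lemma~\ref{lem: taking finite part} applied with $M=U^{ev}_\e(\g)$ and $P=O_\e[G]$ produces the identification
\[
  U^{fin}_\e\otimes_{O_\e[G^d]}O_\e[G]\,\iso\,\bigl(U^{ev}_\e(\g)\otimes_{O_\e[G^d]}O_\e[G]\bigr)^{fin} \,.
\]
This should then be combined with Lemma~\ref{lem: mat coefficient of dU}, which lets one write any finite-dimensional rational $\dU_\e$-module as a subquotient of $\tFr^*(N)\otimes V$ for some $N$ and a fixed $V$, transferring the surjectivity to a question in $\Rep(\dU_\BF(\g^d))$. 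There it is resolved by the known identity that $Z^{fin}_{Fr}\simeq\BF[G^d]$ is the $\dU_\BF(\g^d)$-locally finite part of $Z_{Fr}\simeq\BF[G^d_0]$ from Lemma~\ref{lem: Zfin in field F}(a). A rank count using Corollary~\ref{U^ev is free over Z_e} (freeness of $U^{ev}_\e(\g)$ of rank $d=\prod_{\a\in\Delta_+}\ell_\a^2\prod_i\ell_i$ over $Z_{Fr}$) together with the localization $Z_{Fr}=Z^{fin}_{Fr}[K^{2\lambda_0}]$ from Lemma~\ref{lem: Zfin in field F}(b) then forces $\hiota(O_\e[G])=U^{fin}_\e$.

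The main obstacle will be extracting the desired equality from the tensor-product identification above: the source $U^{ev}_\e(\g)\otimes_{O_\e[G^d]}O_\e[G]$ has rank $d^2$ over $Z_{Fr}$ whereas $U^{ev}_\e(\g)$ has rank only $d$, so the natural multiplication map is far from injective, and one must track which part of the left side lands inside $\hiota(O_\e[G])$. A workable route is to exploit the explicit generation $U^{ev}_\e(\g)=O_\e[G][\{K^{2\w_i}\}_{i=1}^r]$ from Proposition~\ref{prop: basis of Ufin}(b): any $u\in U^{fin}_\e$ can be written in $U^{ev}_\e(\g)$ as $K^{2\sum n_i\w_i}\cdot\hiota(f)$ for some $f\in O_\e[G]$ and nonnegative integers $n_i$, and the finite-dimensionality of $\ad'(\dU_\e)u$ together with the fact that $\hiota(O_\e[G])\ni K^{-2\lambda}$ for all $\lambda\in P_+$ should force such a product to already lie in $\hiota(O_\e[G])$, yielding the proposition.
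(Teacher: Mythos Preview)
Your overall strategy is close to the paper's, but there is a concrete gap: you apply Lemma~\ref{lem: taking finite part} with $M=U^{ev}_\e(\g)$, and this is precisely where the argument stalls (as you yourself observe). The paper instead takes $M=Z_{Fr}$ and $P=O_\e[G]$. This choice works cleanly because
\[
  Z_{Fr}\otimes_{O_\e[G^d]}O_\e[G]\;\cong\;O_\e[G^d][K^{2\lambda_0}]\otimes_{O_\e[G^d]}O_\e[G]\;\cong\;O_\e[G][K^{2\lambda_0}]\;=\;U^{ev}_\e(\g),
\]
the first identification by Lemma~\ref{lem: Zfin in field F}(b) and Lemma~\ref{lem: Oe[G^d] and Z^ev}, the last by Proposition~\ref{prop: basis of Ufin}(b). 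Then Lemma~\ref{lem: taking finite part} gives
\[
  U^{fin}_\e\;=\;(Z_{Fr}\otimes_{O_\e[G^d]}O_\e[G])^{fin}\;\cong\;Z^{fin}_{Fr}\otimes_{O_\e[G^d]}O_\e[G]\;\cong\;O_\e[G^d]\otimes_{O_\e[G^d]}O_\e[G]\;=\;O_\e[G],
\]
using $Z^{fin}_{Fr}=O_\e[G^d]$ from Lemma~\ref{lem: Oe[G^d] and Z^ev}. No rank count, no appeal to Lemma~\ref{lem: mat coefficient of dU}, and no ambiguity about which part of a rank-$d^2$ module lands where.

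Your fallback in the last paragraph is circular: writing $u=K^{2\mu}\cdot\hiota(f)$ with $\mu\in P_+$ and then asserting that local finiteness forces this product into $\hiota(O_\e[G])$ is exactly the statement you are trying to prove. Also, the reduction to $\bar\BF$ does not require $U^{fin}_\e$ to commute with field extension; one only needs the inclusion $U^{fin}_\BF\otimes_\BF\bar\BF\hookrightarrow U^{fin}_{\bar\BF}$, and the commutative square with $O_{\bar\BF}[G]\iso U^{fin}_{\bar\BF}$ then forces $\hiota_\BF\otimes\bar\BF$ to be bijective.
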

\begin{proof}
First, we assume $\BF$ is algebraically closed. According to Lemma \ref{lem: Oe[G^d] and Z^ev} and Lemma \ref{lem: Zfin in field F}, we have $O_\e[G^d][K^{2\lambda_0}]=Z^{ev}_\e$. By Proposition \ref{prop: basis of Ufin}, $O_\e[G][K^{2\lambda_0}=U^{ev}_\e$. As $O_\e[G]$ is finitely generated over $O_\e[G^d]$ by Proposition \ref{prop: Oe[G] is proj},
\[ Z_{Fr} \otimes_{O_\e[G^d]} O_\e[G]\cong O_\e[G^d][K^{2\lambda_0}]\otimes_{O_\e[G^d]} O_\e[G] \cong O_\e[G][K^{2\lambda_0}]=U^{ev}_\e.\]
We have that $O_\e[G]\in O_\e[G^d]\lmod^{G_\e}$, $Z_{Fr}\in O_\e[G^d]\Rmod^{\dU_\e}$,  and $O_\e[G]$ is finitely generated projective over $O_\e[G^d]$. Hence thanks to Lemma \ref{lem: Oe[G^d] and Z^ev} and Lemma \ref{lem: taking finite part},   it follows that 
\[ (Z_{Fr}\otimes_{O_\e[G^d]} O_\e[G])^{fin}\cong Z^{fin}_{Fr}\otimes_{O_\e[G^d]} O_\e[G] \cong O_\e[G^d] \otimes_{O_\e[G^d]}O_\e[G] \cong O_\e[G],\]
 This implies the isomorphism $O_\e[G]\cong U^{fin}_\e$ when $\BF$ is algebraically closed.


In general, let $\overline{\BF}$ be an algebraically closed field of $\BF$. The following commutative diagram
\[ \begin{tikzcd}O_\BF[G]\otimes_{\BF}\overline{\BF}\arrow[d, "\cong"]\arrow[r, hook]& U^{fin}_\BF\otimes_\BF \overline{\BF}\arrow[d, hook]&\\
O_{\overline{\BF}}[G]\arrow[r, "\cong"]& U^{fin}_{\overline{\BF}}
\end{tikzcd}\]
implies the top horizontal map is bijective. Hence $O_\BF[G]\rightarrow U^{fin}_\BF$ is an isomorphism. 
\end{proof}

\subsection{Proof of Proposition \ref{prop: Oe[G] is proj}}\
\label{ssec: prop of Oe[G] is proj}

Here $\BF$ is algebraically closed. Recall the functor
\[ \tFr^*:\Rep(\rU^*_q)\rightarrow \Rep(\rU_q).\]

\begin{Lem}\label{lem: Oe[G] is fg}$O_\e[G]$ is finitely generated over $O_\e[G^d]$.
\end{Lem}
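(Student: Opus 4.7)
The plan is to exploit the categorical description of $O_\e[G]$ from Section~\ref{ssec: cat def of REA} together with Lemma~\ref{lem: mat coefficient of dU} in order to exhibit an explicit finite generating set. Let $V\in\Rep(\dU_\e(\g))$ be the fixed finite dimensional module produced by Lemma~\ref{lem: mat coefficient of dU}, and let $W\subset O_\e[G]$ be the finite dimensional subspace spanned by all matrix coefficients $c_{f,v}$ with $v\in V, f\in V^*$. I claim that $O_\e[G]=O_\e[G^d]\cdot W$; since $O_\e[G^d]$ is central in $O_\e[G]$ by Remark~\ref{rem: Oq[G^d] is central}, this identifies $O_\e[G]$ as a module generated over $O_\e[G^d]$ by a finite dimensional space, which is exactly what is required.

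To prove the claim, I would take an arbitrary matrix coefficient $c_{f,v}$ with $v\in M, f\in M^*$ and $M$ a finite dimensional object of $\Rep(\dU_\e(\g))$, and invoke Lemma~\ref{lem: mat coefficient of dU} to choose a finite dimensional $N\in \Rep(\dU^*_\e(\g))$ so that $M$ is realized as a subquotient $M_2/M_1$ of $M':=\tFr^*(N)\otimes V$. The equivalence relation defining $O_\e[G]$ categorically then lets me lift $v$ to some $\tilde v\in M_2\subset M'$, pull $f$ back along the projection $\pi\colon M_2\twoheadrightarrow M$ to an element of $M_2^*$ vanishing on $M_1$, and extend it arbitrarily to $\tilde f\in (M')^*$; unravelling the equivalences gives $c_{f,v}=c_{\tilde f,\tilde v}$ in $O_\e[G]$, so every matrix coefficient of $M$ is one of $M'$.

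Finally, the multiplication rule \eqref{eq: cat rel of mult} identifies the subspace of matrix coefficients of $M'=\tFr^*(N)\otimes V$ with the product (in $O_\e[G]$) of the subspaces of matrix coefficients of $\tFr^*(N)$ and of $V$. Matrix coefficients of $\tFr^*(N)$ lie in $O_\e[G^d]$ by the very construction of the embedding $O_\e[G^d]\hookrightarrow O_\e[G]$ coming from the braided monoidal functor $\tFr^*$, and matrix coefficients of $V$ lie in $W$ by definition, so $c_{f,v}\in O_\e[G^d]\cdot W$ as needed. The only delicate step is keeping track of the braiding in \eqref{eq: cat rel of mult}, but this is harmless: since the $R$-matrix $\CR^*$ on the image of $\tFr^*$ is purely Cartan and acts diagonally on weight spaces, the braiding between $\tFr^*(N)$ and $V$ only rescales matrix coefficients of $V$ within the finite dimensional space $W$, so the product stays inside $O_\e[G^d]\cdot W$.
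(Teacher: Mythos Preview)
Your argument is correct and follows essentially the same route as the paper's own proof: both invoke the subquotient lemma to reduce to modules of the form $\tFr^*(N)\otimes V$, then use the categorical multiplication \eqref{eq: cat rel of mult} to factor the matrix coefficients as products of an $O_\e[G^d]$-part and a $V$-part. Your last paragraph's concern about the braiding is unnecessary, though: the top row of \eqref{eq: cat rel of mult} is already an isomorphism (the braiding is invertible and $Y^*\otimes X^*\cong (X\otimes Y)^*$ canonically), so every matrix coefficient of $\tFr^*(N)\otimes V$ lies in the image of the multiplication map restricted to matrix coefficients of $\tFr^*(N)$ times those of $V$, with no need to analyze $\CR^*$.
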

\begin{proof} By Lemma \ref{lem: mat coefficient}, there is a finite dimensional module $V\in \Rep(\rU_q)$ such that any finite dimensional module  $M\in \Rep(\rU_q)$ is a subquotient of $\tFr^*(N)\otimes V$ for some finite dimensional module $N\in \Rep(\rU^*_q)$ . Hence, $O_\e[G]$ is spanned by matrix coefficients of the modules of the form $\tFr^*(N)\otimes V$ with $N\in \Rep(\rU^*_q)$. So, $O_\e[G]$ is linearly spanned by the images of the maps
\[ (\tFr^*(N)\otimes V)^* \otimes (\tFr^*(N)\otimes V)\rightarrow O_\e[G].\]
Evoking the multiplication in \eqref{eq: cat rel of mult}, we have
\[
\begin{tikzcd}
    (\tFr^*(N))^*\otimes \tFr^*(N) \otimes V^*\otimes V \arrow[d] \arrow[r, "\cong"]& (\tFr^*(N)\otimes V)^*\otimes \tFr^*(N)\otimes V\arrow[d]&\\
    O_\e[G]\otimes O_\e[G]\arrow[r, "\mm"]& O_\e[G]
\end{tikzcd}
\]
On the other hand, the image of $(\tFr(N))^*\otimes \tFr^*(N) \rightarrow O_\e[G]$ is contained in $O_\e[G^d]$. From these observations, we see that $O_\e[G]$ is spanned by the image of $V^*\otimes V\rightarrow O_\e[G]$ over $O_\e[G^d]$. Since $V$ is finite dimensional, $O_\e[G]$ is finitely generated over $O_\e[G^d]$.
\end{proof}

To show that $O_\e[G]$ is projective over $O_\e[G^d]$, we need two technical lemmas. Recall the categories $O_\e[G^d]\rmod^{G_\e}$, $O_\e[G^d]\lmod^{G_\e}$, $O_\e[G^d]\bimod^{G_\e}$ from Definition \ref{def: module categories}. 

\begin{Lem}\label{lem: internal Ext}
For any $M\in O_\e[G^d]\rmod^{G_\e}$, all $\Ext$-groups $\Ext^i_{O_\e[G^d]\rmod}(M, O_\e[G^d])$ are objects of $O_\e[G^d]\lmod^{G_\e}$.
\end{Lem}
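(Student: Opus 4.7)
The strategy is to construct a resolution of $M$ in $O_\e[G^d]\rmod^{G_\e}$ whose terms are, on the one hand, free as right $O_\e[G^d]$-modules (hence usable to compute $\Ext^i$) and, on the other hand, of a form whose $\Hom$ into $O_\e[G^d]$ manifestly lies in $O_\e[G^d]\lmod^{G_\e}$.

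The key building block is the following claim: every $N\in O_\e[G^d]\rmod^{G_\e}$ admits a surjection in $O_\e[G^d]\rmod^{G_\e}$ from an object of the form $V\otimes_\BF O_\e[G^d]$, where $V$ is a finite-dimensional rational $\dU_\e(\g)$-representation, the right action of $O_\e[G^d]$ is on the second tensorand, and the $\dU_\e(\g)$-action is diagonal (using that $O_\e[G^d]$ is a $\dU_\e(\g)$-module algebra). Indeed, any finite set of right $O_\e[G^d]$-module generators of $N$ sits inside a finite-dimensional $\dU_\e(\g)$-subrepresentation $V\subset N$, by rationality of $N$; then $V\otimes_\BF O_\e[G^d]\twoheadrightarrow N$, $v\otimes a\mapsto va$, is the desired surjection, and its kernel lies in $O_\e[G^d]\rmod^{G_\e}$ because $O_\e[G^d]\simeq \BF[G^d]$ is Noetherian and subrepresentations of rational $\dU_\e(\g)$-representations are rational. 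Iterating yields a resolution
\[
\cdots \to V_2\otimes_\BF O_\e[G^d]\to V_1\otimes_\BF O_\e[G^d]\to V_0\otimes_\BF O_\e[G^d]\to M\to 0
\]
in $O_\e[G^d]\rmod^{G_\e}$, with each $V_i$ finite-dimensional and rational.

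Each $V_i\otimes_\BF O_\e[G^d]$ is a free right $O_\e[G^d]$-module of finite rank, so the complex computes $\Ext^\ast_{O_\e[G^d]\rmod}(M,O_\e[G^d])$ after applying $\Hom_{O_\e[G^d]\rmod}(-,O_\e[G^d])$. A direct computation provides a natural isomorphism
\[
\Hom_{O_\e[G^d]\rmod}(V_i\otimes_\BF O_\e[G^d],\,O_\e[G^d])\,\simeq\, V_i^\ast\otimes_\BF O_\e[G^d],
\]
where the right-hand side carries the obvious left $O_\e[G^d]$-module structure (via the second tensorand) and the diagonal $\dU_\e(\g)$-action; since $V_i^\ast$ is finite-dimensional and $O_\e[G^d]$ is rational, this object lies in $O_\e[G^d]\lmod^{G_\e}$. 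The resulting complex thus lives entirely in the abelian category $O_\e[G^d]\lmod^{G_\e}$ (abelian because $O_\e[G^d]$ is Noetherian and $\Rep(\dU_\e(\g))$ is abelian closed under subquotients), and taking cohomology gives that $\Ext^i_{O_\e[G^d]\rmod}(M,O_\e[G^d])$ is an object of $O_\e[G^d]\lmod^{G_\e}$.

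The main technical point to verify is that the induced $\dU_\e(\g)$-action on $\Hom_{O_\e[G^d]\rmod}(V\otimes O_\e[G^d],O_\e[G^d])$ really coincides with the diagonal action on $V^\ast\otimes O_\e[G^d]$; this is a formal check using the Hopf algebra structure and the $\dU_\e(\g)$-module-algebra structure on $O_\e[G^d]$, but it is the place where the antipode $S'$ enters and must be handled carefully. Once this identification is established, finite generation of each $\Ext^i$ over $O_\e[G^d]$ is automatic from Noetherianity, completing the verification of both conditions in the definition of $O_\e[G^d]\lmod^{G_\e}$.
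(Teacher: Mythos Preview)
Your proposal is correct and follows essentially the same approach as the paper: build a free resolution of $M$ by objects $V_i\otimes_\BF O_\e[G^d]$ with $V_i$ finite-dimensional rational, then observe that applying $\Hom_{O_\e[G^d]\rmod}(-,O_\e[G^d])$ keeps the complex inside $O_\e[G^d]\lmod^{G_\e}$. The only cosmetic difference is that the paper invokes the general Appendix Lemma~\ref{Hom in H-equiv A-Rmod}(c) to conclude that $\Hom_{O_\e[G^d]\rmod}(N,O_\e[G^d])\in O_\e[G^d]\lmod^{G_\e}$ for any $N$ in the category, whereas you compute the explicit isomorphism $V_i^\ast\otimes_\BF O_\e[G^d]$ for the free terms and flag the Hopf-algebra check; both amount to the same verification.
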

\begin{proof} Since $M$ is finitely generated over $O_\e[G^d]$, there is a surjective morphism $V_1\otimes O_\e[G^d]\twoheadrightarrow M$ in the category $O_\e[G^d]\rmod^{G_\e}$ for some finite dimensional rational $\dU_\e(\g)$-module $V_1$. Applying this process to the kernel of this surjective morphism and so on, we obtain a long exact sequence in $O_\e[G^d]\rmod^{G_\e}$, where $V_i$ are finite dimensional rational $\dU_\e(\g)$-modules: 
\[ \dots \rightarrow V_2\otimes O_\e[G^d]\rightarrow V_1\otimes O_\e[G^d]\rightarrow M\rightarrow 0,\]
 This is a free resolution of the right $O_\e[G^d]$-module $M$.  On the other hand,  by Lemma \ref{Hom in H-equiv A-Rmod},  $\Hom_{O_\e[G^d]\rmod}(N, O_\e[G^d])$ is an object in $O_\e[G^d]\lmod^{G_\e}$ for any $N \in O_\e[G^d]\rmod^{G_\e}$. Hence, $\Ext^i_{O_\e[G^d]\rmod} (M, O_\e[G^d])$  are indeed objects in $O_\e[G^d]\lmod^{G_\e}$.
\end{proof}

Recall that the vanishing locus of $f=K^{-2\lambda_0}\in O_\e[G^d]\cong \BF[G^d]$ in $G^d(\BF)$ is $G^d(\BF)\backslash G^d_0(\BF)$. For any $M\in O_\e[G^d]\lmod^{G_\e}$, let $M_f$ be the localization of $M$ by $f\in O_\e[G^d]$.


\begin{Lem}\label{lem: zero localization} If $M\in O_\e[G^d]\lmod^{G_\e}$ is such that $M_f=0$, then $M=0$.
\end{Lem}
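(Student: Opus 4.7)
The approach is to interpret $M$ as a $G^d$-equivariant coherent sheaf on $G^d = \Spec O_\e[G^d]$, and to show that its closed support is stable under the $G^d$-conjugation and lies in $G^d\setminus G^d_0$, hence is empty.

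Step~1 (reduction to a question about $\operatorname{ann}(M)$). Since $O_\e[G^d]\cong \BF[G^d]$ is Noetherian and $M$ is finitely generated over it, the hypothesis $M_f=0$ yields $f^NM=0$ for some $N\geq 1$. Setting $I:=\operatorname{ann}_{O_\e[G^d]}(M)$, we have $f^N\in I$, so $V(I)\subseteq V(f)=G^d\setminus G^d_0$.

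Step~2 (invariance of $I$). The heart of the argument is to prove that $I$ is stable under the adjoint action of $\dU_\BF(\g^d)$ on $O_\e[G^d]$; recall that the $\dU_\e(\g)$-action on $O_\e[G^d]$ factors through the quantum Frobenius $\cFr\colon \dU_\e(\g)\twoheadrightarrow \dU_\BF(\g^d)$ and identifies with the differential of $G^d$-conjugation on $\BF[G^d]$. Fix $a\in I$, $m\in M$, and $x\in \dU_\e(\g)$; the $\dU_\e(\g)$-equivariance of $M$ gives
\[
  0=x(am)=\sum_{(x)}(x_{(1)}\cdot a)\cdot (x_{(2)}\cdot m).
\]
I would take $x=\tE_i^{(n\ell_i)}$, which maps to a unit multiple of $e_i^{(n)}$ under $\cFr$ by Lemma~\ref{lem: computations of cFr}, and use the twisted coproduct together with the vanishing $\cFr(\tE_i^{(c)})=0$ for $\ell_i\nmid c$; this collapses the Leibniz sum to the indices $c=k\ell_i$, $0\leq k\leq n$. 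An induction on $n$ (with inductive hypothesis $\tE_i^{(k\ell_i)}\cdot a\in I$ for $k<n$, together with $a\in I$ killing the $k=n$ term) will force $(\tE_i^{(n\ell_i)}\cdot a)\cdot m=0$ for every $m\in M$, i.e.\ $\tE_i^{(n\ell_i)}\cdot a\in I$; the same argument applied to $\tF_i^{(n\ell_i)}$ yields $f_i^{(n)}\cdot I\subseteq I$.

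Step~3 (conclusion via conjugacy classes). The subalgebra of $\dU_\BF(\g^d)$ generated by $\{e_i^{(n)},f_i^{(n)}\}$ recovers the Cartan divided powers via formulas of type \eqref{eq: ef*-relation}, so it is all of $\dU_\BF(\g^d)$; hence $I$ is $\dU_\BF(\g^d)$-stable, which amounts to saying that $I$ is a $G^d$-subcomodule of $\BF[G^d]$ under the conjugation coaction. Thus $V(I)\subseteq G^d$ is a $G^d$-conjugation-stable closed subvariety. As observed in the proof of Lemma~\ref{lem: Zfin in field F}(a), every conjugacy class of $G^d$ meets $G^d_0$ (since $G^d_0\supseteq B^d$ and every element of $G^d$ is conjugate into $B^d$), so the only conjugation-stable closed subset of $G^d\setminus G^d_0$ is $\emptyset$; hence $V(I)=\emptyset$, $I=O_\e[G^d]$, and $M=0$.

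The main obstacle is Step~2: plain $\dU_\e(\g)$-equivariance does not a priori give $\dU_\BF(\g^d)$-stability of $\operatorname{ann}_{O_\e[G^d]}(M)$, since the Leibniz expansion mixes the action on $a$ with the action on $m$. The key simplification I would exploit is that $\cFr$-kernel elements annihilate $O_\e[G^d]$, which makes the expansion of $\Delta'(\tE_i^{(n\ell_i)})$ collapse to just $n+1$ genuinely contributing terms and restores enough triangular structure for the induction to close.
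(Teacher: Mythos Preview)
Your proof is correct and follows a genuinely different route from the paper's. The paper argues by contradiction: if $M\neq 0$, choose a finite-dimensional rational $V$ with $\Hom_{\dU_\e(\g)}(V,M)\neq 0$, and pass to the $\fu_\e$-invariant part $(M\otimes V^*)^{\fu_\e}$, which by Lemma~\ref{lem: u-invariant part} is a nonzero $G^d$-equivariant coherent sheaf on $G^d$ with $(M\otimes V^*)^{\fu_\e}_f=0$; its closed support is then $G^d$-stable and contained in $G^d\setminus G^d_0$, contradicting the fact that every conjugacy class meets $G^d_0$.

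Your approach instead works directly with the annihilator ideal $I=\operatorname{ann}_{O_\e[G^d]}(M)$ and shows it is $\dU_\BF(\g^d)$-stable via the inductive Leibniz collapse in Step~2; this is valid because the $\dU_\e(\g)$-action on $O_\e[G^d]$ factors through $\cFr$, so in $\Delta'(\tE_i^{(n\ell_i)})=\sum_c \tE_i^{(n\ell_i-c)}\otimes \tE_i^{(c)}K^{-(n\ell_i-c)\zlambda_i}$ only the terms $c=k\ell_i$ survive when the first factor acts on $a\in O_\e[G^d]$, and the resulting $(n+1)$-term sum is triangular in $n-k$ (the $K$-factor acts invertibly on $M$, so it does not obstruct the conclusion). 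The paper's route is more conceptual and reuses the $\fu_\e$-invariants machinery set up in Lemma~\ref{lem: u-invariant part}; yours is more elementary and self-contained, avoiding both the auxiliary tensor $M\otimes V^*$ and the small-quantum-group invariants, at the cost of the explicit induction. Both converge on the same geometric endpoint.
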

This result is proved using its classical counterpart for equivaraint sheaves under an action of  an algebraic group. Let $\fu_\e$ be the Hopf subalgebra of $\dU_\e(\g)$ generated by $\{\tE_i, \tF_i, K^\lambda\}_{1\leq i \leq r}^{\lambda \in 2P}$. 
\begin{Lem}\label{lem: u-invariant part}(a) For any rational $\dU_\e(\g)$-module $M$, the $\fu_\e$-invariant part $M^{\fu_\e}$ is a rational $\dU_\BF(\g^d)$-module.

\noindent
(b) For any $M\in O_\e[G^d]\lmod^{G_\e}$, the $\fu_\e$-invariant part $M^{\fu_\e}$ belongs to $O_\e[G^d]\Mod^{G^d}$, the category of $G^d$-equivariant coherent sheaves on $\Spec(O_\e[G^d])=G^d$.    
\end{Lem}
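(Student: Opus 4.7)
For part (a), the plan is to realize $\fu_\e$ as a \emph{normal} Hopf subalgebra of $\dU_\e(\g)$ whose associated quotient coincides with $\dU_\BF(\g^d)$ via $\cFr$. Writing $\fu_\e^+ := \fu_\e\cap \ker\varepsilon'$, the first step is to verify on generators (using the coproducts \eqref{eq:twisted Hopf on divided powers}) that $\ad'(\dU_\e(\g))(\fu_\e) \subseteq \fu_\e$; the standard Hopf identity $\varepsilon'(\ad'(a)(b)) = \varepsilon'(a)\,\varepsilon'(b)$ combined with the formula $ax = \sum \ad'(a_{(1)})(x)\,a_{(2)}$ and its $S'$-dual then yields
\[
  \dU_\e(\g)\cdot \fu_\e^+ \;=\; \fu_\e^+\cdot \dU_\e(\g).
\]
Next, one matches this Hopf ideal with $\ker\cFr$: the generators of $\ker\cFr$ listed in Lemma~\ref{KerFr} all lie in $\fu_\e^+$ or in $\dU_\e(\g)\cdot \fu_\e^+$ (for $\tE_i^{(n)},\tF_i^{(n)}$ with $\ell_i\nmid n$, writing $n = \ell_i m + r$ with $0<r<\ell_i$ and factoring through $\tE_i^{r}\in \fu_\e^+$), while conversely every element of $\fu_\e^+$ is annihilated by $\cFr$ (using $\ell_i\geq 2$ to force $\cFr(\tE_i)=\cFr(\tF_i)=0$). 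Hence $\ker\cFr = \dU_\e(\g)\cdot \fu_\e^+$, producing a Hopf-algebra identification $\dU_\e(\g)/\dU_\e(\g)\cdot\fu_\e^+ \simeq \dU_\BF(\g^d)$.

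With this in hand, for any rational $M\in \Rep(\dU_\e(\g))$ the subspace $M^{\fu_\e}$ is annihilated by $\fu_\e^+$, hence by $\dU_\e(\g)\cdot\fu_\e^+$, and by normality it is $\dU_\e(\g)$-stable with the residual action factoring through $\dU_\BF(\g^d)$. Rationality holds because the $P$-weight decomposition of $M$ restricts to a $P^*$-weight decomposition of $M^{\fu_\e}$ (the triviality of the $K^\lambda$-action for all $\lambda\in 2P$ on invariants forces surviving weights $\mu$ to satisfy $\mu\in P^*$), while local finiteness of the $e_i^{(n)},f_i^{(n)}$-actions on $M^{\fu_\e}$ follows from local finiteness of $\tE_i^{(n\ell_i)},\tF_i^{(n\ell_i)}$ on $M$ together with Lemma~\ref{lem: computations of cFr}.

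For part (b), part~(a) endows $M^{\fu_\e}$ with a rational $\dU_\BF(\g^d)$-action. Moreover, $M^{\fu_\e}$ is an $O_\e[G^d]$-submodule of $M$: by Lemma~\ref{lem: Oe[G^d] and Z^ev} the $\dU_\e(\g)$-action on $O_\e[G^d]$ factors through $\cFr$, so $\fu_\e$ acts on $O_\e[G^d]$ via the counit, and using $\Delta'(\fu_\e)\subseteq \fu_\e\otimes\fu_\e$ the equivariance identity collapses to $u(am) = \varepsilon'(u)\,am$ for $u\in\fu_\e$, $a\in O_\e[G^d]$, $m\in M^{\fu_\e}$. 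Finite generation of $M^{\fu_\e}$ over $O_\e[G^d]\simeq \BF[G^d]$ is then automatic, being a submodule of the finitely generated module $M$ over the noetherian algebra $O_\e[G^d]$. Finally, the $\dU_\e(\g)$-equivariance of the $O_\e[G^d]$-action on $M$ descends to $\dU_\BF(\g^d)\simeq \Dist(G^d)$-equivariance on $M^{\fu_\e}$, which is exactly the condition defining a $G^d$-equivariant coherent sheaf on $\Spec O_\e[G^d]\simeq G^d$.

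The main obstacle is the first step, the precise identification $\ker\cFr = \dU_\e(\g)\cdot \fu_\e^+$. While the normality check is a direct computation on generators, matching both sides requires attention to the Cartan elements $\binom{K_i;0}{t}$ with $\ell_i\nmid t$, which lie in $\ker\cFr$ by construction but must be expressed as members of $\dU_\e(\g)\cdot\fu_\e^+$; this essentially replays the inductive extraction performed in the proof of Lemma~\ref{KerFr}, using the commutation relations between $\tE_i^{(p)}$ and $\tF_i^{(p)}$ to produce these Cartan witnesses from the finite pieces of $\fu_\e^+$.
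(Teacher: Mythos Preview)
Your approach is correct and close in spirit to the paper's, but organized differently and does more work than needed. The paper does not verify Hopf normality of $\fu_\e$ via the adjoint action, nor does it establish the full identification $\ker\cFr = \dU_\e(\g)\cdot\fu_\e^+$. Instead it invokes the already-established Remark~\ref{rem: ideal EF in Lus form} (a twisted analogue of Lemma~\ref{lem: normality of u}, proved there via higher-order quantum Serre relations): the left and right ideals of $\dU_\e(\g)$ generated by $\{\tE_i,\tF_i\}$ coincide, and every weight space $\dU_\e(\g)_\nu$ with $\nu\notin Q^*$ lies in this two-sided ideal. From here the paper argues by a weight dichotomy on $u\in\dU_\e(\g)$: if $\operatorname{wt}(u)\notin Q^*$ then $um=0$ outright; if $\operatorname{wt}(u)\in Q^*$ then $K^{2\lambda}$ commutes with $u$ (so $K^{2\lambda}um=um$), while $\tE_iu,\tF_iu$ have weight outside $Q^*$, hence lie in the left ideal generated by $\{\tE_j,\tF_j\}$, forcing $\tE_ium=\tF_ium=0$. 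Once $M^{\fu_\e}$ is seen to be $\dU_\e(\g)$-stable, the factoring through $\cFr$ follows at once from Lemma~\ref{KerFr}. Your structural route via normality and the quotient identification reaches the same conclusion but replays the content of Lemma~\ref{KerFr} rather than citing it.

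One point to tighten in your write-up: the formula $ax=\sum\ad'(a_{(1)})(x)a_{(2)}$ combined with $\ad'_l$-stability of $\fu_\e$ gives $\dU_\e(\g)\cdot\fu_\e^+\subseteq\fu_\e^+\cdot\dU_\e(\g)$, but the inclusion you actually need to show that $M^{\fu_\e}$ is $\dU_\e(\g)$-stable is the reverse one, $\fu_\e^+\cdot\dU_\e(\g)\subseteq\dU_\e(\g)\cdot\fu_\e^+$ (so that $x(am)=(xa)m\in\dU_\e(\g)\cdot\fu_\e^+\cdot m=0$). That direction comes from the companion identity $xa=\sum a_{(1)}\ad'_r(a_{(2)})(x)$ together with $\ad'_r$-stability of $\fu_\e$; your ``$S'$-dual'' remark should be made explicit as either a direct check of $\ad'_r$-stability or an appeal to the fact that $\fu_\e$ is $S'$-closed. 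For part~(b) your argument and the paper's are essentially identical.
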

\begin{proof}(a) This follows if we can show that $M^{\fu_\e}$ is a $\dU_\e(\g)$-submodule of $M$. Because by that, $M^{\fu_\e}$ is a rational $\dU_\e(\g)$-module and  the action of $\dU_\e(\g)$ on $M^{\fu_\e}$ will factor through the morphism $\cFr: \dU_\e(\g) \rightarrow \dU_\BF(\g^d)$ by Lemma \ref{KerFr}.

As in Remark \ref{rem: ideal EF in Lus form}, the left ideal of $\dU_\e(\g)$ generated by $\{ \tE_i, \tF_i\}_{1\leq i \leq r}$ is equal to the right ideal of $\dU_\e(\g)$ generated by $\{ \tE_i, \tF_i\}_{1\leq i \leq r}$; and the weight space $\dU_\e(\g)_\nu$ with $\nu \not \in Q^*$ is contained in this left ideal of $\dU_\e(\g)$.

Let $u$ be a weight element in $\dU_\e(\g)$ and $m \in M^{\fu_\e}$. We will show that $um \in M^{\fu_\e}$. If the weight of $u$ is not contained in $Q^*$, then by the second paragraph, $um=0$. Therefore, we can assume that the weight of $u$ is contained in $Q^*$. Then $K^{2\lambda}u=uK^{2\lambda}$ for all $\lambda \in P$, hence $K^{2\lambda}um=uK^{2\lambda}m=um$. Meanwhile, the weights of $\tE_iu, \tF_i u$ are not contained in $Q^*$, therefore, by the second paragraph, $\tE_iu$ and $\tF_iu$ are contained in the left ideal of $\dU_\e(\g)$ generated by $\{ \tE_i, \tF_i\}_{1\leq i \leq r}$. This implies that $\tE_ium=\tF_ium=0$.

\noindent
(b) Since $\fu_\e$ acts on $O_\e[G^d]$ via the counit and using  part (a), $M^{\fu_\e}$ is a $\dU_\BF(\g^d)$-equivariant $O_\e[G^d]$-module. Since $O_\e[G^d]$ is Noetherian and $M$ is finitely generated over $O_\e[G^d]$, it follows that $M^{\fu_\e}$ is finitely generated over $O_\e[G^d]$.
\end{proof}
\begin{proof}Assume that $M\neq 0$. Then there is a finite dimensional rational $\dU_\e(\g)$-module $V$ such that $\Hom_{\dU_\e(\g)}(V, M)\neq 0$. This implies that $(M\otimes V^*)^{\fu_\e}=\Hom_{\fu_\e}(V,M) \neq 0$. On the other hand, $M\otimes V^*\in O_\e[G^d]\Mod^{G^d}$ and $(M\otimes V^*)_f=0$, where $O_\e[G^d]$ acts only on the first factor in $M\otimes V^*$. Therefore, $(M\otimes V^*)^{\fu_\e}\neq 0$ belongs to $O_\e[G^d]\Mod^{G^d}$, and $(M\otimes V^*)^{\fu_\e}_f=0$. However, the latter is impossible. Indeed, the support of $(M\otimes V^*)^{\fu_\e}$ must be a $G^d(\BF)$-invariant closed subvariety of $G^d=\Spec(O_\e[G^d])$, while $(M\otimes V^*)^{\fu_\e}_f=0$ implies that this support is contained in $G^d(\BF)\backslash G^d_0(\BF)$. However, the open Bruhat cell intersects nontrivially any conjugacy classes in $G^d(\BF)$, see part (a) of Lemma \ref{lem: Zfin in field F}. Therefore, the support of $(M\otimes V^*)^{\fu_\e}$ must be empty. Thus, $(M\otimes V^*)^{\fu_\e}=0$, contradiction.
\end{proof}

We are now ready to finish the proof of Proposition \ref{prop: Oe[G] is proj}
\begin{Prop} $O_\e[G]$ is a projective left and right module over $O_\e[G^d]$.
\end{Prop}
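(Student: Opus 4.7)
The plan is to deduce projectivity from the vanishing of higher $\Ext$ groups combined with a classical criterion over regular commutative rings. Because $O_\e[G^d]$ is commutative and lies in the centre of $O_\e[G]$ (Remark~\ref{rem: Oq[G^d] is central}), the left and right $O_\e[G^d]$-module structures on $O_\e[G]$ agree, so it is enough to treat $O_\e[G]$ as a right $O_\e[G^d]$-module, viewed as an object of $O_\e[G^d]\rmod^{G_\e}$. By Lemma~\ref{lem: Oe[G] is fg}, it is finitely generated (hence finitely presented) over the Noetherian ring $O_\e[G^d]\cong\BF[G^d]$.

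First I would verify that $\Ext^i_{O_\e[G^d]\rmod}(O_\e[G], O_\e[G^d])=0$ for all $i\geq 1$. By Lemma~\ref{lem: internal Ext}, each such $\Ext$-module is naturally an object of $O_\e[G^d]\lmod^{G_\e}$, so by Lemma~\ref{lem: zero localization} it suffices to check vanishing after inverting $f=K^{-2\lambda_0}$. Since $\Ext$ commutes with localization for finitely presented modules over Noetherian rings, the localized group equals $\Ext^i_{Z_{Fr}}(U^{ev}_\e, Z_{Fr})$, where $(O_\e[G])_f=U^{ev}_\e$ by Proposition~\ref{prop: basis of Ufin} and $(O_\e[G^d])_f=Z_{Fr}$ by Lemma~\ref{lem: Zfin in field F}(b). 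Corollary~\ref{U^ev is free over Z_e} says that $U^{ev}_\e$ is free over $Z_{Fr}$, so these localized $\Ext$-groups vanish for $i\geq 1$, and hence so do the original ones.

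Finally, I would invoke the standard fact that for a commutative Noetherian regular ring $A$ of finite Krull dimension and a finitely generated $A$-module $M$, the vanishing of $\Ext^i_A(M,A)$ for all $i\geq 1$ forces $M$ to be projective. (One reduces to a regular local ring $(A_\mathfrak{p}, \mathfrak{p}A_\mathfrak{p})$; if the projective dimension $p$ were positive, a minimal free resolution of length $p$ would have its top differential landing in the maximal ideal, and dualizing it would produce a nonzero $\Ext^p_A(M,A)$, a contradiction.) Since $G^d$ is a smooth affine algebraic group over the algebraically closed field $\BF$, the ring $O_\e[G^d]\cong\BF[G^d]$ is regular of finite Krull dimension, and the conclusion follows. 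The main conceptual point of the plan is the second step: pairing the equivariant enhancement of $\Ext$ with the $f$-localization principle of Lemma~\ref{lem: zero localization} is precisely what reduces a global projectivity statement over $O_\e[G^d]$ to a computation on the open Bruhat cell, where freeness is already known.
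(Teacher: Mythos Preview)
Your proposal is correct and follows essentially the same approach as the paper's proof: reduce to a right module via centrality, show the $\Ext^i$ groups are equivariant (Lemma~\ref{lem: internal Ext}), use Lemma~\ref{lem: zero localization} to reduce vanishing to the localization at $f$, where freeness is known, and then conclude projectivity from the smoothness of $G^d$. The only cosmetic difference is that the paper phrases the final step via Cohen--Macaulayness rather than regularity, but the underlying homological criterion is the same.
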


\begin{proof}As $O_\e[G^d]$ is a central subalgebra of $O_\e[G]$ by Remark \ref{rem: Oq[G^d] is central} , it is enough to show that $O_\e[G]$ is projective as a right $O_\e[G^d]$-module. Since $(O_\e[G])_f \cong U^{fin}_\e[K^{2\lambda_0}]\cong U^{ev}_\e$ by Proposition \ref{prop: basis of Ufin} and $O_\e[G^d]_f\cong Z_{Fr}$, it follows by Corollary \ref{U^ev is free over Z_e} that $(O_\e[G])_f$ is free as a right $O_\e[G^d]_f$-module. By viewing $O_\e[G]$ as an object in $O_\e[G^d]\rmod^{G_\e}$, we know that all $\Ext^i_{O_\e[G^d]\rmod}(O_\e[G], O_\e[G^d])$ are objects in $O_\e[G^d]\lmod^{G_\e}$ by Lemma \ref{lem: internal Ext}. 

Furthermore, since $(O_\e[G])_f$ is a free right $(O_\e[G^d])_f$-module, we must have
\[ (\Ext^i_{O_\e[G^d]\rmod}(O_\e[G], O_\e[G^d]))_f=0, \qquad, \forall i >0\]
By Lemma \ref{lem: zero localization}, it follows that 
\begin{equation}\label{eq:O_Ext_vanishing}\Ext^i_{O_\e[G^d]\rmod}(O_\e[G], O_\e[G^d])=0, \forall i>0.\end{equation} Since the variety $G^d=\Spec(O_\e[G^d])$ is smooth, hence Cohen-Macaulay, and $O_\e[G]$ is finitely generated over $O_\e[G^d]$, (\ref{eq:O_Ext_vanishing}) implies that $O_\e[G]$ is a projective right $O_\e[G^d]$-module.
\end{proof}
\subsection{$O_q[G]\cong U^{fin}_q$ for Noetherian $\CA_\sN$-algebra $R$}\

\subsubsection{A technical result about filtrations on $O_\e[G]$ and $U^{ev}_\e(\g)$.}\

In this section, we assume that $R=\BF$ is a field. Let us refine $P$ into a linearly  ordered set, in particular,  $P_+$ has elements $ \lambda_1< \lambda_2< \dots$ We note that the results in this section depend on the refined order on $P$.

By Lemma \ref{prop: good filtration on Oq[G]} , there is an exhaustive good $\dU_\e(\g) \otimes_\BF \dU_\e(\g)$-module filtration on $O_\e[G]$ where $M_i$ is the maximal $\dU_\e(\g)\otimes_\BF \dU_\e(\g)$-submodule of $O_\e[G]$ whose weights are bounded by $(\lambda_i, \lambda^*_i)$. 
\[ M_1\subset M_2 \subset M_3 \subset \dots\]
The $\dU_\e(\g)\otimes \dU_\e(\g)$-action on $O_\e[G]$ is given by $(x\otimes y)f(u)=f(S(y)uS^2(x))$ for all $x,y,u \in \dU_\e(\g)$ and $f\in O_\e[G]$. Let $c_{f,v}$ be a matrix coefficient of $V\in \Rep^{fd}(\dU_\e(\g))$, then the action reads $(x\otimes y)c_{f,v}=c_{yf, S^2(x)v}$. From this description of the action  and the property that the weights of $M_i$ are bounded by $(\lambda_i,\lambda_i^*)$, it follows that 
\begin{Lem}\label{lem: mat-coeffs in Mi} $M_i$ is spanned by the matrix coeffecients of all $V\in \Rep^{fd}(\dU_\e(\g))$ such that the weights of $V$ are bounded by $\lambda_i$.
\end{Lem}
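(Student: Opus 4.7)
The plan is to prove $M_i = N_i$, where $N_i$ denotes the linear span of all matrix coefficients $c_{f,v}$ with $V \in \Rep^{fd}(\dU_\e(\g))$ having all simple composition factors $L_\e(\mu)$ satisfying $\mu \leq \lambda_i$, and $v \in V$, $f \in V^*$.

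For the inclusion $N_i \subseteq M_i$, I would first check that $N_i$ is stable under the $\dU_\e(\g)\otimes \dU_\e(\g)$-action: the formula $(x \otimes y) c_{f,v} = c_{yf,\, S^2(x)v}$ keeps us within matrix coefficients of the same module $V$. I then verify the weight bound. Since $\mu \leq \lambda_i$ in the dominance order implies $w_0\mu \geq w_0\lambda_i$, every composition factor $L_\e(\mu)$ of $V$ with $\mu \leq \lambda_i$ has weights lying in the interval $[w_0\lambda_i, \lambda_i]$; hence all weights of $V$ lie in $[w_0\lambda_i, \lambda_i]$, and correspondingly all weights of $V^*$ lie in $[-\lambda_i, \lambda_i^*]$. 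Consequently the $\dU_\e(\g)\otimes \dU_\e(\g)$-weight $(\mathrm{wt}(v), \mathrm{wt}(f))$ of any homogeneous $c_{f,v} \in N_i$ is bounded above by $(\lambda_i, \lambda_i^*)$. By maximality of $M_i$ we conclude $N_i \subseteq M_i$.

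For the reverse inclusion $M_i \subseteq N_i$, I would argue by induction on $i$, with trivial base case. Assuming $M_{i-1} = N_{i-1}$, it suffices to show that the composition $N_i \hookrightarrow M_i \twoheadrightarrow M_i/M_{i-1}$ is surjective, for then any $\phi \in M_i$ may be written as $\phi = \phi_1 + \phi_2$ with $\phi_1 \in N_i$ and $\phi_2 \in M_{i-1} = N_{i-1} \subseteq N_i$, yielding $\phi \in N_i$. By Lemma \ref{prop: good filtration on Oq[G]} we have $M_i/M_{i-1} \simeq H^0_\e(\lambda_i) \otimes H^0_\e(\lambda_i^*)$. I would produce preimages in $N_i$ by considering the matrix coefficient maps $V^* \otimes V \to O_\e[G]$ for $V$ ranging over modules with composition factors $\leq \lambda_i$, and tracing through the good filtration to show that the images, after projection to $M_i/M_{i-1}$, cover the full quotient. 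The naive candidates $V = W_\e(\lambda_i)$ and $V = H^0_\e(\lambda_i)$ contribute only $L_\e(\lambda_i)\otimes H^0_\e(\lambda_i^*)$ and $H^0_\e(\lambda_i)\otimes L_\e(\lambda_i^*)$, so I would enlarge the class of admissible $V$ to include extensions and direct sums whose composition factors still lie below $\lambda_i$ in the total order, thereby realizing the remaining pieces of $H^0_\e(\lambda_i)\otimes H^0_\e(\lambda_i^*)$.

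The main obstacle will be precisely this surjectivity onto $M_i/M_{i-1}$. In the generic (non-root-of-unity) setting $H^0_\e(\lambda_i)$ is simple and a single choice of $V$ suffices, but at roots of unity the Weyl and dual Weyl modules typically have several composition factors, so the contribution of a single $V$ only hits a proper submodule. The resolution is to exploit the flexibility of $\Rep^{fd}(\dU_\e(\g))$: by allowing $V$ to be any finite-dimensional rational module admitting only composition factors $L_\e(\mu)$ with $\mu \leq \lambda_i$, the total $\Hom$-space $\Hom_{\dU_\e\otimes \dU_\e}(V^*\otimes V,\, H^0_\e(\lambda_i)\otimes H^0_\e(\lambda_i^*))$ becomes large enough for the combined image to exhaust the quotient. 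This reduction of surjectivity to a $\Hom$-space computation, via the good filtration of Proposition \ref{Prop:coord_good_filtration}, is the technical heart of the argument.
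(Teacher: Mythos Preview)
Your inclusion $N_i \subseteq M_i$ is essentially the paper's argument: the weight bound on $V$ forces the bi-weight of each $c_{f,v}$ to lie below $(\lambda_i,\lambda_i^*)$, and $M_i$ is maximal with that property.

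The reverse inclusion, however, has a genuine gap. You reduce to surjectivity of $N_i \to M_i/M_{i-1} \simeq H^0_\e(\lambda_i)\otimes H^0_\e(\lambda_i^*)$ and then acknowledge that a single choice such as $V=W_\e(\lambda_i)$ or $V=H^0_\e(\lambda_i)$ only hits a proper subquotient. Your proposed fix---``enlarge the class of admissible $V$'' so that the collective image exhausts the quotient---is not an argument but a hope; you give no mechanism for producing elements of $N_i$ landing in the missing pieces of the quotient, and there is no reason to expect that a Hom-space computation alone will deliver surjectivity at a root of unity where the category is far from semisimple.

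The paper avoids this induction entirely by a direct and elementary construction. Given any $\phi \in M_i$, take $V := \dU_\e(\g)\phi$, the cyclic submodule generated by $\phi$ under the \emph{first} $\dU_\e(\g)$-action on $O_\e[G]$. Since $V \subset M_i$, its weights under the first action are bounded by $\lambda_i$ (and this $V$ is finite-dimensional by Proposition~\ref{prop:rational_finiteness}). Now define $f \in V^*$ by $f(v)=v(1)$ for $v \in V \subset O_\e[G]$. A one-line check shows $c_{f,\phi}(u)=f(u\phi)=(u\phi)(1)=\phi(u)$, i.e., $\phi$ is literally a matrix coefficient of $V$. This single observation replaces your entire inductive scheme and removes the obstacle you identified.
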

\begin{proof} Matrix coefficients of $V \in \Rep^{fd}(\dU_\e(\g))$ is contained in $M_i$ since $M_i$ is the maximal $\dU_e(\g) \otimes_{\BF} \dU_\e(\g)$-submodule of $O_\e[G]$ with the weight space bounded by $(\lambda_i, \lambda_i^*)$. Let us prove the other direction.

Let $\phi \in M_i$. Let $V:=\dU_\e(\g) \phi \subset M_i$ denote the $\dU_\e(\g)$-submodule under the first $\dU_\e(\g)$-action. It follows that the weights of $V$ are bounded by $\lambda_i$ and the weights  of $V^*$ are bounded by $\lambda^*_i$. Let $f\in V^*$ be defined by $f(v)=v(1)$ for all $v\in V \subset O_\e[G]$. 

Recall the $\dU_\e(\g) \otimes_\BF \dU_\e(\g)$-linear map $V\otimes V^* \rightarrow O_\e[G]$ defined by $v\otimes f \mapsto c_{f, K^{-2\rho} v}$. The image of $K^{2\rho} \phi\otimes f$ is $c_{f, \phi}$. This element coincides with $\phi$, indeed,
\[ c_{f, \phi}(u)=f(u\phi)=u\phi(1)=\phi(u), \qquad \text{for all $u \in \dU_\e(\g)$}.\]
This finishes the proof.
\end{proof}
As in Section \ref{ssec: REA-Ufin nonroot case}, $U^{ev}_\e(\g)$ has a $\dU_\e(\g)$-module filtration labeled by the weight lattice $P$ such that $ \deg(\tE_i K^{\zlambda_i})=0,\;\;\deg(\tF_i K^{-\zmu_i})=0,\;\;\deg(K^{2\lambda})=-\lambda\;$
for $1\leq i \leq r, \lambda \in P$. Set 
\[ \tU^{ev>}_\e=:\bigoplus_{\mu \in Q_+} U^{ev>}_{\e,\mu}K^{\gamma(\mu)}, \qquad \tU^{ev<}_\e:=\bigoplus_{\mu \in Q_+} U^{ev<}_{\e,-\mu}(\g)K^{\kappa(\mu)}.\]
Then 
\[ U^{ev}_{\e, \leq \lambda}:=\bigoplus_{\mu \leq \lambda} \tU^{ev<}_\e \otimes K^{-2\mu}\otimes\tU^{ev>}_\e.\]
\begin{Rem}\label{rem: a filtration of Uev}  $U^{ev}_\e(\g)=\bigcup_{\lambda_i \in P_+} U^{ev}_{\e, \leq \lambda_i}$.
\end{Rem}
\begin{Lem}\label{lem: filtrations on Oe[G] and Uev}Let $R=\BF$ a field. Then for all $\lambda_i\in P_+$ we have 
\[(U^{ev}_{\e, \leq \lambda_i})^{fin}=U^{ev}_{\e, \leq  \lambda_i} \cap O_\e[G]=M_i.\]
\end{Lem}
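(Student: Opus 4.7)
The plan is to prove the two claimed equalities in turn.

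For $(U^{ev}_{\e, \leq \lambda_i})^{fin} = U^{ev}_{\e, \leq \lambda_i} \cap O_\e[G]$, I will first verify that the filtration $\CF$ on $U^{ev}_\e(\g)$ is preserved by the adjoint action $\ad'$ of $\dU_\e(\g)$: combining the formulas \eqref{eq:explicit adjoint action} with the degree assignments $\deg(\tE_i)=\zlambda_i/2$, $\deg(\tF_i)=-\zmu_i/2$, $\deg(K^{2\lambda})=-\lambda$ gives $\deg(\ad'(\tE_i)x)=\deg(\ad'(\tF_i)x)=\deg(x)$, exactly as used in Section~\ref{ssec: REA-Ufin nonroot case}. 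Stability implies that an element $x \in U^{ev}_{\e, \leq \lambda_i}$ has $\dU_\e(\g)\cdot x$ again in $U^{ev}_{\e, \leq \lambda_i}$, so being $\ad'$-locally finite inside $U^{ev}_{\e, \leq \lambda_i}$ is the same as being $\ad'$-locally finite in $U^{ev}_\e(\g)$. Proposition~\ref{prop: Oe[G] iso Ufin } identifies the latter locally finite part with $\hiota(O_\e[G])$, establishing the first equality.

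For the inclusion $M_i \subseteq U^{ev}_{\e, \leq \lambda_i}$ of the second equality, I will rely on Lemma~\ref{lem: mat-coeffs in Mi}: $M_i$ is spanned over $\BF$ by matrix coefficients $c_{f, m}$ of finite-dimensional rational $\dU_\e(\g)$-modules $V$ whose weights are bounded by $\lambda_i$. The formula from the proof of Lemma~\ref{lem: matrix coeff in Uev}(b) expresses $\hiota(c_{f, m})$ as a sum, indexed by $(\mu, \nu) \in Q_+ \times Q_+$, of pieces lying in $(U^{ev<}_{\e,-\mu} K^{\kappa(\mu)}) K^{-2(\mu + \weight(m))} (U^{ev>}_{\e,\nu} K^{\gamma(\nu)})$. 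A piece can be non-zero only when there exists $\dx \in \dU^>_{\e,\mu}$ with $\dx m \neq 0$, forcing $\mu + \weight(m)$ to be a weight of $V$ and hence $\leq \lambda_i$ in the partial order. Since $\mu + \weight(m)$ coincides with the $\CF$-degree of the piece, every contribution to $\hiota(c_{f, m})$ sits in $U^{ev}_{\e, \leq \lambda_i}$.

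For the reverse inclusion $U^{ev}_{\e, \leq \lambda_i} \cap O_\e[G] \subseteq M_i$, I will argue by contradiction using the weight decomposition $U^{ev}_\e(\g) = \bigoplus_\nu \tU^{ev<}_\e K^{-2\nu} \tU^{ev>}_\e$. Suppose $f \in O_\e[G]$ satisfies $\hiota(f) \in U^{ev}_{\e, \leq \lambda_i}$ but $f \notin M_i$, and let $j$ be minimal with $f \in M_j$ (possible since the filtration $\{M_j\}$ is exhaustive and $M_i \subseteq M_j$ for $j > i$ in the refined total order). Since the total order refines the natural partial order and $j > i$, one sees that $\lambda_j \not\leq \lambda_i$ partially, so the $\nu = \lambda_j$ component $\hiota(f)_{\lambda_j}$ must vanish. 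On the other hand, by the forward inclusion applied to indices $k < j$, every matrix coefficient spanning $M_{j-1}$ comes from a module with weights $\leq \lambda_{j-1}$, in particular not including $\lambda_j$ as a weight, so such coefficients contribute $0$ to the $\lambda_j$-component. Thus $\hiota(f)_{\lambda_j}$ is well-defined on the non-zero class $\bar f \in M_j/M_{j-1} \cong H^0_\e(\lambda_j) \otimes H^0_\e(\lambda_j^*)$, and I will show this contribution is non-zero, yielding the desired contradiction.

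The main obstacle is this final non-vanishing step: proving that the induced linear map $H^0_\e(\lambda_j) \otimes H^0_\e(\lambda_j^*) \to \tU^{ev<}_\e K^{-2\lambda_j} \tU^{ev>}_\e$ obtained by extracting the $\lambda_j$-component is injective. I expect the cleanest route is to represent $\bar f$ by matrix coefficients of $V = H^0_\e(\lambda_j)$ and compute, using the formula from Lemma~\ref{lem: matrix coeff in Uev}(a), that the $\lambda_j$-component picks up precisely the contributions with $\mu + \weight(m) = \lambda_j$, i.e., those probing the one-dimensional highest-weight line of $H^0_\e(\lambda_j)$. Non-degeneracy then reduces via the key identity $\hiota(c_{v^*_\lambda, v_\lambda}) = K^{-2\lambda}$ of Lemma~\ref{lem: image of iota(K(-2lambda))} and the perfect PBW pairings of Corollary~\ref{cor: pairing of PBW} to the non-degeneracy of the adjoint-invariant pairing \eqref{eq16} of Proposition~\ref{Prop:pairing_whole_R}.
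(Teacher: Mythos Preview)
Your treatment of the first equality and of $M_i \subseteq U^{ev}_{\e,\leq\lambda_i}$ is correct and matches the paper.

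The gap is in your resolution of the ``main obstacle''. You propose to represent a nonzero $\bar f \in M_j/M_{j-1} \cong H^0_\e(\lambda_j)\otimes H^0_\e(\lambda_j^*)$ by a matrix coefficient of $V = H^0_\e(\lambda_j)$. But the $\dU_\e\otimes\dU_\e$-linear map $H^0_\e(\lambda_j)\otimes H^0_\e(\lambda_j)^* \to M_j/M_{j-1}$ need not be surjective when $\e$ is a root of unity: under $H^0_\e(\lambda_j)^*\cong W_\e(\lambda_j^*)$ it identifies (up to a nonzero scalar, cf.\ Step~2 of the proof of Lemma~\ref{lem: basis of invariant of R[U]}) with $\mathrm{id}\otimes\big(W_\e(\lambda_j^*)\to H^0_\e(\lambda_j^*)\big)$, and the image of that second factor is only the simple socle $L_\e(\lambda_j^*)$, a proper submodule whenever $H^0_\e(\lambda_j^*)$ is reducible. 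So a general $\bar f$ admits no such lift, and your reduction to the one-dimensional highest-weight line never gets off the ground in exactly the cases where the lemma matters.

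The paper argues directly, without passing to $M_j/M_{j-1}$: for $c_{g,v}\in U^{ev}_{\e,\leq\lambda_i}\cap O_\e[G]$ with $g,v$ weight vectors and $V=\dU_\e(\g)v$, the vanishing of the components $u_{\mu\nu}$ for all $\mu$ with $\mu+\mathrm{wt}(v)>\lambda_i$ forces $g(\dU^<_\e\,\dU^>_{\e,\mu}v)=0$ for those $\mu$, so $g$ kills the submodule $N$ generated by all weight vectors of weight $>\lambda_i$; then $c_{g,v}$ is a matrix coefficient of $V/N$, whose weights are $\leq\lambda_i$, and Lemma~\ref{lem: mat-coeffs in Mi} places it in $M_i$. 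Your injectivity claim can in fact be proved by the same quotient idea (kill the submodule generated by the single weight space $V_{\lambda_j}$), but then your contradiction route becomes a longer repackaging of the paper's direct argument.
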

\begin{proof}The equality $(U^{ev}_{\e, \leq \lambda_i})^{fin}=U^{ev}_{\e, \leq \lambda_i}\cap O_\e[G]$ holds since $O_\e[G]\cong U^{fin}_\e$.

 Let $c_{f,v}$ be a matrix coefficient of $V\in \Rep^{fd}(\dU_\e(\g))$ such that $c_{f,v}\in U^{ev}_{\e, \leq \lambda_i}$. By Lemma  \ref{lem: mat-coeffs in Mi}, it is enough to show that $c_{f,v}$ can be regarded as a matrix coefficient of some module $V'\in \Rep^{fd}(\dU_\e(\g))$ such that the weights of $V'$ are bounded by $\lambda_i$.

Note that we can assume $f,v$ be weight vectors of $V^*, V$, respectively. Furthermore, we can assume $V=\dU_\e(\g) v$. Let $N$ be the $\dU_\e(\g)$-submodule of $\dU_\e(\g)v$ generated by all weight vectors $v_i$ of weight bigger than $\lambda_i$. Then we will show that $V'$ can be chosen to be $V/N$.

In Lemma \ref{lem: matrix coeff in Uev}, we constructed the element $c_{f,v}\in U^{ev}_\e(\g)$. That is $c_{f,v}=\sum_{\mu, \nu} u_{\mu\nu}$ for $u_{\mu, \nu}\in (U^{ev<}_{\e,-\mu}K^{\kappa(\mu)})K^{-2\weight(v)-2\mu} (U^{ev>}_{\e,\nu} K^{\gamma(\nu)})$ such that  
\[ \<u_{\mu \nu}, (\dy K^{\kappa(\nu)}) u_0 (\dx K^{\gamma(\mu)})\>'= \hat{\chi}_{\weight(v)+\mu}(u_0)f((\dy K^{\kappa(\nu)})(\dx K^{\gamma(\mu)})v),\]
for all $\dy \in \dU^<_{\e, -\nu}$, $\dx \in \dU^>_{\e, \mu}$ and $u_0\in \dU^0_\e$. In particular, $u_{\mu\nu}=0$ if and only if $f((\dy K^{\kappa(\nu)})(\dx K^{\gamma(\mu)})v)=0$ for $\dx, \dy, u_0$ as above. 

Since $c_{f,v}\in U^{ev}_{\e, \leq \lambda_i}$, it follows that $u_{\mu, \nu} =0$ for all $ \mu > \lambda_i-\weight(v)$. Hence $f((\dy K^{\kappa(\nu)})(\dx K^{\gamma(\mu)})v)=0$ for all above $\dy, \dx, u_0$ such that $\mu> \lambda_i-\weight(v)$. It follows that $f(N)=0$, hence $c_{f,v}$ can be regarded as a matrix coefficient of $V'=V/N$.   
\end{proof}


\subsubsection{$O_q[G]\cong U^{fin}_q$}\

We make a remark that for any weight $\dU_\e(\g)$-module $M=\oplus_\lambda M_\lambda$, the $\dU_\e(\g)$-locally finite part $M^{fin}$ is also the maximal rational $\dU_\e(\g)$-submodule of $M$. 
\begin{Lem}\label{lem: fin of even/Oq[G]} Let $R=\BF$, a field. Then the $\dU_\e(\g)$-locally finite part $(U^{ev}_\BF(\g)/O_\BF[G])^{fin}$ is zero.
\end{Lem}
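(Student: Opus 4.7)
The plan is to reduce the lemma to a statement about the quotient $\BF[G^d_0]/\BF[G^d]$ as a $\dU_\BF(\g^d)$-module, and then establish the reduced claim directly. For the reduction, I would unwind the proof of Proposition~\ref{prop: Oe[G] iso Ufin } to obtain the $\dU_\BF(\g)$-equivariant identification $U^{ev}_\BF(\g)\simeq Z_{Fr}\otimes_{O_\BF[G^d]}O_\BF[G]$, under which the embedding $\hiota\colon O_\BF[G]\hookrightarrow U^{ev}_\BF(\g)$ corresponds to $Z^{fin}_{Fr}\otimes_{O_\BF[G^d]}O_\BF[G]\hookrightarrow Z_{Fr}\otimes_{O_\BF[G^d]}O_\BF[G]$ via $Z^{fin}_{Fr}\simeq O_\BF[G^d]$ from Lemma~\ref{lem: Zfin in field F}(a). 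Since $O_\BF[G]$ is flat over $O_\BF[G^d]$ by Proposition~\ref{prop: Oe[G] is proj}, this gives $U^{ev}_\BF(\g)/O_\BF[G]\simeq (Z_{Fr}/Z^{fin}_{Fr})\otimes_{O_\BF[G^d]}O_\BF[G]$. Applying Lemma~\ref{lem: taking finite part} with $M=Z_{Fr}/Z^{fin}_{Fr}$ and $P=O_\BF[G]$, together with faithful flatness of $O_\BF[G]$ over $O_\BF[G^d]$, reduces the desired vanishing to $(\BF[G^d_0]/\BF[G^d])^{fin}=0$ as a $\dU_\BF(\g^d)$-module (the $\dU_\BF(\g)$-action factoring through $\cFr$ by Remark~\ref{actions of U(gd)}). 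A base-change argument as at the end of the proof of Proposition~\ref{prop: Oe[G] iso Ufin } further reduces to $\BF$ algebraically closed.

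For the reduced claim, I argue by contradiction: suppose there is a nonzero finite-dimensional $\dU_\BF(\g^d)$-submodule $\bar V\subseteq \BF[G^d_0]/\BF[G^d]$ and let $V':=\pi^{-1}(\bar V)\subseteq \BF[G^d_0]$ be its preimage, a $\dU_\BF(\g^d)$-submodule with $V'\supseteq \BF[G^d]$ and $V'/\BF[G^d]=\bar V$. The key step is to show that $V'$ is itself a rational $\dU_\BF(\g^d)$-module in the sense of Definition~\ref{defi:rat_rep}; granting this, Proposition~\ref{prop:rational_finiteness} implies that every element of $V'$ generates a finite-dimensional $\dU_\BF(\g^d)$-submodule, which by Lemma~\ref{lem: Zfin in field F}(a) is forced to lie in $\BF[G^d_0]^{fin}=\BF[G^d]$. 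Hence $V'=\BF[G^d]$, contradicting $\bar V\neq 0$.

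To verify rationality of $V'$ only the local nilpotency of divided powers $e_i^{(s)}$ and $f_i^{(s)}$ is nontrivial (the weight-module condition being inherited from $\BF[G^d_0]$). For $m\in V'$, set $w_s:=e_i^{(s)}m$; since $\pi(m)\in \bar V$ is finite-dimensional and rational, $w_s\in \BF[G^d]$ for all $s\geq s_0(m)$. The divided-power identity $e_i^{(t)}e_i^{(s)}=\binom{s+t}{s}\,e_i^{(s+t)}$ in $\dU_\BF(\g^d)$, combined with the local nilpotency of each $e_i^{(t)}$-action on $\BF[G^d]$ (which is rational by Proposition~\ref{Prop:coord_good_filtration} and Proposition~\ref{prop:rational_finiteness}), forces $\binom{s+t}{s}\,w_{s+t}=0$ for all $t$ sufficiently large in $s$. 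Lucas' theorem (cf.\ Lemma~\ref{lem:aux-at-roots}) produces, for each $s$, infinitely many $t$ with $\binom{s+t}{s}\neq 0$ in $\BF$, yielding $w_{s+t}=0$ on an unbounded set of $t$'s; iterating this argument while shifting the base index $s$ is meant to propagate the vanishing to all sufficiently large indices, and the argument for $f_i^{(s)}$ is symmetric.

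The main obstacle is precisely this last propagation step. At roots of unity and in positive characteristic, the integer binomial coefficients $\binom{s+t}{s}$ vanish in $\BF$ on large subsets of indices, so the divided-power identity only provides ``sparse'' information, and upgrading the Lucas-subsequence vanishing to vanishing of $w_s$ for all large $s$ requires a simultaneous control over the nilpotency indices of the $e_i^{(t)}$-action on the varying sequence $\{w_s\}_{s\geq s_0}\subset \BF[G^d]$. The combinatorial content of this step, combining Lemma~\ref{lem:aux-at-roots} and Lucas' theorem with the rational structure of $\BF[G^d]$, is where the bulk of the technical effort is concentrated.
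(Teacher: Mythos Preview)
Your reduction to $(\BF[G^d_0]/\BF[G^d])^{fin}=0$ is valid in the root-of-unity case over an algebraically closed field, but the overall argument has two genuine gaps.

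First, the reduction only applies when $q=\e$ is a root of unity, since it goes through $Z_{Fr}$, $Z^{fin}_{Fr}$, and the identifications from Proposition~\ref{prop: Oe[G] iso Ufin }. The lemma, however, is needed for arbitrary fields $\BF$ (it is invoked in Proposition~\ref{prop: REA-Ufin domain case} for the fraction field of a domain, where $q$ may well fail to be a root of unity). You do not address this case.

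Second, and more seriously, your proof of the reduced claim does not close. You correctly isolate the issue: to show $V'=\pi^{-1}(\bar V)\subset \BF[G^d_0]$ is rational, one must verify that $e_i^{(s)}m=0$ for all large $s$, knowing only that $w_s:=e_i^{(s)}m\in\BF[G^d]$ for $s\geq s_0$. The identity $e_i^{(t)}w_s=\binom{s+t}{s}w_{s+t}$ combined with rationality of $\BF[G^d]$ gives $\binom{s+t}{s}w_{s+t}=0$ for $t\geq T_s$, where $T_s$ depends on $w_s$. In characteristic~$0$ this immediately gives $w_u=0$ for $u>s_0+T_{s_0}$. In characteristic~$p$, however, Lucas' theorem only yields vanishing of $w_u$ on a \emph{sparse} set of $u$ for each fixed $s$: for instance, $\binom{p^N}{s}=0$ for every $0<s<p^N$, so the constraints coming from all $s<p^N$ say nothing about $w_{p^N}$. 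To reach $w_{p^N}=0$ one would need to use $s=p^N$ itself, i.e.\ information about $T_{p^N}$, which requires first knowing something about $w_{p^N}$. Without a uniform bound on the $T_s$ --- and none is available, since the $w_s$ lie in the infinite-dimensional $\BF[G^d]$ --- this bootstrap does not terminate.

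The paper avoids both problems by a different mechanism. It works inside the filtration pieces $M:=U^{ev}_{\BF,\leq\lambda_i}$, whose locally finite parts $M^{fin}=M_i$ are \emph{finite-dimensional} (Lemma~\ref{lem: filtrations on Oe[G] and Uev}). Given a finite-dimensional $V\subset U^{ev}_\BF/O_\BF[G]$, one chooses $\lambda_i$ large enough that $\pi(M)\supset V$; then $\pi^{-1}(V)\subset M$ is an extension of $V$ by $M_i$, hence finite-dimensional. A finite-dimensional weight module is automatically rational, since $E_i^{(s)}$ shifts weight by $s\alpha_i$ and there are only finitely many nonzero weight spaces. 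So $\pi^{-1}(V)$ is a rational submodule of $M$ strictly containing $M^{fin}$, contradicting maximality. This argument is uniform in $\operatorname{char}\BF$ and in whether $q$ is a root of unity, precisely because the extension problem is trivialized by finite-dimensionality; your approach, by contrast, faces an extension of a finite-dimensional module by the infinite-dimensional $\BF[G^d]$, where rationality is genuinely nontrivial.
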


\begin{proof}We have a short exact sequence
\begin{equation}\label{eq: sequence of O[G] and Uev} 0\rightarrow O_\BF[G]\rightarrow U^{ev}_\BF(\g)\xrightarrow[]{\pi} U^{ev}_\BF(\g)/O_\BF[G] \rightarrow 0.
\end{equation}
Assume that $(U^{ev}_\BF(\g)/O_\BF[G])^{fin} \neq 0$ then there is a finite dimensional rational $\dU_\e(\g)$-submodule $V \neq 0$ in $U^{ev}_\BF(\g)/O_\BF[G]$. By Remark \ref{rem: a filtration of Uev}, there $\lambda_i$ such that the image of $M:=U^{ev}_{\BF, \leq \lambda_i}$ under the quotient $\pi: U^{ev}_\BF(\g) \rightarrow U^{ev}_\BF/O_\BF[G]$ contains $V$. By Lemma \ref{lem: filtrations on Oe[G] and Uev}, $M^{fin}$ is a finite dimensional rational $\dU_\e(\g)$-module.

The short exact sequence \eqref{eq: sequence of O[G] and Uev} gives a rise to a short exact sequence
\[ 0\rightarrow M^{fin}\rightarrow M \rightarrow \pi(M)\rightarrow 0.\]
We note that  $M^{fin}$ and $V$ are finite dimensional rational $\dU_\e(\g)$-modules and $M$ is a weight $\dU_\e(\g)$-module. Therefore, the preimage $\pi^{-1}(V)\subset M$ under the map $\pi: M \rightarrow \pi(M)$ must be a rational $\dU_\e(\g)$-submodule properly containing $M^{fin}$. This leads to a contradiction since $M^{fin}$ is the maximal rational $\dU_\e(\g)$-submodule of $M$.  
\end{proof}

\begin{Prop}\label{prop: REA-Ufin domain case}  The inclusion $O_R[G]\hookrightarrow U^{fin}_R$ is an isomorphism when $R$ is a domain.
\end{Prop}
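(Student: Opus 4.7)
The strategy is to identify \(U^{fin}_R\) with the maximal rational \(\dU_R(\g)\)-subrepresentation of \(U^{ev}_R(\g)\) under the adjoint action \(\ad'\), and then use the adjoint-invariant embedding \(\iota\colon U^{ev}_R(\g)\hookrightarrow \Hom_R(\dU_R(\g),R)\) of~\eqref{eq:iota-map} to show that its image already lies in \(R[\dU_R(\g)]\), which \(\hiota\) then pulls back to \(O_R[G]\).

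First I would verify that \(U^{fin}_R\) is a rational \(\dU_R(\g)\)-representation. For \(u\in U^{fin}_R\), the submodule \(V:=\ad'(\dU_R(\g))u\) is finitely generated over \(R\) by definition, and it is \(Q\)-graded as a submodule of the \(Q\)-graded \(U^{ev}_R(\g)\). Finite \(R\)-generation forces \(V\) to have finite \(Q\)-weight support, so \(\ad'(\tE_i^{(n)})u=\ad'(\tF_i^{(n)})u=0\) for all sufficiently large \(n\) and all \(i\). The Cartan generators \(K^\mu\) (for \(\mu\in 2P\)) and the divided Cartan \(\binom{K_i;0}{t}\) both act on the \(\nu\)-weight space \(V_\nu\) by the scalars \(q^{(\mu,\nu)}\) and \(\binom{(\alpha_i^\vee,\nu)}{t}_{q_i}\) respectively, as a short computation with the coproduct \(\Delta'\) and the expansion of \(\binom{K_i;0}{t}\) as a polynomial in \(K_i^{-2}\) shows; this yields the weight decomposition required in Definition~\ref{defi:rat_rep}. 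Hence every cyclic \(\dU_R(\g)\)-submodule of \(U^{fin}_R\) is rational, and so \(U^{fin}_R\) is rational. (Conversely, any rational submodule is automatically contained in \(U^{fin}_R\) by Proposition~\ref{prop:rational_finiteness}.)

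Next, Proposition~\ref{Prop:pairing_whole_R}(a) establishes the \(\ad'\)-invariance of the pairing \(\langle\ ,\ \rangle'\), so \(\iota\) is a \(\dU_R(\g)\)-equivariant embedding into \(\Hom_R(\dU_R(\g),R)\) endowed with the action \((xf)(y)=f(\ad'(S'(x))y)\) used in the construction of \(R[\dU_R(\g)]\) as the maximal rational subrepresentation (see Sections~\ref{ssec: quantize coordinate algebra} and~\ref{ssec: rat of idem form}, and the discussion preceding Lemma~\ref{lem: Oq[G] --> even part}). Consequently \(\iota(U^{fin}_R)\) is a rational \(\dU_R(\g)\)-submodule, so by the universal property of \(R[\dU_R(\g)]\) we obtain \(\iota(U^{fin}_R)\subseteq R[\dU_R(\g)]\).

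Finally, Lemma~\ref{lem: matrix coeff in Uev}(b) together with the construction of \(\hiota\) as the unique section of \(\iota\) with values in the preimage of \(R[\dU_R(\g)]\) yields \(\hiota(O_R[G])=\iota^{-1}(R[\dU_R(\g)])\) as subsets of \(U^{ev}_R(\g)\). Combining with the previous step produces \(U^{fin}_R\subseteq \hiota(O_R[G])\), and the reverse inclusion \(\hiota(O_R[G])\subseteq U^{fin}_R\) has already been observed (since \(O_R[G]\) is rational), giving the desired equality. The main technical obstacle is the first step: confirming that finite \(R\)-generation of \(\ad'(\dU_R(\g))u\) really does entail the full weight decomposition of Definition~\ref{defi:rat_rep}, which comes down to verifying the action of the divided Cartan elements on the \(Q\)-graded components of \(U^{ev}_R(\g)\). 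The domain hypothesis serves as a fallback here: should this verification fail or become delicate over a general \(R\), one can reduce to the field case of Proposition~\ref{prop: Oe[G] iso Ufin} by passing to \(K=\Frac(R)\) and then proving purity of the embedding \(O_R[G]\hookrightarrow U^{ev}_R(\g)\) using the good filtration of Lemma~\ref{prop: good filtration on Oq[G]} and the compatible filtration of Lemma~\ref{lem: filtrations on Oe[G] and Uev} over \(\CA_\sN\).
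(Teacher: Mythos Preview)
Your main argument has a genuine gap in the step ``by the universal property of \(R[\dU_R(\g)]\) we obtain \(\iota(U^{fin}_R)\subseteq R[\dU_R(\g)]\)''. The quantized coordinate algebra \(R[\dU_R(\g)]\) is defined (Section~\ref{ssec: quantize coordinate algebra}) as the maximal rational subrepresentation of \(\Hom_R(\dU_R(\g),R)\) under the one-sided regular actions \(\delta\) and \(\gamma\), \emph{not} under the adjoint action. The \(\ad'\)-invariance of the pairing (Proposition~\ref{Prop:pairing_whole_R}(a)) makes \(\iota\) equivariant only for the adjoint action on the target, so rationality of \(\iota(U^{fin}_R)\) under adjoint does not place it inside \(R[\dU_R(\g)]\). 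In fact the maximal rational subrepresentation of \(\Hom_R(\dU_R(\g),R)\) under the adjoint action is strictly larger: any \(\ad'\)-invariant functional is trivially rational under adjoint, yet most such functionals are not matrix coefficients of rational representations. Since \(\hiota(O_R[G])=\iota^{-1}(R[\dU_R(\g)])\) (your step~3, which is correct), the containment \(\iota(U^{fin}_R)\subseteq R[\dU_R(\g)]\) is exactly equivalent to the statement of the proposition, so your argument is circular. You also misidentify the obstacle: step~1 (that \(U^{fin}_R\) is rational) is straightforward and is stated in the paper; the real issue is the unjustified universal-property step.

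Your fallback points in the right direction but misses the key mechanism. The paper's proof does reduce to the fraction field \(K=\Frac(R)\), but the essential input from the domain hypothesis is that \(U^{ev}_R(\g)/O_R[G]\) is torsion-free over \(R\): for each nonzero \(x\in R\) one applies the snake lemma to the multiplication-by-\(x\) map on the short exact sequences for \(O_R[G]\hookrightarrow U^{ev}_R(\g)\) and their reductions mod \(x\). Torsion-freeness gives an embedding \(U^{ev}_R(\g)/O_R[G]\hookrightarrow U^{ev}_K(\g)/O_K[G]\), and then Lemma~\ref{lem: fin of even/Oq[G]} (the locally finite part of the quotient vanishes over a field) finishes the argument via the left-exact sequence \(0\to O_R[G]\to U^{fin}_R\to (U^{ev}_R/O_R[G])^{fin}\). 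Your mention of good filtrations and Lemma~\ref{lem: filtrations on Oe[G] and Uev} does not supply this torsion-freeness step.
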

\begin{proof} For any nonzero $x\in R$, we have the following commutative diagram, where the rows are exact while all vertical morphisms are injective: 
\[ \begin{tikzcd} 0\arrow[r]& O_R[G] \arrow[d, hook]\arrow[r,"\cdot x"] & O_R[G]\arrow[d, hook] \arrow[r,]& O_{R/(x)}[G] \arrow[d, hook]\arrow[r] & 0\\
0\arrow[r]& U^{ev}_R(\g) \arrow[r, "\cdot x"] & U^{ev}_R(\g) \arrow[r]& U^{ev}_{R/(x)}(\g) \arrow[r]&0
\end{tikzcd}
\]
By the Snake Lemma, the morphism 
\[ U^{ev}_R(\g)/O_R[G] \xrightarrow[]{\cdot x} U^{ev}_R(\g) /O_R[G]\]
is injective for all nonzero $x\in R$. In  other words, $U^{ev}_R(\g) /O_R[G]$ is torsion free over $R$.  Let $\mathbb{K}$ be the fraction field of $R$. Then we have the inclusion 
\[ U^{ev}_R(\g)/O_R[G]\hookrightarrow (U^{ev}_R(\g)/O_R[G])\otimes_R \mathbb{K} \cong U^{ev}_\mathbb{K}(\g)/O_\mathbb{K}[G].\]
Since the $\dU_\mathbb{K}(\g)$-locally finite part of $U^{ev}_\mathbb{K}(\g)/O_\mathbb{K}[G]$ is zero by Lemma \ref{lem: fin of even/Oq[G]}, it follows that $(U^{ev}_R(\g)/O_R[G])^{fin}=0$. On the other hand, we have a left exact sequence
\[ 0\rightarrow O_R[G]\rightarrow U^{fin}_R\rightarrow (U^{ev}_R/O_R[G])^{fin}.\]
Therefore, $O_R[G]\cong U^{fin}_R$.
\end{proof}

The following is the main result of this section.

\begin{Thm}\label{prop: REA-Ufin general case}For any Noetherian $\CA_\sN$-algebra $R$, the inclusion $O_R[G]\hookrightarrow U^{fin}_R$ is an isomorphism.
\end{Thm}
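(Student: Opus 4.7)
The plan is to reduce the general Noetherian case to the domain case (Proposition~\ref{prop: REA-Ufin domain case}) via a fiberwise argument over $\Spec R$, applying the field case (Proposition~\ref{prop: Oe[G] iso Ufin }) at each residue field. The crucial preliminary step is the base change identification $Q_R \otimes_R k(\mathfrak{p}) \cong Q_{k(\mathfrak{p})}$ for every prime $\mathfrak{p}\subset R$, where $Q_R := U^{ev}_R/O_R[G]$ and $k(\mathfrak{p}) := R_\mathfrak{p}/\mathfrak{p} R_\mathfrak{p}$. This follows because $O_{\CA_\sN}[G]$ and $U^{ev}_{\CA_\sN}$ are free $\CA_\sN$-modules (Lemma~\ref{Lem:coord_alg_freeness} and Lemma~\ref{lem:twisted-triangular-PBW}), while the injectivity of $O_{k(\mathfrak{p})}[G]\hookrightarrow U^{ev}_{k(\mathfrak{p})}$ supplied by Lemma~\ref{lem: Oq[G] --> even part} keeps the right-exact sequence obtained by base change exact on the left as well.

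Given $u \in U^{fin}_R$, its image in $U^{ev}_{k(\mathfrak{p})}$ has finitely generated $\dU_{k(\mathfrak{p})}$-orbit (the image of the finitely generated $\dU_R$-orbit of $u$), hence lies in $U^{fin}_{k(\mathfrak{p})} = O_{k(\mathfrak{p})}[G]$ by the field case. Through the base change isomorphism, this means that the class $\bar u \in Q_R$ maps to zero in $Q_R \otimes_R k(\mathfrak{p})$ for every prime $\mathfrak{p}$. The same vanishing extends to every element of the finitely generated $R$-submodule $N \subseteq Q_R$ defined as the image of $\dU_R \cdot u$ (which is finitely generated over $R$ by the very definition of $U^{fin}_R$), since $N$ is generated as an $R$-module by elements of the form $x \cdot u$ with $x \in \dU_R$, each of which inherits the vanishing at residue fields.

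The final step is to conclude $N = 0$, whence $u \in O_R[G]$. Localizing at an arbitrary maximal ideal $\mathfrak{m}\subset R$, the translation of the vanishing in $Q_R \otimes_R k(\mathfrak{m})$ yields $N_\mathfrak{m} \subseteq \mathfrak{m} R_\mathfrak{m} \cdot (Q_R)_\mathfrak{m}$, and the plan is then to apply Nakayama to the finitely generated $R_\mathfrak{m}$-module $N_\mathfrak{m}$. The main obstacle is precisely this last Nakayama step: because $Q_R$ need not be flat over $R$, the natural map $N \otimes_R k(\mathfrak{m}) \to Q_R \otimes_R k(\mathfrak{m})$ may have a nontrivial kernel, so the vanishing of the image in $Q_R \otimes_R k(\mathfrak{m})$ does not immediately upgrade to the required $N_\mathfrak{m} \subseteq \mathfrak{m} R_\mathfrak{m} \cdot N_\mathfrak{m}$. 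The natural workaround is to lift the Nakayama argument up to the preimage $\dU_R \cdot u + O_R[G] \subseteq U^{ev}_R$, which is a finitely generated submodule of a free $R$-module; finite generation combined with freeness of $U^{ev}_R$ over $R$ should allow one to track generators across residue fields cleanly and close the loop.
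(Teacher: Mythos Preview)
Your approach and the paper's diverge at the key step, and the obstacle you identify is real. The paper avoids Nakayama entirely. Instead of passing to residue fields $k(\mathfrak{p})$, it applies the domain case (Proposition~\ref{prop: REA-Ufin domain case}) to $R/\mathfrak{p}$ to obtain, for each prime $\mathfrak{p}$, an isomorphism $O_R[G]\otimes_R R/\mathfrak{p}\xrightarrow{\sim}(U^{ev}_R\otimes_R R/\mathfrak{p})^{fin}$. It then exploits the Noetherian hypothesis in the form of a \emph{prime filtration}: a finite chain $0=M_0\subset M_1\subset\cdots\subset M_n=R$ of $R$-submodules with $M_i/M_{i-1}\cong R/\mathfrak{p}_i$. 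Since $O_R[G]$ and $U^{ev}_R$ are free over $R$, tensoring the short exact sequence $0\to M_{i-1}\to M_i\to R/\mathfrak{p}_i\to 0$ with either one gives exact rows; combined with left-exactness of $(\,\cdot\,)^{fin}$, a five-lemma chase shows by induction that $O_R[G]\otimes_R M_i\to(U^{ev}_R\otimes_R M_i)^{fin}$ is an isomorphism for every $i$. Taking $i=n$ finishes.

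Your suggested workaround does not close the gap. The module $\dU_R\cdot u+O_R[G]$ is \emph{not} finitely generated over $R$ (because $O_R[G]$ is not), so Nakayama cannot be applied to it. Abstractly the obstruction is genuine: over $R=k[x]/(x^2)$, an element of the form $x\cdot v$ in a quotient of free $R$-modules is nonzero yet dies at the unique residue field, so vanishing of the image of $N$ in $Q_R\otimes_R k(\mathfrak{m})$ for all $\mathfrak{m}$ does not force $N=0$. Replacing $O_R[G]$ by a finitely generated piece---say a term $M_i$ of the good filtration---would require knowing over arbitrary Noetherian $R$ that $M_i$ coincides with the intersection of $O_R[G]$ with the corresponding filtration piece of $U^{ev}_R$; but Lemma~\ref{lem: filtrations on Oe[G] and Uev} establishes this only over fields, and extending it to general $R$ appears to need precisely the statement you are trying to prove.
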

\begin{proof}

Let $\mathfrak{p}$ be a prime ideal of $R$. Let $\bullet^{fin, \dU_{R/\mathfrak{p}}}$ denote the $\dU_{R/\mathfrak{p}}(\g)$-locally finite part. We reserve the notation $\bullet^{fin}$ for the $\dU_R(\g)$-locally finite part. 
Then one see that 
\[ (U^{ev}_{R/\mathfrak{p}}(\g))^{fin}\cong (U^{ev}_{R/\mathfrak{p}}(\g))^{fin, \dU_{R/\mathfrak{p}}}.\]
By Proposition \ref{prop: REA-Ufin domain case}, we have the isomorphism
\[ O_{R/\mathfrak{p}}[G]\rightarrow (U^{ev}_{R/\mathfrak{p}}(\g))^{fin, \dU_{R/\mathfrak{p}}}.\]
Note that $O_R[G]\otimes_R R/\mathfrak{p}=O_{R/\mathfrak{p}}[G]$ and $U^{ev}_R(\g) \otimes R/\mathfrak{p}=U^{ev}_{R/\mathfrak{p}}(\g)$. Therefore, the natural morphism 
\begin{equation}\label{eq: iso over R/p} O_R[G]\otimes_R R/\mathfrak{p}\rightarrow (U^{ev}_R(\g) \otimes_R R/\mathfrak{p})^{fin}
\end{equation}
is an isomorphism for any prime ideal $\mathfrak{p}$ of $R$.

Since $R$ is Noetherian, there is a finite $R$-module filtration $0=M_0\subset M_1\subset \dots \subset M_n =R$ such that $M_i/M_{i-1}\cong R/\mathfrak{p}_i$ with some prime ideal $\mathfrak{p}_i$ for all $1\leq i \leq n$. We will prove by induction that the homomorphisms
\[ O_R[G]\otimes_R M_i \rightarrow (U^{ev}_R\otimes_R M_i)^{fin}\]
are isomorphisms for all $1\leq i \leq n$. The base case $i=1$ holds by \eqref{eq: iso over R/p}. Let us do the induction step. Since $O_R[G]$ and $U^{ev}_R(\g)$ are free over $R$, we have the commutative diagram
\[\begin{tikzcd}0 \arrow[r]& O_R[G]\otimes_R M_{i-1}\arrow[d] \arrow[r]& O_R[G]\otimes_R M_i \arrow[d] \arrow[r] &O_R[G]\otimes_R R/\mathfrak{p}_i \arrow[d]\arrow[r] &0\\
0\arrow[r] &U^{ev}_R\otimes_R M_{i-1}\arrow[r]& U^{ev}_R(\g) \otimes_R M_i \arrow[r]& U^{ev}_R(\g) \otimes_R R/\mathfrak{p}_i \arrow[r] &0  
\end{tikzcd}
\]
which gives us the commutative diagram
\begin{equation}\label{eq: big diagram}\begin{tikzcd}0 \arrow[r]& O_R[G]\otimes_R M_{i-1}\arrow[d] \arrow[r]& O_R[G]\otimes_R M_i \arrow[d] \arrow[r] &O_R[G]\otimes_R R/\mathfrak{p}_i \arrow[d]\arrow[r] &0\\
0\arrow[r] &(U^{ev}_R\otimes_R M_{i-1})^{fin}\arrow[r]& (U^{ev}_R(\g) \otimes_R M_i)^{fin} \arrow[r]& (U^{ev}_R(\g) \otimes_R R/\mathfrak{p}_i)^{fin} & 
\end{tikzcd}
\end{equation}
The first vertical arrow  in \eqref{eq: big diagram} is an isomorphism by induction hypothesis while the third vertical arrow in \eqref{eq: big diagram} is an isomorphism by \eqref{eq: iso over R/p}. Therefore, the middle vertical morphism is an isomorphism. This finishes the induction. In particular, when $i=n$,  we have the desired isomorphism $O_R[G]\cong U^{fin}_R$. 
\end{proof}

\subsection{Harish-Chandra center}
\label{sec: HC center}

\begin{defi}{\it The Harish-Chandra center } $Z_{HC} \subset U^{ev}_q(\g)$ is defined to be the $\dU_q(\g)$-invariant part of $U^{ev}_q(\g)$.
\end{defi}
\begin{Lem}$Z_{HC}$ is central in $U^{ev}_q(\g)$ and $Z_{HC}\subset U^{ev}_q(\g)_0$, the zero weight space of $U^{ev}_q(\g)$.
\end{Lem}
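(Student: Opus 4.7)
The plan has two independent parts.

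First, for centrality, I would invoke the standard Hopf-algebra identity $xz = \ad'(x_{(1)})(z)\cdot x_{(2)}$, valid in any Hopf algebra by coassociativity and the antipode axiom. Applying it in $\bU(\g,P/2)$ with $x\in \dU_q(\g)$ and $z\in Z_{HC}\subset U^{ev}_q(\g)$: since $\dU_q(\g)$ is a Hopf subalgebra, $\Delta'(x)\in \dU_q(\g)\otimes \dU_q(\g)$, and ad-invariance gives $\ad'(x_{(1)})(z) = \varepsilon'(x_{(1)})z$, so $xz = \varepsilon'(x_{(1)})\,x_{(2)}\cdot z = zx$. Since all generators $\tE_i, \tF_i, K^\mu$ ($\mu\in 2P$) of $U^{ev}_q(\g)$ lie in $\dU_q(\g)$, the element $z$ commutes with every generator and is thus central in $U^{ev}_q(\g)$.

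For the weight-zero property, I plan to decompose $z = \sum_\lambda z_\lambda$ into $Q$-weight components (which makes sense since $\ad'$ preserves the $Q$-grading on $U^{ev}_q(\g)$) and argue $\lambda = 0$ whenever $z_\lambda\neq 0$. The key technical step will be to establish the character formula: for every $y\in \dU^0_q$ and every homogeneous $w\in U^{ev}_{q,\lambda}$,
\[ \ad'(y)(w) \,=\, \hchi_\lambda(y)\,w \,, \]
where $\hchi_\lambda$ is the character from~\eqref{eq: defi of chi-lambda}. Both sides being multiplicative in $y$, it suffices to verify this on generators of $\dU^0_q$. For $y = K^\mu$ the formula is immediate from $K^\mu w K^{-\mu} = q^{(\mu,\lambda)}w$. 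For $y = \binom{K_i;0}{t}$, I would compute directly using $\Delta'(K_i^{-2}) = K_i^{-2}\otimes K_i^{-2}$ and the commutativity of $\dU^0_q$: moving $S'(y_{(2)})$ past $w$ picks up a weight factor that collapses $y_{(1)}\cdot (\text{shifted } S'(y_{(2)}))$ to the scalar $\binom{(\alpha_i^\vee,\lambda)}{t}_{q_i}$; the $t=1$ case is a two-line check, and the general case follows inductively via the factorization $\binom{K_i;0}{t} = \binom{K_i;0}{t-1}\cdot (1 - v_i^{2(t-1)}K_i^{-2})/(1 - v_i^{-2t})$. Once the character formula is in hand, ad-invariance forces $\hchi_\lambda(y) = \varepsilon'(y) = \hchi_0(y)$ for every $y\in \dU^0_q$, and Lemma~\ref{lem: independent of characters}(a) gives $\lambda=0$.

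The main obstacle is the verification of the character formula for the non-group-like generators $\binom{K_i;0}{t}$: the computation is purely algebraic but must be carried out carefully since the coproducts produce denominators and $v$-binomial coefficients that need to be recognized. As a safeguard, an alternative route bypasses $\binom{K_i;0}{t}$ altogether: observe that $Z_{HC}\subset U^{fin}_q$ (any ad-invariant element generates the rank-one $\dU_q(\g)$-submodule $R\cdot z$), use Theorem~\ref{prop: REA-Ufin general case} to identify $Z_{HC}$ with $O_q[G]^{\dU_q}$, apply Lemma~\ref{lem: basis of invariant of R[U]} to exhibit the $R$-basis $\{c_\lambda\}_{\lambda\in P_+}$, and then use $\hiota(c_{v^*_\lambda,v_\lambda})=K^{-2\lambda}$ together with the graded structure~\eqref{eq:orthogonal} of the ad-invariant pairing to conclude that each $\hiota(c_\lambda)$ has $Q$-weight zero.
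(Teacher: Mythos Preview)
Your plan is correct in substance and essentially matches the paper's proof, though the paper is much terser on both points. For centrality, the paper invokes the explicit formulas~\eqref{eq:explicit adjoint action}: $\ad'(\tE_i)(z)=[\tE_i,z]K^{\zlambda_i}$, $\ad'(\tF_i)(z)=[\tF_i,z]K^{-\zmu_i}$, $\ad'(K^\mu)(z)=K^\mu z K^{-\mu}$; invariance plus invertibility of the $K$-factors gives commutation with all generators. Your Hopf-identity route reaches the same conclusion, but the phrase ``applying it in $\bU(\g,P/2)$'' is loose over a general $R$: at roots of unity there is no single Hopf $R$-algebra containing both $U^{ev}_q(\g)$ and $\dU_q(\g)$ (the natural map $U^{ev}_q\to\dU_q$ kills $\tE_i^{\ell_i}$). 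The clean fix is to apply the identity $xz=\ad'(x_{(1)})(z)\,x_{(2)}$ \emph{inside the Hopf $R$-algebra $U^{ev}_q(\g)$} for $x\in\{\tE_i,\tF_i,K^\mu\}$; since the coproduct and antipode of these generators are identical in $U^{ev}_q$ and $\dU_q$, invariance under $\dU_q$-adjoint implies invariance under the self-adjoint action of $U^{ev}_q$ on these elements, and your identity yields $xz=zx$. This is the paper's argument with the commutators hidden.

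For the weight-zero claim, the paper's entire argument is the one-liner $Z_{HC}=\Hom_{\dU_q(\g)}(W_q(0),U^{ev}_q(\g))$. Your plan unpacks exactly what lies beneath: the character formula $\ad'(y)(w)=\hchi_\lambda(y)w$ for $y\in\dU^0_q$ and $w\in U^{ev}_{q,\lambda}$, followed by Lemma~\ref{lem: independent of characters}(a). The character formula is simpler than you suggest, though: since $\ad'$ is an algebra action and $\ad'(K^\mu)$ acts on degree $\lambda$ by the scalar $q^{(\mu,\lambda)}$, the commutative algebra $\dU^0_q$ acts on each $U^{ev}_{q,\lambda}$ via a character; that this character equals $\hchi_\lambda$ is a relation in $U^{ev}_\CA(\g)$ (where $\binom{K_i;0}{t}$ is visibly a $\BQ(v^{1/2})$-polynomial in $K_i^{-2}$) and survives base change. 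No inductive coproduct calculation is needed.

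Your alternative route via Theorem~\ref{prop: REA-Ufin general case} and Lemma~\ref{lem: basis of invariant of R[U]} is logically sound (those results are proved before this lemma in the paper's ordering) but disproportionately heavy for a two-line statement.
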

\begin{proof}
Since $Z_{HC}=\Hom_{\dU_q(\g)}(W_q(0), U^{ev}_q(\g))$, the second statement holds. On the other hand,
\[ \ad'(K^{\lambda})(x)=K^{\lambda}xK^{-\lambda}, \qquad \ad'(\tE_i)(x)=[\tE_i,x]K^{\zlambda_i}, \qquad \ad'(\tF_i)(x)=[\tF_i, x]K^{-\zmu_i},\]
for $x\in U^{ev}_q(\g)$. Therefore, for $z\in Z_{HC}$, we must have $z$ commutes with $\tE_i, \tF_i, K^\lambda$ for $1\leq i \leq r$ and $\lambda \in 2P$, hence, $z$ is central in $U^{ev}_q(\g)$.
\end{proof}
The zero weight space $U^{ev}_q(\g)_0$ has a decomposition:
\[ U^{ev}_q(\g)_0=U^{ev0}_q\oplus \bigoplus_{\mu \in Q_+}U^{ev<}_{q,-\mu}U^{ev0}_qU^{ev>}_{q, \mu}.\]
Using the commutation relations between $\tE_i, \tF_i$ and $K^\lambda$, one can easily see that $\bigoplus_{\mu \in Q_+}U^{ev<}_{q,-\mu}U^{ev0}_qU^{ev>}_{q, \mu}$ is a two-sided ideal in $U^{ev}_q(\g)_0$. Therefore, we have an algebra homomophism
\[ \pi: Z_{HC}\hookrightarrow U^{ev}_q(\g)_0 \twoheadrightarrow U^{ev0}_q(\g).\]
\begin{Thm}\label{thm: HC-center}The homomorphism $\pi$ defines an algebra isomorphism between $Z_{HC}$ and the following subalgebra of $U^{ev0}_q(\g)$:
\begin{equation}\label{eq: W-inv of Uev0} R\Big[K^{\pm 2\w_1}, \dots, K^{\pm 2\w_r}\Big]^{W_\bullet},
\end{equation}
where we use the dot-action of the Weyl group $W$ on $U^{ev0}_q(\g)$ extended by linearity from:
\[ w_\bullet(K^\mu)= q^{(w^{-1}\rho-\rho, \mu)}K^{w(\mu)}\qquad\textnormal{for all $x\in W,$ $\mu \in 2P$}.\]
\end{Thm}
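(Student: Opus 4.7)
The plan is to identify $Z_{HC}$ with the $\dU_q(\g)$-invariant part of the locally finite subalgebra $U^{fin}_q$, transport this via the isomorphism $\hiota\colon O_q[G]\iso U^{fin}_q$ of Theorem \ref{prop: REA-Ufin general case} into $O_q[G]$, where Lemma \ref{lem: basis of invariant of R[U]} already supplies an explicit $R$-basis of invariants, and then compute the projection $\pi$ of this basis to recognize it as the set of (twisted) Weyl characters, which by the Weyl character formula is exactly an $R$-basis of $R[K^{\pm 2\omega_i}]^{W_\bullet}$.

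First, any $z\in Z_{HC}$ spans a trivial (hence finite-dimensional) $\dU_q(\g)$-submodule of $U^{ev}_q(\g)$, so $z\in U^{fin}_q$ and $Z_{HC}=(U^{fin}_q)^{\dU_q(\g)}$. Since the $\dU_q(\g)$-action on $O_q[G]$ from Lemma \ref{lem: Oq[G] is left module algebra} coincides with the adjoint action on $R[\dU_q(\g)]$ appearing in Lemma \ref{lem: basis of invariant of R[U]}, applying Theorem \ref{prop: REA-Ufin general case} yields that $Z_{HC}$ is a free $R$-module with basis $\{\hiota(c_\lambda)\}_{\lambda\in P_+}$, where $c_\lambda$ is the image of $\sum_i v_i\otimes v_i^*$ under $W_q(\lambda)\otimes W_q(\lambda)^*\to O_q[G]$, $v\otimes f\mapsto c_{f,K^{-2\rho}v}$, for any weight basis $\{v_i\}$ of $W_q(\lambda)$ of weights $\mu_i\in P$.

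Next I compute $\pi(\hiota(c_\lambda))$. For any weight vector $v_\mu$ of weight $\mu$ with dual basis element $v^*_\mu$, the matrix coefficient $c_{v^*_\mu,v_\mu}$ restricts on $\dU^0_q$ to $\hat\chi_\mu$, which by~\eqref{eq16} coincides with the restriction of $\iota(K^{-2\mu})$. Since $c_{v^*_\mu,v_\mu}$ has weight zero, $\hiota^{-1}(c_{v^*_\mu,v_\mu})$ lies in $U^{ev}_q(\g)_0$, and the Cartan component picked out by Lemma \ref{lem: matrix coeff in Uev}(a) with $\nu=\mu_{\text{Lem}}=0$ is exactly $K^{-2\mu}$; hence $\pi(\hiota(c_{v^*_\mu,v_\mu}))=K^{-2\mu}$. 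Expanding $c_\lambda=\sum_i q^{-(2\rho,\mu_i)}c_{v_i^*,v_i}$ gives
\[
  \pi(\hiota(c_\lambda))=\sum_{\mu\in P}m_\mu^\lambda\, q^{-(2\rho,\mu)}K^{-2\mu},
\]
with $m_\mu^\lambda$ the weight multiplicity in $W_q(\lambda)$, which is given by the Weyl character formula (Lemma \ref{Lem:Weyl_char_formula}).

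Finally, under the substitution $\tilde{K}^\mu:=q^{(\rho,\mu)}K^\mu$, a direct check shows that $w_\bullet(K^\mu)=q^{(w^{-1}\rho-\rho,\mu)}K^{w\mu}$ translates into $w(\tilde{K}^\mu)=\tilde{K}^{w\mu}$, so $W_\bullet$-invariance on $R[K^{\pm 2\omega_i}]$ is the ordinary $W$-invariance on $R[\tilde{K}^{\pm 2\omega_i}]$. In these coordinates, $\pi(\hiota(c_\lambda))=\sum_\mu m_\mu^\lambda \tilde{K}^{-2\mu}$ is the classical Weyl character, and these range over an $R$-basis of the $W$-invariant Laurent polynomials as $\lambda$ varies over $P_+$. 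Consequently $\{\pi(\hiota(c_\lambda))\}_{\lambda\in P_+}$ is an $R$-basis of $R[K^{\pm 2\omega_i}]^{W_\bullet}$, so $\pi$ is an $R$-linear bijection onto this subalgebra; since $\pi$ is already a ring map (by the ideal property already noted in the excerpt), the theorem follows. The only real technical point to verify carefully is the identity $\pi(\hiota(c_{v^*_\mu,v_\mu}))=K^{-2\mu}$ beyond the highest-weight case of Lemma \ref{lem: image of iota(K(-2lambda))}; everything else is bookkeeping once Lemma \ref{lem: basis of invariant of R[U]} and the REA--locally finite identification are in hand.
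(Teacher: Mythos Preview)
Your proposal is correct and follows essentially the same approach as the paper: both identify $Z_{HC}=(U^{fin}_q)^{\dU_q}$, transport via $\hiota$ to use the basis $\{c_\lambda\}$ from Lemma~\ref{lem: basis of invariant of R[U]}, and then compute $\pi(\hiota(c_\lambda))$ as a twisted Weyl character. The paper phrases the computation of $\pi(\hiota(c_\lambda))$ via the characterizing identity $\langle\pi(\hiota(c_\lambda)),u\rangle'=c_\lambda(u)$ for $u\in\dU^0_q$, which is exactly your identity $\pi(\hiota(c_{v^*_\mu,v_\mu}))=K^{-2\mu}$ read through the pairing; your technical point follows immediately from the $(\mu',\nu')=(0,0)$ case of the construction in Lemma~\ref{lem: matrix coeff in Uev}.
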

The results on Harish-Chandra center for the usual De Concini-Kac forms at generic $q$ can be found in \cite[Theorem 6.25]{j}. See \cite[$\mathsection 6.2$]{dckp} for results  when $q$ is an odd order root of unity, while see \cite[Proposition 3.3]{t2} for results  when $q$ is an even order root of unity.
\begin{proof}By Lemma \ref{lem: basis of invariant of R[U]}, $O_q[G]^{\dU_q}$ is a free $R$-module with a basis $c_{\lambda}=\sum_{i} c_{v^*_i, K^{-2\rho} v_i}$ for all $\lambda\in P_+$, where $\{v_i\}$ is a weight basis of the Weyl module $W_q(\lambda)$. Hence, $Z_{HC}$ is a free $R$-module with basis $\{ \hiota(c_\lambda)|\lambda \in P_+\}$ by the isomorphism $\hiota: O_q[G] \cong U_q^{fin}$.

Let us compute $\pi\circ \hiota(c_\lambda)$. This element is uniquely determined by 
\[ \Big\<\pi \circ \hiota(c_\lambda), u\Big\>=c_\lambda(u),\]
for all $u \in \dU^0_q$. On the other hand, let $P_{+,\lambda}$ be the set of dominant weights in $W_q(\lambda)$, then it is straightforward to see that
\[ c_\lambda(u)=\sum_{\mu \in P_{+, \lambda}} \text{rank}\big(W_q(\lambda)_\mu \big) \sum_{\mu' \in W\mu} q^{(-2\rho, \mu')} \hat{\chi}_{\mu'}(u),\]
for all $u \in \dU^0_q$. Therefore,
\[ c_\lambda(u)=\Big\< \sum_{\mu \in P_{+, \lambda}} \text{rank}\big(W_q(\lambda)_\mu \big) \sum_{\mu'\in W\mu} q^{(2\rho, \mu')}K^{2\mu'}, u\Big\>\]
for all $u \in \dU^0_q$. It follows that 
\begin{equation}\label{eq: image of clambda under HC} \pi \circ \hiota(c_\lambda)=\sum_{\mu \in P_{+, \lambda}} \text{rank}\big(W_q(\lambda)_\mu\big)\sum_{\mu'\in W\mu}q^{(\rho, 2\mu')}K^{2\mu'}.
\end{equation}
It is an easy exercise to show that the collection of elements in the right hand side of \eqref{eq: image of clambda under HC} over all $\lambda \in P_+$ forms a $R$-basis of the free $R$-module \eqref{eq: W-inv of Uev0}. This implies that $\pi$ defines an algebra homomorphism between $Z_{HC}$ and the  algebra \eqref{eq: W-inv of Uev0}.
\end{proof}


\section{Poisson Geometry of center $Z$ of $U^{ev}_\e(\g)$ over $\BC$}\label{sec:PoissonGeometry}

The goal of this section is to generalize the results about the center and Azumaya locus of De Concini-Kac forms at odd order roots of unity to the even part algebra $U^{ev}_\e(\g)$ at both odd and even order roots of unity. Most of the arguments closely follow~\cite{dck, dckp, dckp2}.

\subsection{Poisson bracket on $Z$ and $Z_{Fr}$}\

Let $q=\e\in \BC$ be a root of unity of order $\ell$ such that $\ell_i > \max\{ 2, 1-a_{ij}\}_{1\leq j \leq r}$. The center $Z$ of $U^{ev}_\e(\g)$ naturally carries a Poisson structure via quantization. 

Consider 
\begin{equation}\label{eq: A'}
  \CA'=\BC[v,v^{-1}]\Big[\frac{1}{v^{2k}-1}\Big]_{1\leq k\leq \max\{\sd_i\}}, 
\end{equation}
and let $\varphi_\e\colon U^{ev}_{\CA'}(\g)\rightarrow U^{ev}_\e(\g)$ denote the natural epimorphism sending $v$ to $\e$. Then, any $x\in \varphi^{-1}_\e(Z)$ naturally gives rise to a derivation of the algebra $U^{ev}_\e(\g)$ as follows: for any $u \in U^{ev}_\e(\g)$, pick its arbitrary lift $\hat{u}$ in $U^{ev}_{\CA'}(\g)$ and define
\begin{equation}\label{eq: derivation}
  \{x,u\}:=\varphi_{\e}\left(\frac{[x,\hat{u}]}{v-\e}\right).
\end{equation}

\begin{Rem}\label{rem: Der and Poisson}
(a) We note that $\{x,u\}$ is clearly independent of the choice of a lift $\hat{u}\in \varphi^{-1}_\e(u)$.

\noindent
(b) The above construction also gives rise to a  Poisson bracket in $Z$ as follows: given $u, w\in Z$, pick any lifts $\hat{u}\in \varphi^{-1}_\e(u)$ and $\hat{w}\in \varphi^{-1}_\e(w)$, and define:
\begin{equation}\label{eq: poisson bracket}
  \{u,w\}:=\varphi_\e\left(\frac{[\hat{u}, \hat{w}]}{v-\e}\right).    
\end{equation}

\noindent
(c) We note that the derivation $\{x,-\}$ of the entire algebra $U^{ev}_\e(\g)$, defined in \eqref{eq: derivation}, does actually depend on $x$ and not just on its specialization $\varphi_\e(x)\in Z$, in contrast to (b).
\end{Rem}

Recall the ring homomorphism $\sigma\colon \CA'\rightarrow \BC$ sending $v$ to $\e$. We define a nonzero scalar $\sA_i\in \BC$ via:
\begin{equation}
  \sA_i:=\sigma\left(\frac{(\ell_i)_{v_i}!}{v-\e}\right).
\end{equation}

\begin{Prop}\label{prop: ad and poisson bracket} 
For any $u\in Z$, we have the following equalities:
\begin{equation}\label{eq: ad and poisson bracket}
  \ad'\Big(\sA_i\tE_i^{(\ell_i)}\Big)(u)=\{\tE_i^{\ell_i}, u\}K^{\ell_i \zlambda_i}\qquad \textnormal{and} \qquad \ad'\Big(\sA_i\tF_i^{(\ell_i)}\Big)=\{\tF_i^{\ell_i}, u\}K^{-\ell_i \zmu_i},
\end{equation}
where the Poisson brackets $\{\tE_i^{\ell_i}, u\}$ and $\{\tF_i^{\ell_i}, u\}$ are defined by \eqref{eq: poisson bracket} in Remark \ref{rem: Der and Poisson}.
\end{Prop}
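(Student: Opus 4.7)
The plan is to prove the first identity; the second follows by the parallel argument with $(\tE_i,\zlambda_i)$ replaced by $(\tF_i,-\zmu_i)$. First I will verify that $\sA_i\in\BC^\times$ and reformulate the LHS integrally: since $\ell_i\geq 2$, the scalar $\e_i^2$ has order exactly $\ell_i$, so $(\ell_i)_{v_i}!$ has a simple zero at $v=\e$, i.e.\ $(\ell_i)_{v_i}!=(v-\e)g(v)$ with $g(\e)=\sA_i\in\BC^\times$. Using the identity $\ad'(\tE_i^{\ell_i})=(\ell_i)_{v_i}!\cdot\ad'(\tE_i^{(\ell_i)})$ valid in $\bU(\g,P/2)\otimes_{\CA'}\BQ(v^{1/2})$ together with the integrality of $\ad'(\tE_i^{(\ell_i)})$ on $U^{ev}_{\CA'}(\g)$ (Proposition~\ref{action of Lus on DCK}), for any weight-zero lift $\hat u\in\varphi_\e^{-1}(u)$ the first identity reduces to the congruence
\[
  \ad'(\tE_i^{\ell_i})(\hat u)\;\equiv\;[\tE_i^{\ell_i},\hat u]\,K^{\ell_i\zlambda_i}\pmod{(v-\e)^2\,U^{ev}_{\CA'}(\g)}.
\]

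The key step will be to establish this congruence by iterating the adjoint action. Since $\ad'$ is an algebra homomorphism and $\ad'(\tE_i)(y)=[\tE_i,y]K^{\zlambda_i}$, one has $\ad'(\tE_i^{\ell_i})=(\ad'\tE_i)^{\ell_i}$. Writing $[\tE_i,\hat u]=(v-\e)\tilde w$ with $\tilde w|_\e = w := \{\tE_i,u\}\in U^{ev}_\e(\g)$ (of weight $\alpha_i$), $\CA'$-linear iteration gives $(\ad'\tE_i)^{\ell_i}(\hat u)\equiv(v-\e)\cdot(\ad'\tE_i)^{\ell_i-1}(wK^{\zlambda_i})\pmod{(v-\e)^2}$, the latter computed inside $U^{ev}_\e(\g)$. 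Exploiting $K^{\zlambda_i}\tE_i=\e_i^2\tE_i K^{\zlambda_i}$, a direct induction yields
\[
  (\ad'\tE_i)^{\ell_i-1}(wK^{\zlambda_i})\;=\;\Bigl(\prod_{j=1}^{\ell_i-1}\bigl(L_{\tE_i}-\e_i^{2j}R_{\tE_i}\bigr)\Bigr)(w)\cdot K^{\ell_i\zlambda_i},
\]
with $L_{\tE_i}, R_{\tE_i}$ the commuting operators of left and right multiplication by $\tE_i$ on $U^{ev}_\e(\g)$.

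The cyclotomic collapse is the decisive combinatorial input: since $\e_i^2$ is a primitive $\ell_i$-th root of unity, $\{\e_i^{2j}\}_{j=1}^{\ell_i-1}$ runs exactly over the non-trivial $\ell_i$-th roots of unity, so
\[
  \prod_{j=1}^{\ell_i-1}(x-\e_i^{2j})\;=\;\tfrac{x^{\ell_i}-1}{x-1}\;=\;\sum_{k=0}^{\ell_i-1}x^k,
\]
whence $(\ad'\tE_i)^{\ell_i-1}(wK^{\zlambda_i})=\bigl(\sum_{k=0}^{\ell_i-1}\tE_i^k\,w\,\tE_i^{\ell_i-1-k}\bigr)K^{\ell_i\zlambda_i}$. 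On the other hand, the telescoping $[\tE_i^{\ell_i},\hat u]=\sum_{k=0}^{\ell_i-1}\tE_i^k[\tE_i,\hat u]\tE_i^{\ell_i-1-k}$, after dividing by $v-\e$ and specializing, gives $\{\tE_i^{\ell_i},u\}=\sum_{k=0}^{\ell_i-1}\tE_i^k\,w\,\tE_i^{\ell_i-1-k}$, matching the previous expression and finishing the first identity. The second identity is obtained by the parallel computation with $\ad'(\tF_i)(y)=[\tF_i,y]K^{-\zmu_i}$ and $K^{-\zmu_i}\tF_i=v_i^{-2}\tF_i K^{-\zmu_i}$, relying on the companion identity $\prod_{j=1}^{\ell_i-1}(x-\e_i^{-2j})=\sum_{k=0}^{\ell_i-1}x^k$.

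The main obstacle is the reformulation of the iterated nested $q$-commutator $(\ad'\tE_i)^{\ell_i-1}$ as a polynomial in the two commuting operators $L_{\tE_i},R_{\tE_i}$; once this reorganization is in place, the cyclotomic identity mechanically collapses the expression into precisely the telescoped commutator matching the Poisson bracket $\{\tE_i^{\ell_i},u\}$.
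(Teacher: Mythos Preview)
Your proof is correct and takes a genuinely different route from the paper's. The paper expands $\ad'(\tE_i^{r})(y)$ directly via the coproduct and antipode formulas for $\tE_i^r$, obtaining the closed-form identity
\[
  \ad'(\tE^{r}_i)(y) \;=\; [\tE_i^r,y]K^{r\zlambda_i}
  \;+\;\sum_{c=1}^{r-1}(-1)^c v_i^{c(2r-c-1)}\binom{r}{c}_{v_i}[\tE_i^{r-c},y]\tE_i^c\,K^{r\zlambda_i},
\]
whose derivation hinges on the $q$-binomial vanishing $\sum_{c=0}^{r}(-1)^cv_i^{c(r-1)}\bmat{r\\c}_{v_i}=0$. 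At $r=\ell_i$ the middle terms die after dividing by $v-\e$ because $\varphi_\e([\tE_i^{\ell_i-c},y])=0$ (centrality of $u$) while $\binom{\ell_i}{c}_{v_i}/(v-\e)\in\CA'$. By contrast, you iterate the single-step formula $\ad'(\tE_i)(y)=[\tE_i,y]K^{\zlambda_i}$, land in $(v-\e)U^{ev}_{\CA'}$ after one step, and then compute the remaining $\ell_i-1$ steps at $v=\e$ as the product of commuting operators $\prod_{j=1}^{\ell_i-1}(L_{\tE_i}-\e_i^{2j}R_{\tE_i})$; the cyclotomic collapse $\prod_{j=1}^{\ell_i-1}(x-\e_i^{2j})=\sum_{k=0}^{\ell_i-1}x^k$ then identifies this with the telescoped commutator $\sum_k\tE_i^k\,w\,\tE_i^{\ell_i-1-k}$. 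The paper's approach has the advantage of producing a formula valid for all $r$, potentially reusable; your approach trades that generality for a more elementary endgame, replacing the $q$-binomial identity by a standard cyclotomic factorization and the commutator telescope. The ``weight-zero'' hypothesis on $\hat u$ is harmless but not actually used in your argument: the relation $K^{\zlambda_i}\tE_i=v_i^2\tE_i K^{\zlambda_i}$ is what drives the induction, independent of the weight of $w$.
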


\begin{proof}
We shall prove only the first equality. According to \eqref{eq:twisted Hopf on divided powers}, we have in $U^{ev}_{\CA'}(\g)$:
\begin{align*}
  &\Delta'(\tE_i^r)=\sum_{c=0}^{r}v_i^{2(r-c)c}\binom{r}{c}_{v_i} \tE_i^{r-c}\otimes K^{-(r-c)\zlambda_i}\tE_i^c,\\
  &S'(K^{-(r-c)\zlambda_i}\tE_i^c)=(-1)^c v_i^{c(c-1)}\tE_i^c K^{r\zlambda_i}.
\end{align*}
Therefore, for any $r\in \BN$ and $y\in U^{ev}_{\CA'}(\g)$, we have:
\begin{multline*}
  \ad'(\tE_i^r)(y)=\sum_{c=0}^r (-1)^c v_i^{c(2r-c-1)}\binom{r}{c}_{v_i} \tE_i^{r-c}y \tE_i^c K^{r\zlambda_i}=\\
  \left(\tE_i^ry+\sum_{c=1}^r(-1)^c v_i^{c(2r-c-1)}\binom{r}{c}_{v_i} y\tE_i^r+\sum_{c=1}^{r-1}(-1)^c v_i^{c(2r-c-1)}\binom{r}{c}_{v_i}[\tE_i^{r-c},y]\tE_i^c\right)K^{r\zlambda_i}.
\end{multline*}
Evoking the identity
\begin{equation*}
    \sum_{c=0}^r (-1)^c v_i^{c(2r-c-1)}\binom{r}{c}_{v_i} = \, 
    \sum_{c=0}^r (-1)^c v_i^{c(r-1)} \bmat{r\\c}_{v_i} = \, 0,
\end{equation*}
we thus obtain:
\begin{equation}\label{eq: ad(E^r)(y)}
  \ad'(\tE_i^r)(y)=[\tE^r_i,y]K^{r\zlambda_i} + 
  \sum_{c=1}^{r-1}(-1)^c v_i^{c(2r-c-1)}\binom{r}{c}_{v_i} [\tE_i^{r-c},y]\tE_i^c K^{r\zlambda_i}.
\end{equation}
Applying the formula \eqref{eq: ad(E^r)(y)} to $r=\ell_i$ and $y\in \varphi^{-1}_\e(u)$ with $u\in Z$, we then obtain:
\begin{equation*}
  \ad'\left(\sA_i\tE_i^{(\ell_i)}\right)(u) = 
  \varphi_\e\left(\frac{[\tE_i^{\ell_i},y]}{v-\e}K^{\ell_i \zlambda_i} + \sum_{c=1}^{\ell_i-1}(-1)^c v_i^{c(2\ell_i-c-1)}\frac{\binom{\ell_i}{c}_{v_i}}{v-\e}[\tE_i^{\ell_i-c},y]\tE^c_iK^{\ell_i\zlambda_i}\right).
\end{equation*}
As $\varphi_\e([\tE^k_i,y])=[\tE^k_i,u]=0$, we finally get
\[ 
  \ad'\Big(\sA_i \tE_i^{(\ell_i)}\Big)(u)=\{\tE_i^{\ell_i}, u\}K^{\ell_i \zlambda_i} \qquad \forall\, u \in Z.
\]
This completes our proof of the first equality in \eqref{eq: ad and poisson bracket}.
\end{proof}

It turns out that the Frobenius center $Z_{Fr}$ is a Poisson subalgebra of $Z$:

\begin{Prop}\label{prop: Possion structure of ZFr}
(a) The center $Z_{Fr}$ is closed under the Poisson bracket of $Z$.

\noindent
(b) $Z_{Fr}$ is generated by $\Big\{K^\lambda, \tE_i^{\ell_i}, \tF_i^{\ell_i}\Big\}$ for $1\leq i \leq r$ and $\lambda \in 2P^*$ as a Poisson algebra.

\noindent
(c) Let us recall the identification $Z_{Fr} \simeq \BC[G^d_0]$ of Proposition~\ref{Ze and Bruhat cell}. The symplectic leaves of $\Spec Z_{Fr} \cong G^d_0$ are the intersections of conjugacy classes of $G^d$ with $G^d_0$. 
\end{Prop}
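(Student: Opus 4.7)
My plan is to prove (a) and (b) simultaneously by introducing the Poisson subalgebra $\tilde Z \subseteq Z$ generated (inside $(Z,\{\cdot,\cdot\})$) by the elements listed in (b), and showing $\tilde Z = Z_{Fr}$. Once this equality is established, (a) is immediate since $\tilde Z$ is Poisson-closed by construction, and (b) is the defining property of $\tilde Z$.

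For the inclusion $\tilde Z \subseteq Z_{Fr}$, it suffices to show that each Hamiltonian derivation $\{g,-\}$ of $Z$, for $g$ a generator, preserves $Z_{Fr}$. For $g = \tE_i^{\ell_i}$, Proposition~\ref{prop: ad and poisson bracket} gives
\begin{equation*}
  \{\tE_i^{\ell_i}, z\} = \ad'\bigl(\sA_i\tE_i^{(\ell_i)}\bigr)(z)\cdot K^{-\ell_i\zlambda_i},
\end{equation*}
and the right-hand side lies in $Z_{Fr}$: the Cartan factor belongs to $Z^0_{Fr}$ since $\ell_i\zlambda_i = \gamma(\alpha_i^*)\in 2P^*$ by Remark~\ref{rem:lattice-star}, while $Z_{Fr}$ is $\ad'$-invariant by Lemma~\ref{basis properties of Ze}(a) and Remark~\ref{actions of U(gd)}. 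The case $g=\tF_i^{\ell_i}$ is analogous. For $g=K^\lambda$ with $\lambda\in 2P^*$, lifting a weight-homogeneous $z \in Z_{Fr}$ of weight $\mu\in Q^*$ to $\hat z \in U^{ev}_{\CA'}(\g)$ of the same weight and applying \eqref{eq: poisson bracket} yields $\{K^\lambda, z\} = c_{\mu,\lambda}\cdot zK^\lambda$ with $c_{\mu,\lambda} = \lim_{v\to\e}(v^{(\mu,\lambda)}-1)/(v-\e)\in\BC$, hence is in $Z_{Fr}$. Iterating products and Poisson brackets then keeps us inside $Z_{Fr}$, giving $\tilde Z \subseteq Z_{Fr}$.

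For the reverse inclusion, Lemma~\ref{basis properties of Ze}(b) reduces the task to realizing each $\tE_\alpha^{\ell_\alpha}$ and $\tF_\alpha^{\ell_\alpha}$, for $\alpha\in\Delta_+$, as an element of $\tilde Z$. I propose induction on $\het(\alpha)$: writing a non-simple $\alpha = \alpha_i+\beta$ with $\beta\in\Delta_+$ and $\het(\beta)<\het(\alpha)$, I will compute
\begin{equation*}
  \{\tE_i^{\ell_i}, \tE_\beta^{\ell_\beta}\} = \ad'\bigl(\sA_i\tE_i^{(\ell_i)}\bigr)(\tE_\beta^{\ell_\beta})\cdot K^{-\ell_i\zlambda_i}
\end{equation*}
explicitly via Lusztig's higher-order quantum Serre relations together with the braid-group compatibility of the PBW root vectors. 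The expected output is $\tE_\alpha^{\ell_\alpha}$ times a nonzero scalar and a Cartan element in $Z^0_{Fr}$, so inverting the Cartan factor places $\tE_\alpha^{\ell_\alpha}$ in $\tilde Z$; the analogous recursion with $\tF$'s handles the negative side. Bookkeeping the scalar and Cartan prefactors here will be the main technical obstacle, parallel to the odd-root computations in \cite[$\mathsection 3$]{dckp}.

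For (c), once (a) is in hand the Poisson algebra $Z_{Fr}$ makes $\Spec Z_{Fr} \cong G^d_0$ into a Poisson variety. The plan is to identify this bracket with the restriction to $G^d_0$ of the standard Semenov-Tian-Shansky Poisson-Lie structure on $G^d$, by matching the generators $\tE_i^{\ell_i}, \tF_i^{\ell_i}, K^\lambda$ (viewed as Chevalley-type coordinates on $G^d_0$ via Proposition~\ref{Ze and Bruhat cell}) with the standard coordinate functions, and comparing the brackets computed in (a) with the classical STS brackets; this step follows the template of \cite[$\mathsection 3$]{dckp}. With the Poisson identification established, the theorem follows from the classical fact (see \cite[$\mathsection 4$]{dckp}) that the symplectic leaves of $G^d$ equipped with its STS Poisson-Lie structure are precisely its conjugacy classes, from which the leaves in $G^d_0$ are the intersections $C\cap G^d_0$. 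The main obstacle for (c) is verifying the Poisson identification on a sufficient family of generators; once this is done, the leaf description is a direct consequence of the classical theorem.
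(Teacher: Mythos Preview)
Your inclusion $\tilde Z\subseteq Z_{Fr}$ is exactly the paper's Step~1. The reverse inclusion is where your approach diverges, and where it has a genuine gap.

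Your inductive scheme writes a non-simple $\alpha=\alpha_i+\beta$ and proposes to recover $\tE_\alpha^{\ell_\alpha}$ from $\{\tE_i^{\ell_i},\tE_\beta^{\ell_\beta}\}$. But this bracket has weight $\ell_i\alpha_i+\ell_\beta\beta$, which equals $\ell_\alpha\alpha$ only when $\ell_i=\ell_\beta=\ell_\alpha$. In the non--simply-laced case at even roots of unity (e.g.\ type $B_2$ with $4\mid\ell$) roots of different lengths have different $\ell_\alpha$, so the weights do not match and no single bracket of generators lands in the weight space of $\tE_\alpha^{\ell_\alpha}$. Iterated brackets will produce various PBW monomials of the correct total weight, but there is no reason they should yield $\tE_\alpha^{\ell_\alpha}$ itself up to an invertible factor. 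This is not a bookkeeping nuisance but a structural mismatch---precisely the kind of obstruction that makes the odd-root arguments of \cite{dckp} fail at even roots.

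The paper bypasses this entirely. Rather than building root vectors by induction, it invokes Lemma~\ref{lem: Zfin in field F}(b),(c): over $\BC$ one has $Z_{Fr}=Z^{fin}_{Fr}[K^{2\lambda_0}]$ and $Z^{fin}_{Fr}=\bigoplus_{\lambda\in P^*_+}\ad'(\dU_\e(\g))K^{-2\lambda}$. Since the $\dU_\e(\g)$-action on $Z_{Fr}$ factors through $\dU_\BC(\g^d)$, generated by the $e_i,f_i$, every element of $Z^{fin}_{Fr}$ is a linear combination of iterates $\ad'(x_1)\cdots\ad'(x_n)(K^{-2\lambda})$ with each $x_j\in\{\tE_i^{(\ell_i)},\tF_i^{(\ell_i)}\}$. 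Proposition~\ref{prop: ad and poisson bracket} converts each adjoint step into a Poisson bracket with a generator times a Cartan factor already in $\mathcal P$, so $Z_{Fr}\subseteq\mathcal P$ falls out with no root-vector computation.

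For (c), identifying the bracket with the Semenov--Tian--Shansky structure is more than is needed, and that identification is neither established in the paper nor obviously true (the Hamiltonian vector fields of the generators agree with the adjoint vector fields only up to multiplication by invertible functions on $G^d_0$). The paper's argument is direct: from \eqref{eq: dev of E and F}--\eqref{eq: dev of K} the Hamiltonian vector fields of the Poisson generators are invertible-function multiples of the infinitesimal conjugation fields $e_i,f_i,\lambda$; part~(b) then gives that the Hamiltonian distribution is contained in the adjoint-orbit distribution, and a short Lie-bracket computation (using $[f\upsilon_1,g\upsilon_2]=fg[\upsilon_1,\upsilon_2]+\cdots$ to climb in root height) gives the reverse inclusion. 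Equality of distributions yields the leaf description immediately, with no global Poisson identification required.
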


\begin{proof}
First, we will prove parts (a) and (b) simultaneously.

\noindent
{\it Step 1:} We show that $Z_{Fr}$ is closed under the derivations $\{\tE_i^{\ell_i}, -\}$, $\{ \tF_i^{\ell_i},-\}$ and $\{K^\lambda, -\}$ for all $1\leq i \leq r$ and $\lambda \in 2P^*$. By Proposition \ref{prop: ad and poisson bracket}, we have: 
\begin{equation}\label{eq: dev of E and F}
  \ad'(\sA_i \tE^{(\ell_i)})(u)=\{\tE^{\ell_i}, u\}K^{\ell_i\zlambda_i}, \qquad 
  \ad'(\sA_i \tF^{(\ell_i)})(u)=\{\tF_i^{\ell_i}, u\}K^{-\ell_i \zmu_i}
\end{equation}
for any $u\in Z$. Since $Z_{Fr}$ is closed under the adjoint action of $\dU_\e(\g)$ (see Remark~\ref{actions of U(gd)}) and $K^{-\ell_i\zlambda_i}, K^{\ell_i\zmu_i}\in Z_{Fr}$, it follows that $Z_{Fr}$ is closed under the derivations $\{\tE_i^{\ell_i}, -\}$ and $\{\tF_i^{\ell_i},-\}$.

Let $\displaystyle\theta:=\frac{v^\ell-1}{v-\e}\Big|_\e$. Then for any homogeneous element $u\in Z_{Fr}$ and $\lambda \in 2P^*$, we have
\begin{equation}\label{eq: dev of K}
  \{K^\lambda, u\}=\theta\frac{(\lambda, \weight(u))}{\ell}uK^\lambda.
\end{equation}
Therefore, $Z_{Fr}$ is also closed under the derivations $\{K^\lambda,-\}$ for any $\lambda \in 2P^*$.

\noindent
{\it Step 2:} Let $\mathcal{P}$ denote the Poisson subalgebra  of $Z$ generated by $\{\tE_i^{\ell_i}, \tF_i^{\ell_i}, K^\lambda\}_{1\leq i \leq r}^{\lambda \in 2P^*}$. Then by Step 1, $Z_{Fr}$ contains $\mathcal{P}$. Let us now show that $\mathcal{P}$ contains $Z_{Fr}$, which implies parts (a) and~(b). By Lemma \ref{lem: Zfin in field F} with the notations thereof, we have 
\[ 
  Z_{Fr}=Z^{fin}_{Fr}[K^{\lambda_0}], \qquad Z^{fin}_{Fr}=\bigoplus_{\lambda \in P^*_+} \ad'(\dU_\e(\g))K^{-2\lambda}.
\]
Thus, it is enough to show that $\ad'(\dU_\e(\g))K^{-2\lambda}\in \mathcal{P}$. The action of $\dU_\e(\g)$ on $Z_{Fr}$ factors through $\cFr\colon \dU_\e(\g)\rightarrow \dU_\BC(\g^d)$ and $\dU_\BC(\g^d)$ is generated by $e_i, f_i$ as an algebra. Therefore, $\ad'(\dU_\e(\g))K^{-2\lambda}$ is $\BC$-linearly spanned by elements of the form
\[ 
  \ad'(x_1\dots x_n)K^{-2\lambda} \ \text{ with  }\ \lambda \in 2P^* \ \mathrm{and} \ 
  x_j=\tE^{(\ell_i)}_i \text{ or } \tF^{(\ell_i)}_i\ \text{ for }1\leq i \leq r,
\]
but these elements are contained in $\mathcal{P}$.

\noindent
c) Let $\upsilon_{\tE_i}, \upsilon_{\tF_i}, \upsilon_{K^\lambda}$ be the vector fields on $\Spec{Z_{Fr}}\cong G^d_0$ generated by the derivations $\{\tE_i^{\ell_i},-\}$, $\{ \tF_i^{\ell_i},-\}$, $\{ K^\lambda,-\}$, respectively. Let $e_i, f_i ,\lambda$ be the vector fields on $G^d_0$ coming from the adjoint action of $G^d$ on itself. Therefore, from 
\eqref{eq: dev of E and F} and \eqref{eq: dev of K}, we have that 
\[ 
  \upsilon_{\tE_i}=\frac{\sA_i}{\e^*_i} K^{-\ell_i \zlambda_i} e_i, \qquad 
  \upsilon_{\tF_i}=\sA_i K^{\ell_i\zmu_i} f_i, \qquad 
  \upsilon_{K^\lambda}=\frac{\theta}{\ell}K^\lambda \lambda.
\]
Since $Z_{Fr}$ is a Poisson algebra generated by $\{\tE_i^{\ell_i}, \tF_i^{\ell_i}, K^\lambda\}_{1\leq i \leq r}^{\lambda \in 2P^*}$, we see that the distribution of the Hamiltonian vector fields  of $\Spec{Z_{Fr}}\cong G^d_0$ is contained in the distribution of vector fields arising from the adjoint action of $\g^d$ on $G^d_0$.

On the other hand, for any function $f,g$ and vector fields $\upsilon_1,\upsilon_2$ on $G^d_0$ we have
\[ 
  [f\upsilon_1,g\upsilon_2]= fg[\upsilon_1,\upsilon_2]+f\upsilon_1(g) \upsilon_2-g\upsilon_2(f)\upsilon_1.
\]
Hence we have 
\[ 
  [\upsilon_{\tE_{i_1}}, \cdots [\upsilon_{\tE_{i_{k-1}}}, \upsilon_{\tE_{i_k}}] \cdots] = 
  aK^b[e_{i_1},\cdots[e_{i_{k-1}},e_{i_k}] \cdots] + f
\]
for some $b\in 2P^*$, $a\in \BC^{\x}$, where $f$ is a $\mathbb{C}[G^d_0]$-linear combination of vector fields  corresponding to root vectors attached to roots $\a\in \Delta_+$ such that $\het (\a) < \het(\a_{i_1}+\dots +\a_{i_k})$. There are similar formulas for $\upsilon_{\tF_i}$. From this we see that the distribution of vector fields arising from the adjoint action of $\g^d$ on $\Spec Z_{Fr}\cong G^d_0$ is contained in (and thus is equal to) the distribution of the Hamiltonian vector fields. This implies part (c).
\end{proof}

We now pick a $\BC$-basis $\{z_\jmath\}$ of $Z_{Fr}$ and lift it to elements $\{\hat{z}_\jmath\}$ with $\hat{z}_\jmath\in \varphi^{-1}_\e(z_\jmath)$. Consider the $\BC$-linear map 
\[ 
  D\colon Z_{Fr}\rightarrow \text{Der}_{\BC}(U^{ev}_\e(\g))
\]
determined by $z_\jmath\mapsto \{\hat{z}_\jmath,-\}$ for all $\jmath$. This makes $(Z_{Fr}, U^{ev}_\e(\g))$ into a {\it Poisson order} as defined in \cite[$\mathsection 2.2$]{bg}. By Theorem $4.2$ in \cite{bg}, we have:

\begin{Prop}\label{prop: fiber over symplectic leaves} 
The fibers of $U^{ev}_\e(\g)$ over two points in the same $G^d$-conjugacy classes in $\Spec(Z_{Fr})$, equivalently, in the same  symplectic leaves, are isomorphic as $\BC$-algebras.
\end{Prop}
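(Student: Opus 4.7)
The plan is to invoke the Brown--Gordon machinery for Poisson orders (\cite[Theorem 4.2]{bg}) directly. The setup just before the statement already gives almost everything we need: a $\BC$-linear map $D\colon Z_{Fr}\to \mathrm{Der}_\BC(U^{ev}_\e(\g))$ lifting a chosen basis $\{z_\jmath\}$ to derivations $\{\hat z_\jmath,-\}$, and by Remark~\ref{rem: Der and Poisson}(c) these derivations depend on the chosen lifts, but only up to inner derivations of $U^{ev}_\e(\g)$; this is precisely the weakening required in the axioms of a Poisson order. Combined with Proposition~\ref{prop: Possion structure of ZFr}(a), which guarantees that $D$ restricts to the Poisson bracket on the Poisson subalgebra $Z_{Fr}\subset Z$, this shows $(Z_{Fr},U^{ev}_\e(\g))$ is genuinely a Poisson $Z_{Fr}$-order.

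Next, I would verify the finiteness hypothesis needed to invoke \cite[Theorem~4.2]{bg}: namely that $U^{ev}_\e(\g)$ is a finitely generated $Z_{Fr}$-module. This is exactly Corollary~\ref{U^ev is free over Z_e}, which in fact gives freeness of rank $d=\prod_{\alpha\in\Delta_+}\ell_\alpha^2\cdot\prod_i\ell_i$. Thus all the hypotheses of Brown--Gordon's theorem are met, and we may conclude that the fibers
\begin{equation*}
  U^{ev}_\e(\g)\otimes_{Z_{Fr}}\BC_{\mathfrak m_1}\quad\text{and}\quad U^{ev}_\e(\g)\otimes_{Z_{Fr}}\BC_{\mathfrak m_2}
\end{equation*}
are isomorphic as $\BC$-algebras whenever $\mathfrak m_1,\mathfrak m_2\in\Spec(Z_{Fr})$ lie in a common symplectic leaf of the Poisson variety $\Spec(Z_{Fr})$.

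Finally, I would identify these symplectic leaves explicitly. Proposition~\ref{prop: Possion structure of ZFr}(c), combined with the $\dU_\BC(\g^d)$-equivariant identification $Z_{Fr}\simeq \BC[G^d_0]$ from Proposition~\ref{Ze and Bruhat cell}, says that under $\Spec(Z_{Fr})\simeq G^d_0$ the symplectic leaves are exactly the nonempty intersections $\mathcal O\cap G^d_0$ of $G^d$-conjugacy classes $\mathcal O\subset G^d$ with the open Bruhat cell. Substituting this description into the conclusion of the previous paragraph yields the statement.

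The only step requiring any real care is the verification that the data $(Z_{Fr},U^{ev}_\e(\g),D)$ fits the precise axioms of a Poisson $Z_{Fr}$-order in the sense of Brown--Gordon—specifically that $D$ can be chosen to be a $\BC$-linear splitting landing in derivations, with Leibniz compatibility modulo inner derivations, and that the induced bracket on $Z_{Fr}$ matches the one from \eqref{eq: poisson bracket}. This is essentially bookkeeping, and all the necessary ingredients are already established in Propositions~\ref{prop: ad and poisson bracket} and~\ref{prop: Possion structure of ZFr}; no further computation with the explicit structure of $U^{ev}_\e(\g)$ is needed beyond what has already been done.
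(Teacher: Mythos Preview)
Your proposal is correct and follows essentially the same approach as the paper: the paper's proof consists solely of the paragraph just before the statement setting up the map $D\colon Z_{Fr}\to\mathrm{Der}_\BC(U^{ev}_\e(\g))$ and the sentence ``By Theorem 4.2 in \cite{bg}, we have,'' so your version simply spells out the hypotheses (finiteness via Corollary~\ref{U^ev is free over Z_e}, identification of leaves via Proposition~\ref{prop: Possion structure of ZFr}(c)) that the paper leaves implicit.
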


\subsection{Degeneration of $U^{ev}_q(\g)$}\

Let us recall two linear maps $\kappa, \gamma\colon \h^*\rightarrow \h^*$ defined in \eqref{kappa and gamma}.

One of the main tools in \cite{dckp2} is a remarkable degeneration of the quantum group. We will establish a similar degeneration for $U^{ev}_q(\g)$,  $q\in \text{Spec}(\CA')$ where $\CA'$ is defined in \eqref{eq: A'}.

Following Section~\ref{ssec: quantum DJ}, we fix a reduced expression of the longest element $w_0=s_{i_1}\dots s_{i_N}$. This defines the convex ordering on $\Delta_+$ (``convex'' means that if $\a<\b\in \Delta_+$ and $\a+\b\in \Delta_+$ then $\a<\a+\b <\b$):
\[ 
  \b_1=\a_{i_1}, \qquad \b_2=s_{i_1}(\a_{i_2}),\quad \dots \quad,\qquad \b_N=s_{i_1}\dots s_{i_{N-1}}(\a_{i_N}).
\]
We also recall the root vectors of~\eqref{eq:root-generator}.


\begin{Lem}\label{lem: commutators of E,F}
(a) For any $1\leq i<j\leq N$, one has:
\[ 
  F_{\b_i}F_{\b_j}-q^{(\b_i, \b_j)}F_{\b_j} F_{\b_i}=\sum_{\vec{k}\in \BZ^N_{\geq 0}} a_{\vec{k}}F^{\cev{k}},
\]
where $a_{\vec{k}} \in \BC[q, q^{-1}]$ and $a_{\vec{k}}\neq 0$ only if $\vec{k}=(k_1, \dots, k_N)$ satisfies $k_s=0$ for $s\leq i$ and $s \geq j$.

\noindent
(b) For any $1\leq i<j\leq N$, one has:
\[ 
  E_{\b_j} E_{\b_i}-q^{-(\b_i,\b_j)}E_{\b_i} E_{\b_j}=\sum_{\vec{k}\in \BZ^N_{\geq 0}} b_{\vec{k}} E^{\cev{k}},
\]
where $b_{\vec{k}} \in \BC[q, q^{-1}]$ and $b_{\vec{k}} \neq 0$ only if $\vec{k}=(k_1, \dots, k_N)$ satisfies $k_s=0$ for $s\leq i$ and $s \geq j$.
\end{Lem}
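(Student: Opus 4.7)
My plan is to prove the Levendorskii–Soibelman convex commutation formula that the lemma asserts, closely following the strategy of \cite[Proposition 5.5.2]{dckp} and \cite[Proposition 9.3]{dck}. First, applying the $\mathbb{Q}$-algebra anti-involution $\tau$ of~\eqref{eq: anti-involution map}, which sends $q \mapsto q^{-1}$, interchanges $E_{\beta_k}$ with $F_{\beta_k}$, and reverses the order of products, transforms part (b) into part (a). Hence it suffices to establish part (b).

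For part (b), I would reduce to the rank-two case via the braid group action. Given $i<j$, let $\Delta'$ denote the rank-two root subsystem of $\Delta$ spanned by $\beta_i$ and $\beta_j$. Using the convexity of the ordering $\beta_1 < \cdots < \beta_N$, the positive roots $\beta_k$ with $i \leq k \leq j$ lying in $\Delta'$ form a consecutive block, and the commutator $E_{\beta_j}E_{\beta_i} - q^{-(\beta_i,\beta_j)}E_{\beta_i}E_{\beta_j}$ is expected to lie in the subalgebra generated by the root vectors $E_{\beta_k}$ with $\beta_k \in \Delta'$ and $i < k < j$. Via the braid conjugation $T_{i_1}\cdots T_{i_{i-1}}$, one can normalize so that $\Delta'$ corresponds to a rank-two Levi subalgebra, reducing the verification to the subalgebra of $\bU(\g)$ generated by two simple roots and the associated Lusztig PBW basis.

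For the rank-two case, the four possibilities $A_1 \times A_1$, $A_2$, $B_2$, $G_2$ are handled separately by fixing a standard reduced expression for the longest element of the rank-two Weyl group, computing the root vectors explicitly via~\eqref{eq:braid group}, and verifying the commutator identity using the quantum Serre relations~\eqref{DJ eqn 4}. Direct inspection of the resulting coefficients shows they lie in $\mathbb{C}[q,q^{-1}]$, which is the essential integrality claim of the lemma (as opposed to mere membership in $\mathcal{A}'$).

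The main obstacle will be the $G_2$ case, where the Serre relations are cubic and there are six positive roots, leading to a tedious but ultimately finite verification. A subtler point is confirming that the reduction to rank two does not introduce denominators from $\mathcal{A}' \setminus \mathbb{C}[q,q^{-1}]$; this should follow from the integrality of the Lusztig braid action on the De Concini--Kac form $\mU_\CA$, which ensures that all intermediate computations stay within the $\mathbb{C}[q,q^{-1}]$-span of PBW monomials inside $\mU_\CA$.
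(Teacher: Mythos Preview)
The paper's proof is considerably shorter than your proposal: it simply cites \cite[Lemma~4.2]{dckp2}, which already gives
\[
  E_{\beta_j}E_{\beta_i}-q^{-(\beta_i,\beta_j)}E_{\beta_i}E_{\beta_j}=\sum_{\vec{k}} c_{\vec{k}}E^{\vec{k}}
\]
with $c_{\vec{k}}\in\BC[q,q^{-1}]$ and the required support condition. Part (a) then follows by applying $\tau$ (which sends $E^{\vec{k}}\mapsto F^{\cev{k}}$), and part (b) follows from the cited identity together with the observation that re-expressing each $E^{\vec{r}}$ with $r_s=0$ for $s<i$ and $s>j$ in the opposite-ordered basis $\{E^{\cev{k}}\}$ preserves this support condition.

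Your proposal instead re-proves the Levendorskii--Soibelman result from scratch. That is legitimate, but your outline contains a genuine gap in the rank-two reduction. You assert that the braid conjugation $T_{i_1}\cdots T_{i_{i-1}}$ normalizes the subsystem spanned by $\beta_i,\beta_j$ to a rank-two Levi. This is not correct: applying $(T_{i_1}\cdots T_{i_{i-1}})^{-1}$ sends $E_{\beta_i}$ to the simple root vector $E_{\alpha_{i_i}}$, but it sends $E_{\beta_j}$ to $T_{i_i}\cdots T_{i_{j-1}}E_{\alpha_{i_j}}$, which is a root vector for $s_{i_i}\cdots s_{i_{j-1}}\alpha_{i_j}$ --- a root that need not lie in any rank-two Levi containing $\alpha_{i_i}$. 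The actual argument in \cite{dckp2} (and in \cite[\S1.7]{dck}) proceeds by induction on $j-i$, first reducing via this braid twist to the case $i=1$ and then treating the commutator with a simple root vector, which is where the quantum Serre relations enter; no single-step reduction to a rank-two Levi is available.

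There is also a smaller issue with your use of $\tau$: applying $\tau$ to part (b) yields $F_{\beta_i}F_{\beta_j}-q^{(\beta_i,\beta_j)}F_{\beta_j}F_{\beta_i}=\sum \tau(b_{\vec{k}})F^{\vec{k}}$, with the \emph{forward}-ordered monomials $F^{\vec{k}}$ rather than the $F^{\cev{k}}$ required in (a). You therefore still need the reordering step that the paper supplies for (b), so $\tau$ alone does not interchange (a) and (b) as stated.
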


\begin{proof}
According to \cite[Lemma 4.2]{dckp2}, for $1\leq i<j\leq N$, one has
\begin{equation}\label{eq: commutators of E} 
  E_{\b_j}E_{\b_i}-q^{-(\b_i, \b_j)}E_{\b_i}E_{\b_j}=\sum_{\vec{k}\in \BZ^N_{\geq 0}} c_{\vec{k}}E^{\vec{k}},
\end{equation}
where $c_{\vec{k}}\in\BC[q,q^{-1}]$ and $c_{\vec{k}}\neq 0$ only if $\vec{k}=(k_1,\dots, k_N)$ satisfies $k_s=0$ for $s\leq i$ and $s \geq j$. We note that the right-hand side of~\eqref{eq: commutators of E} contains $E^{\vec{k}}$ rather than $E^{\cev{k}}$.

Part (a) now follows by applying the anti-automorphism $\tau$ of~\eqref{eq: anti-involution map} to~\eqref{eq: commutators of E} and evoking that $\tau(E_{\beta_s})=F_{\beta_s}$ for all $s$.

Part (b) follows from~\eqref{eq: commutators of E} and the observation that 
\[ 
  E^{\vec{r}}=\sum_{\vec{k}\in \BZ^N_{\geq 0}} d_{\vec{k}}E^{\cev{k}},
\]
where for $\vec{r}=(0, \dots, 0, r_{i}, \dots, r_{j}, 0, \dots, 0)$ one has $d_{\vec{k}}\neq 0$ only if $\vec{k}=(k_1, \dots, k_N)$ satisfies $k_s=0$ for $s<i$ and $s>j$.
\end{proof}

Evoking the elements $\tE_{\b_k},\tF_{\b_k}$ of~\eqref{eq:twisted-root-vector}, we immediately obtain:

\begin{Cor}\label{cor: commutator of tE, tF} 
(a) For any $1\leq i<j\leq N$, one has:
\[ 
  q^{-(\b_j, \kappa(\b_i)}\tF_{\b_i}\tF_{\b_j}-\tF_{\b_j}\tF_{\b_i}=\sum_{\vec{k}\in \BZ^N_{\geq 0}} a'_{\vec{k}} \tF^{\cev{k}},
\]
where $a'_{\vec{k}} \in \BC[q, q^{-1}]$ and $a'_{\vec{k}}\neq 0$ only if $\vec{k}=(k_1, \dots, k_N)$ satisfgies $k_s=0$ for $s\leq i$ and $s\geq j$.

\noindent
(b) For any $1\leq i<j\leq N$, one has:
\[ 
  \tE_{\b_j}\tE_{\b_i}-q^{-(\b_j, \kappa(\b_i))}\tE_{\b_i}\tE_{\b_j}=\sum_{\vec{k}\in \BZ^N_{\geq 0}} b'_{\vec{k}} \tE^{\cev{k}},
\]
where $b'_{\vec{k}} \in \BC[q, q^{-1}]$ and $b'_{\vec{k}}\neq 0$ only if $\vec{k}=(k_1,\dots, k_N)$ satisfies $k_s=0$ for $s\leq i$ and $s\geq j$.
\end{Cor}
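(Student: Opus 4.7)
The strategy is a direct calculation: reduce the twisted commutators to those of Lemma \ref{lem: commutators of E,F} via the explicit formulas \eqref{eq:twisted-root-vector} for $\tE_{\b_k}$, $\tF_{\b_k}$, and then rewrite the resulting ordered monomials $F^{\cev{k}}$, $E^{\cev{k}}$ in the twisted PBW bases using the rescaling formulas \eqref{eq:tF-vs-F-products}--\eqref{eq:tE-vs-E-products}.

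For part (a), I would substitute $\tF_{\b_k} = v^{b^<_{\beta_k}} K^{-\nu^<_{\b_k}} F_{\b_k}$ and commute the Cartan factors past the $F$'s using $K^{\mu} F_\alpha = v^{-(\alpha, \mu)} F_\alpha K^\mu$, which yields
\begin{equation*}
  q^{-(\b_j, \kappa(\b_i))} \tF_{\b_i} \tF_{\b_j} - \tF_{\b_j} \tF_{\b_i} \,=\, v^{c_{i,j}} K^{-\nu^<_{\b_i} - \nu^<_{\b_j}} \bigl[F_{\b_i} F_{\b_j} - q^{(\b_i, \b_j)} F_{\b_j} F_{\b_i}\bigr]
\end{equation*}
for an appropriate integer $c_{i,j}$. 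The critical bookkeeping step is checking that the relative $v$-exponent between the two terms collapses to exactly $q^{(\b_i,\b_j)}$, matching Lemma \ref{lem: commutators of E,F}(a). Using $\kappa(\b) = \b + 2\nu^<_\b$ (which follows from $\kappa(\alpha_i) = \alpha_i + 2\nu^<_i$) together with the antisymmetry identity $(\b_i, \nu^<_{\b_j}) + (\b_j, \nu^<_{\b_i}) = 0$ (a direct consequence of $\phi_{ij} = -\phi_{ji}$), this is an elementary verification.

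Applying Lemma \ref{lem: commutators of E,F}(a) replaces the bracket by $\sum_{\vec{k}} a_{\vec{k}} F^{\cev{k}}$, supported on $\vec{k}$ with $k_s = 0$ for $s \le i$ and $s \ge j$. Each such $F^{\cev{k}}$ has $Q$-degree $\sum_s k_s \b_s = \b_i + \b_j$, and since $\nu^<$ extends $\BZ$-linearly to $Q$ we have $\sum_s k_s \nu^<_{\b_s} = \nu^<_{\b_i} + \nu^<_{\b_j}$. Thus $K^{-\nu^<_{\b_i} - \nu^<_{\b_j}} F^{\cev{k}} = K^{-\sum_s k_s \nu^<_{\b_s}} F^{\cev{k}}$, and formula \eqref{eq:tF-vs-F-products} identifies this (up to an invertible scalar) with $\tF^{\cev{k}}$. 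Collecting coefficients produces the desired identity, with the support condition on $\vec{k}$ directly inherited from that of $a_{\vec{k}}$. Membership $a'_{\vec{k}} \in \BC[q, q^{-1}]$ then follows from the fact that both sides of the equality lie in $U^{ev\,<}_{\CA}$, in which $\{\tF^{\cev{k}}\}$ is an $\CA$-basis by Lemma \ref{lem:twisted-triangular-PBW}(b2).

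Part (b) is completely parallel: using $\tE_{\b_k} = v^{b^>_{\beta_k}} E_{\b_k} K^{\nu^>_{\b_k}}$, the analogous calculation reduces the required $v$-exponent identity to $(\b_j, \nu^>_{\b_i}) - (\b_i, \nu^>_{\b_j}) = -2(\b_i, \nu^<_{\b_j})$, which follows from $\nu^>_\b = -\b + \nu^<_\b$ combined with the same antisymmetry of $\phi$; one then invokes Lemma \ref{lem: commutators of E,F}(b) and converts the $E^{\cev{k}}$'s to $\tE^{\cev{k}}$'s via \eqref{eq:tE-vs-E-products}. There is no substantive obstacle: the whole argument is careful bookkeeping, reducing to Lemma \ref{lem: commutators of E,F} together with the two elementary identities on $\kappa$, $\nu^<$, $\nu^>$ spelled out above.
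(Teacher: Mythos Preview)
Your proposal is correct and follows exactly the approach the paper intends: the paper gives no proof beyond ``Evoking the elements $\tE_{\b_k},\tF_{\b_k}$ of~\eqref{eq:twisted-root-vector}, we immediately obtain'', and your bookkeeping fills in precisely that reduction to Lemma~\ref{lem: commutators of E,F}. The two key identities you isolate (the antisymmetry $(\b_i,\nu^<_{\b_j})+(\b_j,\nu^<_{\b_i})=0$ and the relation $\kappa(\b)=\b+2\nu^<_\b$) are the right ones, and the verification that the relative exponent collapses to $q^{(\b_i,\b_j)}$ goes through as you describe. Your final remark on membership in $\BC[q,q^{-1}]$ via the $\CA$-basis is the natural way to finesse the half-integer exponents appearing in the intermediate steps.
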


For any $\vec{k}, \vec{r}\in \BZ^N_{\geq 0}$ and $u\in U^{ev\, 0}_q(\g)$, let
\[ 
  M_{\vec{k}, \vec{r}, u}=\tF^{\cev{k}}u \tE^{\cev{r}},
\]
where $\tF^{\cev{k}}=\tF_{\b_N}^{k_N}\dots \tF_{\b_1}^{k_1}$ and $\tE^{\cev{r}}=\tE_{\b_N}^{r_N}\dots \tE_{\b_1}^{r_1}$. For any positive root $\b=\sum_i b_i\a_i$, consider its height $\het(\b):=\sum_i b_i$. We define a total height of $M_{\vec{k}, \vec{r}, u}$ via
\begin{equation}\label{eq: height of monomial} 
  \het(M_{\vec{k}, \vec{r}, u})=\sum_i(k_i+r_i)\het(\b_i),
\end{equation}
and a total degree of $M_{\vec{k}, \vec{r}, u}$ by 
\begin{equation}\label{eq: deg of monomial} 
  \deg(M_{\vec{k}, \vec{r}, u})=(r_N, \dots, r_1, k_N, \dots, k_1, \het(M_{\vec{k}, \vec{r}, u}))\in \BZ^{2N+1}_{\geq 0}.
\end{equation}
This gives rise to a $\BZ^{2N+1}_{\geq 0}$-filtration of $U^{ev}_q(\g)$. 


By Corollary \ref{cor: commutator of tE, tF} and the commutator relations between $\tE_i$ and $\tF_j$ in \eqref{eq:gen-rel-twistedDJ}, we obtain the following analogue of~\cite[Proposition 4.2]{dckp2}:

\begin{Prop}\label{prop: grU} 
The associated graded algebra $\gr U^{ev}_q$ of the $\BZ^{2N+1}_{\geq 0}$-filtered algebra $U^{ev}_q(\g)$ is an algebra over $\BC$  with generators
\[ 
  X_\a=\tE_\a K^{\gamma(\a)}, \qquad Y_\a=\tF_\a, \qquad K^\lambda \quad (\lambda \in 2P)
\]
subject to the following relations:
\begin{align*}
    K^{\lambda_1}K^{\lambda_2}&=K^{\lambda_1+\lambda_2}, \qquad K^0=1,\\
    K^{\lambda}X_\a &=q^{(\lambda,\a)}X_\a K^\lambda, \qquad K^\lambda Y_\a =q^{-(\lambda, \a)}Y_\a K^\lambda,\\
    X_\a Y_\b&=Y_\b X_\a,\\
    X_\a X_\b&=q^{-(\kappa(\a),\b)}X_\b X_\a \qquad \text{if $\a>\b$ in the convex ordering},\\
    Y_\a Y_\b&=q^{-(\a, \kappa(\b))}Y_\b Y_\a \qquad \text{if $\a>\b$ in the convex ordering},    
\end{align*}
in which $\lambda_1, \lambda_2\in 2P$ and $\a, \b\in \Delta_+$.
\end{Prop}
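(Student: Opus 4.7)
The plan is to prove this by exploiting the PBW basis from Lemma~\ref{lem:twisted-triangular-PBW} together with the explicit commutation formulas recorded in Corollary~\ref{cor: commutator of tE, tF} and in \eqref{eq:gen-rel-twistedDJ}, in the spirit of \cite[Proposition~4.2]{dckp2}. By Lemma~\ref{lem:twisted-triangular-PBW}, the monomials $M_{\vec{k},\vec{r},u}=\tF^{\cev{k}}u\tE^{\cev{r}}$ (with $u$ ranging over the basis \eqref{eq:basis-Cartan-twisted-Lus} of $U_\CA^{ev\,0}$ after specializing to $q$) form a basis of $U^{ev}_q(\g)$, so the filtration~\eqref{eq: deg of monomial} is well-defined and the images of these monomials provide a basis of $\gr U^{ev}_q$. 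It then suffices to verify each relation in $\gr U^{ev}_q$ and to show by a basis comparison that the abstract algebra presented by the generators and relations in the statement surjects onto $\gr U^{ev}_q$ via a map that sends an ordered monomial basis to an ordered monomial basis.

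For the $YY$-relation with $\a=\b_j>\b=\b_i$ (so $j>i$), Corollary~\ref{cor: commutator of tE, tF}(a) gives $\tF_{\b_i}\tF_{\b_j}=q^{(\b_j,\kappa(\b_i))}\tF_{\b_j}\tF_{\b_i}+q^{(\b_j,\kappa(\b_i))}\sum a'_{\vec{k}}\tF^{\cev{k}}$, where the error terms have $k_s=0$ for $s\le i$ and $s\ge j$, and by weight considerations share the same total height $\het(\b_i)+\het(\b_j)$. Lexicographic comparison in the $(k_N,\ldots,k_1)$-coordinates (entry $k_j$ equals $1$ on the leading PBW monomial but $0$ on the error) shows the error lies in a strictly smaller filtration piece, yielding $Y_\a Y_\b=q^{-(\a,\kappa(\b))}Y_\b Y_\a$ in $\gr U^{ev}_q$. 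The $XX$-relation is obtained analogously from Corollary~\ref{cor: commutator of tE, tF}(b), upon using the adjointness $(\gamma(\a),\b)=(\a,\kappa(\b))$ from \eqref{adjoint} and the symmetry of $(\,,\,)$ to absorb the Cartan twist $K^{\gamma(\a)}$ built into $X_\a=\tE_\a K^{\gamma(\a)}$. The Cartan relations and the commutation of $K^\lambda$ with $X_\a,Y_\a$ are exact identities in $U^{ev}_q(\g)$ that descend verbatim to $\gr U^{ev}_q$.

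The most delicate point is the mixed commutativity $X_\a Y_\b=Y_\b X_\a$ in $\gr U^{ev}_q$. For simple roots this is a direct consequence of \eqref{eq:gen-rel-twistedDJ}: when $i\ne j$ the scalar $q^{(\a_i,-\zmu_j)}=q^{(\a_i,\kappa(\a_j))}$ arising from $\tE_i\tF_j=q^{(\a_i,\kappa(\a_j))}\tF_j\tE_i$ cancels exactly with the scalar $q^{-(\gamma(\a_i),\a_j)}$ produced by moving $K^{\gamma(\a_i)}$ past $\tF_j$, again by adjointness; when $i=j$, the identity $\tE_i\tF_i=v_i^2\tF_i\tE_i+v_i(1-K_i^{-2})/(1-v_i^{-2})$ together with $(\gamma(\a_i),\a_i)=2\sd_i$ gives exactly $\tF_i\tE_i K^{\gamma(\a_i)}$ up to a Cartan correction of total height zero, which is strictly lower than the height $2\het(\a_i)$ of $X_{\a_i}Y_{\a_i}$ and hence vanishes in $\gr$. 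For arbitrary $\a,\b\in\Delta_+$ one reduces to the simple case by induction on $\het(\a)+\het(\b)$, applying the Lusztig braid operators to produce the commutator of $\tE_\a$ and $\tF_\b$ as a sum of terms of strictly smaller total height, exactly as in \cite[Proposition~4.2]{dckp2}. This height bound is the main obstacle: one must carefully track how the twist factors and the $K$-reorderings interact with the induction, but the convexity of the chosen ordering on $\Delta_+$ ensures that all error terms land in the lower part of the $\BZ_{\ge 0}^{2N+1}$-filtration. Once all relations are checked, a direct rewriting argument shows the abstract algebra defined by the presentation has a spanning set $\{Y^{\cev{k}}K^\lambda X^{\cev{r}}\}$ that is mapped bijectively onto the PBW basis of $\gr U^{ev}_q$, completing the proof.
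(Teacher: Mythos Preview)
Your proof is correct and follows the same route as the paper, which simply invokes Corollary~\ref{cor: commutator of tE, tF} and the relations~\eqref{eq:gen-rel-twistedDJ} and refers to \cite[Proposition~4.2]{dckp2}. The one place to tighten is the mixed relation $X_\a Y_\b=Y_\b X_\a$ for arbitrary roots: rather than invoking braid operators (which do not directly compare $\tE_\a$ with $\tF_\b$, since the braid words defining them differ), use that $\tE_\a\in U^{ev>}_q$ is a weight-$\a$ polynomial in the $\tE_i$ and $\tF_\b\in U^{ev<}_q$ a weight-$(-\b)$ polynomial in the $\tF_j$; repeated application of $\tE_i\tF_j=q^{(\a_i,\kappa(\a_j))}\tF_j\tE_i+(\text{height-zero term if }i=j)$ then gives $\tE_\a\tF_\b=q^{(\a,\kappa(\b))}\tF_\b\tE_\a$ modulo strictly lower total height, and the factor $K^{\gamma(\a)}$ in $X_\a$ cancels this scalar exactly via $(\gamma(\a),\b)=(\a,\kappa(\b))$.
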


\begin{Rem}[cf.~Remark 4.2(a) of~\cite{dckp2}]\label{rem: chain of filtered algebras} 
Considering the degree by total height $d_0$, we obtain a $\BZ_{\geq 0}$-filtration of $U^{ev}_q(\g)$; let $U^{(0)}_q$ be the associated graded algebra. Letting $d_1(M_{\vec{k}, \vec{r}, u})=k_1$, we obtain a $\BZ_{\geq 0}$-filtration of $U^{(0)}_q$, let $U^{(1)}_q$ be the associated graded algebra. We proceed by defining $\BZ_{\geq 0}$-filtrations on the algebras $U^{(i)}_q$ and getting their associated graded algebras $U^{(i+1)}_q$.  It is clear that at the last step we get the algebra $\gr U^{ev}_q \cong U^{(2N)}_q$.
\end{Rem}

The following corollary is proved in the same way as~\cite[Corollary $1.8$]{dck}:

\begin{Cor}\label{cor: grU has nonzero divisor}
The algebra $\gr U^{ev}_q$ has  no zero divisors. Hence all algebras $U^{(i)}_q$ in Remark~\ref{rem: chain of filtered algebras} have no zero divisors, in particular, $U^{ev}_q(\g)$ has no zero divisors.
\end{Cor}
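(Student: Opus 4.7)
The plan is to mirror the approach of \cite[Corollary 1.8]{dck}, passing through the associated graded algebra first and then lifting the domain property back through the tower of filtrations of Remark~\ref{rem: chain of filtered algebras}.

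First I would show directly from the presentation given in Proposition~\ref{prop: grU} that $\gr U^{ev}_q$ is a twisted (quasi-)polynomial algebra: after ordering the generators as $K^{\lambda_1},\ldots,K^{\lambda_s},X_{\beta_1},\ldots,X_{\beta_N},Y_{\beta_1},\ldots,Y_{\beta_N}$ (with the $K^{\lambda_j}$ running over a $\BZ$-basis of $2P$), every pair of generators $q$-commutes. Concretely, $\gr U^{ev}_q$ can be realized as an iterated skew-Laurent/Ore extension over $\BC[K^{\pm\lambda_1},\ldots,K^{\pm\lambda_s}]$, where at each step we adjoin one of $X_{\beta_i}$ or $Y_{\beta_i}$ by an automorphism (and zero derivation) of the previous ring, as dictated by the quasi-commutation relations of Proposition~\ref{prop: grU}. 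Since the base ring $\BC[K^{\pm\lambda_1},\ldots,K^{\pm\lambda_s}]$ is a domain and a skew-Laurent extension of a domain by an automorphism is again a domain, an easy induction gives that $\gr U^{ev}_q$ has no zero divisors.

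Next, I would use the standard filtered/graded argument to propagate this through the chain $U^{(2N)}_q \to U^{(2N-1)}_q \to \cdots \to U^{(0)}_q \to U^{ev}_q$ of Remark~\ref{rem: chain of filtered algebras}. The key lemma is: if a $\BZ_{\geq 0}$-filtered algebra $A=\bigcup_n F_n A$ has $\gr_F A$ without zero divisors, then $A$ has no zero divisors; for if $a,b\in A\setminus\{0\}$ with $a\in F_m A\setminus F_{m-1}A$ and $b\in F_n A\setminus F_{n-1}A$, then the symbols $\sigma(a)\in \gr_m A$ and $\sigma(b)\in \gr_n A$ are nonzero, hence $\sigma(a)\sigma(b)\neq 0$ in $\gr_{m+n}A$, which forces $ab\in F_{m+n}A\setminus F_{m+n-1}A$, in particular $ab\neq 0$. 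Applying this lemma inductively starting from $\gr U^{ev}_q = U^{(2N)}_q$ back up to $U^{(0)}_q$ (one step per index $d_{2N},\ldots,d_1$), and finally from $U^{(0)}_q$ (the associated graded for the total-height filtration) to $U^{ev}_q$ itself, yields that every $U^{(i)}_q$ is a domain, and in particular $U^{ev}_q(\g)$ is a domain.

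The only genuinely non-formal step is the first one, verifying that the presentation in Proposition~\ref{prop: grU} really is complete enough to identify $\gr U^{ev}_q$ with an iterated Ore extension; one needs the quasi-commutation relations to be the \emph{only} relations, so that the PBW basis $\{\tF^{\cev{k}}K^{\mu}\tE^{\cev{r}}\}$ of Lemma~\ref{lem:twisted-triangular-PBW} passes to a $\BC$-basis of $\gr U^{ev}_q$. This is, however, automatic from the definition of the $\BZ^{2N+1}_{\geq 0}$-filtration in \eqref{eq: deg of monomial}: the leading term of any such monomial in $\gr U^{ev}_q$ is the corresponding ordered monomial in $X_\beta, Y_\beta, K^\lambda$, and the presentation in Proposition~\ref{prop: grU} admits exactly this set as a $\BC$-basis. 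Once that identification is in hand, the rest of the argument is essentially formal.
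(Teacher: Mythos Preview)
Your proposal is correct and follows essentially the same approach as the paper, which simply cites \cite[Corollary~1.8]{dck}: identify $\gr U^{ev}_q$ as a twisted polynomial (iterated Ore) extension, hence a domain, and then lift the domain property back through the chain of $\BZ_{\geq 0}$-filtrations via the standard ``symbol'' argument. Your added remark about why the PBW monomials survive as a basis of $\gr U^{ev}_q$ is exactly the point that makes the identification with a skew-polynomial algebra legitimate.
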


Now we restrict to the case when $q=\e$ is a root of unity of order $\ell$. Let $A$ be  an algebra with no zero divisors. Let $Z$ be the center of $A$ and $Q(Z)$ be the quotient field  of $Z$, and let $Q(A)=Q(Z)\otimes_Z A$. The algebra $A$ is called {\em integrally closed} if for any subring $B$ of $Q(A)$  such that $A\subset B \subset z^{-1}A$ for some $z\neq 0 \in Z$, we have $B=A$. The following result follows from \cite[Theorem $6.5$]{dp}:

\begin{Prop}\label{prop: Uev is integrally closed}
All algebras $U^{(i)}_\e$ in Remark \ref{rem: chain of filtered algebras} are integrally closed. In particular,  both  $U^{ev}_\e(\g)$ and $\gr U^{ev}_\e$  are integrally closed.
\end{Prop}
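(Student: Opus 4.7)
The strategy is to descend along the chain of filtered algebras
$$U_\e^{ev} = U_\e^{(-1)} \rightsquigarrow U_\e^{(0)} \rightsquigarrow U_\e^{(1)} \rightsquigarrow \cdots \rightsquigarrow U_\e^{(2N)} = \gr U_\e^{ev}$$
from Remark \ref{rem: chain of filtered algebras}, establishing the integral-closedness at the top first and then lifting it one filtration at a time. The lifting is precisely what \cite[Theorem 6.5]{dp} provides: for a $\BZ_{\geq 0}$-filtered algebra $B$ with $\gr B$ a domain, if $\gr B$ is integrally closed then so is $B$. Note that by Corollary \ref{cor: grU has nonzero divisor} every $U_\e^{(i)}$ is a domain, so the domain hypothesis of the lifting criterion is automatically satisfied at each rung of the chain.

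The base case is to show that the top algebra $\gr U_\e^{ev} = U_\e^{(2N)}$ is integrally closed. For this I would use the explicit presentation obtained in Proposition \ref{prop: grU}: $\gr U_\e^{ev}$ is generated by the commuting families $\{X_\alpha\}_{\alpha \in \Delta_+}$, $\{Y_\alpha\}_{\alpha \in \Delta_+}$ and $\{K^\lambda\}_{\lambda \in 2P}$, with the $X$'s $q$-commuting among themselves (in the convex order), the $Y$'s $q$-commuting among themselves, the $X$'s commuting with the $Y$'s, and the $K^\lambda$ acting diagonally by $q$-scalars. Thus $\gr U_\e^{ev}$ is (after localizing at the group-like family $K^\lambda$) an iterated skew-Laurent / Ore extension of the commutative Laurent polynomial ring $\BC[K^\lambda]_{\lambda \in 2P}$ by the variables $X_\alpha$ and $Y_\alpha$. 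This is exactly the shape of ``quantum affine space over a torus'' to which \cite[Theorem 6.5]{dp} directly applies, giving integral closedness of $\gr U_\e^{ev}$.

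With the base case in hand, I would run a finite descending induction: suppose $U_\e^{(i+1)}$ is integrally closed; since $U_\e^{(i+1)} = \gr U_\e^{(i)}$ is also a domain, \cite[Theorem 6.5]{dp} applies to the $\BZ_{\geq 0}$-filtered algebra $U_\e^{(i)}$ and yields that $U_\e^{(i)}$ is integrally closed. Starting from $i = 2N-1$ and going down to $i = -1$ produces the integral-closedness of every $U_\e^{(i)}$ in the chain, and in particular of $U_\e^{ev}(\g) = U_\e^{(-1)}$ and of $\gr U_\e^{ev} = U_\e^{(2N)}$.

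The main obstacle is the base case, that is, identifying $\gr U_\e^{ev}$ as an algebra to which the DP criterion applies. Once the explicit skew-polynomial form from Proposition \ref{prop: grU} is in hand this is essentially bookkeeping, but one must check that the iterated Ore-extension presentation respects the hypotheses of \cite[Theorem 6.5]{dp} (in particular that each step introduces either an invertible $q$-commuting variable, as for the $K^\lambda$'s, or a polynomial skew variable, as for the $X_\alpha$'s and $Y_\alpha$'s, in such a way that the standard $q$-commutation hypotheses are satisfied). The rest of the proof is then a direct application of the lifting theorem, with no further input needed beyond the fact that all algebras in the chain are domains, which was already recorded in Corollary \ref{cor: grU has nonzero divisor}.
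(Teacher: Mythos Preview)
Your proposal is correct and follows essentially the same approach as the paper: the paper's proof consists entirely of the sentence ``The following result follows from \cite[Theorem~6.5]{dp}'', and your argument simply unpacks how that theorem applies to the chain $U_\e^{(i)}$, using the twisted-polynomial presentation of $\gr U_\e^{ev}$ from Proposition~\ref{prop: grU} for the base case and the lifting-through-filtration part of the same theorem for the inductive step.
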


\subsection{Finite dimensional representations of $U^{ev}_\e(\g)$}\

All of the following facts about orders are in \cite[$\mathsection 1$]{dckp2}.
\begin{defi} (a) Let $R$ be a Noetherian domain over $\BC$ and $K$ be its fraction field.  An $R$-algebra $A$ is called {\it an order} if $A$ is finitely generated and torison free over $R$, and $A_K:=A\otimes_R K$ is a finite dimensional algebra over $K$.

\noindent
(b) An $R$-order $A$ is {\it a maximal order} if there is no finitely generated $R$-subalgebra $B$ of $A_K$ that strictly contains $A$ (if such $B$ exists, then it is an $R$-order).
\end{defi}

We will be interested in the $R$-orders $A$ such that $R=Z$ the center of $A$ and $A_{K}$ is a central simple algebra over $K$. Note that in that case $K=Q(Z)$,  the fraction field of $Z$.  We will call such $A$ {\it an order in a central simple algebra}.  Let $\overline{Q(Z)}$ be the algebraic closure of $Q(Z)$ then $A\otimes_Z \overline{Q(Z)}\cong \text{Mat}_d(\overline{Q(Z)})$ for some $d\in \BN$.
\begin{defi}\label{defi: deg d} Let $A$ be an order in a central simple algebra.  The number $d$ as above is called {\it the degree} of $A$.
\end{defi}

\begin{Rem}(see \cite{ir}) \label{rem: order with normal center}
We have the so called reduced  trace map $\textnormal{tr}: A\otimes_Z Q(Z)\rightarrow Q(Z)$. If $Z$ is normal then $\textnormal{tr}(A) \subset Z$. In particular,  this holds for maximal orders since the center of a maximal order is automatically normal.

\end{Rem}

\begin{Thm}[Theorem 1.1 \cite{dckp2}]\label{thm: finite dim rep of order} Suppose $A$ is a finitely generated algebra over $\BC$ and $A$ is  an order closed under the trace (i.e., $\textnormal{tr}(A) \subset Z$) in a  central simple algebra then 

\noindent
(a) $Z$ is a finitely generated algebra over $\BC$.


\noindent
(b) The points of $\Spec(Z)$ parametrize equivalence classes of $d$-dimensional semisimple representations of $A$.

\noindent
(c) Let us consider the natural map $\chi:  \Irr(A)\rightarrow \Spec(Z)$ from the set of irreducible representations of $A$ to the corresponding central characters. This map is surjective. Each fiber consists of the irreducible representations that occur in the corresponding semisimple representation. In particular, each irreducible representation of $A$ has dimension at most $d$.

\noindent
(d) The set 
\[ \Omega_A=\{ a\in \Spec Z~|\; \text{the corresponding semisimple representation is irreducible}\}\]
is a nonempty Zariski open subset of $\Spec Z$ (dense since $Z$ is a domain).

\noindent
(e) Suppose $Z$ is a finitely generated module over a subalgebra $Z_0$. Consider the map 
\[ \Irr(A) \xrightarrow[]{\chi} \Spec Z \xrightarrow[]{\tau} \Spec Z_0.\]
Then the set
\[ \Omega^0_A=\{ a\in \Spec Z_0~|~ (\tau \circ \chi)^{-1}(a) ~\text{ contains all irreducible representations of dimension $d$}\}\]
 is a nonempty Zariski open (dense) subset of $\Spec(Z_0)$.
\end{Thm}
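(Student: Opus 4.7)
The statement is an instance of the general structure theory of orders in central simple algebras, so my plan is to reduce each part to standard results on polynomial identity (PI) rings and Azumaya algebras. I would establish (a) first and then derive (b)--(e) as formal consequences.

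For (a), the key tool is Shirshov's height theorem. Since $A\otimes_Z \overline{Q(Z)}\cong \operatorname{Mat}_d(\overline{Q(Z)})$, the algebra $A$ satisfies the polynomial identities of $d\times d$ matrices. Picking finitely many $\BC$-algebra generators $a_1,\ldots,a_n$ of $A$, Shirshov's theorem tells us that $A$ is a finite module over the subalgebra generated by the reduced traces of monomials in the $a_i$ of bounded length. The trace-closure hypothesis $\textnormal{tr}(A)\subset Z$ is what ensures this subalgebra lies inside $Z$ (and not just inside $Q(Z)$); call it $Z_1\subset Z$. Then $Z_1$ is finitely generated commutative, $A$ is a finite $Z_1$-module, and consequently $Z\subset A$ is also a finite $Z_1$-module by Eakin--Nagata, whence $Z$ is a finitely generated $\BC$-algebra.

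For (b), (c), (d), I would invoke Posner's theorem: since $A$ is a prime PI ring with centre $Z$, the central localization $A\otimes_Z Q(Z)$ is a central simple $Q(Z)$-algebra of degree $d$ into which $A$ embeds. For any $z\in\Spec(Z)$, the fibre $A_z:=A\otimes_Z (Z/\mathfrak{m}_z)$ has $\BC$-dimension at most $d^2$, and the $d$-dimensional semisimple representation attached to $z$ is built from its semisimplification, with simple constituents appearing with multiplicities equal to their matrix ranks so as to fill total dimension $d$. Surjectivity of $\chi\colon\Irr(A)\to\Spec(Z)$ is immediate from this construction. The locus $\Omega_A$ coincides with the Azumaya locus of $A$ over $Z$, which is non-empty since Azumaya already holds at the generic point by Posner, and open by Artin's theorem on Azumaya loci; on $\Omega_A$ the associated semisimple representation is already irreducible of dimension $d$, which identifies $\Omega_A$ with the locus where $A$ has a $d$-dimensional irreducible fibre.

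For (e), finiteness of $Z$ over $Z_0$ makes $\tau\colon\Spec(Z)\to\Spec(Z_0)$ a finite morphism of affine schemes; since $Z_0\hookrightarrow Z$ is injective ($Z$ is a domain containing $Z_0$), $\tau$ is also surjective and closed. The image $\tau(\Spec(Z)\setminus\Omega_A)$ is then a proper closed subset of $\Spec(Z_0)$, and its complement is exactly $\Omega_A^0$, which is therefore open and non-empty, hence dense in the irreducible variety $\Spec(Z_0)$. The main obstacle will be part (a): one must verify that Shirshov's height theorem combined with the trace-closure hypothesis genuinely produces a \emph{finitely generated} commutative subalgebra of $Z$ over which $A$ is finite; everything else is a formal consequence once the centre is known to be finitely generated and Posner's theorem is in hand.
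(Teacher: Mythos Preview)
The paper does not prove this theorem: it is stated with attribution to \cite[Theorem~1.1]{dckp2} and used as a black box, so there is no proof in the paper to compare your proposal against. Your outline is essentially the standard argument one finds in the PI-ring literature (and is close to how \cite{dckp2} itself proceeds): Shirshov/Procesi--Razmyslov trace identities for (a), Posner plus the Artin--Procesi Azumaya criterion for (b)--(d), and a finiteness/closedness argument for (e).

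One small point in your treatment of (e): you assert that $\tau(\Spec Z\setminus\Omega_A)$ is a \emph{proper} closed subset of $\Spec Z_0$, but closedness alone (from finiteness of $\tau$) does not give properness. You should add the dimension count: since $Z$ is a domain finite over $Z_0$, we have $\dim\Spec Z=\dim\Spec Z_0$, while $\Spec Z\setminus\Omega_A$ is closed of strictly smaller dimension, and a finite map cannot raise dimension. With that addition your sketch for (e) is complete.
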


Since $U^{ev}_\e(\g)$ has no zero divisors and finitely generated over $Z$ by Corollary \ref{U^ev is free over Z_e} and  \ref{cor: grU has nonzero divisor}, we see that $U^{ev}_\e(\g)$ is an order in a central simple algebra. Using the fact  that $U^{ev}_\e(\g)$ is integrally closed, Proposition \ref{prop: Uev is integrally closed}, one can easily show that $U^{ev}_\e(\g)$ is a maximal order hence  $U^{ev}_\e(\g)$ is closed under the trace. Therefore, Theorem \ref{thm: finite dim rep of order} can be applied to the triple ($U^{ev}_\e(\g),Z, Z_0=Z_{Fr}$). Combining this with Proposition \ref{prop: fiber over symplectic leaves}, we have
\begin{Prop}\label{prop: Omega0 is union of symplective leaves} $\Omega^0_{U^{ev}_\e}$ is the union of some symplectic leaves of $\Spec Z_{Fr}$.
\end{Prop}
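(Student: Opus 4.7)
The plan is to deduce this immediately from Proposition~\ref{prop: fiber over symplectic leaves}: the property defining $\Omega^0_{U^{ev}_\e}$ is a property of the fiber algebra of $U^{ev}_\e(\g)$ at a point $a\in \Spec Z_{Fr}$, so if two points lie in the same symplectic leaf (equivalently, in the same $G^d$-conjugacy class in $G^d_0$ under the identification $\Spec Z_{Fr}\cong G^d_0$), the fibers are isomorphic as $\BC$-algebras, hence the defining condition is constant along leaves.

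Concretely, I would first unpack the definition. Given $a\in \Spec Z_{Fr}$ with maximal ideal $\m_a$, the irreducible $U^{ev}_\e(\g)$-modules whose central character lies in $\tau^{-1}(a)$ correspond bijectively (preserving dimensions) to the irreducible modules of the finite-dimensional $\BC$-algebra $A_a:=U^{ev}_\e(\g)/\m_a U^{ev}_\e(\g)$; this uses that $U^{ev}_\e(\g)$ is module-finite over $Z_{Fr}$ by Corollary~\ref{U^ev is free over Z_e}. Hence the condition ``$a\in \Omega^0_{U^{ev}_\e}$'' is an intrinsic condition on the algebra $A_a$, namely that every irreducible $A_a$-module has dimension $d$ (the degree of $U^{ev}_\e(\g)$ as an order, Definition~\ref{defi: deg d}).

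Next I would invoke Proposition~\ref{prop: fiber over symplectic leaves}: if $a,b\in \Spec Z_{Fr}$ lie on the same symplectic leaf, then $A_a\cong A_b$ as $\BC$-algebras. Any such isomorphism induces a bijection between the sets of isomorphism classes of irreducible modules preserving their dimensions, so the condition defining $\Omega^0_{U^{ev}_\e}$ holds for $a$ iff it holds for $b$. Therefore $\Omega^0_{U^{ev}_\e}$ is a union of symplectic leaves of $\Spec Z_{Fr}$, which is the claim.

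There is essentially no hard step here: all the work has been done in establishing the Poisson order structure on $(Z_{Fr},U^{ev}_\e(\g))$ and invoking \cite[Theorem 4.2]{bg} to get Proposition~\ref{prop: fiber over symplectic leaves}. The only thing to double-check is the translation between the abstract formulation from Theorem~\ref{thm: finite dim rep of order}(e) and the concrete statement about fiber algebras $A_a$; this is a standard fact about finite modules over commutative rings and does not require further computation.
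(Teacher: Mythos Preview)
Your proof is correct and follows exactly the paper's approach: the paper simply states the proposition as an immediate consequence of applying Theorem~\ref{thm: finite dim rep of order} to $(U^{ev}_\e(\g), Z, Z_{Fr})$ combined with Proposition~\ref{prop: fiber over symplectic leaves}, and you have spelled out precisely the argument that this combination entails.
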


We will now determine the degree $d$ of $U^{ev}_\e(\g)$. The character of the $Z_{Fr}$-action on an irreducible representations of $U^{ev}_\e$ will be called the {\it Frobenius central character} of this representation. We need the following proposition which will be proved in the next section.
\begin{Prop}\label{prop: deg of grU} The degree of $\gr U^{ev}_\e$ is $\prod_{\a\in \Delta_+} \ell_\a$.
\end{Prop}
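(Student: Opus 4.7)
The plan is to adapt the strategy of De~Concini--Kac--Procesi from~\cite[\S4]{dckp2} to our Cartan-twisted even-part setting. The crucial simplification over working with $U^{ev}_\e(\g)$ itself is that, by Proposition~\ref{prop: grU}, the algebra $\gr U^{ev}_\e$ has the transparent structure of a ``quantum polynomial algebra with Cartan twists'': the $X_\alpha$'s pairwise $\e$-commute, the $Y_\alpha$'s pairwise $\e$-commute, the two families commute with one another with no $\e$-factor at all, and the Cartan elements $K^\lambda$ ($\lambda\in 2P$) conjugate $X_\alpha$ and $Y_\alpha$ by opposite powers of~$\e$. The overall strategy is to identify a large central subalgebra, use the resulting rank bound to control the degree from above, and match it from below by constructing an explicit irreducible representation of the claimed dimension.

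First I would introduce the central subalgebra $Z_0 \subset \gr U^{ev}_\e$ generated by $\{X_\alpha^{\ell_\alpha}, Y_\alpha^{\ell_\alpha}\}_{\alpha\in\Delta_+}$ together with $\{K^{2\lambda}\}_{\lambda\in P^*}$. Centrality is a direct check from the relations of Proposition~\ref{prop: grU} together with the divisibility properties built into the definitions of $\ell_\alpha$ and $P^*$ -- the same divisibility considerations that underlie the analogous centrality statements for $\tE_\alpha^{\ell_\alpha}, \tF_\alpha^{\ell_\alpha}, K^{2\lambda}$ in $U^{ev}_\e(\g)$ used already in Lemma~\ref{basis properties of Ze}. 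Using the PBW basis inherited from $U^{ev}_\e(\g)$, one sees that $\gr U^{ev}_\e$ is a free $Z_0$-module of rank $\bigl(\prod_{\alpha\in\Delta_+}\ell_\alpha\bigr)^{2}\prod_i\ell_i$, and at a generic central character $\chi\colon Z_0\to\BC$ the specialization $\gr U^{ev}_\e\otimes_{Z_0}\BC_\chi$ is a finite-dimensional algebra whose structure can be analyzed in detail because of the simple commutation relations of Proposition~\ref{prop: grU}.

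For the lower bound I would construct an explicit irreducible representation of dimension $\prod_\alpha\ell_\alpha$ on $V = \bigotimes_{\alpha\in\Delta_+}\BC^{\ell_\alpha}$, in the standard quantum-torus fashion: each $X_\alpha$ acts by a cyclic shift on its own slot and diagonally on the others (with eigenvalues prescribed by $X_\alpha X_\beta=\e^{-(\kappa(\alpha),\beta)}X_\beta X_\alpha$), the $K^\lambda$'s act diagonally consistently with $K^\lambda X_\alpha=\e^{(\lambda,\alpha)}X_\alpha K^\lambda$, and the $Y_\alpha$'s act by operators determined by the remaining commutation relations together with the central values $Y_\alpha^{\ell_\alpha}$. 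Irreducibility is a direct verification for generic $\chi$, yielding $d\ge\prod_\alpha\ell_\alpha$ via Theorem~\ref{thm: finite dim rep of order}(c). The matching upper bound $d\le\prod_\alpha\ell_\alpha$ will follow once we show that the full center $\widetilde Z$ of $\gr U^{ev}_\e$ has rank exactly $\prod_i\ell_i$ over $Z_0$, so that $\gr U^{ev}_\e$ has rank $\bigl(\prod_\alpha\ell_\alpha\bigr)^{2}$ over $\widetilde Z$. The main obstacle will be pinning down this last rank: while $Z_0$ is easy to describe, the extra central elements modulo $Z_0$ are ``mixed'' Cartan-root monomials, and the varying divisibility data $\gcd((\alpha,\alpha),\ell)$ across roots of different lengths makes their combinatorial enumeration more delicate than in the homogeneous odd-$\ell$ setting of~\cite{dckp2}.
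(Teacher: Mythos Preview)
Your overall strategy is close in spirit to the paper's, but two points deserve comment.

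First, the paper bypasses separate lower/upper bounds by invoking the general result for twisted polynomial algebras from \cite[Proposition~2]{dckp2} (recorded here as Proposition~\ref{prop: deg of twisted polyalg}): the degree of $\BC_H[x_1,\ldots,x_n]$ is exactly $\sqrt{h}$, where $h=|\Img(\underline{H}\colon \BZ^n\to(\BZ/\ell\BZ)^n)|$. Since Proposition~\ref{prop: grU} presents $\gr U^{ev}_\e$ as such an algebra with an explicit $(2N+r)\times(2N+r)$ skew-symmetric matrix $H$, the entire proposition reduces to the single computation $h=\bigl(\prod_{\alpha}\ell_\alpha\bigr)^2$, which is Proposition~\ref{prop: size images of H and HS}. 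Your upper-bound step---computing the rank of $\widetilde Z$ over $Z_0$---is, via Proposition~\ref{prop: deg of twisted polyalg}(a), equivalent to this same matrix computation, so you are not avoiding it. That reduction is where the real work lies: the paper row/column-reduces $H$ to a diagonal matrix with entries $(\beta_i,\beta_i)$, each repeated twice, using the root-system identities of Corollary~\ref{cor: property of root system}. This argument handles even $\ell$ uniformly, whereas the original one in \cite[Lemma~3.3]{dckp2} covered only odd $\ell$. The ``mixed Cartan-root monomials'' you anticipate are precisely the elements $u_i$ described in Proposition~\ref{prop: center of grU}.

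Second, your explicit lower-bound construction is incomplete as stated. You have each $X_\alpha$ act as a cyclic shift on its own tensor slot; but by Proposition~\ref{prop: grU} every $Y_\beta$ commutes with every $X_\alpha$, so the $Y$'s must lie in the commutant of the $X$'s while simultaneously $q$-commuting nontrivially among themselves and carrying nonzero $K$-weight. Reconciling these constraints is not automatic, and ``determined by the remaining commutation relations'' does not settle it---whether a compatible choice of $Y$'s exists on your $V$ depends on exactly the lattice computation you have deferred to the upper bound. The painless route is again Proposition~\ref{prop: deg of twisted polyalg}: part~(c) shows the Laurent version is Azumaya of degree $\sqrt{h}$, supplying irreducibles of the correct dimension without a by-hand construction.
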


For any point $p \in \Spec \BC[K^{2\lambda}]_{\lambda \in P}\cong T$,  we define the diagonal module $M_{p,0}$  as follows:
\[ M_{p,0}=U^{ev}_\e/U^{ev}_\e(\tE_i, K^{2\lambda}-p(K^{2\lambda}), \tF_\a^{\ell_\a}),\]
where $\a \in \Delta_+$, $1\leq i \leq r$, and $q$ viewed as the algebra morphism $\BC[K^{2\lambda}]_{\lambda \in P}\rightarrow \BC$.

We recall the identification in Proposition \ref{Ze and Bruhat cell}
\[ Z_{Fr}=\BC[\tE_\a^{\ell_\a} K^{\ell_\a \gamma(\a)}]_{\a\in \Delta_+}\otimes \BC[K^{2\lambda}]_{\lambda \in P^*} \otimes \BC[\tF_\a^{\ell_\a} K^{\ell_\a \kappa(\a)}]_{\a\in \Delta_+}\cong \BC[U^d_-]\otimes \BC[T^d]\otimes \BC[U^d_+]\]
Let $p_\ell$ denote  the point in $T^d$ defined by $K^{2\lambda} \mapsto p(K^{2\lambda})$ for all $\lambda \in P^*$. The following lemma is straightforward:
\begin{Lem}Dimension of $M_{p,0}$ is $\prod_{\a\in \Delta_+}\ell_\a$. The Frobenius central character of $M_{p,0}$ corresponds to the point $(1,p_\ell,1) \in U^d_-\x T^d\x U^d_+$. 
\end{Lem}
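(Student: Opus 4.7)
The plan is to realize $M_{p,0}$ as a quotient of $U^{ev,<}_\e$ via the triangular decomposition, determine its dimension through the filtration of Remark~\ref{rem: chain of filtered algebras}, and finally compute the $Z_{Fr}$-action on the cyclic generator $v_0$.

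First I would exploit the triangular decomposition of Lemma~\ref{lem:twisted-triangular-PBW} to identify $M_{p,0}$ with $U^{ev,<}_\e / I$ as a vector space, where $I$ denotes the left ideal of $U^{ev,<}_\e$ generated by $\{\tF_\a^{\ell_\a}\}_{\a \in \Delta_+}$. Indeed, every root vector $\tE_{\b_k} \in U^{ev,>}_\e$ has positive $Q$-weight, so any expression for it as a polynomial in the simple generators $\tE_i$ must end on the right with some $\tE_i$; hence $\tE_{\b_k} \in \sum_i U^{ev,>}_\e \cdot \tE_i \subseteq J$, and therefore $\tE^{\cev{r}} v_0 = 0$ for any $\vec{r} \neq 0$. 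Combined with the defining Cartan relations $K^{2\lambda} v_0 = p(K^{2\lambda}) v_0$, this shows that every PBW monomial $\tF^{\cev{k}} K^\mu \tE^{\cev{r}}$ reduces modulo $J$ to a scalar multiple of the image of $\tF^{\cev{k}}$ when $\vec{r} = 0$ and to zero otherwise, yielding the identification with $U^{ev,<}_\e / I$.

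Next I would compute $\dim_\BC U^{ev,<}_\e / I$ using the filtration of Remark~\ref{rem: chain of filtered algebras}. By Corollary~\ref{cor: commutator of tE, tF}(a) and the analogue of Proposition~\ref{prop: grU} for the negative part, the associated graded $\gr U^{ev,<}_\e$ is a skew polynomial ring in generators $Y_{\b_i}$ satisfying $Y_\a Y_\b = q^{-(\a,\kappa(\b))} Y_\b Y_\a$ for $\a > \b$. In such a skew polynomial ring the left ideal generated by $\{Y_\a^{\ell_\a}\}$ coincides with the two-sided ideal, and the quotient has a $\BC$-basis $\{Y^{\cev{k}} : 0 \leq k_j < \ell_{\b_j}\}$ of cardinality $\prod_\a \ell_\a$. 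Inheriting the filtration on $U^{ev,<}_\e / I$ and noting that the defining generators of $I$ have principal symbols $Y_\a^{\ell_\a}$, one obtains a surjection of $\gr(U^{ev,<}_\e / I)$ onto the truncated skew polynomial ring; combined with the fact that the classes of $\{\tF^{\cev{k}} : 0 \leq k_j < \ell_{\b_j}\}$ clearly span $M_{p,0}$ by iterated use of Corollary~\ref{cor: commutator of tE, tF}(a) to reduce any $k_j \geq \ell_{\b_j}$ modulo $I$, this forces $\dim M_{p,0} = \prod_\a \ell_\a$ on the nose.

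Finally, since $Z_{Fr}$ is central in $U^{ev}_\e$, its action on the cyclic generator $v_0$ determines a character on $M_{p,0}$. Using the triangular decomposition of $Z_{Fr}$ from Lemma~\ref{basis properties of Ze}(c) and evaluating on the generators identified in Proposition~\ref{Ze and Bruhat cell}: the elements $\tE_\a^{\ell_\a} K^{\ell_\a \gamma(\a)}$ act by $0$ (because $\tE_\a v_0 = 0$ and $K^{\ell_\a \gamma(\a)}$ commutes with itself past $\tE_\a^{\ell_\a}$ via a scalar since $\ell_\a \gamma(\a) \in 2P$), the elements $\tF_\a^{\ell_\a} K^{\ell_\a \kappa(\a)}$ likewise act by $0$ (since $\tF_\a^{\ell_\a} v_0 = 0$), and $K^{2\lambda}$ for $\lambda \in P^*$ acts by $p(K^{2\lambda})$. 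Under the identification $\tZ_{Fr}^{<} \cong \BC[U^d_+]$, $Z_{Fr}^0 \cong \BC[T^d]$, $\tZ_{Fr}^{>} \cong \BC[U^d_-]$, the vanishing on the unipotent factors corresponds to evaluation at the identity while the Cartan data corresponds to $p_\ell \in T^d$, giving exactly the claimed point $(1, p_\ell, 1)$.

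The main obstacle will be the dimension count, specifically ensuring that the filtration on $M_{p,0}$ inherited from Remark~\ref{rem: chain of filtered algebras} has associated graded exactly equal to the truncated skew polynomial ring, rather than a proper quotient thereof; this requires combining the spanning and upper-bound arguments carefully so that the two match.
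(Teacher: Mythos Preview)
Your reduction to $U^{ev,<}_\e/I$ and your computation of the Frobenius central character are both fine, and this is exactly the approach the paper has in mind (it simply declares the lemma ``straightforward'' without writing a proof).

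The dimension count, however, has a genuine gap. Your surjection goes the wrong way: for any filtered module with a submodule one has $\gr(U^{ev,<}_\e/I)=\gr U^{ev,<}_\e/\gr I$, and since $\gr I$ \emph{contains} the ideal generated by the principal symbols $Y_\a^{\ell_\a}$, the truncated skew polynomial ring surjects onto $\gr(U^{ev,<}_\e/I)$, not conversely. So your filtration argument and your spanning argument both yield the upper bound $\dim M_{p,0}\le\prod_\a\ell_\a$, and you have supplied no lower bound. (Your reference to Corollary~\ref{cor: commutator of tE, tF}(a) for reducing exponents is also off target: that corollary reorders root vectors but does not lower powers.)

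The missing ingredient is much simpler than the filtration: by Lemma~\ref{basis properties of Ze}, each $\tF_\a^{\ell_\a}$ lies in $Z^<_{Fr}\subset Z_{Fr}$ and is therefore \emph{central} in $U^{ev}_\e$, hence in $U^{ev,<}_\e$. Writing $k_j=\ell_{\b_j}q_j+r_j$ with $0\le r_j<\ell_{\b_j}$ and pulling the central factors $(\tF_{\b_j}^{\ell_{\b_j}})^{q_j}$ to the front, the PBW basis $\{\tF^{\cev{k}}\}_{\vec{k}\in\BZ_{\ge0}^N}$ exhibits $U^{ev,<}_\e$ as a free $\BC[\tF_\a^{\ell_\a}\,:\,\a\in\Delta_+]$-module on $\{\tF^{\cev{r}}:0\le r_j<\ell_{\b_j}\}$. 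Quotienting by the augmentation ideal of this polynomial ring then gives $\dim M_{p,0}=\prod_\a\ell_\a$ directly, with no filtration needed.
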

\begin{Cor} \label{cor: module at xi} For any point $\upsilon \in 1\x T^d \x 1 \subset \Spec Z_{Fr}$, there are modules $M_{p,0}$ with the Frobenius central character corresponding to the point $\upsilon$.
\end{Cor}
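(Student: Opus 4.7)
The plan is to reduce the statement immediately to the preceding (unproved) lemma. That lemma constructs, for each point $p$ of $T = \Spec\BC[K^{2\lambda}]_{\lambda\in P}$, a module $M_{p,0}$ of dimension $\prod_{\alpha\in\Delta_+}\ell_\alpha$ whose Frobenius central character corresponds to the point $(1,p_\ell,1)\in U^d_-\times T^d\times U^d_+$, where $p_\ell\in T^d$ is obtained from $p$ by restricting the character $2P\to\BC^\times$ along the inclusion $2P^*\hookrightarrow 2P$. Thus, in order to realize an arbitrary $\upsilon\in 1\times T^d\times 1$ as a Frobenius central character of some $M_{p,0}$, it is enough to show that the assignment $p\mapsto p_\ell$ defines a surjection $T\twoheadrightarrow T^d$.

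This surjectivity is a purely lattice-theoretic fact. Recall that the coordinate ring of $T$ is the group algebra $\BC[2P]$ and the coordinate ring of $T^d$ (as embedded in $\Spec Z_{Fr}$ via Proposition~\ref{Ze and Bruhat cell}) is $\BC[2P^*]$. The map $p\mapsto p_\ell$ is dual to the inclusion of character lattices $2P^*\hookrightarrow 2P$; since both lattices have rank $r$ and the inclusion has finite index (equal to $\prod_{i=1}^r \ell_i$, by Lemma~\ref{Q-to-2P} together with $2P^* = \kappa(Q^*) \subset \kappa(Q) = 2P$ and $[Q:Q^*] = \prod_i\ell_i$), the corresponding morphism of tori $T\to T^d$ is a finite surjective isogeny. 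Consequently every $\upsilon\in T^d$ admits at least one preimage $p\in T$.

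Given such a $p$, the module $M_{p,0}$ supplied by the preceding lemma has Frobenius central character $(1,p_\ell,1) = (1,\upsilon,1) = \upsilon$, which establishes the corollary. There is no substantive obstacle here: once the lemma is in hand the only content is the elementary surjectivity $T\twoheadrightarrow T^d$, and the real technical work lies earlier, in verifying the dimension count and the Frobenius central character of $M_{p,0}$ — in particular, in checking from the defining relations of $U^{ev}_\e(\g)$ that the images of all the generators $\tE_\alpha^{\ell_\alpha}K^{\ell_\alpha\gamma(\alpha)}$ and $\tF_\alpha^{\ell_\alpha}K^{\ell_\alpha\kappa(\alpha)}$ of $Z_{Fr}$ act as prescribed by the point $(1,p_\ell,1)$ on the cyclic vector of $M_{p,0}$.
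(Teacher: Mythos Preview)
Your proposal is correct and takes essentially the same approach as the paper: the corollary is stated there without proof, as it follows immediately from the preceding lemma once one observes that the restriction map $T\to T^d$ (dual to the finite-index inclusion $2P^*\hookrightarrow 2P$) is surjective. You have simply spelled out this surjectivity argument explicitly, which the paper leaves implicit.
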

Now we can compute the degree of $U^{ev}_\e(\g)$.
\begin{Prop}\label{prop: deg of U} (a) $d=\prod_{\a\in \Delta_+} \ell_\a$.

\noindent
(b) $\dim_{Q(Z_{Fr})}Q(Z)=\prod_{i=1}^r \ell_i$.    
\end{Prop}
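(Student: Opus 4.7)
The plan is to prove part~(a) first and then deduce~(b) from~(a) via the rank computation of Corollary~\ref{U^ev is free over Z_e}. For the upper bound $d \leq \prod_{\a \in \Delta_+} \ell_\a$ of~(a), the idea is to exploit Proposition~\ref{prop: deg of grU} together with the chain of $\BZ_{\geq 0}$-filtrations in Remark~\ref{rem: chain of filtered algebras}, which iteratively degenerates $U^{ev}_\e(\g)$ to $\gr U^{ev}_\e$. Each such filtration produces a flat Rees deformation of the given algebra to its associated graded, and the PI-degree (which coincides with the degree in the sense of Definition~\ref{defi: deg d} for orders in central simple algebras) is upper semicontinuous on such flat families, so degeneration can only increase it. Iterating through the chain $U^{ev}_\e \to U^{(0)}_\e \to \cdots \to U^{(2N)}_\e = \gr U^{ev}_\e$ then yields $\deg U^{ev}_\e(\g) \leq \deg \gr U^{ev}_\e = \prod_\a \ell_\a$.

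For the lower bound $d \geq \prod_\a \ell_\a$, one exhibits an irreducible module of that dimension. Fix $p \in T$ such that $p_\ell \in T^d$ is regular semisimple and the conjugacy class of $(1,p_\ell,1)$ meets $G^d_0 \cap \Omega^0$; such $p$ form a dense open subset of $T$ by Proposition~\ref{prop: Omega0 is union of symplective leaves} combined with the density of regular semisimple classes in $G^d$. Corollary~\ref{cor: module at xi} then supplies a module $M_{p,0}$ of dimension $\prod_\a \ell_\a$ whose Frobenius central character corresponds to $(1,p_\ell,1)$. The central claim is that $M_{p,0}$ is simple for generic such $p$; granting this, $d \geq \dim M_{p,0} = \prod_\a \ell_\a$, completing~(a). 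Part~(b) then follows by a dimension count: Corollary~\ref{U^ev is free over Z_e} makes $U^{ev}_\e$ free over $Z_{Fr}$ of rank $\prod_\a \ell_\a^2 \prod_i \ell_i$, so localizing at $Z_{Fr} \setminus \{0\}$ and invoking the absence of zero divisors (Corollary~\ref{cor: grU has nonzero divisor}), the localization $U^{ev}_\e \otimes_{Z_{Fr}} Q(Z_{Fr})$ is a finite-dimensional central simple $Q(Z_{Fr})$-algebra whose center is forced to be $Q(Z)$; thus $\prod_\a \ell_\a^2 \prod_i \ell_i = d^2 \cdot [Q(Z) : Q(Z_{Fr})]$, and~(a) gives $[Q(Z) : Q(Z_{Fr})] = \prod_i \ell_i$.

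The main obstacle is the simplicity of $M_{p,0}$ at generic $p$. The approach is a weight-type analysis: the cyclic generator $\bar 1 \in M_{p,0}$ is annihilated by every $\tE_i$, and the PBW basis $\{\tF^{\cev{r}}\bar 1 : 0 \leq r_i < \ell_{\b_i}\}$ simultaneously diagonalizes the action of $\BC[K^{2\lambda}]_{\lambda \in 2P}$, with eigenvalues becoming pairwise distinct once $p$ avoids a proper subvariety of $T$. A nonzero submodule must therefore contain some basis vector $\tF^{\cev{r}}\bar 1$, and by an inductive application of the commutation relations underlying Corollary~\ref{cor: commutator of tE, tF} together with the defining relations between $\tE_i$ and $\tF_j$ in~\eqref{eq:gen-rel-twistedDJ}, one shows that a suitably chosen sequence of $\tE_\a$'s returns this vector to a nonzero scalar multiple of $\bar 1$, forcing the submodule to be all of $M_{p,0}$. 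This is the combinatorial heart of the argument and will require adapting \cite[Theorem~5.1]{dckp2} to the present Cartan-twisted setup at possibly even order roots of unity, where extra care with signs and with half-lattice elements in $P/2$ is needed.
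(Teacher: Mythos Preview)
You have the two inequalities exactly reversed, and this is a genuine gap rather than a cosmetic issue.

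First, the degeneration argument. You claim the PI-degree is upper semicontinuous along the Rees family, so that $\deg U^{ev}_\e(\g) \leq \deg \gr U^{ev}_\e$. The opposite is true: degeneration does not \emph{increase} the degree, i.e.\ $\deg U^{ev}_\e(\g) \geq \deg \gr U^{ev}_\e$; see \cite[Remark~1.3]{dckp2}. The reason is that the Rees algebra $\mathscr{R}$ is itself an order whose degree is computed at the generic fiber (which is $U^{ev}_\e$), and any irreducible $\gr U^{ev}_\e$-module is an irreducible $\mathscr{R}$-module annihilated by $t$, hence of dimension at most $\deg \mathscr{R} = \deg U^{ev}_\e$. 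So Proposition~\ref{prop: deg of grU} gives you the \emph{lower} bound $d \geq \prod_{\a\in\Delta_+}\ell_\a$, not the upper bound.

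Second, the role of $M_{p,0}$. You propose to prove $M_{p,0}$ is simple for generic $p$ and deduce $d \geq \prod_\a \ell_\a$. After correcting the first point this is no longer needed, and in fact the paper uses $M_{p,0}$ in the opposite direction, for the \emph{upper} bound, without ever establishing simplicity. The argument is: the set of regular semisimple conjugacy classes is open dense in $G^d_0$, so it meets the open dense set $\Omega^0_{U^{ev}_\e}$; since $\Omega^0_{U^{ev}_\e}$ is a union of symplectic leaves (Proposition~\ref{prop: Omega0 is union of symplective leaves}) and each regular semisimple leaf meets $T^d$, there exists $\upsilon \in T^d \cap \Omega^0_{U^{ev}_\e}$. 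Every irreducible over $\upsilon$ has dimension exactly $d$ by definition of $\Omega^0$; but $M_{p,0}$ (for suitable $p$) has Frobenius central character $\upsilon$ and dimension $\prod_\a \ell_\a$, so any composition factor of it witnesses $d \leq \prod_\a \ell_\a$. No weight analysis or commutation-relation induction is required.

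Thus your ``main obstacle'' is an artifact of having swapped the two halves of the argument. Your treatment of part~(b) via the rank from Corollary~\ref{U^ev is free over Z_e} is correct and matches the paper.
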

\begin{proof}(a) Since degeneration does not increase the degree \cite[Remark 1.3]{dckp2}, from Proposition \ref{prop: deg of grU}, we have
\[ d=\deg(U^{ev}_\e(g))\geq \prod_{\a\in \Delta_+} \ell_\a.\]
Let $G^{d, diag}$ be the union of intersections of all conjugacy classes of $T^d$ with $\Spec Z_{Fr}$ then $G^{d, diag} \cap \Omega^0_{U^{ev}_\e}$ is non-empty since both subsets are open and Zariski dense. Since $\Omega^0_{U^{ev}_\e}$ is the union of symplectic leaves, we must have 
\[ T^d \cap \Omega^0_{U^{ev}_\e} \neq \emptyset.\]
Let $\upsilon \in T^d \cap \Omega^0_{U^{ev}_\e}$. Since $\upsilon \in \Omega^0_{U^{ev}_\e}$, all irreducible representations with Frobenius central character $\upsilon$ must have dimension $d$. On the other hand, by Corollary \ref{cor: module at xi}, there are some irreducible representations with Frobenius central character $\upsilon$ and dimension not exceeding $\prod_{\a\in \Delta_+} \ell_\a$. Hence
\[ d\leq \prod_{\a\in \Delta_+} \ell_\a.\]
Therefore, $d=\prod_{\a \in \Delta_+}\ell_\a$.

\noindent
(b) Since $Z$ is a finitely generated module over $Z_{Fr}$ and both are integral domains, it follows that $Q(Z)=Z\otimes_{Z_{Fr}} Q(Z_{Fr})$ so that $Q(U^{ev}_\e(\g))=U^{ev}_\e(\g) \otimes_{Z_{Fr}} Q(Z_{Fr})$. Then part $(b)$ holds thanks to the following equalities
\begin{align*}
    \dim_{Q(Z)} Q(U^{ev}_\e(\g))&= d^2 =\Big(\prod_{\a\in \Delta_+} \ell_\a\Big)^2\\
    \Dim_{Q(Z_{Fr})} Q(U^{ev}_\e(\g))&= \Big(\prod_{\a\in \Delta_+} \ell_\a\Big)^2 \cdot \prod_{1\leq i \leq r} \ell_i\\
    \dim_{Q(Z_{Fr})}Q(U^{ev}_\e(\g))&=\Dim_{Q(Z_{Fr})} Q(Z) \cdot \Dim_{Q(Z)}Q(U^{ev}_\e(\g)).
\end{align*}
The first equality follows from Definition \ref{defi: deg d} applied to $U^{ev}_\e(\g)$. The second equality follows from Corollary \ref{U^ev is free over Z_e}.
\end{proof}

\subsection{The degree of $\gr U^{ev}_\e$}\

The algebra $\gr U^{ev}_\e$ is a {\em twisted (Laurent) polynomial algebra} in the sense of  \cite[$\mathsection 2$]{dckp2}. Let us recall the definition of the later. Let $q\in \BC^\x$. Given an $n \x n$ skew-symmetric matrix $H=(h_{ij})$ over $\BZ$, the twisted polynomial algebra $\BC_H[x_1,\dots,x_n]$ is the algebra with generators $x_1,\dots,x_n$ and defining relations:
\[ x_ix_j=q^{h_{ij}}x_j x_i  \qquad (i,j=1,\dots, n).\]
The twisted Laurent polynomial algebra $\BC_H[x_1^{\pm}, \dots ,x_n^{\pm}]$ is defined similarly. 
\begin{Lem}[Lemma 2.2 \cite{dckp2}]\label{lem: x acts as 0 or invertible}  In any irreducible $\BC_H[x_1, \dots, x_n]$-module each element $x_i$ acts either by $0$ or by an invertible operator.
\end{Lem}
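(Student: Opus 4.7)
The plan is to exploit the defining relation $x_i x_j = q^{h_{ij}} x_j x_i$ to show that, for each $i$, both $\ker(x_i)$ and $\operatorname{Im}(x_i)$ are $\BC_H[x_1,\dots,x_n]$-submodules of any irreducible module $V$, and then to read off the conclusion from irreducibility. This is a fairly short, self-contained argument; I do not anticipate any serious obstacle.

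First I would check that $\ker(x_i)$ is a submodule. Given $v \in V$ with $x_i v = 0$ and any generator $x_j$, the commutation relation gives $x_i(x_j v) = q^{h_{ij}} x_j x_i v = 0$, so $x_j v \in \ker(x_i)$. Thus $\ker(x_i)$ is stable under every generator, hence is a submodule. An essentially identical calculation shows $\operatorname{Im}(x_i)$ is a submodule: if $w = x_i v$, then $x_j w = q^{-h_{ij}} x_i (x_j v) \in \operatorname{Im}(x_i)$.

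Next I would split into cases using the irreducibility of $V$, which forces each of $\ker(x_i), \operatorname{Im}(x_i)$ to equal either $0$ or $V$. If $\ker(x_i) = V$, then $x_i$ acts as zero and we are done. Otherwise $\ker(x_i) = 0$, i.e.\ $x_i$ is injective on $V$. In this case $\operatorname{Im}(x_i) \neq 0$ (since $V \neq 0$), and so by irreducibility $\operatorname{Im}(x_i) = V$, meaning $x_i$ is also surjective. An injective and surjective linear endomorphism of a vector space is invertible (as a linear map), completing the argument.

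The only point to be careful about is that $V$ may be infinite-dimensional, so injectivity alone does not give invertibility; this is precisely why one must check that $\operatorname{Im}(x_i)$ is a submodule and use irreducibility a second time. No further input from the structure of $H$ or of $q$ is needed beyond the fact that the relations $x_i x_j = q^{h_{ij}} x_j x_i$ are homogeneous of degree one in each variable, which is what makes $\ker(x_i)$ and $\operatorname{Im}(x_i)$ stable under all the~$x_j$.
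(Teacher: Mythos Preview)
Your argument is correct and is the standard proof of this fact. The paper does not supply its own proof of this lemma; it simply cites \cite[Lemma 2.2]{dckp2}, so there is nothing further to compare.
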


Let $q=\e$ be a root of unity of order $\ell$. The matrix $H$ defines a homomorphism 
\[\uH :\BZ^n \rightarrow (\BZ/\ell \BZ)^n.\]
Let $h$ be the cardinality of the image of the homomorphism $\uH$. Let $K$ be the kernel of $\uH$. We will need the following result in \cite[Proposition 2]{dckp2} (for more discussions of twisted polynomial rings and their modules, see \cite[$\mathsection 2$]{dckp2}).
\begin{Prop} \label{prop: deg of twisted polyalg}(a) The elements $\textbf{\textnormal{x}}^a=x_1^{a_1}\dots x_n^{a_n}$ with $a\in K \cap \BZ^n_{\geq 0}$ (resp. $a \in K$) form a basis of the center of $\BC_H[x_1, \dots, x_n]$ (resp. $\BC_H[x_1^{\pm}, \dots, x_n^{\pm}]$).

\noindent
(b) $\deg \BC_H[x_1,\dots, x_n]=\deg \BC_H[x_1^{\pm}, \dots, x_n^{\pm}]=\sqrt{h}$.

\noindent
(c) The algebra $\BC_H[x^{\pm}_1, \dots , x^{\pm}_n]$ is Azumaya over its center.
\end{Prop}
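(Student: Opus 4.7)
The plan is to reduce everything to the structure of quantum tori via the symplectic normal form of the skew-symmetric matrix $H$, and then exploit a well-known classification for rank-two quantum tori at roots of unity.

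First I would prove part (a) by a direct computation. Using that the ordered monomials $\textbf{x}^a=x_1^{a_1}\cdots x_n^{a_n}$ form a $\BC$-basis of $\BC_H[x_1,\ldots,x_n]$ (and of $\BC_H[x_1^{\pm},\ldots,x_n^{\pm}]$ with $a\in \BZ^n$), the defining relations give $x_i \cdot \textbf{x}^a=q^{(Ha)_i}\textbf{x}^a\cdot x_i$, where $(Ha)_i=\sum_j h_{ij}a_j$. Hence $\textbf{x}^a$ is central iff $(Ha)_i\equiv 0 \pmod \ell$ for all $i$, i.e.\ iff $a\in K=\ker\uH$. For an arbitrary element $z=\sum c_a\textbf{x}^a$, commuting with each $x_i$ separates the sum into $q$-eigenspaces, so $c_a\ne 0 \Rightarrow a\in K$. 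This yields the claimed bases of the center in both the polynomial and the Laurent case.

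For parts (b) and (c), I would first handle the Laurent case by putting $H$ into symplectic normal form. Since $H$ is a skew-symmetric integer matrix, there is a $\BZ$-basis change of $\BZ^n$ (which induces a $\BC$-algebra automorphism of $\BC_H[x_1^{\pm},\ldots,x_n^{\pm}]$ renaming the generators) bringing $H$ to the block form with $2\times 2$ blocks $\bigl(\begin{smallmatrix}0&d_i\\-d_i&0\end{smallmatrix}\bigr)$ for $i=1,\ldots,g$ together with a zero block of size $s=n-2g$, where $d_1|d_2|\cdots|d_g$ are positive integers. In the new generators $u_i,v_i,w_k$ the algebra decomposes as a tensor product
\begin{equation*}
  \BC_H[x_1^{\pm},\ldots,x_n^{\pm}]\ \simeq\ \bigotimes_{i=1}^{g} A_{d_i}\ \otimes\ \BC[w_1^{\pm},\ldots,w_s^{\pm}] \,,
\end{equation*}
where $A_d=\BC\langle u^{\pm},v^{\pm}\rangle/(uv-q^d vu)$ is a quantum torus. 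For each $i$, $q^{d_i}$ is a primitive root of unity of order $\ell_i':=\ell/\operatorname{GCD}(d_i,\ell)$, and a direct computation (already implicit in part (a) for the rank-two case) shows that $A_{d_i}$ is a free module of rank $(\ell_i')^2$ over its center $\BC[u^{\pm \ell_i'},v^{\pm \ell_i'}]$ and that the natural map $A_{d_i}\otimes_{Z(A_{d_i})}A_{d_i}^{\mathrm{op}}\rightarrow \operatorname{End}_{Z(A_{d_i})}(A_{d_i})$ is an isomorphism — i.e., $A_{d_i}$ is Azumaya of degree $\ell_i'$. Since being Azumaya is preserved under tensor products over the base field and under tensoring with a (central) Laurent polynomial ring, the Laurent algebra is Azumaya and its degree is $\prod_i \ell_i'=\sqrt{h}$, where $h=\prod_i (\ell_i')^2=|\operatorname{coker}\uH|$ follows from the elementary divisor description of $\uH$.

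To extract part (b) for the polynomial algebra $\BC_H[x_1,\ldots,x_n]$, I would observe that it embeds into its Laurent counterpart, that the two share the same simple quotient ring $Q(Z)\otimes_Z A$ (one localizes at the nonzero elements of the center, and every $x_i^{\ell}$ is central by (a)), and therefore the two algebras have equal degree. The main obstacle in this plan is the concrete Azumaya verification for the rank-two building block $A_d$: one has to exhibit explicitly an orthogonal idempotent system or a separability idempotent for $A_d$ over its center, and check that the natural map to $\operatorname{End}_{Z(A_{d})}(A_{d})$ is bijective after localization (or use that $A_d$ is a crossed product of $Z(A_d)$ with the finite group $\BZ/\ell_i'\times \BZ/\ell_i'$ acting trivially, with a nondegenerate cocycle coming from $q^{d_i}$, whose corresponding twisted group algebra is well known to be a matrix algebra of degree $\ell_i'$). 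Once this rank-two case is settled, the rest of the argument is bookkeeping through the symplectic decomposition.
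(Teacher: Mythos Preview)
Your argument is correct and is essentially the standard one. The paper does not supply its own proof of this proposition but quotes it from \cite[Proposition~2]{dckp2}; the argument there proceeds along exactly the lines you outline, bringing $H$ to symplectic normal form over $\BZ$ and thereby decomposing the Laurent algebra as a tensor product of rank-two quantum tori $A_{d_i}$ at roots of unity together with a central Laurent polynomial factor. Each $A_{d_i}$ is Azumaya of degree $\ell_i'=\ell/\gcd(d_i,\ell)$ over its center, most simply verified by writing down an explicit irreducible $\ell_i'$-dimensional representation (diagonal action of $u$, cyclic shift action of $v$) for any given central character and observing that $A_{d_i}$ is free of rank $(\ell_i')^2$ over its center. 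Your reduction of the polynomial case to the Laurent case via $x_i^{\ell}\in Z$ (so that both algebras have the same simple quotient ring) is exactly right.
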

Let $S \subset \{ 1, \dots, n\}$ and $H_S$ be the matrix obtained from $H$ by removing all rows and columns labeled by indices not  in $S$. 
\begin{Lem}\label{lem: intersection with Omega0} Let $Z$ be the center of $\BC_H[x_1, \dots, x_n]$. Assume $Z$ is a finitely generated module  over a subalgebra $Z_0$ and $\{x_i^{m_i}|1\leq i \leq n\} \subset Z_0$ for some $m_i\neq 0$. Let $\mathscr{O}_0$ be the subset in $\Spec Z_0$ of solutions to the equations $x_i^{m_i}=0, i \not \in S$. Assume $ \deg \BC_{H_S}[x_i]_{ i\in  S} =\deg \BC_H[x_1,\dots, x_n]$ then 
\begin{equation}\label{eq: intersection with Omega0} \Omega^0_{\BC_H[x_1, \dots, x_n]} \cap \mathscr{O}_0 \; \text{is a nonempty  Zariski open subset in $\mathscr{O}_0$.}
\end{equation}
\end{Lem}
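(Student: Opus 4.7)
The plan is to reduce to Theorem~\ref{thm: finite dim rep of order} applied to the quotient algebra $A_S := A/(x_i : i \notin S) \cong \BC_{H_S}[x_i]_{i \in S}$, where $A := \BC_H[x_1,\ldots,x_n]$, via the natural projection $\pi \colon A \twoheadrightarrow A_S$. The key observation is that for any $\chi_0 \in \mathscr{O}_0$, any irreducible $A$-module $M$ whose $Z_0$-central character equals $\chi_0$ must have $x_i$ acting as $0$ for every $i \notin S$: indeed, $\chi_0(x_i^{m_i}) = 0$ forces $x_i$ to act nilpotently on $M$, and Lemma~\ref{lem: x acts as 0 or invertible} then forces $x_i = 0$. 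Hence such $M$ factors through $\pi$, and conversely every irreducible $A_S$-module pulled back to $A$ has $Z_0$-central character lying in $\mathscr{O}_0$. This yields a dimension-preserving bijection between $\Irr(A_S)$ and the set of $M \in \Irr(A)$ with $\chi_M|_{Z_0} \in \mathscr{O}_0$.

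Next I would verify that $A_S$ meets the hypotheses of Theorem~\ref{thm: finite dim rep of order}: as a twisted polynomial algebra it is a finitely generated $\BC$-algebra without zero divisors, and by Proposition~\ref{prop: deg of twisted polyalg} it is a finite module over its polynomial center $Z_S$; by \cite[Theorem~6.5]{dp} it is integrally closed, hence a maximal order, hence trace-closed by Remark~\ref{rem: order with normal center}. To invoke part~(e) of the theorem I would take $Z_0' := \pi(Z_0) \subset Z_S$. The inclusions $\pi(Z_0) \subset \pi(Z) \subset Z_S$ are both finite extensions: the first because $Z$ is finite over $Z_0$ by hypothesis, and the second because for any generator $x^a$ of $Z_S$ with $a \in K_S$ the element $\ell a$ (extended by zero to $\BZ^n$) lies in $K$, so $x^{\ell a} \in Z$ and its image in $A_S$ equals $(x^a)^{\ell}$ up to a nonzero scalar, showing $x^a$ is integral over $\pi(Z)$.

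A surjectivity argument then identifies the underlying varieties $\Spec \pi(Z_0) = \mathscr{O}_0$: the closed embedding $\Spec \pi(Z_0) \hookrightarrow \mathscr{O}_0$ is immediate from the containment $(x_i^{m_i} : i \notin S) \subset \ker(\pi|_{Z_0})$, while the converse uses that every $\chi_0 \in \mathscr{O}_0$ lifts to some $\chi \in \Spec Z$ by integrality of $Z/Z_0$, realizes via Theorem~\ref{thm: finite dim rep of order}(b) as a semisimple $A$-representation whose simple constituents are $A_S$-modules by the first paragraph, and therefore forces $\chi_0$ to annihilate $\ker(\pi|_{Z_0})$. The hypothesis $\deg A_S = \deg A =: d$ now has its payoff: for every $\chi_0 \in \Omega^0_{A_S}$, the irreducible $A_S$-modules at $\chi_0$ all have dimension $d$, hence by the bijection the irreducible $A$-modules at $\chi_0$ all have dimension $d$, so $\chi_0 \in \Omega^0_A$. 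Theorem~\ref{thm: finite dim rep of order}(e) applied to $A_S$ makes $\Omega^0_{A_S} \subset \mathscr{O}_0$ a nonempty Zariski open subset, proving the claim.

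I expect the main technical burden to be confirming the chain of finiteness properties (trace-closedness of $A_S$ and the finite extensions $\pi(Z_0) \subset \pi(Z) \subset Z_S$) needed to invoke Theorem~\ref{thm: finite dim rep of order}(e) on $A_S$; once this bookkeeping is in place, everything else follows formally from the bijection of irreducibles and the equality of degrees.
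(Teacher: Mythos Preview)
Your proof is correct, but the paper takes a noticeably more direct route. Rather than passing to the \emph{quotient} $A_S = A/(x_i : i\notin S)$ and re-invoking Theorem~\ref{thm: finite dim rep of order}(e) for $A_S$, the paper first observes that openness of $\Omega^0_A \cap \mathscr{O}_0$ in $\mathscr{O}_0$ is automatic (since $\Omega^0_A$ is open in $\Spec Z_0$), so only nonemptiness needs proof. It then \emph{localizes} at $\{x_j^{m_j}\}_{j\in S}$: over the nonempty set $\mathscr{O}_0 \cap \Spec(Z_0[x_j^{-m_j}]_{j\in S})$, every irreducible $A$-module has all $x_i$, $i\notin S$, acting by zero (your argument) and all $x_j$, $j\in S$, acting invertibly, hence is an irreducible $\BC_{H_S}[x_j^{\pm}]_{j\in S}$-module. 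By Proposition~\ref{prop: deg of twisted polyalg}(c) this Laurent ring is Azumaya, so all its irreducibles have dimension $d$, finishing the proof immediately.

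Your approach buys a cleaner conceptual picture (the bijection $\Irr(A_S) \leftrightarrow \{M\in\Irr(A):\chi_M|_{Z_0}\in\mathscr{O}_0\}$), but at the cost of the bookkeeping you correctly anticipate: trace-closedness of $A_S$, finiteness of $Z_S$ over $\pi(Z_0)$, and the identification $\Spec \pi(Z_0) = \mathscr{O}_0$ as varieties (your lifting argument for the latter is a nice touch but unnecessary in the paper's setup). The paper's localization trick sidesteps all of this by using the Azumaya property of the Laurent ring directly, rather than the softer statement of Theorem~\ref{thm: finite dim rep of order}(e). Both proofs share the core observation (Lemma~\ref{lem: x acts as 0 or invertible}) that forces $x_i\mapsto 0$ for $i\notin S$ over $\mathscr{O}_0$.
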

\begin{proof} Since $\Omega^0_{\BC_H[x_1, \dots, x_n]}$ is a Zariski open subset of $\Spec Z_0$, it is enough to show that 
\[\Omega^0_{\BC_H[x_1,\dots, x_n]} \cap \mathscr{O}_0 \neq \emptyset.\]

Let $Z_0[x_j^{-m_j}]_{j \in S}$ be the localization of $Z_0$ at $\{ x_j^{m_j}|j \in S\}$, then $\Spec(Z_0[x_j^{-m_j}]_{j \in S})$ is an nonempty open subset in $\Spec Z_0$. On any  irreducible $\BC_H[x_1, \dots, x_n]$-module with central character contained in $\Spec (Z_0[x_j^{-m_j}]_{j\in S})$, the actions of $x_j, j \in S,$ will be by invertible operators due to Lemma \ref{lem: x acts as 0 or invertible}. Therefore, irreducible $\BC_H[x_1, \dots, x_n]$-modules whose central characters are  contained in $\Spec(Z_0[x_j^{-m_j}]_{j \in S})$ are in bijection with the irreducible $\BC_H[x_i, x_j^{\pm}]_{i \not \in S, j \in S}$-modules.

Since $\{ x_i^{m_i}|1\leq i \leq n\}\subset Z_0$, the intersection $\mathscr{O}_0 \cap \Spec (Z_0[x_j^{-m_j}]_{j \in S})$ is nonempty. By Lemma \ref{lem: x acts as 0 or invertible}, the elements $x_i, i\not\in S,$ act by $0$ on any irreducible $\BC_H[x_1, \dots, x_n]$-module with central character in $\mathscr{O}_0$. Therefore the irreducible modules of $\BC_H[x_1, \dots, x_n]$ with central characters in $\mathscr{O}_0 \cap \Spec(Z_0[x_j^{-m_j}]_{ j \in S})$ are in bijection with the irreducible of $\BC_{H_S}[x_j^{\pm}]_{j \in S}$-modules. Then by the assumption that $\deg \BC_{H_S}[x_j]_{j \in S}=\deg \BC_H[x_1, \dots, x_n]$ and Lemma \ref{prop: deg of twisted polyalg}(c), it follows that  all irreducible  $\BC_H[x_1, \dots, x_n]$-modules with central characters contained in $\mathscr{O}_0 \cap \Spec Z_0[x_j^{-m_j}]_{j \in S}$ are of maximal possible dimension. Therefore, $\emptyset \neq \mathscr{O}_0 \cap \Spec(Z_0[x_j^{-m_j}]_{j \in S} \subset \mathscr{O}_0 \cap \Omega^0_{\BC_H[x_1, \dots, x_n]}$. This finishes the proof.
\end{proof}

By Proposition \ref{prop: grU}, $\gr U^{ev}_\e$ is the algebra 
\[ \BC_H[K^{\pm 2\w_1}, \dots , K^{\pm 2\w_r}, X_{\b_1}, \dots, X_{\b_N}, Y_{\b_1}, \dots, Y_{\b_N}]\]
where the skew-symmetric matrix $H$ is of the form:
\begin{equation}\label{eq: matrix H of grU}
     H=\begin{bmatrix} B & 0& -A^T\\0&C& A^T \\A&-A &0 \end{bmatrix}
\end{equation}
Let us describe $H$: 
\begin{itemize}
    \item The columns from left to right are labeled by $X_{\b_1},\dots, X_{\b_N}, Y_{\b_1}, \dots, Y_{\b_N}, K^{2\w_1}, \dots, K^{2\w_r}$. The rows from top to bottom are labeled by the same  set. 
    \item The matrix $B=(b_{ij})$ is an $N \x N$ skew-symmetric matrix with    $b_{ij}=(\b_i, \kappa(\b_j))$ for $i<j$. 
    \item The  matrix $C=(c_{ij})$ is an $N \x N$ skew-symmetric matrix with $c_{ij}=(\kappa(\b_i), \b_j)$ for $i<j$
    \item The matrix $A=(a_{ij})$ is of size $r \x N$ with $a_{ij}=(2\w_i, \b_j)$.
\end{itemize}

To compute the cardinality of the image of $\uH$, we need some results about root systems in \cite[$\mathsection 3$]{dckp2}. Given a  simple root $\a$, let 
\begin{equation}\label{eq: the set Ia}
    I_\a=\{1\leq j \leq N|s_{i_j}= s_\a\}, \qquad \CI_\a=\{ \b_{k_1}, \dots, \b_{k_t}| k_j \in I_{\a}\},
\end{equation}

\begin{Lem}[Lemma 3.2, \cite{dckp2}]\label{lem: properties of root systems} Fix a simple root $\a$ and let $\w$ be the corresponding fundamental weight. Let $k_1< \dots <k_r$ be all elements of the set $I_\a$. For $t\in \BZ$, $1\leq t\leq r$, let 
\[ \lambda_t=s_{\b_{k_t}} \dots s_{\b_{k_1}}(\w), \qquad \mu_t=-s_{\b_{k_{r-t+1}}} \dots s_{\b_{k_r}}(\w^*),\]
here $\w^*=-w_0(\w)$. Let $\b^\vee_j:=2 \b_j/(\b_j, \b_j)$ for all $1\leq j \leq N$. Then:

(a) $\lambda_t=s_{i_1}\dots s_{i_{k_t}}(\w);$ in particular $\lambda_r=-\w^*$.

(b) If $k_t<j<k_{t+1},$ then $( \lambda_t, \b^\vee_j)=0$.

(c)  $( \lambda_t, \b^\vee_{k_{t+1}})=1$.

(d)  $\lambda_t=\w-\sum_{i=1}^t \b_{k_i}.$

Similarly:

(a') $\mu_t=\lambda_{r-t};$ in particular $\mu_r=\w$.

(b') If $k_{r-t}<j< k_{r-t+1}$, then $( \mu_t, \b^\vee_j)=0$.

(c') $( \mu_t, \b^\vee_{k_{r-t}})=-1$.

(d') $-s_{\b_{k_1}} \dots s_{\b_{k_r}}(w^*)=-\w^*+\sum_{i=1}^r \b_{k_i}=\w$.
\end{Lem}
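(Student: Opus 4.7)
The plan is to establish claims (a)--(d) in order and then derive their primed analogs (a')--(d'). The core tool is the conjugation identity $s_{\beta_k} = s_{i_1}\cdots s_{i_{k-1}} s_{i_k} s_{i_{k-1}}\cdots s_{i_1}$, which follows from $\beta_k = s_{i_1}\cdots s_{i_{k-1}}(\alpha_{i_k})$ and $s_{w(\gamma)} = w s_\gamma w^{-1}$. I will use throughout two structural facts: the parabolic subgroup $W_\alpha \subset W$ generated by the $s_i$ with $i \neq$ the index of $\alpha$ fixes $\omega$, and preserves the sublattice $L := \bigoplus_{j \neq \text{index of }\alpha}\BZ \alpha_{j}^\vee$ of the coroot lattice.

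Claim (a) is proved by induction on $t$. In the base case $t=1$, the conjugation identity turns $s_{\beta_{k_1}}(\omega)$ into $s_{i_1}\cdots s_{i_{k_1}}\bigl(s_{i_{k_1-1}}\cdots s_{i_1}(\omega)\bigr)$, and the inner word fixes $\omega$ since all indices $i_1,\ldots,i_{k_1-1}$ differ from the index of $\alpha$ by definition of $k_1$. The inductive step is identical: after substituting the conjugation identity for $s_{\beta_{k_{t+1}}}$ and canceling $s_{i_1}\cdots s_{i_{k_t}}$ against its inverse appearing on the right via $s_i^2 = 1$, the remaining middle block $s_{i_{k_{t+1}-1}}\cdots s_{i_{k_t+1}}$ lies in $W_\alpha$ and fixes $\omega$, yielding $\lambda_{t+1} = s_{i_1}\cdots s_{i_{k_{t+1}}}(\omega)$. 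For (b) and (c), using (a) and $W$-invariance of the bilinear form, the pairing $(\lambda_t,\beta_j^\vee)$ reduces to $(\omega, w(\alpha_{i_j}^\vee))$ for a specific $w \in W_\alpha$. In (b) the initial coroot $\alpha_{i_j}^\vee$ already lies in $L$ (since $i_j$ is not the index of $\alpha$), so its $W_\alpha$-image lies in $L$ and pairs to zero with $\omega$. In (c) the initial coroot is $\alpha^\vee$ itself, and any element of $W_\alpha$ sends it to $\alpha^\vee$ modulo $L$, producing the pairing $(\omega, \alpha^\vee) = 1$. Finally, (d) is an immediate induction on $t$ from $\lambda_t = \lambda_{t-1} - (\lambda_{t-1},\beta_{k_t}^\vee)\beta_{k_t}$ combined with (c) at index $t-1$.

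For the primed statements, observe first that (a) at $t = r$ combined with $s_{i_{k_r+1}}\cdots s_{i_N} \in W_\alpha$ gives $\lambda_r = s_{i_1}\cdots s_{i_N}(\omega) = w_0(\omega) = -\omega^*$, so by (d) we obtain $\omega^* = -\omega + \sum_{i=1}^r \beta_{k_i}$, which is precisely (d'). Next, the analog of (c) at index $r-t$ gives $(\lambda_{r-t},\beta_{k_{r-t}}^\vee) = (\omega,s_{i_{k_{r-t}}}(\alpha^\vee)) = (\omega,s_\alpha(\alpha^\vee)) = -1$, which combined with (a') yields (c'). I then prove (a'), namely $\mu_t = \lambda_{r-t}$, by induction on $t$ using the recursion $\mu_{t+1} = s_{\beta_{k_{r-t}}}(\mu_t) = \mu_t - (\mu_t,\beta_{k_{r-t}}^\vee)\beta_{k_{r-t}}$ together with the sign computation just given; the base case $\mu_1 = \lambda_{r-1}$ is a direct computation using the already-proved identity $\omega^* = -\omega + \sum \beta_{k_i}$. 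Finally, (b') is a direct translation of (b) at index $r-t$. The main technical hurdle is the telescoping bookkeeping in (a): the indices must be tracked carefully so that consecutive blocks of simple reflections cancel correctly. The rest of the argument is essentially mechanical.
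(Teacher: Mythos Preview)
The paper does not give its own proof of this lemma: it is quoted verbatim from \cite{dckp2} and used as a black box. Your argument is correct and is essentially the standard one, built on the conjugation identity $s_{\beta_k}=(s_{i_1}\cdots s_{i_{k-1}})s_{i_k}(s_{i_1}\cdots s_{i_{k-1}})^{-1}$ together with the fact that the parabolic subgroup $W_\alpha$ fixes $\omega$ and preserves the span of the coroots $\alpha_j^\vee$ with $j$ not the index of $\alpha$. The computations in (a)--(d) are carried out cleanly; the only cosmetic issue is the order of exposition for the primed claims: you compute $(\lambda_{r-t},\beta_{k_{r-t}}^\vee)=-1$ and cite (a') before (a') is proved, though you then immediately supply the inductive proof of (a') using exactly that pairing. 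Logically nothing is missing, but it would read better to first record the pairing $(\lambda_s,\beta_{k_s}^\vee)=-1$ for all $s$ (same proof as (c), replacing $k_{t+1}$ by $k_t$ so that an extra $s_\alpha$ appears in the word), then prove (a') by induction, and only afterwards deduce (c') and (b').
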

\begin{Cor}[Corollary 3.2, \cite{dckp2}]\label{cor: property of root system}
(a) $(\b_l, \w)= \sum_{k_i<l} (\b_l, \b_{k_i})$ if $l \not \in I_\a$.

\noindent
(b) $(\b_{k_t}, 2\w)= (\b_{k_t}, \b_{k_t})+\sum_{i<t}(2\b_{k_1}, \b_{k_i})$ for $k_t \in I_\a$.
\end{Cor}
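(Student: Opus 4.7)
The plan is to derive both (a) and (b) as direct corollaries of Lemma \ref{lem: properties of root systems}, using only the formula $\lambda_t = \w - \sum_{i=1}^{t}\b_{k_i}$ from part (d) together with the orthogonality/normalization statements in parts (b) and (c). The key idea is to pair $\b_l$ (respectively $\b_{k_t}$) against $\w$ by first expressing $\w$ as $\lambda_t$ plus a sum of the $\b_{k_i}$'s, where $t$ is chosen so that the $\lambda_t$-contribution is controlled by Lemma \ref{lem: properties of root systems}.

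For part (a), I would pick $t$ to be the unique index with $k_t < l < k_{t+1}$ (with the boundary conventions $k_0 = 0$ and $k_{r+1} = \infty$ for $l$ below $k_1$ or above $k_r$). Writing $\w = \lambda_t + \sum_{i=1}^{t} \b_{k_i}$ and pairing with $\b_l$, Lemma \ref{lem: properties of root systems}(b) gives $(\lambda_t, \b_l^\vee) = 0$, hence $(\lambda_t, \b_l) = 0$, so $(\w, \b_l) = \sum_{i=1}^{t}(\b_{k_i}, \b_l) = \sum_{k_i < l}(\b_{k_i}, \b_l)$, which is exactly (a).

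For part (b), I would instead use the expression $\w = \lambda_{t-1} + \sum_{i=1}^{t-1} \b_{k_i}$ and pair with $\b_{k_t}$. Lemma \ref{lem: properties of root systems}(c) yields $(\lambda_{t-1}, \b_{k_t}^\vee) = 1$, i.e.\ $(\lambda_{t-1}, \b_{k_t}) = \tfrac{1}{2}(\b_{k_t}, \b_{k_t})$. Doubling and rearranging gives $(2\w, \b_{k_t}) = (\b_{k_t}, \b_{k_t}) + \sum_{i<t}(2\b_{k_t}, \b_{k_i})$, which is (b).

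There is essentially no obstacle here: both statements are bookkeeping consequences of Lemma \ref{lem: properties of root systems}. The only subtlety is choosing the correct index $t$ in each case (specifically $t-1$ versus $t$) so that the $\lambda$-term either vanishes or contributes exactly $\tfrac{1}{2}(\b_{k_t},\b_{k_t})$. Once that index is identified, both identities are one-line computations and no further structural input about the root system is needed.
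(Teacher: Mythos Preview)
Your proposal is correct and is exactly the intended derivation: the paper gives no proof of its own, simply citing \cite{dckp2}, and what you have written is precisely how the corollary follows from Lemma~\ref{lem: properties of root systems}. One small remark: your boundary conventions $k_0=0$, $k_{r+1}=\infty$ implicitly use the cases $t=0$ (with $\lambda_0=\w$) and $j>k_r$ of parts (b),(c) of the lemma, which are not literally stated there but follow by the same argument, namely that $s_{i_m}$ fixes $\w$ whenever $m\notin I_\alpha$; you might add a word acknowledging this.
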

For each set $I_{\a_i}$, pick $\b_{m_i} \in \CI_{\a_i}$. Let 
\begin{equation}\label{eq: set S}
    S= \begin{cases}\{  K^{2\w_1}, \dots K^{2\w_r}, Y_{\b_i}, X_{\b_i}\}\setminus\{ X_{\b_{m_1}}, \dots, X_{\b_{m_r}}\}\\
    \{K^{2\w_1}, \dots K^{2\w_r}, Y_{\b_i}, X_{\b_i}\}\setminus \{Y_{\b_{m_1}}, \dots, Y_{\b_{m_r}}\}
    \end{cases}
\end{equation}
 Let $H_S$ be the $2N \x 2N$ submatrix of $H$ consisting of columns and rows labeled by the set $S$. So we have the following group homomorphisms
 \begin{equation}\label{eq: the map H and Hs} \uH: \BZ^{2N+r}\rightarrow (\BZ/\ell\BZ)^{2N+r}, \qquad \uH_{S}: \BZ^{2N}\rightarrow (\BZ/\ell \BZ)^{2N}.
 \end{equation}

 \begin{Prop}\label{prop: size images of H and HS} The cardinalities of $\Img \uH$ and $\Img \uH_S$ are equal to $\Big(\prod_{\a\in \Delta_+} \ell_\a\Big)^2$.
 \end{Prop}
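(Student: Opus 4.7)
The plan is to compute the Smith normal form of $H$ over $\BZ$, from which both $|\Img \uH|$ and $|\Img \uH_S|$ can be read directly, adapting the proof of \cite[Proposition~3.2]{dckp2} to accommodate the Cartan twist encoded by $\kappa$. First I would note that unimodular row and column operations preserve both image cardinalities, so one has complete freedom to manipulate $H$ over $\BZ$. The $(X,X)$-block $B$ and $(Y,Y)$-block $C$ are strictly triangular with respect to the convex order on $\Delta_+$ inherited from the reduced decomposition $w_0 = s_{i_1}\cdots s_{i_N}$, and their diagonals vanish by skew-symmetry; crucially, the contribution of $\kappa$ to the diagonal is trivial because $\phi_{ii}=0$ forces $(\beta_k,\kappa(\beta_k))=(\beta_k,\beta_k)$ to be absorbed by antisymmetry.

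Next, to simplify the Cartan block, I would use Lemma~\ref{lem: properties of root systems}(d): for each simple root $\alpha_i$ with $I_{\alpha_i}=\{k_1<\cdots<k_{r_i}\}$, one has $\lambda_t^{(i)}=\omega_i-\sum_{s=1}^t \beta_{k_s}$. Replacing the Cartan column $K^{2\omega_i}$ by $2\lambda_t^{(i)}$ is a unimodular column operation adding integer combinations of the $X_{\beta_{k_s}}$- and $Y_{\beta_{k_s}}$-columns; by parts~(b) and~(c) of the lemma, the resulting column has an essentially unique nonzero pairing. The key numerical identity is then Corollary~\ref{cor: property of root system}(b): $(\beta_{k_t},2\omega_i) = (\beta_{k_t},\beta_{k_t}) + \sum_{s<t}(2\beta_{k_t},\beta_{k_s})$. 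Combined with the convex-order triangularity of $B$ and $C$, this allows one to bring $H$ by unimodular operations to a block-diagonal integer equivalent whose only nonzero Smith factors are $2N$ entries — each equal, up to a unit modulo~$\ell$, to $(\beta,\beta)=2d_\beta$ with each positive root appearing exactly twice (once paired through the $A$-block with an $X$-row and once with a $Y$-row) — together with $r$ vanishing diagonal entries corresponding to the Cartan directions made unimodular by the column operations above.

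Consequently
\[
|\Img \uH| = \prod_{\alpha\in\Delta_+}\left(\frac{\ell}{\gcd(2d_\alpha,\ell)}\right)^2 = \prod_{\alpha\in\Delta_+}\ell_\alpha^2 \,.
\]
For $\uH_S$, the specific choice of representatives $\beta_{m_i}\in\CI_{\alpha_i}$ for $1\leq i\leq r$ in the definition \eqref{eq: set S} of~$S$ is precisely the one that matches up with the $r$ Smith-trivial Cartan directions isolated in the reduction: removing either the $\{X_{\beta_{m_i}}\}$ or the $\{Y_{\beta_{m_i}}\}$ rows and columns excises exactly these $r$ directions, leaving the $2N$ nontrivial Smith factors untouched. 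Hence $|\Img \uH_S|=|\Img \uH|$, as claimed.

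The chief technical obstacle will be verifying that the Cartan twist $\kappa$ does not alter the gcd structure of the Smith invariants with~$\ell$. The essential point is that $\kappa-\Id$ maps $Q$ into the integer span of the coweights via the integer matrix $2(\phi_{ij})$, so its only contribution to $H$ is an integer off-diagonal perturbation of $B$ and $C$. Since the argument triangularizes $B$ and $C$ by the convex order anyway, these perturbations can be cleared by further unimodular operations without ever entering the diagonal, so the Smith invariants are identical to those in the untwisted case treated in \cite[Proposition~3.2]{dckp2}. A secondary nuisance is tracking signs and the interaction of the $(X,K)$- and $(Y,K)$-blocks $-A^T$ and $A^T$, which I would handle by first adding the $X$- and $Y$-columns to symmetrize before invoking the reductions above.
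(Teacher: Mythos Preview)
Your overall strategy (diagonalize $H$ and $H_S$ by integer row/column operations and read off the image-cardinality from the diagonal entries $(\beta,\beta)$, each occurring twice) is exactly the paper's strategy, and you correctly identify Corollary~\ref{cor: property of root system} as the combinatorial input. However, the execution has a genuine gap at precisely the new point of the argument.

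First, a factual error: the blocks $B$ and $C$ are \emph{not} ``strictly triangular with respect to the convex order''; they are skew-symmetric with zero diagonal. This is not cosmetic --- a skew-symmetric matrix cannot produce the desired diagonal entries $\pm(\beta_j,\beta_j)$ by internal row/column operations alone, so you must explain where those diagonals come from.

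Second, and more seriously, your treatment of the twist is inadequate. You assert that $\kappa-\Id$ contributes only an integer off-diagonal perturbation of $B,C$ that ``can be cleared \ldots\ without ever entering the diagonal,'' but you do not show this, and it is exactly the content beyond \cite[Lemma~3.3]{dckp2} (whose argument, as the paper explicitly remarks, fails for even~$\ell$). The paper's proof instead uses the Cartan columns $-A^T,A^T$ \emph{first}: since $\kappa(Q)=2P$, one may add the integer combination of Cartan columns encoding $\kappa(\beta_j)$ to the $X_{\beta_j}$-column. Via the identities
\[
(\beta_j,\kappa(\beta_j))=(\beta_j,\beta_j),\qquad (\kappa(\beta_k),\beta_j)+(\gamma(\beta_k),\beta_j)=2(\beta_k,\beta_j)
\]
(equivalently $\kappa+\gamma=2\Id$), this simultaneously creates the diagonal entry $-(\beta_j,\beta_j)$ and replaces all $\kappa$-dependent off-diagonal entries by the twist-free $-2(\beta_k,\beta_j)$, while the cross-block terms produced in the $Y$-rows cancel exactly against the dual operation on $C$. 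This is the mechanism that makes the twist disappear; your sketch neither invokes $\kappa+\gamma=2\Id$ nor explains the cancellation, and your proposed order of operations (modifying Cartan columns by $X,Y$-columns via $\lambda_t^{(i)}$) runs in the opposite direction and does not obviously achieve this. Once $B,C$ are replaced by the triangular $B_1,C_1=-B_1^T$ with diagonals $\pm(\beta_j,\beta_j)$ and off-diagonals $\pm 2(\beta_i,\beta_j)$, the root-system identities you cite (in the form \eqref{eq: row with wl}--\eqref{eq: column with wl}) do the rest; but getting to that point is where your argument needs to be filled in.
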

 This proposition is proved in \cite[Lemma 3.3]{dckp2} when $\ell$ is odd, but the arguments in {\em loc.cit.} do not work for even $\ell$. 
\begin{proof} Note that the following operations do not change the cardinalities of $\Img \uH$ and $\Img \uH_S$:

$\bullet$ Swapping rows or columns in these matrices.

$\bullet$ Adding an integral multiple of one  row (resp. column) to another row (resp. column).

$\bullet$ Multiply rows (columns) by $\pm 1$.

Using these operators, we will transform $H$ and $H_S$ into diagonal matrices whose non-zero entries are $(\b_i, \b_i), 1\leq i \leq N$, each of which occurs twice. This will imply the lemma. Indeed, 
\[ \Img \uH\cong \bigoplus_{i=1}^N \Big(\big[(\b_i, \b_i)\BZ +\ell \BZ\big]/\ell \BZ\Big)^{\oplus 2} \cong \bigoplus_{i=1}^N \Big( \textnormal{gcd}\big[(\b_i, \b_i), \ell\big] \BZ/\ell \BZ\Big)^{\oplus 2}.\]
Hence $\sharp \Img \uH= \left(\prod_{\a\in \Delta_+} \ell_\a\right)^2$. The case of $H_S$ is the same.

Now let us describe how to transform $H, H_S$ into such diagonal matrices. Recall the conventions in \eqref{eq: matrix H of grU}.

{\bf Computations for $H$.} We will first do the case of $H$. The following equalities are frequently used, again $\kappa$ and $\gamma$ are defined in \eqref{kappa and gamma}:
\begin{equation}\label{eq: some identities} (\gamma(\a), \b)=(\a, \kappa(\b)), \qquad \gamma(\a)+\kappa(\a)=2\a, \qquad (\gamma(\a), \a)=(\a, \kappa(\a))=(\a,\a).
\end{equation}

{\it Step 1: }We will show how to use the last $r$ columns of $H$ to transform $\bmat{B \\0\\A}$ and how to use the last $r$ rows of $H$ to transform $\bmat{ 0& C& A^T}$.

{\it Step 1.1:}  The column of $\bmat{B\\0}$ indexed by
$X_{\b_j}$ is 
\[\bmat{(\b_1, \kappa(\b_j))&\dots& (\b_{j-1}, \kappa(\b_j))&0&-(\kappa(\b_{j+1}), \b_j)&\dots &-(\kappa(\b_N), \b_j)&0&\dots&0}^T.\]
By adding the following vector, which is an integral linear combination of columns in $\bmat{-A^T\\A^T}$
\[ \bmat{-(\b_1, \kappa(\b_j))& \dots & -(\b_N, \kappa(\b_j)) & (\b_1, \kappa(\b_j))& \dots & (\b_N, \kappa(\b_j))}^T,\]
we get 
\begin{equation}\label{eq: modified columns} \bmat{0& \dots &0 & -(\b_j, \b_j)& -2(\b_{j+1}, \b_j)& \dots -2(\b_N, \b_j)& (\b_1, \kappa(\b_j) & \dots (\b_N, \kappa(\b_j))}^T
\end{equation}
here the entries $-2(\b_{k}, \b_j)$ and $(\b_j, \b_j)$ show up because by \eqref{eq: some identities}
\[ -(\kappa(\b_k), \b_j)-(\b_k, \kappa(\b_j))=-(\kappa(\b_k)+\gamma(\b_k), \b_j)=-2(\b_k, \b_j), \quad (\b_j, \kappa(\b_j))=(\b_j, \b_j).\]

{\it Step 1.2:} Similarly, the row of $\bmat{0&C}$ indexed by $Y_{\b_i}$ is 
\[ \bmat{0 &-(\b_i, \kappa(\b_1))&  \dots & -(\b_i, \kappa(\b_{i-1}))& 0&(\kappa(\b_i), \b_{i+1}) & \dots & (\kappa(\b_i), \b_N)}.\]
 By adding the vector 
 \[\bmat{-(\gamma(\b_i), \b_1)&\dots & -(\gamma(\b_i), \b_N) & (\gamma(\b_i), \b_1) & \dots & (\gamma(\b_i), \b_N)},\]
 we get 
 \begin{equation}\label{eq: modified rows} \bmat{-(\gamma(\b_i), \b_1) & \dots & -(\gamma(\b_i), \b_N) & 0& (\b_i, \b_i)& 2(\b_i, \b_{i+1})& \dots & 2(\b_i, \b_N)}.
 \end{equation}

Note that the last entries   of columns \eqref{eq: modified columns} cancel the first entries of rows \eqref{eq: modified rows}: $(\b_i, \kappa(\b_j))-(\gamma(\b_i), \b_j)=0$. Therefore, we have transformed $H$ into the following form:
\[ H_1 =\bmat{B_1 & 0 & -A^T\\0 & C_1 & A^T\\ A& -A & 0},\]
where:\\
$\bullet$ $B_1$ is a lower triangular matrix whose column indexed by $X_{\b_j}$ is 
\[\bmat{0& \dots &0& -(\b_j, \b_j) & -2(\b_{j+1}, \b_j) & \dots &-2(\b_N, \b_j)}^T.\]
$\bullet$ $C_1$ is an upper triangular matrix whose row indexed by $Y_{\b_i}$ is 
\[\bmat{0 & \dots & 0& (\b_i, \b_i) & 2(\b_i, \b_{i+1}) & \dots & 2(\b_i, \b_N)}.\]
Let 
\begin{align*}
    v_{\b_i}&:=\bmat{0& \dots &0 & (\b_i, \b_i)& 2(\b_i, \b_{i+1}) & \dots & 2(\b_i, \b_N)}\\
    V_{\b_j}&:=\bmat{2(\b_1, \b_j)& \dots & 2(\b_{j-1}, \b_j)&(\b_j, \b_j)& 0&\dots &0}^T.
\end{align*}
Then 
\begin{align*}
    B_1=\bmat{-v^T_{\b_1}& \dots & -v^T_{\b_N}}, \qquad C_1=\bmat{V_{\b_1}&\dots &V_{\b_N}}= \bmat{v_{\b_1}\\ \dots \\ v_{\b_N}}=-B_1^T.
\end{align*}

{\it Step 2:} We recall $\CI_{\a_l}=\{\b^{w_l}_{k_1}, \dots , \b^{w_l}_{k_{s_l}}\}$. Here the cardinality of $\CI_{\a_l}$ is $s_l$ and we put the superscript $w_l$  to distinguish the elements in different sets $\CI_{\a_l}$. Then 
\begin{align}
  \label{eq: row with wl}  \bmat{(2\w_l, \b_1)& \dots &(2\w_l, \b_N)}&=v_{\b^{\w_l}_{k_1}}+\dots v_{\b^{\w_l}_{k_{s_l}}}\\
 \label{eq: column with wl}   \bmat{(2\w^*_l, \b_1)& \dots & (2\w^*_l, \b_N)}^T&=V_{\b^{\w_l}_{k_1}}+\dots +V_{\b^{\w_l}_{k_{s_l}}}.
\end{align}
Indeed, by Corollary \ref{cor: property of root system}, we have 
\begin{equation*}
    (2\w_l, \b_j)=\begin{cases} \sum_{i=1}^{t-1} 2(\b^{\w_l}_{k_i}, \b_j), \qquad k_{t-1} <j< k_t\\
    \sum_{i=1}^{t-1} 2(\b^{\w_l}_{k_i}, \b_{k_t})+(\b_{k_t}, \b_{k_t}), \qquad j=k_t
    \end{cases}
\end{equation*}
\begin{equation*}
    (2\w^*_l, \b_j)= (2\sum_{i=1}^{s_l}\b^{\w_l}_{\b_{k_i}}, \b_j)-(2\w_l, \b_j)=\begin{cases} \sum_{i=t}^{s_l} 2(\b^{\w_l}_{k_i}, \b_j), \qquad k_{t-1}< j< k_t\\
        \sum_{i=t+1}^{s_l}2(\b^{\w_l}_{k_i}, \b_{k_t})+(\b_{k_t}, \b_{k_t}), \qquad j=k_t
        \end{cases}
\end{equation*}

{\it Step 3:} Using Step 2,  we can use the columns of $B_1$  to eliminate top right submatrix $-A^T$. Meanwhile, we can use the rows of $C_1$ to eliminate the middle submatrix $-A$ in the last row of $H_1$. We will get the new matrix 
\begin{equation*}
    H_2=\bmat{B_1&0&0 \\0&C_1& A^T\\A&0& D+E},
\end{equation*}
where

$\bullet$ $D=(d_{ij})$ is a $r\x r$ matrix coming from eliminating $-A^T$, and $d_{ij}=-(2\w_i, \sum_{l=1}^{s_j} \b^{\w_j}_{k_l})$

$\bullet$ $E=(e_{ij})$ is an $r\x r$-matrix coming from eliminating $-A$, and $e_{ij}=(\sum_{l=1}^{s_i} \b^{\w_i}_{k_l}, 2\w_j)=-d_{ji}$
On the other hand, $D$ is a symmetric matrix. Indeed, note that $\w_i+\w^*_i=\sum_{l=1}^{s_i} \b^{\w_i}_{k_l}$, hence
\[ d_{ij}=-2(\w_i, \w_j+\w_j^*)=-2(\w_i+\w^*_i, \w_j)=d_{ji}.\]
Therefore, $D+E=0$

{\it Step 4:} We observe that  in  any row  or column of $H_2$ with nonzero diagonal entries, all other entries in these rows or columns are divisible by the diagonal entries. So we can use the diagonal entries of $B_1$ to eliminate the other entries in $\bmat{B_1\\0\\A}$, meanwhile, we can use the diagonal entries of $C_1$ to eliminate the other entries in $\bmat{0& C_1& A^T}$. So we can transform $H_2$, hence $H$, to the desired diagonal matrix.

{\bf Computations on $H_S$.} Now we consider $H_S$ with 
\[ S=\{X_{\b_1}, \dots, X_{\b_N}, Y_{\b_1}, \dots Y_{\b_N},  K^{2\w_1}, \dots, K^{2\w_r}\}/\{ Y_{\b_{m_1}}, \dots Y_{\b_{m_r}}\}.\]
We still perform as in Step $1$-$2$ but in Step $3$, since some rows and columns  of $\bmat{0 & C& A^T}$ were deleted, we  get the following matrix
\[ H_{S,1}=\bmat{B_1&0&0\\0&\underline{C}_1& \underline{A}^T\\ A & \underline{F}_1& \underline{F}_2},\]
where

$\bullet$ $\bmat{\underline{C}_1& \underline{A}^T}$ is obtained from $\bmat{C_1& A^T}$ by deleting rows and columns indexed by $Y_{\b_{m_1}}, \dots, Y_{\b_{m_r}}$. 

$\bullet$ Row at $K^{2\w_i}$ of $\bmat{\underline{F}_1&\underline{F}_2}$ is $-$ row at $Y_{\b_{m_i}}$ of $\bmat{C_1&A^T}$ after deleting columns indexed by $Y_{\b_{m_1}}, \dots, Y_{\b_{m_r}}$

So we can multiply rows of $\bmat{A & \underline{F}_1 &\underline{F}_2}$ by $-1$, then  permute rows of $H_{S,1}$ to get
\[ H_{S,2}=\bmat{B_1&0&0\\\ast& C_2 &A^T},\]
where  $C_2$ is obtained from $C_1$ by deleting columns at $Y_{\b_{m_1}}, \dots, Y_{\b_{m_r}}$.

Now by using \eqref{eq: column with wl} in Step 2, we can use $C_2$ to transform $A^T$ into a matrix whose columns are all mising columns of $C_1$ in $C_2$. Then permuting columns and rows , we obtain
\[ H_{S,3}=\bmat{B_1&0\\\ast & C_1},\]
 which can be  tranformed into the desired diagonal matrix  as in Step $4$.
\end{proof}

\begin{Cor} Proposition \ref{prop: deg of grU} holds.
\end{Cor}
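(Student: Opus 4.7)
The plan is to combine the structural presentation of $\gr U^{ev}_\e$ as a twisted polynomial algebra with the two auxiliary results that have just been established: Proposition \ref{prop: deg of twisted polyalg}(b), which expresses the degree of a twisted (Laurent) polynomial algebra in terms of the cardinality of the image of the skew-symmetric matrix viewed mod $\ell$, and Proposition \ref{prop: size images of H and HS}, which computes exactly that cardinality as $\left(\prod_{\a\in\Delta_+}\ell_\a\right)^2$. Essentially, the corollary is a one-line consequence of packaging these two inputs.

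First, I would invoke Proposition \ref{prop: grU} to present $\gr U^{ev}_\e$ as the twisted algebra $\BC_H[(K^{2\w_1})^{\pm},\dots,(K^{2\w_r})^\pm, X_{\b_1},\dots,X_{\b_N},Y_{\b_1},\dots,Y_{\b_N}]$ with skew-symmetric matrix $H$ from \eqref{eq: matrix H of grU}. This is a mixed form (Laurent in the $K^{2\w_i}$, ordinary polynomial in the $X_{\b_j},Y_{\b_j}$), so Proposition \ref{prop: deg of twisted polyalg}(b) does not apply verbatim. The next step is to observe that the degree of an order in a central simple algebra is unchanged by localization at a central multiplicatively closed subset, since the ambient central simple algebra $A\otimes_Z Q(Z)$ is preserved. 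Therefore the degree of $\gr U^{ev}_\e$ agrees with the degree of its central localization $\BC_H[(K^{2\w_i})^\pm,X_{\b_j}^\pm,Y_{\b_j}^\pm]$, to which Proposition \ref{prop: deg of twisted polyalg}(b) does apply, giving $\deg(\gr U^{ev}_\e)=\sqrt{h}$ where $h=\sharp\,\Img\uH$ for $\uH\colon\BZ^{2N+r}\to(\BZ/\ell\BZ)^{2N+r}$.

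Finally, substituting $\sharp\,\Img\uH=\left(\prod_{\a\in\Delta_+}\ell_\a\right)^2$ from Proposition \ref{prop: size images of H and HS} yields
\[
  \deg(\gr U^{ev}_\e) \,=\, \sqrt{\,\Big(\textstyle\prod_{\a\in\Delta_+}\ell_\a\Big)^2\,}
  \,=\, \prod_{\a\in\Delta_+}\ell_\a,
\]
which is exactly the statement of Proposition \ref{prop: deg of grU}.

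No serious obstacle is expected: the only mildly subtle point is the passage from the mixed polynomial/Laurent form to the fully Laurent form needed to invoke Proposition \ref{prop: deg of twisted polyalg}(b), but this is a standard fact about degrees of orders. All the genuine combinatorial work has already been done in the root-system computations of Proposition \ref{prop: size images of H and HS}, so the corollary is essentially a bookkeeping step tying the previous two propositions together.
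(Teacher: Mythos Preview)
Your proposal is correct and matches the paper's intent: the corollary is stated without proof precisely because it follows immediately from combining Proposition~\ref{prop: deg of twisted polyalg}(b) with Proposition~\ref{prop: size images of H and HS}. Your handling of the mixed polynomial/Laurent situation via central localization is a reasonable elaboration; alternatively one can simply observe that since Proposition~\ref{prop: deg of twisted polyalg}(b) gives $\deg \BC_H[x_i]=\deg \BC_H[x_i^{\pm}]=\sqrt{h}$, any intermediate central localization is sandwiched between two algebras of the same degree and therefore has degree $\sqrt{h}$ as well.
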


Let $\a^*_i=-w_0(\a_i)$ and $\w^*_i =-w_0(\w_i)$ for $1\leq i \leq r$. Let  \[ \CI_{\a_i}=\{ \b_{k_1}, \dots, \b_{k_r}\}, \qquad \CI_{\a^*_i}=\{ \b_{l_1}, \dots, \b_{l_r}\} \quad \text{$1\leq i  \leq r$}.\]
where $\CI_?$ is introduced in \eqref{eq: the set Ia}. The following proposition is similar to \cite[Proposition 5.2]{dckp2}. We provide the proof adapting  to our setup for reader's convenience.
\begin{Prop}\label{prop: center of grU} The center of $\gr U^{ev}_\e$ is generated by the elements
$K^{\pm 2\ell_i \w_i}, X_\a^{\ell_a}, Y_\a^{\ell_\a}$ as well as $u_i$ for $1\leq i \leq r, \a \in \Delta_+$, where 
\[ u_i= K^{-2\w_i +\kappa(\w_i+\w^*_i)} \prod X_{\b_{k_i}} \prod Y_{\b_{l_j}}.\]
\end{Prop}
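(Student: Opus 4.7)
The proof plan is to apply Proposition \ref{prop: deg of twisted polyalg}(a) to the twisted polynomial/Laurent algebra $\gr U^{ev}_\e$, reducing the problem to a lattice-theoretic description of the kernel $L := \ker \uH$, where $H$ is the skew-symmetric matrix from \eqref{eq: matrix H of grU}. A $\BC$-basis of the center then consists of monomials $K^{2\sum_i a_i \w_i} \prod_j X_{\b_j}^{b_j} \prod_k Y_{\b_k}^{c_k}$ whose exponent vectors $(a,b,c) \in L$, with $b_j, c_k \geq 0$ (the $K^{2\w_i}$ are Laurent, so the $a_i$ are unconstrained).

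First I would verify that each of the claimed generators lies in the center, i.e.\ that its exponent vector belongs to $L$. The cases of $K^{\pm 2\ell_i \w_i}$, $X_\a^{\ell_\a}$, and $Y_\a^{\ell_\a}$ are direct: $K^{2\ell_i\w_i}$ commutes with $X_{\b_j}$ because $(\w_i,\b_j)\in \sd_i\BZ$ (as $\b_j\in Q$) and $\ell_i\cdot 2\sd_i = \operatorname{lcm}(2\sd_i,\ell)$ is a multiple of $\ell$; while $X_\a^{\ell_\a}$ and $Y_\a^{\ell_\a}$ are images in $\gr U^{ev}_\e$ of the central elements $\tE_\a^{\ell_\a}, \tF_\a^{\ell_\a}$ of $U^{ev}_\e$ from Lemma \ref{basis properties of Ze}(b). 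The delicate verification is for $u_i$: using the relations of Proposition \ref{prop: grU}, the commutator of $u_i$ with each generator produces a $q$-power whose exponent must be shown to lie in $\ell\BZ$. Here I invoke Lemma \ref{lem: properties of root systems} and Corollary \ref{cor: property of root system}, especially the identity $\sum_{j}\b_{k_j}=\w_i+\w_i^{*}$ for $\CI_{\a_i}=\{\b_{k_1},\dots,\b_{k_{s_i}}\}$, which makes the mixed contributions from the Cartan twist $K^{-2\w_i+\kappa(\w_i+\w_i^{*})}$ cancel exactly against the products $\prod X_{\b_{k_j}}\prod Y_{\b_{l_j}}$.

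Once centrality is established, let $Z'$ denote the subalgebra generated by the claimed elements; the remaining task is to prove $Z'=Z(\gr U^{ev}_\e)$. My approach is an index/lattice count. Let $L'\subseteq L$ be the sublattice of $\BZ^{2N+r}$ spanned by the exponent vectors of the claimed generators. From Proposition \ref{prop: size images of H and HS} one has $|L/\ell\BZ^{2N+r}| = \ell^{2N+r}/(\prod_{\a\in\Delta_+}\ell_\a)^2$. On the other hand, the contribution of $K^{\pm 2\ell_i\w_i}$ gives a factor $\prod_i \gcd(2\sd_i,\ell)$ in the Cartan directions, while $X_\a^{\ell_\a}$ and $Y_\a^{\ell_\a}$ contribute $\prod_\a \gcd((\a,\a),\ell)^2 = \prod_\a(\ell/\ell_\a)^2$ in the root directions; together these generate a ``diagonal'' sublattice $L_0\subseteq L'$. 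The $u_i$ supply the remaining $r$ directions mixing the blocks, and I would show that their exponent vectors, reduced modulo $L_0$, form a basis of $L/L_0$.

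The main obstacle will be this last linear independence and exhaustion step. Linear independence of the $u_i$ modulo $L_0$ can be seen by projecting onto the Cartan component: by Lemma \ref{Q-to-2P} the map $\kappa\colon Q\to 2P$ is an isomorphism, so the $r\times r$ matrix whose $(i,j)$-entry records the $K^{2\w_j}$-coefficient of $u_i$ (namely the $\w_j$-component of $-\w_i+\tfrac12\kappa(\w_i+\w_i^{*})$) can be shown to be nondegenerate after the reductions encoded by $L_0$. For exhaustion, I would adapt the column/row reductions carried out in the proof of Proposition \ref{prop: size images of H and HS} (in particular the use of the sets $S$ from \eqref{eq: set S}) to express any central monomial, modulo $L_0$, as an integer combination of the $u_i$; the root-system identities of Lemma \ref{lem: properties of root systems} ensure this lift is uniform across positive and negative root vectors, in the spirit of \cite[Proposition 5.2]{dckp2}.
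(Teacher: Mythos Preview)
Your overall framework matches the paper: verify centrality of each listed element, then use a lattice-index argument together with Proposition~\ref{prop: size images of H and HS} and Proposition~\ref{prop: deg of twisted polyalg}(a) to conclude that their exponent vectors span the full kernel $L=\ker\uH$.

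There is, however, a genuine gap in your exhaustion step. You propose to detect the $\ul{u_i}$ modulo the diagonal sublattice $L_0$ by projecting onto the Cartan coordinates. This fails already for $\g=\mathfrak{sl}_2$: there $\w_1^*=\w_1$ and $\kappa(\w_1+\w_1^*)=\kappa(\a_1)=\a_1=2\w_1$ (since $\phi_{11}=0$), so the Cartan exponent $-2\w_1+\kappa(\w_1+\w_1^*)$ of $u_1$ vanishes and $u_1=X_{\a_1}Y_{\a_1}$ has no Cartan component at all. In general the Cartan part of $\ul{u_i}$ carries no reliable information, so the ``nondegeneracy after reductions'' you invoke is not available.

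The paper's argument uses the $X$-coordinates instead, and this is the natural fix. The $X$-component of $\ul{u_i}$ is the indicator vector of the index set $I_{\a_i}\subset\{1,\dots,N\}$, and these sets partition $\{1,\dots,N\}$. Pick one $m_i\in I_{\a_i}$ for each $i$; replacing the standard basis vector at position $X_{\b_{m_i}}$ by $\ul{u_i}$ then yields a new $\BZ$-basis of $\BZ^{2N+r}$ (the change-of-basis matrix is unimodular because the $I_{\a_i}$ are disjoint). In this basis the sublattice $L'$ spanned by the claimed generators visibly contains the diagonal sublattice with multiplicities $1$ at each $\ul{u_i}$-position, $\ell_\a$ at every remaining $X_\a$ and every $Y_\a$, and $\ell_i$ at each Cartan direction. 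Since $\ell_{\b_{m_i}}=\ell_i$ (all roots in $\CI_{\a_i}$ are $W$-conjugate to $\a_i$), the index of this diagonal lattice is exactly $\prod_i\ell_i\cdot\prod_\a\ell_\a\cdot\prod_{\a\neq\b_{m_i}}\ell_\a=(\prod_\a\ell_\a)^2$, which equals $[\BZ^{2N+r}:L]$ and forces $L'=L$.
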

\begin{proof}Label the standard basis $\{(\dots, 1, \dots)\}$ of the lattice $\BZ^{2N+r}$ by the set 
\[\{X_{\b_N}, \dots, X_{\b_1}, Y_{\b_N},\dots, Y_{\b_1}, K^{2\w_1}, \dots, K^{2\w_r}\}.\]
 Then each monomial in $\gr U_\e^{ev}$ corresponds to a vector in $\BZ^{2N+r}$, see description of $\gr U_\e^{ev}$ in Proposition \ref{prop: grU}.

{\it Step 1.} We show that $u_i$ is central in $\gr U^{ev}_\e$. Let $\ul{u_i}$ denote the coordinate of $u_i$ in the lattice $\BZ^{2N+r}$. Then it is enough to show that $\ul{u_i} H=0$, where $H$ is given by \eqref{eq: matrix H of grU}.
The product of the row $\ul{u_i}$ with the column of $H$ labeled by $X_{\b_j}$ is 
\begin{align*}
   & \sum_{1\leq k \leq r}(-2\w_i+\kappa(\w_i+\w^*_i), \a^\vee_k/2)(2\w_k, \b_j)+\sum_{k_m <j} (\b_{k_m}, \kappa(\b_j)) -\sum_{k_m >j} (\kappa(\b_{k_m}), \b_j)\\
   & =-(2\w_i, \b_j)+(\kappa(\w_i+\w^*_i), \b_j)+\sum_{k_m <j} (\b_{k_m}, \kappa(\b_j)) -\sum_{k_m >j} (\kappa(\b_{k_m}), \b_j)\\
   &=-(2\w_i, \b_j)+\sum_{k_m<j}(\b_{k_m}, 2\b_j)+ \begin{cases} (\b_j, \b_j) \qquad & \text{if $j\in I_{\a_i}$}\\
    0& \text{otherwise}
    \end{cases}\\
& =0 \qquad \text{by Corrollary \ref{cor: property of root system}}
\end{align*}
In the second equality we use $\w_i+\w^*_i=\b_{k_1}+\dots +\b_{k_r}$ (see Lemma \ref{lem: properties of root systems}), $(\b_{k_m},  \kappa(\b_j))=(\gamma(\b_{k_m}), \b_j)=(\b_{k_m}, 2\b_j)-(\kappa(\b_{k_m}), \b_j)$ and $(\kappa(\b_j),\b_j)=(\b_j, \b_j)$. 

The product of the row $\ul{u_i}$ with the column of $H$ labeled by $Y_{\b_j}$ is computed in a similar way. The product of $\ul{u_i}$ with the column of $H$ labeled by $K^{2\w_j}$ is 
\begin{align*}
    \sum_{k_m} (-2\w_j, \b_{k_m})+\sum_{l_n}(2\w_j, \b_{l_n})=(-2\w_j, \w_i+\w^*_i)+(2\w_j, \w_i+\w^*_i)=0,
\end{align*}
here we use $\w_i+\w^*_i=\b_{k_1}+\dots+\b_{k_r}=\b_{l_1}+\dots +\b_{l_r}$, see Lemma \ref{lem: properties of root systems}.

{\it Step 2.}
We will show that $K^{\pm 2\ell_i \w_i}, X_\a^{\ell_\a}, Y_\e^{\ell_\a}$ are central in $\gr U^{ev}_\e$. Let us do this for $X_\a^{\ell_\a}$ only. Let $\ul{X}_\a^{\ell_\a}$  denote the coordinate of $X_\a^{\ell_\a}$ in the lattice  $\BZ^{2N+r}$.

The product of $\ul{X}_\a^{\ell_\a}$ with the column labeled by $X_{\b_j}$ of $H$ in \eqref{eq: matrix H of grU} is 
\[ \begin{cases} \ell_\a(\b_k, \kappa(\a))=2\ell_\a d_\a(\gamma(\b_k)/2, \a^\vee) \qquad &\text{if $\b_k <\a$}\\
0 & \text{if $\b_k=\a$}\\
-\ell_\a(\kappa(\b_k), \a)=2\ell_\a d_\a(\kappa(\b_k)/2, \a^\vee) & \text{if $\b_k>\a$}
\end{cases}
\]
here $d_\a=(\a, \a)/2$. All these integer values are divisible by $\ell$ since $\gamma(\b_k), \kappa(\b_k) \in 2P$. Doing the same computations for the rest of the column vectors in $H$, we arrive at our conclusion that $\ul{X}_\a^{\ell_\a}$ is contained in the kernel of the map $\ul{H}$ in \eqref{eq: the map H and Hs}, hence $X_\a^{\ell_\a}$ is central.

{\it Step 3.}
Let us use $\ul{K}^{2\ell_i \w_i}, \ul{X}_\a^{\ell_\a}, \ul{Y}_\a^{\ell_\a}, \ul{u_i}$ to denote the coordinates of the corresponding elements in $\BZ^{2N+r}$. Let $K'$ denote the sublattice in $\BZ^{2N+r}$ generated by $\ul{K}^{2\ell_i \w_i}, \ul{X}_\a^{\ell_\a}, \ul{Y}_\a^{\ell_\a}, \ul{u_i}$. 

For each set $\CI_{\a_i}$ choose one element $\b'_i$, we replace  the basis  vector $(\dots, 1, \dots)$ corresponding to  $X_{\b'_i}$ by $\ul{u_i}$. Then we still get a basis for the lattice $\BZ^{2N+r}$. Moreover, $\ell_{\b'_i}=\ell_i$. By these observations, one can easily show that the cardinality of $\BZ^{2N+r} /K' $ is less than or equal to $ (\prod_{\a\in \Delta_+} \ell_\a)^2$. On the other hand,  $K'$ is contained in the kernel $K$ of $\uH$ and the cardinality of $\BZ^{2N+r}/K$ is $(\prod_{\a\in \Delta_+} \ell_\a)^2$ by Proposition \ref{prop: size images of H and HS}. Therefore $K'=K$.

{\it Step 4.} The proposition follows by the description of the centers of the twisted polynomial rings in Proposition \ref{prop: deg of twisted polyalg}.
\end{proof}
\subsection{The center $Z$ of $U^{ev}_\e(\g)$}\

This section describes $Z$ in terms of $Z_{HC}$ and $Z_{Fr}$, see \cite[Theorem 6.4]{dckp} and  \cite[Theorem 5.4]{t2} for related results of original De Concini-Kac forms.

Recall the Harish-Chandra isomorphism in Theorem \ref{thm: HC-center}:
\[\pi: Z_{HC}\rightarrow \BC[K^{2\lambda}]_{\lambda \in P}^{W_\bullet},\]
here $\BC[K^{2\lambda}]_{\lambda \in P}$ is the group algebra the lattice $2P$.

The algebra automorphism $\gamma_{-\rho}: \BC[K^{2\lambda}]_{\lambda\in P}\rightarrow \BC[K^{2\lambda}]_{\lambda \in P}$ is defined by $\gamma(K^{2\lambda})=\e^{(-\rho, 2\lambda)}K^{2\lambda}$ for all $\lambda \in P$ (here we fix an element $\e^{1/\sN}$). Then, by Theorem \ref{thm: HC-center}, we have 
\begin{equation}\label{eq: HC map}\gamma_{-\rho}\circ \pi: Z_{HC}\xrightarrow{\sim} \BC[K^{2\lambda}]_{\lambda \in P}^W,
\end{equation}
where $W$-action is as follows: $w(K^\mu)=K^{w(\mu)}$ for all $w\in W, \mu \in 2P$. 

Let us define
\[ Z_\cap:=Z_{Fr}\cap Z_{HC}=(Z_{Fr})^{\dU_\e}\]
\begin{Lem} $Z_\cap\cong \BC[K^{2\lambda}]_{\lambda \in P^*}^W \subset \BC[K^{2\lambda}]^W_{\lambda \in P} \cong Z_{HC}$. 
\end{Lem}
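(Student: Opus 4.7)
The plan is to prove that the Harish-Chandra isomorphism $\pi_\bullet := \gamma_{-\rho}\circ\pi$ of~\eqref{eq: HC map} restricts to an algebra isomorphism $Z_\cap \iso \BC[K^{2\lambda}]^W_{\lambda\in P^*}$, from which the stated embedding follows at once.

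For the containment $\pi_\bullet(Z_\cap) \subseteq \BC[K^{2\lambda}]^W_{\lambda\in P^*}$, I would take any $z \in Z_\cap \subseteq Z_{Fr}$ and use the triangular decomposition of Lemma~\ref{basis properties of Ze}(c) to write $z = \sum_i y_i\, u_i\, x_i$ with weight-homogeneous $y_i \in Z_{Fr}^<$, $u_i \in Z_{Fr}^0$, $x_i \in Z_{Fr}^>$. The Harish-Chandra projection $\pi$ annihilates any summand whose $y_i$ or $x_i$ has nonzero weight, so the surviving terms have $y_i, x_i \in \BC$ and $u_i \in Z_{Fr}^0 = \bigoplus_{\mu\in 2P^*}\BC K^\mu$ by Lemma~\ref{basis properties of Ze}(b). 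Thus $\pi(z) \in \bigoplus_{\mu\in 2P^*}\BC K^\mu$, and applying $\gamma_{-\rho}$ (together with the automatic $W$-invariance from $z \in Z_{HC}$) yields $\pi_\bullet(z) \in \BC[K^{2\lambda}]^W_{\lambda\in P^*}$.

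For the reverse inclusion, I would produce a basis of $\BC[K^{2\lambda}]^W_{\lambda\in P^*}$ in the image. Since invariants are automatically locally finite, $Z_\cap = (Z_{Fr}^{fin})^{\dU_\e}$; by Remark~\ref{actions of U(gd)} this action factors through $\cFr\colon \dU_\e(\g)\to \dU_\BC(\g^d)$, and Lemma~\ref{lem: Zfin in field F}(a) identifies $Z_{Fr}^{fin}$ with $\BC[G^d]$ as a $G^d$-algebra, so $Z_\cap \cong \BC[G^d]^{G^d}$. For each $\lambda \in P^* \cap P_+$, let $z_\lambda \in Z_\cap$ be the element corresponding to the character $\chi^d_\lambda$ of $V^d(\lambda)$; by Chevalley restriction, $\{z_\lambda\}_{\lambda\in P^*\cap P_+}$ is a $\BC$-basis of $Z_\cap$.

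The heart of the argument is the computation of $\pi_\bullet(z_\lambda)$. Under $Z_{Fr} \cong \BC[G^d_0] \cong \BC[U^d_-]\otimes\BC[T^d]\otimes\BC[U^d_+]$ from Proposition~\ref{Ze and Bruhat cell}, the Harish-Chandra projection corresponds to the restriction to the $T^d$-factor (set the $U^d_\pm$-coordinates to the identity). Hence $\pi(z_\lambda)$ is the $T^d$-character of $V^d(\lambda)$; tracking the identification $K^{2\lambda}\leftrightarrow \chi_{-\lambda}$ of Proposition~\ref{Ze and Bruhat cell} and applying $\gamma_{-\rho}$, a computation analogous to~\eqref{eq: image of clambda under HC} yields $\pi_\bullet(z_\lambda) = \sum_{\mu} \dim V^d(\lambda)_\mu \, K^{\pm 2\mu}$, which has leading term $K^{\pm 2\lambda}$ with respect to the $W$-orbit monomial basis $\big\{\sum_{\mu\in W\lambda}K^{2\mu}\big\}_{\lambda\in P^*\cap P_+}$ of $\BC[K^{2\lambda}]^W_{\lambda\in P^*}$. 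Triangularity then forces $\pi_\bullet$ to be surjective onto this subalgebra. The main obstacle is precisely this bookkeeping: the definition of $\hiota$ involves a $K^{-2\rho}$ twist, the identification of $Z_{Fr}^0$ with $\BC[T^d]$ involves a sign, and the Harish-Chandra normalization uses the dot-action, so the three conventions must be reconciled to conclude that the triangular basis $\{\pi_\bullet(z_\lambda)\}$ indeed sits inside $\BC[K^{2\lambda}]^W_{\lambda\in P^*}$.
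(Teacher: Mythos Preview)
Your proposal is correct and follows essentially the same approach as the paper: both identify $Z_\cap$ with the invariants $\BC[G^d]^{G^d}$ (the paper via the REA isomorphism $O_\e[G^d]\cong Z^{fin}_{Fr}$ and the Peter--Weyl decomposition, you more directly via Lemma~\ref{lem: Zfin in field F}(a) and Chevalley restriction), take the basis coming from characters of $V^d(\lambda)$ for $\lambda\in P^*_+$, and compute their Harish-Chandra images as in~\eqref{eq: image of clambda under HC}. Your bookkeeping worry in the final paragraph is unnecessary: since your Step~1 already establishes the containment $\pi_\bullet(Z_\cap)\subseteq\BC[K^{2\lambda}]^W_{\lambda\in P^*}$, surjectivity follows from triangularity of the images regardless of the sign conventions in the intermediate identifications.
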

\begin{proof} Recall the isomorphism $\varphi: O_\e[G]\cong U^{fin}_\e$ in Theorem \ref{prop: REA-Ufin general case}. Thanks to Lemma \ref{lem: Oe[G^d] and Z^ev}, under this isomorphism we have $O_\e[G^d]\cong Z^{fin}_{Fr}$. Hence $O_\e[G^d]^{\dU_\e}\cong (Z_{Fr})^{\dU_\e} =Z_\cap$.

For any $\lambda \in P^*$, let  $W^d_\e(\lambda)$ be the Weyl module in $\Rep(\dU^*_\e)$. Let $c^d_\lambda =\sum c_{(v^d_i)^*, K^{-2\rho}v^d_i}\in O_\e[G^d]$, here $\{v^d_i\}$ is a weight basis of $W^d_\e(\lambda)$ and we sum over the indexing set of the basis. Then $\{c^d_\lambda\}_{\lambda\in P^*_+}$ form a $\BC$-basis of $O_\e[G^d]^{\dU_\e}$  because $\Rep(\dU^*_\e)$ is equivalent to $\Rep(\dU_\BC(\g^d))$ (cf. \eqref{eq: Rep(U*) vs Rep(U(gd))}),  hence semisimple, and then $O_\e[G^d]\cong \bigoplus_{\lambda \in P^*_+} W^d_\e(\lambda)\otimes (W^d_\e(\lambda))^*$. Now compute the image of $\{ \varphi(c^d_\lambda)\}_{\lambda \in P^*_+}$ under the Harish-Chandra map \eqref{eq: HC map} as in the proof of Theorem \ref{thm: HC-center}, one can see that $\{\gamma_{-\rho}\circ \pi\circ \varphi(c^d_\lambda)\}_{\lambda \in P^*_+}$ forms a basis of $\BC[K^{2\lambda}]^W_{\lambda \in P^*}$. This implies the lemma.
\end{proof}

\begin{Rem}The inclusion $Z_\cap \hookrightarrow Z_{HC}$ gives a rise to a map $\bullet^\ell: T/W\rightarrow T^d/W$. Under the identification $Z_{Fr}\cong \BC[G^d_0]$, this inclusion  corresponds to the categorical quotient $G^d_0 \hookrightarrow G^d \rightarrow G^d\sslash G^d \cong T^d/W$.  Hence 
\[ \Spec\; Z_{Fr}\otimes_{Z_\cap} Z_{HC}  \cong G^d_0 \x_{T^d/W} T/W.\]
\end{Rem}
\begin{Thm}[cf. Theorem $6.4$ \cite{dckp}]The natural map $Z_{Fr}\otimes_{Z_\cap} Z_{HC}\rightarrow Z$ is an isomorphism.
\end{Thm}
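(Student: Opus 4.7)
The plan is to follow the strategy of De~Concini-Kac-Procesi in \cite[Theorem~6.4]{dckp}, adapted to our even part setting. First, $\phi\colon Z_{Fr}\otimes_{Z_\cap}Z_{HC}\to Z$ is a well-defined $\BC$-algebra homomorphism because $Z_{Fr}$ and $Z_{HC}$ are commuting central subalgebras of $U^{ev}_\e(\g)$ meeting in $Z_\cap$. Under the identifications $\Spec Z_{Fr}\simeq G^d_0$ (Proposition~\ref{Ze and Bruhat cell}) and $\Spec Z_{HC}\simeq T/W$ (Theorem~\ref{thm: HC-center}), $\phi$ dualizes a morphism $\Spec Z\to G^d_0\times_{T^d/W}T/W$ of finite schemes over $G^d_0$, so the content of the statement is precisely that $\Spec Z$ is identified with this fiber product.

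The first step is to match generic ranks over $Z_{Fr}$. By Proposition~\ref{prop: deg of U}(b) we have $\dim_{Q(Z_{Fr})}Q(Z)=\prod_i\ell_i$. On the source side, the inclusion of lattices $2P^*\hookrightarrow 2P$ corresponds to an isogeny of tori $T\twoheadrightarrow T^d$ of degree $|P/P^*|=\prod_i\ell_i$, and by Chevalley-Shephard-Todd both $T/W$ and $T^d/W$ are affine spaces $\mathbb{A}^r$, making $T/W\to T^d/W$ a finite surjective morphism between smooth varieties of equal dimension, hence flat. Consequently $Z_{HC}$ is a free $Z_\cap$-module of rank $\prod_i\ell_i$, so $Z_{Fr}\otimes_{Z_\cap}Z_{HC}$ is a free $Z_{Fr}$-module of rank $\prod_i\ell_i$ matching the generic rank of $Z$.

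The remaining content is the isomorphism of $\phi$, which reduces to (a) injectivity of the extension $\phi_\eta\colon Q(Z_{Fr})\otimes_{Z_\cap}Z_{HC}\to Q(Z)$ and (b) normality-based surjectivity. For (a), $\phi_\eta$ sends a $\prod_i\ell_i$-dimensional $Q(Z_{Fr})$-algebra into the field $Q(Z)$; its image is a field extension $L$ of $Q(Z_{Fr})$ containing $Q(Z_{HC})$. Irreducibility of the fiber product $G^d_0\times_{T^d/W}T/W$---proved via a monodromy argument invoking the simple-connectedness of $G^d$ and the connectedness of the cover $T\to T^d$---gives $[L:Q(Z_{Fr})]=\prod_i\ell_i$, forcing $\phi_\eta$ to be a $Q(Z_{Fr})$-algebra isomorphism; $\phi$ is then injective because a free $Z_{Fr}$-module embeds into its localization. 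For (b), $Z$ is normal since $U^{ev}_\e(\g)$ is a maximal order (Proposition~\ref{prop: Uev is integrally closed}) with normal center (Remark~\ref{rem: order with normal center}), and $\BC[G^d_0\times_{T^d/W}T/W]$ is normal by Serre's $R_1+S_2$ criterion (Cohen-Macaulay via flatness of $T/W\to T^d/W$ and smoothness of $G^d_0$; regular in codimension one thanks to generic étaleness of $T/W\to T^d/W$). The image of $\phi$ is then a normal finite $Z_{Fr}$-extension of $Z$ sharing its fraction field, hence equals $Z$. The main obstacle is precisely the irreducibility of $G^d_0\times_{T^d/W}T/W$, which underlies both the generic injectivity and the normality arguments; unlike in \cite{dckp,t2}, the parity of $\ell$ plays no role here, because the Cartan twist has already absorbed the relevant covering subtleties into the very definition of $U^{ev}_\e(\g)$.
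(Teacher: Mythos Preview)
Your overall architecture—establish normality of $\tilde{Z}:=Z_{Fr}\otimes_{Z_\cap}Z_{HC}$, prove injectivity of $\phi$ via irreducibility of $\Spec\tilde{Z}$, match generic ranks over $Z_{Fr}$ to get birationality, and conclude by the ``finite birational onto normal'' principle—is exactly the paper's strategy. The rank computation and the Cohen--Macaulay half of Serre's criterion are fine. However, two of your steps rest on inadequate justifications, and both are repaired by the same missing ingredient: Steinberg's results on the adjoint quotient $G^d\to T^d/W$ for the simply connected group $G^d$.

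For $R_1$, generic \'etaleness of $T/W\to T^d/W$ is not enough. Since $T/W\cong\mathbb{A}^r\cong T^d/W$ and the map has degree $\prod_i\ell_i>1$, it cannot be \'etale (a connected finite \'etale cover of $\mathbb{A}^r$ is trivial), so by purity the branch locus $B\subset T^d/W$ has pure codimension~$1$; its preimage in $G^d_0$ under the flat map $G^d_0\to T^d/W$ then also has codimension~$1$, and your argument says nothing about regularity of the fiber product over that locus. The paper instead invokes Steinberg: the map $G^d\to T^d/W$ is smooth along the regular locus $G^{d,reg}$, whose complement in $G^d_0$ has codimension~$\ge 2$. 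Base-changing the smooth morphism $G^{d,reg}\cap G^d_0\to T^d/W$ along $T/W\to T^d/W$ yields a smooth morphism to the smooth variety $T/W$, so the fiber product is regular away from a codimension-$\ge 2$ set.

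For irreducibility, ``a monodromy argument invoking simple-connectedness of $G^d$ and connectedness of $T\to T^d$'' does not do the work as stated. Algebraic simple-connectedness of $G^d$ does not make $G^d_0$ (or the relevant \'etale locus) simply connected, and connectedness of $T$ tells you nothing about how the monodromy of $T/W\to T^d/W$ restricts along $G^d_0\to T^d/W$. The paper avoids monodromy altogether: project $\Spec\tilde{Z}$ to $T/W$ rather than to $G^d_0$, observe that this projection is the base change of $G^d_0\to T^d/W$ and hence has the same fibers (the Steinberg fibers in $G^d_0$, which are irreducible), and conclude irreducibility from a flat dominant map with irreducible base $T/W$ and irreducible fibers.

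A minor point: you assert that $Z$ is normal but never use it. What your argument actually needs—and what you do argue for, modulo the $R_1$ gap—is normality of $\tilde{Z}$: once $\phi$ is injective with $Q(\tilde{Z})=Q(Z)$ and $\tilde{Z}$ normal, integrality of $Z$ over $Z_{Fr}\subset\tilde{Z}$ forces $Z=\tilde{Z}$.
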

\begin{proof} The proof closely follows \cite[Theorem 6.4]{dckp}. Let $\tilde{Z}:=Z_{Fr}\otimes_{Z_\cap} Z_{HC}$

{\it Step 1:} We will show that  the algebra $Z_{Fr}\otimes_{Z_\cap} Z_{HC}$ is normal. 
By the construction in Section \ref{SSS:root_data}, $P^*$ is the weight lattice of a root system. In particular, $G^d$ is simply connected. It follows that $W$ acts on $T^d$ as a reflection group. Hence the quotient morphism $T^d\rightarrow T^d/W$ is flat. On the other hand, $T/W$ is smooth because $G$ is simply connected. It follows that the morphism $T/W\rightarrow T^d/W$ is flat as any finite dominant morphism between smooth varieties. Hence the morphism $G^d\times_{T^d/W}T/W\rightarrow G^d$ is finite and flat. Since $G^d$ is smooth, $G^d\times_{T^d/W}T/W$ is Cohen-Macaulay, hence so is $G^d_0 \times_{T^d/W} T/W$. In particular, thanks to the Serre normality criterium, it's enough to check that $\tilde{Z}=\mathbb{C}[G^d_0\times_{T^d/W}T/W]$ is regular in codimension $1$.

Since $G^d$ is simply connected,  we can apply results in \cite[$\mathsection 3.8$]{St}. By \cite[Theorem 1, $\mathsection 3.8$]{St}, $G^d_0=G_1 \cup G_2$, where $G_1$ is the open set of regular elements and $G_2$ is the closed subvariety of codimension at least $2$. Furthermore, By \cite[Theorem 3, $\mathsection 3.8$]{St}, the map $G_1\rightarrow T^d/W$ is smooth, hence the map $G_1 \x_{T^d/W} T/W \rightarrow T/W$ is smooth. Meanwhile $T/W$ is smooth, hence $G_1 \x_{T^d/W} T/W$ is smooth. On the other hand, $\bullet^\ell: T/W \rightarrow T^d/W$ is a finite map, hence $G_2 \x_{T^d/W} T/W$ has codimension at least $2$. So, we have proved that $\tilde{Z}$ is regular in codimension $1$.

{\it Step 2:} We have the following diagram
\[ \begin{tikzcd}
     \Spec \; Z\arrow[rr] \arrow[dr] && \Spec \; \tilde{Z}\arrow[dl]&\\
     & \Spec\; Z_{Fr}&&
\end{tikzcd}
\]
$\Spec \tZ$ is irreducible by the following reasons: the map $G^d_0 \times_{T^d/W} \rightarrow T/W$ is flat and dominant, the fibers are irreducible and the base $T/W$ is irreducible. On the other hand, two diagonal maps are finite and dominant. Therefore, the horizonal map is dominant, equivalently, the map $\tilde{Z}\rightarrow Z$ is injective.

{\it Step 3:} One can see that $\dim_{Q(Z_{Fr})} Q(\tilde{Z}) =\prod_{i=1}^r \ell_i$, hence $\dim_{Q(Z_{Fr})} Q(\tilde{Z})=\dim_{Q(Z_{Fr})} Q(Z)$ by Proposition \ref{prop: deg of U}. Therefore, the map $Q(\tilde{Z})\hookrightarrow Q(Z)$ is an isomorphism, equivalently, $\vartheta: \Spec\tilde{Z}\rightarrow \Spec Z$ is birational. So $\vartheta$ is a finite birational map to a normal variety, hence $\vartheta$ is an isomorphism. This completes the proof.   
\end{proof}

The rest of this section is about technical results used in Lemma \ref{lem: center is normal}. They are similar to \cite[Theorem 5.2]{dckp2}. However, our set up is slightly different and the proof in {\it loc.cit.} is rather sketchy. So we provide a detail proof for reader's convenience. 
\begin{Prop}[cf. Theorem 5.2 \cite{dckp2}]\label{prop: chain of centers} Let $Z^{(i)}$ be the center of the algebra $U^{(i)}_\e$ in Remark \ref{rem: chain of filtered algebras}. Then $\gr Z^{(i)}=Z^{(i+1)}$.
\end{Prop}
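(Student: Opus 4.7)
My plan is as follows. The inclusion $\gr Z^{(i)} \subseteq Z^{(i+1)}$ is the standard principal symbol argument: each $U^{(i)}_\e$ is $\BZ_{\geq 0}$-filtered with $\gr U^{(i)}_\e = U^{(i+1)}_\e$, so if $z \in Z^{(i)}$ has principal symbol $\sigma(z) \in U^{(i+1)}_\e$, then for any $a \in U^{(i)}_\e$ the relation $[z,a] = 0$ passes to the associated graded as $[\sigma(z), \sigma(a)] = 0$. Since $U^{(i+1)}_\e$ is generated as a $\BC$-algebra by the principal symbols of elements of $U^{(i)}_\e$ (this is built into the definition of the filtration in Remark~\ref{rem: chain of filtered algebras}), we conclude $\sigma(z) \in Z^{(i+1)}_\e$, proving $\gr Z^{(i)} \subseteq Z^{(i+1)}$.

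For the reverse inclusion, I would run a degree/rank comparison argument analogous to the one in \cite[Theorem~5.2]{dckp2}. The key inputs are the following: by Corollary~\ref{cor: grU has nonzero divisor} every $U^{(i)}_\e$ is a domain and finitely generated over its center (the latter via an iterated filtration argument starting from Corollary~\ref{U^ev is free over Z_e}), hence each $U^{(i)}_\e$ is an order in a central simple algebra over the fraction field $Q(Z^{(i)})$; Proposition~\ref{prop: Uev is integrally closed} then says that $U^{(i)}_\e$ is integrally closed, so it is a maximal order and $Z^{(i)}$ is a normal domain. By Proposition~\ref{prop: deg of U} we have $\deg U^{(-1)}_\e = \prod_{\alpha \in \Delta_+} \ell_\alpha$ and by Proposition~\ref{prop: deg of grU} the same holds for $U^{(2N)}_\e = \gr U^{ev}_\e$; since degeneration cannot increase the PI-degree \cite[Remark~1.3]{dckp2}, the degree equals $d = \prod_{\alpha \in \Delta_+} \ell_\alpha$ at every intermediate step. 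Consequently $U^{(i)}_\e$ and $U^{(i+1)}_\e$ have the same generic rank $d^2$ over their respective centers.

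From here the plan is to compare $\gr Z^{(i)}$ and $Z^{(i+1)}$ inside $U^{(i+1)}_\e$. Because the filtration on $U^{(i)}_\e$ restricts to a filtration of $Z^{(i)}$, the generic rank of $U^{(i+1)}_\e$ as a module over $\gr Z^{(i)}$ is at least the generic rank of $U^{(i)}_\e$ over $Z^{(i)}$, which equals $d^2$. Combined with the inclusion $\gr Z^{(i)} \hookrightarrow Z^{(i+1)}$ and the equality $\mathrm{rk}_{Q(Z^{(i+1)})} U^{(i+1)}_\e = d^2$, this forces $Q(\gr Z^{(i)}) = Q(Z^{(i+1)})$, so $Z^{(i+1)}$ is contained in the integral closure of $\gr Z^{(i)}$ in their common fraction field. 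To finish, I need to show that $\gr Z^{(i)}$ is integrally closed, or equivalently that $Z^{(i+1)}$ does not contain any elements genuinely new modulo symbols of $Z^{(i)}$.

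The main obstacle is precisely this last step: upgrading the generic equality to $\gr Z^{(i)} = Z^{(i+1)}$ as subalgebras of $U^{(i+1)}_\e$. I would tackle it by combining two ingredients. First, normality of $Z^{(i)}$ (inherited from maximality of the order $U^{(i)}_\e$) together with the explicit ``block-diagonal'' structure of the associated graded filtration introduced in Remark~\ref{rem: chain of filtered algebras} should yield normality of $\gr Z^{(i)}$, since each refinement step in this remark introduces only one additional $\BZ_{\geq 0}$-grading and the corresponding Rees-algebra construction preserves normality for suitably nice filtrations. Second, $Z^{(i+1)}$ is finite over $\gr Z^{(i)}$ because $U^{(i+1)}_\e$ is finite over $Z^{(i+1)}$ and hence over $\gr Z^{(i)}$; together with the fraction-field coincidence and normality of $\gr Z^{(i)}$, this forces equality $\gr Z^{(i)} = Z^{(i+1)}$. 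In case the direct normality argument is delicate, a fallback is to proceed by a downward induction from $i = 2N-1$, using the explicit description of $Z^{(2N)}$ in Proposition~\ref{prop: center of grU} as the base case and exhibiting explicit central lifts of the generators at each stage.
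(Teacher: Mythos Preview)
Your easy inclusion $\gr Z^{(i)}\subseteq Z^{(i+1)}$ is fine and matches the paper. The issue is with the reverse inclusion.

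Your main approach has a genuine circularity. You want to conclude $\gr Z^{(i)}=Z^{(i+1)}$ from $Q(\gr Z^{(i)})=Q(Z^{(i+1)})$ together with normality of $\gr Z^{(i)}$, and you propose to get the latter from normality of $Z^{(i)}$ via a Rees-algebra argument. But passing to the Rees algebra or to the associated graded does not preserve normality without additional input; in fact, the paper's own proof that $\mathscr{R}(Z^{(i)})$ is normal (Lemma~\ref{lem: center is normal}) \emph{uses} Proposition~\ref{prop: chain of centers}: the special fiber $\gr Z^{(i)}$ is identified with $Z^{(i+1)}$, which is known to be normal as the center of the integrally closed order $U^{(i+1)}_\e$, and then normality of all fibers plus flatness over $\BC[t]$ gives normality of the Rees algebra. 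So the normality of $\gr Z^{(i)}$ is an output of the proposition, not an input, and your main argument collapses.

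Your fallback is essentially what the paper does, though not as an induction. The paper observes that it suffices to prove the single equality $\gr Z = Z(\gr U^{ev}_\e)$ for the full $\BZ^{2N+1}_{\geq 0}$-filtration; the intermediate equalities then follow from the chain of inclusions $\gr Z\subseteq\cdots\subseteq Z^{(2N)}$ collapsing. For the full equality, one lifts the explicit generators of $Z(\gr U^{ev}_\e)$ given in Proposition~\ref{prop: center of grU}: the elements $K^{\pm 2\ell_i\omega_i},X_\alpha^{\ell_\alpha},Y_\alpha^{\ell_\alpha}$ are principal symbols of elements of $Z_{Fr}$, and the remaining generators $u_i$ are, up to nonzero scalars, the principal symbols of the Harish-Chandra elements $c_{\omega_i}\in Z_{HC}$. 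The latter is the nontrivial step and is the content of Lemma~\ref{lem: gr Harish-Chandra center}. So rather than a degree-comparison argument, the paper's route is a direct, completely explicit lift of generators, and the hard work lies in identifying the leading term of $c_{\omega_i}$.
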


\begin{proof}The proposition will follow if we  show that  $\gr Z= Z_{\gr U^{ev}_\e}$. But then it is enough to prove that the generators $K^{\pm 2\ell_i \w_i}, X_\a^{\ell_\a}, Y_\a^{\ell_\a}, u_i$  of $Z_{\gr U^{ev}_\e}$ from Proposition \ref{prop: center of grU} are contained in $\gr Z$. On the other hand, the elements $K^{\pm 2\ell_i \w_i}, X_\a^{\ell_\a}, Y_\a^{\ell_\a}$ in $U^{ev}_\e(\g)$ are contained in the Frobenius center $Z_{Fr}$. Furthermore, their images under the symbol map in $\gr U^{ev}_\e$ are $K^{\pm 2\ell_i \w_i}, X_\a^{\ell_\a}, Y_\e^{\ell_\a}$, respectively. It follows that $K^{\pm 2\ell_i \w_i}, X_\a^{\ell_\a}, Y_\a^{\ell_\a}$ are contained in $\gr Z$. Therefore, it remains to  show that $u_i \in \gr Z$. This follows from Lemma \ref{lem: gr Harish-Chandra center} below.
\end{proof}
Let us consider  the Weyl module $W_\e(\w_i)$ for the fundamental weight $\w_i$. Let $v_1, \dots, v_t$ be the weight basis of $W_\e(\w_i)$ such that if $\weight(v_i)> \weight(v_j)$ then $i>j$. In particular, $v_1$ is the lowest weight vector of $W_\e(\w_i)$ and $v_t$ is the highest weight vector of $W_\e(\w_i)$.  The image of the element $c_{\w_i}=\sum_j c_{v^*_j, K^{-2\rho} v_j} \in O_\e[G]$ in $ U^{ev}_\e$ is contained in the Harish-Chandra center $Z_{HC}$ of $U^{ev}_\e(\g)$, see Section \ref{sec: HC center}. 
\begin{Lem}\label{lem: gr Harish-Chandra center} $\gr (c_{\w_i})=a  u_i$ for some $a\in \BC^\x$.
\end{Lem}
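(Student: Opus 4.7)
The plan is to compute the symbol $\gr(c_{\w_i})$ directly with respect to the $\BZ^{2N+1}_{\geq 0}$-filtration of $U^{ev}_\e(\g)$ introduced in~\eqref{eq: deg of monomial}, and then identify it as a scalar multiple of $u_i$. The starting point is to realize each summand $c_{v^*_j, K^{-2\rho}v_j}$ of $c_{\w_i}$ as an element of $U^{ev}_\e$ via the embedding $\hiota\colon O_\e[G]\hookrightarrow U^{ev}_\e$ of Theorem~\ref{prop: REA-Ufin general case}. Lemma~\ref{lem: matrix coeff in Uev} together with the explicit pairing~\eqref{eq16} expresses $\hiota(c_{v^*_j, K^{-2\rho}v_j})$ as a finite sum of PBW monomials of the form $\tF^{\cev{k}} K^{\lambda_{\vec{k},\vec{r}}} \tE^{\cev{r}}$ whose coefficients are determined by the matrix entries $v^*_j\!\bigl(\tF^{(\cev{k})} u_0 \tE^{(\cev{r})} v_j\bigr)$ for varying $u_0\in \dU^0_\e$.

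Next I would identify the maximal term in the filtration. The key observation, using Lemma~\ref{lem: properties of root systems}(d) and (d'), is that the chain of extremal weight vectors connecting the lowest weight vector $v_1$ (of weight $-\w^*_i$) to the highest weight vector $v_t$ (of weight $\w_i$) inside $W_\e(\w_i)$ uses precisely the ordered root sets $\CI_{\a^*_i}=\{\b_{l_1},\dots,\b_{l_r}\}$ for the $\tE$-direction and $\CI_{\a_i}=\{\b_{k_1},\dots,\b_{k_r}\}$ for the $\tF$-direction, with each single step having a nonzero matrix entry by parts (c) and~(c'). Hence the PBW monomial $\tF_{\b_{k_r}}\cdots \tF_{\b_{k_1}} K^{?} \tE_{\b_{l_r}}\cdots \tE_{\b_{l_1}}$ appears with a nonzero coefficient in $\hiota(c_{v^*_1, K^{-2\rho}v_1})$, and by the convexity of the chosen ordering of $\Delta_+$ it dominates, in the lexicographic order of~\eqref{eq: deg of monomial}, every other PBW monomial occurring in $\hiota(c_{\w_i})$: monomials coming from $j>1$ or from non-extremal paths in $W_\e(\w_i)$ are strictly shorter in the sequence of highest indices.

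The symbol of this dominant monomial in $\gr U^{ev}_\e$, rewritten in the generators of Proposition~\ref{prop: grU}, is a product of the $Y_{\b_{k_j}}$'s and $X_{\b_{l_j}}$'s times a Cartan factor. The Cartan factor is assembled from the shift $K^{2\,\weight(v_1)}=K^{-2\w^*_i}$ supplied by Lemma~\ref{lem: matrix coeff in Uev}, the dressings $K^{\kappa(\mu)}$ and $K^{\gamma(\mu)}$ built into the twisted PBW vectors for $\mu=\w_i+\w^*_i=\sum \b_{k_j}=\sum \b_{l_j}$, and the correction $K^{-\gamma(\b_{l_j})}$ produced when each $\tE_{\b_{l_j}}$ is replaced by $X_{\b_{l_j}}$. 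Using $\gamma+\kappa = 2\,\mathrm{Id}$ together with Lemma~\ref{lem: properties of root systems}(d'), the Cartan exponent collapses to $-2\w_i+\kappa(\w_i+\w^*_i)$, so the symbol equals $a\,u_i$ for some nonzero $a$. Because $c_{\w_i}\in Z_{HC}$ is central, Proposition~\ref{prop: chain of centers} forces $\gr(c_{\w_i})\in Z_{\gr U^{ev}_\e}$, which is consistent with Proposition~\ref{prop: center of grU} identifying the weight-zero central elements of the prescribed filtration degree as scalar multiples of $u_i$.

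The main obstacle will be the maximality claim in the second paragraph: one must rule out contributions of equal filtration degree from (a) intermediate weight vectors $v_j$ with $j>1$ paired against other PBW monomials, and (b) non-extremal weight-raising sequences in $W_\e(\w_i)$ that could a priori involve root vectors with larger indices than those in $\CI_{\a^*_i}$. Both are controlled by a careful weight-by-weight analysis of $W_\e(\w_i)$ driven by Lemma~\ref{lem: properties of root systems}, which pins down the extremal-chain structure in terms of the ordered sets $\CI_{\a_i}$ and $\CI_{\a^*_i}$ and thereby constrains the highest indices that can occur in the PBW support.
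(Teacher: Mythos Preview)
Your overall strategy coincides with the paper's: realize $c_{\w_i}$ inside $U^{ev}_\e$ via $\hiota$, use Lemma~\ref{lem: matrix coeff in Uev} to expand into PBW monomials, isolate the term coming from the lowest-weight summand $c_{v^*_1,v_1}$, and identify its symbol. However, there is a concrete error in how you translate between the matrix-coefficient side and the PBW monomial side.

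The pairing~\eqref{pairing of DCK and Lus rewritten} couples $y\in U^{ev<}$ with $\dx\in \dU^>$ and $x\in U^{ev>}$ with $\dy\in \dU^<$. Thus the PBW monomial $\tF^{\cev{a}}K^{?}\tE^{\cev{b}}$ in $U^{ev}_\e$ pairs against $\tF^{(\cev{b})}u_0\tE^{(\cev{a})}$ in $\dU_\e$, and the matrix coefficient being tested is $v^*_1\bigl(\tF^{(\cev{b})}\tE^{(\cev{a})}v_1\bigr)$. Since the raising chain $v_1\to v_t$ uses roots from $\CI_{\a^*_i}=\{\b_{l_j}\}$ and the lowering chain $v_t\to v_1$ uses $\CI_{\a_i}=\{\b_{k_j}\}$, the extremal monomial in $U^{ev}_\e$ is $\tF_{\b_{l_r}}\cdots\tF_{\b_{l_1}}K^{?}\tE_{\b_{k_r}}\cdots\tE_{\b_{k_1}}$, not the transposed one you wrote. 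Your version yields $\prod X_{\b_{l_j}}\prod Y_{\b_{k_j}}$ in $\gr U^{ev}_\e$, which for $\a_i\ne\a^*_i$ is \emph{not} $u_i$; the correct swap gives $\prod X_{\b_{k_j}}\prod Y_{\b_{l_j}}$ and the Cartan factor $K^{-2\w_i+\kappa(\w_i+\w^*_i)}$ then drops out directly (your Cartan bookkeeping, starting from $K^{2\weight(v_1)}$ rather than $K^{-2(\weight(\dx)+\weight(v_1))}$, also needs correction).

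Separately, the maximality in the lexicographic order is not a consequence of convexity of the root ordering. The paper's argument is an inductive step-by-step analysis along the extremal chain: Lemma~\ref{lem: technical lem 2} (which uses parts (b), (b') of Lemma~\ref{lem: properties of root systems}) shows $\tE_{\b_j}^{(n)}v_{\lambda'_t}=0$ for $l_t<j<l_{t+1}$, while Lemma~\ref{lem: technical lem 1} forces $\tE_{\b_{l_{t+1}}}$ to act with multiplicity exactly one. Convexity plays no role here, and your final paragraph correctly anticipates that a weight-by-weight argument is required; those two lemmas are precisely what is needed.
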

To prove Lemma \ref{lem: gr Harish-Chandra center}, we need some technical results. 
\begin{Lem}\label{lem: technical lem 1} Let $M \in \Rep(\dU_\e(\g))$. Let $0\neq m \in M_\lambda$. For each $\a\in \Delta_+$, set $r_\a=\max\{r| \tE_\a^{(r)}m \neq 0\}$ and $s_\a=\max\{ s|\tF_\a^{(s)}m \neq 0\}$. Then $s_\a-r_\a=(\lambda, \a^\vee)$, where $\a^\vee=2\a/(\a, \a)$.
\end{Lem}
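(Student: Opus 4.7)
The plan is to reduce to the case of a simple root using Lusztig's braid group action and then to perform the standard $\mathfrak{sl}_2$-type computation adapted to the twisted, divided-power setting at a root of unity.

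\smallskip

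\noindent\textbf{Step 1: Reduction to a simple root.}  Choose $w\in W$ with $w(\alpha_i)=\alpha$ for some simple root $\alpha_i$, and fix a reduced expression of $w$.  Since $M\in\Rep(\dU_\e(\g))$ is rational, hence integrable, Lusztig's braid operators $T_w$ of~\eqref{eq:braid group} act on $M$ and carry the weight space $M_\mu$ to $M_{w\mu}$.  The formulas~\eqref{eq:braid group} show that conjugation by $T_w$ sends $E_i^{(k)}$ (respectively $F_i^{(k)}$) to $E_\alpha^{(k)}$ (respectively $F_\alpha^{(k)}$) up to a unit scalar and a Cartan factor; after absorbing the Cartan factor using~\eqref{eq:twisted-root-vector}, the same holds for the twisted generators $\tE^{(k)}_\bullet, \tF^{(k)}_\bullet$.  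Therefore, setting $m':=T_w^{-1}m\in M_{w^{-1}\lambda}$, the integers $r_\alpha(m), s_\alpha(m)$ equal $r_i(m'), s_i(m')$ respectively.  Since $(w^{-1}\lambda,\alpha_i^\vee)=(\lambda,\alpha^\vee)$, it suffices to prove the claim when $\alpha=\alpha_i$ is simple.

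\smallskip

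\noindent\textbf{Step 2: The simple root case via the EF commutation relation.}  Write $r=r_{\alpha_i}$, $s=s_{\alpha_i}$, and set $m^+:=\tE_i^{(r)}m$, a nonzero vector of weight $\mu=\lambda+r\alpha_i$.  Using $\tE_i\tE_i^{(r)}=(r+1)_{v_i}\tE_i^{(r+1)}$ together with the definition of $r$ (splitting according to whether $r+1\equiv 0\bmod \ell_i$ or not), one checks $\tE_i m^+=0$.  Then the EF-formula~\eqref{eq:ef-twisted-swap-in-v} applied inductively on $k$ yields an explicit identity of the shape
\begin{equation*}
  \tE_i^{(k)}\tF_i^{(k)}m^+ \;=\; v_i^{k^2}\binom{(\lambda,\alpha_i^\vee)+2r}{k}_{v_i}\, m^+,
\end{equation*}
where the $v$-binomial coefficient is evaluated at $v=\e$.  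An analogous computation, using \eqref{eq:ef-twisted-swap-in-v} in the form $\tF_i^{(k+r)}\tE_i^{(r)}$, shows that $\tF_i^{(k+r)}m^+$ is a unit multiple of $\tF_i^{(k)}m$ for all $k\geq 0$.  Consequently the set $\{k:\tF_i^{(k)}m^+\neq 0\}$ equals $\{0,1,\dots,s+r\}$, and maximality forces $\tE_i^{(s+r+1)}\tF_i^{(s+r+1)}m^+=0$ while $\tE_i^{(s+r)}\tF_i^{(s+r)}m^+\neq 0$.  Combined with the displayed formula and Lemma~\ref{lem:aux-at-roots}(a) for evaluating $v$-binomials at a root of unity, this pins down $s+r=(\lambda,\alpha_i^\vee)+2r$, i.e.\ $s-r=(\lambda,\alpha_i^\vee)$.

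\smallskip

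\noindent\textbf{Main obstacle.}  The technical heart is Step 2: at a root of unity some of the $v$-quantum integers and binomial coefficients appearing in the EF-formula vanish, so one cannot invoke the naive generic $\mathfrak{sl}_2$-chain argument directly.  Lemma~\ref{lem:aux-at-roots}(a) (the divided form of Lucas's theorem) lets us compute $\binom{n}{k}_{v_i}$ explicitly in terms of the base-$\ell_i$ digits, and this is precisely what is needed to match the pattern of vanishing of the $\tF_i^{(k)}m^+$ with the pattern of vanishing of the binomials.  The remainder of the argument is bookkeeping with the unit scalars produced by the twisting~\eqref{eq:twisted-root-vector} and by the Cartan factors $\binom{K_i;a}{c}$ evaluated on the weight vector $m$.
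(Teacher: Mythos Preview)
Your Step~1 (braid reduction to a simple root) is correct and matches the paper's argument exactly; the paper additionally first trades the twisted generators $\tE_\alpha^{(n)},\tF_\alpha^{(n)}$ for the untwisted $E_\alpha^{(n)},F_\alpha^{(n)}$, which differ only by Cartan factors, but this is cosmetic. For the simple-root case the paper does not attempt a direct computation: it cites \cite[Lemma~1.11]{APW1} (noting in Remark~\ref{rem: 1rem about APW} that the proof there goes through over a general base ring).

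Your Step~2, however, has real gaps. The assertion that ``$\tF_i^{(k+r)}m^+$ is a unit multiple of $\tF_i^{(k)}m$'' is false in general: the weight space $M_\lambda$ need not be one-dimensional, and expanding $\tF_i^{(k+r)}\tE_i^{(r)}$ via the commutation relation produces a genuine sum $\sum_c\tE_i^{(r-c)}[\cdot]\tF_i^{(k+r-c)}m$, not a single term proportional to $\tF_i^{(k)}m$. Even setting this aside, you infer from $\tF_i^{(s+r)}m^+\neq 0$ that $\tE_i^{(s+r)}\tF_i^{(s+r)}m^+\neq 0$, but your displayed formula only identifies the latter with $\e_i^{(s+r)^2}\binom{n}{s+r}_{\e_i}m^+$, and nothing rules out that binomial vanishing. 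Finally, the conditions $\binom{n}{K}_{\e_i}\neq 0$ and $\binom{n}{K+1}_{\e_i}=0$ do \emph{not} pin down $K$: by Lemma~\ref{lem:aux-at-roots}(a), for $\ell_i=3$ and $n=4$ one has $\binom{4}{1}_{\e_i}\neq 0,\ \binom{4}{2}_{\e_i}=0$ as well as $\binom{4}{4}_{\e_i}\neq 0,\ \binom{4}{5}_{\e_i}=0$, so the vanishing pattern of the binomials does not match the vanishing pattern of $\tF_i^{(k)}m^+$. The correct argument for the simple-root case is two-sided: one passes to \emph{both} the highest vector $m^+=\tE_i^{(r)}m$ and the lowest vector $m^-=\tF_i^{(s)}m$, applies the relevant chain-length statement (this is what \cite[Lemma~1.11]{APW1} supplies) to each, and combines the resulting inequalities.
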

\begin{proof} We will prove the similar statement where $\tE_\a^{(n)}, \tF_\a^{(n)}$ are replaced by  the standard roots generators $E_\a^{(n)}, F_\a^{(n)}$ because we want to use the Lusztig braid group actions on rational representations in $\Rep(\dmU_\e(\g)) \cong \Rep(\dU_\e(\g))$. The result for our version of the quantum group will follow since the twisted root generators $\tE_\a^{(n)}, \tF_\a^{(n)}$ of $\dU_\e(\g)$ are different from the standard root generators only by  factors of the form $K^?$.

When $\a=\a_i$ for some simple root $\a_i$, the statement is proved in \cite[Lemma 1.11]{APW1}. Note that in \emph{loc.~cit.} they proved the result over the localization $\BZ[v, v^{-1}]_{\m_0}$ of $\BZ[v,v^{-1}]$ at the ideal $\m_0=(p, v-1)$. However, their proof works over an arbitrary ring $R$, see Remark \ref{rem: 1rem about APW}.. 

In general, $\a=s_{i_1} \dots s_{i_{k-1}}(\a_{i_k})$ so that $E^{(n)}_\a=T_{i_1} \dots T_{i_{k-1}}(E^{(n)}_{i_k}), F^{n}_\a=T_{i_1}\dots T_{i_{k-1}}(F^{(n)}_{i_k})$. There are compatible braid group actions on the rational representations in $\Rep(\dmU_\e(\g))$ \cite[ $\mathsection 41.2$]{l-book}. Consider $m'=T^{-1}_{i_{k-1}}\dots T^{-1}_{i_1} m$, a weight vector of weight $s_{i_{k-1}}\dots s_{i_1}(\lambda)$. Then 
\[ E_\a^{(n)}m =T_{i_1}\dots T_{i_{k-1}}(E^{(n)}_{i_k}m'), \qquad F_\a^{(n)}m=T_{i_1}\dots T_{i_{k-1}}(F^{(n)}_{i_k}m').\]
Hence the statement for general $\a$ follows from the case when $\a=\a_i$ a simple root.
\end{proof}
Recall the sets $\CI_{\a_i}=\{ \b_{k_1}, \dots, \b_{k_r}\}, \CI_{\a^*_i}=\{ \b_{l_1}, \dots, \b_{l_r}\}$. Let 
\[ \lambda_t=s_{\b_{k_t}}\dots s_{\b_{k_1}}(\w_i), \qquad \lambda'_t=s_{\b_{k_t}}\dots s_{\b_{k_1}}(-\w^*_i)\qquad \forall 1\leq t\leq r.\]
In the weight basis $v_1, \dots, v_m$  of $W_\e(\w_i)$ as above, let $v_{\lambda_t}, v_{\lambda'_t}$ be the unique weight vectors of weights $\lambda_t, \lambda'_t$, respectively. 
\begin{Lem}\label{lem: technical lem 2} (a) $\tE_{\b_j}^{(n)}(v_{\lambda_t})=0$ for all $n \geq 1$ and $k_t< j \leq k_{t+1}$. 

\noindent
(b) $\tF_{\b_j}^{(n)}v_{\lambda'_t}=0$ for all $n \geq 1$ and $l_t< j \leq l_{t+1}$.
\end{Lem}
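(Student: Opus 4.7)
My plan is to use Lusztig's braid group action on the Weyl module $W_\e(\w_i)$ to reduce both claims to the elementary fact that positive (respectively negative) root vectors annihilate the highest (respectively lowest) weight vector. As in the proof of Lemma~\ref{lem: technical lem 1}, I first replace the twisted generators $\tE_{\b_j}^{(n)}$ and $\tF_{\b_j}^{(n)}$ by the untwisted root generators $E_{\b_j}^{(n)}$ and $F_{\b_j}^{(n)}$: the two differ only by a scalar and a Cartan element that acts on a weight vector by a nonzero scalar, so the vanishing of one is equivalent to the vanishing of the other. Working in $\Rep(\dmU_\e(\g))$, Lusztig's braid action is directly available.

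For part~(a), the first step is to show that $v_{\lambda_t}=c\cdot T_{i_1}\cdots T_{i_{k_t}}(v_{\w_i})$ for some $c\in\BC^\x$. This follows because, by Lemma~\ref{lem: properties of root systems}(a), $\lambda_t=s_{i_1}\cdots s_{i_{k_t}}(\w_i)$; since the braid operators send weight $\mu$ vectors to weight $s_i(\mu)$ vectors, act invertibly on each rational representation, and the extremal weight space $W_\e(\w_i)_{\lambda_t}$ is one-dimensional, the vector $T_{i_1}\cdots T_{i_{k_t}}(v_{\w_i})$ must be a nonzero scalar multiple of $v_{\lambda_t}$. The next step is the intertwining identity $T_i\cdot a=T_i(a)\cdot T_i$ (as operators on the module), combined with $E_{\b_j}=T_{i_1}\cdots T_{i_{j-1}}(E_{i_j})$ from~\eqref{eq:root-generator}, to obtain
\[
E_{\b_j}^{(n)}\cdot T_{i_1}\cdots T_{i_{k_t}}(v_{\w_i})=T_{i_1}\cdots T_{i_{k_t}}\cdot T_{i_{k_t+1}}\cdots T_{i_{j-1}}(E_{i_j}^{(n)})\cdot v_{\w_i}.
\]
Since $s_{i_{k_t+1}}\cdots s_{i_N}$ is a reduced expression (being a consecutive subword of the reduced expression $s_{i_1}\cdots s_{i_N}$ for $w_0$), the element $T_{i_{k_t+1}}\cdots T_{i_{j-1}}(E_{i_j}^{(n)})$ is a positive root vector in the corresponding shifted PBW basis, associated to the positive root $s_{i_{k_t+1}}\cdots s_{i_{j-1}}(\a_{i_j})$. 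It therefore annihilates the highest weight vector $v_{\w_i}$ for every $n\geq 1$, giving $E_{\b_j}^{(n)}v_{\lambda_t}=0$, as needed.

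For part~(b) I would proceed dually: express $v_{\lambda'_t}$ as a nonzero scalar multiple of a suitable product of braid operators applied to the lowest weight vector $v_{-\w^*_i}$, then use the intertwining identity to move $F_{\b_j}^{(n)}$ (for $l_t<j\leq l_{t+1}$) past the braid operators, so that it becomes (up to scalars) a negative root vector in a shifted PBW basis acting on $v_{-\w^*_i}$; such an operator vanishes on the lowest weight vector for $n\geq 1$. The main obstacle in both parts is the intertwining computation: while the manipulation is formal, one must carefully track the braid identities in the correct order and verify that the shifted subword indeed yields a root vector in a legitimate shifted PBW basis. This is standard given Lusztig's theory, but requires care with the conventions for root generators and reduced expressions.
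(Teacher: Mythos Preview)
Your proposal is correct and takes essentially the same approach as the paper: both replace $\tE_{\b_j}^{(n)}$ by $E_{\b_j}^{(n)}$, identify $v_{\lambda_t}$ with a nonzero multiple of $T_{i_1}\cdots T_{i_{k_t}}(v_{\w_i})$ via Lemma~\ref{lem: properties of root systems}(a), and use the intertwining property of Lusztig's braid operators to reduce to a positive root vector annihilating the highest weight vector. The only difference is in the last step: the paper pulls the residual braid operators $T^{-1}_{i_{j-1}}\cdots T^{-1}_{i_{k_t+1}}$ onto $v_{\w_i}$ and observes they fix it up to scalar (since $s_{i_m}(\w_i)=\w_i$ for $k_t<m<k_{t+1}$, these indices lying outside $I_{\a_i}$), reducing directly to $E^{(n)}_{i_j}v_{\w_i}=0$; you instead leave those operators on $E_{i_j}^{(n)}$ and invoke the standard fact that $T_{i_{k_t+1}}\cdots T_{i_{j-1}}(E_{i_j}^{(n)})\in\dmU^>$ because $s_{i_{k_t+1}}\cdots s_{i_j}$ is reduced.
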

\begin{proof}Again, we will prove the statements for the standard root generators $E_\a^{(n)}, F_\a^{(n)}$ instead of $\tE_\a^{(n)}, \tF_\a^{(n)}$.

We will prove part $(a)$ only since the proof of part $(b)$ is similar.  Using Lusztig's braid group action, we get
\[ E_{\b_j}^{(n)}=T_{s_{i_1}}\dots T_{s_{i_{j-1}}}(E^{(n)}_{i_k}), \qquad v_{\lambda_t}=a T_{s_{i_1}} \dots T_{s_{i_{k_t}}}(v_{\w_i}), \qquad a\in \BC^\x.\]
here we use $\lambda_t=s_{i_1}\dots s_{i_{k_t}} (\w_i)$ by Lemma \ref{lem: properties of root systems}.
Hence
\[ E_{\b_j}^{(n)}v_{\lambda_t}=aT_{s_{i_1}}\dots T_{s_{i_{j-1}}}(E_{i_{k}}^{(n)} T^{-1}_{s_{i_{j-1}}}\dots T^{-1}_{s_{i_k+1}}(v_{\w_i}).\]
Since $s_{i_m}(\w_i)=\w_i$ for all $k_t< m< k_{t+1}$, it follows that $T^{-1}_{s_{i_m}}(v_{\w_i})$ is a nonzero weight vector of weight $w_i$ in $W_\e(\w_i)$. Therefore, 
\[ T^{-1}_{s_{i_{j-1}}}\dots T^{-1}_{s_{i_k+1}}(v_{\w_i})=b v_{\w_i}, \qquad b\in \BC^{\x}\]
Hence 
\[ E^{(n)}_{\b_j}(v_{\lambda_t})=ab T_{s_{i_1}} \dots T_{s_{i_{j-1}}}(E^{(n)}_{i_k} v_{\w_i})=0.\]
\end{proof}

\begin{proof}[Proof of Lemma \ref{lem: gr Harish-Chandra center}]We note that $c_{\w_i}=\sum_j \e^{-(2\rho, \weight(v_j))} c_{v^*_j, v_j}$. We view $c_{\w_j}$ as  an element of $U_{\e}^{ev}(\mathfrak{g})$ via the embedding $O_\e[G]\hookrightarrow U^{ev}_\epsilon(\mathfrak{g})$. We recall the definitions of the height and degree of monomials in $U_\e^{ev}$ in  \eqref{eq: height of monomial}-\eqref{eq: deg of monomial} as well as the $\BZ^{2N+1}_{\geq 0}$-filtration on $U_\e^{ev}$.

We need to show that $u_i$ is the highest monomial of $c_{\w_i}$ in the $\BZ^{2N+1}_{\geq 0}$-filtration. This follows from the following two steps.

{\it Step 1.} We will show that the highest degree monomial in $c_{v^*_1, v_1}$  is 
\[ (\tF_{\b_{l_s}} \dots \tF_{\b_{l_1}}K^{\kappa(\w_i+\w_i^*)}) K^{-2\w_i}(\tE_{\b_{k_t}}\dots\tE_{\b_{k_1}} K^{\gamma(\w_i+\w^*_i)}),\]
which is equal to $b u_i$ for some $b\in \BC^\x$. The height of this monomial is $2\w_i+2\w^*_i$.

By Lemma \ref{lem: matrix coeff in Uev}, $c_{v^*_1, v_1}$ contains the monomial 
\begin{equation}\label{eq: monomial}(\tF^{\cev{r}} K^{\kappa(-\weight(\tF^{\cev{r}}))})K^{-2\weight(\tE^{\cev{r}})-2\weight(v_1)} (\tE^{\cev{k}} K^{\gamma(\weight(\tE^{\cev{k}}))})
\end{equation}
if and only if
\[  v^*_1(\tF^{(\cev{k})} \tE^{(\cev{r})} v_1) \neq 0.\]

$\bullet$ The height of monomial \eqref{eq: monomial} is $ \weight(\tE^{\cev{k}})-\weight(\tF^{\cev{r}}) =\weight(\tE^{(\cev{r})})-\weight(\tF^{(\cev{k})})$. 
This height is the largest possible  (equal to $2\w_i +2\w^*_i$) only when $\tE^{(\cev{r})}v_1= a v_t$ and $\tF^{(\cev{k})}v_t =b v_1$ for some $a, b \in \BC^\x$. 

$\bullet$ To keep track of the degree of \eqref{eq: monomial}, we put the degree $( k_N, \dots, k_1)$ on $\tF^{(\cev{k})}$  and the degree $(r_N, \dots, r_1)$ on $\tE^{(\cev{r})}$ via the lexicographic orders:  $(k_N, 0, \dots, 0)< \dots< (0,\dots, 0, k_1)$ and $(r_N,0, \dots, 0)< \dots < (0, \dots, 0, r_1)$

$\bullet$ We will show that the highest degree monomial $\tE^{(\cev{r})}$ such that $\tE^{(\cev{r})} v_1=av_t$ for some $a\in \BC^\x$ is $\tE_{\b_{l_s}}\dots \tE_{\b_{l_1}}$. Note that $v_1=v_{-\w^*_i}, v_t=v_{\w_i}$.


For $1\leq j < l_1$, $\tF_{\b_j}^{(n)}v_{-\w^*_i}=0$ for all $n \geq 1$ , and $( -\w^*_i, \b^\vee_j)=0$. Then by Lemma \ref{lem: technical lem 1}, $\tE^{(n)}_{\b_j} v_{-\w^*_i}=0$ for all $n\geq 1$ and $1\leq j < l_1$.\\
Moreover, $\tF^{(n)}_{\b_{l_1}} v_{-\w^*_i}=0$ for all $n\geq 1$, and $(-\w^*_i, \b^\vee_{l_1})=1$. Then by Lemma \ref{lem: technical lem 1}, 
\[\begin{cases} 
\tE_{\b_{l_1}}v_{-\w^*_i} \neq 0 \Rightarrow \tE_{\b_{l_1}} v_{-\w^*_i} =a v_{\lambda'_1}, \qquad a\neq 0\\
\tE^{(n)}_{\b_{l_1}}v_{-\w^*_i}=0 \qquad \forall n\geq 2.
\end{cases}\]
So we can assume $\tE^{(\cev{r})}=\tE^{(r_N)}_{\b_N}\dots \tE^{(r_{l_1}+1)}_{\b_{l_1+1}} \tE_{\b_{l_1}}$.

For $l_1<j<l_2$, $\tF^{(n)}_{\b_j} v_{\lambda'_1}=0$, and $(\lambda'_1, \b^\vee_j)=0$. Then by Lemma \ref{lem: technical lem 1}, $\tE^{(n)}_{\b_j}\tE_{\b_{l_1}} v_{-\w^*_i}=0$.\\
Moreover, $\tE^{(n)}_{\b_{l_2}}v_{\lambda'_1}=0$ for $n \geq 1$, and $(\lambda'_1, \b^\vee_{l_1})=1$ then by Lemma \ref{lem: technical lem 1}
\[\begin{cases} \tE_{\b_{l_2}} \tE_{\b_{l_1}} v_{-\w^*_i} \neq 0 \Rightarrow \tE_{\b_{l_2}}\tE_{\b_{l_1}} v_{-\w^*_i}= bv_{\lambda'_2}, \;\; b\neq 0\\
\tE^{(n)}_{\b_{l_2}} \tE_{\b_{l_1}} v_{-\w^*_i}=0 \qquad \forall n \geq 2
\end{cases}\]
So we can assume $\tE^{(\cev{r})}=\tE^{(r_N)}_{\b_N}\dots \tE^{(r_{l_2}+1)}_{\b_{l_2+1}} \tE_{\b_{l_2}}\tE_{\b_{l_1}}$. 

Keep doing, we see that the highest degree monomial is $\tE_{\b_{l_r}}\dots \tE_{\b_{l_1}}$. 

$\bullet$ Similarly, the highest degree monomial $\tF^{(\cev{k})}$ such that $\tF^{(\cev{k})} v_t=bv_1$ for some $b\in \BC^\x$ is $\tF_{\b_{k_t}} \dots \tF_{\b_{k_1}}$.

{\it Step 2.} All monomials in $c_{v^*_i, v_i}$ with $i>1$ have lower degree than $u_i$. Indeed, the heights of all monomials in $c_{v^*_i, v_i}$ are at most $2\w_i -2\weight(v_i)< 2\w_i +2\w^*_i$.   
\end{proof}

\subsection{The Azumaya locus of $U^{ev}_\e(\g)$ over $\Spec Z$}\ \label{ssec: Azumaya locus}

Let us recall the following set introduced in Theorem \ref{thm: finite dim rep of order}:
\[ \Omega_{U^{ev}_\e} =\{ p\in \Spec Z~|~ \text{the corresponding semisimlpe representation is irreducible}\}.\]
Since $U^{ev}_\e(\g)$ is a completely prime (i.e, without zero divisors) Noetherian algebra, by \cite[Proposition 3.1]{bg2}, 
\[ \Omega_{U^{ev}_\e}=\{ p\in \Spec Z~|~ U^{ev}_\e \otimes_Z Z_{\m_p}\; \text{is Azumaya over}\; Z_{\m_p}\}.\]
Hence, $\Omega_{U^{ev}_\e}$ is called {\em the Azumaya locus} of $U^{ev}_\e(\g)$. In this section, we construct a large open subset of $\Omega_{U^{ev}_\e}$. We follow the arguments in \cite{dckp2}. Let us state  two  key lemmas.

For a filtered algebra $A$, let $\gr A=\bigoplus_i A_{i+1}/A_i$ be the associated graded algebra and $\mathscr{R}(A)=\bigoplus_i A_i t^i$ be the Rees algebra of $A$.
\begin{Lem}[Proposition 1.4 \cite{dckp2}]\label{lem: regular sequence} Let $A$ be a commutative filtered algebra and let $a_1, \dots, a_n \in A$ be such that $\bar{a}_1, \dots, \bar{a}_n$ is a regular sequence of $\gr A$. Let $I=(a_1, \dots, a_n)$ be the ideal of $A$ generated by $a_1, \dots, a_n$. Then 
\begin{enumerate}[label=\alph*)]
\item $a_1, \dots, a_n$ is a regular sequence in $A$.
\item The ideal $\gr I$ of $\gr A$ is generated by the elements $\bar{a}_1, \dots, \bar{a}_n$.
\end{enumerate}
\end{Lem}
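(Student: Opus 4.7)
The plan is to prove (b) first by a leading-symbol argument, and then deduce (a) by induction on $n$, using (b) together with a general principle that a non-zero-divisor in $\gr A$ lifts to a non-zero-divisor in $A$ provided the filtration is separated. Throughout I will use the standard convention that $\gr A = \bigoplus_{i} A_i/A_{i-1}$ with $A_{-1}=0$, and for $x\in A_i\setminus A_{i-1}$ I write $\bar x \in A_i/A_{i-1}$ for its principal symbol.

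The lifting principle I would isolate first is the following: if $x\in A$ is such that $\bar x$ is a non-zero-divisor in $\gr A$, then $x$ is a non-zero-divisor in $A$. Indeed, suppose $xy=0$ with $y\ne 0$; since the filtration is separated, there is a maximal $i$ with $y\in A_i$, giving $\bar y\ne 0$. But $xy=0$ forces $\bar x\cdot \bar y=0$ in $\gr A$ in the appropriate homogeneous component, a contradiction. The same argument works for the quotient filtration on any $A/J$.

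For part (b), I would proceed by descending induction on the minimum total degree $N$ of a representation $x=\sum_j a_j b_j$ of a given element $x\in I$ of filtration degree $d$. If one can always achieve $N=d$, then the leading symbol $\bar x = \sum_{d_j=d}\bar a_j\bar b_j$ lies in $(\bar a_1,\ldots,\bar a_n)$ and (b) follows. Suppose instead $N>d$ in every representation; then the top-degree piece satisfies $\sum_{d_j=N}\bar a_j\bar b_j=0$ in $\gr A$. Because $\bar a_1,\ldots,\bar a_n$ is a regular sequence in $\gr A$, the Koszul syzygy theorem gives that every homogeneous relation $\sum_j \bar a_j c_j=0$ is a combination of the trivial Koszul relations $\bar a_i\bar a_j-\bar a_j\bar a_i=0$; concretely, there exist homogeneous $d_{jk}\in \gr A$ with $d_{jk}=-d_{kj}$ such that $c_j=\sum_k \bar a_k d_{jk}$. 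Lifting the $d_{jk}$ to elements of $A$ in the appropriate filtration pieces and substituting into $x=\sum a_j b_j$ allows one to rewrite the sum so that all top-degree contributions cancel exactly, strictly decreasing $N$. This contradicts the assumed minimality of $N$, proving (b).

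For part (a), I would argue by induction on $n$. The case $n=1$ is the lifting principle applied directly to $x=a_1$. For the inductive step, part (b) applied to the ideal $(a_1,\ldots,a_{i-1})$ yields a canonical surjection $\gr A/(\bar a_1,\ldots,\bar a_{i-1})\twoheadrightarrow \gr(A/(a_1,\ldots,a_{i-1}))$; a standard filtration-degree count (comparing Hilbert series in each degree) or an explicit check that this map is injective shows it is an isomorphism. Under this identification the principal symbol of the image of $a_i$ in $A/(a_1,\ldots,a_{i-1})$ is precisely $\bar a_i$, which by hypothesis is a non-zero-divisor on $\gr A/(\bar a_1,\ldots,\bar a_{i-1})$. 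The lifting principle applied to the filtered quotient $A/(a_1,\ldots,a_{i-1})$ then concludes that $a_i$ is a non-zero-divisor there, completing the induction.

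The main obstacle is the Koszul-syzygy step inside the proof of (b): one must know that regularity of $\bar a_1,\ldots,\bar a_n$ in $\gr A$ forces every top-degree cancellation $\sum_{d_j=N}\bar a_j\bar b_j=0$ to come from the trivial relations, and then one must lift those syzygies back to $A$ with careful filtration bookkeeping so that the rewritten representation of $x$ actually has strictly smaller $N$. Every other step is either formal (induction, lifting principle) or a routine filtration comparison.
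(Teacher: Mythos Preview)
The paper does not supply a proof; the lemma is quoted as Proposition~1.4 of \cite{dckp2} and used as a black box. Your argument is correct and is essentially the standard proof of this fact.

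One small simplification in your step for (a): for any ideal $I\subset A$ carrying the induced filtration, the identification $\gr(A/I)\cong (\gr A)/(\gr I)$ is automatic from the definitions (a direct check in each degree), so no Hilbert-series comparison is needed; what part (b) buys you is precisely $\gr I=(\bar a_1,\ldots,\bar a_{i-1})$, after which the desired isomorphism $\gr A/(\bar a_1,\ldots,\bar a_{i-1})\cong \gr(A/(a_1,\ldots,a_{i-1}))$ follows immediately. Your caveat about separatedness is well placed: in the paper's application the filtration is $\BZ_{\ge 0}^{2N+1}$-indexed, hence bounded below, so every quotient filtration is automatically separated and the lifting principle applies.
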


\begin{Lem}[Lemma 1.5 \cite{dckp2}]\label{key lemma about filtered algebra} Let $A$ be a finitely generated algebra over $\BC$ equipped with a filtration such that 

$\bullet$ $A$, $\gr A$ and $\mathscr{R}(A)$ are orders closed under trace in  central simple algebras, and

$\bullet$ $\gr A$ is a finitely generated order of the same degree as $A$.

Let $Z_0$ be a central subalgebra of $A$  such that $\gr Z_0$ is a finitely generated algebra over $\BC$ and $\gr A$ is a finitely generated module over $\gr Z_0$. Let $I$ be an ideal of $Z_0$ and $\gr I$ be the associated graded ideal of $\gr Z_0$. Let $\mathscr{O}$ (resp. $\mathscr{O}_1$) be the set of zeroes of $I$ (resp. $\gr I$) in $\Spec Z_0$ (resp. $\Spec (\gr Z_0)$). Suppose that $\mathscr{O}_1\cap \Omega^0_{\gr A} \neq 0$. Then $\mathscr{O}\cap \Omega^0_A \neq 0$.
\end{Lem}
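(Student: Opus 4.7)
The plan is to study the Rees algebra $\mathscr{R}(A) = \bigoplus_{i \geq 0} A_i t^i \subset A[t]$, which is a $\mathbb{C}[t]$-flat family whose fiber at $t = 0$ recovers $\gr A$ and whose fiber at any $t \neq 0$ is isomorphic to $A$. One forms in parallel the central Rees subalgebra $\mathscr{R}(Z_0) \subset \mathscr{R}(A)$ and the homogeneous Rees ideal $\mathscr{R}(I) := \bigoplus_i (I \cap Z_{0,i})\,t^i \subset \mathscr{R}(Z_0)$. A direct check identifies $\mathscr{R}(Z_0)/\mathscr{R}(I) \cong \mathscr{R}(Z_0/I)$, and a Rees algebra is always torsion-free, hence flat, over $\mathbb{C}[t]$; so $t$ lies in no associated prime of this quotient. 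Consequently every irreducible component of $V(\mathscr{R}(I)) \subset \Spec \mathscr{R}(Z_0)$ dominates the $t$-line, while the fiber of $V(\mathscr{R}(I))$ over $t = 0$ is $\mathscr{O}_1$ and the fiber over any $t = c \neq 0$ is a copy of $\mathscr{O}$.

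Next, the hypotheses imply that $\mathscr{R}(A)$ is itself a finitely generated, trace-closed order in a central simple algebra of the same degree $d$ as both $A$ and $\gr A$, and is module-finite over $\mathscr{R}(Z_0)$. Applying Theorem~\ref{thm: finite dim rep of order}(e) to the triple $(\mathscr{R}(A), \text{center of } \mathscr{R}(A), \mathscr{R}(Z_0))$ therefore produces a nonempty Zariski-open (dense) locus $\Omega^0_{\mathscr{R}(A)} \subset \Spec \mathscr{R}(Z_0)$. Using that the fiber of $\mathscr{R}(A)$ at a point of $V(t)$ coincides with the fiber of $\gr A$ over the corresponding point of $\Spec(\gr Z_0)$, and that its fiber at a point of $D(t)$ with $t = c \neq 0$ coincides with the fiber of $A$ over the projected point of $\Spec Z_0$, one obtains the identifications $\Omega^0_{\mathscr{R}(A)} \cap V(t) = \Omega^0_{\gr A}$ and $\Omega^0_{\mathscr{R}(A)} \cap D(t) = \Omega^0_A \times \mathbb{G}_m$; the equality of degrees is what ensures that the $d$-dimensional irreducibles at a point of $\Omega^0_{\mathscr{R}(A)}$ specialize to $d$-dimensional irreducibles on each side.

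Given these tools, the rest is quick. The hypothesis furnishes $p_1 \in \mathscr{O}_1 \cap \Omega^0_{\gr A}$, which, regarded as a closed point of $V(t) \subset \Spec \mathscr{R}(Z_0)$, lies in $V(\mathscr{R}(I)) \cap \Omega^0_{\mathscr{R}(A)}$. Choose any irreducible component $C$ of $V(\mathscr{R}(I))$ passing through $p_1$; by the flatness observation of the first paragraph, $C$ is not contained in $V(t)$, so $C \cap D(t)$ is open and dense in $C$. Since $\Omega^0_{\mathscr{R}(A)}$ is open and contains $p_1$, it meets $C$ in an open neighborhood of $p_1$, which must then meet $C \cap D(t)$. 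Any point of the resulting triple intersection $\Omega^0_{\mathscr{R}(A)} \cap V(\mathscr{R}(I)) \cap D(t)$ projects, under $D(t) \to \Spec Z_0$, to an element of $\mathscr{O} \cap \Omega^0_A$.

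The principal technical hurdle will be justifying the fiber identifications $\Omega^0_{\mathscr{R}(A)} \cap V(t) = \Omega^0_{\gr A}$ and $\Omega^0_{\mathscr{R}(A)} \cap D(t) = \Omega^0_A \times \mathbb{G}_m$: one must verify that the $d$-dimensional semisimple representation attached to each closed point of $\Spec \mathscr{R}(Z_0)$ by Theorem~\ref{thm: finite dim rep of order} specializes correctly to the corresponding semisimple representation of $\gr A$ at $t=0$ (and of $A$ at $t \neq 0$), so that the condition of containing only irreducibles of maximal dimension is equivalent on the two sides. This depends on the fact that $\mathscr{R}(A)$ is trace-closed of the same degree $d$, which prevents the dimension of a generic irreducible from jumping under specialization; the flatness computation of the first paragraph and this specialization/degree comparison are precisely the places where the standing hypotheses of the lemma enter in an essential way.
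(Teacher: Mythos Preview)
The paper does not give its own proof of this lemma, citing instead \cite[Lemma~1.5]{dckp2}; the only commentary is the remark immediately following, which explains that the trace-closed hypothesis on $\mathscr{R}(A)$ is needed precisely so that $\Omega^0_{\mathscr{R}(A)}$ is open and dense in $\Spec \mathscr{R}(Z_0)$, this openness being ``crucial in the last argument of the proof.'' Your reconstruction is correct and follows exactly this Rees-algebra deformation route: form $\mathscr{R}(A)$, $\mathscr{R}(Z_0)$, $\mathscr{R}(I)$; use flatness of $\mathscr{R}(Z_0/I)$ over $\mathbb{C}[t]$ so that components of $V(\mathscr{R}(I))$ dominate the $t$-line; then use the openness of $\Omega^0_{\mathscr{R}(A)}$ together with the fiberwise identifications to propagate from the special fiber at $t=0$ to a point with $t\neq 0$.
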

\begin{Rem}The condition of being closed under trace on $\mathscr{R}(A)$ is not mentioned in \cite[Lemma 1.5]{dckp2}. But we believe this condition is important so that the set $\Omega^0_{\mathscr{R}(A)}$ is open dense  in $\Spec \mathscr{R}(Z_0)$ by Theorem \ref{thm: finite dim rep of order}. The openess of $\Omega^0_{\mathscr{R}(A)}$ is crucial in the last argument of the proof of \cite[Lemma 1.5]{dckp2}.
\end{Rem}
Recall the $\dU_\BC(\g^d)$-equivariant isomorphism $Z_{Fr}\cong \BC[G^d_0]$ in Proposition \ref{Ze and Bruhat cell}. For any $\lambda\in P^*_+$, let $V^d(\lambda)$ be the irreducible representation of $G^d$ with  highest weight $\lambda$. Consider the morphism 
\[\pi_\lambda: \Spec Z_{Fr}\cong G^d_0\subset G^d \rightarrow GL(V^d(\lambda)).\]
Let 
\[ \phi_{\lambda}(u)=\textnormal{tr}_{V^d(\lambda)}(\pi_\lambda(u)), \qquad \forall ~ p \in \Spec Z_{Fr}.\]
Then the following lemma is standard
\begin{Lem}\label{lem: trace element phi(wi)}(a) The elements $\Big\{\phi_{\lambda}|\lambda \in P^*_+ \Big\}$ are a $\BC$-basis of $Z_\cap= \BC[G^d_0]^{\dU_\BC(\g^d)}$.

\noindent
(b) $Z_\cap$ is a polynomial algebra over $\BC$ on the generators $\{ \phi_{\w^d_1}, \dots, \phi_{w^d_r}\}$, here $\{\w^d_i=\ell_i \w_i\}_{1\leq i\leq r}$ are fundamental weights of $\g^d$.
\end{Lem}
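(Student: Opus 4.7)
The plan is to first identify $Z_\cap$ with a classical invariant ring and then invoke well-known results. By Lemma~\ref{lem: Zfin in field F}(a) and Proposition~\ref{Ze and Bruhat cell}, the $\dU_\BC(\g^d)$-locally finite part of $Z_{Fr}\cong \BC[G^d_0]$ is $Z^{fin}_{Fr}\cong \BC[G^d]$, where the $\dU_\BC(\g^d)$-action corresponds to differentiating the $G^d$-adjoint action. Since any invariant element is automatically locally finite, we have the chain
\[
  Z_\cap \,=\, (Z_{Fr})^{\dU_\BC(\g^d)} \,=\, (Z^{fin}_{Fr})^{\dU_\BC(\g^d)} \,\cong\, \BC[G^d]^{G^d} \,,
\]
where the last term denotes functions on $G^d$ invariant under conjugation. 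Under this identification, I will check that $\phi_\lambda$ corresponds to the classical character $\chi_{V^d(\lambda)}$ of the irreducible representation $V^d(\lambda)$: indeed, $\phi_\lambda(u)=\operatorname{tr}_{V^d(\lambda)}(\pi_\lambda(u))$ is by definition the character function, extended by zero (or by restriction) to the open Bruhat cell.

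For part (a), the Peter--Weyl decomposition for the simply connected group $G^d$ gives
\[
  \BC[G^d] \,\cong\, \bigoplus_{\lambda\in P^*_+} V^d(\lambda)^*\otimes V^d(\lambda)
\]
as $G^d\times G^d$-modules (left/right translation). Passing to the conjugation invariants on each summand amounts, via the natural isomorphism $V^d(\lambda)^*\otimes V^d(\lambda)\cong \operatorname{End}(V^d(\lambda))$, to picking out the $G^d$-equivariant endomorphisms, which by Schur's lemma form a one-dimensional space spanned by $\operatorname{Id}_{V^d(\lambda)}$. The image of $\operatorname{Id}_{V^d(\lambda)}$ under this correspondence is precisely the character $\chi_{V^d(\lambda)}=\phi_\lambda$. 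Hence $\{\phi_\lambda\}_{\lambda\in P^*_+}$ is a $\BC$-basis of $Z_\cap$.

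For part (b), I will invoke the Chevalley restriction theorem for the simply connected group $G^d$: restriction to the maximal torus $T^d$ gives an isomorphism $\BC[G^d]^{G^d}\iso \BC[T^d]^W$. Since $G^d$ is simply connected, the character lattice of $T^d$ is $P^*$, which is a free $\BZ$-module with basis $\{\w^d_1,\dots,\w^d_r\}$; hence $\BC[T^d]^W$ is a polynomial algebra generated by the $W$-orbit sums of the fundamental weights, equivalently by the characters $\chi_{V^d(\w^d_i)}$ of the fundamental representations (this last equivalence is standard, proved by a triangularity argument in the dominance order). Translating back via the identification above, $Z_\cap$ is a polynomial ring in $\phi_{\w^d_1},\dots,\phi_{\w^d_r}$.

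The only mildly non-trivial point in this plan is verifying the identification $Z_\cap\cong \BC[G^d]^{G^d}$, which requires carefully matching the $\dU_\BC(\g^d)$-action on $Z^{fin}_{Fr}$ with the differential of the $G^d$-conjugation action on $\BC[G^d]$; this follows from Remark~\ref{actions of U(gd)} together with the construction of the isomorphism $\varphi\colon Z_{Fr}\iso \BC[G^d_0]$ in Proposition~\ref{Ze and Bruhat cell}, which is $\dU_\BC(\g^d)$-equivariant by construction. After that, both parts (a) and (b) are direct consequences of classical results in algebraic-group invariant theory.
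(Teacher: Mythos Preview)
The paper does not prove this lemma at all; it is introduced with the sentence ``The following lemma is standard'' and no argument is given. Your proposal supplies exactly the standard argument the authors have in mind: identify $Z_\cap$ with $\BC[G^d]^{G^d}$ via the equivariant isomorphism $Z_{Fr}\cong\BC[G^d_0]$ and the locally-finite-part identification, then invoke Peter--Weyl plus Schur for part (a) and Chevalley restriction together with the simply-connectedness of $G^d$ for part (b). This is correct and there is nothing further to compare.
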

The set $\{\a^d_i=\ell_i \a_i\}_{1\leq i \leq r}$ is the set of simple roots of $\g^d$. Recall that the Weyl group $W^d$ of $G^d$ is the same as $W$ via identification $s_{\a_i}\mapsto s_{\a^d_i}$. Recall the fixed reduced expression $w_0=s_{i_1}\dots s_{i_N}$ so that we have the set of positive roots $\{ \b^d_k=s_{i_1}\dots s_{i_{k-1}}(\a^d_{i_k})=\ell_{i_k}\b_k\}$.  For each simple root $\a^d$, let 
\begin{equation}\label{eq: the set Ida}
I_{\a^d}:=\{ 1\leq j \leq N| s_{i_j}=s_{\a^d}\}, \qquad \CI_{\a^d}:=\{ \b^d_{k_1}, \dots,\b^d_{k_t}|k_j \in I_{\a^d}\}.
\end{equation}
It is not hard to see that $I_{\a^d}=I_\a$, the set introduced in \eqref{eq: the set Ia}. Set
\[ x_{\b^d}=\tE^{\ell_{\b}}_{\b} K^{\ell_\b \gamma(\b)}, \qquad y_{\b^d}=\tF^{\ell_\b}_\b K^{\ell_\b \kappa(\b)}, \qquad z_{\w^d_i}=K^{2\ell_i\w_i}.\]
where $\b$ is a positive root of $\g$ and $\b^d=\ell_\b \b$ is the positive root of $\g^d$.

\begin{Lem}[cf. Lemma 4.6 \cite{dckp2}] \label{lem: image in grZ} Recall the element $\phi_{\w^d_i} \in Z_{Fr}\cong \BC[G^d_0]$ in Lemma \ref{lem: trace element phi(wi)}.b . Then the image $\overline{\phi}_{\w^d_i} \in \gr Z_{Fr}$ is 
\begin{equation}\label{eq: graded image of phi}\overline{\phi}_{\w^d_i}=z_{-\w^d_i} x_{\b^d_{k_r}} \dots x_{\b^d_{k_1}} y_{\b^{d}_{l_r}}\dots y_{\b^{d}_{l_1}},
\end{equation}
here $\CI_{\a^d_i}=\{ \b^d_{k_1}, \dots, \b^d_{k_r}\}$ and $\CI_{-w_0(\a^d_i)}=\{\b^{d}_{l_1},\dots, \b^{d}_{l_r}\}$ as in \eqref{eq: the set Ida}.
\end{Lem}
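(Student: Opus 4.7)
The plan is to emulate the argument of Lemma~\ref{lem: gr Harish-Chandra center} in the Frobenius setting. There, the top monomial of the Harish--Chandra element $c_{\w_i}$ was read off from the matrix coefficients of the Weyl module $W_\e(\w_i)$ using Lemmas~\ref{lem: technical lem 1}--\ref{lem: technical lem 2}; here the analogous element is $\phi_{\w^d_i}$, and the role of $W_\e(\w_i)$ is played by the $\dU^*_\e$--Weyl module $W^*_\e(\w^d_i)$ corresponding under the equivalence~\eqref{eq: Rep(U*) vs Rep(U(gd))} to the fundamental representation $V^d(\w^d_i)$ of $G^d$.

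First, one realizes $\phi_{\w^d_i}$ explicitly inside $U^{ev}_\e$. The chain
\[
\BC[G^d_0]\,\cong\,Z_{Fr}\,\hookleftarrow\,O_\e[G^d]\,\hookrightarrow\,O_\e[G]\,\xrightarrow{\hiota}\,U^{fin}_\e\,\subset\,U^{ev}_\e
\]
(assembled from Lemma~\ref{lem: Oe[G^d] and Z^ev} and Theorem~\ref{prop: REA-Ufin general case}) expresses $\phi_{\w^d_i}$ as $\sum_j a_j\,\hiota(c^*_{v^*_j,v_j})$ with $a_j\in\BC^\x$, where $\{v_1,\dots,v_t\}$ is a weight basis of $W^*_\e(\w^d_i)$ ordered by increasing weight, so that $v_1$ has weight $-\w^{d,*}_i$ (lowest) and $v_t$ has weight $\w^d_i$ (highest).

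Next, one repeats the two--step filtration analysis of Lemma~\ref{lem: gr Harish-Chandra center}. By Lemma~\ref{lem: matrix coeff in Uev}, every monomial appearing in $\hiota(c^*_{v^*_j,v_j})$ has height at most $2(\w^d_i-\weight(v_j))$, so only the summand $j=1$ can attain the maximal height $2(\w^d_i+\w^{d,*}_i)$. Within $\hiota(c^*_{v^*_1,v_1})$, the lex--maximal monomial is obtained by applying Lemma~\ref{lem: matrix coeff in Uev} to the unique extremal pair $(\te^{(\cev r)},\tf^{(\cev k)})$ of ordered products of $\dU^*_\e$--root vectors sending $v_1$ to a nonzero multiple of $v_t$ and $v_t$ to a nonzero multiple of $v_1$, respectively. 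This extremal pair is identified by transferring Lemmas~\ref{lem: technical lem 1}--\ref{lem: technical lem 2} to $\dU^*_\e$ via the dual braid action~\eqref{eq: *braid action} and the sets $\CI_{\a^d_i}=\{\b^d_{k_1},\dots,\b^d_{k_r}\}$ and $\CI_{-w_0(\a^d_i)}=\{\b^d_{l_1},\dots,\b^d_{l_r}\}$; the resulting products involve precisely the root vectors $\te_{\b^*_{k_j}}$ and $\tf_{\b^*_{l_j}}$ (or their swap, matching the convention in the target formula).

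Finally, one pulls the extremal monomial back to $U^{ev}_\e$ and passes to the associated graded. Lemma~\ref{lem: computations of cFr}(b) combined with Lemma~\ref{lem: dU*(g) iso Kostant form} identifies each generator $\te_{\b^*_j}$ (resp.\ $\tf_{\b^*_j}$) modulo $\Ker(\tFr^>)$ (resp.\ $\Ker(\tFr^<)$) with $\tE^{(\ell_{i_j})}_{\b_j}$ (resp.\ $\tF^{(\ell_{i_j})}_{\b_j}$) up to a unit in $\BC^\x$. Substituting into the explicit formula of Lemma~\ref{lem: matrix coeff in Uev} (with $\mu=\nu=\w^d_i+\w^{d,*}_i$ and $\lambda=-\w^d_i$), and invoking Proposition~\ref{prop: grU} so that Cartan and root--vector factors commute strictly in $\gr U^{ev}_\e$ and the factors $K^{\ell_i\gamma(\b_{k_j})}$ and $K^{\ell_i\kappa(\b_{l_j})}$ get absorbed into the definitions of $x_{\b^d_{k_j}}$ and $y_{\b^d_{l_j}}$, the leading term becomes $z_{-\w^d_i}\,x_{\b^d_{k_r}}\cdots x_{\b^d_{k_1}}\,y_{\b^d_{l_r}}\cdots y_{\b^d_{l_1}}$ up to a unit. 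The hardest part will be precisely this final scalar bookkeeping: one must track the divided--power denominators $(\ell_i)_{\e_i}!$ arising from the lifts $\te\mapsto\tE^{(\ell_i)}$, $\tf\mapsto\tF^{(\ell_i)}$, the $\e$--twist factors produced by commuting Cartan and root--vector factors via~\eqref{eq:gen-rel-twistedDJ}, and the coefficients $a_1$ from Step~1, in order to confirm that the resulting leading coefficient is a nonzero unit consistent with the $\dsim$--equality asserted in the statement.
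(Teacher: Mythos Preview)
Your approach is essentially the same as the paper's: both emulate the proof of Lemma~\ref{lem: gr Harish-Chandra center}, locating the lex-maximal PBW monomial by analyzing matrix coefficients of the fundamental representation $V^d(\w^d_i)$ via the analogues of Lemmas~\ref{lem: technical lem 1}--\ref{lem: technical lem 2}.

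The difference lies in which pairing you use to read off monomials. The paper works directly with the pairing $Z_{Fr}\times\dU_R(\g^d)\rightarrow R$ of Lemma~\ref{lem center pair}, invoking the orthogonality of Lemma~\ref{lem: perfect pairings} for $Z^<_{Fr}\times\dU^>_R(\g^d)$ and $\dU^<_R(\g^d)\times Z^>_{Fr}$: the coefficient of $\tF^{\cev k}K^?\tE^{\cev r}$ (with $\vec k,\vec r\notin\mathscr{K}$) in $\phi_{\w^d_i}$ is detected by pairing against $f^{(\cev k^*)}u_0\,e^{(\cev r^*)}$, which immediately reduces to matrix-coefficient values in the $\dU_R(\g^d)$-module $V^d(\w^d_i)$. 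Your route instead goes upstairs: realize $\phi_{\w^d_i}$ inside $U^{ev}_\e$ via $O_\e[G^d]\hookrightarrow O_\e[G]$, apply Lemma~\ref{lem: matrix coeff in Uev} to the $\dU^*_\e$-Weyl module pulled back along $\tFr^*$, and then descend via Lemma~\ref{lem: computations of cFr}(b). This is correct, but it manufactures the divided-power and $\epsilon$-twist bookkeeping you flag at the end; working at the level of $Z_{Fr}$ from the start, as the paper does, avoids that layer entirely since the relevant PBW basis is already $\{\tE_\b^{\ell_\b},\tF_\b^{\ell_\b},K^{2\lambda}\}$ with no divided powers. The paper's approach buys simplicity; yours is more self-contained in that it recycles the $U^{ev}_\e$-level machinery rather than the specialized pairing of Lemma~\ref{lem: perfect pairings}.
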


\begin{proof} The proof is exactly the same as the proof of Lemma \ref{lem: gr Harish-Chandra center}. We just want to make a remark. In the proof of Lemma \ref{lem: gr Harish-Chandra center}, the orthogonality of pairings $(\tF^{\cev{k}}, \tE^{(\cev{r})})'$ and $(\tF^{(\cev{k})}, \tE^{\cev{r}})'$ allows us to see whether  the monomial $\tF^{\cev{k}} K^{?} \tE^{\cev{r}}$ shows up in $c_{f,v} \in U^{ev}_\e(\g)$ by seeing whether $f(\tF^{(\cev{r})} \tE^{(\cev{k})} v)$ is  zero or not. In this lemma, we used  the orthogonality of pairing $(\tF^{\cev{k}}, e^{(\cev{r})})'$ in $Z_{Fr}^< \x \dU^>(\g^d)\rightarrow \BC$ and pairing $(f^{(\cev{k})}, \tE^{\cev{r}})'$ in $\dU^<(\g^d) \x Z^>_{Fr}\rightarrow \BC$ in Lemma \ref{lem: perfect pairings}.
\end{proof}

We are now going to construct the promised open subset of $\Omega_{U^{ev}_\e}$. Let us consider the projection
\[ \sA_1: \Spec Z \cong G^d_0 \x_{T^d/W} T/W \rightarrow G^d_0 \hookrightarrow G^d.\]
Let $G^{d, reg}$ be the set of regular elements in $G^d$. 
\begin{Thm}[cf. Theorem 5.1 \cite{dckp2}]\label{thm: open set of Azumaya locus} The set $\Omega_{U^{ev}_\e}$ contains the open set $\sA_1^{-1}(G^{d, reg})$.
\end{Thm}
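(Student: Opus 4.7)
The proof follows the blueprint of \cite[Theorem 5.1]{dckp2}, combining a symplectic-leaf reduction with a degeneration argument to transfer the Azumaya property from the twisted polynomial ring $\gr U^{ev}_\e$ back to $U^{ev}_\e(\g)$.

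First, I would reduce to a statement at the level of $Z_{Fr}$. Writing $\tau\colon\Spec Z\to\Spec Z_{Fr}$ for the natural projection, one checks that $\tau^{-1}(\Omega^0_{U^{ev}_\e})\subseteq\Omega_{U^{ev}_\e}$: at any $p\in\tau^{-1}(q)$ with $q\in\Omega^0_{U^{ev}_\e}$, every irreducible above $p$ has dimension $d=\prod_{\alpha\in\Delta_+}\ell_\alpha$, and since the semisimple representation at $p$ has total dimension $d$, this forces it to be irreducible, i.e.\ $p\in\Omega_{U^{ev}_\e}$. Since $\sA_1$ factors through $\tau$, it suffices to prove $G^{d,\mathrm{reg}}\cap G_0^d\subseteq\Omega^0_{U^{ev}_\e}$. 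Using Proposition~\ref{prop: Omega0 is union of symplective leaves} together with the identification of symplectic leaves as $G^d$-orbit intersections with $G_0^d$ (Proposition~\ref{prop: Possion structure of ZFr}(c)), it suffices to exhibit, for every regular $G^d$-orbit meeting $G_0^d$, a single point of that intersection in $\Omega^0_{U^{ev}_\e}$.

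For such a representative I would take a regular semisimple $\bar t_0\in T^{d,\mathrm{reg}}\subset G_0^d$ (with Steinberg-section representatives handling any regular orbits not meeting $T^d$). To show $\bar t_0\in\Omega^0_{U^{ev}_\e}$, I would verify that every $p\in\tau^{-1}(\bar t_0)\subset\Spec Z$ lies in $\Omega_{U^{ev}_\e}$ by applying Lemma~\ref{key lemma about filtered algebra} to the filtered algebra $(U^{ev}_\e(\g),Z)$ with $I=\mathfrak m_p$. By Proposition~\ref{prop: chain of centers}, $\gr Z$ coincides with $Z_{\gr U^{ev}_\e}$ from Proposition~\ref{prop: center of grU}; and by Lemma~\ref{lem: gr Harish-Chandra center} together with its straightforward extension to all fundamental weights (obtained by repeating the analysis of $\hiota$ for the matrix coefficient $c_{\w_i}$ with $i$ arbitrary), the symbols of the Harish-Chandra generators $c_{\w_i}$ are the central elements $u_i$. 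Consequently $\gr I$ is generated, up to invertible Cartan factors, by $X_\alpha^{\ell_\alpha}$, $Y_\alpha^{\ell_\alpha}$, $K^{2\ell_i\w_i}-c_i$, and $u_i-d_i$, with $d_i$ equal to the Weyl character value $\chi_{\w_i^d}(\bar t_0)$; these are nonzero precisely because $\bar t_0$ is regular.

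The heart of the argument is then to invoke Lemma~\ref{lem: intersection with Omega0} for the twisted polynomial ring $\gr U^{ev}_\e$ to produce a point in $\mathscr O_1\cap\Omega^0_{\gr U^{ev}_\e}$: with the subset $S$ from Proposition~\ref{prop: size images of H and HS} (obtained by excluding one $X_{\beta_{m_i}}$ per simple root), the degree equality $\deg\BC_{H_S}=\deg\gr U^{ev}_\e$ holds, and the equations $X_\alpha^{\ell_\alpha}=Y_\alpha^{\ell_\alpha}=0$ for the generators not in $S$, together with the non-vanishing constraints $u_i=d_i\neq0$, describe a nonempty open locus in $\mathscr O_1$. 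The main obstacle will be the careful bookkeeping in this final step: verifying that the complement indices $m_i$ and the free parameters $X_{\beta_{m_i}}$ in $S$ match correctly with the products $\prod X_{\beta_{k_j}}\prod Y_{\beta_{l_j}}$ appearing in the $u_i$-formulas of Proposition~\ref{prop: center of grU}, so that the non-vanishing constraints are compatible with the localization hypothesis of Lemma~\ref{lem: intersection with Omega0}; and ensuring that the extension of Lemma~\ref{lem: gr Harish-Chandra center} to all fundamental weights (and the handling of non-semisimple regular orbits via a Steinberg cross-section inside $G_0^d$) goes through without modification.
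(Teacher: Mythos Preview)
Your overall architecture—reduction to $\Omega^0_{U^{ev}_\e}$ via the symplectic-leaf description, followed by the degeneration Lemma~\ref{key lemma about filtered algebra}—is the same as the paper's. But your implementation of the degeneration step has a genuine gap, stemming from the choice to work with the \emph{full} center $Z$ and the maximal ideal $\mathfrak m_p$ rather than with $Z_{Fr}$ and the ideal of~$\bar{\mathscr O}$.

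First, your computation of $\gr\mathfrak m_p$ is incorrect. Under the $\BZ^{2N+1}_{\geq 0}$-filtration of~\eqref{eq: deg of monomial}, scalars sit in degree zero while $c_{\omega_i}$ has strictly positive top degree (that of $u_i$); hence the symbol of $c_{\omega_i}-c_{\omega_i}(p)$ is $a_i u_i$, \emph{not} $u_i-d_i$. Second, even with the corrected list of symbols $\{X_\alpha^{\ell_\alpha},\,Y_\alpha^{\ell_\alpha},\,K^{2\ell_i\omega_i}-c_i,\,u_i\}$, you cannot invoke Lemma~\ref{lem: regular sequence}: some power of $u_i$ is a monomial in the $X_\alpha^{\ell_\alpha},Y_\alpha^{\ell_\alpha}$ (times a Cartan factor), so $u_i$ is nilpotent modulo them and the sequence is not regular. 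Third, if one takes your claimed generators at face value, the zero locus $\mathscr O_1$ is empty—on $\{X_\alpha^{\ell_\alpha}=Y_\alpha^{\ell_\alpha}=0\}$ each $u_i$ is forced to be zero, contradicting $u_i=d_i\neq 0$—so Lemma~\ref{lem: intersection with Omega0} would yield nothing. Finally, your reduction to regular \emph{semisimple} points, with non-semisimple regular orbits deferred to a Steinberg section, leaves the latter case unaddressed: at a Steinberg-section point the $X_\alpha^{\ell_\alpha},Y_\alpha^{\ell_\alpha}$ do not all vanish, so your description of $\gr\mathfrak m_p$ would need a separate analysis.

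The paper avoids all of this by taking $Z_0=Z_{Fr}$ and letting $I\subset Z_{Fr}$ be the ideal generated by the $r$ trace functions $\phi_{\omega^d_i}-c_i$, which cut out $\bar{\mathscr O}\cap G^d_0$ for \emph{any} regular orbit $\mathscr O$ (no separate semisimple/unipotent cases). By Lemma~\ref{lem: image in grZ} the symbols $\overline{\phi}_{\omega^d_i}$ are monomials in the $\ell$-th power variables $x_{\beta^d},y_{\beta^d}$, and since the index sets $\CI_{\alpha_i^d}$ (resp.\ $\CI_{(\alpha_i^*)^d}$) are pairwise disjoint, these monomials lie in pairwise disjoint sets of indeterminates and hence form a regular sequence automatically. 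Then $\gr I$ is generated by these $r$ monomials, $\mathscr O_1$ decomposes into coordinate-type components, and the component obtained by setting one $y_{\beta^d_{m_i}}$ to zero for each $i$ matches exactly the subvariety to which Lemma~\ref{lem: intersection with Omega0} (with the subset $S$ of~\eqref{eq: set S}) applies. The relevant symbol computation is Lemma~\ref{lem: image in grZ}, not Lemma~\ref{lem: gr Harish-Chandra center}; the latter concerns the Harish-Chandra elements $c_{\omega_i}\in Z_{HC}$ and is used only to establish Proposition~\ref{prop: chain of centers}.
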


Before proceeding to prove this theorem, we need to show that Lemma \ref{key lemma about filtered algebra} can be applied to the chain of filtered algebras $U^{(i)}_\e$ in Remark \ref{rem: chain of filtered algebras}. To this end, this is enough to prove that $\mathscr{R}(U^{(i)})$ is closed under the trace for all $0\leq i \leq 2N$. By Remark \ref{rem: order with normal center}, it is sufficient to prove 
\begin{Lem} \label{lem: center is normal} The Rees algebra $\mathscr{R}(Z^{(i)})$ is normal for all $i$. 
\end{Lem}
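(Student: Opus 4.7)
The plan is to deduce normality of $\mathscr{R}(Z^{(i)})$ from normality of the $Z^{(i)}$ themselves, which in turn follows from each $U^{(i)}_\e$ being a maximal order. First, I would establish that each $Z^{(i)}$ is a normal Noetherian commutative domain. Proposition \ref{prop: Uev is integrally closed} tells us that $U^{(i)}_\e$ is integrally closed in the sense defined there, and this is easily seen to be equivalent to being a maximal order in the central simple algebra of fractions: a finitely generated $Z^{(i)}$-subalgebra $B$ of $Q(U^{(i)}_\e)$ containing $U^{(i)}_\e$ admits a common central denominator $z \in Z^{(i)}$ for its generators, so $B \subset z^{-1} U^{(i)}_\e$, forcing $B = U^{(i)}_\e$. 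By Remark \ref{rem: order with normal center}, the center of a maximal order is automatically normal, so each $Z^{(i)}$ is normal.

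Next, for $i < 2N$, set $R := \mathscr{R}(Z^{(i)})$, which is a Noetherian domain equipped with the Rees parameter $t$, a nonzerodivisor of positive degree. Proposition \ref{prop: chain of centers} gives $R/tR \simeq \gr Z^{(i)} = Z^{(i+1)}$, which is normal by the previous step, while $R[t^{-1}] \simeq Z^{(i)}[t, t^{-1}]$ is normal since $Z^{(i)}$ is. I would then invoke the standard commutative algebra fact that a Noetherian commutative domain $R$ with a nonzerodivisor $t$ such that both $R/tR$ and $R[t^{-1}]$ are normal must itself be normal. This is verified by Serre's $(R_1) + (S_2)$ criterion: at primes not containing $t$ one localizes to the normal ring $R[t^{-1}]$; the only height-one prime containing $t$ is $(t)$ (Krull's principal ideal theorem), whose localization $R_{(t)}$ is a DVR; and at primes $\mathfrak{p}$ of height $\geq 2$ containing $t$, the regularity of $t$ combined with $(S_2)$ for $R/tR$ gives $\operatorname{depth}(R_\mathfrak{p}) = \operatorname{depth}(R_\mathfrak{p}/tR_\mathfrak{p}) + 1 \geq 2$. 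For the boundary case $i = 2N$ no further filtration is imposed, so $\mathscr{R}(Z^{(2N)}) \simeq Z^{(2N)}[t]$ is immediately normal.

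The main technical point is the Serre criterion step, which is standard. There is no genuine obstacle: the deep input is Proposition \ref{prop: Uev is integrally closed} (established via \cite[Theorem 6.5]{dp}), together with the already-proven Proposition \ref{prop: chain of centers} identifying $\gr Z^{(i)}$ with $Z^{(i+1)}$.
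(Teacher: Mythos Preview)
Your proof is correct and follows the same overall structure as the paper's: first deduce normality of each $Z^{(i)}$ from the integral closedness of $U^{(i)}_\e$ (Proposition~\ref{prop: Uev is integrally closed}), then use commutative algebra to conclude normality of the Rees algebra. The difference is only in the commutative algebra tool used in the second step. The paper observes that $\mathscr{R}(Z^{(i)})$ is faithfully flat over $\BC[t]$ with all fibers normal (the special fiber being $Z^{(i+1)}$ and the generic fibers $Z^{(i)}$), then invokes \cite[Corollary~22.E]{ma} to conclude. You instead verify Serre's $(R_1)+(S_2)$ criterion directly from normality of $R/tR$ and $R[t^{-1}]$. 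Your argument is slightly more elementary and self-contained; the paper's is shorter once one is willing to cite Matsumura. One small point you gloss over: that $\mathscr{R}(Z^{(i)})$ is Noetherian requires knowing it is a finitely generated $\BC$-algebra, which the paper derives from finite generation of $\gr Z^{(i)} = Z^{(i+1)}$ by lifting generators---this is routine, but worth a sentence.
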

\begin{proof} Since $\mathscr{R}(Z^{(i)})/(t)=\gr Z^{(i)}\cong Z^{(i+1)}$, a finitely generated  algebra over $\BC$, $\mathscr{R}(Z^{(i)})$ is a finitely generated graded algebra  over $\BC$.  

The fiber of $\mathscr{R}(Z^{(i)})$ at $0\in \operatorname{Spec}(\mathbb{C}[t])$ is $\gr Z^{(i)}\cong Z^{(i+1)}$ and the  fiber at $a\in \operatorname{Spec}(\mathbb{C}[t^{\pm 1}])$ is $Z^{(i)}$. Since $U^{(i)}_\e$ and $U^{(i+1)}_\e$ are integrally closed  by Proposition \ref{prop: Uev is integrally closed}, both $Z^{(i)}$ and $Z^{(i+1)}$ are normal. Hence all fibers of $\mathscr{R}(Z^{(i)})$ over $\BC[t]$ are normal. On the other hand, $\mathscr{R}(Z^{(i)})$ is faithfully flat over $\BC[t]$ and $\BC[t]$ is normal. Hence by \cite[Corollary 22.E]{ma}, $\mathscr{R}(Z^{(i)})$ is normal.
\end{proof}
The proof of Theorem \ref{thm: open set of Azumaya locus} follows the strategy of the proof of \cite[Theorem 5.1]{dckp2}. However, some details are modified. 
\begin{itemize}
\item In the {\it loc.cit.}  they used the notion "trace filtration", see \cite[Definition 5.1]{dckp2},  in order to use Lemma \ref{key lemma about filtered algebra} ( \cite[Lemma 1.5]{dckp2}). However, they did not verify that the filtration on the De Concini-Kac form constructed in \cite[Proposition 4.2]{dckp2} is a trace filtration, indeed, it seems to be difficult to verify this. Here we use Lemma \ref{lem: center is normal} in order to apply Lemma \ref{key lemma about filtered algebra}.
\item Some results used in the proof of \cite[Theorem 5.1]{dckp2} require adaptations in our setup or different proofs including Proposition \ref{prop: size images of H and HS} and Lemma \ref{lem: image in grZ}, etc.
\end{itemize}
So we provide the proof of Theorem \ref{thm: open set of Azumaya locus} here for the reader's convenience.
\begin{proof}
{\it Step 1:} Let $\chi_0: \Irr(U^{ev}_\e(\g) )\rightarrow \Spec Z_{Fr}$ be the map that sends irreducible representations of $U^{ev}_\e$ to the restriction of the central characters to $Z_{Fr}$. Let us recall 
\[\Omega^0_{U^{ev}_\e}:=\{ p \in \Spec Z_{Fr}~|~ \text{all representations of $\chi^{-1}_0(p)$ have the maximal dimension }\}.\]
Then it is enough to show that $G^{d, reg}_0:=G^d_0 \cap G^{d, reg} \subset \Omega^0_{U^{ev}_\e}$. 

Let $\mathscr{O}$ be a conjugacy class of regular elements in $G^d$ and $\bar{\mathscr{O}}$ be the Zariski closure of $\mathscr{O}$ in $G^d$. So we need to show that $\mathscr{O}\cap G^d_0 \subset \Omega^0_{U^{ev}_\e}$. By Proposition \ref{prop: Omega0 is union of symplective leaves},  $\Omega^0_{U^{ev}_\e}$ is a union of symplectic  leaves. The symplectic leaves of $\Spec Z_{Fr}=G^d_0$ are exactly the intersections of conjugacy classes of $G^d$ with $G^d_0$ by Proposition \ref{prop: Possion structure of ZFr}. Therefore, it is enough to show that $\mathscr{O}\cap \Omega^0_{U^{ev}_\e} \neq 0$. Since $\Omega^0_{U^{ev}_\e}$ is open in $G^d_0$,  it is then enough to show that $\bar{\mathscr{O}} \cap \Omega^0_{U^{ev}_\e} \neq 0$.

{\it Step 2:} It is well-known that $\bar{\mathscr{O}}\subset G^d$ is given by the equations
\[  \textnormal{tr}_{V^d(\w^d_i)}(g) =c_i, \qquad i=1, \dots, n,\; \text{and}\; g \in G^d.\]
hence the set $\bar{\mathscr{O}}\cap \Spec Z_{Fr}$ is given by the equations
\[ \phi_{\w^d_i}=c_i, \qquad i=1, \dots, n.\]
We want to apply Lemma \ref{key lemma about filtered algebra} to the ideal $I$ of $Z_{Fr}$ generated by $\{ \phi_{\w^d_i}-c_i\}$. For this first of all, we need to show that each of the algebra $U^{(j)}$ introduced in Remark \ref{rem: chain of filtered algebras} has degree $d=\prod_{\a \in \Delta_+} \ell_\a$. Since the degree can only decrease in each step, it is enough to show that $d$ is the degree of $U^{(2N)}=\gr U^{ev}_\e$, but this follows by Proposition \ref{prop: deg of grU}. By easy induction with Lemma \ref{key lemma about filtered algebra}, we only need to show that if $\gr I$ is the associated graded ideal of $I$ in $\gr Z_{Fr}$, and $\mathscr{O}_1$ is its set of zeros, then 
\begin{equation}\label{eq: intersection is nonzero}
    \mathscr{O}_1  \cap \Omega^0_{\gr U^{ev}_\e} \neq 0
\end{equation}

{\it Step 3:} The elements $\overline{\phi}_{\w^d_i}, i =1, \dots, r$ in \eqref{eq: graded image of phi} form a regular sequence of $\gr Z_{Fr}$ since they are monomials in disjoint sets of indeterminates. Hence, by Lemma \ref{lem: regular sequence}, $\gr I$  is generated by $\{ \overline{\phi}_{\w^d_i}, i=1, \dots, r\}$.  So $\mathscr{O}_1$ is the union of the following subvarieties. From each monomial $\overline{\phi}_{\w^d_i}$ we choose a factor in $\{ x_{\b^d_{k_r}}, \dots, x_{\b^d_{k_1}}, y_{\b^d_{l_r}}, \dots y_{\b^d_{l_1}}\}$. The subvarieties of interests are obtained as the sets of zeroes of these factors. So it is enough to show that one of these subvarieties intersects $\Omega^0_{U^{ev}_\e}$ nontrivially.

Let us recall the set $\CI_{\a_i}$ and pick $\b_{m_i} \in \CI_{\a_i}$. As in \eqref{eq: set S}, let
\[ S=\{ K^{2\w_1}, \dots, K^{2\w_r}, Y_{\b_i}, X_{\b_i}\} /\{ Y_{\b_{m_1}}, \dots, Y_{\b_{m_r}}\}.\]
Then $Y_{\b_{m_i}}^{\ell_{\b_{m_i}}} \in \gr Z_{Fr}$. The closed subvariety $\mathscr{Y}$  defined by equations $Y_{\b_{m_i}}^{\ell_{\b_{m_i}}} =0, i=1, \dots, r$  is a component of $\mathscr{O}_1$ (note that $y_{\b^d_{m_i}}=Y_{\b_{m_i}}^{\ell_{\b_{m_i}}} K^{\ell_{\b_{m_i}}\gamma(\b_{m_i})}$ and $K^{\ell_{\b_{m_i}} \gamma(\b_{m_i})}$ is invertible in $\gr Z_{Fr}$). By  Proposition \ref{prop: size images of H and HS}, we can apply Lemma \ref{lem: intersection with Omega0} to obtain $\mathscr{Y} \cap \Omega^0_{\gr U^{ev}_\e} \neq 0$. Therefore, \eqref{eq: intersection is nonzero} holds. This finishes the proof.
\end{proof}

\appendix

\section{Equivariant modules}\label{sec:equiv}

This appendix is devoted to establish the basic properties of (left, right, or bi) modules over an algebra with a compatible action of a Hopf algebra.


\subsection{$H$-invariants}\label{ssec:H-inv}
Let $H$ be a Hopf algebra over a field $k$. For any left $H$-module $V$, the {\it $H$-invariant part} of $V$ is defined by 
\[
  V^H=\big\{v\in V \,|\, hv=\varepsilon(h)v \; \text{for all $h\in H$} \big\} \,.
\]
For any left $H$-modules $M$ and $N$, we have the following two actions of $H\otimes H$ on $\Hom_k(M,N)$:
\begin{align*}
    (x\otimes y)(m)&= xf(S(y)m),\\
    (x\otimes y)(m) &= yf(S^{-1}(x)m).
\end{align*}
Composing these actions with the coproduct $\Delta\colon H\rightarrow H\otimes H$, we obtain two left actions of $H$ on $\Hom_k(M,N)$ 
that will be denoted by $(H, \cdot_1)$ and $(H, \cdot_2)$, respectively.

\begin{Lem}\label{H-invariant} 
$\Hom_H(M,N)=\Hom_k(M,N)^{(H, \cdot_1)}=\Hom_k(M,N)^{(H, \cdot_2)}$, where the superscript mean that we  take the $H$-invariants  with respect to the corresponding $H$-actions.
\end{Lem}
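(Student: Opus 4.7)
The plan is to verify each of the two equalities by a direct computation in Sweedler notation. I will treat the case of the action $\cdot_1$ in detail and note that the proof for $\cdot_2$ is completely parallel, the only difference being that one uses the identity $\sum h_{(2)} S^{-1}(h_{(1)}) = \varepsilon(h)$ (which holds whenever the antipode is invertible, as is the case for our Hopf algebras) in place of $\sum h_{(1)} S(h_{(2)}) = \varepsilon(h)$.

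First I would establish the inclusion $\Hom_H(M,N) \subseteq \Hom_k(M,N)^{(H,\cdot_1)}$. Unwinding the definition, the action $\cdot_1$ is given in Sweedler notation by $(h \cdot_1 f)(m) = h_{(1)} f(S(h_{(2)}) m)$. Given an $H$-linear $f$, we may pull $S(h_{(2)})$ through $f$ to obtain $h_{(1)} S(h_{(2)}) f(m) = \varepsilon(h) f(m)$ by the antipode axiom, proving that $f$ is $\cdot_1$-invariant.

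The converse inclusion is the main content and requires the standard iterated-coproduct trick. Given a $\cdot_1$-invariant $f$, we have the identity
\[
  h_{(1)} f(S(h_{(2)}) m) = \varepsilon(h) f(m)
\]
holding for every $h \in H$ and every $m \in M$. I would apply this identity after the substitution $h \rightsquigarrow h_{(1)}$ and $m \rightsquigarrow h_{(2)} m$, using the convention $\Delta^{(2)}(h) = h_{(1)} \otimes h_{(2)} \otimes h_{(3)}$ together with coassociativity to relabel the Sweedler indices. The left-hand side then becomes $h_{(1)} f(S(h_{(2)}) h_{(3)} m)$, which collapses via $S(h_{(2)}) h_{(3)} = \varepsilon(h_{(2)}) \cdot 1$ and the counit axiom to $h f(m)$; the right-hand side becomes $\varepsilon(h_{(1)}) f(h_{(2)} m) = f(h m)$. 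This gives $h f(m) = f(hm)$ and hence $H$-linearity.

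The only real obstacle is bookkeeping through the nested coproducts, and for the $\cdot_2$-case one must first verify $\sum h_{(2)} S^{-1}(h_{(1)}) = \varepsilon(h)$ by applying $S^{-1}$ (viewed as an algebra anti-homomorphism and coalgebra anti-homomorphism) to one of the two standard antipode identities; this is routine but easy to botch. Once this identity is in hand, the same substitution pattern $h \rightsquigarrow h_{(2)}$, $m \rightsquigarrow h_{(1)} m$ (with appropriate Sweedler relabelling) yields $H$-linearity from $\cdot_2$-invariance, completing the proof.
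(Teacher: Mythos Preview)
Your proof is correct and is precisely the standard Sweedler computation the paper has in mind when it says ``The proof is standard, c.f~\cite[\S3.10]{j}.'' The paper's own (commented-out) argument follows the same pattern, differing only in cosmetic bookkeeping.
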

The proof is standard, c.f \cite[$\mathsection 3.10$]{j}.

\subsection{$H$-equivariant $A$-modules}

We shall now also assume that $A$ is an algebra over $k$. We start with the standard definition:

\begin{defi}
An $H$-module algebra $A$ (assumed to be unital, with unit $1_A$) is an algebra $A$ with a left 
$H$-module structure such that 
\[
  h(1_A)=\varepsilon(h)1_A, \qquad h(ab)=\sum (h_{(1)}a)(h_{(2)}b) \qquad \forall\, h \in H,\, a,b \in A \,.
\]
\end{defi}

In this setup, one naturally introduces the categories of $H$-equivariant left/right $A$-modules:

\begin{defi}\label{A Mod}
Let $A$ be an $H$-module algebra. 

\noindent
\medskip
(a) An $H$-equivariant left $A$-module $M$ is a left $A$-module $M$ with a left $H$-module structure such that 
\begin{equation}\label{eq:left-equiv}
  h(am)=\sum (h_{(1)}a)(h_{(2)}m) \qquad \forall\, h \in H\,,\, a\in A,\, m\in M \,.
\end{equation}
Let $A\Rmod^H$ denote the category of $H$-equivariant left $A$-modules. A morphism $\eta\colon M\rightarrow N$ 
in $A\Lmod^H$ is a morphism of left $A$-modules which is also a morphism of left $H$-modules.

\medskip
\noindent
(b) An $H$-equivariant right $A$-module $M$ is a right $A$-module $M$ with a left $H$-module structure such that 
\begin{equation}\label{eq:right-equiv}
  h(mb)=\sum (h_{(1)}m)(h_{(2)}b) \qquad \forall\, h\in H,\, b\in A,\, m \in M \,.
\end{equation}
Let $A\Lmod^H$ denote the category of $H$-equivariant right $A$-modules. A morphism $\eta\colon M\rightarrow N$ 
in $A\Rmod^H$ is a morphism of right $A$-modules which is also a morphism of left $H$-modules.

\medskip
\noindent
(c) An $H$-equivariant $A$-bimodule $M$ is a $A$-bimodule $M$ with a left $H$-module structure such that 
\begin{equation}\label{eq:bi-equiv}
  h(amb)=\sum (h_{(1)}a)(h_{(2)}m)(h_{(3)}b)  \qquad \forall\, h \in H,\, a,b \in A,\, m \in M \,.
\end{equation}
Let $A\Bimod^H$ denote the category of $H$-equivariant $A$-bimodules. A morphism $\eta\colon M\rightarrow N$ 
in $A\Bimod^H$ is a morphism of $A$-bimodules which is also a morphism of left $H$-modules.
\end{defi}

We note that the condition~\eqref{eq:bi-equiv} is equivalent to the combination of~\eqref{eq:left-equiv} 
and~\eqref{eq:right-equiv}.

\begin{Lem}\label{Hom in H-equiv A-Rmod}
(a) For any $M,N\in A\Rmod^H$, the space $\Hom_{A\Rmod}(M,N)$ carries a natural structure of a left $H$-module via 
\begin{equation} \label{H-action on Hom}
  (hf)(m)=\sum h_{(1)}f(S(h_{(2)})m) 
\end{equation}
for $h\in H, f\in \Hom_{A\Rmod}(M,N), m\in M$. Furthermore, we have
\[
  \Hom_{A\Rmod^H}(M,N)=\Hom_{A\Rmod}(M,N)^{(H, \cdot_1)} \,. 
\]

\medskip
\noindent
(b) For any $M\in A\Bimod^H$ and $N\in A\Rmod^H$, the space $\Hom_{A\Rmod}(M,N)$ is naturally an object in $A\Rmod^H$ 
with the left $H$-module structure as in \eqref{H-action on Hom} and the right $A$-module structure defined by 
$(fa)(m)=f(am)$ for all $a\in A, f\in \Hom_{A\Rmod}(M,N), m\in M$.

\medskip
\noindent
(c) For any $M\in A\Rmod^H$ and $N\in A\Bimod^H$, the space $\Hom_{A\Rmod}(M,N)$ is naturally an object in $A\Lmod^H$ 
with the left $H$-module structure as in \eqref{H-action on Hom} and the left $A$-module structure defined by 
$(af)(m)=af(m)$ for all $a\in A, f\in \Hom_{A\Rmod}(M,N), m\in M$.

\medskip
\noindent
(d) For any $M,N\in A\Bimod^H$, the space $\Hom_{A\Rmod}(M,N)$ is naturally an object in $A\Bimod^H$ with 
the left $H$-module structure as in \eqref{H-action on Hom} and the $A$-bimodule structure defined by 
$(afb)(m)=af(bm)$ for $a,b\in A, f\in \Hom_{A\Rmod}(M,N), m\in M$.
\end{Lem}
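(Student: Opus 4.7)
The plan is to verify each part by a direct Sweedler-notation computation, using only the Hopf algebra axioms, the antipode identities $\sum h_{(1)}S(h_{(2)}) = \sum S(h_{(1)})h_{(2)} = \varepsilon(h)$, and the equivariance conditions \eqref{eq:left-equiv}--\eqref{eq:bi-equiv}. Since the four parts are parallel, I would handle (a) in detail and sketch the compatibilities needed for (b)--(d).

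For part (a), I would first check that \eqref{H-action on Hom} gives a well-defined right $A$-linear map: using \eqref{eq:right-equiv} for $M$,
\[
(hf)(ma) = \sum h_{(1)}f(S(h_{(2)})(ma)) = \sum h_{(1)}f((S(h_{(3)})m)(S(h_{(2)})a)),
\]
and then \eqref{eq:right-equiv} for $N$ together with $f \in \Hom_{A\Rmod}(M,N)$ turns this into $\sum h_{(1)}f(S(h_{(4)})m) \cdot (h_{(2)}S(h_{(3)})a)$. The middle factor collapses via $\sum h_{(2)}S(h_{(3)}) = \varepsilon(h_{(2)})$ and coassociativity to give $(hf)(m) \cdot a$. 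Next, I would verify the $H$-module axioms $1_H f = f$ and $(hh')f = h(h'f)$; these are formal consequences of $\Delta$ being an algebra map and $S$ being an antihomomorphism. Finally, comparing \eqref{H-action on Hom} with the action $(H,\cdot_1)$ from Section~\ref{ssec:H-inv}, I note that they literally coincide (since $\cdot_1$ is defined precisely by this formula), so the identification $\Hom_{A\Rmod^H}(M,N) = \Hom_{A\Rmod}(M,N)^{(H,\cdot_1)}$ is a direct application of Lemma~\ref{H-invariant}.

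For part (b), the $H$-action and right $A$-action are defined on $\Hom_{A\Rmod}(M,N)$; I need to check three things. First, $(fa)$ defined by $(fa)(m) = f(am)$ is still right $A$-linear: $(fa)(mb) = f(amb) = f(am)b = (fa)(m)b$, using that $M$ is an $A$-bimodule and $f$ is right $A$-linear. Second, this makes $\Hom_{A\Rmod}(M,N)$ into a right $A$-module since $(f(ab))(m) = f(abm) = ((fa)b)(m)$. Third, and the most substantive point, is the compatibility $h(fa) = \sum (h_{(1)}f)(h_{(2)}a)$: unpacking both sides as functions of $m$, the left side is $\sum h_{(1)}f(aS(h_{(2)})m)$, while the right side is
\[
\sum h_{(1)}f(S(h_{(2)})((h_{(3)}a)m)) = \sum h_{(1)}f((S(h_{(2)})h_{(3)}a)(S(h_{(4)})\cdot\cdot)m),
\]
which after applying \eqref{eq:left-equiv} for the bimodule $M$ and collapsing $\sum S(h_{(2)})h_{(3)} = \varepsilon(h_{(2)})$ reduces to $\sum h_{(1)} f(a S(h_{(2)}) m)$, matching the left side.

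Parts (c) and (d) follow by the same pattern, with the only new input being the analogous left-$A$-action compatibility and, in (d), the observation that the bimodule axiom $(afb)(m) = af(bm)$ is automatically associative on both sides because $f$ is right $A$-linear and $N$ is a genuine bimodule. The main obstacle throughout is purely notational bookkeeping with Sweedler indices; there is no conceptual difficulty, but one must consistently apply the coassociativity identity $\sum h_{(1)} \otimes \Delta(h_{(2)}) = \sum \Delta(h_{(1)}) \otimes h_{(2)}$ and the antipode identities to collapse middle factors. No deeper results from the paper are needed, so the argument is self-contained within this appendix.
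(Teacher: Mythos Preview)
Your proposal is correct and matches the paper's approach exactly: the paper's printed proof is simply ``This is straightforward,'' but the commented-out detailed proof in the source carries out precisely the Sweedler-notation verifications you outline (checking that $hf$ is right $A$-linear via the collapse $\sum h_{(2)}S(h_{(3)}) = \varepsilon(h_{(2)})$, then invoking Lemma~\ref{H-invariant} for the second claim in (a), and similarly for the compatibility $h(fa) = \sum (h_{(1)}f)(h_{(2)}a)$ in (b)). There is nothing to add.
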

\begin{proof}This is straightforward.
\end{proof}

\begin{Rem}\label{rem: A-Rmod vs A-Lmod}
(a) If $A$ is an $H$-module algebra, then $A^{\op}$ is an $H^{\ccop}$-module algebra with the same action, whereas $H^\ccop\simeq H$ as algebras.

\noindent
(b) If $M,N\in A\Lmod^H$ then \eqref{H-action on Hom} does not equip $\Hom_{A\Lmod}(M,N)$ with an $H$-action in general. 
However, evoking (a), we note that any $M\in A\Lmod^H$ is naturally an object in $A^{\op}\Rmod^{H^{\ccop}}$, where the left 
$H^{\ccop}$-module structure on $M$ is unchanged and the right $A^{\op}$-module structure coincides with the left $A$-module structure. 
Thus, we have $\Hom_{A\Lmod}(M,N)=\Hom_{A^{\op}\Rmod}(M,N)$. By Lemma \ref{Hom in H-equiv A-Rmod}(a), the right-hand side has 
a left $H^{\ccop}$-action defined by $(hf)(m)=\sum h_{(2)}f(S^{-1}(h_{(1)})m)$ for $h\in H^{\ccop}, f\in \Hom_{A^\op \Rmod}(M,N), m\in M$. 
This implies that for any $M, N\in A\Lmod^H$, we have a left $H$-action on $\Hom_{k}(M,N)$ given by 
\begin{equation}\label{H-action on Hom 2}
  (hf)(m)=\sum h_{(2)}f(S^{-1}(h_{(1)})m) \qquad \forall\, h\in H,\, f\in \Hom_{A\Lmod}(M,N),\, m\in M \,.
\end{equation}
\end{Rem}

The following result follows from Lemma~\ref{Hom in H-equiv A-Rmod} and Remark~\ref{rem: A-Rmod vs A-Lmod} 


\begin{Lem} \label{Hom in H-equiv A-Lmod}
(a) For any $M,N \in A\Lmod^H$, the space $\Hom_{A\Lmod}(M,N)$ carries a natural structure of a left $H$-module 
via~\eqref{H-action on Hom 2}.
Furthermore, we have
\[
  \Hom_{A\Lmod^H}(M,N)=\Hom_{A\Lmod}(M,N)^{(H,\cdot_2)} \,.
\]

\noindent
(b) For any $M\in A\Bimod^H$ and $N\in A\Lmod^H$, the space $\Hom_{A\Lmod}(M,N)$ is naturally an object in $A\Lmod^H$ 
with the left $H$-module structure as in \eqref{H-action on Hom 2} and the left $A$-module structure defined by 
$(af)(m)=f(ma)$ for $a\in A,  f\in \Hom_{A\Lmod}(M,N), m \in M$.

\medskip
\noindent
(c) For any $M\in A\Lmod^H$ and $N\in A\Bimod^H$, the space $\Hom_{A\Lmod}(M,N)$ is naturally an object in $A\Rmod^H$ 
with the left $H$-module structure as in \eqref{H-action on Hom 2} and the right $A$-module structure defined by 
$(fa)(m)=f(m)a$ for $a\in A, f\in \Hom_{A\Lmod}(M,N), m\in M$.

\medskip
\noindent
(d) For any $M,N\in A\Bimod^H$, the space $\Hom_{A\Lmod}(M,N)$ is a naturally an object in $A\Bimod^H$ with 
the left $H$-module structure as in \eqref{H-action on Hom 2} and the $A$-bimodule structure defined by 
$(afb)(m)=f(ma)b$ for $a,b\in A, f\in \Hom_{A\Lmod}(M,N), m\in M$.
\end{Lem}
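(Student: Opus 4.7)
The plan is to deduce this lemma from Lemma~\ref{Hom in H-equiv A-Rmod} via the translation device spelled out in Remark~\ref{rem: A-Rmod vs A-Lmod}(b). Recall that any $M \in A\Lmod^H$ is naturally an object of $A^{\op}\Rmod^{H^{\ccop}}$ (keeping the $k$-module structure, reinterpreting the left $A$-action as a right $A^{\op}$-action, and the left $H$-action as a left $H^{\ccop}$-action). Under this identification we have an equality of $k$-modules
\[
  \Hom_{A\Lmod}(M,N) \;=\; \Hom_{A^{\op}\Rmod}(M,N) \,,
\]
and similarly for the equivariant Homs. The coproduct of $H^{\ccop}$ is the flip of that of $H$, so $h_{(1)}^{\ccop}\otimes h_{(2)}^{\ccop}=h_{(2)}\otimes h_{(1)}$, and its antipode is $S^{-1}$.

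For part (a): applying Lemma~\ref{Hom in H-equiv A-Rmod}(a) to the triple $(A^{\op}, H^{\ccop}, M, N)$ equips $\Hom_{A^{\op}\Rmod}(M,N)$ with the left $H^{\ccop}$-action
\[
  (hf)(m) \;=\; \sum h^{\ccop}_{(1)}\,f\bigl(S_{H^{\ccop}}(h^{\ccop}_{(2)})m\bigr) \;=\; \sum h_{(2)}\,f\bigl(S^{-1}(h_{(1)})m\bigr),
\]
which is precisely \eqref{H-action on Hom 2}; and it identifies $\Hom_{A\Lmod^H}(M,N)=\Hom_{A^{\op}\Rmod^{H^{\ccop}}}(M,N)$ with the invariants of this action, which by Lemma~\ref{H-invariant} coincide with $\Hom_{A\Lmod}(M,N)^{(H,\cdot_2)}$ (since the $\cdot_2$-action uses the same formula \eqref{H-action on Hom 2}).

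For parts (b)--(d): apply the corresponding parts of Lemma~\ref{Hom in H-equiv A-Rmod} to $A^{\op}$-modules. The only thing to check is that the bimodule/module structures translate to the formulas stated. In the right $A^{\op}$-picture, part (b) of Lemma~\ref{Hom in H-equiv A-Rmod} gives a right $A^{\op}$-action $(fa)(m)=f(a\cdot_{A^{\op}} m)=f(ma)$; reinterpreting this as a left $A$-action produces $(af)(m)=f(ma)$, as claimed. Similarly, part (c) of Lemma~\ref{Hom in H-equiv A-Rmod} in the right $A^{\op}$-picture gives a left $A^{\op}$-action $(af)(m)=a\cdot_{A^{\op}} f(m)=f(m)a$, which becomes the stated right $A$-action. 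Part (d) follows by combining (b) and (c), exactly as in the proof of Lemma~\ref{Hom in H-equiv A-Rmod}(d).

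There is no real obstacle here: the entire content is bookkeeping about op/cop conventions, and the only mild subtlety is making sure the signs match, i.e.\ that swapping $\Delta\leftrightarrow\Delta^{\op}$ forces $S\leftrightarrow S^{-1}$, which is precisely why \eqref{H-action on Hom 2} features $S^{-1}$ in place of $S$. Once this translation is in place, every assertion of the lemma is read off directly from the corresponding assertion of Lemma~\ref{Hom in H-equiv A-Rmod}, so the proof reduces to a single sentence such as ``this is straightforward'' (which matches the proof style used for Lemma~\ref{Hom in H-equiv A-Rmod} itself).
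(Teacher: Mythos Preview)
Your proposal is correct and follows exactly the paper's approach: the paper states just before the lemma that it ``follows from Lemma~\ref{Hom in H-equiv A-Rmod} and Remark~\ref{rem: A-Rmod vs A-Lmod}'', which is precisely the translation device you spell out in detail. Your careful bookkeeping of the op/cop conventions (in particular why $S^{-1}$ replaces $S$) is a faithful unpacking of what the paper leaves implicit.
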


In the next lemma, we shall equip $\Hom_{A\Rmod}(\cdot,\cdot)$ with the left $H$-action via \eqref{H-action on Hom}, 
while $\Hom_{A\Lmod}(\cdot,\cdot)$ shall be equipped with the left $H$-action via \eqref{H-action on Hom 2}.

\begin{Lem}[Tensor-Hom Adjunction]\label{tensor-hom-adj}
(a) For any $M,N \in A\Rmod^H$ and $P\in A\Bimod^H$, we have a natural isomorphism of left $H$-modules:
\begin{equation}\label{Tensor-Hom}
  \Hom_{A\Rmod}(N\otimes_A P,M) \simeq \Hom_{A\Rmod}(N, \Hom_{A\Rmod}(P,M)) \,.
\end{equation}
Here, $\Hom_{A\Rmod}(P,M)\in A\Rmod^H$ by Lemma \ref{Hom in H-equiv A-Rmod}(b) and $N\otimes_A P$ is naturally in $A\Rmod^H$.

\medskip
\noindent
(b) Let $P,M\in A\Rmod^H$ be such that $P$ is a finitely generated projective right $A$-module. Note that $A$ is naturally 
an object in $A\Bimod^H$. Then we have an isomorphism of left $H$-modules: 
\begin{equation}\label{Hom-Dual}
  \Hom_{A\Rmod}(P,M) \simeq M\otimes_A \Hom_{A\Rmod}(P,A) \,.
\end{equation}
If we assume further that $P\in A\Bimod^H$, then \eqref{Hom-Dual} is an isomorphism in $A\Rmod^H$.

\medskip
\noindent
(c) For any $P\in A\Lmod^H$, $\Hom_{A\Lmod}(P,A)$ is an object in $A\Rmod^H$ by Lemma \ref{H-action on Hom 2}(c), and  
$\Hom_{A\Rmod}(\Hom_{A\Lmod}(P,A),A))$ is an object in $A\Lmod^H$ by Lemma \ref{H-action on Hom}(c). Define the map 
\begin{equation}
  \pi\colon P \longrightarrow \Hom_{A\Rmod}(\Hom_{A\Lmod}(P,A),A))
\end{equation}
via $\pi(p)(f)=f(p)$ for $p\in P, f\in \Hom_{A\Lmod}(P,A)$. Then $\pi$ is a morphism in $A\Lmod^H$. 
If additionally $P$ is a finitely generated projective left $A$-module, then $\pi$ is an isomorphism.
\end{Lem}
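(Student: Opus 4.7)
The strategy is the same for all three parts: first produce the underlying $R$-linear isomorphism exactly as in the non-equivariant tensor-hom adjunction, then verify compatibility with the $H$-action using the Sweedler-style formulas recorded in Lemmas~\ref{Hom in H-equiv A-Rmod} and~\ref{Hom in H-equiv A-Lmod}. The only subtle point is keeping track of which power of $S$ appears, because $\Hom_{A\Rmod}$ uses the action~\eqref{H-action on Hom} involving $S$, while $\Hom_{A\Lmod}$ uses~\eqref{H-action on Hom 2} involving $S^{-1}$; these cancel correctly whenever both sides of an adjunction are built consistently.

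For part~(a), I would write the standard adjunction map $\Phi\colon\Hom_{A\Rmod}(N\otimes_AP,M)\to \Hom_{A\Rmod}(N,\Hom_{A\Rmod}(P,M))$ sending $\phi\mapsto(n\mapsto(p\mapsto\phi(n\otimes p)))$, with inverse $\psi\mapsto(n\otimes p\mapsto\psi(n)(p))$. The non-equivariant bijection is classical; the only content is $H$-equivariance. Expanding $(h\Phi(\phi))(n)(p)$ using~\eqref{H-action on Hom} twice, one obtains a triple Sweedler sum involving $h_{(1)},S(h_{(2)}),h_{(3)},S(h_{(4)})$ applied to $n,p$; using coassociativity and the antipode axiom $\sum S(h_{(2)})h_{(3)}=\varepsilon(h_{(2)})$ collapses two factors and yields $\Phi(h\phi)(n)(p)$. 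This step is routine but cleanest if carried out once carefully.

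For part~(b), I would first verify that the evaluation-coevaluation map $\Psi\colon M\otimes_A\Hom_{A\Rmod}(P,A)\to\Hom_{A\Rmod}(P,M)$, $m\otimes f\mapsto (p\mapsto m\cdot f(p))$, is $H$-equivariant; this is a direct Sweedler computation using that $M$ and $\Hom_{A\Rmod}(P,A)$ are $H$-equivariant right $A$-modules. Then I reduce to the case $P=A^n$, where $\Hom_{A\Rmod}(A,A)=A$ as objects of $A\Bimod^H$ (via $f\mapsto f(1)$), so both sides identify with $M^n$ and the statement is immediate. The passage from $P=A^n$ to a general finitely generated projective $P$ uses a direct summand decomposition, which is functorial in $P$; the $H$-action on the idempotent cutting out $P$ inside $A^n$ need not be trivial, but because $\Psi$ is a natural transformation of $H$-equivariant functors this causes no problem. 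If $P\in A\Bimod^H$, the extra left $A$-action commutes with $\Psi$ by inspection.

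For part~(c), the plan is first to check that $\pi$ is a well-defined morphism in $A\Lmod^H$: $A$-linearity follows directly from $f\in\Hom_{A\Lmod}(P,A)$, and $H$-linearity is a one-line computation combining~\eqref{H-action on Hom} and~\eqref{H-action on Hom 2} with the identity $\sum h_{(3)}S^{-1}(h_{(2)})=\varepsilon(h_{(2)})$. For the isomorphism statement, I would reduce to $P=A^n$ via a finite projective presentation $P\oplus Q\simeq A^n$ in $A\Lmod^H$; compatibility of $\pi$ with direct sums and the case $P=A$, where $\Hom_{A\Lmod}(A,A)=A$ as $A$-bimodules via $f\mapsto f(1)$ and then $\Hom_{A\Rmod}(A,A)=A$ in the same way, reduce the claim to the identity map on $A^n$. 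The main obstacle I anticipate is not conceptual but notational: keeping the three different Sweedler indices and the antipodes straight in the $H$-equivariance checks, especially in part~(c) where both~\eqref{H-action on Hom} and~\eqref{H-action on Hom 2} appear simultaneously.
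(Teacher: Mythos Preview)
Your approach is correct and is exactly what the paper does: the paper's proof reads ``The proof is straightforward,'' and the detailed verification it has in mind (visible in commented-out form) is precisely the Sweedler-style check you outline---write down the standard non-equivariant adjunction/evaluation maps, then verify $H$-equivariance by expanding the actions \eqref{H-action on Hom} and \eqref{H-action on Hom 2} and collapsing with the antipode and counit axioms.

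One small imprecision worth noting: in parts (b) and (c) you write that the reduction to $P=A^n$ goes via a splitting $P\oplus Q\simeq A^n$ \emph{in $A\Lmod^H$} (or $A\Rmod^H$). Such an $H$-equivariant splitting need not exist. But you do not need it: you have already checked in a separate step that your map ($\Psi$ or $\pi$) is $H$-linear for arbitrary $P$, so the reduction to $P=A^n$ is only used to verify bijectivity at the level of underlying $k$-vector spaces, and for that the splitting in $A\Lmod$ (resp.\ $A\Rmod$) suffices. An $H$-equivariant bijection is automatically an isomorphism in the equivariant category. So your argument goes through once you drop the phrase ``in $A\Lmod^H$'' from the reduction step.
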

\begin{proof}
    The proof is straightforward.
\end{proof}

\end{document}